\pdfoutput=1
\documentclass[11pt]{article}

\usepackage[margin=1.05in]{geometry}
\usepackage{amsmath,amssymb,amsthm}
\usepackage{graphicx}
\usepackage{booktabs}
\usepackage{float}
\usepackage{microtype}
\usepackage{xcolor}
\usepackage{tikz}
\usepackage[shortlabels]{enumitem}
\usepackage[colorlinks=true,linkcolor=blue!45!black,citecolor=blue!45!black,urlcolor=blue!45!black]{hyperref}
\hypersetup{hypertexnames=false,pdftitle={Occupation counts of rarely switching Markov chains},pdfauthor={Arjun Pemmasani}}

\newtheorem{theorem}{Theorem}[section]
\newtheorem{lemma}[theorem]{Lemma}
\newtheorem{proposition}[theorem]{Proposition}
\newtheorem{corollary}[theorem]{Corollary}

\newtheorem{introtheorem}{Theorem}

\theoremstyle{definition}
\newtheorem{definition}[theorem]{Definition}
\newtheorem{example}[theorem]{Example}
\theoremstyle{remark}
\newtheorem{remark}[theorem]{Remark}

\newcommand{\Z}{\mathbb{Z}}
\newcommand{\R}{\mathbb{R}}
\newcommand{\E}{\mathbb{E}}
\newcommand{\Prob}{\mathbb{P}}
\newcommand{\Var}{\operatorname{Var}}
\newcommand{\Cov}{\operatorname{Cov}}
\newcommand{\supp}{\operatorname{supp}}
\newcommand{\ones}{\mathbf{1}}
\newcommand{\tree}{\operatorname{tree}}

\graphicspath{{figures/}}
\title{Occupation counts of rarely switching Markov chains:\\ stratified local laws, support selection and the boundary layer}
\author{Arjun Pemmasani\\[2pt]
\small Department of Mathematics, Harvey Mudd College\\
\small Claremont, CA 91711, USA\\
\small \texttt{apemmasani@hmc.edu}\\
\small ORCID \href{https://orcid.org/0009-0001-2776-6298}{0009-0001-2776-6298}}
\date{}
\newif\ifarxivbuild
\arxivbuildtrue
\newcommand{\Bversion}{\ifarxivbuild the first version\else the preprint\fi}

\begin{document}
\maketitle

\begin{abstract}
A Markov chain with exit rates $q_i$ runs for $N$ steps with transition matrix
$I+(T/N)Q$. On each irreducible face that carries start
mass, we prove a local limit theorem for the occupation counts with an
explicit constant and relative error $O(1/T)$, uniformly for $1\le
T\le\varepsilon N$ with $\varepsilon\max_iq_i<1$ and on compact subsets of the
open face. Up to $O(1)$, minus the log-probability of counts $n$ with support $F$ is
$|F|-1$ times $\log(N/\sqrt T)$ plus $N$ times the Donsker and Varadhan functional of $I+(T/N)Q$ at $n/N$. Every admissible face has an explicit power of $T$ in its largest
probability for large $T\le N^{1/3}$, and a constant given by a
finite-dimensional variational formula. For
$T=\tau\log N$ the face of the mode minimizes $\tau\lambda_F+|F|-1$ at first
order, with $\lambda_F$ the exit rate. If an admissible face has $\Delta\ge1$ more states than
another and an exit rate smaller by $\delta>0$, we locate every crossing of their largest
probabilities for large $T\le N^{1/3}$ up to $O(1/\log N)$. For two irreducible faces the crossing is
unique for large $N$ in $1\le T\le\varepsilon N$, with coefficient $1/2$ of $\log\log N$. Near a facet, the
continuous-time occupation density is the facet density times an explicit kernel, with relative
error $O(1/T)$ at times $x\le cT$ off the facet, $c$ small. The counts obey a lattice form with
extra error $T^2/N$ for $T\le\varepsilon_1\sqrt N$. The kernel is Bessel for $x=o(T)$ at facet profiles where the removed state has
equal entry and exit laws, and at a profile where the rate has a first correction, of order
$m\ge3$, if and only if $x=o(T^{1-2/m})$.
On the complete graph with the
uniform start, each face of at least 2 states crosses a vertex exactly once.
\end{abstract}

\medskip
\noindent\textbf{Keywords:} occupation times of Markov chains; mode; local
limit theorem; principal Dirichlet eigenvalue; Donsker and Varadhan
functional; Bessel kernel.

\noindent\textbf{Mathematics Subject Classification (2020):} Primary 60J10;
Secondary 60J27, 60F10, 60F05, 15B48.

\section{Introduction}\label{sec:intro}

Let $Q$ be a generator on $d$ states, with exit rates $q_i=\sum_{j\ne i}q_{ij}$, and run the chain for $N$ steps with
transition matrix $I+(T/N)Q$. Its path follows the continuous-time chain with
generator $Q$ up to time $T$, while the transition matrix tends to the identity
as $N$ grows. Hence the chain degenerates. Its spectral gap is $T/N$ times that of
$Q$, and
for fixed $T$ the visit fractions converge in law to the occupation fractions of
the continuous-time chain at time $T$, which charge the faces of the simplex. We
let $T$ range from $1$ to $h_*N$, for a fixed $h_*$ with $h_*\max_iq_i<1$, and
describe the law of the occupation counts $n_i=\#\{t\le N:X_t=i\}$. The support
$F=\{i:n_i>0\}$ of the count vector $n$, which we also call a composition, is a face of the
simplex.

Our main result is a local limit theorem for the count vector on each face that satisfies
the condition (H) below, with
an explicit constant and uniformly in $T$ (Theorem~\ref{thm:A}). On compact subsets of
the open face, up to $O(1)$,
$-\log P_N(n)$ is $(|F|-1)\log(N/\sqrt T)$ plus $N$ times the Donsker and
Varadhan functional of the discrete chain at $n/N$. Hence each dimension of the
face costs a factor $N$ up to a power of $\log N$ when $T$ is of order $\log N$,
and a factor of order $\sqrt N$ when $T$ is of order $N$. The functional is one functional of the whole chain, and on each
face it is that of the chain killed on leaving the face. Two consequences follow. First, the law selects the support of the most
likely count vector, the mode, and it locates the values of $T$ at which two
faces have equal largest probabilities (Theorem~\ref{thm:B}). Second, near a
facet the occupation density of the continuous-time chain on a face is the
density on the facet times an explicit kernel, which is Bessel in a range of
the time off the facet that we determine, and the counts follow the same law
with a lattice kernel. \ifarxivbuild The complete graph serves as a worked
example (Theorem~\ref{thm:C}).\else Section~\ref{sec:intro-complete} summarizes the results on the
complete graph and near a vertex, which are in the arXiv version of this paper~\cite{arxivC1}.\fi

To state the law, let $J_V$ be the Donsker and Varadhan rate function of the
empirical measure of the continuous-time chain with generator $Q$, given
in~\eqref{eq:JV}. For $h=T/N$ let $hJ^{(h)}_V$ be the same rate for the chain with
transition matrix $I+hQ$ (Section~\ref{sec:discrete-rate}). It satisfies
$J^{(h)}_V\ge J_V$ and tends to $J_V$ as $h\to0$. We say that a face $F$ satisfies (H)
if the chain can move
between any two states of $F$ without leaving $F$ and the start gives $F$
positive mass. Write $\lambda_F$ for the exit rate of $F$ and $M_F$ for the
largest probability of a count vector with support $F$
(Definition~\ref{def:faces}).

\begin{introtheorem}[the law on a face]\label{thm:A}
Fix the rates $q_{ij}$, the law $\mu$ of $X_1$ and $h_*$ with $h_*\max_iq_i<1$. Let $F$
satisfy \textup{(H)}, with $k=\lvert F\rvert$ states. If $k\ge2$, let $\mathcal K$ be a
compact subset of the open face. Uniformly for $\supp n=F$, $n/N\in\mathcal K$ and
$1\le T\le h_*N$,
\[
 -\log P_N(n)=(k-1)\log N+T\,J^{(h)}_V(n/N)-\frac{k-1}2\log T-\log C^{(h)}_F(n/N)
 +O\Bigl(\frac1T\Bigr),
\]
where $C^{(h)}_F>0$ is an explicit function that tends to the function $C_F$ of
Definition~\ref{def:tilt} as $h\to0$. Uniformly for $1\le T\le h_*N$,
\[
 M_F=N^{-(k-1)}\,T^{(k-1)/2}K^{(h)}_F\,(1-h\lambda_F)^N\bigl(1+O(1/T)\bigr),
\]
where $K^{(h)}_F=C^{(h)}_F(\alpha^*_F)$ and $\alpha^*_F$ is the quasi-ergodic law of
$F$. For $k=1$ the second display holds exactly, with $K^{(h)}_F=\mu_i/(1-hq_i)$ for
$F=\{i\}$. For $1\le T\le N^{1/3}$ both displays hold with $J_V$, $C_F$,
$K_F=C_F(\alpha^*_F)$ (with $K_{\{i\}}=\mu_i$) and $e^{-T\lambda_F}$ in place of $J^{(h)}_V$, $C^{(h)}_F$,
$K^{(h)}_F$ and $(1-h\lambda_F)^N$, and with the error $O(1/T+T^2/N)$. The
second display then holds for every admissible face $F$, one that a path of positive
probability can fill, with $k=|F|$, for $T_1\le T\le N^{1/3}$, with $T_1$ a large constant
(Theorem~\ref{thm:reducible}), and with $T^{(k-1)/2}$ and $K_F$
replaced by an explicit power $T^{p_F}$ and a constant $K^{\rm red}_F>0$ given by a
finite-dimensional variational formula.
\end{introtheorem}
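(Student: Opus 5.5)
The plan is to write $P_N(n)$ as a sum over paths, organized by sojourns. A path with occupation vector $n$ supported on $F$ is a sequence of runs in states of $F$. Each run in state $i$ of length $\ell$ has weight $(1-hq_i)^{\ell-1}$. Each jump $i\to j$ has weight $hq_{ij}$. So
\[
P_N(n)=\sum_{m}\sum_{i_1\to\dots\to i_m}\mu_{i_1}\prod_{r<m}hq_{i_ri_{r+1}}\prod_{i\in F}(1-hq_i)^{n_i-m_i}\binom{n_i-1}{m_i-1},
\]
where $m_i$ is the number of runs in state $i$ and the binomial counts the compositions of $n_i$ into $m_i$ parts. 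The inner sum over itineraries with prescribed jump counts $(m_{ij})$ is a BEST/matrix-tree count. It equals a tree factor times multinomials in the $m_{ij}$, with a boundary correction for the start and end states.

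This reduces $P_N(n)$ to a finite-dimensional sum over the jump-count matrix $(m_{ij})_{i\ne j\in F}$, subject to the balance constraints $\sum_j m_{ij}-\sum_j m_{ji}\in\{-1,0,1\}$. The summand is explicit. Using Stirling's formula, we write it as $\exp(-N\Phi_h(n/N,m/N))$ times a smooth prefactor. We tilt by a vector $\theta$ (a Doob transform by a positive function on $F$). Maximizing over $m$ then reproduces the variational formula for $hJ^{(h)}_V(n/N)$, namely $\sup_{u>0}-\sum\alpha_i\log\bigl((Pu)_i/u_i\bigr)$ restricted to $F$. The maximizer $m^*_{ij}$ is of order $T\cdot(\text{rates})$.

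Second, a Laplace/local-CLT argument on the lattice of jump counts gives the prefactor. The free directions are the $|F|(|F|-1)$ jump counts minus the $|F|-1$ balance constraints. The Gaussian integral over these, combined with the Stirling factors $\binom{n_i-1}{m_i-1}$, should produce exactly $N^{-(k-1)}T^{(k-1)/2}$. Heuristically, the jump counts have size $T$ and fluctuate on scale $\sqrt T$, while $n$ is a $(k-1)$-dimensional lattice point of spacing $1/N$. The Gaussian determinants, the tree factor evaluated at the tilted rates, and the boundary factors from $\mu_{i_1}$ and the last state assemble into $C^{(h)}_F(n/N)$. The relative error $O(1/T)$ comes from the Edgeworth correction, since the effective number of summands is $T$. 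This is uniform on compacts of the open face because the Hessian stays nondegenerate there.

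The main obstacle is uniformity in $1\le T\le h_*N$. For $T$ of order $1$ the sum is not in a Gaussian regime. There I would fall back on the exact finite expression, which is trivially $O(1)$-accurate and is absorbed into the $O(1/T)$ error. For $T$ near $h_*N$ the binomials are no longer in the Poisson-like regime, and I will keep the exact $(1-hq_i)$ factors rather than exponentials; this is why $J^{(h)}$ and $C^{(h)}$ appear.

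For the second display, minimize $J^{(h)}_V$ over $F$. The minimizer is the quasi-ergodic law $\alpha^*_F$, with $TJ^{(h)}_V(\alpha^*_F)=-N\log(1-h\lambda_F)$, and the nondegenerate minimum gives $K^{(h)}_F=C^{(h)}_F(\alpha^*_F)$ up to $O(1/T)$. The lattice-maximum versus continuous-minimum discrepancy is $O(N\cdot N^{-2}\cdot N/T)=O(1/T)$ after rescaling. The case $k=1$ is a single path of probability $\mu_i(1-hq_i)^{N-1}$.

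For $T\le N^{1/3}$, I would expand $J^{(h)}_V=J_V+O(h)$ and $N\log(1-h\lambda_F)=-T\lambda_F+O(T^2/N)$. Then $T(J^{(h)}_V-J_V)=O(T^2/N)$, and this gives the stated error. The reducible case is Theorem~\ref{thm:reducible}, which I would cite: there the itinerary passes through the irreducible classes in order, and each class contributes its own local law, with the power $T^{p_F}$ coming from the time spent in transient classes.
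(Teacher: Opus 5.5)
Your route differs from the paper's. You refine Good's run sum (Lemma~\ref{lem:runs}) into a sum over jump-count matrices by the BEST theorem, and you propose a Laplace analysis on the lattice of integer flows. The paper instead writes the same sum as a torus integral (Lemma~\ref{lem:torus}) and runs one Laplace expansion in the angles, with $(h,\alpha)$ in the compact set $[0,h_*]\times\mathcal K$. The flow route may well give the first display on compacts. But it omits what the second display rests on: a bound on $P_N(n)$ for \emph{every} $n$ with $\supp n=F$. This includes $n/N$ outside every compact subset of $\Delta_F^\circ$ (some $n_i$ of order one), where there is no local law and where Stirling's formula and the Gaussian picture fail. ``Minimize $J^{(h)}_V$ over the face'' does not determine $M_F$ without such a bound.

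Moreover, the bound must keep the factor $h^{k-1}=T^{k-1}N^{-(k-1)}$. An exponential-tilt (Doob transform) bound gives only $C(1-h\lambda_F)^Ne^{-2\eta T}$ off a neighborhood of $\alpha^*_F$. That is negligible against the main term $N^{-(k-1)}T^{(k-1)/2}(1-h\lambda_F)^N$ only once $T$ exceeds a multiple of $\log N$. So it fails for bounded $T$ and for $T=\tau\log N$ with small $\tau$, the regime of Theorem~\ref{thm:B}.

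The paper gets the right bound from positivity. Every coefficient of the torus integrand is nonnegative, so its modulus is at most its value at the real radii $\theta'$, for any $\theta\in V_F$. This gives \eqref{eq:A-chernoff}, $P_N(n)\le h^{k-1}b(\theta')\prod_{i\ge2}\theta_i\prod_{i\in F}(p_i+h\theta_i)^{n_i-1}$ for all $n$ on the face. Finitely many such $\theta$ suffice, chosen by compactness and the lower semicontinuity of $J^{(h)}_F-\lambda^{(h)}_F(0)$. That function is positive on the boundary of $\Delta_F$ by Propositions~\ref{prop:rate}(d) and~\ref{prop:discrete-variational}(a). Together these $\theta$ bound everything off a ball around $\alpha^*_F$ by $e^{-2\eta T}$ times the main term. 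The same positivity supplies the gap in hypothesis~(iii) of Lemma~\ref{lem:laplace}. In your setting you would also have to control the tails of the flow sum by hand, including flows with small counts, where Stirling's formula and the arborescence factor degenerate.

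Several further steps are asserted rather than proved.

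\emph{Bounded $T$.} The exact sum is not ``trivially $O(1)$-accurate''. With $T\in[1,T_1]$ and $N\to\infty$ the number of terms is unbounded, and you need $N^{k-1}P_N(n)$ bounded above and below uniformly for $n/N\in\mathcal K$. The paper obtains this from the discrete--continuum comparison of Lemma~\ref{lem:discrete} and the continuity and positivity of $f_T$ on $[1,T_1]\times\mathcal K$.

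\emph{The constant.} The statement requires the constant to tend to the specific $C_F$ of Definition~\ref{def:tilt}. In your scheme this is a Kirchhoff-type identity: the Gaussian determinant on the flows with prescribed divergence, together with the arborescence count and the Stirling factors, must equal $(\mu\cdot r_\alpha)(l_\alpha\cdot\ones)/\sqrt{(2\pi)^{k-1}\det\Sigma_\alpha}$. The flow space has dimension $|E(G_F)|-(k-1)$, since only edges of $G_F$ carry jumps, not all $|F|(|F|-1)$ pairs. You leave this identity as ``should''. The paper needs no such identity. Minus the Hessian of its phase in the angles is $\Theta'B^\top\Sigma_\alpha B\Theta'$, with $\Theta'=\operatorname{diag}\theta'$ and $B=I+\ones\alpha'^\top/\alpha_1$, by perturbing a Perron root (Lemma~\ref{lem:perron}). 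Lemma~\ref{lem:discrete-hessian} does the same for $h>0$.

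\emph{Uniformity as $h\to0$.} This requires $T$, not $N$, as the large parameter, with a phase in $m/T$ that is analytic in $h$ up to $h=0$. The form $N\Phi_h(n/N,m/N)$ is not uniformly smooth there, since the relevant $m/N$ are of order $h$.

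\emph{Reducible faces.} This case is only cited, and your sketch of it is inaccurate. The time of order $T$ goes to components of least exit rate and least size $k^*$ (to a single one when $k^*\ge2$), and only there does a local law enter. Every other component spends time of order one and enters through the suprema $A_j$ and $B_j$. So $T^{p_F}$ comes from $h^{k_F-1}$ against the one Gaussian factor $T^{-(k^*-1)/2}$, not from time spent in transient classes.
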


Theorem~\ref{thm:A} is Theorems~\ref{thm:stratified}, \ref{thm:facemax}
and~\ref{thm:reducible}.
Good's formula for the counts is an integral over a torus (Lemma~\ref{lem:torus}).
In the variable of the torus the integrand is analytic in $h$ up to $h=0$, where it
is the integrand of the continuum density of the chain killed on leaving $F$. Hence
one Laplace argument with $h$ as a parameter gives the law for $1\le T\le h_*N$
(Theorem~\ref{thm:local}). Only for bounded $T$ do we compare with the continuum,
to keep the ratio bounded below. The rate of the killed chain is the restriction of
$J^{(h)}_V$ to the face (Proposition~\ref{prop:discrete-rate}). Its least value is
$-\frac1h\log(1-h\lambda_F)$, attained only at $\alpha^*_F$ for every $h$
(Proposition~\ref{prop:discrete-variational}). With the continuum rate the error
$T^2/N$ cannot be avoided, since $TJ^{(h)}_V-TJ_V$ is $T^2/N$ times a function
of $n/N$ that vanishes at most at one point, up to $O(T^3/N^2)$. On faces with (H)
the continuum form follows from the discrete one for $T\le N^{1/2}$. We state it for
$T\le N^{1/3}$, the range of a second proof through the continuum density, which needs
$T^2/N\le1/T$ to locate the most likely counts and which we make uniform over a class of
chains. The factor $T^{(k-1)/2}$ appears because
the visit fractions fluctuate on the scale $T^{-1/2}$ in each of the $k-1$ free
directions. For reversible faces $K_F$ involves the square root of a weighted
count of spanning trees (Proposition~\ref{prop:constant}). The face maxima hold
with a field $v$ that weights a composition by $e^{Tv\cdot n/N}$, uniformly for
$v$ in a compact set, and uniformly over a compact class of reversible chains
(Proposition~\ref{prop:uniform-facemax}). If the states of $F$ do not all
communicate, the time goes to the components of least exit rate and least size $k^*$,
and to one of them when $k^*\ge2$. Every other component $C$ gets time of order one
and costs $(T/N)^{|C|-1}$. Each switch
from one component to the next costs $T/N$ too. Hence the power is $p_F=(k^*-1)/2+|F|-k^*$.

\subsection{The mode and its crossings}\label{sec:intro-faces}

When $T=\tau\log N$ the cost $e^{-T\lambda_F}$ of staying in $F$ and the cost
$N^{-(|F|-1)}$ of spreading over the count vectors of $F$ are both powers of $N$.
Put $\Phi_\tau(F)=\tau\lambda_F+|F|-1$. A face is admissible if a path of
positive probability has exactly that support.

\begin{introtheorem}[the support of the mode]\label{thm:B}
Fix the rates $q_{ij}$ and the law $\mu$ of $X_1$.
\begin{enumerate}[(i)]
\item Let $T=\tau\log N$ with $\tau>0$ fixed. Every admissible face $F$ has
$M_F=N^{-\Phi_\tau(F)+o(1)}$. For large $N$ the support of every most likely
composition is an admissible face that minimizes $\Phi_\tau$ among the
admissible faces. If each such minimizing face satisfies \textup{(H)}, then the
visit fractions of every most likely composition are within $O(1/\log N)$ of
the quasi-ergodic law of its support.
\item Let $F$ and $G$ satisfy \textup{(H)}, and suppose that $F$ has
$\Delta\ge1$ more states than $G$ and an exit rate smaller by $\delta>0$. For
large $N$ the equation $M_F=M_G$ has exactly one root $T$ in $[1,h_*N]$, for any
fixed $h_*$ with $h_*\max_iq_i<1$, and it satisfies
\[
 \delta T+\frac\Delta2\log T=\Delta\log N-\log\frac{K_F}{K_G}
 +O\Bigl(\frac1{\log N}\Bigr).
\]
If $F$ and $G$ are only admissible, every root in $[T_1,N^{1/3}]$ satisfies this
equation with $(p_F-p_G)\log T$ in place of $\frac\Delta2\log T$ and $K^{\rm red}$
in place of $K$.
\end{enumerate}
\end{introtheorem}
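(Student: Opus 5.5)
The plan is to derive Theorem~\ref{thm:B} from the face maxima of Theorem~\ref{thm:A}. All of the work is in comparing the asymptotics of $M_F$ across faces.

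\textbf{Part (i).} Put $T=\tau\log N$. For faces with (H), the second display of Theorem~\ref{thm:A}, in its continuum form (valid since $T\le N^{1/3}$), gives
\[
\log M_F=-(k-1)\log N+\tfrac{k-1}2\log T+\log K_F-T\lambda_F+O(1/T)=-\Phi_\tau(F)\log N+O(\log\log N).
\]
For merely admissible faces, Theorem~\ref{thm:reducible} gives the same expression with $p_F\log T+\log K^{\rm red}_F$ in place of the $T$-power and constant. We still have to check that its exponent of $N$ is $-(|F|-1)-\tau\lambda_F$. This holds because each non-principal component $C$ costs $(T/N)^{|C|-1}$ and each switch costs $T/N$, and these $N$-powers add up to $|F|-1$.

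The number of faces is finite, and the error is $O(\log\log N)=o(\log N)$. So for large $N$, $\max_n P_N(n)=\max_F M_F$ is attained only on faces minimizing $\Phi_\tau$: any other face loses a factor $N^{-c}$ with $c>0$. One subtlety is that $\Phi_\tau$ ties are allowed; that is fine, because the claim is only that the support is a minimizer.

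For the location of the mode within a minimizing face $F$ with (H), we use the first display of Theorem~\ref{thm:A}. Since $J^{(h)}_V$ on $F$ has a unique minimum at $\alpha^*_F$ (Proposition~\ref{prop:discrete-variational}), with a nondegenerate Hessian, a deviation of $n/N$ from $\alpha^*_F$ by $\eta$ costs $cT\eta^2$. The term $\log C_F$ is smooth and varies by only $O(\eta)$. Comparing with the point nearest $\alpha^*_F$ then forces $T\eta^2\lesssim \eta+1/T$, so $\eta=O(1/T)=O(1/\log N)$.

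Mode candidates near the boundary of the face, outside the compact set $\mathcal K$, also need handling. I expect to treat them by the same face analysis: a point near the boundary is either dominated by a lower face's asymptotics, or it is controlled by monotonicity of $J_V$ toward the interior. This boundary control is the step I expect to be the most delicate, and I would first try to invoke the uniform-in-$\mathcal K$ form together with an a priori bound $P_N(n)\le e^{-TJ^{(h)}_V(n/N)}\cdot\mathrm{poly}(N)$.

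\textbf{Part (ii).} Subtracting the face-max formulas gives
\[
\log\frac{M_F}{M_G}=-\Delta\log N+\tfrac\Delta2\log T+\log\frac{K^{(h)}_F}{K^{(h)}_G}+N\log\frac{1-h\lambda_F}{1-h\lambda_G}+O(1/T).
\]
Write the $N\log$ term as $N\,g(T/N)$, with $g(h)=\log(1-h\lambda_F)-\log(1-h\lambda_G)$. Its $T$-derivative is $g'(h)\ge\delta'>0$ uniformly on $[0,h_*]$. The other terms have $T$-derivative $\frac{\Delta}{2T}+O(1/N)+O(1/T^2)$; for the last error we need derivative control of the $O(1/T)$ remainder, which the analytic-in-$h$ Laplace expansion supplies.

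So $D(T)=\log(M_F/M_G)$ is strictly increasing for large $N$. We have $D(1)\approx-\Delta\log N<0$ and $D(h_*N)\ge cN>0$, so there is exactly one root. At the root $T=O(\log N)$, hence $h=O(\log N/N)$. This lets us replace $K^{(h)}$ by $K$ and $N\,g$ by $\delta T$ with error $O(T^2/N)=o(1/\log N)$. The error $O(1/T)=O(1/\log N)$ then gives the displayed equation.

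In the admissible case, the same subtraction using Theorem~\ref{thm:reducible} gives the equation with $p_F-p_G$ and $K^{\rm red}$. Here we only need it at each root, since we claim no uniqueness, and $T\ge T_1$ with $T=O(\log N)$ at any root.

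The main obstacle is the monotonicity in (ii), namely controlling the derivative of the $O(1/T)$ remainder uniformly in $T$. If derivative bounds are unavailable, I would instead argue directly. Any root satisfies the displayed equation, whose solution set in $T$ is a single interval of width $O(1/\log N)$. Within that window, the gap $\delta T$ dominates, which forces two roots to coincide up to a contradiction with strict growth of the main term.
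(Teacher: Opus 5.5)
Your outline follows the paper's (Theorem~\ref{thm:first}(iii), (iv) and Theorem~\ref{thm:crossing}): compare face maxima at first order, localize the mode by quadratic growth of the rate around $\alpha^*_F$, and get uniqueness from monotonicity of $D_N$. However, the two steps that carry the difficulty are either left open or argued incorrectly.

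\emph{The boundary in (i).} The local law covers only $n/N$ in a compact $\mathcal K\subset\Delta_F^\circ$, so you must exclude maximizers of support $F$ outside a small ball around $\alpha^*_F$, including those near $\partial\Delta_F$. Lower-face asymptotics do not help here, because these compositions have full support $F$. An a priori bound $e^{-TJ^{(h)}_V(n/N)}\,\mathrm{poly}(N)$ is also too weak unless the polynomial is $h^{k-1}=(T/N)^{k-1}$ up to powers of $T$. With $T=\tau\log N$, the gain $e^{-2\eta T}=N^{-2\eta\tau}$ from $J_F\ge\lambda_F+3\eta$ off the ball is a fixed, possibly tiny, power of $N$, while $M_F$ itself carries $N^{-(k-1)}$. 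The missing tool is Lemma~\ref{lem:upper}: bounding the torus integrand of Good's formula by its value at the real point gives $P_N(n)\le C_\theta h^{k-1}e^{-h\langle n,q-\theta\rangle}$ for each $\theta\in V_F$. Now $J_F=\sup_{\theta\in V_F}\langle q-\theta,\cdot\rangle$, and $J_F-\lambda_F$ is lower semicontinuous and positive on $\Delta_F\setminus\{\alpha^*_F\}$ by Proposition~\ref{prop:rate}(a), (d). So finitely many $\theta$ give $P_N(n)\le Ch^{k-1}e^{-T(\lambda_F+2\eta)}=o(M_F)$ off the ball, which is the step ``outside $U$'' in Theorem~\ref{thm:facemax}.

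\emph{Uniqueness in (ii).} $M_F$ is a maximum over compositions, so there is no smooth remainder to differentiate, and the size estimate $1+O(1/T)$ gives no derivative information. Two ingredients are needed.
\begin{enumerate}[(a)]
\item For each fixed $n$ with $n/N$ in a compact set, $\partial_T\log P_N(n)=-J_F(n/N)+O(1/T)$. This is Lemma~\ref{lem:derivative}. The paper proves it through the convexity of $c\mapsto\log\E e^{c(M-1)}$ under the run-count weights. The values of this function are known to $O(1/T)$ for $|c|\le1$ from the local law at time $e^cT$, so difference quotients at step $1/T$ fix its derivative at $0$ to $O(1)$.
\item $\log M_F$ is a maximum of finitely many positive polynomials in $T$, so its one-sided derivatives are the largest and the least $\partial_T\log P_N(n)$ over the current maximizers. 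By the localization above these maximizers lie at $\alpha^*_F+O(1/\log N)$, and the same holds for $G$. Hence both one-sided derivatives of $D_N$ are $\delta+O(1/\log N)$.
\end{enumerate}
You need this only on the window $[\frac\Delta{2\delta}\log N,\frac{2\Delta}\delta\log N]$, which contains every root by the discrete face maxima. You do not need monotonicity on all of $[1,h_*N]$, where the remainder is not small.

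Your fallback argument fails. Knowing that all roots lie in one interval of length $O(1/\log N)$ cannot exclude several roots, because across that interval the main term $\delta T+\frac\Delta2\log T$ changes by $O(1/\log N)$, the same order as the remainder.

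A minor point: in the admissible case with $p_F<p_G$, bounding the roots by $O(\log N)$ needs $\log T\le\frac13\log N$.
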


Part (i) is Theorem~\ref{thm:first}(ii)--(iv) and part (ii) is
Theorem~\ref{thm:crossing} and Corollary~\ref{cor:reducible-crossing}. The face maxima of Theorem~\ref{thm:A} keep every root
in a window of order $\log N$, and there the uniqueness comes from a derivative of
the local law in $T$ (Lemma~\ref{lem:derivative}). Theorem~\ref{thm:hull} reads
the winners in (i) off the lower-left boundary of the convex hull of the points
$(\lambda_F,|F|-1)$. Hence as $\tau$ grows the winning size can only grow. Solving
the equation in (ii) gives $T=\frac\Delta\delta[\log N-\frac12\log\log N+O(1)]$.
The coefficient $\frac12$ of $\log\log N$ holds for every such pair, whatever
the graph, the rates, the sizes or the start, and without reversibility. For
admissible faces it is $(p_F-p_G)/\Delta$, which is $1$ and then $0$ in
Example~\ref{ex:threecycle}. If $Q$ has
symmetric support or $\mu$ full support, and at each switch of (i) only two
values of $(\lambda_F,|F|)$ compete, Corollary~\ref{cor:mode-changes} locates
every change of the mode for $T$ of order $\log N$ up to $O(1/\log N)$. When
more compete, Theorem~\ref{thm:window} describes the window of $T$ in which they
do.

\subsection{The boundary layer}\label{sec:intro-layer}

Section~\ref{sec:boundary-layer} follows the occupation density of the
continuous-time chain from the inside of a face $F$ up to a facet
$G=F\setminus\{i\}$. Let $x$ be the time spent in $i$, $S=T-x$, and $\beta$ the
profile of the time on $G$, a point of the open facet. Let $\pi^{\rm in}_\beta$ be
the law of the state from which the chain tilted to $\beta$ enters $i$, and
$\pi^{\rm out}_\beta$ the law of the state into which it leaves $i$. The Bessel
kernel is $e^{-\gamma x}\sqrt{\omega S/x}\,I_1(2\sqrt{\omega Sx})$, where
$\omega$ and $\gamma$ are explicit and $\gamma-q_i$ is $\omega/2$ times the
effective resistance between $\pi^{\rm in}_\beta$ and $\pi^{\rm out}_\beta$
(Proposition~\ref{prop:layer-constants}). For every irreducible face whose
facet satisfies (H), Theorem~\ref{thm:layer} writes the density on $F$ as the
density on $G$ times an explicit generalized kernel, with relative error
$O(1/T)$ for $x$ up to a small multiple of $T$. Proposition~\ref{prop:layer-cubic}
gives the range of the Bessel form at each $\beta$. At the $\beta$ with
$\pi^{\rm in}_\beta=\pi^{\rm out}_\beta$, which are all $\beta$ when $Q$ is
reversible and some $\beta$ for some chains that are not, the Bessel kernel holds
with relative error $O(1/T+\sqrt{x/T})$, so for $x=o(T)$. At a $\beta$ where the
rate has a term beyond the Bessel exponent, in powers of $\sqrt{x/S}$, let $m\ge3$
be the order of the first one. There the relative Bessel form holds if and only
if $x=o(T^{1-2/m})$. We do not know whether such a term exists whenever
$\pi^{\rm in}_\beta\ne\pi^{\rm out}_\beta$. For the cycle on three states with rates
$2$ in one direction and $1$ in the other, at the uniform profile on a facet of
$2$ states, $m=3$, and the logarithm of the ratio
to the Bessel form is $\frac{49}{64\sqrt\omega}\,x^{3/2}S^{-1/2}$ to leading
order. For a pair face the kernel is the facet form of Pedler's formula, which
we prove with an explicit error for all $x$ (Proposition~\ref{prop:pair-face}).
The proof rests on a local law for a weighted chain on the facet with a complex
parameter (Theorem~\ref{thm:L-facet}). We prove it on a torus through a complex
critical point, with a Laplace lemma for a complex Hessian. The counts follow
the same law. With $n_i$ visits to $i$, $P_N(n)$ is the probability of the
counts on $G$ in $N-n_i$ steps times a lattice kernel, the generalized kernel with
Laguerre polynomials\ifarxivbuild{} (Section~\ref{sec:vertex})\fi{} in place of the Bessel
function, with the extra relative error $T^2/N$ for $T$ up to a small multiple of $N^{1/2}$
(Theorem~\ref{thm:layer-counts}). The lattice kernel is $T/N$ times the
generalized kernel up to $O(T^2/N)$, so the Bessel forms hold for the counts with $T^2/N$ added to the error.
For a pair face the discrete Pedler formula is exact, and we compare it with the
lattice kernel at every $T$ (Theorem~\ref{thm:pair-counts}). For three or more
states the counts are open when $T^2/N$ is of order one.

\ifarxivbuild
\subsection{An example, the complete graph}\label{sec:intro-complete}

On the complete graph with the uniform start, $Q=J-dI$ with $J$ the all-ones
matrix, every face ties at first order and the constants decide. A face of $k$
states has $K_F=\frac1dk^{k+1/2}(4\pi)^{-(k-1)/2}$, and the $k^{k-2}$ spanning
trees of $K_k$ enter through it. Put
$a_k=\frac12\log(4\pi)-\frac{k+1/2}{k-1}\log k$, write $V_N$ for the
probability of a vertex, and call a composition balanced if its positive
coordinates differ by at most $1$. The next theorem collects results of our
preprint~\cite{paperB}, which this paper \ifarxivbuild replaces\else
supersedes\fi. Here they come out of the general theory where it applies, and
Table~\ref{tab:overlap} says which parts use the exact arguments of the
preprint.

\begin{introtheorem}[the complete graph~\cite{paperB}]\label{thm:C}
Let $Q=J-dI$ and let the start be uniform.
\begin{enumerate}[(i)]
\item Let $F$ be a face of $2\le k\le d$ states. For every $N\ge k$ the equation
$M_F=V_N$ has exactly one root $T_{N,k}$ in $(0,N/(d-1))$, and the probability
$T_{N,k}/N$ of a switch to a given state decreases strictly in $N$. As
$N\to\infty$,
\[
 T_{N,k}+\tfrac12\log T_{N,k}=\log N+a_k+O\Bigl(\frac1{\log N}\Bigr).
\]
\item For every $N$ and every $0<T\le N/d$, the most likely compositions with a
given support are exactly the balanced ones.
\item Since $a_2>a_3>\dots>a_d$, for large $N$ and $T$ up to any fixed multiple
of $\log N$ the mode changes exactly once, from the $d$ vertices straight to the
balanced compositions with $d$ positive coordinates.
\end{enumerate}
\end{introtheorem}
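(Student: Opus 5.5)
The plan is to reduce each part to exact combinatorics of the complete graph, with the general theory used only for the asymptotics in (i) and for the ordering argument in (iii).

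\textbf{Exact law of a path.} For $Q=J-dI$ the transition matrix is $P=I+hQ$ with $h=T/N$. It stays put with probability $1-h(d-1)$ and moves to each other state with probability $h$. A path with $s$ switches therefore has probability $\frac1d h^s(1-h(d-1))^{N-1-s}$. Hence
\[
P_N(n)=\frac1d\sum_s W_s(n)\,h^s(1-(d-1)h)^{N-1-s},
\]
where $W_s(n)$ counts the words with letter counts $n$ and $s$ changes of letter.

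\textbf{Part (ii).} Fix the support, of size $k$, and $s$. A word is a sequence of $s+1$ runs, with consecutive runs distinct. The number of run-label sequences with $r_i$ runs of letter $i$ is independent of $n$. The number of ways to fill run lengths is $\prod_i\binom{n_i-1}{r_i-1}$. So
\[
W_s(n)=\sum_{r}A(r)\prod_i\binom{n_i-1}{r_i-1}.
\]
I would show that each product $\prod_i\binom{n_i-1}{r_i-1}$ is maximized over $\sum n_i=N$ exactly at balanced $n$. This holds because $m\mapsto\log\binom{m-1}{r-1}$ is concave in $m$. Moving a unit from a larger $n_i$ to a smaller $n_j$ with $n_i\ge n_j+2$ weakly increases every term. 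The increase is strict for the term with all $r_i=1$, and that term has positive coefficient in $P_N$ as long as $h<1/(d-1)$, which $T\le N/d$ ensures. So the balanced compositions are exactly the maximizers. Since the terms are ordered uniformly in $s$, the result holds for every $0<T\le N/d$.

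\textbf{Part (i).} Here $V_N=\frac1d(1-(d-1)h)^{N-1}$, and $M_F$ is $P_N$ at a balanced $n$ by (ii). Set
\[
R(h)=M_F/V_N=\sum_s c_s\,\bigl(h/(1-(d-1)h)\bigr)^s,
\]
with $c_s\ge0$, $c_0=0$ for $k\ge2$, and some $c_s>0$. Then $R$ is strictly increasing in $h$ on $(0,1/(d-1))$. It tends to $0$ as $h\to0$ and to $\infty$ as $h\to1/(d-1)$. This gives exactly one root, and in the variable $h$ it is $h_{N,k}=T_{N,k}/N$. Monotonicity in $N$ requires comparing $R_N$ and $R_{N+1}$ at the same $h$. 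I would aim to show $R_{N+1}(h)>R_N(h)$ for fixed $h$: each extra step adds more ways to place the switches, via $\binom{n_i}{r_i-1}\ge\binom{n_i-1}{r_i-1}$ after pairing balanced compositions of $N$ and $N+1$. Then the root $h_{N,k}$ strictly decreases.

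The asymptotics come from Theorem~\ref{thm:B}(ii), taking $F$ of size $k$ and $G$ a vertex. Here $\Delta=k-1$, $\lambda_F=d-k$ and $\lambda_G=d-1$, so $\delta=k-1$. This gives
\[
T+\tfrac12\log T=\log N-\tfrac1{k-1}\log(K_F/K_G)+O(1/\log N).
\]
The constants come from Proposition~\ref{prop:constant}: $K_F=\frac1dk^{k+1/2}(4\pi)^{-(k-1)/2}$, via the $k^{k-2}$ spanning trees, and $K_G=\mu_i=1/d$. These identify the constant term as $a_k$.

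\textbf{Part (iii).} Use Theorem~\ref{thm:B}(i): all faces tie at first order, since $\Phi_\tau(F)=\tau(d-k)+k-1$ and for $\tau=1$ every $\Phi$ equals $d-1$. The competition is therefore decided by the second-order crossings. Check that $a_k$ is strictly decreasing in $k$; this is elementary calculus on $\frac{k+1/2}{k-1}\log k$. Then $T_{N,d}<T_{N,k}$ for $2\le k<d$ at large $N$. Before $T_{N,d}$ the vertex beats every face. After it, the full face beats the vertex, and hence, through the crossings between pairs of faces from Theorem~\ref{thm:B}(ii), it beats every $F$. Those crossings must be compared and shown to stay consistent on the whole range $T\le C\log N$; this is the delicate point.

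\textbf{Main obstacle.} I expect the strict monotonicity of the root in $N$ to be the hardest step. The comparison $R_{N+1}>R_N$ needs a careful injection between balanced compositions of $N$ and $N+1$, and when the parity of $N$ relative to $k$ changes the run counts do not pair directly. If the injection fails, I would fall back to the integral (torus) representation of Lemma~\ref{lem:torus}.
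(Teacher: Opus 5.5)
\textbf{The gap is in part (ii).} Concavity of $m\mapsto\log\binom{m-1}{r-1}$ compares a coordinate only with itself. The balancing move, however, trades a unit between two coordinates that carry different run numbers. So your claim that the move weakly increases every term $\prod_i\binom{n_i-1}{r_i-1}$ is false.

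\begin{itemize}
\item Take $r_a=1$, $r_b=2$ and all other $r_i=1$; this vector has $A(r)>0$. The term is $n_b-1$, and moving a unit from $b$ to $a$ lowers it to $n_b-2$.
\item Pairing $r$ with its transpose, which the symmetry of $A$ allows, does not rescue the argument once $k\ge3$. For $r_a=1$, $r_b=3$, $r_c=1$ the pair contributes $\binom{n_a-1}{2}+\binom{n_b-1}{2}$, and the move decreases this by $n_b-n_a-1\ge1$.
\item The strictness claim fails too: the term with all $r_i=1$ is identically $1$.
\end{itemize}

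In the run form some terms rise and others fall, and that expansion gives no handle on the net change. A sign of this is that your argument would use only $h<1/(d-1)$, never the actual threshold $T\le N/d$.

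\textbf{The missing idea.} Instead of counting runs, expand $I+hQ=\sigma I+hJ$ with $\sigma=p-h=1-dh$. Choosing the term $h$ at $M-1$ steps cuts a path into $M$ constant blocks whose labels may repeat. The label sequences are then counted by the multinomial $M!/\prod_im_i!$, which factorizes after writing $M!=\int_0^\infty e^{-t}t^M\,dt$. With $u=h/p$ and $S_n(u)=dP_N(n)/p^{N-1}=\sum_sW_s(n)u^s$ this gives
\[
S_n(u)=\frac{(1-u)^{N-1}}{v}\int_0^\infty e^{-t}\prod_{i\in F}B_{n_i}(vt)\,dt,\qquad B_n(w)=\sum_{m=1}^n\binom{n-1}{m-1}\frac{w^m}{m!},\quad v=\frac h\sigma .
\]
All terms are positive exactly when $\sigma>0$, that is $T<N/d$. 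At $T=N/d$ the law is multinomial. This is where the threshold enters. Balancing then reduces to strict log-concavity of $n\mapsto B_n(w)$ for each $w>0$ (a Tur\'an-type inequality for Laguerre polynomials), applied pointwise in $t$ and integrated.

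\textbf{The rest of your plan} is sound with small repairs.
\begin{itemize}
\item In (i), treat $M_F/V_N$ as the maximum over $\supp n=F$ of the strictly increasing polynomials $S_n(u)$. Then you do not need (ii), which is unavailable for $T>N/d$ anyway.
\item The step you call the main obstacle is easy. Adding one visit to any coordinate of a maximizer raises every coefficient of $S_n$, strictly for the term with $r_i=2$. So no pairing of balanced compositions is needed.
\item In (iii), compare the lines $(k-1)(y-a_k)$ in the single variable $y=T-\log N+\frac12\log T$, given by the uniform expansion of $\log(M_F/V_N)$, rather than pairwise crossings. Their upper envelope together with $0$ has one kink, at $y=a_d$. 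The exact uniqueness from (i) decides a window around that kink. Monotonicity in $u$ handles $T\le c\log N$. Part (ii) is what makes the modes exactly the balanced compositions.
\end{itemize}
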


Part (i) is Theorem~\ref{thm:simplex-crossing}, part (ii) is
Theorem~\ref{thm:simplex-balance} and part (iii) is
Corollary~\ref{cor:simplex-modes}. The uniform start matters in (ii). For $d=2$,
$N=4$ and the start in state $1$, the most likely composition with support
$\{1,2\}$ is $(3,1)$ at $T=\frac12$, $1$ and $\frac32$, since the three
compositions $(3,1)$, $(2,2)$ and $(1,3)$ have the probabilities
$63,57,49$ over $512$ at $T=\frac12$, $15,13,9$ over $64$ at $T=1$ and
$165,147,75$ over $512$ at $T=\frac32$. Figure~\ref{fig:complete-envelope}
shows the window of (iii).

\begin{figure}[t]
\centering
\includegraphics[width=0.52\textwidth]{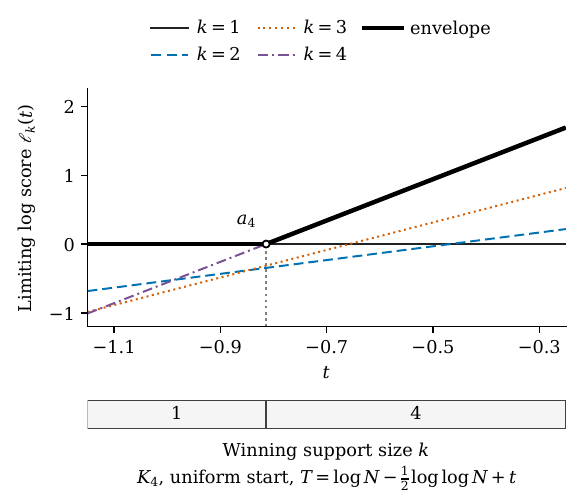}
\caption{Support selection on the complete graph with four states, unit rates
and uniform start, in the window $T=\log N-\frac12\log\log N+t$.
Thin lines are limiting log face maxima relative to a vertex,
and the bold curve is their upper envelope. The winner jumps from one state
to all four at $t=a_4$. The scores are derived in Section~\ref{sec:complete},
and the marked tie concerns the limit, not a finite-$N$ equality}
\label{fig:complete-envelope}
\end{figure}
\else
\subsection{The complete graph and the layer near a vertex}\label{sec:intro-complete}

On the complete graph with the uniform start, $Q=J-dI$ with $J$ the all-ones matrix, every face
ties at first order and the constants decide. Sections~7 and~8 of the arXiv version of this
paper~\cite{arxivC1} treat this case, partly by the general theory and partly by exact arguments
of our preprint~\cite{paperB}, which this paper supersedes. A face of $k\ge2$ states crosses a
vertex exactly once, at a value $T_{N,k}$ with
\[
 T_{N,k}+\tfrac12\log T_{N,k}=\log N+a_k+O\Bigl(\frac1{\log N}\Bigr),\qquad
 a_k=\frac12\log(4\pi)-\frac{k+\frac12}{k-1}\log k .
\]
For $T\le N/d$ the most likely compositions
with a given support are those whose positive coordinates differ by at most $1$. Since
$a_2>a_3>\dots>a_d$, for large $N$ and $T$ up to a fixed multiple of $\log N$ the mode changes exactly once, from
the vertices straight to the compositions with $d$ positive coordinates. Near a vertex, for one
large count $b$ and small counts $a_1,\dots,a_\ell$, the crossing with a vertex lies at the odds
$\sqrt{y_a/b}\,(1+O(\sqrt{(|a|+1)/b}))$ of a switch, where $y_a$ is the positive root of
$\mathcal B(y)=1$ for an explicit polynomial $\mathcal B$ built from Laguerre polynomials. Then
$T$ is of order $\sqrt N$, and the lattice kernel of Theorem~\ref{thm:layer-counts} cannot be replaced by the Bessel kernel. This version leaves out
those two sections and keeps the numbers of the arXiv version, so Section~\ref{sec:boundary-layer}
is followed by Section~\ref{sec:verify}.
\fi

\subsection{Related work}\label{sec:related}

The exact law of the count vector goes back to Whittle~\cite{whittle1955},
Dawson and Good~\cite{dawsongood1957}, Goodman~\cite{goodman1958} and
Good~\cite{good1961}. For two states it is the Markov binomial law, whose exact form Gabriel~\cite{gabriel1959} gave. Dobrushin~\cite{dobrushin1953} found the limit laws of the number of visits of a two-state chain whose switch probabilities
scale with the length, and for switch probabilities of order $1/N$ the limit is
not Gaussian. This is the case $d=2$ of our regime. \v{C}ekanavi\v{c}ius and
Vellaisamy~\cite{cv2010} give compound Poisson approximations to the Markov
binomial law and cite the work on it since Dobrushin. Stadje~\cite{stadje1999}
finds the joint laws of the numbers of visits and their limits when one state
becomes nearly absorbing, with the transition probabilities out of it of order
$1/N$. Here every row of the transition matrix degenerates at the same rate.
Good proposed a saddle-point analysis of his formula (10) and did not carry it
out~\cite[pp.~42--43]{good1961}, and Theorem~\ref{thm:local}(a) is that analysis,
uniform in $T$ and with its constant. Part (b) does the same for his discrete
formula (3), uniformly in $T/N\le h_*$. Polettini~\cite{polettini2015} and
Carugno, Vivo and Coghi~\cite{cvc2022} derive the exact law from the BEST and
matrix-tree theorems.

For a fixed chain the occupation counts form a Markov additive process, and its
large deviations are classical. Miller~\cite{miller1961} proves a local large deviation theorem from the Perron root of the transform matrix, Ney and Nummelin~\cite{nn1987a,nn1987b} obtain the rates from its convergence parameter on general state spaces,
Kontoyiannis and Meyn~\cite[Theorem~6.3]{km2003} give exact large deviation
asymptotics for a real functional, and Herv\'e and Ledoux~\cite{herveledoux2013}
prove a local limit theorem for densities of the additive component. These
results concern a fixed transition matrix, and their constants depend on it. For
instance, Theorems~6.3 and~6.5 of~\cite{km2003} give the exact asymptotics of
$\Prob\{S_n\ge nc\}$ as $n\to\infty$, where $S_n$ is the partial sum of a real
functional of one geometrically ergodic chain, in the non-lattice and the lattice case, with no rate in $n$ and no
uniformity over kernels. Here the matrix $I+hQ$ tends to the identity and its
spectral gap $T/N$ tends to $0$, so these results give no bound uniform in $N$.
Theorem~\ref{thm:local}(b) is an exact local large deviation estimate for the whole
count vector of the chain killed on leaving $F$, with an explicit constant, and the
new point is that it is uniform as $I+hQ\to I$. At $h=0$ it joins
Theorem~\ref{thm:local}(a), the same estimate for the occupation times of the
continuous-time chain. We found no statement in these sources that covers either,
and none that is uniform as the chain degenerates.

In continuous time, Brydges, van der Hofstad and K\"onig~\cite{bhk2007} give an
explicit formula for the joint density of the local times on the event that the
range is a given set, and use it for large deviation upper bounds beyond the
exponential scale. On the event that the range up to time $T$ is $F$,
Lemma~\ref{lem:continuum} gives this density, mixed over the start and written
as Good's run expansion, and Theorem~\ref{thm:local}(a) gives its exact asymptotics
on compact subsets of the open face. Huang, Kious, Sidoravicius and
Tarr\`es~\cite{hkst2018} write the joint density of the local times, the last
exit tree and the cycling numbers of a chain with symmetric rates as a product
of Bessel functions, one for each edge. Pedler's occupation-time density for
two-state chains~\cite{pedler1971} and the Bessel forms of Kovchegov, Meredith
and Nir~\cite{kmn2010} for the occupation time of one state are exact formulas
for a fixed chain too. In the boundary layer the Bessel kernel arises
differently. It is the large-$T$ form of the contribution of the visits to one
state, after the facet is integrated out by a local law, and without
$\pi^{\rm in}=\pi^{\rm out}$ it can fail beyond $x$ of order $T^{1/3}$.

The rate inside a face is the Donsker and Varadhan functional of the killed
chain~\cite{dv1975,dupuisliu2015,chendeng2013}. Guillin, Nectoux and
Wu~\cite{gnw2024} prove a large deviation principle for the occupation measure
of a process conditioned to stay in a set. The quasi-ergodic law goes back to
Darroch and Seneta~\cite{darrochseneta1965,darrochseneta1967}, and Collet,
Mart\'inez and San Mart\'in~\cite{cmsm2013} is the standard reference. Dietz and
Sethuraman~\cite{dietz2007,dietz2005}, Bouguet and
Cloez~\cite{bouguetcloez2018} and Bena\"im, Bravo and Faure~\cite{bbf2026}
study chains whose switching decays along the trajectory, while here the rate is
fixed during each path and changes with $N$. Bertini, Gabrielli and
Landim~\cite{bgl2024}, Landim~\cite{landim2023} and Kim and Landim~\cite{kl2024}
expand the rate functions of a fixed chain in a parameter, the last for the
joint rate of the empirical measure and the flow~\cite{bfg2014,bfg2015}. Here
the functional enters through its restriction to each face, and faces of
different dimension compete.

The two-state walk is treated at every $N$ in~\cite{paperA}. Our
preprint~\cite{paperB} on the complete graph is \ifarxivbuild the first version
of this arXiv entry, and this version replaces it\else superseded by this
paper\fi. Table~\ref{tab:overlap} lists the results of \Bversion\ that we
reprove, where, which part, and how.\ifarxivbuild\else\ Sections~7 and~8 of the arXiv version of
this paper~\cite{arxivC1} reprove in addition Proposition~2.2, Theorems~3.1, 3.2 and~3.8,
Lemma~3.5, Proposition~3.6, Corollary~3.9, Example~3.10, Lemma~6.1 and Theorem~6.4
of~\cite{paperB}, and Table~1 there says how.\fi{} The other results of~\cite{paperB} are not
reproved here, namely Proposition~2.1, Proposition~3.7, Sections~4 and~5 on a
start and a field, Theorem~6.2, Remark~6.3 and Appendix~C. A companion
paper\ifarxivbuild~\cite{paperC2}\else, in preparation,\fi{} asks what the Donsker and Varadhan functional and the modal
supports determine about the generator.

\begin{table}[t]
\centering
\small
\caption{Results of \Bversion~\cite{paperB} reproved here. ``General'' means
that the proof is a case of the general theory of Sections~\ref{sec:model}
to~\ref{sec:secondorder}\ifarxivbuild, and ``as in~\cite{paperB}'' that it is the exact
argument of the preprint\fi}
\label{tab:overlap}
\begin{tabular}{@{}p{0.17\textwidth}p{0.25\textwidth}p{0.27\textwidth}p{0.2\textwidth}@{}}
\toprule
Item of~\cite{paperB} & Result here & Part covered & Method\\
\midrule
\ifarxivbuild
Prop.~2.2 & Lemma~\ref{lem:refresh}, \eqref{eq:runform}, \eqref{eq:SnVN} & all & as in~\cite{paperB}, with the run form from Lemma~\ref{lem:runs}\\
Thm.~3.1 & Cor.~\ref{cor:paperB}, Thm.~\ref{thm:simplex-crossing}(ii), (iv) & (ii) to (iv), and (i) without the term $-(k-1)/(8T)$ & general, and (iv) as in~\cite{paperB}\\
Thm.~3.2 & Thm.~\ref{thm:simplex-crossing}(i), (iii) & all but the term $1/(8T_{N,k})$ and the third-order expansion & (i) as in~\cite{paperB}, and (iii) general\\
Thm.~3.3 & Thm.~\ref{thm:stratified}, Ex.~\ref{ex:complete-face} & the local law (7), not the root (8) & general\\
Lemmas~3.4 and~3.5, Prop.~3.6 & Lemma~\ref{lem:weighted-cayley}, Prop.~\ref{prop:constant}(b), Ex.~\ref{ex:complete-face} & all but Prop.~3.6(5) & general, and Lemma~3.5 as in~\cite{paperB}\\
Thm.~3.8 & Thm.~\ref{thm:simplex-balance} & all & as in~\cite{paperB}\\
Cor.~3.9 & Cor.~\ref{cor:simplex-modes} & all & as in~\cite{paperB}, with Thm.~\ref{thm:simplex-crossing}(ii)\\
Ex.~3.10 & Rem.~\ref{rem:simplex-edge-mode} & all & as in~\cite{paperB}\\
Lemma~6.1 & Lemma~\ref{lem:uniform-laguerre-bessel} & all & as in~\cite{paperB}\\
Thm.~6.4 & Thm.~\ref{thm:vertex-uniform-root} & all & as in~\cite{paperB}\\
\else
Thm.~3.3 & Thm.~\ref{thm:stratified} & the local law (7) up to the closed form of its constant, not the root (8) & general\\
Lemma~3.4 & Prop.~\ref{prop:constant}(b) & all & general\\
\fi
\bottomrule
\end{tabular}
\end{table}

\emph{Organization.} Section~\ref{sec:model} sets up the exact law and
Section~\ref{sec:dv} the killed functional, in continuous and in discrete time. Section~\ref{sec:firstorder} proves
first-order face selection and gives the face maxima of reducible faces. It uses the local law and the face maxima of
Section~\ref{sec:secondorder}, whose proofs do not use
Section~\ref{sec:firstorder}. Section~\ref{sec:boundary-layer} treats the
boundary layer, \ifarxivbuild Sections~\ref{sec:complete} and~\ref{sec:vertex} the complete
graph, \fi and Section~\ref{sec:verify} the checks and the open questions.\ifarxivbuild\else\ The
arXiv version of this paper~\cite{arxivC1} has in addition Sections~7 and~8, on the complete graph
and near a vertex, with Appendices~C and~D. Its other sections and statements have the same numbers
as here.\fi

\section{Setting, notation and the exact law}\label{sec:model}

\subsection{Setting and notation}\label{sec:notation}

\emph{The chain.} Fix $d\ge2$ and a generator $Q$ on $[d]=\{1,\dots,d\}$. Its
rates are $q_{ij}\ge0$ for $i\ne j$, the exit rates are $q_i=\sum_{j\ne i}q_{ij}$,
$Q_{ii}=-q_i$, and $q_{\max}=\max_iq_i$. We also write $q$ for the vector
$(q_i)$. The start $\mu$ is a probability law on $[d]$, and
$\mu(F)=\sum_{i\in F}\mu_i$. We write $V=[d]$ for the face of all states. For
$N\ge2$ and $T>0$ put
\[
 h=\frac TN,\qquad L=\log N,\qquad \tau=\frac TL .
\]
The chain $X_1,\dots,X_N$ has $X_1\sim\mu$ and transition matrix $I+hQ$. So at
each step it jumps from $i$ to $j$ with probability $hq_{ij}$, and $N$ steps
look like the continuous-time chain with generator $Q$ run for time $T$. We
assume $hq_{\max}\le\frac12$ unless we say otherwise. The \emph{composition} is
the vector of visit counts $n=(n_1,\dots,n_d)$, with
$n_i=\#\{t\le N:X_t=i\}$, so that $n_1+\dots+n_d=N$. We write $P_N(n)$ for the
probability that the chain has composition $n$, $\supp n=\{i:n_i>0\}$ for its
support, and $\alpha=n/N$ for the visit fractions.

\emph{Faces.} A face is a nonempty set $F\subseteq[d]$, of size
$k=k_F=\lvert F\rvert$. Let $\Delta_F$ be the set of probability vectors on
$[d]$ that vanish off $F$, and $\Delta_F^\circ$ the set of those that are
positive on $F$. We number the states of $F$ as $1,\dots,k$ and use the
coordinates $(\alpha_2,\dots,\alpha_k)$ on $\Delta_F$ and $(t_2,\dots,t_k)$ on
$\{t\in\R^F:t_1+\dots+t_k=T\}$. We write $x\cdot y=\langle x,y\rangle
=\sum_{i\in F}x_iy_i$ and $x\circ y=(x_iy_i)_{i\in F}$. Let
$A_F=(q_{ij})_{i,j\in F}$ with zero diagonal, and $Q_F=A_F-\operatorname{diag}
(q_i)_{i\in F}$. Since $q_i$ includes the rates of leaving $F$, $Q_F$ generates
the chain killed when it leaves $F$. The switching graph $G_F$ has an edge
$i\to j$ whenever $i,j\in F$, $i\ne j$ and $q_{ij}>0$.

\emph{Perron roots.} A square matrix $B$ is Metzler if its off-diagonal entries
are nonnegative, and then its Perron root $\varrho(B)$ is its largest real
eigenvalue.
If $B$ is irreducible, $\varrho(B)$ is simple and has positive left and right
eigenvectors~\cite{bermanplemmons1994}.

\begin{definition}[faces]\label{def:faces}
Let $F$ be a face. We call $F$ \emph{irreducible} if $G_F$ is strongly connected.
In particular every singleton is irreducible. We call $F$ \emph{admissible} if for some
$N$ some path $x_1\cdots x_N$ of positive probability has support exactly $F$
(for $0<hq_{\max}<1$ this does not depend on $h$).
We say that $F$ satisfies \textup{(H)} if $F$ is irreducible and $\mu(F)>0$.
The \emph{exit rate} of $F$ is $\lambda_F=-\varrho(Q_F)$, so
$\lambda_{\{i\}}=q_i$. If $F$ is irreducible, $r_F$ and $l_F$ are right and left
eigenvectors of $Q_F$ for $-\lambda_F$ with positive entries and
$\langle l_F,r_F\rangle=1$, and $\alpha^*_F=l_F\circ r_F$ is the
\emph{quasi-ergodic law} of $F$~\cite{darrochseneta1965,darrochseneta1967}. The
\emph{face maximum} is $M_F=\max\{P_N(n):\supp n=F\}$, with $\max\emptyset=0$.
\end{definition}

Note that $\lambda_F$ is a true exit rate. If $F$ is irreducible, then
$e^{TQ_F}r_F=e^{-T\lambda_F}r_F$ with $r_F$ positive. Hence the chance that the
continuous-time chain started in $F$ stays in $F$ up to time $T$ lies between
$e^{-T\lambda_F}\min r_F/\max r_F$ and $e^{-T\lambda_F}\max r_F/\min r_F$. If
$F$ is reducible, we order the strongly connected components $C_1,\dots,C_r$ of
$G_F$ so that every edge between components goes forward. Then $Q_F$ is block
triangular with diagonal blocks $Q_{C_j}$, and
$\lambda_F=\min_j\lambda_{C_j}$. In all cases $0\le\lambda_F\le\min_{i\in
F}q_i$, since the rows of $Q_F$ have sums at most $0$ and a Perron root is at
least every diagonal entry. For a face $F$ satisfying (H), $\alpha^*_F$ is the
limit as $T\to\infty$ of the law of the visit fractions of the continuous-time chain
conditioned to stay in $F$ up to time $T$.

\emph{Conventions.} We fix $Q$, $\mu$ and the faces we discuss. Constants,
implied constants in $O(\cdot)$ and thresholds $N_0$ depend only on these and
on the compact set $\mathcal K$, if there is one, unless we say otherwise. By a
bound ``uniformly for $1\le T\le N^{1/3}$'' we mean one implied constant for
all $N\ge N_0$ and all $T$ in that range. Some results fix $h_*$ with $h_*q_{\max}<1$ and hold uniformly for
$1\le T\le h_*N$. There $h\le h_*$ replaces $hq_{\max}\le\frac12$, and the constants
also depend on $h_*$. Subscripts as in $O_{\mathcal K}$ stress a dependence.

\subsection{Admissible faces}

\begin{lemma}[admissible faces]\label{lem:admissible}
Let $F$ be a face and $C_1,\dots,C_r$ the strongly connected components of
$G_F$. \textup{(a)} $F$ is admissible if and only if the components can be
numbered so that $\mu(C_1)>0$ and, for each $j<r$, $q_{xy}>0$ for some $x\in
C_j$ and $y\in C_{j+1}$. \textup{(b)} An irreducible face is admissible if and
only if $\mu(F)>0$. So \textup{(H)} holds exactly for the irreducible admissible
faces. \textup{(c)} If $Q$ has symmetric support, that is $q_{ij}>0$ exactly
when $q_{ji}>0$, then every admissible face is irreducible.
\end{lemma}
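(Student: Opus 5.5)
For (a), we work directly with paths of positive probability. Since $0<hq_{\max}<1$, the diagonal entries $1-hq_i$ of $I+hQ$ are positive. So a path $x_1\cdots x_N$ has positive probability exactly when $\mu_{x_1}>0$ and $q_{x_tx_{t+1}}>0$ whenever $x_t\ne x_{t+1}$. Holding at a state is always allowed. Hence $F$ is admissible if and only if some finite walk in $G_F$ starts at a state of positive start mass and visits every state of $F$. Repetitions can then be padded in to reach any length $N$ at least the walk's length, which shows that this does not depend on $h$.

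\emph{Necessity in (a).} Given such a walk, record the order in which it first enters each component of $G_F$. Once the walk leaves a component it can never return, since the condensation of $G_F$ is acyclic. So the components are visited in consecutive blocks $C_{\sigma(1)},\dots,C_{\sigma(r)}$, each exactly once because the walk covers $F$. The first state lies in $C_{\sigma(1)}$ and has positive mass, so $\mu(C_{\sigma(1)})>0$. The step from the last state of block $j$ to the first state of block $j+1$ is an edge $x\to y$ with $x\in C_{\sigma(j)}$, $y\in C_{\sigma(j+1)}$ and $q_{xy}>0$.

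\emph{Sufficiency in (a).} Start at a state of $C_1$ with positive mass. Strong connectivity of $C_1$ gives a walk inside $C_1$ that covers $C_1$ and ends at the given $x\in C_1$. Cross to $y\in C_2$ and repeat in each component in turn. The whole walk stays in $F$, which gives support exactly $F$.

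For (b), take $r=1$ in (a): the only condition left is $\mu(F)>0$. Together with the definition of (H), this shows that (H) means irreducible and admissible. For (c), suppose $F$ is admissible with $r\ge2$. Part (a) gives an edge $x\to y$ from $C_1$ to $C_2$. Symmetric support gives $q_{yx}>0$, and $x,y\in F$, so $y\to x$ is also an edge of $G_F$. Then $x$ and $y$ lie in a common component, which is a contradiction, so $r=1$.

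The only point that needs care is in the necessity direction of (a): a walk can never revisit a component after leaving it. This is what forces the components into a linear order with consecutive edges, rather than a more general order.
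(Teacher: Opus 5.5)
Your proof is correct and follows the paper's argument essentially step for step. You reduce admissibility to the existence of a walk in $G_F$ that starts in $\supp\mu$ and visits every state of $F$, and you use the fact that a walk cannot re-enter a component it has left to get consecutive blocks joined by single edges. Sufficiency comes from concatenating covering walks inside each component, and (b) and (c) follow directly from (a).
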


\begin{proof}
We prove this by translating paths into walks in $G_F$. Take a path of positive probability
with support $F$ and list the states of its maximal runs. This list is a walk
in $G_F$ that starts at a state of $\supp\mu$ and visits every state of $F$.
Conversely, every such walk $v_1,\dots,v_J$ is the list of runs of a path of
positive probability with support $F$, as soon as $N\ge J$. The path follows
the walk and then stays in its last state, and a stay has positive probability.
Hence $F$ is admissible exactly when $G_F$ has a walk
that starts in $\supp\mu$ and visits every state of $F$.

(a) Suppose that there is such a walk. Once it has left a component $C$ for a
state $y\notin C$, it cannot come back to $C$. Otherwise $y$ could be reached
from $C$ and $C$ from $y$, and $y$ would belong to $C$. Hence the walk visits
each component in one block of consecutive times, and it passes from one block
to the next along a single edge. We number the components in the order of these
blocks. The walk starts in $C_1$, so $\mu(C_1)>0$, and the edge from block $j$
to block $j+1$ is an edge $x\to y$ with $x\in C_j$ and $y\in C_{j+1}$. This is
the condition in (a).

Conversely, suppose that the components are numbered as in (a), with edges
$x_j\to y_{j+1}$ from $C_j$ to $C_{j+1}$ for $j<r$. Take $y_1\in C_1$ with
$\mu_{y_1}>0$ and any $x_r\in C_r$. Since $C_j$ is strongly connected, there is
a walk inside $C_j$ from $y_j$ to $x_j$ that visits every state of $C_j$. We
join these $r$ walks by the edges $x_j\to y_{j+1}$. The result is a walk that
starts in $\supp\mu$ and visits every state of $F$.

(b) This is (a) with $r=1$.

(c) If $Q$ has symmetric support, an edge from one component to another would
come with an edge back, and the two components would be the same. Hence no edge
joins two components, and the condition in (a) forces $r=1$.
\end{proof}

Hence $M_F>0$ for large $N$ if $F$ is admissible, and $M_F=0$ otherwise.

\subsection{The exact law}

For $m\in\Z_{\ge1}^F$ put $M=\sum_{i\in F}m_i$ and
$W(m)=\sum_c\mu_{c_1}\prod_{j<M}q_{c_jc_{j+1}}$, the sum over the words
$c=c_1\cdots c_M$ in $F$ with $c_j\ne c_{j+1}$ in which each $i\in F$ occurs
$m_i$ times. Such a word lists the states of the runs of a path, and $m_i$ is
the number of runs in state $i$.

\begin{lemma}[run expansion]\label{lem:runs}
Let $hq_{\max}\le1$ and let $n$ have support $F$. Then, with
$\binom{n_i-1}{m_i-1}=0$ for $m_i>n_i$ and $0^0=1$,
\[
P_N(n)=\sum_{m\in\Z_{\ge1}^F}W(m)\,h^{M-1}\prod_{i\in F}\binom{n_i-1}{m_i-1}
(1-hq_i)^{n_i-m_i}.
\]
\end{lemma}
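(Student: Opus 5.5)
The plan is to split the event $\{\text{composition}=n\}$ according to the run structure of the path and count each piece exactly. Every path $x_1\cdots x_N$ with support $F$ is determined by two things. The first is the word $c=c_1\cdots c_M$ of the states of its maximal runs, which satisfies $c_j\ne c_{j+1}$. The second is the vector of run lengths $\ell_1,\dots,\ell_M\ge1$. If $m_i$ denotes the number of runs in state $i$, the path has composition $n$ exactly when, for each $i\in F$, the lengths of the $m_i$ runs in state $i$ add up to $n_i$. I will therefore sum over $m\in\Z_{\ge1}^F$, then over words $c$ with occurrence vector $m$, and then over admissible run lengths.

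The first step is to compute the probability of a single path with word $c$ and run lengths $\ell$. The first state contributes $\mu_{c_1}$. Each of the $M-1$ switches $c_j\to c_{j+1}$ contributes $hq_{c_jc_{j+1}}$. Every remaining step stays put, and each stay in state $i$ contributes $1-hq_i$. A run of length $\ell$ has $\ell-1$ stays, so state $i$ has $n_i-m_i$ stays in total. Hence the probability of the path is
\[
\mu_{c_1}\prod_{j<M}q_{c_jc_{j+1}}\;h^{M-1}\prod_{i\in F}(1-hq_i)^{n_i-m_i}.
\]
This depends on $\ell$ only through $m$ and $n$. The hypothesis $hq_{\max}\le1$ guarantees that the factors $1-hq_i$ are genuine nonnegative stay probabilities. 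If $hq_i=1$ and $n_i=m_i$, the factor is $0^0=1$, which matches the convention in the statement.

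The second step is to count the run lengths. For fixed $c$, the run lengths in state $i$ form an ordered tuple of $m_i$ positive integers summing to $n_i$. Their order is fixed by the positions of $i$ in $c$. Such tuples are compositions of $n_i$ into $m_i$ positive parts, and there are $\binom{n_i-1}{m_i-1}$ of them. This count is $0$ when $m_i>n_i$, consistent with the stated convention. The choices for different states are independent, and any choice together with $c$ gives a valid path of length $N=\sum_i n_i$.

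The last step is to sum over words $c$ with occurrence vector $m$, which produces exactly $W(m)$. Summing then over $m$ gives the stated formula. The only point that needs care is that the map from paths to pairs $(c,\ell)$ is a bijection, so that nothing is double counted. It is a bijection because maximal runs are uniquely determined by the path, and conversely $(c,\ell)$ reconstructs the path. Apart from this, the argument is routine bookkeeping.
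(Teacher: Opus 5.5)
Your proof is correct and is essentially the paper's own argument. Both split a path into its run word $c$, which sums to $W(m)$, and its run lengths. Both read off the weight $\mu_{c_1}h^{M-1}\prod_{j<M}q_{c_jc_{j+1}}\prod_{i\in F}(1-hq_i)^{n_i-m_i}$ and count the run lengths as $\prod_{i\in F}\binom{n_i-1}{m_i-1}$ compositions; your explicit check that paths correspond bijectively to pairs $(c,\ell)$ is the only detail the paper leaves implicit.
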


This is Good's coefficient identity~\cite[(3)]{good1961}, with the exponents
$n_i-1$, written for the chain killed on leaving $F$.

\begin{proof}
We prove this by counting words. The states $c_1\cdots c_M$ of the maximal runs of a path with composition $n$
form a word as in $W(m)$. The path starts at $c_1$, of probability
$\mu_{c_1}$, it has $M-1$ switches, of probabilities $hq_{c_jc_{j+1}}$, and
$n_i-m_i$ stays in state $i$, of probability $1-hq_i$ each. The number of paths
with the word $c$ and composition $n$ is $\prod_i\binom{n_i-1}{m_i-1}$, the number
of ways to write each $n_i$ as an ordered sum of $m_i$ positive run lengths.
Summing over the words $c$ with given $m$ gives $W(m)$, and summing over $m$ gives
the claim.
\end{proof}

For $F=\{i\}$ the only path is $i^N$, so uniformly for $1\le T\le N^{1/2}$
\begin{equation}\label{eq:vertexmax}
M_{\{i\}}=\mu_i(1-hq_i)^{N-1}=\mu_ie^{-q_iT}\bigl(1+O(T^2/N)\bigr).
\end{equation}
For $t\in(0,\infty)^F$ let $H(t)=\sum_mW(m)\prod_{i\in F}t_i^{m_i-1}/(m_i-1)!$,
which is finite since $W(m)\le k^Mq_{\max}^{M-1}$. In the continuum limit the
binomial coefficients of Lemma~\ref{lem:runs} become the powers
$t_i^{m_i-1}/(m_i-1)!$, and Lemma~\ref{lem:discrete} below shows that the discrete
law is close to the following density for $T\le N^{1/2}$.

\begin{lemma}[the continuum density]\label{lem:continuum}
Let $(Y_s)_{s\ge0}$ be the continuous-time chain with generator $Q$ and
$Y_0\sim\mu$, let $\mathcal O_i=\int_0^T\mathbf 1\{Y_s=i\}\,ds$, and let $E_F$
be the event that $Y$ stays in $F$ during $[0,T]$ and visits every state of $F$.
Then $\Prob(E_F,\ \mathcal O\in\cdot\,)$ has the density
$e^{-\langle q,t\rangle}H(t)$ in $(t_2,\dots,t_k)$ on
$\{t\in(0,\infty)^F:\sum_it_i=T\}$, and $\Prob(E_F,\ \mathcal O/T\in\cdot\,)$
has the density
\[
f_T(\alpha)=T^{k-1}e^{-T\langle q,\alpha\rangle}H(T\alpha)
\]
in $(\alpha_2,\dots,\alpha_k)$. For $n$ with support $F$ and $t=hn$,
$N^{-(k-1)}f_T(n/N)=h^{k-1}e^{-\langle q,t\rangle}H(t)$. For $F=\{i\}$,
$f_T=\Prob(E_F)=\mu_ie^{-q_iT}$.
\end{lemma}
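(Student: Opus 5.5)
We prove Lemma~\ref{lem:continuum} by decomposing paths of $Y$ according to the word of states they visit, in the same way as the run expansion of Lemma~\ref{lem:runs}.

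\emph{Decomposition into words.} On $E_F$ the path of $Y$ on $[0,T]$ has finitely many jumps almost surely. We split $E_F$ according to the word $c=c_1\cdots c_M$ of states visited, which has $c_j\ne c_{j+1}$. Each $i\in F$ occurs $m_i\ge1$ times in $c$. The jump chain of $Y$ together with its holding times gives the following density for the event that $Y$ follows $c$ with holding times $s_1,\dots,s_{M-1}$ and then stays in $c_M$ for the remaining time:
\[
\mu_{c_1}\prod_{j<M}q_{c_jc_{j+1}}e^{-q_{c_j}s_j}\cdot e^{-q_{c_M}s_M},\qquad s_M=T-\sum_{j<M}s_j>0 .
\]
The factor $\prod_j e^{-q_{c_j}s_j}$ equals $e^{-\langle q,t\rangle}$, where $t_i$ is the total time spent in $i$. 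It therefore depends only on the occupation vector. No factor $q_{c_M,\cdot}$ appears at the end, because there is no jump at time $T$.

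\emph{Change of variables.} We pass from $(s_1,\dots,s_{M-1})$ to the occupation coordinates $(t_2,\dots,t_k)$ together with the run lengths within each state. For fixed $t$, the run lengths of state $i$ form a composition of $t_i$ into $m_i$ positive parts. This is a simplex of dimension $m_i-1$ with volume $t_i^{m_i-1}/(m_i-1)!$. The map from $(s_j)_{j<M}$ to the pair (the $t$-coordinates, the internal run coordinates) is linear and unimodular. Indeed, the run lengths in each state are free except for one per state, fixed by $t_i$. The constraint $\sum t_i=T$ removes $s_M$, and the $k-1$ free $t$-coordinates account for the remaining degrees of freedom, with dimension count $(M-1)=(k-1)+\sum_i(m_i-1)$. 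We check unimodularity by noting that each $s_j$ is an integer combination of the new coordinates and conversely, for example by ordering the coordinates suitably. Integrating out the internal run coordinates gives the contribution $\mu_{c_1}\prod q_{c_jc_{j+1}}\,e^{-\langle q,t\rangle}\prod_i t_i^{m_i-1}/(m_i-1)!$.

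\emph{Summation over words.} Summing over the words with given $m$ gives $W(m)$, and summing over $m$ gives $H(t)$. Exchanging sum and integral is justified by nonnegativity and Tonelli's theorem, together with the bound $W(m)\le k^Mq_{\max}^{M-1}$, which makes the series finite. This yields the density $e^{-\langle q,t\rangle}H(t)$ of $(E_F,\mathcal O)$ in $(t_2,\dots,t_k)$.

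\emph{The remaining claims.} The density of $\mathcal O/T$ follows from the scaling $t=T\alpha$, whose Jacobian in the $k-1$ free coordinates is $T^{k-1}$; this gives $f_T$. For $t=hn$ we have $T\alpha=Tn/N=hn$, and
\[
N^{-(k-1)}T^{k-1}=h^{k-1},
\]
which gives the stated identity. For $F=\{i\}$ the only word is $c=i$, so the probability is $\mu_ie^{-q_iT}$ directly.

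The only delicate step is the bookkeeping of the Jacobian, namely that the change of variables is unimodular. Everything else is a routine continuum analogue of the proof of Lemma~\ref{lem:runs}.
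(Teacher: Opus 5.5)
Your proposal is correct and follows essentially the paper's proof: you split $E_F$ by the word of visited states, write the holding-time density $\mu_{c_1}\prod_{j<M}q_{c_jc_{j+1}}e^{-\langle q,t\rangle}$, pass to occupation and internal run coordinates, integrate out simplices of volume $t_i^{m_i-1}/(m_i-1)!$, and sum over words. The only difference is the Jacobian bookkeeping: the paper uses a triangular change of variables with unit diagonal into the coordinates $t_i$, $i\ne c_M$, and then notes that any $k-1$ of the $t_i$ give the same Lebesgue measure on $\{\sum_it_i=T\}$, while your remark that the affine map and its inverse both have integer linear parts gives $|\det|=1$ directly in $(t_2,\dots,t_k)$.
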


The density is Good's formula~\cite[(10)]{good1961} for the killed chain,
whose diagonal keeps the full exit rates $q_i$. The proof checks the exponents
$m_i-1$.

\begin{proof}
Up to a null set, $E_F$ is the disjoint union of the events $E_c$ that $Y$
visits exactly $c_1,\dots,c_M$ in this order during $[0,T]$, over the words $c$
in the sums $W(m)$. Fix $c$, and let $s_j$ be the time that $Y$ spends in its
$j$-th run, so that $s_1+\dots+s_M=T$. For $j<M$ the run has the density
$q_{c_j}e^{-q_{c_j}s_j}$ and ends with a jump to $c_{j+1}$ of probability
$q_{c_jc_{j+1}}/q_{c_j}$, and the last run survives with probability
$e^{-q_{c_M}s_M}$. So on $\{s_j>0\ \forall j\}$ the event $E_c$ has the density
$\mu_{c_1}\prod_{j<M}q_{c_jc_{j+1}}\,e^{-\langle q,\mathcal O\rangle}$ in
$(s_1,\dots,s_{M-1})$, where $\mathcal O_i$ is the sum of the $s_j$ with $c_j=i$.
For each $i\ne c_M$ we replace the last holding time in state $i$ by
$\mathcal O_i$, a triangular change of variables with unit diagonal. With
$\mathcal O=t$ fixed, the $m_i-1$ other holding times in state $i$ are positive
with sum less than $t_i$, so they range over a simplex of volume
$t_i^{m_i-1}/(m_i-1)!$. Integrating them out gives the density
$\mu_{c_1}\prod_{j<M}q_{c_jc_{j+1}}\,e^{-\langle q,t\rangle}\prod_it_i^{m_i-1}/(m_i-1)!$
in the coordinates $t_i$ with $i\ne c_M$, and any $k-1$ of the coordinates give
the same Lebesgue measure on $\{\sum_it_i=T\}$. Summing over $c$ and $m$ gives
$e^{-\langle q,t\rangle}H(t)$, and $t=T\alpha$ has Jacobian $T^{k-1}$. For
$F=\{i\}$ the event $E_F$ says that $Y$ starts at $i$ and does not jump before
time $T$.
\end{proof}

\begin{example}[two models]\label{ex:models}
(i) For $d=2$, $q_{12}=q_{21}=1$ and uniform $\mu$ the chain gives the
directions of the persistent random walk on $\Z$ of~\cite{paperA}, which
repeats its previous step with probability $p=1-h$. So $T=N(1-p)$, and the
faces $\{1\}$ and $\{2\}$ are the endpoints. (ii) For $Q=J-dI$, with $J$ the all-ones matrix, the chain
switches to each other state with probability $h$ and keeps its state with
probability $p=1-(d-1)h$. With uniform $\mu$ it is the model of~\cite{paperB},
whose switch probability $r$ is our $h$. A word $x_1\cdots x_N$ then has
probability $\frac1dh^{N-1}(p/h)^{\#\{s<N:\,x_s=x_{s+1}\}}$, so the path law is the
one-dimensional $d$-state Potts chain with free boundaries~\cite{wu1982}. Here $q_i=d-1$ for
every $i$, so Lemma~\ref{lem:runs} reads
\begin{equation}\label{eq:runform}
 P_N(n)=p^{N-1}\sum_{a\in F}\mu_aS^{(a)}_n(u),\qquad u=\frac hp,
\end{equation}
for $p>0$ and $\supp n=F$, where $S^{(a)}_n(u)$ is the sum of $u^j$ over the words with
the counts $n$ that start with $a$ and have $j$ switches. Indeed such a word has
probability $\mu_ap^{N-1-j}h^j=\mu_ap^{N-1}u^j$. In both models $Q$ has symmetric
support and $\mu$ full support, so all admissible faces satisfy \textup{(H)} by
Lemma~\ref{lem:admissible}.
\end{example}
Figure~\ref{fig:simplex} shows the exact support competition for the three-state path.
There $\lambda_F$ is $1$ for an end, $(3-\sqrt5)/2$ for a pair of neighbors and
$0$ for all three states, so to first order the mode moves at
$\tau=(1+\sqrt5)/2$ and at $\tau=(3+\sqrt5)/2$, that is, at $T=7.3$ and
$T=11.8$ for $N=90$. At $N=90$ the mode actually moves near $T=4.9$ and $T=8.5$. Corrections of
this size are the subject of Section~\ref{sec:secondorder}.

\begin{figure}[t]
\centering
\includegraphics[width=0.9\textwidth]{\ifarxivbuild fig_simplex\else Fig1\fi}
\caption{The exact law of the composition for the path $1$--$2$--$3$ (rate $1$ on
each edge, uniform start, $N=90$) on the simplex of visit fractions, with the
faces drawn apart. The triangle holds the compositions with three states, each
band those with two, and each disk a vertex. The color is the probability
relative to the mode, the thin lines are spaced by a factor of ten, and the
rings mark the modes. The mode is (a) at an end of the path, (b) on a pair of
neighbors and (c) inside. The dashed band $\{1,3\}$ has probability zero}
\label{fig:simplex}
\end{figure}

\section{The killed functional}\label{sec:dv}

Every rate in this paper is the restriction of one functional, the Donsker and
Varadhan functional of the chain, or of its discrete version of
Section~\ref{sec:discrete-rate}.

\subsection{The restriction identity}\label{sec:restriction}

For a probability vector $\alpha$ on $[d]$ let
\begin{equation}\label{eq:JV}
 J_V(\alpha)=\sup_{u\in(0,\infty)^d}\sum_{i\in[d]}\alpha_i\Bigl(-\frac{(Qu)_i}{u_i}\Bigr)
 =\sup_{u\in(0,\infty)^d}\sum_{i\in[d]}\alpha_i\Bigl(q_i-\sum_{j\ne i}q_{ij}\frac{u_j}{u_i}\Bigr),
\end{equation}
the Donsker and Varadhan rate of the empirical measure of the continuous-time
chain with generator $Q$~\cite{dv1975}. For a face $F$ and $\alpha\in\Delta_F$ let
\[
 J_F(\alpha)=\sup_{u\in(0,\infty)^F}\sum_{i\in F}\alpha_i\Bigl(-\frac{(Q_Fu)_i}{u_i}\Bigr)
 =\sup_{u\in(0,\infty)^F}\sum_{i\in F}\alpha_i\Bigl(q_i-\sum_{j\in F\setminus\{i\}}q_{ij}\frac{u_j}{u_i}\Bigr),
\]
the same functional for the chain killed on leaving $F$. Let $q_i^{\rm out}(F)=\sum_{j\notin F}q_{ij}$. Taking $u=\ones$
gives $J_F(\alpha)\ge\sum_i\alpha_iq_i^{\rm out}(F)$, and dropping the terms
with $q_{ij}$ gives $J_F(\alpha)\le\sum_i\alpha_iq_i$.

\begin{proposition}[the restriction identity]\label{prop:restriction}
For every face $F$ and every $\alpha\in\Delta_F$, $J_F(\alpha)=J_V(\alpha)$. Hence
$J_F=J_{F'}$ on $\Delta_{F'}$ for every nonempty $F'\subseteq F$.
\end{proposition}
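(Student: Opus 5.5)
We prove the two inequalities $J_V(\alpha)\le J_F(\alpha)$ and $J_V(\alpha)\ge J_F(\alpha)$ for $\alpha\in\Delta_F$. Since $\alpha_i=0$ off $F$, only the terms with $i\in F$ survive in \eqref{eq:JV}. For such $i$,
\[
 q_i-\sum_{j\ne i}q_{ij}\frac{u_j}{u_i}=q_i-\sum_{j\in F\setminus\{i\}}q_{ij}\frac{u_j}{u_i}-\sum_{j\notin F}q_{ij}\frac{u_j}{u_i}.
\]

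For the upper bound $J_V\le J_F$, note that the last sum is nonnegative, because $q_{ij}\ge0$ and $u>0$. Hence, for every $u\in(0,\infty)^d$, the objective of $J_V$ at $u$ is at most the objective of $J_F$ at the restriction $u|_F$. Taking the supremum over $u$ gives $J_V(\alpha)\le J_F(\alpha)$.

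For the lower bound $J_V\ge J_F$, fix $v\in(0,\infty)^F$ and $\varepsilon>0$. Set $u_i=v_i$ for $i\in F$ and $u_j=\varepsilon$ for $j\notin F$. The objective of $J_V$ at $u$ equals the objective of $J_F$ at $v$ minus
\[
 \varepsilon\sum_{i\in F}\frac{\alpha_i}{v_i}\,q_i^{\rm out}(F),
\]
which tends to $0$ as $\varepsilon\to0$ because $v$ is fixed and the sum is finite. So $J_V(\alpha)\ge$ the objective of $J_F$ at $v$, and taking the supremum over $v$ gives $J_V(\alpha)\ge J_F(\alpha)$.

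This shows $J_F=J_V$ on $\Delta_F$. For nonempty $F'\subseteq F$ we have $\Delta_{F'}\subseteq\Delta_F$, and applying the identity to both faces gives $J_F=J_V=J_{F'}$ on $\Delta_{F'}$.

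Both directions are elementary. The only point that needs care is that the suprema may be $+\infty$ or may not be attained, so the argument works with the objective at individual $u$ and takes suprema afterwards, and never uses a maximizer. For the lower bound we fix $v$ before sending $\varepsilon\to0$; this makes the correction term vanish, and no uniformity in $v$ is needed.
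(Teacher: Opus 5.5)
Your proof is correct and is essentially the paper's argument: drop the nonnegative terms with $j\notin F$ to get $J_V\le J_F$, put $u=\varepsilon$ off $F$ and let $\varepsilon\downarrow0$ for the reverse inequality, then apply the identity to both $F$ and $F'$. Your caution about infinite suprema is harmless but not needed, since $J_F(\alpha)\le\sum_i\alpha_iq_i$ is always finite.
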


\begin{proof}
Fix $\alpha\in\Delta_F$ and $u\in(0,\infty)^d$. The terms of~\eqref{eq:JV} with
$i\notin F$ vanish, since $\alpha_i=0$ there. For $i\in F$ the terms with
$j\notin F$ are nonnegative and are subtracted, so
\[
 \sum_{i\in F}\alpha_i\Bigl(q_i-\sum_{j\ne i}q_{ij}\frac{u_j}{u_i}\Bigr)
 \le\sum_{i\in F}\alpha_i\Bigl(q_i-\sum_{j\in F\setminus\{i\}}q_{ij}\frac{u_j}{u_i}\Bigr)
 \le J_F(\alpha).
\]
Hence $J_V(\alpha)\le J_F(\alpha)$. Conversely, let $w\in(0,\infty)^F$ and let $u$
equal $w$ on $F$ and $\varepsilon$ off $F$. The terms with $j\notin F$ are then
$-q_{ij}\varepsilon/w_i$, which tend to zero as $\varepsilon\downarrow0$, and
the supremum over $w$ gives $J_V(\alpha)\ge J_F(\alpha)$. The second claim is the
first for $F$ and for $F'$.
\end{proof}

\subsection{The variational formula}\label{sec:spectral-geometry}

For a face $F$ and a field $v\in\R^F$ let
\begin{equation}\label{eq:lambdaFv}
 \lambda_F(v)=-\varrho\bigl(Q_F+\operatorname{diag}v\bigr),
\end{equation}
so that $\lambda_F(0)=\lambda_F$. If $F$ is irreducible, let $r$ and $l$ be
positive right and left Perron vectors of $Q_F+\operatorname{diag}v$ with
$l\cdot r=1$, and put $\alpha^*_F(v)=l\circ r$, so that $\alpha^*_F(0)=\alpha^*_F$.
Since $\sum_i\alpha_i=1$,
\begin{equation}\label{eq:Jfield}
 J_F(\alpha)-v\cdot\alpha=\sup_{u\in(0,\infty)^F}\sum_{i\in F}\alpha_i
 \Bigl(-\frac{((Q_F+\operatorname{diag}v)u)_i}{u_i}\Bigr).
\end{equation}
Hence a field replaces the diagonal $q_i$ by $q_i-v_i$ and changes nothing else.

\begin{lemma}[monotonicity in the face]\label{lem:face-monotone}
If $F\subsetneq G$ are faces and $Q_G$ is irreducible, then
$\lambda_G(v_G)<\lambda_F(v_F)$ for every field $v\in\R^G$. Now let $Q$ be
irreducible. Then $\lambda_V(v)<\lambda_F(v_F)$ for every proper face $F$, and
for every fixed $v\in\R^d$ there is $\gamma(v)>0$ with
$\lambda_F(v_F)\ge\lambda_V(v)+\gamma(v)$ for all proper $F$. So the score
$\Phi_{\tau,v}(F)=|F|-1+\tau\lambda_F(v_F)$ is least at $V$ for
$\tau>(d-1)/\gamma(v)$.
\end{lemma}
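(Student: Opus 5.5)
The first claim is strict monotonicity of the Perron root under principal submatrices, and the rest follows from it by finiteness. Put $B=Q_G+\operatorname{diag}v_G$. This is an irreducible Metzler matrix, and $Q_F+\operatorname{diag}v_F$ is its principal submatrix on $F$. Since $q_i$ in the diagonal is the full exit rate in both matrices, the diagonal entries of the two agree on $F$.

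I would shift by a constant $c$ so that $B+cI$ is a nonnegative irreducible matrix. The classical fact for such matrices~\cite{bermanplemmons1994} is that every proper principal submatrix has spectral radius strictly smaller than the whole matrix. So $\varrho(Q_F+\operatorname{diag}v_F)<\varrho(B)$, that is, $\lambda_G(v_G)<\lambda_F(v_F)$.

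To keep the argument self-contained I would also give a direct proof. Let $l>0$ be the left Perron vector of $B$. Take a right Perron-type vector $w\ge0$, $w\ne0$, of the submatrix $B_F$, with $B_Fw=\rho_Fw$; it exists even if $F$ is reducible, by Perron--Frobenius for Metzler matrices. Extend $w$ by $0$ off $F$ to a vector $\tilde w$. Then $(B\tilde w)_i=\rho_F\tilde w_i$ for $i\in F$, and $(B\tilde w)_i\ge0=\rho_F\tilde w_i$ off $F$. Pairing with $l$ gives
\[
\varrho(B)\,l\cdot\tilde w=l\cdot B\tilde w\ge\rho_F\,l\cdot\tilde w .
\]
Equality would force $(B\tilde w)_i=0$ for all $i\notin F$. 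That means no edge of $G_G$ goes from $G\setminus F$ into $\supp w$. Irreducibility of $B$ supplies a path from a state of $G\setminus F$ into $\supp w$, and the first edge entering $\supp w$ starts either outside $F$ or in $F\setminus\supp w$. I would handle the second case by choosing $w$ on a final class of $B_F$, or by replacing the argument with the strict monotonicity of $\varrho$ in the entries for irreducible matrices. Running $\varrho\bigl(B-\varepsilon\ones_{G\setminus F}\text{-diagonal}\bigr)\to\varrho(B_F)$ as $\varepsilon\to\infty$ is cleaner still: $\varrho$ is strictly decreasing in each diagonal entry of an irreducible Metzler matrix, because its derivative is $l_ir_i>0$. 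I expect this strictness step to be the only real obstacle. I would use the derivative argument $\partial\varrho/\partial B_{ii}=l_ir_i>0$ together with the fact that the limit as the entries off $F$ go to $-\infty$ is $\varrho(B_F)$.

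The second statement is the first with $G=V$, since $Q$ irreducible means $Q_V=Q$ is irreducible. For the uniform gap, the proper faces form a finite set, so $\gamma(v)=\min_{F\subsetneq V}\bigl(\lambda_F(v_F)-\lambda_V(v)\bigr)>0$.

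Finally I would compare scores. Every proper $F$ has $|F|-1\ge0$, while $|V|-1=d-1$. Hence
\[
\Phi_{\tau,v}(F)-\Phi_{\tau,v}(V)\ge\tau\gamma(v)-(d-1),
\]
which is positive once $\tau>(d-1)/\gamma(v)$. So $V$ uniquely minimizes the score.
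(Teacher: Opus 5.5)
Your proof is correct and follows the paper's route: the paper also shifts to a nonnegative matrix and notes that the zero-padded principal submatrix $\widetilde B$ on $F$ satisfies $0\le\widetilde B\le B$ and $\widetilde B\ne B$ by irreducibility. It then invokes the strict monotonicity of the Perron root from Berman and Plemmons, takes the minimum over the finitely many proper faces, and compares scores exactly as you do.

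In your optional self-contained argument, the case you worried about cannot occur. For $i\in F\setminus\supp w$ the equation $(B_Fw)_i=\rho_Fw_i=0$ already forces $B_{ij}=0$ for all $j\in\supp w$. So equality in your pairing would leave no edge into $\supp w$ from its complement, which contradicts irreducibility. Note also that $\varrho$ is \emph{increasing}, not decreasing, in each diagonal entry, since $\partial\varrho/\partial B_{ii}=l_ir_i>0$.
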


Note that the last two claims need $Q$ irreducible. For $d=2$, $q_{12}=1$ and $q_{21}=0$
we have $\lambda_V=0=\lambda_{\{2\}}$.

\begin{proof}
Choose $c$ with $B=Q_G+\operatorname{diag}v_G+cI\ge0$. Its principal submatrix on
$F$ is $Q_F+\operatorname{diag}v_F+cI$, since the diagonal of a killed generator
keeps the full exit rates. Let $\widetilde B$ be this submatrix padded with zeros
to $G\times G$. Then $0\le\widetilde B\le B$, and $\widetilde B\ne B$ because the
irreducible matrix $B$ has a positive entry in a row of $F$ and a column off $F$.
So $\varrho(\widetilde B)<\varrho(B)$~\cite[Ch.~2]{bermanplemmons1994},
and $\varrho(\widetilde B)$ is the Perron root of the submatrix on $F$, since the
padding adds only the eigenvalue $0$. This holds whether or not $F$ is
irreducible, and subtracting $c$ gives the first claim. If $Q$ is irreducible,
the first claim with $G=V$ gives the second. For fixed $v$ there are
finitely many proper faces, so $\gamma(v)$, the least of the numbers
$\lambda_F(v_F)-\lambda_V(v)$, is positive. Finally
$\Phi_{\tau,v}(F)-\Phi_{\tau,v}(V)=|F|-d+\tau(\lambda_F(v_F)-\lambda_V(v))\ge-(d-1)+\tau\gamma(v)$.
\end{proof}

\begin{proposition}[the rate on a face]\label{prop:rate}
Let $F$ be irreducible with $k=\lvert F\rvert\ge2$, and let $v\in\R^F$.
\begin{enumerate}[(a)]
\item On $\Delta_F$, $J_F(\alpha)-v\cdot\alpha\ge\lambda_F(v)$, with equality exactly at
$\alpha=\alpha^*_F(v)$. In particular $J_F\ge\lambda_F$, with equality exactly at
$\alpha^*_F=l_F\circ r_F$.
\item The rate $J_F$ is convex on $\Delta_F$ and strictly convex on $\Delta_F^\circ$. For
$\alpha\in\Delta_F^\circ$ the supremum defining $J_F(\alpha)$ is attained at a
vector $u=r_\alpha$, which is unique up to a positive factor, and $\alpha$ is
the stationary law of the generator on $F$ with off-diagonal entries
$q_{ij}r_{\alpha,j}/r_{\alpha,i}$.
\item If there is $\nu\in(0,\infty)^F$ with $\nu_iq_{ij}=\nu_jq_{ji}$ for all
$i,j\in F$, then for every $\alpha\in\Delta_F$
\[
 J_F(\alpha)=\sum_{\{i,j\}\subseteq F,\ i\ne j}\bigl(\sqrt{\alpha_iq_{ij}}-\sqrt{\alpha_jq_{ji}}\bigr)^2
 +\sum_{i\in F}\alpha_iq_i^{\rm out}(F).
\]
\item For every nonempty face $F'\subsetneq F$ and every
$\alpha\in\Delta_{F'}$, $J_F(\alpha)-v\cdot\alpha=J_{F'}(\alpha)-v\cdot\alpha\ge\lambda_{F'}(v_{F'})>\lambda_F(v)$.
\end{enumerate}
\end{proposition}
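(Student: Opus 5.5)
The plan is to prove (a) first, then (b), (c) and (d), with (a) carrying most of the weight. By \eqref{eq:Jfield}, a field only replaces $Q_F$ by $Q_F+\operatorname{diag}v$. So it suffices to treat $v=0$ for a general irreducible Metzler matrix $B=Q_F+\operatorname{diag}v$, and to write $\lambda=-\varrho(B)$.

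\emph{Lower bound in (a).} Insert the right Perron vector $u=r$ into the supremum. This gives $-(Br)_i/r_i=\lambda$ for every $i$, so $J_F(\alpha)-v\cdot\alpha\ge\lambda$ on all of $\Delta_F$.

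\emph{Equality case in (a).} Let $\alpha$ attain equality. The reverse inequality at $\alpha^*=l\circ r$ will come from the Collatz--Wielandt/Donsker--Varadhan minimax. For any $u>0$, put $w=u/r$ and consider the $h$-transform $\widetilde q_{ij}=q_{ij}r_j/r_i$. This is a generator with stationary law $\alpha^*$, since $\alpha^*_i\widetilde q_{ij}=l_iq_{ij}r_j$. Rewriting gives
\[
\sum_i\alpha^*_i\Bigl(-\frac{(Bu)_i}{u_i}\Bigr)=\lambda-\sum_i\alpha^*_i\sum_{j\ne i}\widetilde q_{ij}\Bigl(\frac{w_j}{w_i}-1\Bigr).
\]
The last sum is at least $\sum\alpha^*_i\widetilde q_{ij}\log(w_j/w_i)=0$, by $x-1\ge\log x$ and stationarity. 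Equality holds only when $w$ is constant, by irreducibility. So $J_F(\alpha^*)-v\cdot\alpha^*=\lambda$. Uniqueness of the minimizer then follows from the strict convexity in (b), provided $\alpha^*$ is interior, which it is. The boundary points are handled by (d). For (d), the equality $J_F=J_{F'}$ is Proposition~\ref{prop:restriction}, applied to the rate with field. The inequality $J_{F'}-v\cdot\alpha\ge\lambda_{F'}(v_{F'})$ holds by the lower bound just proved, applied on each irreducible component of $F'$. I would rather apply it directly: $u$ is any Perron-type vector of the block triangular $Q_{F'}+\operatorname{diag}v$, approximated by positive subinvariant vectors, and this gives the bound $\ge\lambda_{F'}$. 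Finally $\lambda_{F'}(v_{F'})>\lambda_F(v)$ is Lemma~\ref{lem:face-monotone}.

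\emph{Plan for (b).} Convexity holds because $J_F$ is a supremum of linear functions of $\alpha$. For $\alpha$ interior, substitute $u=e^{\phi}$. The objective $\sum_i\alpha_iq_i-\sum_{i\ne j}\alpha_iq_{ij}e^{\phi_j-\phi_i}$ is strictly concave in $\phi$ modulo constants, by irreducibility. It also tends to $-\infty$ as $\max\phi-\min\phi\to\infty$. To see this, along a path of the strongly connected graph some edge must carry a large increase $\phi_j-\phi_i$, with weight $\alpha_iq_{ij}>0$. So a maximizer $r_\alpha$ exists and is unique up to scaling. The first-order condition in $\phi_m$ reads $\sum_i\alpha_iq_{im}r_m/r_i=\alpha_m\sum_jq_{mj}r_j/r_m$. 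This is exactly stationarity of $\alpha$ for the transformed generator. Strict convexity on $\Delta_F^\circ$ is the step I expect to be the main obstacle. My approach is to go through the identity $J_F(\alpha)=\sum_i\alpha_iq_i-\sup$-free form. Specifically, write $J_F$ as a Legendre-type transform: $J_F(\alpha)-v\cdot\alpha\ge\lambda_F(v)$ for all $v$, with equality at $\alpha=\alpha^*_F(v)$. Here $v\mapsto\alpha^*_F(v)$ is the gradient of the map $v\mapsto-\lambda_F(v)$, which is smooth and strictly convex modulo constants (the Perron root is analytic and simple). This map is onto $\Delta_F^\circ$, since any interior $\alpha$ is $\alpha^*_F(v)$ for $v_i=-(Q_Fr_\alpha)_i/r_{\alpha,i}$ plus a constant, by (b). Hence $J_F$ restricted to $\Delta_F^\circ$ is the Legendre transform of a strictly convex smooth function, and is therefore strictly convex.

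\emph{Plan for (c).} Symmetrize with $u_i=\sqrt{\alpha_i/\nu_i}$, or more generally write $u=\sqrt{\alpha/\nu}\,e^{\psi}$. Pairing the terms for $\{i,j\}$ and using $\nu_iq_{ij}=\nu_jq_{ji}$ gives $\sqrt{\alpha_iq_{ij}\alpha_jq_{ji}}\,(e^{\psi_j-\psi_i}+e^{\psi_i-\psi_j})$. This is minimized at $\psi=0$. Expanding the square then yields the formula, with $q_i=\sum_{j\in F}q_{ij}+q_i^{\rm out}(F)$. Boundary $\alpha$ follow by continuity of both sides, since convex lower semicontinuity holds and the right side is continuous.
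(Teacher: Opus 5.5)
Your proposal is correct, but several steps take a different route from the paper. The paper runs everything through one criterion. With $u=e^\psi$ the objective is concave in $\psi$ and affine in $\alpha$, and $\psi$ is optimal exactly when $\alpha$ is the stationary law of the generator with off-diagonal entries $q_{ij}e^{\psi_j-\psi_i}$. The value at $\alpha^*_F(v)$ is then a check that $\psi=\log r$ meets the criterion. Strict convexity follows because equality at an intermediate point $\alpha^s$ would make the optimal $\psi$ there optimal at both endpoints, so both endpoints would be stationary for one irreducible generator.

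\emph{The value at $\alpha^*$.} You get it instead by the classical Donsker--Varadhan computation with the $h$-transform and $x-1\ge\log x$, which is self-contained.

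\emph{Strict convexity.} Your argument is the same affinity argument run on the dual variable. The field $v$ with $\alpha^*_F(v)=\alpha^s$ exists by your surjectivity step. Equality would make $v$ a global maximizer of $w\mapsto w\cdot\alpha^0+\lambda_F(w)$, forcing $\alpha^0=-\nabla\lambda_F(v)=\alpha^*_F(v)=\alpha^s$, and likewise $\alpha^1=\alpha^s$. This uses only the differentiability of the simple Perron root, with gradient $l\circ r$. Strict convexity of $-\lambda_F$ modulo constants is not needed, and ``analytic and simple'' would not justify it anyway. Your route costs eigenvalue perturbation theory and needs the primal maximizer $r_\alpha$ for surjectivity. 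In return, the same gradient argument gives uniqueness in (a) for every $\alpha\in\Delta_F$, boundary included, without (d).

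\emph{The lower bound and (d).} Inserting the Perron vector is simpler than the paper's argument, but it only covers irreducible faces. For a reducible $F'$ in (d), applying it component by component does not control the terms $-\alpha_iq_{ij}u_j/u_i$ between components, unless you also multiply $u$ by $\varepsilon^a$ on the $a$-th component and let $\varepsilon\to0$. Your subinvariant-vector fix is the right one. The paper makes it explicit: with $B'=Q_{F'}+\operatorname{diag}v_{F'}$ and $s>\varrho(B')$, the vector $u=(sI-B')^{-1}\ones$ is positive and gives $-(B'u)_i/u_i=-s+1/u_i>-s$. This works on every face at once and contains your irreducible case.

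\emph{Part (c).} Extending the formula to the boundary by convexity and lower semicontinuity is fine. The paper instead puts $u_i=\varepsilon$ off the support and lets $\varepsilon\to0$.
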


For symmetric $Q_F$, $l_F=r_F$ and $\alpha^*_F$ is the entrywise square of the
Perron vector normalized in $\ell^2$. Without reversibility the proof of (c)
still gives the formula as an upper bound.

\begin{proof}
We prove this through the matrix $B$. Put $B=Q_F+\operatorname{diag}v$, an irreducible Metzler matrix with the switching
graph $G_F$ and $\lambda_F(v)=-\varrho(B)$.

\emph{Setup.} With $u=e^\psi$, \eqref{eq:Jfield} reads
$J_F(\alpha)-v\cdot\alpha=\sup_\psi\Psi_\alpha(\psi)$ with
\[
\Psi_\alpha(\psi)=\sum_i\alpha_i(q_i-v_i)-\sum_{i\ne j}\alpha_iq_{ij}e^{\psi_j-\psi_i},
\]
where $i,j$ run over $F$. It is concave in $\psi$, affine in $\alpha$, and
unchanged when a constant is added to $\psi$. Let $G^\psi$ be the conservative
generator on $F$ with off-diagonal entries $q_{ij}e^{\psi_j-\psi_i}$. It has the
graph $G_F$, so it has exactly one stationary law, and it does not involve $v$.
Since $\nabla_\psi\Psi_\alpha=-\alpha^\top G^\psi$ and a differentiable concave
function attains its maximum exactly at its critical points, $\psi$ attains the
supremum if and only if $\alpha$ is the stationary law of $G^\psi$. We call this
the criterion.

\emph{A lower bound on every face.} Let $F'$ be any face, irreducible or not, and
$B'=Q_{F'}+\operatorname{diag}v_{F'}$. We show
$J_{F'}(\alpha)-v\cdot\alpha\ge-\varrho(B')$ on $\Delta_{F'}$. Let
$s>\varrho(B')$ and write $B'=B_0-cI$ with $B_0\ge0$. Then $s+c>\varrho(B_0)$, so
$u=(sI-B')^{-1}\ones=\sum_{\ell\ge0}(s+c)^{-\ell-1}B_0^\ell\ones$ converges and is
positive. Since $(sI-B')u=\ones$, we get $-(B'u)_i/u_i=-s+1/u_i>-s$ at every
state, and letting $s\downarrow\varrho(B')$ gives the claim.

\emph{The value at $\alpha^*_F(v)$.} Take $\psi=\log r$ and $R=\operatorname{diag}r$.
Then $G^\psi=R^{-1}(B-\varrho(B)I)R$, since the off-diagonal entries agree and
$Br=\varrho(B)r$. So $(l\circ r)^\top G^\psi=l^\top(B-\varrho(B)I)R=0$, and by the
criterion $\psi$ attains the supremum at $\alpha^*_F(v)$. Every ratio
$-(Br)_i/r_i$ equals $\lambda_F(v)$, so the value is $\lambda_F(v)$.

(b) Convexity holds because $J_F$ is a supremum of affine functions. Let
$\alpha\in\Delta_F^\circ$ and take $v=0$.

\emph{The supremum is attained.} Normalize $\min_i\psi_i=0$ and put
$\bar\psi=\max_i\psi_i$. Let $a$ be the least positive rate $q_{ij}$ in $F$. A
shortest path in $G_F$ from a state where $\psi=0$ to a state where
$\psi=\bar\psi$ has at most $k-1$ edges, so one of its edges $i\to j$ has
$\psi_j-\psi_i\ge\bar\psi/(k-1)$. Keeping only this edge gives
$\Psi_\alpha(\psi)\le\sum_i\alpha_iq_i-a(\min_i\alpha_i)e^{\bar\psi/(k-1)}$, which
tends to $-\infty$ as $\bar\psi\to\infty$. Hence the continuous function
$\Psi_\alpha$ attains its supremum on a bounded set. Write $r_\alpha=e^\psi$ for a
maximizer. The last claim of (b) is the criterion.

\emph{The maximizer is unique up to a factor.} If $\psi$ and $\psi+\delta$ both
attain the supremum, $\Psi_\alpha$ is constant on the segment between them. There
it is a constant minus a sum of the convex functions
$\alpha_iq_{ij}e^{\psi_j-\psi_i+s(\delta_j-\delta_i)}$, so each of them is affine
in $s$, and $\delta_j=\delta_i$ on every edge of $G_F$. As $G_F$ is strongly
connected, $\delta$ is constant.

\emph{Strict convexity.} Let $\alpha^0\ne\alpha^1$ lie in $\Delta_F^\circ$, put
$\alpha^s=(1-s)\alpha^0+s\alpha^1$ with $0<s<1$, and let $\psi$ attain
$J_F(\alpha^s)$. Since $\Psi_\alpha(\psi)$ is affine in $\alpha$,
$J_F(\alpha^s)=(1-s)\Psi_{\alpha^0}(\psi)+s\Psi_{\alpha^1}(\psi)\le(1-s)J_F(\alpha^0)+sJ_F(\alpha^1)$.
Equality would make $\alpha^0$ and $\alpha^1$ both stationary for $G^\psi$ by the
criterion, which is impossible.

(d) The equality is Proposition~\ref{prop:restriction}. The lower bound on $F'$
gives $J_{F'}(\alpha)-v\cdot\alpha\ge\lambda_{F'}(v_{F'})$, and
Lemma~\ref{lem:face-monotone} with $G=F$ gives $\lambda_{F'}(v_{F'})>\lambda_F(v)$.

(a) The lower bound and the value at $\alpha^*_F(v)$ are proved above. Let
$\alpha\ne\alpha^*_F(v)$. If $\alpha$ lies on the boundary of $\Delta_F$, then (d)
with $F'=\supp\alpha$ gives $J_F(\alpha)-v\cdot\alpha>\lambda_F(v)$. If
$\alpha\in\Delta_F^\circ$ and equality held, strict convexity would give a value
below $\lambda_F(v)$ at $\frac12(\alpha+\alpha^*_F(v))$.

(c) Since $q_i=\sum_{j\in F\setminus\{i\}}q_{ij}+q_i^{\rm out}(F)$, the function
inside the supremum is $\sum_i\alpha_iq_i^{\rm out}(F)$ plus, for each pair
$\{i,j\}$, the term $\alpha_iq_{ij}(1-u_j/u_i)+\alpha_jq_{ji}(1-u_i/u_j)$. By the
AM--GM inequality this term is at most
$(\sqrt{\alpha_iq_{ij}}-\sqrt{\alpha_jq_{ji}})^2$, which gives the upper bound. For
$\alpha\in\Delta_F^\circ$ the vector $u_i=\sqrt{\alpha_i/\nu_i}$ gives
$\alpha_iq_{ij}u_j/u_i=\alpha_jq_{ji}u_i/u_j$ by $\nu_iq_{ij}=\nu_jq_{ji}$, so there is
equality for every pair at once. If some $\alpha_i$ vanish, take $u_i=\varepsilon$
for these $i$ and let $\varepsilon\to0$. A pair with $\alpha_i=0<\alpha_j$ then has
the term $\alpha_jq_{ji}(1-\varepsilon/u_j)\to\alpha_jq_{ji}$, which is its bound, and
a pair with $\alpha_i=\alpha_j=0$ has the term $0$.
\end{proof}

\begin{proposition}[the variational formula]\label{prop:variational}
Let $F$ be irreducible. For every $v\in\R^F$,
\[
 \lambda_F(v)=\min_{\alpha\in\Delta_F}\bigl(J_F(\alpha)-v\cdot\alpha\bigr),
\]
and the minimum is attained only at $\alpha^*_F(v)$. Equivalently, $-\lambda_F(v)$
is the Legendre transform of $J_F$ at $v$, and $J_F$ is the Legendre transform of
$v\mapsto-\lambda_F(v)$ on $\Delta_F^\circ$. The map $v\mapsto\lambda_F(v)$ is
concave and real-analytic, and $\lambda_F(v+c\ones)=\lambda_F(v)-c$. If $Q_F$ is
reversible with respect to $\nu$, let $S_F=D_{\sqrt\nu}(-Q_F)D_{\sqrt\nu}^{-1}$, where $D_x=\operatorname{diag}x$.
Then with $\psi_i=\sqrt{\alpha_i}$,
\[
 J_F(\alpha)-v\cdot\alpha=\sum_{i\in F}(q_i-v_i)\alpha_i-\sum_{i\ne j\in F}\sqrt{q_{ij}q_{ji}}\,\psi_i\psi_j
\]
is the Rayleigh quotient of the symmetric matrix $S_F-\operatorname{diag}v$ at the
unit vector $\psi$, and $\lambda_F(v)=\lambda_{\min}(S_F-\operatorname{diag}v)$.
\end{proposition}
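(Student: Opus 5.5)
The minimum formula and its uniqueness follow directly from Proposition~\ref{prop:rate}(a). It gives $J_F(\alpha)-v\cdot\alpha\ge\lambda_F(v)$ on all of $\Delta_F$, with equality exactly at $\alpha^*_F(v)$, which lies in $\Delta_F$. So the minimum is attained and is unique. For the Legendre statements, the first is a rewriting of this: $-\lambda_F(v)=\max_{\alpha}(v\cdot\alpha-J_F(\alpha))$, which is the convex conjugate of $J_F$ extended by $+\infty$ off $\Delta_F$. For the converse I would use Fenchel--Moreau. $J_F$ is convex by Proposition~\ref{prop:rate}(b) and finite on the compact set $\Delta_F$, with $\lambda_F\le J_F\le\sum\alpha_iq_i$. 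It is lower semicontinuous, being a supremum of continuous affine functions, so it equals its biconjugate on $\Delta_F$. The biconjugate is $\sup_v(v\cdot\alpha+\lambda_F(v))$, and this gives $J_F(\alpha)=\sup_v(v\cdot\alpha+\lambda_F(v))$, i.e.\ $J_F$ is the Legendre transform of $-\lambda_F$. The restriction to $\Delta_F^\circ$ in the statement only needs this identity there.

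Concavity of $v\mapsto\lambda_F(v)$ is immediate, since $\lambda_F(v)$ is a minimum over $\alpha$ of functions affine in $v$. The shift identity holds because $Q_F+\operatorname{diag}(v+c\ones)=Q_F+\operatorname{diag}v+cI$ moves the Perron root by $c$. Real-analyticity comes from simplicity of the Perron root. $B(v)=Q_F+\operatorname{diag}v$ is an irreducible Metzler matrix depending affinely on $v$, so $\varrho(B(v))$ is a simple root of $\det(zI-B(v))$, a polynomial in $(z,v)$. The analytic implicit function theorem then makes $\varrho$ analytic near each $v$. This is the only step with any subtlety, and simplicity is exactly what it needs.

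For the reversible case, put $\nu_iq_{ij}=\nu_jq_{ji}$. Then $S_F=D_{\sqrt\nu}(-Q_F)D_{\sqrt\nu}^{-1}$ has off-diagonal entries $-\sqrt{\nu_i/\nu_j}\,q_{ij}=-\sqrt{q_{ij}q_{ji}}$, so it is symmetric with diagonal $q_i$. Next I expand Proposition~\ref{prop:rate}(c), together with the relation $q_i=\sum_{j\in F\setminus\{i\}}q_{ij}+q^{\rm out}_i(F)$. Each pair contributes $\alpha_iq_{ij}+\alpha_jq_{ji}-2\sqrt{\alpha_i\alpha_jq_{ij}q_{ji}}$. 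Summing over pairs and adding the exit terms gives $\sum_i q_i\alpha_i-\sum_{i\ne j}\sqrt{q_{ij}q_{ji}}\,\psi_i\psi_j$, where the ordered sum counts each pair twice. This holds for all $\alpha\in\Delta_F$, boundary included, since (c) does. Subtracting $v\cdot\alpha=\sum v_i\psi_i^2$ gives $\psi^\top(S_F-\operatorname{diag}v)\psi$ with $\|\psi\|_2=1$.

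It remains to identify $\lambda_F(v)$ with the least eigenvalue. By the first part it is the minimum of this Rayleigh quotient over unit vectors $\psi\ge0$. The Rayleigh minimum over all unit vectors is $\lambda_{\min}$, and it is attained at a nonnegative vector. Indeed, the off-diagonal entries of $S_F-\operatorname{diag}v$ are $\le0$, so replacing $\psi$ by $|\psi|$ does not increase the quotient. Hence the two minima agree.
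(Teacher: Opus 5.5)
Your proof is correct. Apart from one step it is the paper's argument:
\begin{itemize}
\item the minimum formula and its unique minimizer come from Proposition~\ref{prop:rate}(a);
\item concavity holds because $\lambda_F$ is a minimum of affine functions of $v$;
\item analyticity comes from the simplicity of the Perron root;
\item the reversible form comes from expanding the squares of Proposition~\ref{prop:rate}(c), and the identification with $\lambda_{\min}$ from replacing $\psi$ by $|\psi|$.
\end{itemize}

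The step that differs is the converse Legendre statement. You obtain $J_F(\alpha)=\sup_v(v\cdot\alpha+\lambda_F(v))$ from Fenchel--Moreau, applied to $J_F$ extended by $+\infty$ off $\Delta_F$. The paper works directly. It notes that $J_F(\alpha)\ge v\cdot\alpha+\lambda_F(v)$ for all $v$. Then, for $\alpha\in\Delta_F^\circ$, it exhibits the dual point $v=q-A_Fr_\alpha/r_\alpha$, with $r_\alpha$ from Proposition~\ref{prop:rate}(b). Since $r_\alpha>0$ is a null vector of $Q_F+\operatorname{diag}v$, we get $\lambda_F(v)=0$, and $v\cdot\alpha=J_F(\alpha)$.

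Your route is softer and slightly more general. It needs no maximizer $r_\alpha$, and it gives the identity on the whole closed simplex, but only as a supremum. The paper's route is elementary and shows that the supremum is attained on the open face. This explains the restriction to $\Delta_F^\circ$ in the statement. For $\alpha$ on the boundary the supremum is never attained: a maximizing $v$ would make $\alpha$ the minimizer $\alpha^*_F(v)$, which lies in $\Delta_F^\circ$. The paper's route also identifies the dual variable as $q-\theta(\alpha)$, the tilt of Definition~\ref{def:tilt}. Up to a shift along $\ones$, this is the gradient of $J_F$ that appears in the proof of Proposition~\ref{prop:constant}(a).
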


The formula is the finite form of the Donsker and Varadhan formula for the
principal eigenvalue~\cite{dv1975}, close to the characterization of Collatz and
Wielandt~\cite[Ch.~2]{bermanplemmons1994}.

\begin{proof}
The formula and the unique minimizer are Proposition~\ref{prop:rate}(a). As a
minimum of affine functions of $v$, $\lambda_F(v)$ is concave. It is
real-analytic because the Perron root of an irreducible Metzler matrix is a simple
eigenvalue, and the shift is clear.

\emph{Legendre duality.} The formula says
$-\lambda_F(v)=\max_\alpha(v\cdot\alpha-J_F(\alpha))$. Conversely
$J_F(\alpha)\ge v\cdot\alpha+\lambda_F(v)$ for all $\alpha$ and $v$. Let
$\alpha\in\Delta_F^\circ$, take $r_\alpha$ from Proposition~\ref{prop:rate}(b), and
put $v=q-A_Fr_\alpha/r_\alpha$, with the quotient taken entrywise. Then $Q_F+\operatorname{diag}v$ has the
off-diagonal entries of $Q_F$ and the diagonal $-A_Fr_\alpha/r_\alpha$, so
$r_\alpha>0$ is a null vector of it. Only the Perron root has a positive
eigenvector, so $\lambda_F(v)=0$. Also $J_F(\alpha)=\langle q-A_Fr_\alpha/r_\alpha,\alpha\rangle
=v\cdot\alpha$, so equality holds at this $v$.

\emph{The reversible form.} Reversibility gives
$\sqrt{\nu_i}\,q_{ij}/\sqrt{\nu_j}=\sqrt{q_{ij}q_{ji}}$, so $S_F$ is symmetric with
diagonal $q_i$ and off-diagonal entries $-\sqrt{q_{ij}q_{ji}}$. Expanding the
squares in Proposition~\ref{prop:rate}(c) gives
$J_F(\alpha)=\langle\psi,S_F\psi\rangle$, and subtracting
$v\cdot\alpha=\langle\psi,\operatorname{diag}(v)\psi\rangle$ gives the display. Every
unit vector $\psi\ge0$ arises from $\alpha=\psi\circ\psi$, and the off-diagonal
entries of $S_F-\operatorname{diag}v$ are nonpositive, so passing to the entrywise
modulus does not increase the quadratic form. Hence the minimum over $\Delta_F$ is
$\lambda_{\min}(S_F-\operatorname{diag}v)$.
\end{proof}

\subsection{The functional of the discrete chain}\label{sec:discrete-rate}

When $T$ is a fixed fraction of $N$ the counts follow the functional of the
discrete chain itself. Let $0<h<1/q_{\max}$ and $p_i=1-hq_i$, so that $I+hQ$ is
a stochastic matrix with positive diagonal. For a face $F$ and
$\alpha\in\Delta_F$ let
\[
 J^{(h)}_F(\alpha)=\frac1h\sup_{u\in(0,\infty)^F}\sum_{i\in F}\alpha_i
 \log\frac{u_i}{((I+hQ_F)u)_i},
\]
and put $J^{(0)}_F=J_F$. For $F=V$, $hJ^{(h)}_V$ is the Donsker and Varadhan rate
of the empirical measure of the chain with transition matrix
$I+hQ$~\cite{dv1975}. For $\psi\in\R^F$ put
$\theta_i(\psi)=\sum_{j\in F\setminus\{i\}}q_{ij}e^{\psi_j-\psi_i}$, and let
$L_h(y)=\int_0^1y\,ds/(1-shy)$, which is $-\log(1-hy)/h$ for $h>0$ and $y$ for
$h=0$. With $u=e^\psi$ we have $((I+hQ_F)u)_i/u_i=p_i+h\theta_i(\psi)$, so
\[
 J^{(h)}_F(\alpha)=\sup_{\psi\in\R^F}G_h(\alpha,\psi),\qquad
 G_h(\alpha,\psi)=\sum_{i\in F}\alpha_iL_h\bigl(q_i-\theta_i(\psi)\bigr),
\]
and at $h=0$ this is the supremum that defines $J_F$. Since $L_h(y)\ge y$, we
have $J^{(h)}_F\ge J_F$. We say that a function on $[0,h_*]\times U$ is
real-analytic up to $h=0$ if near each point it is the restriction of a
real-analytic function on an open set.

\begin{proposition}[the discrete functional]\label{prop:discrete-rate}
Fix $h_*$ with $h_*q_{\max}<1$.
\begin{enumerate}[(a)]
\item For every face $F$, every $0<h<1/q_{\max}$ and every $\alpha\in\Delta_F$,
$J^{(h)}_F(\alpha)=J^{(h)}_V(\alpha)$. Hence $J^{(h)}_F=J^{(h)}_{F'}$ on
$\Delta_{F'}$ for every nonempty $F'\subseteq F$.
\item Let $F$ be irreducible with $k\ge2$, and let
$(h,\alpha)\in[0,h_*]\times\Delta_F^\circ$. The supremum defining
$J^{(h)}_F(\alpha)$ is attained at exactly one $\psi=\psi^{(h)}(\alpha)$ with
$\psi_1=0$, and there the Hessian of $G_h(\alpha,\cdot)$ in
$(\psi_2,\dots,\psi_k)$ is negative definite. The maps $\psi^{(h)}(\alpha)$,
$\theta^{(h)}(\alpha)=\theta(\psi^{(h)}(\alpha))$ and $J^{(h)}_F(\alpha)$ are
real-analytic on $[0,h_*]\times\Delta_F^\circ$ up to $h=0$, and
$\theta^{(0)}(\alpha)=A_Fr_\alpha/r_\alpha$ with $r_\alpha$ from
Proposition~\ref{prop:rate}(b). The function $J^{(h)}_F$ is convex on $\Delta_F$
and strictly convex on $\Delta_F^\circ$.
\item Let $F$ be irreducible with $k\ge2$, $0\le h\le h_*$,
$\alpha\in\Delta_F^\circ$ and $\psi\in\R^F$, and put $\theta=\theta(\psi)$. Then
$\psi$ attains the supremum if and only if the vector
$(\alpha_i/(p_i+h\theta_i))_{i\in F}$ is proportional to $l\circ e^\psi$, where
$l$ is a positive left null vector of $A_F-\operatorname{diag}\theta$.
\end{enumerate}
\end{proposition}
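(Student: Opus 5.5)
The plan follows the proofs of Propositions~\ref{prop:restriction} and~\ref{prop:rate}, with the affine map $y\mapsto y$ replaced by the convex increasing map $L_h$.

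\emph{Part (a).} We copy the restriction identity. Fix $\alpha\in\Delta_F$ and $u\in(0,\infty)^d$. The terms with $i\notin F$ vanish. For $i\in F$ we have $((I+hQ)u)_i\ge((I+hQ_F)u)_i$, because the extra terms $hq_{ij}u_j$ with $j\notin F$ are nonnegative. Since $\log$ is increasing, each summand for $V$ is at most the summand for $F$, so $J^{(h)}_V\le J^{(h)}_F$. Conversely, set $u=\varepsilon$ off $F$ and let $\varepsilon\downarrow0$. Both rows are positive because $p_i>0$, so the logarithms converge and the supremum over $w$ gives $J^{(h)}_V\ge J^{(h)}_F$.

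\emph{Part (c).} Since $L_h'(y)=1/(1-hy)$, we get
\[
\partial_{\psi_m}G_h=\sum_{i}\frac{\alpha_i}{p_i+h\theta_i}\,\partial_{\psi_m}\bigl(-\theta_i\bigr).
\]
Put $w_i=\alpha_i/(p_i+h\theta_i)$. The derivative works out to
\[
\partial_{\psi_m}G_h=w_m\theta_m-\sum_{i\ne m}w_iq_{im}e^{\psi_m-\psi_i}.
\]
So $\nabla G_h=0$ says that $(w\circ e^{-\psi})^\top(A_F-\operatorname{diag}\theta)=0$ after multiplying by $e^{-\psi_m}$. The matrix $A_F-\operatorname{diag}\theta$ is an irreducible Metzler matrix with zero row sums relative to $e^\psi$, that is, $(A_F-\operatorname{diag}\theta)e^\psi=0$. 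Its Perron root is therefore $0$ and its left null space is spanned by a positive $l$. Hence $w\circ e^{-\psi}\propto l$, which is the claimed proportionality.

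Criticality is equivalent to being a maximizer once we know concavity. The function $\psi\mapsto q_i-\theta_i(\psi)$ is concave, and $L_h$ is convex and increasing on $(-\infty,1/h)$. That combination is not concave in general, so I would reparametrize. Write
\[
L_h(q_i-\theta_i)=-\tfrac1h\log\bigl(p_i+h\theta_i\bigr).
\]
Here $\log(p_i+h\sum_j q_{ij}e^{\psi_j-\psi_i})$ is a log-sum-exp of affine functions of $\psi$, hence convex. So $G_h$ is concave in $\psi$, and at $h=0$ it is concave too.

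\emph{Part (b).} The plan has four steps.
\begin{enumerate}[(1)]
\item \emph{Existence by coercivity.} As in Proposition~\ref{prop:rate}(b), some edge has $\psi_j-\psi_i\ge\bar\psi/(k-1)$. Then $-\frac1h\log(p_i+h\theta_i)\to-\infty$, logarithmically in $\bar\psi$. That suffices, since the other terms are bounded above by $\sum_i\alpha_iL_h(q_i)$.
\item \emph{Uniqueness and definiteness.} Compute the Hessian of the log-sum-exp terms. It is negative semidefinite, and its kernel consists of directions $\delta$ constant along every edge, because each edge term is strictly convex transversally. Strong connectivity then forces $\delta$ to be constant, so the Hessian is definite on $\psi_1=0$. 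This gives uniqueness.
\item \emph{Analyticity.} $G_h(\alpha,\psi)$ is real-analytic in $(h,\alpha,\psi)$ including a neighborhood of $h=0$, because $L_h(y)=\int_0^1 y\,ds/(1-shy)$ is analytic for $h|y|<1$. The implicit function theorem applied to $\nabla_\psi G_h=0$ then gives analytic $\psi^{(h)}$, $\theta^{(h)}$ and $J^{(h)}_F=G_h(\alpha,\psi^{(h)})$.
\item \emph{Strict convexity and the value at $h=0$.} At $h=0$ the criterion reduces to that of Proposition~\ref{prop:rate}(b), so $\psi^{(0)}=\log r_\alpha$ and $\theta^{(0)}=A_Fr_\alpha/r_\alpha$. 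Convexity holds because $J^{(h)}_F$ is a supremum of affine functions of $\alpha$. Strict convexity follows as before. Equality at a midpoint would make the common maximizer critical for two $\alpha$'s, so by (c) both $\alpha$'s would be determined as $(p_i+h\theta_i)\,l_i e^{\psi_i}$ normalized, and hence equal.
\end{enumerate}

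The main obstacle is the concavity in $\psi$. It holds only in the $\log(p_i+h\theta_i)$ form, so I would state that form first. Uniformity of coercivity near $h=0$ is the other delicate point, and it follows from $L_h(y)\le y$ for $y\le0$.
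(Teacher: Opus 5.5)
Your proposal follows the paper's proof step for step. It uses the same restriction argument for (a) and the same gradient computation for (c); you read off the left null vector of $A_F-\operatorname{diag}\theta$ directly, where the paper passes through the stationary law of $U^{-1}(A_F-\operatorname{diag}\theta)U$ with $U=\operatorname{diag}(e^\psi)$, which amounts to the same thing. Concavity from the $\log(p_i+h\theta_i)$ form, coercivity along one edge, definiteness from strong connectivity, the implicit function theorem, and strict convexity through (c) are also as in the paper. Three details need correcting, none of which changes the route.

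First, your closing remark is backwards. The inequality $-\log(1-x)\ge x$ gives $L_h(y)\ge y$ for all $y<1/h$, so it yields no upper bound. No uniformity in $h$ is needed in any case. Existence is proved for each fixed $(h,\alpha)$, and analyticity comes from the implicit function theorem together with uniqueness of the maximizer. If you want uniformity anyway, $\partial_hL_h(y)=\int_0^1sy^2(1-shy)^{-2}\,ds\ge0$ gives $L_h\le L_{h_*}$ for $0\le h\le h_*$.

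Second, analyticity of $L_h$ on the set $h|y|<1$ does not cover maximizers with $h(\theta_i-q_i)\ge1$. These do occur for $\alpha$ near the boundary of $\Delta_F$: for two states with unit rates, $\theta_2\approx h\alpha_1/(\alpha_2p)$ as $\alpha_2\to0$. Use instead that $1-shy=(1-s)+s(1-hy)$ stays away from $0$ for $s\in[0,1]$ wherever $1-hy>0$. Then note that $1-h(q_i-\theta_i)=p_i+h\theta_i\ge p_i>0$ at every point you need.

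Third, in step (2) the kernel is not explained by ``edge terms''. The second derivative of the $i$-th term in a direction $\delta$ is $\alpha_i$ times the variance of the values $0$ and $\delta_j-\delta_i$, under weights proportional to $p_i$ and $hq_{ij}e^{\psi_j-\psi_i}$. It is the positive weight $p_i$ on the value $0$ that forces $\delta_j=\delta_i$ on every edge out of $i$. If $p_i$ were $0$ the claim would fail: on a directed cycle each term would then be affine in $\psi$. This is exactly where $h_*q_{\max}<1$ is used.
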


\begin{proof}
We prove (a), then (b), then (c). (a) As in the proof of Proposition~\ref{prop:restriction}, the terms
$hq_{ij}u_j$ with $j\notin F$ only increase $((I+hQ)u)_i$, which gives
$J^{(h)}_V\le J^{(h)}_F$ on $\Delta_F$, and $u=\varepsilon$ off $F$ with
$\varepsilon\downarrow0$ gives the converse, since the diagonal of $I+hQ_F$ keeps
the full exit rates. The second claim is the first for $F$ and for $F'$.

(c) We have $\partial\theta_i/\partial\psi_m=q_{im}e^{\psi_m-\psi_i}$ for
$m\ne i$ and $\partial\theta_i/\partial\psi_i=-\theta_i$, and
$L_h'(y)=1/(1-hy)$. So with $w_i=\alpha_i/(p_i+h\theta_i)$ the gradient of
$G_h(\alpha,\cdot)$ is $-w^\top G^\psi$, where $G^\psi$ is the conservative
generator of the proof of Proposition~\ref{prop:rate}, with off-diagonal entries
$q_{ij}e^{\psi_j-\psi_i}$. The function $G_h(\alpha,\cdot)$ is concave by the
proof of (b) below. Hence $\psi$ attains the supremum if and only if $w$ is
proportional to the stationary law of $G^\psi$. With
$U=\operatorname{diag}(e^\psi)$ we have
$G^\psi=U^{-1}(A_F-\operatorname{diag}\theta)U$, whose stationary law is
proportional to $l\circ e^\psi$.

(b) \emph{Concavity.} At $h=0$ the Hessian of $-G_0$ in a direction $\delta$ is
$\sum_i\alpha_i\sum_{j\ne i}q_{ij}e^{\psi_j-\psi_i}(\delta_j-\delta_i)^2$. For
$h>0$,
\[
 -hG_h(\alpha,\psi)=\sum_{i\in F}\alpha_i
 \log\Bigl(p_i+\sum_{j\in F\setminus\{i\}}hq_{ij}e^{\psi_j-\psi_i}\Bigr).
\]
The $i$-th term is $\alpha_i$ times the logarithm of a sum of exponentials of
the affine functions $0$ and $\psi_j-\psi_i$, with the positive weights $p_i$ and
$hq_{ij}$. Its second derivative in a direction $\delta$ is $\alpha_i$ times the
variance of the values $0$ and $\delta_j-\delta_i$ under the weights
proportional to $p_i$ and $hq_{ij}e^{\psi_j-\psi_i}$. As $p_i>0$, it vanishes
only if $\alpha_i=0$ or $\delta_j=\delta_i$ on every edge $i\to j$ of $G_F$. So
for every $h\in[0,h_*]$ the function $G_h(\alpha,\cdot)$ is concave, and for
$\alpha\in\Delta_F^\circ$ its Hessian is negative definite on $\{\delta_1=0\}$,
since $G_F$ is strongly connected. This is where $h_*q_{\max}<1$ is used.

\emph{The supremum is attained.} Let $\psi_1=0$ and let $\bar\psi$ be the
largest difference of two entries of $\psi$. As in the proof of
Proposition~\ref{prop:rate}(b), some edge $i\to j$ has
$\psi_j-\psi_i\ge\bar\psi/(k-1)$, so $\theta_i(\psi)\ge ae^{\bar\psi/(k-1)}$ with
$a$ the least positive rate in $F$. Since $L_h$ is increasing and tends to
$-\infty$ at $-\infty$, the term of $i$ tends to $-\infty$ as $\bar\psi\to\infty$,
while every term is at most $\alpha_iL_h(q_i)\le q_{\max}/p_{\min}$, with
$p_{\min}=1-h_*q_{\max}$. Hence the supremum is attained, and by strict concavity at
exactly one $\psi$ with $\psi_1=0$.

\emph{Analyticity.} The function $L_h(y)$ is real-analytic in $(h,y)$ where
$1-hy>0$, including $h=0$ and small negative $h$, and $1-h(q_i-\theta_i)\ge p_i>0$
for $h\in[0,h_*]$. So $G$ is real-analytic near every point $(h,\alpha,\psi)$ with
$h\in[0,h_*]$. At the maximizer the gradient in $(\psi_2,\dots,\psi_k)$ vanishes
and the Hessian is invertible. The implicit function theorem gives a real-analytic
solution near $(h,\alpha)$, which is $\psi^{(h)}(\alpha)$ for $h\ge0$ by
uniqueness. Hence $\theta^{(h)}(\alpha)$ and
$J^{(h)}_F(\alpha)=G_h(\alpha,\psi^{(h)}(\alpha))$ are real-analytic up to
$h=0$. At $h=0$ the maximizer is $\log r_\alpha$, normalized.

\emph{Convexity.} As a supremum of affine functions, $J^{(h)}_F$ is convex. Let
$\alpha^0\ne\alpha^1$ lie in $\Delta_F^\circ$, let $0<s<1$, and let $\psi$ attain
$J^{(h)}_F(\alpha^s)$ at $\alpha^s=(1-s)\alpha^0+s\alpha^1$. Equality in
$J^{(h)}_F(\alpha^s)\le(1-s)J^{(h)}_F(\alpha^0)+sJ^{(h)}_F(\alpha^1)$ would make
$\psi$ attain the supremum at $\alpha^0$ and at $\alpha^1$. By (c) both would then
be proportional to $((p_i+h\theta_i)l_ie^{\psi_i})_{i\in F}$, and as probability
vectors they would be equal.
\end{proof}

For a field $v\in\R^F$, $J^{(h)}_F(\alpha)-v\cdot\alpha$ is $1/h$ times the
supremum of $\sum_i\alpha_i\log(u_i/(Mu)_i)$ with
$M=e^{h\operatorname{diag}v}(I+hQ_F)$, since $-hv_i\alpha_i=\alpha_i\log e^{-hv_i}$.
For any face $F'$, any $v\in\R^{F'}$ and $0<h<1/q_{\max}$ we put
$\lambda^{(h)}_{F'}(v)=-\frac1h\log\varrho\bigl(e^{h\operatorname{diag}v}(I+hQ_{F'})\bigr)$,
which is defined since the matrix is nonnegative with a positive diagonal.

\begin{proposition}[the discrete exit rate]\label{prop:discrete-variational}
Let $F$ be irreducible with $k\ge2$, let $0<h<1/q_{\max}$ and $v\in\R^F$, and
put $M=e^{h\operatorname{diag}v}(I+hQ_F)$. Let $r$ and $l$ be positive right and
left Perron vectors of $M$ with $l\cdot r=1$, and put
$\alpha^{*(h)}_F(v)=l\circ r$.
\begin{enumerate}[(a)]
\item On $\Delta_F$, $J^{(h)}_F(\alpha)-v\cdot\alpha\ge\lambda^{(h)}_F(v)$, with
equality only at $\alpha^{*(h)}_F(v)$, which lies in $\Delta_F^\circ$. For every
nonempty face $F'\subsetneq F$ and every $\alpha\in\Delta_{F'}$,
$J^{(h)}_F(\alpha)-v\cdot\alpha\ge\lambda^{(h)}_{F'}(v_{F'})>\lambda^{(h)}_F(v)$.
\item At $v=0$, $\varrho(M)=1-h\lambda_F$ and $\alpha^{*(h)}_F(0)=\alpha^*_F$, so
$\lambda^{(h)}_F(0)=-\frac1h\log(1-h\lambda_F)$ for every $h$.
\item Fix $h_*$ with $h_*q_{\max}<1$, and put $\lambda^{(0)}_F(v)=\lambda_F(v)$ and
$\alpha^{*(0)}_F(v)=\alpha^*_F(v)$. Then $\lambda^{(h)}_F(v)$ and
$\alpha^{*(h)}_F(v)$ are real-analytic on $[0,h_*]\times\R^F$ up to $h=0$.
\end{enumerate}
\end{proposition}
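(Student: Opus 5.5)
We follow the proof of Proposition~\ref{prop:rate}, with the Perron vectors of $M$ in place of those of $B$ and the criterion of Proposition~\ref{prop:discrete-rate}(c) in place of the stationarity criterion.

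\emph{Lower bound (a).} Let $F'\subseteq F$ be any face and $M'=e^{h\operatorname{diag}v_{F'}}(I+hQ_{F'})$. This matrix is nonnegative with a positive diagonal. For $s>\varrho(M')$ the vector $u=(sI-M')^{-1}\ones=\sum_\ell s^{-\ell-1}M'^\ell\ones$ is positive and satisfies $(M'u)_i/u_i=s-1/u_i<s$. Hence $\sum_i\alpha_i\log(u_i/(M'u)_i)>-\log s$ for every $\alpha\in\Delta_{F'}$. Letting $s\downarrow\varrho(M')$ and using the remark before the proposition, which rewrites $J^{(h)}_{F'}-v\cdot\alpha$ as a supremum with $M'$, gives $J^{(h)}_{F'}(\alpha)-v\cdot\alpha\ge\lambda^{(h)}_{F'}(v_{F'})$.

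\emph{Equality case (a).} At $u=r$ every ratio $(Mr)_i/r_i$ equals $\varrho(M)$, so the value $\lambda^{(h)}_F(v)$ is attained at $\alpha=l\circ r$. This point is in $\Delta_F^\circ$ because $M$ is irreducible, its graph being $G_F$. To see that $u=r$ is the maximizer there, we check the criterion: the gradient of $\psi\mapsto\sum_i\alpha_i\log(u_i/(Mu)_i)$ at $u=r$ is, up to sign, $\alpha_i-\sum_j \alpha_jM_{ji}r_i/(Mr)_j=l_ir_i-r_i(l^\top M)_i/\varrho=0$. The function is concave by the log-sum-exp argument of Proposition~\ref{prop:discrete-rate}(b), now with weights $e^{hv_i}p_i$ and $e^{hv_i}hq_{ij}$, so this critical point is the maximum.

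\emph{Uniqueness and strict inequality (a).} Uniqueness of the minimizer follows as before. On $\Delta_F^\circ$ we use strict convexity of $J^{(h)}_F$ from Proposition~\ref{prop:discrete-rate}(b); the field term is linear and does not affect it. On the boundary, $\alpha$ lies in $\Delta_{F'}$ with $F'=\supp\alpha$, and we combine the lower bound for $F'$ with Proposition~\ref{prop:discrete-rate}(a). It remains to show $\varrho(M')<\varrho(M)$. Pad $M'$ with zeros to $F\times F$; it is then dominated entrywise by the irreducible $M$, with some entry strictly smaller, since $M$ has a positive entry from $F'$ to $F\setminus F'$. Strict monotonicity of the Perron root~\cite[Ch.~2]{bermanplemmons1994} gives the claim, and $-\frac1h\log$ reverses the inequality.

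\emph{The case $v=0$ (b).} Here $M=I+hQ_F$ has the same eigenvectors as $Q_F$, with eigenvalue $1-h\lambda_F$. So $r=r_F$, $l=l_F$ and $\alpha^{*(h)}_F(0)=\alpha^*_F$.

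\emph{Analyticity (c).} This is the main point, since at $h=0$ the defining formula degenerates. We write
\[
\varrho(M)=1+h\,\sigma(h,v),
\]
where $\sigma(h,v)$ is the Perron root of the Metzler matrix
\[
\frac{e^{h\operatorname{diag}v}(I+hQ_F)-I}{h}=\frac{e^{hv}-1}{h}\circ I+e^{hv}\circ Q_F .
\]
Read entrywise, this matrix is real-analytic in $(h,v)$, including at $h=0$ and for small negative $h$, where it equals $Q_F+\operatorname{diag}v$. Its off-diagonal part has the fixed irreducible pattern $G_F$, so $\sigma$ is a simple eigenvalue and is real-analytic. Its Perron vectors, normalized by $l\cdot r=1$ and for instance $r_1=1$, are analytic too. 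Then
\[
\lambda^{(h)}_F(v)=-\sigma\int_0^1\frac{ds}{1+sh\sigma},
\]
in the spirit of $L_h$. This is analytic as long as $1+h\sigma>0$, which holds because $\varrho(M)\ge\min_i e^{hv_i}p_i>0$. At $h=0$ it equals $-\sigma(0,v)=\lambda_F(v)$. The matrix $(M-I)/h$ has the same eigenvectors as $M$, so $\alpha^{*(h)}_F(v)=l\circ r$ is analytic and at $h=0$ equals $\alpha^*_F(v)$.
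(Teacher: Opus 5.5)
Your proof is correct and follows the paper's argument essentially step for step. It uses the resolvent vector $(sI-M')^{-1}\ones$ for the lower bound, the gradient check at $\psi=\log r$ for the equality case, strict convexity together with strict Perron monotonicity of the zero-padded principal submatrix for uniqueness, and the Metzler matrix $(M-I)/h$ with $\lambda^{(h)}_F(v)=-\sigma\int_0^1ds/(1+sh\sigma)$ for analyticity up to $h=0$. The one detail worth stating explicitly is that $M'u=\sum_{j\ge0}s^{-j-1}M'^{j+1}\ones>0$ because $M'$ has a positive diagonal, which is what makes the logarithms in the lower bound well defined.
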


\begin{proof}
We prove this through the matrix $B$. Put $B=(M-I)/h$. Its off-diagonal entries $e^{hv_i}q_{ij}$ are nonnegative and
positive exactly on the edges of $G_F$, so $B$ is an irreducible Metzler matrix.
Its entries are real-analytic in $(h,v)$ up to $h=0$, and $B=Q_F+\operatorname{diag}v$
at $h=0$. Let $\kappa=\varrho(B)$. Then $\varrho(M)=1+h\kappa$, and $M$ and $B$
have the same Perron vectors. As $\kappa$ is a simple eigenvalue of $B$, it and the
normalized Perron vectors are real-analytic in $(h,v)$ up to $h=0$, and so is
$\lambda^{(h)}_F(v)=-\kappa\int_0^1ds/(1+sh\kappa)$. This gives (c). At $v=0$ we
have $B=Q_F$ for every $h$, which gives (b).

(a) \emph{A lower bound on every face.} Let $F'\subseteq F$ be nonempty,
irreducible or not, and let $M'$ be the principal submatrix of $M$ on $F'$, which
is $e^{h\operatorname{diag}v_{F'}}(I+hQ_{F'})$ since the diagonal keeps the full
exit rates. For $\alpha\in\Delta_{F'}$, part (a) of
Proposition~\ref{prop:discrete-rate} gives
$J^{(h)}_F(\alpha)=J^{(h)}_{F'}(\alpha)$, so $J^{(h)}_F(\alpha)-v\cdot\alpha$ is
$1/h$ times the supremum of $\sum_{i\in F'}\alpha_i\log(u_i/(M'u)_i)$ over
$u\in(0,\infty)^{F'}$. Let $s>\varrho(M')$. As $M'\ge0$, the vector
$u=(sI-M')^{-1}\ones=\sum_{j\ge0}s^{-j-1}M'^j\ones$ is positive, and $M'u=su-\ones$.
Also $M'u>0$, since $M'$ has a positive diagonal. So $u_i/(M'u)_i>1/s$ for every
$i$, and letting $s\downarrow\varrho(M')$ gives
$J^{(h)}_F(\alpha)-v\cdot\alpha\ge\lambda^{(h)}_{F'}(v_{F'})$. If $F'\ne F$, then
$M'$ padded with zeros is at most $M$ and differs from it, since the irreducible
$M$ has a positive entry in a row of $F'$ and a column off $F'$. So
$\varrho(M')<\varrho(M)$~\cite[Ch.~2]{bermanplemmons1994}, which gives the second
claim. The case $F'=F$ is the lower bound.

\emph{Equality.} With $u=r$ every ratio $r_i/(Mr)_i$ is $1/\varrho(M)$. The
function $\psi\mapsto\sum_i\alpha_i(\psi_i-\log(Me^\psi)_i)$ is concave, and at
$\psi=\log r$ its derivative in $\psi_m$ is
$\alpha_m-\sum_i\alpha_iM_{im}r_m/(\varrho(M)r_i)$. For $\alpha=l\circ r$ this is
$\alpha_m-(l^\top M)_mr_m/\varrho(M)=0$. So $u=r$ attains the supremum at
$\alpha^{*(h)}_F(v)$, and equality holds there. This point lies in $\Delta_F^\circ$
since $r,l>0$.

\emph{Uniqueness.} On the boundary of $\Delta_F$ the inequality is strict by the
second claim. In $\Delta_F^\circ$, $J^{(h)}_F(\alpha)-v\cdot\alpha$ is strictly
convex by Proposition~\ref{prop:discrete-rate}(b), so it has only one minimizer.
\end{proof}

\section{First-order face selection}\label{sec:firstorder}

\subsection{An example with four states on a cycle}\label{sec:worked}

Take $d=4$ states on a cycle, with rate $1$ between neighbors and the uniform
start. Up to symmetry the admissible faces are a vertex, an adjacent pair, an
arc of three states and the whole cycle, with exit rates $2$, $1$, $2-\sqrt2$
and $0$. For the arc, $-Q_F=2I-A$ with $A$ the adjacency matrix of the path
$1$--$2$--$3$, whose largest eigenvalue is $\sqrt2$. The chain stays in a face $F$
up to time $T$ with probability about $e^{-T\lambda_F}$, and this mass is shared
by about $(N/\sqrt T)^{|F|-1}$ compositions. Hence
$M_F\approx N^{-(|F|-1)}T^{(|F|-1)/2}e^{-T\lambda_F}$, and with $T=\tau\log N$
this is $N^{-\Phi_\tau(F)}$ up to powers of $\log N$.
Figure~\ref{fig:C4worked}(a) plots the four exponents. The vertices win for
$\tau<1$, the pairs for $1<\tau<2$ and the whole cycle for $\tau>2$. The arcs
never win, since they beat the pairs only for $\tau>1+\sqrt2$, where the
cycle beats them. The whole cycle carries almost all the mass for every
$\tau>0$, and it carries the mode only for $\tau>2$, because it spreads its
mass over about $N^3$ compositions. Theorems~\ref{thm:first}
and~\ref{thm:facemax} make this rule exact, and for a pair
Corollary~\ref{cor:symface} gives the constant $1/\sqrt{2\pi}$ in front.

\begin{figure}[t]
\centering
\begin{tikzpicture}[x=1.5cm,y=0.85cm,font=\small]
 \draw[->] (0,0) -- (3.3,0) node[below] {$\tau$};
 \draw[->] (0,0) -- (0,4.5) node[left] {$\Phi_\tau(F)$};
 \foreach \x in {1,2,3} \draw (\x,0.06) -- (\x,-0.06) node[below] {$\x$};
 \foreach \y in {1,2,3,4} \draw (0.04,\y) -- (-0.04,\y) node[left] {$\y$};
 \draw[gray] (0,0) -- (2.2,4.4) node[anchor=west] {vertex};
 \draw[gray] (0,1) -- (3.2,4.2) node[anchor=west] {pair};
 \draw[gray,dashed] (0,2) -- (3.2,{2+0.5858*3.2}) node[anchor=west] {arc};
 \draw[gray] (0,3) -- (3.2,3) node[anchor=west] {cycle};
 \draw[very thick] (0,0) -- (1,2) -- (2,3) -- (3.2,3);
 \node at (0.3,4.3) {(a)};
 \begin{scope}[xshift=7.4cm]
 \draw[->] (0,0) -- (2.4,0) node[below] {$\lambda_F$};
 \draw[->] (0,0) -- (0,4.5) node[left] {$\lvert F\rvert-1$};
 \foreach \x in {1,2} \draw (\x,0.06) -- (\x,-0.06) node[below] {$\x$};
 \foreach \y in {1,2,3} \draw (0.04,\y) -- (-0.04,\y) node[left] {$\y$};
 \draw[thick] (2,0) -- (1,1) -- (0,3);
 \fill (2,0) circle (2pt) node[above right] {vertex};
 \fill (1,1) circle (2pt) node[right] {pair};
 \draw[fill=white] (0.5858,2) circle (2pt) node[right] {arc};
 \fill (0,3) circle (2pt) node[right] {cycle};
 \node[gray] at (1.38,0.35) {$1$};
 \node[gray] at (0.33,1.55) {$2$};
 \node at (0.3,4.3) {(b)};
 \end{scope}
\end{tikzpicture}
\caption{The 4-cycle. (a) The exponents $\Phi_\tau(F)=\tau\lambda_F+\lvert
F\rvert-1$ of the four kinds of faces against $\tau$, and their lower envelope
(bold). The mode sits on the face whose line is lowest, namely a vertex for
$\tau<1$, an adjacent pair for $1<\tau<2$ and the whole cycle for $\tau>2$. The
dashed line of the arc of three states is never lowest. (b) The same faces as points
$(\lambda_F,\lvert F\rvert-1)$, with the lower-left boundary of their convex
hull. The gray numbers are minus the slopes of its edges, the values of $\tau$
where the winner changes (Theorem~\ref{thm:hull}). The arc (open) lies above
the boundary}
\label{fig:C4worked}
\end{figure}

\subsection{Face selection}

We also need an upper bound on a whole face. This includes the region near its boundary, where the local law does not apply. For an irreducible face $F$ put
$V_F=\{\theta\in\R^F:\varrho(A_F-\operatorname{diag}\theta)=0\}$. Each
$\theta\in V_F$ gives an exponential bound with the linear rate
$\alpha\mapsto\langle q-\theta,\alpha\rangle$, and $J_F$ is the supremum of these
linear rates. Indeed, for $u\in(0,\infty)^F$ the entrywise quotient $\theta=A_Fu/u$
lies in $V_F$, since $u$ is a positive null vector of the irreducible
Metzler matrix $A_F-\operatorname{diag}\theta$ and only its Perron root has a
positive eigenvector. Then $\langle q-\theta,\alpha\rangle$ is the function of $u$
inside the supremum that defines $J_F(\alpha)$. Conversely every $\theta\in V_F$
arises in this way from a positive Perron vector of
$A_F-\operatorname{diag}\theta$. For a field $v\in\R^F$ the vector
$\theta=q-v-\lambda_F(v)\ones$ lies in $V_F$, since
$A_F-\operatorname{diag}\theta=Q_F+\operatorname{diag}v+\lambda_F(v)I$.

\begin{lemma}[a uniform upper bound]\label{lem:upper}
Let $F$ be irreducible with $k=\lvert F\rvert\ge2$, and let $\theta\in V_F$.
There is $C_\theta$, depending on $\theta$, $Q$ and $\mu$ and bounded for $\theta$
in compact subsets of $V_F$, such that for every $n\in\Z_{\ge0}^F$ with
$\supp n=F$ and every $h$ with $hq_{\max}\le\frac12$, the probability that
$X_1,\dots,X_{\lvert n\rvert}$ have composition $n$ is at most
$C_\theta h^{k-1}\exp\bigl(-h\langle n,q-\theta\rangle\bigr)$.
\end{lemma}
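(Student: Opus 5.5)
The plan is to start from the run expansion of Lemma~\ref{lem:runs}, written for the first $\lvert n\rvert$ steps of the chain. We then bound the word weight $W(m)$ by a product of powers of $\theta$, using a Doob transform built from the Perron vector that $\theta$ determines. After that, the sum over the run numbers $m$ factorizes over the states and is computed exactly by the binomial theorem. Assume $\theta\in V_F$ and let $u\in(0,\infty)^F$ be a positive Perron vector of $A_F-\operatorname{diag}\theta$, so that $A_Fu=\theta\circ u$ entrywise, normalized by $\min_iu_i=1$. Because $F$ is irreducible with $k\ge2$, every row of $A_F$ has a positive entry, so $\theta_i=(A_Fu)_i/u_i>0$ for every $i\in F$. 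The Perron root is simple, so $u$ depends continuously on $\theta$ along $V_F$. Hence $\max u/\min u$, $\max_i\theta_i$ and $1/\min_i\theta_i$ are bounded for $\theta$ in a compact subset of $V_F$, and these three quantities will make up $C_\theta$.

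\emph{Step 1: the bound on $W(m)$.} Put $p_{ab}=q_{ab}u_b/(\theta_au_a)$ for $a\ne b$ in $F$. The rows of $p$ sum to $1$, so $p$ is a stochastic matrix on $F$. For a word $c=c_1\cdots c_M$ the product telescopes:
\[
\prod_{j<M}q_{c_jc_{j+1}}=\frac{u_{c_1}}{u_{c_M}}\prod_{j<M}\theta_{c_j}\prod_{j<M}p_{c_jc_{j+1}}
\le\frac{\max u}{\min u}\cdot\frac{1}{\min_i\theta_i}\prod_{i\in F}\theta_i^{m_i}\prod_{j<M}p_{c_jc_{j+1}}.
\]
We sum over the words with run numbers $m$ and weight each by $\mu_{c_1}$. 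The words with run numbers $m$ are distinct words of length $M$, so the sum of $\mu_{c_1}\prod_jp_{c_jc_{j+1}}$ over them is at most the total mass of the $p$-chain on words of length $M$, which is $1$. This gives $W(m)\le C'_\theta\prod_i\theta_i^{m_i}$.

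\emph{Step 2: the exact sum over $m$.} Write $h^{M-1}=h^{k-1}\prod_ih^{m_i-1}$ and insert the bound of Step 1 into Lemma~\ref{lem:runs}. The sum over $m$ then splits into a product over $i\in F$ of
\[
\theta_i\sum_{m_i\ge1}\binom{n_i-1}{m_i-1}(h\theta_i)^{m_i-1}(1-hq_i)^{n_i-m_i}=\theta_i\bigl(1-hq_i+h\theta_i\bigr)^{n_i-1}\le\theta_i\,e^{-h(q_i-\theta_i)(n_i-1)}.
\]
The base is positive because $hq_i\le\frac12$ and $\theta_i>0$, and the inequality is $1+x\le e^x$. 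Since $h(q_i-\theta_i)\le hq_i\le\frac12$, the shift from $n_i-1$ to $n_i$ costs at most a factor $e^{1/2}$ per state. Collecting the factors gives
\[
P\le C'_\theta\Bigl(\prod_i\theta_i\Bigr)e^{k/2}\,h^{k-1}e^{-h\langle n,q-\theta\rangle},
\]
which is the claim, with a constant that is bounded on compact subsets of $V_F$.

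\emph{Where care is needed.} The main obstacle is avoiding any loss that grows with $m$. A crude estimate such as $(1-hq_i)^{-m_i}\le2^{m_i}$ would replace $\theta$ by a multiple of $\theta$ in the exponent, and the bound would then fail. That is why the binomial sum has to be carried out exactly, with the factor $(h\theta_i)^{m_i-1}$ kept together with $\binom{n_i-1}{m_i-1}$. The other point is the telescoping: the boundary factor $u_{c_1}/u_{c_M}$, together with the one missing factor $\theta_{c_M}$, must be absorbed into $C_\theta$ uniformly in the word. This is where positivity of $u$ and of $\theta$, which comes from irreducibility and $k\ge2$, is used.
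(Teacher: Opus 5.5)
Your proof is correct, and it takes a different route from the paper. The paper never bounds $W(m)$ term by term. It writes $P_{|n|}(n)$ as the torus integral of Lemma~\ref{lem:torus} with the radii $\theta'$, which are admissible by \eqref{eq:A-radii}: $\varrho(A'-\operatorname{diag}\theta')<0$ and $g(\theta')=\theta_1$. It then bounds the integrand by its value at the real point, using that $X$, $g$ and $b$ have nonnegative coefficients. This gives \eqref{eq:A-chernoff},
\[
P_{|n|}(n)\le h^{k-1}b(\theta')\prod_{i\ge2}\theta_i\prod_{i\in F}(p_i+h\theta_i)^{n_i-1},
\]
with $C_\theta=2^kb(\theta')\prod_{i\ge2}\theta_i$. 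Your Doob transform with the Perron vector $u$ gives instead the termwise bound $W(m)\le\frac{\max u}{\min u}\,\frac{1}{\min_i\theta_i}\prod_i\theta_i^{m_i}$. The exact binomial sum then produces the same product $\prod_i\theta_i(p_i+h\theta_i)^{n_i-1}$, with a different constant.

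Your route is elementary and self-contained. It needs only Lemma~\ref{lem:runs} and Perron--Frobenius, with no resolvent, Schur complement or Cauchy integral. The same termwise bound on $W(m)$ also gives at once the continuum bound \eqref{eq:A-Hsigma}, namely $H_\sigma(t)\le C\sigma^{k-1}\prod_i\theta_i\,e^{\sigma\langle t,\theta\rangle}$, and the form \eqref{eq:A-chernoff} for every $hq_{\max}\le1$. The paper uses both later.

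The paper's route is cheaper in context, because the torus representation has to be built anyway for the local law. Its constant $b(\theta')$ is the same function whose value at the critical point gives $C_F(\alpha)$. The same representation, with domination by nonnegative weights, is also what Section~\ref{sec:boundary-layer} reuses for complex weights through Lemma~\ref{lem:torus-complex}.
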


The proof is in Appendix~\ref{app:torus}. It writes the probability as an integral
over a torus and bounds the integrand by its value at the real point. The lemma
holds for a path of any length $\lvert n\rvert$, so it also applies to the blocks of
a path. With $\theta=q-v-\lambda_F(v)\ones$ and $\lvert n\rvert=N$ it gives
\begin{equation}\label{eq:upper-field}
 P_N(n)e^{Tv\cdot n/N}\le C_\theta h^{k-1}e^{-T\lambda_F(v)}
\end{equation}
for every $n$ with support $F$. The next theorem uses the local law and the
face maxima of Section~\ref{sec:secondorder}, which are proved there without
it. Its part (i) is a weak form of Theorem~\ref{thm:stratified}(b), and its part
(ii) is the only part that bounds the face maxima of reducible faces.

\begin{theorem}[first-order face selection]\label{thm:first}
Fix $Q$ and $\mu$.
\begin{enumerate}[(i)]
\item Let $F$ satisfy \textup{(H)}, let $k=\lvert F\rvert$, and let
$\mathcal K\subset\Delta_F^\circ$ be compact. There is $N_0$ such that,
uniformly for $N\ge N_0$, $1\le T\le N^{1/3}$ and $\supp n=F$ with
$n/N\in\mathcal K$,
\[
 \log P_N(n)=-(k-1)L-T\,J_F(n/N)+O_{\mathcal K}(1+\log T).
\]
\item Let $F$ be admissible with components $C_1,\dots,C_r$, and let $k^*$ be
the least size of a component $C_j$ with $\lambda_{C_j}=\lambda_F$. There are
$\kappa$, $T_0$ and $N_0$ such that for $N\ge N_0$ and $T_0\le T\le N^{1/3}$
\begin{align*}
 \log M_F&\ge-(k_F-1)L+(k_F-k^*)\log T-T\lambda_F-\kappa,\\
 \log M_F&\le-(k_F-1)L+(k_F-1)\log T-T\lambda_F+\kappa.
\end{align*}
If $F$ is not admissible, $M_F=0$.
\item Let $T=\tau L$ with $\tau>0$ fixed, and let $\mathcal F_\tau$ be the set
of admissible faces that minimize $\Phi_\tau$. Then
$\log\max_nP_N(n)=-L\min_F\Phi_\tau(F)+O(\log L)$, with the minimum over
admissible faces, and for large $N$ every global mode $\hat n$ has
$\supp\hat n\in\mathcal F_\tau$. This holds for every start $\mu$.
\item If in (iii) every face in $\mathcal F_\tau$ is irreducible, then every
global mode $\hat n$ satisfies $\hat n/N=\alpha^*_F+O(1/L)$ with
$F=\supp\hat n$. This is the case when $Q$ has symmetric support or $\mu$ has
full support (Theorem~\ref{thm:hull}(c)).
\end{enumerate}
\end{theorem}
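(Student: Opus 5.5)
Part (i) is the weak form of Theorem~\ref{thm:stratified}(b), which we may take as given since its proof in Section~\ref{sec:secondorder} does not use this section. Uniformly for $1\le T\le N^{1/3}$ and $n/N\in\mathcal K$ it gives $-\log P_N(n)=(k-1)L+TJ_F(n/N)-\frac{k-1}2\log T-\log C_F(n/N)+O(1/T+T^2/N)$. Since $C_F$ is continuous and positive on $\mathcal K$, $\log C_F$ is $O_{\mathcal K}(1)$, and $T^2/N\le N^{-1/3}$. This yields (i).

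For (ii) I take the components in the order of Lemma~\ref{lem:admissible}(a). The lower bound comes from an explicit family of paths that follow the component chain. Fix $C_{j^*}$ with $\lambda_{C_{j^*}}=\lambda_F$ and $|C_{j^*}|=k^*$. The path enters each component $C_j$ with $j\ne j^*$ at a fixed entry state. It fills that component with a fixed walk whose runs have lengths between $N/T$ and $2N/T$, so the time spent there is of order one. It then leaves along the edge $x_j\to y_{j+1}$. In $C_{j^*}$ it spends the remaining time $T-O(1)$. Each component $C_j\ne C_{j^*}$ costs $h^{|C_j|-1}$ for its internal switches and $h$ for the exit switch. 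The run lengths range over a box of $\asymp(N/T)^{|C_j|}$ choices with total count fixed, but only the composition matters, so I count compositions instead of paths. This gives $h^{|C_j|}\cdot(N/T)^{\cdots}$ combined to $(T/N)^{|C_j|}$ times $N/T$ choices of total time. Summing, the composition restricted to $C_{j^*}$ is at its face maximum by Theorem~\ref{thm:facemax} for the irreducible face $C_{j^*}$, with a start law equal to the entry distribution. That maximum is $N^{-(k^*-1)}T^{(k^*-1)/2}e^{-T\lambda_F}$, and it is at least $N^{-(k^*-1)}e^{-T\lambda_F}$.

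Multiplying, the non-starred components contribute $(T/N)^{k_F-k^*}$ in total. Bookkeeping the exponent of $N$ gives $-(k_F-1)$ and a power of $T$ at least $k_F-k^*$. It remains to check that the discrete transitions give the constant $\kappa$. For the upper bound I sum Lemma~\ref{lem:upper} over blocks. I decompose any path with support $F$ into maximal blocks in each component, and there are at most $r$ such blocks, in forward order. Each block is bounded by $C_\theta h^{|C_j|-1}e^{-h\langle n^{(j)},q-\theta_j\rangle}$ with $\theta_j=q-\lambda_{C_j}\ones$, and each is at most $h^{|C_j|-1}e^{-h|n^{(j)}|\lambda_F}$. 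The $r-1$ switches contribute $h^{r-1}$, and summing over the $\le N^{r-1}$ block boundaries gives a factor $N^{r-1}$. The product is at most $C h^{k_F-r}h^{r-1}N^{r-1}e^{-T\lambda_F}$, which equals $CN^{-(k_F-1)}T^{k_F-1}e^{-T\lambda_F}$. If $F$ is not admissible, no positive path exists, so $M_F=0$.

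For (iii), take $T=\tau L$ and apply (ii) to every face. This gives $\log M_F=-L\Phi_\tau(F)+O(\log L)$ for admissible $F$. There are finitely many faces, so $\log\max_n P_N(n)=-L\min\Phi_\tau+O(\log L)$. A face outside $\mathcal F_\tau$ trails by at least $cL$, which beats $O(\log L)$. For (iv), the winning faces satisfy (H) by Lemma~\ref{lem:admissible}(b). Off a neighborhood of $\alpha^*_F$, Proposition~\ref{prop:rate}(a),(d) and the bound \eqref{eq:upper-field} with suitable $\theta$ push the probability below $M_F$ by a factor $e^{-cT}$. Inside a compact neighborhood, (i) together with the quadratic growth $J_F-\lambda_F\asymp|\alpha-\alpha^*_F|^2$ and the $O(\log T)$ error gives only $|\alpha-\alpha^*|=O(\sqrt{\log L/L})$. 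To reach $O(1/L)$ I need the sharp form of Theorem~\ref{thm:stratified}, which has a smooth $C_F$ and error $O(1/T)$. With it, maximizing $TJ_F-\log C_F$ moves the minimizer by $O(1/T)$. The main obstacle is the region near the boundary of $\Delta_F$, where the local law fails. I expect to handle it with Lemma~\ref{lem:upper} and a $\theta$ chosen so that the linear rate exceeds $\lambda_F$ there, using strict inequality on lower faces and compactness.
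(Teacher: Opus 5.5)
Your parts (i) and (iii) follow the paper. Your plan for (iv) re-derives the location of the maximizer in Theorem~\ref{thm:facemax}(b), which is proved in Section~\ref{sec:secondorder} without the present theorem, so you can simply cite it, as the paper does. The paper obtains (ii) from Theorem~\ref{thm:reducible}, and Appendix~\ref{app:first} gives the direct path argument you are attempting. Your version of that argument has two concrete errors.

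\emph{The upper bound.} There are no block boundaries to sum over. For a fixed $n$, each component is visited in one block, in the unique order of Lemma~\ref{lem:admissible}(a). So block $j$ has length $N_j=\sum_{i\in C_j}n_i$, which $n$ determines. The only sum is over the finitely many patterns, that is, the first state and the switches $x_j\to y_{j+1}$. Your factor $N^{r-1}$ is therefore spurious. The identity you use to absorb it is also false for $r\ge2$: $h^{k_F-r}h^{r-1}N^{r-1}=T^{k_F-1}N^{-(k_F-r)}$, not $T^{k_F-1}N^{-(k_F-1)}$. As derived, your bound is too weak by $N^{r-1}$. Drop the sum, and treat singleton blocks by the exact factor $(1-hq_i)^{N_j-1}$, since Lemma~\ref{lem:upper} needs $k\ge2$. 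This gives $P_N(n)\le\kappa(T/N)^{k_F-1}e^{-T\lambda_F}$ directly.

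\emph{The lower bound.} Theorem~\ref{thm:facemax} bounds the probability that a chain started in $C_{j^*}$ has a given composition, with no condition on its final state. Unless $j^*=r$ or $k^*=1$, the block in $C_{j^*}$ must end at a state $x_{j^*}$ with $q_{x_{j^*}y_{j^*+1}}>0$. So you need a bridge probability, namely composition $n^{(j^*)}$ together with the block ending at $x_{j^*}$, and the face maximum does not control it. You also cannot always take $j^*=r$, since a component of least exit rate need not come last.

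The paper's direct proof avoids the local law at this step. Fix $m=\lceil N/T\rceil$ for every other block and $\ell=N-(r-1)m$ for block $j^*$. The path runs $\ell-2m+1$ steps inside $C_{j^*}$, with survival probability at least $(1-h\lambda_F)^{\ell}\min w/\max w$ by the Perron vector $w$ of $Q_{C_{j^*}}$. These steps are flanked by two pieces of length $m$ that pin the endpoints $y_{j^*},x_{j^*}$ and visit every state (Claims A and B). The whole family of paths then has probability at least $c\,h^{r-1}e^{-T\lambda_F}$. Dividing by its at most $(N+1)^{k^*-1}\prod_{j\ne j^*}(m+1)^{|C_j|-1}$ compositions gives the bound with $T^{k_F-k^*}$, which is all (ii) asks.

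If you want the sharper power $T^{p_F}$ instead, you need a bridge local law at the maximizer, as in Lemma~\ref{lem:block-max}(ii). That is the route of Theorem~\ref{thm:reducible}.
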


\begin{proof}
(i) This is Theorem~\ref{thm:stratified}(b), since $J_V=J_F$ on $\Delta_F$, $\log C_F$
is bounded on $\mathcal K$ and $J_{\{i\}}=q_i$.

(ii) This follows from Theorem~\ref{thm:reducible} below, whose proof in
Appendix~\ref{app:counts} does not use the present theorem. After $T_1$ and $N_1$ are
enlarged, that theorem gives $\log M_F=-(k_F-1)L+p_F\log T-T\lambda_F+O(1)$ for $N\ge N_1$ and
$T_1\le T\le N^{1/3}$. Since $T\ge1$ and $k_F-k^*\le p_F\le k_F-1$, both bounds hold with
$T_0=T_1$ and $N_0=N_1$. If $F$ is not admissible, no path has support $F$, so $M_F=0$.
Appendix~\ref{app:first} gives a direct proof by explicit paths, which does not
use the local law.

(iii) For large $N$ we have $T_0\le T=\tau L\le N^{1/3}$, and both bounds in (ii)
read $\log M_F=-L\,\Phi_\tau(F)+O(\log L)$ for every admissible face, while
$M_F=0$ for the others. Since $\max_nP_N(n)=\max_FM_F$ over finitely many faces,
the first claim follows. Let $m_\tau=\min_F\Phi_\tau(F)$. An admissible
$F\notin\mathcal F_\tau$ has $\Phi_\tau(F)\ge m_\tau+\varepsilon_0$ for a fixed
$\varepsilon_0>0$, so $M_F\le N^{-m_\tau-\varepsilon_0+o(1)}$, which is smaller
than $\max_nP_N(n)=N^{-m_\tau+o(1)}$ for large $N$. A global mode has positive
probability, so its support is admissible, and hence it lies in $\mathcal F_\tau$.

(iv) By (iii), for large $N$ a global mode $\hat n$ maximizes $P_N$ on its face
$F\in\mathcal F_\tau$, which satisfies (H) by Lemma~\ref{lem:admissible}(b). So
Theorem~\ref{thm:facemax}(b) with $T=\tau L\le N^{1/3}$ gives
$\hat n/N=\alpha^*_F+O(1/L)$.
\end{proof}

On a reducible face the power of $T$ can differ from $(k_F-1)/2$, as in
Example~\ref{ex:threecycle}. Theorem~\ref{thm:reducible} below gives the power and the constant
for every admissible face.

\begin{remark}[a field]\label{rem:first-field}
Let $v\in\R^d$, put $M_F(T,v)=\max\{P_N(n)e^{Tv\cdot n/N}:\supp n=F\}$ and
$\Phi_{\tau,v}(F)=|F|-1+\tau\lambda_F(v_F)$. On an admissible face,
$Q_F+\operatorname{diag}v_F$ is block triangular in the order of the
components, so $\lambda_F(v_F)=\min_j\lambda_{C_j}(v_{C_j})$. The upper bound in
(ii) holds for $M_F(T,v)$ with $\lambda_F(v_F)$ in place of $\lambda_F$. The proof
in Appendix~\ref{app:first} applies with~\eqref{eq:upper-field} on each block and
the exact weight $e^{hv_iN_j}$ on a block $\{i\}$ of length $N_j$. For $F$ satisfying (H),
Theorem~\ref{thm:facemax} gives the matching lower bound. If $\mu$ has full
support, a reducible admissible face $F$ has a component $C$ with
$\lambda_C(v_C)=\lambda_F(v_F)$, which is admissible with fewer states, so
$\Phi_{\tau,v}(C)\le\Phi_{\tau,v}(F)-1$. Hence for
fixed $\tau$ and $v$, (iii)
and (iv) hold for the weighted law with $\Phi_{\tau,v}$ and $\alpha^*_F(v)$, every
face that minimizes $\Phi_{\tau,v}$ is irreducible, and a strict minimizer carries
every maximizer of the weighted law for large $N$.
\end{remark}

\subsection{Reading the winners off a convex hull}

For an admissible face $F$ put $P_F=(\lambda_F,\lvert F\rvert-1)$, and let $\mathcal S$
be the set of these points. Then $\Phi_\tau(F)=\tau x+y$ at $(x,y)=P_F$. Picture
a line of slope $-\tau$ moving up from far below. The first points of $\mathcal S$ that
it touches are the winners at $\tau$. For each $\tau>0$ the points of
$\operatorname{conv}\mathcal S$ where $\tau x+y$ is least form a vertex or an edge of
$\operatorname{conv}\mathcal S$, and we call the union of these over all $\tau>0$ the
lower-left boundary of $\operatorname{conv}\mathcal S$. Figure~\ref{fig:C4worked}(b) shows
it for the 4-cycle.

\begin{theorem}[geometry of the winners]\label{thm:hull}
Let $\mathcal F_\tau$ be the set of admissible faces that minimize
$\Phi_\tau$.
\begin{enumerate}[(a)]
\item The lower-left boundary of $\operatorname{conv}\mathcal S$ is a path whose
vertices lie in $\mathcal S$. Read from right to left, they are
$v_i=(\lambda^{(i)},y^{(i)})$, $0\le i\le m$, with
$\lambda^{(0)}>\dots>\lambda^{(m)}$ and $y^{(0)}<\dots<y^{(m)}$. Put
$\tau_i=(y^{(i)}-y^{(i-1)})/(\lambda^{(i-1)}-\lambda^{(i)})$ for $1\le i\le m$,
$\tau_0=0$ and $\tau_{m+1}=\infty$. Then $\tau_1<\dots<\tau_m$. For
$\tau_i<\tau<\tau_{i+1}$, $\mathcal F_\tau$ is the set of faces $F$ with
$P_F=v_i$. At $\tau=\tau_i$ it is the set of faces whose point lies on the
closed edge $[v_{i-1},v_i]$.
\item If $0<\tau<\tau'$, $F\in\mathcal F_\tau$ and $G\in\mathcal F_{\tau'}$,
then $\lambda_F\ge\lambda_G$ and $\lvert F\rvert\le\lvert G\rvert$. Both
inequalities are strict when $\mathcal F_\tau\cap\mathcal F_{\tau'}=\emptyset$.
\item If $Q$ has symmetric support, or $\mu$ has full support, then every face
in $\mathcal F_\tau$ is irreducible, for every $\tau>0$.
\end{enumerate}
\end{theorem}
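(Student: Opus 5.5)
Parts (a) and (b) are pure convex geometry of the finite point set $\mathcal S\subset[0,\infty)\times\{0,1,\dots,d-1\}$, with no probability in them. Part (c) is combinatorics based on Lemma~\ref{lem:admissible} and the block-triangular form of $Q_F$. I would treat them in that order.

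For (a), note that $\mathcal S$ is finite and nonempty, so for each $\tau>0$ the linear functional $\ell_\tau(x,y)=\tau x+y$ attains its minimum over $\operatorname{conv}\mathcal S$ on a face of the hull, namely a vertex or an edge. Its vertices are points of $\mathcal S$, so the minimum of $\ell_\tau$ over $\mathcal S$ equals that over the hull, and $\mathcal F_\tau$ is the set of $F$ with $P_F$ in that face. Any two consecutive points of the boundary path are both minimizers of some $\ell_\tau$. If one had the smaller $\lambda$ and not the larger $y$, it would be strictly better for every $\tau>0$. Hence, reading from right to left, $\lambda$ strictly decreases and $y$ strictly increases, which gives the ordering of the $v_i$.

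The value $\tau_i$ is exactly the $\tau$ at which $\ell_\tau(v_{i-1})=\ell_\tau(v_i)$. Convexity of the lower-left chain gives the slopes $-\tau_i$ decreasing, so $\tau_1<\dots<\tau_m$. For $\tau_i<\tau<\tau_{i+1}$ the vertex $v_i$ is the unique minimizer over the hull. A point of $\mathcal S$ that is not a vertex but lies on the minimizing face must then equal $v_i$. At $\tau=\tau_i$ the minimizing face is the edge $[v_{i-1},v_i]$, and any point of $\mathcal S$ on it, vertex or not, wins. I would check the endpoint conventions separately: $\tau_0=0$ and $\tau_{m+1}=\infty$. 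At the right end, $v_0$ must be the point of smallest $y$ among those with the largest $\lambda$ on the chain, and this has to be justified via the small-$\tau$ limit.

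For (b), the monotonicity follows from the standard exchange argument. Write $\Phi_\tau(F)\le\Phi_\tau(G)$ and $\Phi_{\tau'}(G)\le\Phi_{\tau'}(F)$ and add the two inequalities. This gives $(\tau'-\tau)(\lambda_F-\lambda_G)\ge0$, so $\lambda_F\ge\lambda_G$. Substituting back gives $|G|-|F|\ge\tau(\lambda_F-\lambda_G)\ge0$. For strictness, suppose $\lambda_F=\lambda_G$. Then $|F|\le|G|$ and $|G|\le|F|$ together force $\Phi_\tau(F)=\Phi_\tau(G)$, so $G\in\mathcal F_\tau$, which contradicts disjointness. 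Suppose instead $|F|=|G|$. Then $\tau'(\lambda_G-\lambda_F)\le0\le\tau(\lambda_F-\lambda_G)$, which again forces equality and gives the same contradiction.

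For (c), the symmetric-support case is immediate: by Lemma~\ref{lem:admissible}(c) every admissible face is irreducible. For full support, let $F$ be reducible and admissible. As noted after Definition~\ref{def:faces}, $\lambda_F=\min_j\lambda_{C_j}$, so some component $C$ attains $\lambda_C=\lambda_F$. This $C$ is irreducible, and since $\mu(C)>0$ it is admissible by Lemma~\ref{lem:admissible}(b). Also $|C|\le|F|-1$, so $\Phi_\tau(C)\le\Phi_\tau(F)-1<\Phi_\tau(F)$ for every $\tau$, and $F$ is never a minimizer.

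The only delicate point is the bookkeeping in (a). Points of $\mathcal S$ can coincide, since several faces can share the same $(\lambda_F,|F|-1)$, and points can lie on an edge without being vertices. The statement about $\mathcal F_{\tau_i}$ has to count all of these, and I would handle them with the face-of-the-hull description rather than by listing vertices.
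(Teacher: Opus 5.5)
Your proposal is correct. Parts (b) and (c) follow the paper's argument. For (b) the paper also adds the two optimality inequalities. For (c) it also passes to a component $C$ with $\lambda_C=\lambda_F$, which is admissible by Lemma~\ref{lem:admissible}(b) and satisfies $\Phi_\tau(C)\le\Phi_\tau(F)-1$.

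In (b) the paper gets strictness more directly than you do. Once $\lambda_F>\lambda_G$, the inequality $|G|-|F|\ge\tau(\lambda_F-\lambda_G)$ is already strict, so your second case ($|F|=|G|$) is unnecessary. As you wrote it, the chain $\tau'(\lambda_G-\lambda_F)\le0\le\tau(\lambda_F-\lambda_G)$ does not force equality by itself. You also need $\tau(\lambda_F-\lambda_G)\le0$, which comes from $\Phi_\tau(F)\le\Phi_\tau(G)$ with $|F|=|G|$.

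For (a) you work directly with the polygon $\operatorname{conv}\mathcal S$ and the faces exposed by the functionals $\tau x+y$. The paper takes the dual route through the lower envelope $g(\tau)=\min_{(x,y)\in\mathcal S}(\tau x+y)$, a concave piecewise-affine function. It proceeds in four steps:
\begin{itemize}
\item it defines $\tau_1<\dots<\tau_m$ as the kinks of $g$;
\item it shows that on each open interval between kinks exactly one point $v_i$ attains $g$, because an affine function that lies above $g$ and touches it inside an affine piece must coincide with that piece;
\item it reads off $\lambda^{(0)}>\dots>\lambda^{(m)}$ from the strictly decreasing slopes of $g$, and $y^{(i)}>y^{(i-1)}$ from $\tau_i>0$;
\item it identifies the minimizers at a kink by a one-sided sign argument, namely $(\tau-\tau_i)(x-\lambda^{(i-1)})\ge0$ for $\tau$ slightly below $\tau_i$.
\end{itemize}

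The envelope route makes the chain structure an output rather than an input. Your argument starts from ``consecutive points of the boundary path''. It also uses, without proof, that $v_i$ is the unique minimizer exactly for $\tau$ strictly between the parameters of its two adjacent edges. Both are standard facts about convex polygons, but they are essentially what (a) asserts.

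The paper also treats the end intervals $(0,\tau_1)$ and $(\tau_m,\infty)$ by the same argument as the interior ones, so the separate endpoint check you flag is not needed there. Your primal picture does have the advantage of making the bookkeeping of coincident points, and of non-vertex points of $\mathcal S$ on an edge, transparent through the face description.
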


\begin{proof}
(a) We prove this through the least value $g(\tau)=\min_{(x,y)\in\mathcal S}(\tau x+y)$ for $\tau>0$. Each point
$(x,y)$ of $\mathcal S$ gives the affine function $\tau\mapsto\tau x+y$, whose slope is $x$, and different
points give different functions. Hence $g$ is a minimum of finitely many affine functions, so it is
concave and piecewise affine. Let $\tau_1<\dots<\tau_m$ be the points of $(0,\infty)$ where its
slope changes, and put $\tau_0=0$ and $\tau_{m+1}=\infty$.

\emph{Between two kinks.} On $(\tau_i,\tau_{i+1})$ the function $g$ is affine. The affine
function of a point of $\mathcal S$ is nowhere below $g$. So if it equals $g$ at one $\tau$ inside this
interval, it equals $g$ on the whole interval. Hence exactly one point of $\mathcal S$ attains $g$ there.
We call it $v_i=(\lambda^{(i)},y^{(i)})$, and for $\tau_i<\tau<\tau_{i+1}$ the set
$\mathcal F_\tau$ consists of the faces $F$ with $P_F=v_i$. The slope of $g$ on this interval is
$\lambda^{(i)}$. The slope of a concave function decreases, and it changes at each $\tau_i$, so
$\lambda^{(0)}>\dots>\lambda^{(m)}$. By continuity both $v_{i-1}$ and $v_i$ attain $g(\tau_i)$,
namely
\[
\tau_i\lambda^{(i-1)}+y^{(i-1)}=\tau_i\lambda^{(i)}+y^{(i)} .
\]
This is the formula for $\tau_i$ in the statement, and since $\tau_i>0$ it gives
$y^{(i)}>y^{(i-1)}$.

\emph{At a kink.} The points $(x,y)$ of $\mathcal S$ that attain $g(\tau_i)$ are those on the line
$\tau_ix+y=g(\tau_i)$, which passes through $v_{i-1}$ and $v_i$. Let $(x,y)$ be one of them. For
$\tau$ slightly below $\tau_i$ we have $\tau x+y\ge g(\tau)=\tau\lambda^{(i-1)}+y^{(i-1)}$, with
equality at $\tau=\tau_i$. Subtracting gives $(\tau-\tau_i)(x-\lambda^{(i-1)})\ge0$, so
$x\le\lambda^{(i-1)}$. In the same way, $\tau$ slightly above $\tau_i$ gives
$x\ge\lambda^{(i)}$. So $(x,y)$ lies on the closed edge $[v_{i-1},v_i]$. Conversely every point of
$\mathcal S$ on this edge lies on the line and so attains $g(\tau_i)$.

\emph{The boundary.} On $\operatorname{conv}\mathcal S$ the function $\tau x+y$ is least exactly on the
convex hull of the points of $\mathcal S$ where it is least. By the two steps above this hull is the
vertex $v_i$ for $\tau_i<\tau<\tau_{i+1}$ and the edge $[v_{i-1},v_i]$ for $\tau=\tau_i$. So the
lower-left boundary is the path $v_0\cdots v_m$, and its vertices lie in $\mathcal S$.

(b) We prove this by adding two inequalities. Since $F$ wins at $\tau$ and $G$ wins at $\tau'$, we have
$\Phi_\tau(F)\le\Phi_\tau(G)$ and $\Phi_{\tau'}(G)\le\Phi_{\tau'}(F)$. Adding them gives
$(\tau'-\tau)(\lambda_F-\lambda_G)\ge0$, so $\lambda_F\ge\lambda_G$. Then the first inequality
gives $\lvert F\rvert-\lvert G\rvert\le\tau(\lambda_G-\lambda_F)\le0$. If
$\lambda_F=\lambda_G$, the two inequalities give $\lvert F\rvert=\lvert G\rvert$, so
$\Phi_{\tau'}(F)=\Phi_{\tau'}(G)$ and $F\in\mathcal F_{\tau'}$. So if $\mathcal F_\tau$ and
$\mathcal F_{\tau'}$ are disjoint, then $\lambda_F>\lambda_G$, and the first inequality gives
$\lvert F\rvert<\lvert G\rvert$.

(c) With symmetric support every admissible face is irreducible by
Lemma~\ref{lem:admissible}(c). Now let $\mu$ have full support, and let $F$ be admissible and
reducible. Take a component $C$ of $F$ with $\lambda_C=\lambda_F$. It is irreducible with
$\mu(C)>0$, so it is admissible by Lemma~\ref{lem:admissible}(b). It has the same exit rate as
$F$ and fewer states, so $\Phi_\tau(C)\le\Phi_\tau(F)-1$. Hence $F\notin\mathcal F_\tau$.
\end{proof}

Note that a face with small $\lambda_F$ for its size is a set of states that the chain
rarely leaves. As $\tau$ grows the exit rate counts for more, so the mode moves
to larger such sets. It need not grow around its first vertex and it can move
to a disjoint set. Faces of one size share the term $\lvert F\rvert-1$. Hence only
the faces with the least exit rate of each size can win.

\subsection{Reducible faces}\label{sec:reducible-max}

Let $F$ be admissible, with components $C_1,\dots,C_r$ numbered as in
Lemma~\ref{lem:admissible}(a). A path of positive probability with support $F$ visits them in
this order, each in one block of consecutive times. The order is unique, since consecutive
blocks are joined by an edge and the components form an acyclic graph, in which at most one path
passes through every vertex. Put $k_j=|C_j|$ and $\lambda_j=\lambda_{C_j}$, so
$\lambda_F=\min_j\lambda_j$. Let $k^*$ be the least $k_j$ with $\lambda_j=\lambda_F$, let
$S^*$ be the set of $j$ with $\lambda_j=\lambda_F$ and $k_j=k^*$, and put
\[
a_F=\frac{k^*-1}2,\qquad p_F=a_F+k_F-k^*,\qquad R_j=(q_{xy})_{x\in C_j,\,y\in C_{j+1}} .
\]
For an irreducible $C$ with $|C|\ge2$, states $y,x\in C$ and $t\in(0,\infty)^C$ let
$\Phi_C(t)_{yx}=e^{-\langle q,t\rangle}H_{A_C,e_y,e_x}(t)$, in the notation before
Lemma~\ref{lem:torus-complex}. As in Lemma~\ref{lem:continuum}, this is the density of the
occupation times of the chain started at $y$ on the event that it stays in $C$ up to time
$|t|$, visits every state of $C$ and ends at $x$. For $C=\{i\}$ put $\Phi_C(t)=e^{-q_it}$. For
$j\in S^*$ let $r$ and $l$ be the vectors $r_{C_j}$ and $l_{C_j}$ of Definition~\ref{def:faces},
with $l\cdot r=1$, and let
\begin{align*}
A_j&=\sup\,e^{\lambda_F|t^<|}\,\mu_{C_1}^\top\Phi_{C_1}(t^{(1)})R_1\cdots\Phi_{C_{j-1}}(t^{(j-1)})R_{j-1}\,r,\\
B_j&=\sup\,e^{\lambda_F|t^>|}\,l^\top R_j\Phi_{C_{j+1}}(t^{(j+1)})R_{j+1}\cdots\Phi_{C_r}(t^{(r)})\,\ones,
\end{align*}
over all $t^{(m)}\in(0,\infty)^{C_m}$ for $m<j$, respectively $m>j$, where $|t^<|$ and $|t^>|$
are their total times, $A_1=\mu_{C_1}\cdot r$ and $B_r=l\cdot\ones$. Finally let
\[
K^{\rm red}_F=\max_{j\in S^*}\frac{A_jB_j}{(2\pi)^{a_F}\sqrt{\det\Sigma_{C_j}}},
\]
where $\Sigma_{C_j}$ is $\Sigma_\alpha$ of Definition~\ref{def:tilt} for the face $C_j$ at
$\alpha=\alpha^*_{C_j}$, and the denominator is $1$ if $k^*=1$. If $F$ is irreducible, then
$p_F=(k_F-1)/2$ and $K^{\rm red}_F=K_F$.

\begin{theorem}[face maxima on reducible faces]\label{thm:reducible}
Let $F$ be admissible. Then $0<K^{\rm red}_F<\infty$, and there are $T_1$, $N_1$ and $C$ such that
for $N\ge N_1$ and $T_1\le T\le N^{1/3}$
\[
\Bigl|\frac{M_F}{N^{-(k_F-1)}T^{p_F}e^{-T\lambda_F}}-K^{\rm red}_F\Bigr|\le C\Bigl(\frac1T+\frac{T^2}N\Bigr).
\]
In particular the ratio lies between $\frac12K^{\rm red}_F$ and $2K^{\rm red}_F$ after $T_1$ and
$N_1$ are enlarged.
\end{theorem}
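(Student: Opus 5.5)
The plan is to decompose every path with support $F$ into its $r$ blocks, one per component, and to sum the block contributions as a convolution over block lengths, working first in the continuum and then passing to counts. By Lemma~\ref{lem:admissible}(a) and the uniqueness of the order, a path of positive probability with support $F$ is a concatenation of blocks of lengths $N_1,\dots,N_r$, where block $j$ lives in $C_j$, fills it, and is joined to the next block by one switch $x\to y$ of probability $hR_{j,xy}$. Hence $P_N(n)$ factors as
\[
\sum_{N_1+\dots+N_r=N}\mu_{C_1}^\top P^{(1)}(n^{(1)})\,hR_1\,P^{(2)}(n^{(2)})\cdots hR_{r-1}\,P^{(r)}(n^{(r)})\ones ,
\]
where $P^{(j)}(n^{(j)})$ is the matrix of block probabilities in $C_j$ with given composition, indexed by start and end state. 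Once a composition $n$ is fixed, the blocks are determined by the $n^{(j)}=n|_{C_j}$, because the components are disjoint. So there is no sum over $N_j$: $N_j=|n^{(j)}|$, and the $r-1$ switches give the factor $h^{r-1}$. The first step is therefore an exact product formula for $P_N(n)$. The second step approximates each matrix $P^{(j)}$ by $h^{k_j-1}\Phi_{C_j}(hn^{(j)})$, with relative error $O(T^2/N)$, using the matrix version of Lemma~\ref{lem:discrete}, that is, the torus formula with boundary vectors $e_y,e_x$. This turns the problem into maximizing
\[
h^{k_F-1}\,\mu_{C_1}^\top\Phi_{C_1}(t^{(1)})R_1\cdots\Phi_{C_r}(t^{(r)})\ones
\]
over $t$ with $|t|=T$. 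The prefactor $h^{k_F-1}=N^{-(k_F-1)}T^{k_F-1}$ collects $h^{k_j-1}$ per block and $h$ per switch.

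The third step is a Laplace-type localization in this continuum maximization. Lemma~\ref{lem:upper} applied to each block with $\theta$ giving the rate $\lambda_{C_j}$ (Remark~\ref{rem:first-field}) bounds block $j$ by $C\,e^{-\lambda_{C_j}|t^{(j)}|}$. A block with $\lambda_j>\lambda_F$, or with $\lambda_j=\lambda_F$ but $k_j>k^*$, therefore loses either exponentially or a factor $T^{-(k_j-k^*)/2}$ relative to keeping its time bounded. This shows that the maximum is attained, up to $O(1/T)$, by giving time $T-O(1)$ to a single block $j\in S^*$ and times of order one to all others. The optimal configuration has several parts:
\begin{itemize}
\item The long block contributes its irreducible face maximum, $T^{-(k^*-1)}\cdot T^{(k^*-1)/2}(2\pi)^{-a_F}(\det\Sigma_{C_j})^{-1/2}$ times $e^{-\lambda_F|t^{(j)}|}$ and the Perron projection $r\,l^\top$. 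This comes from Theorem~\ref{thm:local} and Theorem~\ref{thm:facemax}, in the matrix form with entry $e_y$ and exit $e_x$, which gives $r_yl_x$ to leading order.
\item The short blocks before and after contribute the suprema $A_j$ and $B_j$.
\end{itemize}
The powers combine as $T^{k_F-1}\cdot T^{-(k^*-1)/2}=T^{p_F}$. Finiteness and positivity of $A_j$ and $B_j$ follow because each short block is bounded by $Ce^{-\lambda_{C_m}|t|}$ with $\lambda_{C_m}\ge\lambda_F$, and $\lambda_{C_m}>\lambda_F$ or bounded polynomial growth prevents escape. One then still has to check that the supremum is attained at bounded times, or is approached with an $O(1/T)$ loss. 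For components with $\lambda_m=\lambda_F$ and $k_m>k^*$ this needs the polynomial decay $T^{-(k_m-1)/2}$ from the local law.

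The main obstacle is uniformity in this localization. The error must be $O(1/T)$ uniformly, and configurations where two blocks of $S^*$, or a block of $S^*$ and one with $\lambda_m=\lambda_F$, share comparable long times must be shown to lose a power of $T$. For this I would use the upper bound on whole faces (Lemma~\ref{lem:upper} and Theorem~\ref{thm:facemax}) to bound the sum over splits. A split $s,T-s$ with both parts large costs $s^{-(k-1)/2}$-type factors relative to the local-law peak. Since $k^*\ge1$, giving the extra time to one block strictly dominates when the cost exponents add. When $k^*=1$ and two singleton blocks both have rate $\lambda_F$, the split is flat in $s$, and the count of splits would add a power. In that case the claimed power must already account for it, so I would check this degenerate case separately and expect it to dictate the definition via $\max_{j\in S^*}$. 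Finally, discretizing the optimizer to integer counts changes the value by a relative amount $O(1/T)$, because the limiting profile has Hessian of order $T$ in the long block and the short blocks vary on scale $1/h$.
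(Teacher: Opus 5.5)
Your architecture is the paper's: the exact product $P_N(n)=\mu_{C_1}^\top G_1(hR_1)\cdots(hR_{r-1})G_r\ones$ with no sum over block lengths, and per-block bounds from Lemma~\ref{lem:upper}. When $\lambda_m=\lambda_F$, the decay $(1\vee s)^{-(k_m-1)/2}$ comes from Theorem~\ref{thm:facemax}(b) applied to the block as a chain of $l\ge N^{2/3}$ steps. The rest also matches: a bridge face maximum giving $r_yl_x$, the suprema $A_j,B_j$, and $T^{k_F-1}T^{-(k^*-1)/2}=T^{p_F}$.

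\textbf{Step 2 fails as stated.} Lemma~\ref{lem:discrete} and its bridge form hold only for profiles in a compact subset of the open face and for times $\ge1$. So there is no uniform relative $O(T^2/N)$ comparison of every block with $h^{k_j-1}\Phi_{C_j}$. Yet an upper bound on $M_F$ must cover every composition, including long blocks whose profile approaches $\partial\Delta_{C_j}$ and blocks of time below $1$. You therefore cannot reduce to maximizing the continuum product over all $t$. The paper keeps the long block discrete and bounds $G_{C_j}(m)_{yx}$ for every $m$ (Lemma~\ref{lem:block-max}). It uses the local law on a ball around $\alpha^*_{C_j}$, and Lemma~\ref{lem:upper} with finitely many $\theta\in V_{C_j}$ off that ball. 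It compares with $\Phi$ only the blocks of bounded total time, with an additive error $C_Rh$ (Lemma~\ref{lem:block-short}). That error becomes relative because $A_j,B_j>0$.

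\textbf{The localization lacks its mechanism and is partly framed wrongly.} $M_F$ is a maximum over $n$, and each $n$ fixes every $N_j$, so there is no sum over splits and no counting factor. In the flat case $k^*=1$, time moves among singletons with $q_i=\lambda_F$ at no cost and no gain, so the power stays $p_F=k_F-1$. Your claim that the optimum gives $T-O(1)$ to one block fails there but is harmless: with the flat factors set to $1$, the product splits at every flat $j\in S^*$, and its supremum is $A_jB_j=K^{\rm red}_F$.

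Nor do spread configurations have to lose a power of $T$. Write $X=N^{-(k_F-1)}T^{p_F}e^{-T\lambda_F}$. The block bounds give $P_N(n)\le C_1h^{k_F-1}e^{-\lambda_FT}\prod_m\tilde w_m(s_m)$, where
\begin{itemize}
\item $\tilde w_m\le1$ for every block;
\item $\tilde w_m(u)\le\max(e^{-\delta_0u},u^{-1/2})$ for $u\ge1$ on every non-flat block, with $\delta_0$ the least positive $\lambda_m-\lambda_F$;
\item $\tilde w_j(u)\le u^{-a_F}$ for $j\in S^*$.
\end{itemize}
Fix $R_0$ large. The coarse bound is already $\le K^{\rm red}_FX$ in each of these cases:
\begin{itemize}
\item $k^*\ge2$ and the block with time $\ge T/r$ is not in $S^*$: the loss is $O(T^{-1/2})$ or exponential;
\item $k^*\ge2$, that block is some $j\in S^*$, but $T-s_j\ge R_0$: the loss is a fixed factor of order $\max(e^{-\delta_0R_0/r},(R_0/r)^{-1/2})$;
\item $k^*=1$ and the non-flat blocks carry total time $\ge R_0$: the same fixed-factor loss.
\end{itemize}
So no $O(1/T)$ precision is needed there. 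Only configurations with $T-s_j<R_0$ need the sharp bound $X(K^{\rm red}_F+C(1/T+T^2/N))$, and Lemma~\ref{lem:block-short} with $R=R_0$ together with the bridge maximum supplies it.
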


Hence the power of $T$ is $p_F$, between the two powers of Theorem~\ref{thm:first}(ii). The time
goes to the components of least exit rate and least size $k^*$, and to one of them when
$k^*\ge2$. Such a component contributes $h^{k^*-1}T^{-(k^*-1)/2}$ through its local law, and each
other component $C$ gets time of order one and contributes $h^{|C|-1}$. Each switch from one component to the next contributes
one more factor $h$. The constants $A_j$ and $B_j$ are maxima over these short blocks.
For irreducible $F$ the theorem is Theorem~\ref{thm:facemax}(b) at $v=0$. The proof is in
Appendix~\ref{app:counts}.

\begin{corollary}[crossings of admissible faces]\label{cor:reducible-crossing}
Let $F$ and $G$ be admissible, with $\Delta=k_F-k_G\ge1$ and $\delta=\lambda_G-\lambda_F>0$. Put
$\pi=p_F-p_G$, $\kappa=\log(K^{\rm red}_F/K^{\rm red}_G)$ and $D_N=\log M_F-\log M_G$, and let
$T_1$ serve for both faces in Theorem~\ref{thm:reducible}.
\begin{enumerate}[(i)]
\item For large $N$, $D_N$ has a zero in $[\frac\Delta{2\delta}L,\frac{2\Delta}\delta L]$.
\item For large $N$, every zero $T_N\in[T_1,N^{1/3}]$ of $D_N$ satisfies
$\delta T_N+\pi\log T_N=\Delta L-\kappa+O(1/L)$.
\item All such zeros lie in one interval of length $O(1/L)$, and
\[
T_N=\frac\Delta\delta\Bigl[L-\frac\pi\Delta\log L+b_{FG}\Bigr]+O\Bigl(\frac{\log L}L\Bigr),\qquad
b_{FG}=-\frac\pi\Delta\log\frac\Delta\delta-\frac\kappa\Delta .
\]
\end{enumerate}
\end{corollary}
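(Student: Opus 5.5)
The plan is to work entirely from Theorem~\ref{thm:reducible}, applied to $F$ and to $G$ with a common $T_1$ and $N_1$. On $[T_1,N^{1/3}]$ it gives
\[
D_N(T)=-\Delta L+\pi\log T+\delta T+\kappa+E_N(T),\qquad |E_N(T)|\le C'\Bigl(\frac1T+\frac{T^2}N\Bigr).
\]
The main term comes from $-(k_F-1)L+(k_G-1)L=-\Delta L$, the term $(p_F-p_G)\log T$, and $-T\lambda_F+T\lambda_G=\delta T$. The error bound follows from $\log(K+O(\epsilon))=\log K+O(\epsilon)$, since the ratios are bounded away from $0$ after enlarging $T_1$ and $N_1$. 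The function $g(T)=\delta T+\pi\log T$ is the model. Note that $\pi$ may be negative.

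\emph{Step (i).} Put $T_\pm$ at the ends of $[\frac\Delta{2\delta}L,\frac{2\Delta}\delta L]$. For large $N$ both lie in $[T_1,N^{1/3}]$, so the expansion applies and the error is $O(1/L)$.
\begin{itemize}
\item At $T_-$, $D_N=-\frac\Delta2L+O(\log L)<0$.
\item At $T_+$, $D_N=\Delta L+O(\log L)>0$.
\end{itemize}
We need continuity of $D_N$ in $T$. Each $P_N(n)$ is a polynomial in $h$, and $M_F,M_G$ are maxima of finitely many continuous positive functions near these $T$. So $\log M_F-\log M_G$ is continuous, and the intermediate value theorem gives a zero.

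\emph{Step (ii).} This is the main obstacle. We must show that every zero in $[T_1,N^{1/3}]$ lies at $T\asymp L$, and only then does the error become $O(1/L)$.
\begin{itemize}
\item At a zero, $\delta T+\pi\log T=\Delta L-\kappa-E_N(T)$.
\item Since $|E_N|\le C'(1+T^2/N)\le C'(1+N^{-1/3})$ is bounded, this gives $\delta T=\Delta L+O(\log T)+O(1)$.
\item $T$ is at most $N^{1/3}$, so $\log T=O(L)$. A bootstrap then applies: if $T\ge cL^2$, the left side is of order $L^2$ while the right side is $O(L)$, a contradiction. So $T=O(L^2)$, hence $\log T=O(\log L)$, and $T=\frac\Delta\delta L+O(\log L)$.
\end{itemize}
In particular $1/T+T^2/N=O(1/L)$, and substituting back gives (ii).

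\emph{Step (iii).} From the previous step, every zero lies in a window $W$ of length $O(\log L)$ around $\frac\Delta\delta L$. On $W$ we have $g'(T)=\delta+\pi/T\ge\delta/2$. Take two zeros $T,T'$. Subtracting their equations from (ii) gives $|g(T)-g(T')|=O(1/L)$, so $|T-T'|=O(1/L)$ by the mean value theorem.

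For the expansion, write $T=\frac\Delta\delta(L+s)$ with $s=O(\log L)$. Then
\[
\pi\log T=\pi\log\frac\Delta\delta+\pi\log L+O\Bigl(\frac{\log L}L\Bigr),
\]
using $\log(L+s)=\log L+O(s/L)$. Solving the equation in (ii) for $s$ gives $\Delta s=-\pi\log L-\pi\log\frac\Delta\delta-\kappa+O(\log L/L)$. Multiplying by $\frac\Delta\delta\cdot\frac1\Delta$ gives the stated $b_{FG}$ with error $O(\log L/L)$.
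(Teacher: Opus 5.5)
Your proof is correct and follows the paper's argument. You expand $D_N$ via Theorem~\ref{thm:reducible}, apply the intermediate value theorem for (i), and prove an a priori bound $T_N=O(L)$ for (ii); the paper splits on the sign of $\pi$ there, while you bootstrap through $T\le L^2$. For (iii) you use monotonicity of $\delta T+\pi\log T$ on the window, as the paper does, except that you compare zeros pairwise and expand each one directly, whereas the paper compares every zero with the root $T^\circ$ of $\delta T+\pi\log T=\Delta L-\kappa$.
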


The coefficient of $\log\log N$ is $\pi/\Delta$. It is $\frac12$ when $F$ and $G$ are
irreducible, which gives Theorem~\ref{thm:crossing}(i) to (iii) for zeros in $[T_1,N^{1/3}]$. In
Example~\ref{ex:threecycle}, where $q_{12}=b$, we have $p_{\{1\}}=0$, $p_{\{1,2\}}=1$ and $p_{\{1,2,3\}}=1$, and
$K^{\rm red}_{\{1,2\}}=b$ from $A_2=\sup_tbe^{(1-b)t}=b$ and $B_2=1$. So the corollary gives the
coefficients $1$ and $0$ of parts (ii) and (iii) there, with their constants. We do not claim
that the zero is unique.

\section{The second-order theory}\label{sec:secondorder}

To locate a crossing up to $o(1)$ we need the power of $T$ and the constant in front
of $N^{-(k-1)}e^{-T\lambda_F}$ for every face that satisfies (H). Throughout this section
gradients and Hessians in $\alpha$ are taken in the coordinates
$\alpha'=(\alpha_2,\dots,\alpha_k)$, $|\cdot|$ is the Euclidean norm, and
$q=(q_i)_{i\in F}$.

\subsection{The tilted chain and the constant}\label{sec:tilt}

To find the probability of a composition with fractions $\alpha$ we change the
rates so that $\alpha$ becomes typical. The optimal vector $r_\alpha$ in the
definition of $J_F(\alpha)$ gives a new generator $\widehat Q_\alpha$ on $F$, with
the same switching graph, whose stationary law is exactly $\alpha$. Under it the
visit fractions satisfy a central limit theorem with covariance
$\Gamma_\alpha/T$, and the price of the change of rates is $e^{-TJ_F(\alpha)}$
times boundary factors for the start and the end of the path. The constant
$C_F(\alpha)$ collects these factors and the Gaussian normalization.

\begin{definition}[tilt and constants]\label{def:tilt}
Let $F$ be irreducible with $k\ge2$. For
$\alpha\in\Delta_F^\circ$ let $r_\alpha$ attain the supremum defining
$J_F(\alpha)$ (Proposition~\ref{prop:rate}(b)), and put
$\theta(\alpha)=A_Fr_\alpha/r_\alpha$ and $l_\alpha=\alpha/r_\alpha$
(entrywise). The \emph{tilted generator} is
$\widehat Q_\alpha=R^{-1}(A_F-\operatorname{diag}\theta(\alpha))R$ with
$R=\operatorname{diag}r_\alpha$. With $D=\operatorname{diag}\alpha$ let
\[
Z_\alpha=(\ones\alpha^\top-\widehat Q_\alpha)^{-1}-\ones\alpha^\top,
\qquad\Gamma_\alpha=DZ_\alpha+Z_\alpha^\top D,
\]
and let $\Sigma_\alpha$ be $\Gamma_\alpha$ with the row and column of state
$1$ deleted. Finally let
\[
C_F(\alpha)=\frac{(\mu\cdot r_\alpha)(l_\alpha\cdot\ones)}
{(2\pi)^{(k-1)/2}\sqrt{\det\Sigma_\alpha}},\qquad K_F=C_F(\alpha^*_F).
\]
\end{definition}

For a single state we put $K_{\{i\}}=\mu_i$.
Nothing in Definition~\ref{def:tilt} changes when $r_\alpha$ is scaled, and nothing
in it involves the diagonal of $Q_F$. Hence the objects depend on $A_F$ and $\mu$ only, and a
field does not change them. For $v\in\R^F$ we put $K_F(v)=C_F(\alpha^*_F(v))$, so
that $K_F(0)=K_F$, and $K_{\{i\}}(v)=\mu_i$. Also
$l_\alpha\cdot r_\alpha=1$ and $J_F(\alpha)=\langle q-\theta(\alpha),\alpha\rangle$. Since
$r_\alpha$ minimizes $u\mapsto\sum_{i\ne j}\alpha_iq_{ij}u_j/u_i$, its derivative in $u_j$, which
is $\sum_il_{\alpha,i}q_{ij}-l_{\alpha,j}\theta_j(\alpha)$, vanishes at $r_\alpha$. Hence
\begin{equation}\label{eq:stationary}
l_\alpha^\top\bigl(A_F-\operatorname{diag}\theta(\alpha)\bigr)=0 .
\end{equation}
Also $(A_F-\operatorname{diag}\theta(\alpha))r_\alpha=0$ with $r_\alpha>0$, so $\theta(\alpha)\in
V_F$~\cite{bermanplemmons1994}. Hence $\widehat Q_\alpha$ is an irreducible generator with the
same off-diagonal zero pattern as $A_F$, and by~\eqref{eq:stationary} its stationary law is
$\alpha$. If $(\ones\alpha^\top-\widehat Q_\alpha)v=0$, multiplying by $\alpha^\top$ gives
$\alpha^\top v=0$, so $\widehat Q_\alpha v=0$. Hence $v$ is a multiple of $\ones$, and
$\alpha^\top v=0$ gives $v=0$. Thus $Z_\alpha$ exists, and
\begin{equation}\label{eq:Zprops}
Z_\alpha\ones=0,\qquad\alpha^\top Z_\alpha=0,\qquad
-\widehat Q_\alpha Z_\alpha=I-\ones\alpha^\top .
\end{equation}
Hence $\Gamma_\alpha\ones=0$. As $\Gamma_\alpha$ is symmetric, its adjugate is then a multiple of
$\ones\ones^\top$, so deleting another state instead of state $1$ gives the same
$\det\Sigma_\alpha$. At $\alpha^*_F$ the vector $u=r_F$ gives the value $\lambda_F=J_F(\alpha^*_F)$,
so $r_{\alpha^*_F}$ is a multiple of $r_F$, $\theta(\alpha^*_F)=q-\lambda_F\ones$, and
$l_{\alpha^*_F}$ is the matching multiple of $l_F$.

For conductances $c_{ij}=c_{ji}$ on $F$ let
$\tree(c)=\sum_{\mathcal T}\prod_{\{i,j\}\in\mathcal T}c_{ij}$, the sum over
the spanning trees $\mathcal T$ of the complete graph on $F$.

\begin{proposition}[the constant]\label{prop:constant}
Let $F$ be irreducible with $k\ge2$.
\begin{enumerate}[(a)]
\item The maps $\alpha\mapsto\theta(\alpha)$, $J_F(\alpha)$, $\Sigma_\alpha$,
$r_\alpha$ and $l_\alpha$ (with $\sum_ir_{\alpha,i}=1$) are real-analytic on
$\Delta_F^\circ$, $\Sigma_\alpha$ is positive definite, and
$\nabla^2J_F(\alpha)=\Sigma_\alpha^{-1}$. Hence
$\det\Sigma_\alpha\cdot\det\nabla^2J_F(\alpha)=1$ and
$C_F(\alpha)=(\mu\cdot r_\alpha)(l_\alpha\cdot\ones)
\sqrt{\det\nabla^2J_F(\alpha)/(2\pi)^{k-1}}$.
\item Suppose that $\nu_iq_{ij}=\nu_jq_{ji}$ for $i,j\in F$ and some
$\nu\in(0,\infty)^F$. Then $r_\alpha$ is a multiple of $\sqrt{\alpha/\nu}$
and $l_\alpha$ of $\sqrt{\alpha\nu}$, and $\widehat Q_\alpha$ is reversible
with conductances $c_{ij}(\alpha)=\sqrt{\alpha_i\alpha_j}\,q_{ij}\sqrt{\nu_i/\nu_j}$.
Also $J_F(\alpha)=\sum_iq_i\alpha_i-\sum_{i\ne j}c_{ij}(\alpha)$, and
\[
\det\Sigma_\alpha=\frac{2^{k-1}(\prod_i\alpha_i)^2}{\tree(c(\alpha))},\qquad
C_F(\alpha)=\frac{(\mu\cdot r_\alpha)(l_\alpha\cdot\ones)}{(4\pi)^{(k-1)/2}}
\,\frac{\sqrt{\tree(c(\alpha))}}{\prod_i\alpha_i}.
\]
\end{enumerate}
\end{proposition}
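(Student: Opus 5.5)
For (a), analyticity comes from Proposition~\ref{prop:discrete-rate}(b) at $h=0$. There $\psi^{(0)}(\alpha)=\log r_\alpha$, normalized, is real-analytic on $\Delta_F^\circ$, and $\theta^{(0)}(\alpha)=A_Fr_\alpha/r_\alpha$. Hence $\theta(\alpha)$ and $J_F(\alpha)=\langle q-\theta(\alpha),\alpha\rangle$ are analytic. The vectors $r_\alpha$ (normalized to sum $1$), $l_\alpha=\alpha/r_\alpha$ and $\widehat Q_\alpha$ are analytic too. Finally $Z_\alpha$ and $\Sigma_\alpha$ are analytic, because $Z_\alpha$ is the inverse of the analytic invertible matrix $\ones\alpha^\top-\widehat Q_\alpha$ minus an analytic rank-one term.

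For the Hessian identity I would use the Legendre duality of Proposition~\ref{prop:variational}. The proof there shows that $v(\alpha)=q-\theta(\alpha)$ satisfies $\lambda_F(v(\alpha))=0$ and $J_F(\alpha)=v(\alpha)\cdot\alpha$. Together with $J_F(\beta)\ge v\cdot\beta+\lambda_F(v)$ for all $\beta$, this makes $v(\alpha)$ a subgradient. So in the coordinates $\alpha'$, $\nabla J_F(\alpha)=(v_i(\alpha)-v_1(\alpha))_{i\ge2}$, and $\alpha^*_F(v(\alpha))=\alpha$. Differentiating $\alpha^*_F(v)=-\nabla_v\lambda_F(v)=l\circ r$ gives
\[
\nabla^2J_F(\alpha)=\bigl(-\nabla^2_{v'}\lambda_F\bigr)^{-1}.
\]
Here the constraint direction $\ones$ is removed by the shift rule $\lambda_F(v+c\ones)=\lambda_F(v)-c$, which also gives $\Gamma_\alpha\ones=0$. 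So the key computation is second-order perturbation of the simple Perron root of $B=Q_F+\operatorname{diag}v$. After conjugating by $R=\operatorname{diag}r_\alpha$, the matrix $B-\varrho(B)I$ becomes $\widehat Q_\alpha$. The standard formula $\partial^2\varrho/\partial v_i\partial v_j=\alpha_iZ_{ij}+\alpha_jZ_{ji}$, with $Z$ the group inverse from~\eqref{eq:Zprops}, then yields $-\nabla^2\lambda_F=\Gamma_\alpha$. Deleting state $1$ gives $\nabla^2J_F=\Sigma_\alpha^{-1}$.

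Positive definiteness: for $x\in\R^F$ put $y=Z_\alpha x$, so that $-\widehat Q_\alpha y=x-(\alpha\cdot x)\ones$ and $\alpha\cdot y=0$. Then
\[
x^\top\Gamma_\alpha x=2\langle\alpha\circ x,y\rangle=2\langle\alpha\circ(-\widehat Q_\alpha y),y\rangle
=\sum_{i\ne j}\alpha_i\widehat q_{ij}(y_j-y_i)^2 ,
\]
using $\alpha^\top\widehat Q_\alpha=0$. This vanishes only when $y$ is constant, that is only when $x\parallel\ones$, since $\widehat Q_\alpha$ is irreducible. So $\Sigma_\alpha$ is positive definite, and the formula for $C_F$ follows.

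For (b), the proof of Proposition~\ref{prop:rate}(c) shows that $u=\sqrt{\alpha/\nu}$ attains the supremum, so $r_\alpha\propto\sqrt{\alpha/\nu}$ and $l_\alpha=\alpha/r_\alpha\propto\sqrt{\alpha\nu}$. The conductances $c_{ij}=\alpha_i\widehat q_{ij}=\sqrt{\alpha_i\alpha_j}\,q_{ij}\sqrt{\nu_j/\nu_i}$ are symmetric by detailed balance; I will match the paper's convention $\sqrt{\nu_i/\nu_j}$ after checking indices. The expression for $J_F$ is $\langle q-\theta,\alpha\rangle$ with $\alpha_i\theta_i=\sum_jc_{ij}$.

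For the determinant I would use $\nabla^2J_F=\Sigma_\alpha^{-1}$ rather than $Z$ directly. Write $J_F=\psi^\top S_F\psi$ with $\psi=\sqrt\alpha$, as in Proposition~\ref{prop:variational}. Differentiating twice in $\alpha$ gives the Hessian in all $k$ coordinates as $\tfrac12D^{-1}LD^{-1}$ plus a term supported by the normal direction. Here $L$ is the weighted Laplacian of the conductances $c(\alpha)$, with $L\ones=0$. Passing to the coordinates $\alpha'$ multiplies this by $P=\bigl(\begin{smallmatrix}-\ones^\top\\ I\end{smallmatrix}\bigr)$ on both sides. Since $L\ones=0$, the result is $\tfrac12\widetilde D^{-1}L'\widetilde D^{-1}$ corrected by the deleted coordinate, where $L'$ is the reduced Laplacian. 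The matrix-tree theorem, $\det L'=\tree(c)$, then gives $\det\nabla^2J_F=\tree(c)/(2^{k-1}\prod_i\alpha_i^2)$.

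I expect the main obstacle to be this bookkeeping, namely showing that the factor from state $1$ produces exactly $\alpha_1^{-2}$. As a fallback I would use $\Gamma_\alpha=2DZ_\alpha$, which holds because $D\widehat Q_\alpha$ is symmetric. Then $DZ_\alpha D$ is a generalized inverse of $L=-D\widehat Q_\alpha D^{-1}\cdot D$, and the identity follows from the Schur complement relation between $Z$ and the Green function killed at state $1$.
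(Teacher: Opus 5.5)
Your proposal is correct. Part~(a) follows the paper's argument closely. You identify $\nabla J_F(\alpha)$ with the shifted tilt $(v_i-v_1)_{i\ge2}$, where $v=q-\theta(\alpha)$, and invert it against the Perron gradient $\alpha^*_F(v)=-\nabla_v\lambda_F(v)$. You then get $-\nabla^2\lambda_F=\Gamma_\alpha$ by second-order perturbation after conjugating to $\widehat Q_\alpha$; the paper proves this formula as Lemma~\ref{lem:perron} rather than quoting it. Positivity comes from the same Dirichlet form~\eqref{eq:A-dirichlet}. The only difference in (a) is where analyticity comes from. You take it from Proposition~\ref{prop:discrete-rate}(b) at $h=0$, which is the implicit function theorem for the maximizer $\psi$. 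The paper instead applies the inverse function theorem to $\nabla\Lambda_0$. Both work, and Proposition~\ref{prop:discrete-rate} does not use the present statement.

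The real divergence is the determinant in (b). The paper works with the covariance directly. It writes the quadratic form of $\Gamma_\alpha$ through $Z_\alpha$ and the Laplacian $\mathcal L=-D\widehat Q_\alpha$, and gets $\Sigma_\alpha=2(D'-\alpha'\alpha'^\top)\mathcal L_{(1)}^{-1}(D'-\alpha'\alpha'^\top)$ without using (a). You work with the Hessian of the explicit reversible formula for $J_F$. This avoids $Z_\alpha$ entirely, but it needs (a) to pass to $\det\Sigma_\alpha$.

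The bookkeeping you are worried about is short. The full Hessian of $\sum_iq_i\alpha_i-\sum_{i\ne j}\sqrt{q_{ij}q_{ji}\alpha_i\alpha_j}$ is exactly $\tfrac12D^{-1}LD^{-1}$, with no normal term. A term of the form $\ones w^\top+w\ones^\top$ would be killed anyway, since $\ones^\top P=0$. Write $D^{-1}P=\ones g^\top+EB$, where $g=-\alpha_1^{-1}\ones\in\R^{k-1}$, $E$ embeds the coordinates $2,\dots,k$, and $B=D'^{-1}+\alpha_1^{-1}\ones\ones^\top$. Then $L\ones=0$ gives $\nabla^2J_F=\tfrac12B^\top L_{(1)}B$. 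The matrix determinant lemma gives $\det B=\prod_{i\ge2}\alpha_i^{-1}\,(1+\sum_{i\ge2}\alpha_i/\alpha_1)=1/\prod_i\alpha_i$. Hence $\det\nabla^2J_F=\tree(c(\alpha))/(2^{k-1}\prod_i\alpha_i^2)$. Since $B=(D'-\alpha'\alpha'^\top)^{-1}$, your identity is exactly the inverse of the paper's formula for $\Sigma_\alpha$.

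Finally, your tentative $\sqrt{\nu_j/\nu_i}$ is an index slip. The correct computation is $\alpha_i\widehat q_{ij}=\alpha_iq_{ij}r_{\alpha,j}/r_{\alpha,i}=\sqrt{\alpha_i\alpha_j}\,q_{ij}\sqrt{\nu_i/\nu_j}=\sqrt{\alpha_i\alpha_jq_{ij}q_{ji}}$. Only this form is symmetric under detailed balance.
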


\begin{proof}
We prove (a) in Appendix~\ref{app:torus}, using Lemma~\ref{lem:perron}.

(b) We guess the vector $r_\alpha$ and check that it is a critical point. Put
$u=\sqrt{\alpha/\nu}$ and $l=\alpha/u=\sqrt{\alpha\nu}$ entrywise. As we saw
before~\eqref{eq:stationary}, the derivative of $\sum_{i\ne j}\alpha_iq_{ij}u_j/u_i$ in $u_j$ is
$\sum_il_iq_{ij}-l_j(A_Fu)_j/u_j$. Here
\[
\sum_il_iq_{ij}=\sum_i\sqrt{\alpha_i\nu_i}\,q_{ij}\qquad\text{and}\qquad
\frac{l_j(A_Fu)_j}{u_j}=\nu_j\sum_iq_{ji}\sqrt{\alpha_i/\nu_i}
=\sum_i\sqrt{\alpha_i\nu_i}\,q_{ij},
\]
where the last step uses $\nu_jq_{ji}=\nu_iq_{ij}$. Hence the derivative vanishes at $u$. The
function is convex in $\log u$, so its critical point $u$ is a minimizer, and the minimizer is
unique up to a factor (Proposition~\ref{prop:rate}(b)). Hence $r_\alpha$ is a multiple of $u$ and
$l_\alpha$ is the matching multiple of $l$. Then
$\alpha_i(\widehat Q_\alpha)_{ij}=l_iq_{ij}u_j=c_{ij}(\alpha)$ for $i\ne j$, and
$c_{ij}(\alpha)=c_{ji}(\alpha)$ because $q_{ij}\sqrt{\nu_i/\nu_j}=q_{ji}\sqrt{\nu_j/\nu_i}$.
Hence $\widehat Q_\alpha$ is reversible with these conductances. Also
$\alpha_i\theta_i(\alpha)=l_i(A_Fu)_i=\sum_{j\ne i}c_{ij}(\alpha)$, and
$J_F(\alpha)=\langle q-\theta(\alpha),\alpha\rangle$ gives the formula for $J_F$.

For the determinant we express the quadratic form of $\Sigma_\alpha$ through the Laplacian of the
conductances, namely $\mathcal L=-D\widehat Q_\alpha$. It is symmetric, its off-diagonal entries
are $-c_{ij}(\alpha)$, and $\mathcal L\ones=0$. Fix $\eta\in\R^F$ with $\eta_1=0$, and put
$w=Z_\alpha\eta$ and $\zeta=D\eta-(\alpha\cdot\eta)\alpha$. By~\eqref{eq:Zprops},
$\mathcal Lw=D(I-\ones\alpha^\top)\eta=\zeta$ and $\alpha\cdot w=0$. So
$\eta^\top\Gamma_\alpha\eta=2(D\eta)^\top w=2\zeta^\top w$, where the second step uses
$\alpha\cdot w=0$. Next we remove state $1$. Let $v=w-w_1\ones$,
so that $v_1=0$. Since $\zeta\cdot\ones=0$ and $\mathcal L\ones=0$, we have
$\zeta^\top w=\zeta^\top v$ and $\mathcal Lv=\zeta$. We write primes for the coordinates
$2,\dots,k$, $D'=\operatorname{diag}\alpha'$, and $\mathcal L_{(1)}$ for $\mathcal L$ without row
and column $1$. As $v_1=0$, the rows $2,\dots,k$ of $\mathcal Lv=\zeta$ say that
$\mathcal L_{(1)}v'=\zeta'$, and as $\eta_1=0$ we have $\zeta'=(D'-\alpha'\alpha'^\top)\eta'$. The
matrix-tree theorem~\cite{bollobas1998} gives $\det\mathcal L_{(1)}=\tree(c(\alpha))$, which is
positive because $F$ is irreducible. Hence $\mathcal L_{(1)}$ is invertible, and
\[
\eta'^\top\Sigma_\alpha\eta'=2\zeta'^\top v'
=2\eta'^\top(D'-\alpha'\alpha'^\top)\mathcal L_{(1)}^{-1}(D'-\alpha'\alpha'^\top)\eta'
\]
for every $\eta'\in\R^{k-1}$. Both matrices are symmetric, hence
$\Sigma_\alpha=2(D'-\alpha'\alpha'^\top)\mathcal L_{(1)}^{-1}(D'-\alpha'\alpha'^\top)$. We take
determinants. The matrix determinant lemma gives
$\det(D'-\alpha'\alpha'^\top)=\prod_{i\ge2}\alpha_i\,(1-\sum_{i\ge2}\alpha_i)=\prod_i\alpha_i$.
Hence $\det\Sigma_\alpha=2^{k-1}(\prod_i\alpha_i)^2/\tree(c(\alpha))$, and putting this into
Definition~\ref{def:tilt} gives the formula for $C_F(\alpha)$.
\end{proof}

\begin{corollary}[symmetric faces]\label{cor:symface}
Let $F$ satisfy \textup{(H)} with $k\ge2$, and let $Q_F$ be symmetric with constant row sums.
Then $\lambda_F=q_i-\sum_{j\in F\setminus\{i\}}q_{ij}$ for every $i\in F$, $\alpha^*_F$ is uniform
on $F$, and
\[
K_F=\mu(F)\,k^{(k+1)/2}\sqrt{\tree(q_F)}\,(4\pi)^{-(k-1)/2},
\]
where $\tree(q_F)$ is $\tree(c)$ for the conductances $c_{ij}=q_{ij}$, $i,j\in F$.
\end{corollary}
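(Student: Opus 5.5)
The plan is to specialize Proposition~\ref{prop:constant}(b) with $\nu=\ones$, which is allowed because $Q_F$ is symmetric. The first step is to identify $\lambda_F$ and $\alpha^*_F$. Constant row sums of $Q_F$ mean that $Q_F\ones=-c\ones$, where $c=q_i-\sum_{j\in F\setminus\{i\}}q_{ij}$ is the same for every $i$. Since $\ones$ is a positive eigenvector of the irreducible Metzler matrix $Q_F$, it is the Perron vector, and so $\lambda_F=c$. Symmetry then makes $\ones$ a left Perron vector as well. Normalizing $\langle l_F,r_F\rangle=1$ gives $r_F=l_F=k^{-1/2}\ones$, hence $\alpha^*_F=l_F\circ r_F=\ones/k$ is uniform.

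Next I evaluate the ingredients of $C_F$ at $\alpha=\ones/k$. By Proposition~\ref{prop:constant}(b) with $\nu=\ones$, $r_\alpha$ is a multiple of $\sqrt{\alpha}=k^{-1/2}\ones$, and $l_\alpha=\alpha/r_\alpha$. The product $(\mu\cdot r_\alpha)(l_\alpha\cdot\ones)$ does not change when $r_\alpha$ is scaled, so I may take $r_\alpha=\ones$. Then $l_\alpha=\ones/k$, and the product equals $\mu(F)\cdot 1=\mu(F)$. The conductances are $c_{ij}(\alpha)=\sqrt{\alpha_i\alpha_j}\,q_{ij}=q_{ij}/k$. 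Every spanning tree of the complete graph on $F$ has $k-1$ edges, so $\tree(c(\alpha))=k^{-(k-1)}\tree(q_F)$. Also $\prod_i\alpha_i=k^{-k}$.

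Substituting these into the formula of Proposition~\ref{prop:constant}(b) gives
\[
K_F=\frac{\mu(F)}{(4\pi)^{(k-1)/2}}\,k^{k}\,k^{-(k-1)/2}\sqrt{\tree(q_F)}
=\mu(F)\,k^{(k+1)/2}\sqrt{\tree(q_F)}\,(4\pi)^{-(k-1)/2},
\]
which is the claim. The only point that needs care is the normalization: I must use that $C_F$ is invariant under scaling $r_\alpha$, and that $\tree$ is homogeneous of degree $k-1$ in the conductances. There is no real obstacle beyond bookkeeping the powers of $k$. Hypothesis (H) enters only through irreducibility, which makes $\tree(q_F)>0$ and gives the uniqueness of the Perron vector.
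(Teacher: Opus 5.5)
Your proposal is correct and follows the paper's proof essentially line by line. You identify $\ones$ as the right and left Perron vector to get $\lambda_F$ and $\alpha^*_F=\ones/k$. You then evaluate Proposition~\ref{prop:constant}(b) with $\nu=\ones$ at the center, using $(\mu\cdot r_\alpha)(l_\alpha\cdot\ones)=\mu(F)$, $c_{ij}(\alpha)=q_{ij}/k$, $\tree(c(\alpha))=k^{-(k-1)}\tree(q_F)$ and $\prod_i\alpha_i=k^{-k}$.
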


\begin{proof}
The positive vector $\ones$ is a right and left eigenvector of $Q_F$, so it belongs to the
Perron root. Hence $\lambda_F$ is minus the row sum, $r_F$ and $l_F$ are multiples of $\ones$, and
$\alpha^*_F=\ones/k$.
At $\alpha=\ones/k$, Proposition~\ref{prop:constant}(b) with $\nu=\ones$ gives
$r_\alpha=\ones$, $l_\alpha=\alpha$ and $(\mu\cdot r_\alpha)(l_\alpha\cdot\ones)=\mu(F)$.
Also $c_{ij}(\alpha)=q_{ij}/k$, so $\tree(c(\alpha))=k^{-(k-1)}\tree(q_F)$, and
$\prod_i\alpha_i=k^{-k}$.
\end{proof}

\subsection{The local law and the face maxima}\label{sec:locallaw}

A field $v\in\R^F$ enters the continuum density as an exact factor. By
Lemma~\ref{lem:continuum}, $f_T(\alpha)e^{Tv\cdot\alpha}$ is the density $f_T$ of
the killed matrix $Q_F+\operatorname{diag}v$, which has the same $A_F$ and $\mu$ and
the diagonal $q-v$ in place of $q$. On the lattice the factor is
$e^{Tv\cdot n/N}=e^{h\langle v,n\rangle}$.

\emph{The discrete constant.} Let $F$ be irreducible with $k\ge2$, fix $h_*$ with
$h_*q_{\max}<1$, and let
$(h,\alpha)\in[0,h_*]\times\Delta_F^\circ$. Let $\theta=\theta^{(h)}(\alpha)$ be the
vector of Proposition~\ref{prop:discrete-rate}(b), let $r$ and $l$ be positive right
and left null vectors of $A_F-\operatorname{diag}\theta$ with $l\cdot r=1$, put
$c_h(\alpha)=\sum_i\alpha_i/(1-hq_i+h\theta_i)$, and let
\[
 C^{(h)}_F(\alpha)=c_h(\alpha)\,(\mu\cdot r)(l\cdot\ones)
 \sqrt{\det\nabla^2J^{(h)}_F(\alpha)/(2\pi)^{k-1}} .
\]
Lemma~\ref{lem:discrete-hessian} shows that $\nabla^2J^{(h)}_F$ is positive definite
and that $C^{(h)}_F$ is positive and real-analytic up to $h=0$. At $h=0$ we have
$c_0=1$ and $\theta^{(0)}(\alpha)=\theta(\alpha)$, so $C^{(0)}_F=C_F$ by
Proposition~\ref{prop:constant}(a). For $v\in\R^F$ put
$K^{(h)}_F(v)=C^{(h)}_F(\alpha^{*(h)}_F(v))$ and $K^{(h)}_F=K^{(h)}_F(0)$, and for a
single state $K^{(h)}_{\{i\}}(v)=\mu_i/(1-hq_i)$. At $\alpha^*_F$ the maximizer of
Proposition~\ref{prop:discrete-rate}(b) is $\log r_F$ up to a constant, for every
$h$, by part (c) of that proposition. Hence
$\theta^{(h)}(\alpha^*_F)=q-\lambda_F\ones$ and
$c_h(\alpha^*_F)=1/(1-h\lambda_F)$.

\begin{theorem}[local law]\label{thm:local}
Let $F$ satisfy \textup{(H)} with $k\ge2$, and let $\mathcal K\subset\Delta_F^\circ$ be
compact.
\begin{enumerate}[(a)]
\item Uniformly for $\alpha\in\mathcal K$, $T\ge1$ and $v\in\R^F$,
\[
f_T(\alpha)e^{Tv\cdot\alpha}=T^{(k-1)/2}C_F(\alpha)e^{-T(J_F(\alpha)-v\cdot\alpha)}\bigl(1+O_{\mathcal K}(1/T)\bigr).
\]
\item Fix $h_*$ with $h_*q_{\max}<1$. Uniformly for $1\le T\le h_*N$,
$\supp n=F$ with $\alpha=n/N\in\mathcal K$, and $v\in\R^F$,
\[
P_N(n)e^{Tv\cdot\alpha}=N^{-(k-1)}T^{(k-1)/2}C^{(h)}_F(\alpha)
e^{-T(J^{(h)}_F(\alpha)-v\cdot\alpha)}\bigl(1+O_{\mathcal K}(1/T)\bigr).
\]
\end{enumerate}
For $F=\{i\}$, $f_T=\mu_ie^{-q_iT}$ and $P_N(Ne_i)=\mu_i(1-hq_i)^{N-1}$.
\end{theorem}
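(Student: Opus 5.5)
The plan is to write both the density and the lattice probability as a Fourier-type integral over a torus, deform the contour to pass through the real critical point given by the tilt, and do a Laplace expansion with $h$ as a parameter. A field $v$ only changes the diagonal of $Q_F$ to $q-v$, and $A_F$, $\mu$ and the constant $C_F$ do not see the diagonal. So it suffices to prove (a) and (b) at $v=0$ for the killed matrix with diagonal $q-v$. We use that every bound depends on the diagonal only through a factor $e^{-T\langle q-v,\alpha\rangle}$ that we pull out exactly, which gives the uniformity in $v$.

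\emph{The integral representation.} We use Good's generating-function identity (Lemma~\ref{lem:torus}). In the continuum, the function $H(t)$ of Lemma~\ref{lem:continuum} is a coefficient extraction. Summing over words with $m_i$ runs turns the sum over $m$ into a determinant or resolvent of $A_F-\operatorname{diag}z$ in variables $z\in\mathbb C^F$, and
\[
H(t)=(2\pi i)^{-k}\oint e^{\langle z,t\rangle}\,\Phi(z)\,dz
\]
on circles $|z_i|=\rho_i$, with $\Phi$ rational and built from $\mu^\top(\operatorname{diag}z-A_F)^{-1}\ones$ and $\det(\ldots)$. On the lattice the binomials $\binom{n_i-1}{m_i-1}(1-hq_i)^{n_i-m_i}$ are coefficients of $(p_i+hw_i)^{n_i-1}$. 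This gives the analogous torus integral with exponent $\sum_i n_i\log(p_i+h\theta_i)$ in place of $T\langle\theta,\alpha\rangle$, that is, $-NhG_h$. With $\theta$ on the real surface $V_F$, the phase is the function $G_h(\alpha,\psi)$ of Proposition~\ref{prop:discrete-rate}, which is real-analytic in $h$ down to $h=0$.

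\emph{The saddle point.} Choose the radii so that the real point is $\theta^{(h)}(\alpha)$, the maximizer of Proposition~\ref{prop:discrete-rate}(b). One variable is eliminated on the pole set $\varrho(A_F-\operatorname{diag}\theta)=0$ by a residue, which produces the Perron vectors $r,l$ and hence the factor $(\mu\cdot r)(l\cdot\ones)$, and in the discrete case also $c_h(\alpha)$. The remaining $k-1$ angular variables $\psi'$ have phase with negative definite Hessian at $\psi'=0$ (Proposition~\ref{prop:discrete-rate}(b)). Its inverse-Hessian determinant combines to $\sqrt{\det\nabla^2J^{(h)}_F/(2\pi)^{k-1}}$, as in Proposition~\ref{prop:constant}(a) and Lemma~\ref{lem:discrete-hessian}. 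A standard Laplace expansion with an analytic amplitude gives the factor $T^{-(k-1)/2}$ times the Gaussian constant, with relative error $O(1/T)$. After the Jacobians $T^{k-1}$ and $N^{-(k-1)}$ this yields the powers $T^{(k-1)/2}$ and $N^{-(k-1)}T^{(k-1)/2}$. Uniformity in $\alpha\in\mathcal K$ and $h\in[0,h_*]$ follows because all the data are real-analytic on the compact set $[0,h_*]\times\mathcal K$, so the Taylor remainders are bounded uniformly.

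\emph{The main obstacle.} The hard step is the global bound away from the saddle. We must show that on the torus the modulus of the integrand is at most its value at the real point times $e^{-cT|\psi'|^2}$ near $0$, and $e^{-cT}$ elsewhere, uniformly in $h$. We would try the triangle inequality on the run sums, as in Lemma~\ref{lem:upper}. The modulus $|p_i+hq_{ij}e^{i\psi}\cdots|$ is maximized only at zero angle, because $p_i>0$; this is where $h_*q_{\max}<1$ enters. Strong connectivity then forces all angle differences to vanish at equality. We would quantify the loss by strict log-concavity along edges of $G_F$ and then sum these losses over a spanning tree. A secondary issue is bounded $T$, where the Laplace error is not small. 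There we only need the ratio to be bounded above and below, which follows from positivity of $f_T$ and $C_F$ and continuity on compacts. For $F=\{i\}$ the claims are the exact formulas in Lemma~\ref{lem:continuum} and~\eqref{eq:vertexmax}.
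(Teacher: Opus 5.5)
Your architecture is the paper's. You use Good's resolvent on a torus (Lemma~\ref{lem:torus}), integrate out the circle of state $1$ by Cauchy's formula at $z_1=g(z')$, and set the other radii to the discrete tilt $\theta^{(h)}(\alpha)'$; no contour has to be moved, since the representation holds at every admissible radius. You then run one Laplace argument with $(h,\alpha)$ in the compact set $[0,h_*]\times\mathcal K$ and assemble the constant by Lemma~\ref{lem:discrete-hessian}.

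The step you single out as the main obstacle is misdescribed, and it is actually the easy one. After $z_1$ is eliminated, the angles enter one at a time, through $z_i=\theta_ie^{\mathrm i\varphi_i}$ for $i\ge2$, not as differences along edges of $G_F$. So no spanning-tree or log-concavity argument is involved. Strong connectivity enters through $\theta>0$, $\varrho(A'-\operatorname{diag}\theta')<0$ and the positive definiteness of the Hessian, not through the gap.

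Since $g$ and $b$ are power series in $1/z_i$ with nonnegative coefficients, $|g(z')|\le g(\theta')=\theta_1$ and $|b(z')|\le b(\theta')$. In (a) this gives $\operatorname{Re}\Psi(\alpha,\varphi)\le\Psi(\alpha,0)-\sum_{i\ge2}\alpha_i\theta_i(1-\cos\varphi_i)$ directly. In (b) one uses
\[
|p_i+h\theta_ie^{\mathrm i\varphi_i}|^2=(p_i+h\theta_i)^2-2p_ih\theta_i(1-\cos\varphi_i)\qquad(i\ge2)
\]
together with $n_i-1\ge T\alpha_i/(2h)$. The factor $h$ then cancels, and the integrand is at most its value at $\varphi=0$ times $\exp\bigl(-c_*T\sum_{i\ge2}(1-\cos\varphi_i)\bigr)$, with $c_*$ independent of $h$.

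What does need care is that the phase $\sum_in_i\log(p_i+hz_i)$ has no continuous branch on the whole torus when $h\theta_i>p_i$. The Laplace lemma therefore has to be used in the localized form of Remark~\ref{rem:laplace-uniform}. Your sketch does not mention this, although your modulus bound off a neighbourhood of $\varphi=0$ is exactly what that form needs.

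There are two smaller errors.

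First, your reduction of the field in (b) to the matrix with diagonal $q-v$ is wrong on the lattice. It turns $p_i$ into $1-h(q_i-v_i)$, which does not produce the factor $e^{hv\cdot n}$. Also, $J^{(h)}_F-v\cdot\alpha$ is the functional of $e^{h\operatorname{diag}v}(I+hQ_F)$, not of $I+h(Q_F+\operatorname{diag}v)$. The reduction is unnecessary anyway, since $e^{Tv\cdot\alpha}$ multiplies both sides of (a) and (b) identically.

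Second, for bounded $T$ in (b), continuity of $f_T$ on compacts does not by itself bound $P_N(n)$ below uniformly as $N\to\infty$. The paper gets this from Lemma~\ref{lem:discrete}, which gives $P_N(n)=N^{-(k-1)}f_T(n/N)(1+O(T^2/N))$ for $T\le T_1$. It combines this with $C^{(h)}_Fe^{-TJ^{(h)}_F}=C_Fe^{-TJ_F}(1+O(T_1h))$.
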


The field cancels from the two sides, so the case $v=0$ is the whole content, and the
error does not depend on $v$. In (b) the ratio of the two sides lies between two
positive constants for all $T$ in the range, so the error is relative.

\emph{Idea of the proof.} We give the proof in Appendix~\ref{app:torus}. The
generating function of the words in $H$ is a matrix resolvent, so
Cauchy's formula writes $H$ as an integral over a torus with the radii
$\theta_2(\alpha),\dots,\theta_k(\alpha)$ (Lemma~\ref{lem:torus}).
All the coefficients are nonnegative, so the integrand is largest at the real
point, and a Laplace lemma with a parameter (Lemma~\ref{lem:laplace}) gives the
expansion. The Hessian at the real point comes from the Perron root of a
perturbed generator (Lemma~\ref{lem:perron}), which brings in $\Sigma_\alpha$.
Note that $f_T(\alpha)$ is continuous and positive in $(T,\alpha)$, as
$H$ is a convergent power series with some $W(m)>0$ by (H). For (b) the same torus
carries Good's discrete formula, with $e^{t_iz_i}$ replaced by $(1-hq_i+hz_i)^{n_i-1}$
(Lemma~\ref{lem:torus}). In the variable $z$ the integrand is analytic in $h$ up to
$h=0$, where it is the integrand of (a), so one Laplace argument with $(h,\alpha)$ in
the compact set $[0,h_*]\times\mathcal K$ covers $1\le T\le h_*N$. Only for
bounded $T$ does the proof use the comparison of Lemma~\ref{lem:discrete}, to keep
the ratio bounded below.

\begin{lemma}[discrete and continuum]\label{lem:discrete}
Let $F$ satisfy \textup{(H)} with $k\ge2$, and let $\mathcal K\subset\Delta_F^\circ$ be
compact. Uniformly for $\operatorname{supp}n=F$ with $n/N\in\mathcal K$ and
$1\le T\le N^{1/2}$,
$P_N(n)=N^{-(k-1)}f_T(n/N)(1+O_{\mathcal K}(T^2/N))$.
\end{lemma}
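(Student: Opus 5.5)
We compare the run expansion of Lemma~\ref{lem:runs} term by term with the series $H$ of Lemma~\ref{lem:continuum}. By that lemma, $N^{-(k-1)}f_T(n/N)=h^{k-1}e^{-\langle q,t\rangle}H(t)$ with $t=hn$. So we must show
\[
\sum_mW(m)h^{M-1}\prod_i\binom{n_i-1}{m_i-1}(1-hq_i)^{n_i-m_i}
=h^{k-1}e^{-\langle q,t\rangle}\sum_mW(m)\prod_i\frac{t_i^{m_i-1}}{(m_i-1)!}\,\bigl(1+O(T^2/N)\bigr).
\]
Since $M-1=\sum_i(m_i-1)+(k-1)$, it suffices to compare, for each $i$ and $j=m_i-1$, the factor $a_j=h^j\binom{n_i-1}{j}(1-hq_i)^{n_i-1-j}$ with $b_j=t_i^je^{-q_it_i}/j!$.

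\emph{The term ratio.} We write
\[
\frac{a_j}{b_j}=\prod_{s<j}\Bigl(1-\frac{s+1}{n_i}\Bigr)\cdot\exp\bigl((n_i-1-j)\log(1-hq_i)+hq_in_i\bigr).
\]
The first factor is $1+O(j^2/n_i)$ when $j\le\sqrt{n_i}$, and it is always at most $1$. The exponent equals $O(h^2n_i)+O(hj)+O(h)$, which is $O(T^2/N)+O(hj)$ since $h^2n_i\le T^2/N$. Because $n_i\asymp N$ on $\mathcal K$, for $j\lesssim T$ the ratio is $1+O((j^2+1)/N+jT/N+T^2/N)$. The weights $W(m)\prod_i t_i^{m_i-1}/(m_i-1)!$ are nonnegative. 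So each product over $i$ gives a relative error bounded by $O((|m|^2+T^2)/N)$, and the upper tail needs a crude bound $a_j\le C^jb_j\cdot e^{O(hj)}$.

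\emph{Concentration of $|m|$.} Under the probability weights proportional to $W(m)\prod t_i^{m_i-1}/(m_i-1)!$, the number of runs $M$ has mean of order $T$. This is the number of jumps in time $T$, since every rate is bounded. Its exponential moments are $E[e^{sM}]\le e^{cT}$, which follows from $W(m)\le k^Mq_{\max}^{M-1}$ and comparing with a Poisson variable of mean $kq_{\max}T$. We need to work relative to $H(t)$ itself. For that we use the lower bound $H(t)\ge e^{-cT}$ in the form given by the local law, Theorem~\ref{thm:local}(a). This is not circular: part (a) concerns only the continuum. It gives $H(T\alpha)\asymp T^{-(k-1)/2}e^{T\langle q-\theta,\alpha\rangle}$. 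Then a Chernoff bound with a tilt $e^{sM}$ gives $\sum_{M>AT}(\cdot)\le e^{-T}H$ for a large constant $A$. For $M\le AT$ the term errors are $O(T^2/N)$. Beyond that, the ratio bound $a_j/b_j\le e^{O(hj)}$ is absorbed by the geometric tail, since $hA T\cdot$ stays harmless when $T\le N^{1/2}$.

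\emph{Main obstacle.} The hard step is showing that the run count concentrates at scale $T$ relative to $H$, not just to the total mass, uniformly for $\alpha\in\mathcal K$ and $1\le T\le N^{1/2}$. I would do this by noting that $\sum_mW(m)s^{M}\prod t_i^{m_i-1}/(m_i-1)!$ is $H$ computed for the rates $sq_{ij}$. Applying Theorem~\ref{thm:local}(a) to both $Q$ and $sQ$ (with $s$ near $1$, say $s=2$) then bounds the ratio by $e^{O(T)}$. Markov's inequality at $M>AT$ finishes the argument.
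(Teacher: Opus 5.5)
Your setup is the paper's. The ratio $P_N(n)/(N^{-(k-1)}f_T(n/N))$ is the average $\E_w\kappa_m$ of the term ratios $\kappa_m=\prod_i a_{m_i-1}/b_{m_i-1}$, taken under the law $w$ on run vectors proportional to the continuum terms. The number of runs $M$ is controlled through $\Upsilon(1)=\log\E_we^{M-1}=O(T)$, using an upper bound on the sum with scaled rates and the lower bound on $H(T\alpha)$ from Theorem~\ref{thm:local}(a). Your bound of the scaled sum by $Ce^{CT}$ via $W(m)\le k^Mq_{\max}^{M-1}$ is a fine, simpler substitute for~\eqref{eq:A-Hsigma}.

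\textbf{The gap is in the endgame.} You split at $M=AT$ and dispose of $\{M>AT\}$ using only $w(M>AT)\le e^{-T}$ and $0\le\kappa_m\le e^{O(hM)}$ there. That gives a relative error $O(T^2/N+e^{-T})$, and $e^{-T}$ is not $O(T^2/N)$ unless $T\gtrsim\log N$. For fixed $T$, the $w$-mass of $\{M>AT\}$ is a fixed positive number independent of $N$, while $T^2/N\to0$. On the lower side this really bites: terms with some $m_i>n_i$ vanish in Lemma~\ref{lem:runs}, so knowing only $\kappa_m\ge0$ on the tail costs the whole tail mass. On the upper side, $\E_w[e^{hq_{\max}M};M>AT]$ is likewise only $e^{-\Theta(T)}$-small. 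Moving the cutoff does not help as long as the errors below it are bounded by their worst case $O(K^2/N+hK)$ and above it by $O(1)$.

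\textbf{The repair, which is what the paper does, is not to split at all.} Instead, use bounds that carry the small factor for every $m$:
\[ \Bigl(1-\frac{M^2}{2n_{\min}}\Bigr)e^{-q_{\max}^2T^2/N}\le\kappa_m\le e^{hq_{\max}M}. \]
The left side is negative whenever some $m_i>n_i$, because $\prod_{l<m_i}(1-l/n_i)\ge1-m_i(m_i-1)/(2n_i)$ holds for every $m_i$; your restriction $j\le\sqrt{n_i}$ is unnecessary. It then suffices to show two things.
\begin{itemize}
\item $\E_wM^2=O(T^2)$. This follows from $\Upsilon(1)=O(T)$ by a Chernoff integral.
\item $\E_we^{hq_{\max}M}=1+O(T^2/N)$. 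This is the missing ingredient, and a tail bound at $M>AT$ does not give it. It comes from convexity of $\Upsilon(c)=\log\E_we^{c(M-1)}$ with $\Upsilon(0)=0$: then $\Upsilon(c)\le c\,\Upsilon(1)$ for $0\le c\le1$, so $\E_we^{xM}\le e^{x(1+CT)}$ at $x=hq_{\max}$.
\end{itemize}

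Two smaller points:
\begin{itemize}
\item The local law gives $H(T\alpha)\asymp T^{-(k-1)/2}e^{T\langle\theta(\alpha),\alpha\rangle}$, not $e^{T\langle q-\theta,\alpha\rangle}$. This is harmless here, since only $e^{\pm O(T)}$ matters.
\item Theorem~\ref{thm:local}(a) yields a positive lower bound only for $T\ge T_1$. For $1\le T\le T_1$ you need the continuity and positivity of $f_T$ on $[1,T_1]\times\mathcal K$.
\end{itemize}
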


We prove this in Appendix~\ref{app:torus}. A field
multiplies both sides by the same factor $e^{Tv\cdot n/N}$. Theorem~\ref{thm:local}(b)
makes this transfer unnecessary for the local law, but the proofs of
Theorem~\ref{thm:local}(b) for bounded $T$ and of Lemma~\ref{lem:derivative} use it.
Hence, together with the restriction identities, the local law gives one display for all
faces.

\begin{theorem}[the stratified local law]\label{thm:stratified}
Fix $Q$, a start $\mu$ and $h_*$ with $h_*q_{\max}<1$. Let $F$ satisfy \textup{(H)} with
$|F|=k\ge2$, and let $\mathcal K\subset\Delta_F^\circ$ be compact.
\begin{enumerate}[(a)]
\item Uniformly for $\supp n=F$, $n/N\in\mathcal K$ and $1\le T\le h_*N$,
\[
 -\log P_N(n)=(k-1)L+T\,J^{(h)}_V(n/N)-\frac{k-1}2\log T-\log C^{(h)}_F(n/N)
 +O_{\mathcal K}\Bigl(\frac1T\Bigr).
\]
\item Uniformly for $\supp n=F$, $n/N\in\mathcal K$ and $1\le T\le N^{1/3}$,
\[
 -\log P_N(n)=(k-1)L+T\,J_V(n/N)-\frac{k-1}2\log T-\log C_F(n/N)+O_{\mathcal K}\Bigl(\frac1T+\frac{T^2}N\Bigr).
\]
\end{enumerate}
For a singleton, $-\log P_N(Ne_i)=-\log\mu_i-(N-1)\log(1-hq_i)$, which is
$Tq_i-\log\mu_i+O(T^2/N)$. With $T=\tau L$ the coefficient of $L$ in (b) is
$(k-1)+\tau J_V(n/N)$, the lattice entropy of the face plus $\tau$ times the
Donsker and Varadhan cost of the point.
\end{theorem}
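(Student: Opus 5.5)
Part (a) is Theorem~\ref{thm:local}(b) at $v=0$, rewritten in logarithmic form, and part (b) comes from Theorem~\ref{thm:local}(a) together with Lemma~\ref{lem:discrete}. In both parts the restriction identities turn the face functional into the functional of the whole chain.

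\emph{Part (a).} Take $v=0$ in Theorem~\ref{thm:local}(b). For $\supp n=F$, $\alpha=n/N\in\mathcal K$ and $1\le T\le h_*N$ it gives
\[
P_N(n)=N^{-(k-1)}T^{(k-1)/2}C^{(h)}_F(\alpha)e^{-TJ^{(h)}_F(\alpha)}\rho,
\qquad \rho=1+O_{\mathcal K}(1/T).
\]
The remark after that theorem says that $\rho$ lies between two positive constants over the whole range. Taking $-\log$ gives $(k-1)L$, then $-\frac{k-1}2\log T$, then $-\log C^{(h)}_F(\alpha)$ and $TJ^{(h)}_F(\alpha)$, plus $-\log\rho$.

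Two points remain. First, $J^{(h)}_F(\alpha)=J^{(h)}_V(\alpha)$ on $\Delta_F$ by Proposition~\ref{prop:discrete-rate}(a). At $h=0$ the same holds with $J_F=J_V$ by Proposition~\ref{prop:restriction}, but $h=T/N>0$ here anyway.

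Second, I need $\log\rho=O(1/T)$ uniformly, and I split by the size of $T$:
\begin{itemize}
\item Fix $T_0$ so that $|\rho-1|\le\frac12$ for $T\ge T_0$. Then $|\log\rho|\le2|\rho-1|=O(1/T)$.
\item For $1\le T\le T_0$, $\rho$ is bounded above and below, so $\log\rho=O(1)=O(T_0/T)$.
\end{itemize}
I expect this bounded-$T$ range to be the only real obstacle. There the relative error $O(1/T)$ is not small, and the argument rests on the two-sided bound on $\rho$, which Theorem~\ref{thm:local} supplies through Lemma~\ref{lem:discrete}.

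The term $\log C^{(h)}_F$ needs no bound of its own, since it appears explicitly in the display. It is finite because $C^{(h)}_F$ is positive and real-analytic on the compact set $[0,h_*]\times\mathcal K$ (Lemma~\ref{lem:discrete-hessian}).

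\emph{Part (b).} Here $1\le T\le N^{1/3}\le N^{1/2}$, so Lemma~\ref{lem:discrete} gives
\[
P_N(n)=N^{-(k-1)}f_T(\alpha)\bigl(1+O_{\mathcal K}(T^2/N)\bigr).
\]
Since $T^2/N\le N^{-1/3}$, this factor is $e^{O(T^2/N)}$ once $N\ge N_0$. Theorem~\ref{thm:local}(a) at $v=0$ gives
\[
f_T(\alpha)=T^{(k-1)/2}C_F(\alpha)e^{-TJ_F(\alpha)}\bigl(1+O(1/T)\bigr).
\]
This factor is handled exactly as in part (a): for large $T$ the error is small, and for bounded $T$ one uses that $f_T$ and $T^{(k-1)/2}C_Fe^{-TJ_F}$ are continuous and positive on $[1,T_0]\times\mathcal K$. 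Replacing $J_F$ by $J_V$ via Proposition~\ref{prop:restriction} and adding the two logarithmic errors gives $O(1/T+T^2/N)$.

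\emph{Singleton and the final remark.} For a singleton the only path is $i^N$, so $P_N(Ne_i)=\mu_i(1-hq_i)^{N-1}$ exactly. Expand $-\log(1-hq_i)=hq_i+O(h^2)$, which is valid since $hq_{\max}\le\frac12$. Then
\[
-(N-1)\log(1-hq_i)=Tq_i-hq_i+O(T^2/N)=Tq_i+O(T^2/N),
\]
where the last step uses $h=T/N\le T^2/N$ for $T\ge1$.

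The closing remark about $T=\tau L$ is read off part (b): the terms $(k-1)L$ and $TJ_V=\tau LJ_V$ are the only ones of order $L$. The remaining terms are $O(\log L)$, using that $\log C_F$ is bounded on $\mathcal K$.
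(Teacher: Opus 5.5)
Your proof is correct. Part (a) is exactly the paper's argument (Theorem~\ref{thm:local}(b) at $v=0$ plus Proposition~\ref{prop:discrete-rate}(a)). For part (b) you take the route that the paper records as an alternative, namely Theorem~\ref{thm:local}(a) together with Lemma~\ref{lem:discrete}. The paper's main line instead deduces (b) from (a): by Proposition~\ref{prop:discrete-rate}(b) and Lemma~\ref{lem:discrete-hessian}, $J^{(h)}_F$ and $\log C^{(h)}_F$ are real-analytic in $(h,\alpha)$ up to $h=0$, so $TJ^{(h)}_F=TJ_F+O(T^2/N)$ and $\log C^{(h)}_F=\log C_F+O(T/N)$, and (b) becomes a corollary of (a) with no further comparison.
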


\begin{proof}
(a) This is Theorem~\ref{thm:local}(b) with $v=0$ and
Proposition~\ref{prop:discrete-rate}(a), which replaces $J^{(h)}_F$ by $J^{(h)}_V$
on $\Delta_F$. (b) By Proposition~\ref{prop:discrete-rate}(b) and
Lemma~\ref{lem:discrete-hessian}, $J^{(h)}_F$ and $\log C^{(h)}_F$ are
real-analytic in $(h,\alpha)$ up to $h=0$, so
$TJ^{(h)}_F=TJ_F+O(Th)=TJ_F+O(T^2/N)$ and $\log C^{(h)}_F=\log C_F+O(T/N)$
uniformly on $\mathcal K$. Put this into (a) and replace $J_F$ by $J_V$ on
$\Delta_F$ by Proposition~\ref{prop:restriction}. Part (b) also follows from
Theorem~\ref{thm:local}(a) and Lemma~\ref{lem:discrete}. The singleton case is the
path $i^N$ and~\eqref{eq:vertexmax}.
\end{proof}

For a field $v\in\R^d$ we write $\lambda_F(v)$, $\alpha^*_F(v)$, $K_F(v)$,
$\lambda^{(h)}_F(v)$, $\alpha^{*(h)}_F(v)$ and $K^{(h)}_F(v)$ for the quantities of
$v_F$, and
\[
M_F(T,v)=\max\{P_N(n)e^{Tv\cdot n/N}:\supp n=F\},
\]
so that $M_F(T,0)=M_F$.

\begin{theorem}[face maxima]\label{thm:facemax}
Let $F$ satisfy \textup{(H)}, let $\mathcal V\subset\R^d$ be compact, and fix $h_*$ with
$h_*q_{\max}<1$.
\begin{enumerate}[(a)]
\item If $k\ge2$, then uniformly for $v\in\mathcal V$ and $1\le T\le h_*N$,
\[
M_F(T,v)=N^{-(k-1)}T^{(k-1)/2}K^{(h)}_F(v)e^{-T\lambda^{(h)}_F(v)}\bigl(1+O_{\mathcal V}(1/T)\bigr),
\]
and every maximizer $\hat n$ satisfies $\hat n/N=\alpha^{*(h)}_F(v)+O_{\mathcal V}(1/T)$.
At $v=0$ this reads
$M_F=N^{-(k-1)}T^{(k-1)/2}K^{(h)}_F(1-h\lambda_F)^N(1+O(1/T))$ with
$\hat n/N=\alpha^*_F+O(1/T)$. For $k=1$,
$M_{\{i\}}(T,v)=e^{Tv_i}\mu_i(1-hq_i)^{N-1}=K^{(h)}_{\{i\}}(v)e^{-T\lambda^{(h)}_{\{i\}}(v)}$
exactly.
\item If $k\ge2$, then uniformly for $v\in\mathcal V$ and $1\le T\le N^{1/3}$,
\[
M_F(T,v)=N^{-(k-1)}T^{(k-1)/2}K_F(v)e^{-T\lambda_F(v)}\bigl(1+O_{\mathcal V}(1/T+T^2/N)\bigr),
\]
and every maximizer $\hat n$ satisfies $\hat n/N=\alpha^*_F(v)+O_{\mathcal V}(1/T)$. For
$k=1$, $M_{\{i\}}(T,v)=e^{Tv_i}M_{\{i\}}$ with $M_{\{i\}}$ given by~\eqref{eq:vertexmax}.
In both cases, uniformly for $v\in\mathcal V$ and $1\le T\le N^{1/3}$,
\begin{equation}\label{eq:logmax}
\log M_F(T,v)=-(k-1)L+\tfrac{k-1}2\log T+\log K_F(v)-T\lambda_F(v)+O_{\mathcal V}(1/T+T^2/N).
\end{equation}
\end{enumerate}
\end{theorem}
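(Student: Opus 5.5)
The plan is to maximize the local law of Theorem~\ref{thm:local}(b) over the lattice points of the face. The weighted exponent $\alpha\mapsto J^{(h)}_F(\alpha)-v\cdot\alpha$ has a unique, nondegenerate minimizer $\alpha^{*(h)}_F(v)$ in $\Delta_F^\circ$ by Proposition~\ref{prop:discrete-variational}(a), with minimum value $\lambda^{(h)}_F(v)$. All these objects are real-analytic in $(h,v)$ up to $h=0$ by Proposition~\ref{prop:discrete-variational}(c) and Lemma~\ref{lem:discrete-hessian}, so the minimizers for $(h,v)\in[0,h_*]\times\mathcal V$ lie in a fixed compact set $\mathcal K_0\subset\Delta_F^\circ$. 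We also get $\nabla^2J^{(h)}_F\ge c\,I$ on a neighbourhood $\mathcal K$ of $\mathcal K_0$, uniformly in $(h,v)$.

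First I bound the contribution of compositions outside $\mathcal K$. For those near the boundary of the face I use Lemma~\ref{lem:upper} with a $\theta\in V_F$ slightly perturbed from $q-v-\lambda_F(v)\ones$. For $T\le N^{1/3}$ I choose it so that the linear rate $\langle q-\theta,\alpha\rangle-v\cdot\alpha$ exceeds $\lambda_F(v)+\eta$ off $\mathcal K$. I expect this to be possible because $J_F$ is the supremum of such linear rates and, by Proposition~\ref{prop:rate}(d), is strictly larger than $\lambda_F(v)$ on the boundary; compactness then gives finitely many $\theta$'s. The resulting bound is $C h^{k-1}e^{-T(\lambda_F(v)+\eta)}$, smaller than the main term by $T^{-(k-1)/2}e^{-\eta T}=O(1/T)$. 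For the discrete rate and $T$ up to $h_*N$ I need the analogous bound with $\lambda^{(h)}$. I would get it from the torus representation (Lemma~\ref{lem:torus}), bounding the integrand at the real point with discrete radii; this gives $P_N(n)e^{h v\cdot n}\le C h^{k-1}e^{-T\langle\cdot\rangle}$ with the discrete linear rates $L_h(q_i-\theta_i)$.

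Inside $\mathcal K$, Theorem~\ref{thm:local}(b) says $P_N(n)e^{Tv\cdot\alpha}$ equals $N^{-(k-1)}T^{(k-1)/2}C^{(h)}_F(\alpha)e^{-T\Phi(\alpha)}(1+O(1/T))$, with $\Phi=J^{(h)}_F-v\cdot\alpha$. By the quadratic growth of $\Phi$, any $\alpha$ with $|\alpha-\alpha^*|\ge A/\sqrt T$ loses a factor $e^{-cA^2}$. So every maximizer has $|\hat\alpha-\alpha^*|=O(T^{-1/2})$, and the best lattice point has $\Phi(\hat\alpha)-\lambda^{(h)}=O(N^{-2})$, since the lattice spacing is $1/N$. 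This gives $TO(N^{-2})=O(1/T)$, and $C^{(h)}_F(\hat\alpha)=K^{(h)}_F(v)(1+O(T^{-1/2}))$. This is the step I expect to be the main obstacle. The $O(T^{-1/2})$ localization yields only an $O(T^{-1/2})$ error in the constant, not $O(1/T)$.

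To get $O(1/T)$, I will sharpen the localization to $\hat\alpha-\alpha^*=O(1/T)$. The maximizer of $\log C^{(h)}_F(\alpha)-T\Phi(\alpha)$ over the continuum is displaced from $\alpha^*$ by $O(1/T)$, because the gradient of $\log C$ is bounded and the Hessian of $T\Phi$ is of order $T$. The relative error $1+O(1/T)$ in the local law, however, could move the discrete maximizer by as much as $T^{-1}$ in $\Phi$-units, which only gives $|\hat\alpha-\alpha^*|=O(T^{-1})$ in squared distance. So I need the error term to be smooth in $\alpha$, with $\nabla$ of the error $O(1/T)$. I would take this from the Laplace lemma with parameter (Lemma~\ref{lem:laplace}), whose expansion is analytic in $\alpha$. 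Then $C^{(h)}_F(\hat\alpha)=K^{(h)}_F(v)(1+O(1/T))$ and the maximizer estimate follow.

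The special cases are then immediate. At $v=0$, Proposition~\ref{prop:discrete-variational}(b) gives $e^{-T\lambda^{(h)}_F}=(1-h\lambda_F)^N$. The case $k=1$ is exact. For part (b), I expand by analyticity: $T\lambda^{(h)}_F(v)=T\lambda_F(v)+O(T^2/N)$, $K^{(h)}_F=K_F+O(T/N)$, and $\alpha^{*(h)}=\alpha^*+O(T/N)$. Together with \eqref{eq:vertexmax} for $k=1$, this gives \eqref{eq:logmax}.
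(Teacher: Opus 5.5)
Your architecture matches the paper's: finitely many $\theta\in V_F$ with the real-point bound \eqref{eq:A-chernoff} (discrete linear rates $L_h(q_i-\theta_i)$) away from $\alpha^{*(h)}_F(v)$, Theorem~\ref{thm:local}(b) near it, the nearest lattice point for the lower bound, and analyticity in $h$ to pass from (a) to (b). The gap is the step you flag as the main obstacle, and it comes from a scaling slip.

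The local law has relative error $O(1/T)$. That is $O(1/T)$ in units of $T\Phi$, hence $O(1/T^2)$ in units of $\Phi$, not $O(1/T)$. So it does not degrade the localization to $|\hat\alpha-\alpha^*|^2=O(1/T)$. Your remedy, an $O(1/T)$ bound on the $\alpha$-gradient of the local-law error, is not something Lemma~\ref{lem:laplace} gives. That lemma gives a uniform $O(1/T)$ remainder and no control of its derivatives in the parameter. For lattice points such a gradient is not even defined unless you extend $P_N(n)$ to non-integer $n$. So, as written, neither the $O(1/T)$ error in $M_F$ nor the $O(1/T)$ localization of $\hat n$ is proved.

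Neither step needs it. Write $x$ for the displacement from $\alpha^{*(h)}_F(v)$, $J^{h,v}=J^{(h)}_F-v\cdot\alpha-\lambda^{(h)}_F(v)$, and $X$ for the claimed value of $M_F(T,v)$.

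\emph{Upper bound.} Taylor's theorem gives $c_U|x|^2\le J^{h,v}$ and $\log(C^{(h)}_F/K^{(h)}_F(v))\le g\cdot x+C_1|x|^2$ with $|g|\le C_1$. Completing the square, for $T\ge2C_1/c_U$,
\[
\log\frac{C^{(h)}_F(\alpha)}{K^{(h)}_F(v)}-TJ^{h,v}(\alpha)\le g\cdot x-\frac{c_UT}2|x|^2\le\frac{|g|^2}{2c_UT}.
\]
So $M_F(T,v)\le X(1+O(1/T))$ with no localization at all.

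\emph{The maximizer.} Compare $\hat n$ with the lattice point $n^0$ nearest $\alpha^{*(h)}_F(v)$. Use that $\log C^{(h)}_F$ is $C_3$-Lipschitz and that $|E|\le\frac12$ once $\hat n/N$ is in the ball. This gives $c_UT|\hat x|^2\le C_3|\hat x|+C_4/T$, hence $|\hat x|=O(1/T)$. The $O(1/T)$ error of the local law enters only as the additive $C_4/T$. This is exactly the paper's argument.

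Two smaller points. The off-ball bound is $X\cdot O(T^{(k-1)/2}e^{-2\eta T})$, not $T^{-(k-1)/2}e^{-\eta T}$, though it is still $O(1/T)$. For $1\le T\le T_2$ you also need to say that $M_F/X$ stays between two positive constants. This follows from the same three steps because the factor $1+E$ of Theorem~\ref{thm:local}(b) does.
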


The constants depend on $\mathcal V$ only through the compact set
$\{\alpha^{*(h)}_F(v):0\le h\le h_*,\ v\in\mathcal V\}$, the bounds of
$\lambda^{(h)}_F$ and $K^{(h)}_F$ on $[0,h_*]\times\mathcal V$, and finitely many
vectors of $V_F$.

\begin{proof}
(a) For $k=1$ the only composition is $Ne_i$. Let $k\ge2$. For
$(h,v)\in[0,h_*]\times\mathcal V$ put
$J^{h,v}(\alpha)=J^{(h)}_F(\alpha)-v\cdot\alpha-\lambda^{(h)}_F(v)$, which is
nonnegative on $\Delta_F$ and vanishes only at $\alpha^{*(h)}_F(v)$, by
Proposition~\ref{prop:discrete-variational}(a) for $h>0$ and
Proposition~\ref{prop:rate}(a) for $h=0$. Put
$X=N^{-(k-1)}T^{(k-1)/2}K^{(h)}_F(v)e^{-T\lambda^{(h)}_F(v)}$, the value that (a)
claims, and write $x=\alpha'-\alpha^{*(h)}_F(v)'$ for the displacement of $\alpha$
from $\alpha^{*(h)}_F(v)$. Below, constants depend only on $Q$, $\mu$, $h_*$ and
$\mathcal V$.

\emph{The balls.} By Proposition~\ref{prop:discrete-variational}(c) the set
$\mathcal A=\{\alpha^{*(h)}_F(v):0\le h\le h_*,\ v\in\mathcal V\}$ is a compact subset
of $\Delta_F^\circ$, and $\lambda^{(h)}_F(v)$ and $\log K^{(h)}_F(v)$ are bounded. By
Proposition~\ref{prop:discrete-rate}(b) and Lemma~\ref{lem:discrete-hessian},
$J^{(h)}_F$ and $\log C^{(h)}_F$ are real-analytic in $(h,\alpha)$ up to $h=0$, with
$\nabla^2J^{(h)}_F$ positive definite, and $J^{h,v}$ has a critical point at
$\alpha^{*(h)}_F(v)$. Taylor's theorem with bounds on the derivatives of order up to
three on $[0,h_*]$ times a compact neighborhood of $\mathcal A$ gives $\rho_0>0$ and
positive constants $c_U$, $C_U$, $C_1$ and $C_3$ with the following properties. The
set $\mathcal K_{\mathcal V}$ of points within $\rho_0$ of $\mathcal A$ lies in
$\Delta_F^\circ$, and for $(h,v)\in[0,h_*]\times\mathcal V$ the ball
$U(h,v)=\{|x|\le\rho_0\}$ satisfies
\[
c_U|x|^2\le J^{h,v}\le C_U|x|^2,\qquad
\bigl|\log(C^{(h)}_F/K^{(h)}_F(v))-g\cdot x\bigr|\le C_1|x|^2
\]
on $U(h,v)$, where $g=\nabla\log C^{(h)}_F(\alpha^{*(h)}_F(v))$ has $|g|\le C_1$.
Also $\log C^{(h)}_F$ is $C_3$-Lipschitz on $\mathcal K_{\mathcal V}$.

\emph{Outside $U(h,v)$.} For $\beta\in\Delta_F$ the definition of $J^{(h)}_F$ and
$((I+hQ_F)u)_i/u_i=1-hq_i+h\theta_i$ with $\theta=A_Fu/u$ give
$J^{(h)}_F(\beta)=\sup_{\theta\in V_F}\sum_i\beta_iL_h(q_i-\theta_i)$, and for fixed
$\theta\in V_F$ the sum is continuous in $(h,\beta)\in[0,h_*]\times\Delta_F$, since
$1-h(q_i-\theta_i)\ge1-h_*q_{\max}$. The set $\mathcal E$ of triples
$(h,v,\beta)\in[0,h_*]\times\mathcal V\times\Delta_F$ with $\beta\notin U(h,v)^\circ$ is
compact, and $J^{h,v}(\beta)$ is positive and lower semicontinuous on it. So
$J^{h,v}\ge3\eta$ on $\mathcal E$ for some $\eta>0$. For each point of $\mathcal E$
the supremum gives $\theta\in V_F$ with
$\sum_i\beta_iL_h(q_i-\theta_i)-v\cdot\beta>\lambda^{(h)}_F(v)+2\eta$, and the same
$\theta$ works near that point. By compactness finitely many vectors
$\theta_j\in V_F$ suffice. Now let $\supp n=F$ with $(h,v,n/N)\in\mathcal E$, and take $j$ that
works there. The bound~\eqref{eq:A-chernoff} with $\theta=\theta_j$, multiplied by
$e^{h\langle v,n\rangle}$, gives, since
$(1-hq_i+h\theta_{j,i})^{n_i}=e^{-hn_iL_h(q_i-\theta_{j,i})}$ with $hn_i=T\beta_i$ for
$\beta=n/N$,
\[
P_N(n)e^{Tv\cdot n/N}\le C(T/N)^{k-1}e^{-T(\lambda^{(h)}_F(v)+2\eta)}=X\cdot O\bigl(T^{(k-1)/2}e^{-2\eta T}\bigr).
\]

\emph{Inside $U(h,v)$.} By Theorem~\ref{thm:local}(b) with
$\mathcal K=\mathcal K_{\mathcal V}$, for $\alpha=n/N\in U(h,v)$,
\begin{equation}\label{eq:insideU}
P_N(n)e^{Tv\cdot n/N}=X\,\tfrac{C^{(h)}_F(\alpha)}{K^{(h)}_F(v)}\,e^{-TJ^{h,v}(\alpha)}(1+E),
\qquad|E|\le C_2/T,
\end{equation}
and $1+E$ lies between two positive constants. For $T\ge2C_1/c_U$ the logarithm of
the two middle factors is at most
\[
g\cdot x+C_1|x|^2-c_UT|x|^2\le g\cdot x-\tfrac{c_UT}2|x|^2\le\frac{|g|^2}{2c_UT},
\]
the maximum of a concave quadratic. Hence $P_N(n)e^{Tv\cdot n/N}\le X(1+O(1/T))$ on
$U(h,v)$. The bound outside $U(h,v)$ is smaller for large $T$, and hence
$M_F(T,v)\le X(1+O(1/T))$. For the lower bound let $n^0$ have support $F$ and
$|n^0_i-N\alpha^{*(h)}_F(v)_i|<1$ for all $i$, which exists for $N\ge N_0$. Then
$|x|\le k/N$, $C^{(h)}_F(n^0/N)\ge K^{(h)}_F(v)e^{-kC_3/N}$ and
$TJ^{h,v}(n^0/N)\le C_Uk^2T/N^2\le C_Uk^2h_*/N$. Since $1/N\le h_*/T$,
\eqref{eq:insideU} gives $P_N(n^0)e^{Tv\cdot n^0/N}\ge X(1-O(1/T))$.

\emph{Bounded $T$.} Fix $T_2$ such that the two bounds give
$M_F(T,v)/X\in[\frac12,\frac32]$ for $T\ge T_2$. For $1\le T\le T_2$ the same three
steps, with $1+E$ between two positive constants, give $M_F(T,v)/X\in[c,C]$ with
$0<c<C$. This proves the first display, and Proposition~\ref{prop:discrete-variational}(b)
gives the form at $v=0$.

\emph{The maximizer.} Let $\hat n$ be a maximizer with displacement $\hat x$. For
$T\ge T_3$ the bound outside $U(h,v)$ is below $X/2\le P_N(n^0)e^{Tv\cdot n^0/N}$, so
$\hat n/N\in U(h,v)$, and $|E|\le\frac12$ in~\eqref{eq:insideU}. Taking logarithms in
the comparison of $\hat n$ with $n^0$, and using
$\log(C^{(h)}_F/K^{(h)}_F(v))\le C_3|\hat x|$ and $TJ^{h,v}\ge c_UT|\hat x|^2$, we get
\[
c_UT|\hat x|^2\le C_3|\hat x|+\frac{C_4}T
\]
for all $T_3\le T\le h_*N$. Solving the quadratic inequality gives
$|\hat x|\le\bigl(C_3+\sqrt{C_3^2+4c_UC_4}\bigr)/(2c_UT)$. For $1\le T<T_3$ the claim
holds since $|\hat x|$ is bounded.

(b) By Proposition~\ref{prop:discrete-variational}(c) and
Lemma~\ref{lem:discrete-hessian}, uniformly for $v\in\mathcal V$,
$K^{(h)}_F(v)=K_F(v)(1+O(h))$, $\lambda^{(h)}_F(v)=\lambda_F(v)+O(h)$ and
$\alpha^{*(h)}_F(v)=\alpha^*_F(v)+O(h)$. For $T\le N^{1/3}$ we have
$h\le T^2/N\le1/T$ and $Th=T^2/N\le1$. Hence the value $X$ in (a) is
$N^{-(k-1)}T^{(k-1)/2}K_F(v)e^{-T\lambda_F(v)}(1+O(T^2/N))$, and every maximizer is
within $O(1/T)$ of $\alpha^*_F(v)$. For $k=1$ the formula is the path $i^N$,
and~\eqref{eq:logmax} is the logarithm of the two displays.

\emph{The continuum route to (b).} Part (b) also follows from the proof of (a)
with $J_F$, $C_F$, $K_F(v)$, $\lambda_F(v)$, Theorem~\ref{thm:local}(a) together
with Lemma~\ref{lem:discrete}, and Lemma~\ref{lem:upper} in place of $J^{(h)}_F$,
$C^{(h)}_F$, $K^{(h)}_F(v)$, $\lambda^{(h)}_F(v)$, Theorem~\ref{thm:local}(b)
and~\eqref{eq:A-chernoff}. In~\eqref{eq:insideU} the error is then
$|E|\le C_2(1/T+T^2/N)$, which is at most $\frac12$ for $T_2\le T\le N^{1/3}$ and large $N$.
For bounded $T$ we use that $f_T$ is continuous and positive. In the step that locates the
maximizer the comparison gives $c_UT|\hat x|^2\le C_3|\hat x|+C_4(1/T+T^2/N)$, and
$T^2/N\le1/T$ gives $|\hat x|=O(1/T)$. Proposition~\ref{prop:uniform-facemax} makes this
route uniform over a class of chains.
\end{proof}

\emph{The range of part (b).} The proof of Theorem~\ref{thm:facemax}(b) from part (a)
uses only $Th\le1$ and $h\le1/T$, so it gives the display and the location of the
maximizer for $1\le T\le N^{1/2}$. The same lines give Theorem~\ref{thm:stratified}(b)
for $T\le N^{1/2}$. We state both for $T\le N^{1/3}$, the range of the continuum route,
which needs $T^2/N\le1/T$ to locate the maximizer and is the route that
Proposition~\ref{prop:uniform-facemax} makes uniform. The error $T^2/N$ is not an
artifact of the proof. By the envelope theorem and $L_h(y)=y+hy^2/2+O(h^2)$ for bounded $y$,
uniformly on $\mathcal K$,
\[
 TJ^{(h)}_F(\alpha)=TJ_F(\alpha)+\frac{T^2}{2N}\sum_{i\in F}\alpha_i\bigl(q_i-\theta_i(\alpha)\bigr)^2
 +O\Bigl(\frac{T^3}{N^2}\Bigr),
\]
and $-N\log(1-h\lambda_F)=T\lambda_F+T^2\lambda_F^2/(2N)+O(T^3/N^2)$. The sum
vanishes only where $\theta(\alpha)=q$. Since $\theta$ determines $\alpha$, as the
stationary law of $R^{-1}(A_F-\operatorname{diag}\theta)R$ with $R$ the diagonal of
the Perron vector, this happens only when $\lambda_F=0$ and $\alpha=\alpha^*_F$. Hence
the continuum exponent $TJ_V$ fails when $T$ is of order $N^{1/2}$, while the
discrete functional absorbs every power of $T/N$ and has no threshold below $h_*N$.
Theorems~\ref{thm:first}(ii) and~\ref{thm:reducible}, which cover reducible faces, and
\ifarxivbuild Theorem~\ref{thm:simplex-crossing}(ii) on the complete graph\else the crossing on the
complete graph in the arXiv version of this paper~\cite{arxivC1}\fi{} are stated for
$T\le N^{1/3}$, and the tie windows of Theorem~\ref{thm:window} for $T$ of order
$L$. We have not extended them.

\subsection{Crossings and tie windows}\label{sec:crossings}

\begin{theorem}[the crossing equation]\label{thm:crossing}
Let $F$ and $G$ satisfy \textup{(H)}, with $\Delta=k_F-k_G\ge1$ and
$\delta=\lambda_G-\lambda_F>0$, and let $D_N(T)=\log M_F-\log M_G$.
\begin{enumerate}[(i)]
\item For large $N$, $D_N$ has a zero in
$[\frac{\Delta}{2\delta}L,\frac{2\Delta}{\delta}L]$.
\item Fix $h_*$ with $h_*q_{\max}<1$. For large $N$, every zero $T_N\in[1,h_*N]$ of
$D_N$ satisfies
\begin{equation}\label{eq:crossing}
\delta T_N+\frac\Delta2\log T_N=\Delta L-\log\frac{K_F}{K_G}
+O\Bigl(\frac1L\Bigr).
\end{equation}
\item All such zeros lie in one interval of length $O(1/L)$, and
\[
T_N=\frac\Delta\delta\Bigl[L-\frac12\log L+b_{FG}\Bigr]
+O\Bigl(\frac{\log L}L\Bigr),\qquad
b_{FG}=-\frac12\log\frac\Delta\delta-\frac1\Delta\log\frac{K_F}{K_G}.
\]
\item For large $N$, $D_N$ has exactly one zero in $[1,h_*N]$.
\end{enumerate}
\end{theorem}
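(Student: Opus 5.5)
Throughout we use Theorem~\ref{thm:facemax}(a) and (b) at $v=0$ for both faces. Note the convention for singletons. If $k_G=1$ the face maximum is exact with $K^{(h)}_G=\mu_i/(1-hq_i)$, and~\eqref{eq:logmax} still holds for $T\le N^{1/3}$.

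\emph{Step 1: the log-difference.} Part (a) gives, uniformly for $1\le T\le h_*N$,
\[
D_N(T)=-\Delta L+\tfrac\Delta2\log T+\log\frac{K^{(h)}_F}{K^{(h)}_G}+N\log\frac{1-h\lambda_F}{1-h\lambda_G}+O(1/T),
\]
where the $O(1/T)$ term is relative in part (a) and so bounded for all $T\ge1$. The function $g(h)=\frac1h\log\frac{1-h\lambda_F}{1-h\lambda_G}$ is analytic on $[0,h_*]$ with $g(0)=\delta>0$. It is positive on $[0,h_*]$ because $\lambda_F<\lambda_G$ and $h\lambda_G<1$. Hence the last term is $Tg(h)\ge cT$ with $c>0$.

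\emph{Step 2: confining the zeros to $T\asymp L$.} For $T\ge CL$ with $C$ large, $D_N\ge cT-\Delta L-O(\log L)>0$. For $T\le L/C'$ we have $D_N\le-\Delta L+\delta' T+O(\log L)<0$. So every zero lies in $T\asymp L$, and then $T\le N^{1/3}$.

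In that range~\eqref{eq:logmax} gives
\[
D_N=-\Delta L+\tfrac\Delta2\log T+\log(K_F/K_G)+\delta T+O(1/L+L^2/N)=\cdots+O(1/L),
\]
which is~\eqref{eq:crossing} at a zero, i.e.\ part (ii). For part (i), evaluate at the endpoints. At $T=\frac{\Delta}{2\delta}L$ we get $D_N\approx-\frac\Delta2L<0$, and at $T=\frac{2\Delta}\delta L$ we get $D_N\approx\Delta L>0$. Since $T\mapsto M_F$ is continuous (a finite polynomial in $h$ for each composition, so the max is continuous), the intermediate value theorem gives a zero.

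\emph{Step 3: part (iii).} The map $\phi(T)=\delta T+\frac\Delta2\log T$ has derivative at least $\delta$. Two solutions of $\phi(T)=\Delta L-\kappa+O(1/L)$ therefore differ by $O(1/L)$. Iterate $T=\frac\Delta\delta[L-\frac12\log T]-\kappa/\delta+O(1/L)$ with $\log T=\log L+\log(\Delta/\delta)+O(\log L/L)$ to get the expansion and $b_{FG}$.

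\emph{Step 4: uniqueness, the main obstacle.} Error terms $O(1/L)$ in values cannot exclude several zeros within a window of width $O(1/L)$. We need $\partial_TD_N\ge\delta/2$ on that window.

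$D_N$ is a difference of logs of maxima over the finite set of compositions. Each $P_N(n)$ is smooth in $T$, so each $\log M_F$ is Lipschitz with one-sided derivatives equal to $\partial_T\log P_N(\hat n)$ at some maximizer $\hat n$. Thus it suffices to show that for maximizers $\hat n$ of $F$ (with $\hat n/N=\alpha^*_F+O(1/T)$),
\[
\partial_T\log P_N(\hat n)=\frac{k-1}{2T}-\lambda_F+O(1/T),
\]
and similarly for $G$. This is the statement of Lemma~\ref{lem:derivative}, which I would prove by differentiating the torus integral of Lemma~\ref{lem:torus} in $h$ at fixed $n$. Differentiation under the integral brings down a factor $\sum_i(n_i-1)(z_i-q_i)/(1-hq_i+hz_i)$ plus the $h^{M-1}$ term. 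The same Laplace argument with a parameter (Lemma~\ref{lem:laplace}) evaluates its mean at the saddle, namely $-J^{(h)}$-type terms plus $O(1/T)$ corrections, uniformly in $h\le h_*$. Evaluating at $\alpha=\alpha^*+O(1/T)$ uses that $\nabla J$ vanishes at the minimizer only to first order, and this contributes $O(1/T)$.

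Then the one-sided derivatives satisfy $D_N'\ge\delta-O(1/L)>0$ on the window, so $D_N$ is strictly increasing there and has exactly one zero. Combined with Step 2 (no zeros outside $T\asymp L$) this gives (iv). The delicate point is the uniformity of the derivative expansion and the handling of non-differentiable maxima via one-sided (Dini) derivatives.
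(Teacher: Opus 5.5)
Your proposal is correct and follows the paper's proof essentially step by step. You confine the zeros to a window of order $L$ with the exact face maxima of Theorem~\ref{thm:facemax}(a), derive (i)--(iii) there from \eqref{eq:logmax}, and get uniqueness from the one-sided derivatives of the maxima via Lemma~\ref{lem:derivative}, with the maximizers near $\alpha^*_F$.

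The only difference is how you would prove that lemma. You differentiate the torus integral in $h$ at fixed radii and apply the Laplace lemma to the extra amplitude. The paper instead controls $\E(M-1)$ through the convexity of $c\mapsto\log\E e^{c(M-1)}$ together with the local law at time $e^cT$. Both routes give $\partial_T\log P_N(n)=-J_F(n/N)+O(1/T)$ in the range needed.
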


\begin{proof}
\emph{Where the zeros lie.} We show that for large $N$ every zero of $D_N$ in
$[1,h_*N]$ lies in $W=[\frac\Delta{2\delta}L,\frac{2\Delta}\delta L]$. By
Theorem~\ref{thm:facemax}(a) with $v=0$ for $F$, and for $G$ too if $k_G\ge2$,
uniformly for $1\le T\le h_*N$,
\[
D_N(T)=-\Delta L+\Xi_N(T)+\log\frac{K^{(h)}_F}{K^{(h)}_G}+O\Bigl(\frac1T\Bigr),
\qquad\Xi_N(T)=N\log\frac{1-h\lambda_F}{1-h\lambda_G}+\frac\Delta2\log T .
\]
If $G=\{j\}$ this holds by the exact formula $M_G=K^{(h)}_G(1-h\lambda_G)^N$ of
Theorem~\ref{thm:facemax}(a). Since $\lambda_F<\lambda_G\le q_{\max}<1/h$,
\[
\Xi_N'(T)=\frac{\lambda_G}{1-h\lambda_G}-\frac{\lambda_F}{1-h\lambda_F}+\frac\Delta{2T}
=\frac\delta{(1-h\lambda_F)(1-h\lambda_G)}+\frac\Delta{2T}\ge\delta,
\]
and the first term of $\Xi_N$ vanishes at $T=0$, so $\Xi_N(T)\ge\delta T$ for $T\ge1$.
The constants $K^{(h)}_F$ and $K^{(h)}_G$ are continuous and positive in
$h\in[0,h_*]$, so the logarithm of their ratio is bounded. Hence a zero $T_N$ has
$\Xi_N(T_N)=\Delta L+O(1)$, hence $\delta T_N\le\Delta L+O(1)$ and
$T_N<\frac{2\Delta}\delta L$. For $T\le\frac{2\Delta}\delta L$ we have $Th=O(L^2/N)$,
so $\Xi_N(T)=\delta T+\frac\Delta2\log T+O(L^2/N)$, and
$\Xi_N(\frac\Delta{2\delta}L)=\frac\Delta2L+O(\log L)$. As $\Xi_N$ is increasing,
$T_N>\frac\Delta{2\delta}L$ for large $N$. Also $W\subset[1,N^{1/3}]$ for large $N$.

\emph{On $W$.} We subtract~\eqref{eq:logmax} for $G$ from~\eqref{eq:logmax} for $F$.
Uniformly on $[1,N^{1/3}]$,
\[
D_N(T)=-\Delta L+\Xi(T)+\log\frac{K_F}{K_G}+O\Bigl(\frac1T+\frac{T^2}N\Bigr),
\qquad\Xi(T)=\delta T+\frac\Delta2\log T .
\]
The function $\Xi$ is increasing, with $\Xi'\ge\delta$.

(i) At $T=\frac{\Delta}{2\delta}L$ the display gives $D_N=-\frac\Delta2L+O(\log L)<0$, and at
$T=\frac{2\Delta}\delta L$ it gives $D_N=\Delta L+O(\log L)>0$. For fixed $N$, $M_F$ is a maximum
of finitely many polynomials in $h$, and it is positive because $F$ is admissible. The same holds
for $G$. Hence $D_N$ is continuous and has a zero in between.

(ii) Let $T_N\in[1,h_*N]$ be a zero. By the first step $T_N\in W$, so $T_N$ is of
order $L$, the error term of the display is $O(1/T_N+T_N^2/N)=O(1/L)$, and
$D_N(T_N)=0$ is~\eqref{eq:crossing}.

(iii) Let $T^\circ$ be the solution of $\Xi(T^\circ)=\Delta L-\log(K_F/K_G)$. By (ii) every zero
has $\Xi(T_N)=\Xi(T^\circ)+O(1/L)$, and $\Xi'\ge\delta$, so every zero is within $O(1/L)$ of
$T^\circ$. To solve for $T^\circ$, we use $T^\circ=\frac\Delta\delta L+O(\log L)$, so
$\log T^\circ=\log L+\log\frac\Delta\delta+O(\log L/L)$. Putting this into
$\delta T^\circ=\Delta L-\frac\Delta2\log T^\circ-\log(K_F/K_G)$ and dividing by $\delta$ gives
the explicit form.

(iv) By the first step, for large $N$ every zero in $[1,h_*N]$ lies in $W$. We show that $D_N$ is strictly increasing
on $W$. For fixed $N$, $M_F$ is the maximum of the finitely many polynomials $T\mapsto P_N(n)$
with $\supp n=F$ that are not identically zero. By Lemma~\ref{lem:runs} each term of such a
polynomial is either zero for all $h$ or positive for all $0<h<1/q_{\max}$, so these polynomials
are positive on $W$. Hence $\log M_F$ has at each $T\in W$ a right
derivative, the largest $\partial_T\log P_N(n)$ over the maximizers $n$ at $T$, and a left
derivative, the least of them. By Theorem~\ref{thm:facemax} every maximizer has
$n/N=\alpha^*_F+O(1/L)$ on $W$. Hence for large $N$ it lies in a closed ball
$\mathcal K\subset\Delta_F^\circ$ around $\alpha^*_F$, and $J_F(n/N)=\lambda_F+O(1/L)$, since
$J_F$ is smooth on $\mathcal K$ with $J_F(\alpha^*_F)=\lambda_F$. Lemma~\ref{lem:derivative}
then gives both one-sided derivatives of $\log M_F$ as $-\lambda_F+O(1/L)$ on $W$. The same
holds for $G$ if $k_G\ge2$. If $G=\{j\}$, then $\log M_G=\log\mu_j+(N-1)\log(1-hq_j)$ has the
derivative $-(N-1)q_j/(N(1-hq_j))=-\lambda_G+O(L/N)$. So both one-sided derivatives of $D_N$
are $\delta+O(1/L)\ge\delta/2$ on $W$ for large $N$. A continuous function whose right
derivative is positive at every point of an interval is strictly increasing there. Hence $D_N$
has at most one zero in $W$, and by (i) it has one.
\end{proof}

On the $4$-cycle at $N=10^{14}$ the explicit form in (iii) differs from the root
of~\eqref{eq:crossing} without its error term by $0.035$ for a vertex against a pair and by
$0.087$ for a pair against the cycle, the two crossings that carry the mode. Hence computations
should solve~\eqref{eq:crossing}.

At a switch value $\tau^*=\tau_i$ of Theorem~\ref{thm:hull}(a) several faces minimize
$\Phi_{\tau^*}$, sometimes more than two, as all faces do at $\tau^*=1$ for the
complete graph $Q=J-dI$, where $\lambda_F=d-|F|$\ifarxivbuild{} (Corollary~\ref{cor:paperB})\fi. Then the competition plays out in a window $T=\tau^*(L-\frac12\log L+t)$ with $t$ of
order $1$. In this window each tied face has a line $\ell_F(t)$, and the mode
follows the highest line.

\begin{theorem}[tie windows]\label{thm:window}
Fix $\tau^*>0$. Let $m^*$ be the least value of $\Phi_{\tau^*}$ over the
admissible faces and $\mathcal F^*$ the set of minimizers. Assume that every
face in $\mathcal F^*$ is irreducible. By Theorem~\ref{thm:hull}(c) this holds
if $Q$ has symmetric support or $\mu$ has full support. Put $T=\tau^*(L-\frac12\log L+t)$, and for
$F\in\mathcal F^*$ let
$\ell_F(t)=\log K_F+\frac{k_F-1}2\log\tau^*-\tau^*\lambda_Ft$ and
$E(t)=\max_{G\in\mathcal F^*}\ell_G(t)$. The function $E$ is convex and piecewise linear. We call
the finitely many points where its slope changes the \emph{kinks} of $E$, and we write $t_j$ for
them.
\begin{enumerate}[(i)]
\item Uniformly for $t$ in compact sets, for $F\in\mathcal F^*$,
$\log M_F=-m^*L+\frac{m^*}2\log L+\ell_F(t)+O(\log L/L)$.
\item There are $\varepsilon_1,\varepsilon_0>0$ such that for large $N$ and
$|T/L-\tau^*|\le\varepsilon_1$, every admissible $F\notin\mathcal F^*$ has
$M_F\le N^{-\varepsilon_0}\max_{G\in\mathcal F^*}M_G$.
\item There are $\varepsilon,t_0>0$ such that the following holds for large $N$. If
$t_0\le t\le\varepsilon L$, then every global mode lies on a face $F\in\mathcal F^*$ with
$\ell_F=E$ on $[t_0,\infty)$. If $-\varepsilon L\le t\le-t_0$, then every global mode lies on a
face $F\in\mathcal F^*$ with $\ell_F=E$ on $(-\infty,-t_0]$. A face of the first kind has the
least $\lambda_F$ in $\mathcal F^*$, and a face of the second kind has the largest.
\end{enumerate}
The numbers $\varepsilon_0$, $\varepsilon_1$, $\varepsilon$ and $t_0$ depend only on $Q$,
$\mu$ and $\tau^*$.
Consequently the following holds for every $\eta>0$ and large $N$. If $|t|\le\varepsilon L$ and
$t$ is at distance at least $\eta$ from every kink of $E$, then every global mode lies on a face
whose line equals $E$ near $t$. So for $|t|\le\varepsilon L$ the line of the face that carries
the global mode changes only at $T=\tau^*(L-\frac12\log L+t_j)+o(1)$, where $t_j$ runs over the
kinks of $E$, and it changes near each kink. Coincident lines do not distinguish
their faces at this order.
\end{theorem}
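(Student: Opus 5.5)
The plan is to compute $\log M_F$ directly at $T=\tau^*(L-\frac12\log L+t)$ from \eqref{eq:logmax}, and to compare faces only through differences, which stay uniform in $t$ up to $\varepsilon L$. This range is beyond the compact-$t$ uniformity of (i).

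\emph{Step (i).} For $F\in\mathcal F^*$ with $k_F\ge2$, I insert $T$ into \eqref{eq:logmax} with $v=0$. The error is $O(1/T+T^2/N)=O(1/L)$. Write $\log T=\log\tau^*+\log L+\log\bigl(1+(t-\frac12\log L)/L\bigr)=\log\tau^*+\log L+O(\log L/L)$ for $t$ in a compact set. Then
\[
\log M_F=-(k_F-1)L-\tau^*\lambda_FL+\tfrac{\tau^*\lambda_F+k_F-1}2\log L+\ell_F(t)+O(\log L/L),
\]
and $\tau^*\lambda_F+k_F-1=m^*$ gives (i). For $F=\{i\}$ the same line works with the exact vertex formula~\eqref{eq:vertexmax}.

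\emph{Step (ii).} I put $\tau=T/L$. The maps $\tau\mapsto\Phi_\tau(F)$ are finitely many affine functions. So there are $\varepsilon_1,\gamma>0$ with $\Phi_\tau(F)\ge\min_{G\in\mathcal F^*}\Phi_\tau(G)+\gamma$ for every admissible $F\notin\mathcal F^*$ and $|\tau-\tau^*|\le\varepsilon_1$. Theorem~\ref{thm:first}(ii) gives $\log M_F\le-L\Phi_\tau(F)+(k_F-1)\log T+\kappa$. Theorem~\ref{thm:facemax}(b), which applies since faces in $\mathcal F^*$ are irreducible and admissible and hence satisfy (H), gives $\log M_G\ge-L\Phi_\tau(G)-O(1)$. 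The gap $\gamma L$ absorbs the $O(\log L)$ terms, and I take $\varepsilon_0=\gamma/2$.

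\emph{Step (iii).} This is the main step. For $|t|\le\varepsilon L$ with $\varepsilon$ small, we have $|T/L-\tau^*|\le\varepsilon_1$, so by (ii) every mode lies on a face of $\mathcal F^*$. For $F,G\in\mathcal F^*$ the tie $\Phi_{\tau^*}(F)=\Phi_{\tau^*}(G)$ gives $k_F-k_G=-\tau^*(\lambda_F-\lambda_G)$. Subtracting \eqref{eq:logmax} for the two faces, the terms in $L$ and $\frac12\log L$ cancel exactly, and what remains is
\[
\log\frac{M_F}{M_G}=\ell_F(t)-\ell_G(t)+\frac{k_F-k_G}2\log\Bigl(1+\frac{t-\frac12\log L}{L}\Bigr)+O(1/L).
\]
The middle term is at most $C(|t|+\log L)/L$.

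Suppose $t_0\le t\le\varepsilon L$ and $\lambda_F>\lambda_G=\min_{\mathcal F^*}\lambda$. Then $\ell_F-\ell_G\le c-\tau^*(\lambda_F-\lambda_G)t$ for a constant $c$. The correction is at most $Ct/L+o(1)$, which is small against $\tau^*(\lambda_F-\lambda_G)t$, so the ratio is negative once $t_0$ is large. Faces that share the least $\lambda$ share $k$ as well. Between them the difference is the constant $\log K_F-\log K_G$ up to $O(1/L)$, so only those with the largest constant survive, and these are exactly the faces whose line equals $E$ on $[t_0,\infty)$. The case $t\le-t_0$ is symmetric, with the largest $\lambda$.

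\emph{Consequence.} For $|t|\le t_0$, (i) gives $\log M_F-\log M_G=\ell_F-\ell_G+O(\log L/L)$. At distance $\eta$ from every kink, each line not equal to $E$ near $t$ lies at least $c(\eta)>0$ below $E$, so it loses. Combined with (iii), this gives the conclusion for all $|t|\le\varepsilon L$, and the line of the face carrying the mode can change only within $o(1)$ of each kink $t_j$.
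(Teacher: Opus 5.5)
Your proposal is correct and follows essentially the paper's proof: the same substitution of $T$ into \eqref{eq:logmax}, leaving the term $\frac{k_F-1}2\log(1+y)$ with $y=(t-\frac12\log L)/L$. It also uses the same gap argument around $\tau^*$ for (ii), and the same split in (iii) between tied faces with equal exit rates, compared through $K_F$, and tied faces with different exit rates, where the slope term $\tau^*(\lambda_G-\lambda_F)t$ dominates the $O((|t|+\log L)/L)$ correction. The only deviation is that in (ii) you take the lower bound for $G\in\mathcal F^*$ from Theorem~\ref{thm:facemax}(b) rather than from the lower bound of Theorem~\ref{thm:first}(ii), which works equally well since these faces satisfy (H).
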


\begin{proof}
Every $F\in\mathcal F^*$ is admissible and irreducible, so it satisfies (H) by
Lemma~\ref{lem:admissible}(b). We first put $T$ into~\eqref{eq:logmax}. Write
$T=\tau^*L(1+y)$ with $y=(t-\frac12\log L)/L$, so that $|y|\le\frac12$ for $|t|\le L/4$ and
large $N$. Then $\log T=\log\tau^*+\log L+\log(1+y)$ and
$T\lambda_F=\tau^*\lambda_F(L-\frac12\log L+t)$, and the error term of~\eqref{eq:logmax} is
$O(1/L)$. Since $\tau^*\lambda_F+k_F-1=m^*$, we get, uniformly for $|t|\le L/4$,
\begin{equation}\label{eq:windowline}
\log M_F=-m^*L+\tfrac{m^*}2\log L+\ell_F(t)+\tfrac{k_F-1}2\log(1+y)+O(1/L)
\qquad(F\in\mathcal F^*).
\end{equation}

(i) For $t$ in a compact set, $y=O(\log L/L)$, so $\log(1+y)=O(\log L/L)$
in~\eqref{eq:windowline}.

(ii) Let $g>0$ be the least value of $\Phi_{\tau^*}(F)-m^*$ over the admissible
$F\notin\mathcal F^*$. If there is no such face, (ii) says nothing, and we put $g=1$. Put
$\varepsilon_1=\min\bigl(\tau^*/2,\,g/(4q_{\max}+4)\bigr)$. If $|T/L-\tau^*|\le\varepsilon_1$, then
$\frac12\tau^*L\le T\le\frac32\tau^*L$, so for large $N$ we are in the range of
Theorem~\ref{thm:first}(ii) and $\log T=O(\log L)$. Take an admissible $F\notin\mathcal F^*$ and
any $G\in\mathcal F^*$. Since $0\le\lambda_F,\lambda_G\le q_{\max}$
(Section~\ref{sec:notation}), $\Phi_\tau(F)-\Phi_\tau(G)$ has slope at most $q_{\max}$ in
absolute value, and it is at least $g$ at $\tau^*$. Hence it is at least $g/2$ for
$|\tau-\tau^*|\le\varepsilon_1$. With $\tau=T/L$ the
upper bound of Theorem~\ref{thm:first}(ii) for $F$ and the lower bound for $G$ give
$\log M_F-\log M_G\le-(\Phi_\tau(F)-\Phi_\tau(G))L+O(\log L)\le-\frac g4L$ for large $N$. Hence (ii)
holds with $\varepsilon_0=g/4$.

(iii) Take $\varepsilon=\min(\frac14,\varepsilon_1/(2\tau^*))$. Then for $|t|\le\varepsilon L$
and large $N$ we have $|T/L-\tau^*|=\tau^*|y|\le\varepsilon_1$, so (ii) applies and no global
mode lies on a face outside $\mathcal F^*$. It remains to compare the tied faces. Let $t_0\ge1$
be larger than every $|t_j|$, so that $E$ is a single line on $[t_0,\infty)$. Let
$F\in\mathcal F^*$ have this line, and let $G\in\mathcal F^*$ have another line. We show that
$M_F>M_G$ for $t_0\le t\le\varepsilon L$ if $t_0$ and $N$ are large. Since $\ell_F\ge\ell_G$
on $[t_0,\infty)$, the slopes satisfy $\lambda_F\le\lambda_G$. As both faces are tied,
\[
\Delta=k_F-k_G=\tau^*(\lambda_G-\lambda_F)\ge0 .
\]
There are two cases.

First let $\Delta=0$. Then the two lines are parallel, and $\ell_F-\ell_G=\log(K_F/K_G)>0$. The
terms with $\log(1+y)$ in~\eqref{eq:windowline} are the same for $F$ and $G$, so
$\log(M_F/M_G)=\log(K_F/K_G)+O(1/L)>0$ for large $N$.

Now let $\Delta>0$. Then $\ell_F(t)-\ell_G(t)=\Delta t+\ell_F(0)-\ell_G(0)$,
and~\eqref{eq:windowline} gives
\[
\log\frac{M_F}{M_G}=\Delta t+\ell_F(0)-\ell_G(0)+\frac\Delta2\log(1+y)+O\Bigl(\frac1L\Bigr).
\]
For large $N$ and $t\ge1$ we have $|\log(1+y)|\le2|y|\le(2t+\log L)/L\le t/2$. Hence the right
side is at least $\frac34\Delta t-C$, which is positive for $t\ge t_0$ once $t_0$ is large.

Hence for $t_0\le t\le\varepsilon L$ every global mode lies on a face whose line is $E$ on
$[t_0,\infty)$. The slope of $\ell_F$ is $-\tau^*\lambda_F$, and $E$ follows the line of largest
slope on the far right, so such a face has the least $\lambda_F$ in $\mathcal F^*$. The case
$t\le-t_0$ is the same with the signs reversed.

\emph{The consequence.} Fix $\eta>0$ and let
$S_\eta=[-t_0,t_0]\setminus\bigcup_j(t_j-\eta,t_j+\eta)$. For $G\in\mathcal F^*$ the function
$E-\ell_G$ is convex, piecewise linear and nonnegative, with kinks only at kinks of $E$. So on
each interval of $S_\eta$ either $\ell_G=E$ or $E-\ell_G\ge g_\eta$ for some $g_\eta>0$. Let
$t\in S_\eta$ and $\ell_F(t)=E(t)$. By (i), every $G\in\mathcal F^*$ whose line is not $E$ near
$t$ has $\log(M_F/M_G)\ge g_\eta-O(\log L/L)>0$, and by (ii) the faces outside $\mathcal F^*$
are smaller still. With (iii) for $t_0\le|t|\le\varepsilon L$ this gives the claim, and the line
does change near each kink, since $E$ is a different line on its two sides.
\end{proof}

\begin{corollary}[where the mode changes]\label{cor:mode-changes}
Let $Q$ have symmetric support or $\mu$ full support. Let $v_0,\dots,v_m$ and
$\tau_1<\dots<\tau_m$ be the vertices and switch values of Theorem~\ref{thm:hull}(a), and
assume that no point of $\mathcal S$ other than $v_{i-1}$ and $v_i$ lies on the edge
$[v_{i-1},v_i]$, for $1\le i\le m$. Put $\Delta_i=y^{(i)}-y^{(i-1)}$ and
$\delta_i=\lambda^{(i-1)}-\lambda^{(i)}$, let $K^{(i)}$ be the largest $K_F$ over the faces
$F$ with $P_F=v_i$, and let $T_{N,i}$ be the unique positive root of
$\delta_iT+\frac{\Delta_i}2\log T=\Delta_iL-\log(K^{(i)}/K^{(i-1)})$. Fix $0<a<b$. There are
$C$ and $N_0$ such that for $N\ge N_0$ and every $T\in[aL,bL]$ with $|T-T_{N,j}|\ge C/L$ for
all $j$, every global mode $\hat n$ has $P_{\supp\hat n}=v_i$, where $i$ is the number of $j$
with $T_{N,j}<T$.
\end{corollary}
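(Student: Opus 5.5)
The plan is to reduce the corollary to Theorem~\ref{thm:window} applied at each switch value $\tau_i$, together with Theorem~\ref{thm:first}(iii) away from the switch values. Since $Q$ has symmetric support or $\mu$ full support, Theorem~\ref{thm:hull}(c) makes every minimizing face irreducible, so (H) holds for all faces that matter. I split $[aL,bL]$ into two kinds of pieces. The first are the windows $|T/L-\tau_j|\le\varepsilon'$ around the switch values $\tau_j\in[a/2,2b]$. The second is the complement, a finite union of compact $\tau$-intervals inside the open intervals $(\tau_i,\tau_{i+1})$.

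On the complement, the winner at each $\tau$ is the set of faces with $P_F=v_i$ (Theorem~\ref{thm:hull}(a)). I need Theorem~\ref{thm:first}(iii) uniformly in $\tau$ over a compact interval, not just for fixed $\tau$. Its proof gives this once the bounds of Theorem~\ref{thm:first}(ii) are used, since they are uniform in $T\in[T_0,N^{1/3}]$. The gap $\min_{F\notin\mathcal F_\tau}\Phi_\tau(F)-\min\Phi_\tau$ is bounded below by a positive constant on such a compact interval, by continuity and finiteness. Hence every global mode has $P_{\supp\hat n}=v_i$. There $T_{N,j}\approx(\Delta_j/\delta_j)L=\tau_jL$, so the count of $j$ with $T_{N,j}<T$ equals $i$ for large $N$.

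Near a switch value $\tau^*=\tau_i$, I apply Theorem~\ref{thm:window}. By the assumption on the edges, $\mathcal F^*$ consists of the faces with $P_F\in\{v_{i-1},v_i\}$. Faces sharing a point have parallel lines $\ell_F$, so $E$ is the maximum of two lines: one with $K^{(i-1)}$ and slope $-\tau^*\lambda^{(i-1)}$, the other with $K^{(i)}$ and slope $-\tau^*\lambda^{(i)}$. Thus $E$ has one kink $t_i$. For $t<t_i$ the mode face has point $v_{i-1}$, and for $t>t_i$ it has point $v_i$. Faces on the winning point but with smaller $K$ are beaten by the $\Delta=0$ argument of that proof. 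Still, $P_{\supp\hat n}$ is all that is claimed, so those faces do not matter.

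The main obstacle is matching precision. Theorem~\ref{thm:window} locates the change only to $o(1)$ in $t$, while the corollary wants $C/L$. For this I would argue directly, as in Theorem~\ref{thm:crossing}(ii)--(iii). Let $F$ attain $K^{(i)}$ and $G$ attain $K^{(i-1)}$. By~\eqref{eq:logmax}, uniformly in the window,
\[
\log M_F-\log M_G=-\Delta_iL+\delta_iT+\tfrac{\Delta_i}{2}\log T+\log\frac{K^{(i)}}{K^{(i-1)}}+O(1/L).
\]
Here $\delta_iT+\frac{\Delta_i}2\log T$ has derivative at least $\delta_i$, so the sign is fixed once $|T-T_{N,i}|\ge C/L$ with $C$ large. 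Every other face at point $v_{i-1}$ or $v_i$ has $M$ at most that of $G$ or $F$ up to a factor $1+O(1/L)$. Thus the maximum over each point is realized, up to that factor, by $G$ or $F$, and the comparison with the linear growth $\delta_i|T-T_{N,i}|\ge\delta_iC/L$ decides it. Faces outside $\mathcal F^*$ are handled by Theorem~\ref{thm:window}(ii). Finally, $|T_{N,j}-\tau_jL|=O(\log L)$ shows that inside this window only $T_{N,i}$ is near $T$, which fixes the count $i$.
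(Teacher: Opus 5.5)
Your proposal is correct and follows the paper's own proof. Near each $\tau_i$, Theorem~\ref{thm:window}(ii) excludes every face not at $v_{i-1}$ or $v_i$, and \eqref{eq:logmax} compares the best face at each of these two points through a function of $T$ with derivative at least $\delta_i$. On compact $\tau$-intervals between the switch values, the bounds of Theorem~\ref{thm:first}(ii) together with a uniform positive gap in $\Phi_\tau$ give the winner, and $T_{N,j}=\tau_jL+O(\log L)$ fixes the count $i$.
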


\begin{proof}
By Theorem~\ref{thm:hull}(a) each face whose point lies on the lower-left boundary belongs to
some $\mathcal F_\tau$, so it satisfies (H) by Theorem~\ref{thm:hull}(c) and
Lemma~\ref{lem:admissible}(b). Let $\mathbb F_i$ be the set of faces $F$ with $P_F=v_i$, and
$M^{(i)}=\max_{F\in\mathbb F_i}M_F$. These faces share $|F|-1=y^{(i)}$ and
$\lambda_F=\lambda^{(i)}$. So for $T$ of order $L$,~\eqref{eq:logmax} gives
$\log(M^{(i)}/M^{(i-1)})=\Xi_i(T)-\Xi_i(T_{N,i})+O(1/L)$ with
$\Xi_i(T)=\delta_iT+\frac{\Delta_i}2\log T$, and $\Xi_i'\ge\delta_i$. Also
$T_{N,i}=\tau_iL+O(\log L)$. Put $\tau_0=0$ and $\tau_{m+1}=\infty$.

\emph{Near a switch value.} By Theorem~\ref{thm:hull}(a) and the assumption,
$\mathcal F_{\tau_i}=\mathbb F_{i-1}\cup\mathbb F_i$. Theorem~\ref{thm:window}(ii) with
$\tau^*=\tau_i$ gives $\varepsilon_1>0$ such that for large $N$ and $|T/L-\tau_i|\le\varepsilon_1$
no face outside $\mathbb F_{i-1}\cup\mathbb F_i$ carries a global mode. We shrink
$\varepsilon_1$ below a third of every gap $\tau_{j+1}-\tau_j$. Then this window contains
$T_{N,i}$ and no other $T_{N,j}$, and by the display there is $C$ such that the global modes lie
in $\mathbb F_i$ for $T\ge T_{N,i}+C/L$ and in $\mathbb F_{i-1}$ for $T\le T_{N,i}-C/L$.

\emph{Between switch values.} Let $J\subset(\tau_i,\tau_{i+1})$ be compact. For $\tau\in J$ only
$v_i$ attains the least value of $\Phi_\tau$ on $\mathcal S$. So $\Phi_\tau(F)-\Phi_\tau(G)\ge g_J>0$
for $\tau\in J$, $G\in\mathbb F_i$ and every admissible $F$ with $P_F\ne v_i$. The bounds of
Theorem~\ref{thm:first}(ii), with $k^*=k_G$ for the irreducible face $G$, give
$\log M_F-\log M_G\le-g_JL+(d-1)\log T+2\kappa<0$ for $T/L\in J$ and large $N$. Hence every
global mode lies in $\mathbb F_i$, and no $T_{N,j}$ lies in $LJ$. Finitely many windows and
intervals $J$ cover $[a,b]$.
\end{proof}

The equation for $T_{N,i}$ is~\eqref{eq:crossing} for a face of largest constant on each side,
so Theorem~\ref{thm:crossing}(iii) gives $T_{N,i}$ explicitly. If three or more points of
$\mathcal S$ lie on one edge, Theorem~\ref{thm:window} locates the changes only up to $o(1)$. At a
kink of $E$ where the line of $G$ gives way to that of $F$, $\Delta=\tau^*\delta$ and
$t_j=b_{FG}$, so the kink is where Theorem~\ref{thm:crossing}(iii) puts the crossing of $F$ and
$G$. The coefficient $\frac12$ of $\log\log N$ comes from the power $T^{(k-1)/2}$ in
Theorem~\ref{thm:facemax}, so it needs only (H).

\subsection{A reducible face}\label{sec:reducible}

The best composition of a reducible face can visit a state only once, with no
Gaussian window in that direction. Hence the coefficient $\frac12$ of
Theorem~\ref{thm:crossing} can fail, and in the next example it is $1$ and then $0$, as
Corollary~\ref{cor:reducible-crossing} predicts.

\begin{example}[a directed $3$-cycle]\label{ex:threecycle}
Let $d=3$, $q_{12}=b>2$, $q_{23}=q_{31}=1$, all other rates $0$, and
$\mu_1=1$. From state $1$ the chain moves only along $1\to2\to3\to1$, so the
admissible faces are $\{1\}$, $\{1,2\}$ (reducible, $\lambda=1$) and
$\{1,2,3\}$ (irreducible, $\lambda=0$).
\begin{enumerate}[(i)]
\item Exactly $M_{\{1\}}=(1-bh)^{N-1}$ and $M_{\{1,2\}}=bh(1-h)^{N-2}$, and the latter is
attained only at $n=(1,N-1,0)$. For the full face,
$\alpha^*=(1,b,b)/(1+2b)$ and $K_{\{1,2,3\}}=(1+2b)^2/(2\sqrt3\,\pi b)$.
\item The equation $M_{\{1\}}=M_{\{1,2\}}$ has exactly one root $T^{(1)}_N$ with $bh<1$, and
$T^{(1)}_N=\frac1{b-1}[L-\log L+\log\frac{b-1}b]+O(\log L/L)$. The coefficient of
$\log\log N$ in the bracket is $1$.
\item Every root $T\in[L/2,2L]$ of $M_{\{1,2\}}=M_{\{1,2,3\}}$ satisfies
$T=L+\log\frac{2\sqrt3\,\pi b^2}{(1+2b)^2}+O(1/L)$, so the coefficient of
$\log\log N$ is $0$.
\end{enumerate}
\end{example}

\begin{proof}[Proof]
(i) A path with support $\{1,2\}$ is $1^{n_1}2^{n_2}$, of probability
$(1-bh)^{n_1-1}\,bh\,(1-h)^{n_2-1}$. Moving one visit from state $2$ to state $1$
multiplies it by $(1-bh)/(1-h)<1$, so the maximum is at $n_1=1$. On the full face $Q$
is a generator, so $\lambda=0$, $r=\ones$, $l=\alpha^*$ and $\alpha^*$ is the stationary law
$(1,b,b)/(1+2b)$. So $(\mu\cdot r)(l\cdot\ones)=1$ and
$K_{\{1,2,3\}}=(2\pi)^{-1}(\det\Sigma_{\alpha^*})^{-1/2}$. By the inequality of the
arithmetic and geometric means, the infimum of
$b\alpha_1u_2/u_1+\alpha_2u_3/u_2+\alpha_3u_1/u_3$ over $u>0$ is $3\phi(\alpha)$ with
$\phi(\alpha)=(b\alpha_1\alpha_2\alpha_3)^{1/3}$. So
$J_V(\alpha)=b\alpha_1+\alpha_2+\alpha_3-3\phi(\alpha)$ by~\eqref{eq:JV}, and by
Proposition~\ref{prop:constant}(a), $\det\Sigma_{\alpha^*}$ is the inverse of the determinant
of the Hessian of $J_V$ in $(\alpha_2,\alpha_3)$ at $\alpha^*$. In all three coordinates the
Hessian of $-3\phi$ is $\phi D^{-1}(I-\frac13\ones\ones^\top)D^{-1}$ with
$D=\operatorname{diag}\alpha$. At $\alpha^*=(c,bc,bc)$ with $c=1/(1+2b)$ we have $\phi=bc$, and
the substitution $\alpha_1=1-\alpha_2-\alpha_3$ gives the Hessian
\[
\frac{1+2b}{3b}\begin{pmatrix}2b^2+2b+2&2b^2+2b-1\\2b^2+2b-1&2b^2+2b+2\end{pmatrix},
\]
whose determinant is $(1+2b)^4/(3b^2)$. Hence $\det\Sigma_{\alpha^*}=3b^2/(1+2b)^4$, which
gives the value of $K_{\{1,2,3\}}$.

(ii) The function $\log(M_{\{1,2\}}/M_{\{1\}})=\log(bh)+(N-2)\log(1-h)-(N-1)\log(1-bh)$
has the derivative $1/h-(N-2)/(1-h)+(N-1)b/(1-bh)>0$ in $h$, since $b>1$ and
$1-bh<1-h$. It tends to $-\infty$ as $h\to0$ and to $+\infty$ as $bh\to1$, so it has
exactly one zero. Its expansion $\log b+\log T-L+(b-1)T+O(T^2/N)$ gives
$(b-1)T+\log T=L-\log b+O(T^2/N)$ at the zero, and with $\log T=\log L-\log(b-1)+O(\log L/L)$
this gives the formula.

(iii) For the full face $k=3$, so Theorem~\ref{thm:facemax} gives
$\log M_{\{1,2,3\}}=-2L+\log T+\log K_{\{1,2,3\}}+O(1/L)$ on $[L/2,2L]$, with the same
power $T^1$ as $\log M_{\{1,2\}}=\log b+\log T-L-T+O(T^2/N)$. Hence $\log T$ cancels, and
equating the two gives $T=L+\log b-\log K_{\{1,2,3\}}+O(1/L)$.
\end{proof}

\subsection{Face maxima uniformly over a class}\label{sec:uniform-class}

The constants of Theorem~\ref{thm:facemax} depend on $Q$ and $\mu$. We make them
uniform over a compact class of reversible chains on a fixed graph. The companion paper\ifarxivbuild~\cite{paperC2}\fi{} uses this statement to make its
finite-horizon recovery uniform over such a class.

Fix a connected graph $\mathcal G=([d],\mathcal E)$, numbers $0<w_*\le R$ and
$0<\mu_*\le1/d$, and a compact set $\mathcal V\subset\R^d$ of fields. A generator $Q$ is
\emph{on $\mathcal G$} if $q_{ij}>0$ exactly when $\{i,j\}\in\mathcal E$. For a reversible
irreducible $Q$ with stationary law $\pi$ put $w_{ij}=q_{ij}q_{ji}$ and
$S_V=D_{\sqrt\pi}(-Q)D_{\sqrt\pi}^{-1}$, with $D_x=\operatorname{diag}x$, the matrix $S_F$ of
Proposition~\ref{prop:variational} for $F=V$. It is symmetric, with diagonal $q_i$ and
the entries $-\sqrt{w_{ij}}$ on edges. Let $\mathcal C$ be the set of reversible
irreducible generators on $\mathcal G$ with $\|S_V\|_2\le R$ and $\sqrt{w_{ij}}\ge w_*$ on
every edge, and let $\mathcal M$ be the set of starts with $\min_i\mu_i\ge\mu_*$. We use
$M_F(T,v)$, $\alpha^*_F(v)$ and $K_F(v)$ as defined before Theorem~\ref{thm:facemax}.

\begin{proposition}[uniform face maxima on the class]\label{prop:uniform-facemax}
Let $\mathcal G$ induce a connected graph on the face $F$, with $k=|F|\ge2$. There are
$N_0$ and $C$, depending only on $\mathcal G$, $R$, $w_*$, $\mu_*$ and $\mathcal V$, such that for all
$Q\in\mathcal C$, $\mu\in\mathcal M$ and $v\in\mathcal V$ the following hold.
\begin{enumerate}[(a)]
\item For all $N\ge2$ and $T>0$ with $TR\le N/2$,
$M_F(T,v)\le C(T/N)^{k-1}e^{-T\lambda_F(v)+CT^2/N}$.
\item For $N\ge N_0$ and $1\le T\le N^{1/3}$, we have $C^{-1}\le K_F(v)\le C$ and
\[
 M_F(T,v)=K_F(v)\,N^{-(k-1)}T^{(k-1)/2}e^{-T\lambda_F(v)}(1+E),\qquad
 |E|\le C\Bigl(\frac1T+\frac{T^2}N\Bigr).
\]
\end{enumerate}
For a singleton, $M_{\{i\}}(T,v)=\mu_i(1-Tq_i/N)^{N-1}e^{Tv_i}$ exactly.
\end{proposition}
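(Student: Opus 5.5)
The plan is to rerun the continuum route of Theorem~\ref{thm:facemax}(b), with every constant traced back to the class data $(\mathcal G,R,w_*,\mu_*,\mathcal V)$. The point that makes this possible is a compactness reduction. Every $Q\in\mathcal C$ is described by its symmetric matrix $S_V$ together with $\pi$. The bounds $\|S_V\|_2\le R$ and $\sqrt{w_{ij}}\ge w_*$ confine the entries $q_i$ and $\sqrt{w_{ij}}$ to a compact set. The rates themselves are $q_{ij}=\sqrt{w_{ij}}\sqrt{\pi_j/\pi_i}$, so I also need $\pi$ bounded below. This follows from detailed balance along edges: $q_{ij}\le q_i\le R$ and $q_{ij}q_{ji}\ge w_*^2$ give $q_{ji}\ge w_*^2/R$, hence $\pi_i/\pi_j=q_{ji}/q_{ij}\in[w_*^2/R^2,R^2/w_*^2]$. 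Since $\mathcal G$ is connected, $\pi$ is bounded below by a constant depending only on $d$, $R$ and $w_*$. Therefore the closure $\overline{\mathcal C}$ is a compact set of reversible generators on $\mathcal G$ whose edge rates are bounded away from $0$ and $\infty$. Along with $\mathcal M$ and $\mathcal V$, this gives a compact parameter set $\mathcal P$, and every $Q$ in it has $Q_F$ irreducible.

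For part (a), I would use Lemma~\ref{lem:upper} in the form \eqref{eq:upper-field}, applied to each $Q$ with $\theta=q-v-\lambda_F(v)\ones$, and in the discrete version \eqref{eq:A-chernoff}. This produces the factor $(1-hq_i+h\theta_i)^{n_i}$, and comparing it with $e^{-hn_i(q_i-\theta_i)}$ yields the $e^{CT^2/N}$ term once $hR\le\frac12$. What remains is to show that $C_\theta$ is uniform on $\mathcal P$. The lemma states that $C_\theta$ is bounded for $\theta$ in compact subsets of $V_F$, but that statement is for a fixed $Q$. So I would go into its proof in Appendix~\ref{app:torus}, the torus integral bounded by its value at the real point. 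There $C_\theta$ should come out as a continuous function of $(A_F,\mu,\theta)$, something like a bound on the resolvent or the Perron vectors of $A_F-\operatorname{diag}\theta$. Since $\theta$ runs over a compact subset of $V_F$ that depends continuously on the parameters, uniformity follows.

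For part (b), the first step is to bound $K_F(v)$ above and below. By Proposition~\ref{prop:constant}(b), $K_F(v)$ is an explicit positive continuous function of $\alpha^*_F(v)$, of $\mu$, $\nu=\pi$ and the conductances. The map $v\mapsto\alpha^*_F(v)$ is continuous on $\mathcal P$ by simplicity of the Perron root, so $K_F(v)$ is bounded above and below by compactness. The second step repeats the proof of Theorem~\ref{thm:facemax}(b) with the local law of Theorem~\ref{thm:local}(a), Lemma~\ref{lem:discrete}, and part (a) used for the region outside the ball.

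The main obstacle is making the $O(1/T)$ errors in Theorem~\ref{thm:local}(a) and the $T^2/N$ error in Lemma~\ref{lem:discrete} uniform over $\mathcal P$. My approach is to check that the Laplace lemma (Lemma~\ref{lem:laplace}) has error constants controlled by three kinds of data: finitely many derivatives of an integrand that is jointly analytic in $(Q,\mu,\alpha,z)$, a lower bound on the Hessian, and a strict gap of the torus integrand away from the real point. Each of these is continuous and positive on the compact set $\mathcal P\times\mathcal K$, where $\mathcal K$ is a fixed neighbourhood of the compact set $\{\alpha^*_F(v)\}$. The torus gap is strict because $G_F$ is aperiodic-free strongly connected, and continuity plus compactness then gives a uniform gap.

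The constants $c_U$, $C_U$, $C_1$, $C_3$ and $\eta$ in the proof of Theorem~\ref{thm:facemax} follow by the same compactness argument. For $\eta$, I would cover $\mathcal P\times\mathcal E$ with finitely many $\theta_j$, each varying continuously with $Q$. With all these constants uniform, the proof's steps (inside the ball, outside it, and bounded $T$ through continuity and positivity of $f_T$ on the compact set) go through verbatim. The singleton formula is the single path $i^N$.
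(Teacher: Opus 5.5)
Your proposal is correct and is essentially the paper's proof: make $\mathcal C\times\mathcal M\times\mathcal V$ compact (the paper reads this off directly from $w_*^2/R\le q_{ij}\le R$ on edges, without passing through $\pi$), then check that $C_\theta=2^kb(\theta')\prod_{i\ge2}\theta_i$ along the Perron tilt, the Laplace data of Theorem~\ref{thm:local}(a), and the constants of the continuum route to Theorem~\ref{thm:facemax}(b) are all continuous on that compact set. On details: outside the ball the single Perron tilt of part (a) loses a factor $T^{(k-1)/2}$, so you need the finitely many tilts $\theta_u=A_Fu/u$ that you introduce later; the torus gap comes from $|g(z')|\le g(\theta')$ and $\alpha_i\theta_i>0$, not from any aperiodicity; and the paper gets the uniform bound $\Upsilon(1)\le CT$ of Lemma~\ref{lem:discrete} from an explicit short walk through $F$ rather than from the uniform local law, although your route (uniform local law for large $T$ plus compactness for bounded $T$) also works.
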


\begin{proof}
For fixed $(Q,\mu)$, part (b) is Theorem~\ref{thm:facemax}(b) with the compact
set $\mathcal V$ of fields, whose constants already do not depend on
$v\in\mathcal V$. We use its continuum route, given at the end of the proof of
Theorem~\ref{thm:facemax}. Part (a) is~\eqref{eq:upper-field}, that is, Lemma~\ref{lem:upper}
with the Perron tilt $\theta=q-v-\lambda_F(v)\ones\in V_F$, and the singleton is
the path $i^N$. It remains to show that no constant depends on $(Q,\mu)$ either.

\emph{The parameter set is compact.} Let $\mathcal P=\mathcal C\times\mathcal M\times\mathcal V$.
Since $S_V$ is symmetric and similar to $-Q$, it is positive semidefinite, so
$0\le q_l\le R$ for every $l$ and $\sqrt{w_{ij}}\le R$ on every edge.
On an edge $q_{ij}\ge w_*^2/R$, since $q_{ij}q_{ji}\ge w_*^2$ and $q_{ji}\le R$, and off the
edges $q_{ij}=0$. Reversibility is the closed condition of Kolmogorov on cycle products,
and $\|S_V\|_2\le R$ is closed since $\pi$ is continuous. Hence $\mathcal P$ is compact, and
every connected face is irreducible for every $Q\in\mathcal C$.

\emph{Joint regularity.} The Perron root is simple, so $\lambda_F(v)$, the normalized
Perron vectors and $\alpha^*_F(v)$ are real-analytic in $(Q,v)$. The proof of
Proposition~\ref{prop:constant}(a), run with $Q$ as an extra variable, makes
$\theta(\alpha)$, $r_\alpha$, $l_\alpha$, $\Sigma_\alpha$, $J_F$ and $C_F$ real-analytic in
$(\alpha,Q)$, and $C_F$ is linear in $\mu$. Hence $K_F(v)$ is continuous and positive on
$\mathcal P$, which gives its bounds, and there is $\varepsilon_0>0$ with
$\alpha^*_{F,i}(v)\ge2k\varepsilon_0$ on $\mathcal P$. Put
$\mathcal K_0=\{\alpha\in\Delta_F:\alpha_i\ge\varepsilon_0\ \forall i\}$.

\emph{The constants.} They are the following, and each is continuous on $\mathcal P$ or on
$\mathcal P\times\mathcal K_0$, or comes from a compactness argument run there.
(i) The constant $C_\theta$ of Lemma~\ref{lem:upper} is built from the factor $b(z')$ of
Lemma~\ref{lem:torus}. This factor is continuous where the
radius condition~\eqref{eq:A-radii} holds strictly, as it does on $V_F$. So it is continuous along the Perron tilt.
(ii) In Theorem~\ref{thm:local}(a) the Perron root in~\eqref{eq:A-radii} at $\theta'(\alpha)$ is
continuous and negative on $\mathcal P\times\mathcal K_0$, so one radius margin serves, and
Lemma~\ref{lem:laplace} applies on the compact space $\mathcal P\times\mathcal K_0$, since
the matrix $\mathcal J$ of Appendix~\ref{app:torus} is continuous and positive definite and
$\chi(\alpha,0)=(\mu\cdot r_\alpha)(l_\alpha\cdot\ones)\prod_{i\ge2}\theta_i(\alpha)/\alpha_1$ is
continuous and positive.
(iii) Step~3 of the proof of Lemma~\ref{lem:discrete} needs only $\Upsilon(1)\le CT$, with $C$
uniform on $\mathcal P\times\mathcal K_0$, and we prove this directly. The upper bound
$H_e(T\alpha)\le e^{k-1}b(\theta')\prod_{i\ge2}\theta_i\,e^{eT\langle\alpha,\theta\rangle}$ of that
step, with $\theta=\theta(\alpha)$, has a constant that is continuous on
$\mathcal P\times\mathcal K_0$ by (i). For the lower bound take a walk $c^0$ in $\mathcal G$ through
every state of $F$ with at most $2k-1$ letters, for instance along a depth-first search of a
spanning tree of the graph that $\mathcal G$ induces on $F$, and let $m^0$ count its letters.
Every start has $\mu_{c^0_1}\ge\mu_*$ and every edge has $q_{ij}\ge w_*^2/R$, so for $T\ge1$ and
$\alpha\in\mathcal K_0$ the term of $m^0$ in $H(T\alpha)$ is at least
\[
\mu_*\min\Bigl(1,\frac{w_*^2}R\Bigr)^{2k-2}\prod_{i\in F}\frac{(T\alpha_i)^{m^0_i-1}}{(m^0_i-1)!}
\ge\mu_*\min\Bigl(1,\frac{w_*^2}R\Bigr)^{2k-2}\frac{\min(1,\varepsilon_0)^{k-1}}{(k-1)!}=c_0>0,
\]
since $\sum_i(m^0_i-1)\le k-1$ and $T\alpha_i\ge\varepsilon_0$. Hence
$\Upsilon(1)=\log H_e(T\alpha)-\log H(T\alpha)\le C+eT\max\theta_i-\log c_0\le C'T$, uniformly on
$\mathcal P\times\mathcal K_0$. Step~4 uses only $q_{\max}\le R$ and
$n_{\min}\ge\varepsilon_0N$.
(iv) In the continuum route to Theorem~\ref{thm:facemax}(b) the bounds for $\Sigma_\alpha^{-1}$, for the third
derivatives of $J_F$ and for two derivatives of $\log C_F$ are continuous on
$\mathcal P\times\mathcal K_0$, which gives one radius $\rho_0\le\varepsilon_0$ and uniform
$c_U$, $C_U$, $C_1$ and $C_3$. Away from the ball, $J_F(\beta)-v\cdot\beta-\lambda_F(v)$ is
lower semicontinuous and positive on a compact subset of $\mathcal P\times\Delta_F$, hence at
least $3\eta$, and finitely many vectors $\theta_u=A_Fu/u\in V_F$ serve, with constants
bounded by (i). For bounded $T$, $f_T$ is continuous and positive on
$[1,T_2]\times\mathcal P\times\mathcal K_0$.
\end{proof}

\section{The boundary layer at a facet}\label{sec:boundary-layer}

The local law of Theorem~\ref{thm:local}(a) holds on compact subsets of the open face. Hence
it says nothing near the boundary. In this section we follow the occupation density of the
continuous-time chain from the inside of a face $F$ up to a facet $G=F\setminus\{i\}$, where the
time spent in $i$ is small. We state the main results for the density in
Section~\ref{sec:layer-main}, after the setting, and prove them in
Sections~\ref{sec:layer-exact} to~\ref{sec:layer-proofs}. Section~\ref{sec:layer-counts} gives
the same law for the counts, with a lattice kernel.

\subsection{Setting}\label{sec:layer-setting}

Let $F$ be an irreducible face with $k=|F|\ge2$, let $i\in F$, and put $G=F\setminus\{i\}$.
We assume that $G$ satisfies (H), so $G$ is irreducible and $\mu(G)>0$. The start mass
$\mu_i$ may be zero. For $j\in G$ put
\[
c^-_j=q_{ji},\qquad c^+_j=q_{ij},
\]
the rates into $i$ and out of $i$. Both vectors are nonnegative and nonzero, since $F$ is
irreducible. For $z\in\mathbb C$ let
\[
Q_G(z)=Q_G+z\,c^-(c^+)^\top,\qquad \mu^{(z)}=\mu_G+z\mu_ic^+,\qquad e^{(z)}=\ones+zc^- ,
\]
and write $Q_G(z)=A(z)-\operatorname{diag}d(z)$ with $A(z)$ the off-diagonal part and
$d(z)=q_G-z\,c^-\circ c^+$. A jump $j\to j'$ of $Q_G(z)$ of rate $zc^-_jc^+_{j'}$ stands for a
visit to $i$ that is entered from $j$ and left to $j'$.

\emph{Occupation coordinates.} Let $E_F$ be the event of Lemma~\ref{lem:continuum}. On $E_F$
let $s\in(0,\infty)^G$ be the times spent in the states of $G$, $S=\sum_{j\in G}s_j$, and
$x=T-S$ the time spent in $i$. We write
\[
f_F(s,x)=e^{-\langle q_G,s\rangle-q_ix}H_F(s,x),\qquad f_G(s)=e^{-\langle q_G,s\rangle}H_G(s),
\]
where $H_F$ and $H_G$ are the sums $H$ of Lemma~\ref{lem:continuum} for the faces $F$ and $G$.
By that lemma $f_F$ is the density of the occupation times on $E_F$ in any $k-1$ of the
coordinates, and $f_G(s)$ is the same density for $G$ at total time $S$. In this
notation Theorem~\ref{thm:local}(a) for the face $G$ reads
\begin{equation}\label{eq:layer-local-G}
f_G(S\beta)=S^{-(k-2)/2}C_G(\beta)e^{-SJ_G(\beta)}\bigl(1+O_{\mathcal K}(1/S)\bigr)
\end{equation}
uniformly for $\beta$ in a compact $\mathcal K\subset\Delta_G^\circ$ and $S\ge1$, with
$C_G(\beta)=\mu_j$ and $J_G=q_j$ when $G=\{j\}$. The profile on the facet is
$\beta=s/S\in\Delta_G^\circ$.

\emph{The weighted chain.} For real $z\ge0$ the matrix $Q_G(z)$ is Metzler and irreducible.
We let $J_G(\beta,z)$ be its rate, the supremum in Proposition~\ref{prop:rate} with $Q_G(z)$
in place of $Q_F$. Its local-law constant $C_G(\beta,z)$ is Definition~\ref{def:tilt} for the
off-diagonal part $A(z)$, with the start $\mu^{(z)}$ in place of $\mu$ and the end vector
$e^{(z)}$ in place of $\ones$. Namely
\[
C_G(\beta,z)=\frac{(\mu^{(z)}\cdot r_\beta(z))(l_\beta(z)\cdot e^{(z)})}{(2\pi)^{(k-2)/2}\sqrt{\det\Sigma_\beta(z)}} .
\]
So $J_G(\beta,0)=J_G(\beta)$ and $C_G(\beta,0)=C_G(\beta)$.

\emph{The facet constants.} Let $r_\beta=r_\beta(0)$ and $l_\beta=\beta/r_\beta$ be the vectors
of Definition~\ref{def:tilt} for the face $G$, so $l_\beta\cdot r_\beta=1$. Put
\[
a_\beta=c^-\cdot l_\beta,\qquad b_\beta=c^+\cdot r_\beta,\qquad \omega_\beta=a_\beta b_\beta,\qquad
\pi^{\rm in}_{\beta,j}=\frac{c^-_jl_{\beta,j}}{a_\beta},\qquad \pi^{\rm out}_{\beta,j}=\frac{c^+_jr_{\beta,j}}{b_\beta}.
\]
Note that only $\omega_\beta$ is intrinsic, since $a_\beta$ and $b_\beta$ move in opposite ways when
$r_\beta$ is scaled. The law $\pi^{\rm in}_\beta$ is the law of the state from which the
tilted chain enters $i$, and $\pi^{\rm out}_\beta$ the law of the state into which it leaves
$i$. Let $\mathcal L_\beta$ be the Laplacian on $G$ with the conductances
$\kappa_{jj'}=l_{\beta,j}q_{jj'}r_{\beta,j'}+l_{\beta,j'}q_{j'j}r_{\beta,j}$ for $j\ne j'$,
which are twice those of the additive reversibilization
$\frac12(\widehat Q_\beta+\widehat Q_\beta^*)$ of the tilted generator $\widehat Q_\beta$, where
$\widehat Q_\beta^*$ is its time reversal with respect to $\beta$. Its kernel is $\R\ones$, since $G$ is irreducible. Let
$\mathcal L_\beta^+$ be its pseudo-inverse and
\begin{equation}\label{eq:layer-gamma}
\gamma_\beta=q_i+\frac{\omega_\beta}2\,(\pi^{\rm out}_\beta-\pi^{\rm in}_\beta)^\top\mathcal L_\beta^+(\pi^{\rm out}_\beta-\pi^{\rm in}_\beta).
\end{equation}
The quadratic form is the effective resistance of the network $\kappa$ between the laws
$\pi^{\rm in}_\beta$ and $\pi^{\rm out}_\beta$. Hence $\gamma_\beta\ge q_i$, with equality exactly
when the two laws agree. Finally put
\[
\zeta=2\sqrt{\omega_\beta Sx},\qquad z_*=\sqrt{\frac x{\omega_\beta S}},\qquad
\mathbf B_\beta(x,S)=e^{-\gamma_\beta x}\sqrt{\frac{\omega_\beta S}x}\,I_1(\zeta),
\]
with the modified Bessel function $I_1$. We call $\mathbf B_\beta$ the Bessel kernel.

We extend the rate on $F$ to occupation times by homogeneity of degree one, and write
$\bar J_F(s,x)=TJ_F((s,x)/T)$ with $T=S+x$. So $\bar J_F(S\beta,x)=S\bar J_F(\beta,x/S)$.

\emph{The generalized kernel.} Fix a compact set $\mathcal K\subset\Delta_G^\circ$.
Theorem~\ref{thm:L-facet} below extends $J_G(\beta,\cdot)$ and $C_G(\beta,\cdot)$ to holomorphic
functions on a disc $|z|<2z_0$, for $\beta\in\mathcal K$, with $z_0>0$ given there. With these extensions we define, for
$\beta\in\mathcal K$ and $x,S>0$, the generalized kernel
\[
\mathbf K_\beta(x,S)=e^{-q_ix}\,\frac1{2\pi\mathrm i}\oint_{|z|=\rho}e^{x/z}\,\frac{C_G(\beta,z)}{C_G(\beta,0)}\,e^{S(J_G(\beta)-J_G(\beta,z))}\,\frac{dz}{z^2},
\]
with $0<\rho<2z_0$. The integrand is holomorphic on the punctured disc, so the integral does not
depend on $\rho$.

\subsection{The main results}\label{sec:layer-main}

\begin{theorem}[the facet layer]\label{thm:layer}
Let $F$ be irreducible with $k=|F|\ge2$, let $i\in F$, let $G=F\setminus\{i\}$ satisfy
\textup{(H)}, and let $\mathcal K\subset\Delta_G^\circ$ be compact. Then there
are $\delta>0$, $T_0$ and $C$ such that the following hold uniformly for $\beta\in\mathcal K$, $T\ge T_0$ and
$0<x\le\delta T$, with $S=T-x$ and $s=S\beta$.
\begin{enumerate}[(a)]
\item $f_F(s,x)=f_G(s)\,\mathbf K_\beta(x,S)\,(1+E)$ with
$|E|\le C(T^{-2}+x^{1/2}T^{-3/2})$, so $|E|\le2C/T$.
\item $f_F(s,x)=f_G(s)\,\mathbf B_\beta(x,S)\,e^{E}$ with $|E|\le C\bigl(1/T+\sqrt{x/T}\,(1+x)\bigr)$.
\item For the $\beta\in\mathcal K$ with $\pi^{\rm in}_\beta=\pi^{\rm out}_\beta$, and so for all
$\beta\in\mathcal K$ if $Q$ is reversible on $F$, we have $\gamma_\beta=q_i$, (b) holds with
$|E|\le C(1/T+\sqrt{x/T})$, and
\[
f_F(s,x)=f_G(s)\,\mathbf B_\beta(x,S)\Bigl(1+c_1(\beta)\,z_*\frac{I_0(\zeta)}{I_1(\zeta)}+E'\Bigr),\qquad|E'|\le C\,\frac{1+x}T ,
\]
where $c_1(\beta)=\partial_z\log C_G(\beta,z)$ at $z=0$.
\end{enumerate}
For $k=2$, (a) holds with $E=0$, and Proposition~\ref{prop:pair-face} gives (b) for all
$x>0$ with an explicit bound.
\end{theorem}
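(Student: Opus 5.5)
The plan is to reduce the face $F$ to the facet $G$ by an exact identity, and then feed in the local law for the weighted chain on $G$. Every path in $E_F$ is a path on $G$ that is interrupted by $m\ge0$ visits to $i$. I collapse each visit, entered from $j$ and left to $j'$, into one jump $j\to j'$ of weight $zc^-_jc^+_{j'}$. A path that starts in $i$ becomes a start weighted by $z\mu_ic^+$, and a path that ends in $i$ becomes an end weighted by $zc^-$. This is exactly why $Q_G(z)$, $\mu^{(z)}$ and $e^{(z)}$ were introduced.

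\emph{Step 1: the exact contour formula.} Integrating out the $m$ holding times in $i$, with total $x$, gives the factor $e^{-q_ix}x^{m-1}/(m-1)!$, as in the proof of Lemma~\ref{lem:continuum}. The remaining factor is the coefficient of $z^m$ in the weighted facet density $f_G(s;z)=e^{-\langle d(z),s\rangle}H_{A(z),\mu^{(z)},e^{(z)}}(s)$, in the notation of Lemma~\ref{lem:torus-complex}. Summing over $m$ and using $\sum_m\frac{x^{m-1}}{(m-1)!}[z^m]g=\frac1{2\pi\mathrm i}\oint e^{x/z}g(z)\,z^{-2}dz$ then gives
\[
f_F(s,x)=e^{-q_ix}\,\frac1{2\pi\mathrm i}\oint_{|z|=\rho}e^{x/z}f_G(s;z)\,\frac{dz}{z^2}
\]
exactly, for every $\rho>0$, since $f_G(s;\cdot)$ is entire. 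For $k=2$ the facet is a single state, $f_G(s;z)=\mu^{(z)}e^{(z)}e^{-d(z)S}$ is explicit, and this already gives (a) with $E=0$.

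\emph{Step 2: insert the local law and choose the radius.} By Theorem~\ref{thm:L-facet}, uniformly for $\beta\in\mathcal K$ and $|z|\le z_0$,
\[
f_G(S\beta;z)=S^{-(k-2)/2}C_G(\beta,z)e^{-SJ_G(\beta,z)}\bigl(1+e_1(\beta,z)/S+O(S^{-2})\bigr),
\]
with $e_1$ holomorphic. Dividing by~\eqref{eq:layer-local-G} gives $\mathbf K_\beta$ times a correction.

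I take $\rho=z_*=\sqrt{x/(\omega_\beta S)}$, the saddle of $x/z+\omega_\beta Sz$. This uses $\partial_zJ_G(\beta,z)|_{0}=-\omega_\beta$, which is the first-order perturbation of the Perron root of the tilted chain by the rank-one term $zc^-(c^+)^\top$. With $x\le\delta T$ we get $z_*\le\sqrt{2\delta/\omega}<z_0$.

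The difficulty is that the trivial bound of the absolute integral loses a factor $\sqrt{\zeta}$ against $I_1(\zeta)$. So I will not bound the error crudely. Instead I write $e_1(\beta,z)=e_1(\beta,0)+O(|z|)$. The constant part cancels against the $1/S$ term of~\eqref{eq:layer-local-G}, up to $O(S^{-2})$. The remaining $O(z_*/S)=O(x^{1/2}T^{-3/2})$ and $O(S^{-2})$ parts are handled by a steepest-descent estimate on the circle. There the real part of the exponent decays like $-\zeta(1-\cos\phi)$, which recovers the Gaussian width that $I_1$ has. This is the step I expect to be the main obstacle, because the estimate has to be uniform both for small $\zeta$ (where $e^{x/z}$ has no saddle structure and one uses the series directly) and for large $\zeta$.

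\emph{Step 3: Bessel forms.} I expand
\[
S\bigl(J_G(\beta)-J_G(\beta,z)\bigr)=\omega_\beta Sz+\sigma_2Sz^2+\sigma_3Sz^3+\dots
\]
At $|z|\approx z_*$ the term $Sz^2$ is of order $x/\omega$. I would show by second-order perturbation of the Perron root that $\sigma_2=-\omega(\gamma_\beta-q_i)/\omega\cdot\omega$, so that this term contributes $-(\gamma_\beta-q_i)x$. The pseudo-inverse $\mathcal L_\beta^+$ and the resistance between $\pi^{\rm in}$ and $\pi^{\rm out}$ enter through this second-order term. I expect this identification to require care, and I would try to obtain it through the symmetric part $\kappa$ of the tilted generator. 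Evaluating $\frac1{2\pi\mathrm i}\oint e^{x/z+\omega Sz}z^{-2}dz=\sqrt{\omega S/x}\,I_1(\zeta)$ then gives $\mathbf B_\beta$. The cubic term $Sz_*^3\sim x^{3/2}T^{-1/2}$ and the term $c_1z_*$ from $C_G(\beta,z)/C_G(\beta,0)$ give the error $\sqrt{x/T}(1+x)$ in (b).

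For (c), $\pi^{\rm in}=\pi^{\rm out}$ makes $\gamma_\beta=q_i$. I would check that the $z^2$ part of the exponent then reduces to a multiple of $z^2S$ that is absorbed at order $x/T$, and that the cubic term vanishes as well. The linear term $c_1z$ integrates exactly to $c_1z_*I_0(\zeta)/I_1(\zeta)$ relative to $\mathbf B_\beta$, which leaves $E'=O((1+x)/T)$.
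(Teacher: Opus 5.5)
Your outline is the paper's (the exact contour formula of Lemma~\ref{lem:layer-exact}, the complex local law of Theorem~\ref{thm:L-facet}, a saddle point on a circle, the Bessel moments). But the step you flag as the obstacle fails as you set it up, because you put the contour at $\rho=z_*$, which is the saddle of $x/z+\omega_\beta Sz$ alone.

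Put $\Lambda(z)=S^{-(k-2)/2}C_G(\beta,z)e^{-SJ_G(\beta,z)}$ and write $e^{x/z}\Lambda(z)=\Lambda(0)e^{\lambda(z)}$. Here $\lambda(z)=x/z+S\omega_\beta z+R(z)$ and $R(z)=\log\bigl(C_G(\beta,z)/C_G(\beta,0)\bigr)+S\bigl(J_G(\beta)-J_G(\beta,z)-\omega_\beta z\bigr)=c_1z-S\omega_\beta(\gamma_\beta-q_i)z^2+O(|z|^2+S|z|^3)$.

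On $|z|=z_*$ the modulus of $e^{\lambda}$ does decay like $e^{-\frac\zeta2(1-\cos\phi)}$. The phase, however, is not stationary at $\phi=0$: the $\phi$-derivative of $\lambda(z_*e^{\mathrm i\phi})$ there is $\mathrm iz_*R'(z_*)\approx-2\mathrm i(\gamma_\beta-q_i)x$, against a curvature of about $\zeta$. Hence $\mathcal A=\frac1{2\pi\mathrm i}\oint e^{\lambda(z)}z^{-2}dz$ is smaller than the integral $\mathcal A^{\rm abs}$ of the modulus over that circle by a factor of about $\exp\bigl(-(\gamma_\beta-q_i)^2x^{3/2}(\omega_\beta S)^{-1/2}\bigr)$. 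This factor is $e^{-(\lambda(z_*)-\lambda(\rho_s))}$, where $\rho_s$ is the real critical point of $\lambda$.

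You control the error integral only through $(z_*/S)\mathcal A^{\rm abs}$, so relative to $f_G\mathbf K_\beta$ you get $Cx^{1/2}T^{-3/2}\exp(cx^{3/2}T^{-1/2})$. For every $\beta$ with $\pi^{\rm in}_\beta\ne\pi^{\rm out}_\beta$, that is $\gamma_\beta>q_i$, this misses the bound of (a) once $x\gg T^{1/3}$. It is exponentially large in $T$ when $x$ is a small multiple of $T$.

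The missing idea is to take the circle through $\rho_s=z_*(1+O(\sqrt{x/S}+1/\zeta))$. This is allowed because the main and error integrands are holomorphic on the punctured disc. There the phase is real and stationary at $\phi=0$. Since $R$ is real on the real axis and $|R''|\le C(1+S)$, you still get $\operatorname{Re}\lambda(\rho_se^{\mathrm i\phi})-\lambda(\rho_s)\le-\frac\zeta2(1-\cos\phi)$. The uniform Laplace lemma of Remark~\ref{rem:laplace-uniform} then gives $\mathcal A\ge c\,\mathcal A^{\rm abs}$; it is needed because $(\beta,x,S)$ does not range over a compact set. Finally $\rho_s\le2z_*$ keeps $\rho_s/S\le Cx^{1/2}T^{-3/2}$.

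The same non-stationarity undermines your Step~3. On the circle through $z_*$ you cannot replace $e^{-S\omega_\beta(\gamma_\beta-q_i)z^2}$ by its value $e^{-(\gamma_\beta-q_i)x}$ at $z_*$, because its phase moves by order $x\zeta^{-1/2}$ across the Gaussian window. The paper instead compares the saddle evaluation at $\rho_s$ with the exact Bessel integral at $z_*$, and for $x\le1$ expands the Gaussian factor in powers of $z^2$.

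Two smaller points. First, Theorem~\ref{thm:L-facet} gives only $|\varepsilon(z)|\le C/S$, not an expansion $1+e_1(\beta,z)/S+O(S^{-2})$ with $e_1$ holomorphic. You do not need one: $\varepsilon$ is holomorphic on $|z|\le z_0$, so the Schwarz lemma gives $|\varepsilon(z)-\varepsilon(0)|\le C|z|/S$. That is exactly the cancellation you want, with no $S^{-2}$ term.

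Second, in (c) it is not enough to check that the $z^2$ and $z^3$ terms of $S(J_G(\beta)-J_G(\beta,z))$ are harmless. A surviving term $\sigma_nSz^n$ contributes $Sz_*^n\asymp x^{n/2}S^{1-n/2}$. For $n=2$ this is of order $x$ (not $x/T$, as you write), and for $n=4$ it is $x^2/S$; neither is within $O((1+x)/T)$. What you need is that $J_G(\beta,\cdot)$ is exactly affine when $\pi^{\rm in}_\beta=\pi^{\rm out}_\beta$, which is Proposition~\ref{prop:layer-constants}(v). There the gradient of $\psi\mapsto P_\beta(e^\psi)$ at $\log r_\beta$ is $\omega_\beta(\pi^{\rm out}_\beta-\pi^{\rm in}_\beta)=0$. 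So $r_\beta$ stays the maximizer for every real $z\ge0$, and holomorphy extends affineness to the disc. The integrand is then exactly $e^{x/z+\omega_\beta Sz}$ times $C_G(\beta,z)/C_G(\beta,0)=1+c_1z+\sum_{n\ge2}c_nz^n$. The Bessel moments with $I_{n-1}\le I_1$ give your $c_1z_*I_0/I_1$ term plus $O(z_*^2)=O(x/T)$.
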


For reversible $Q$, \eqref{eq:layer-c1} below gives $c_1(\beta)$ explicitly. Part (b) gives
the relative Bessel form $f_F=f_G\mathbf B_\beta(1+o(1))$ for $x=o(T^{1/3})$, and beyond that a
logarithmic form. To find the exact range of the relative form we put
\[
\mathfrak r_\beta(\varepsilon)=\bar J_F(\beta,\varepsilon^2)-q_i\varepsilon^2-J_G(\beta)+2\sqrt{\omega_\beta}\,\varepsilon-(\gamma_\beta-q_i)\varepsilon^2 .
\]
Hence $S\mathfrak r_\beta(\sqrt{x/S})$ is the part of the rate $\bar J_F(s,x)$ that the exponent
$SJ_G(\beta)+\gamma_\beta x-2\sqrt{\omega_\beta Sx}$ of $f_G\mathbf B_\beta$ does not capture.

\begin{proposition}[the range of the Bessel form]\label{prop:layer-cubic}
Let $F$, $i$, $G$ and $\mathcal K$ be as in Theorem~\ref{thm:layer}.
\begin{enumerate}[(a)]
\item The function $\mathfrak r_\beta$ is real-analytic near $\varepsilon=0$, jointly with $\beta$, and
$\mathfrak r_\beta(\varepsilon)=\varkappa_\beta\varepsilon^3+O(\varepsilon^4)$ uniformly on $\mathcal K$, where
\[
\varkappa_\beta=\frac{\partial_z^3J_G(\beta,0)}{6\,\omega_\beta^{3/2}}+\frac{(\gamma_\beta-q_i)^2}{\omega_\beta^{1/2}} .
\]
\item There are $\delta>0$, $T_0$ and $C$ such that, uniformly for $\beta\in\mathcal K$,
$T\ge T_0$ and $0<x\le\delta T$, with $S=T-x$ and $s=S\beta$,
\[
\Bigl|\log\frac{f_F(s,x)}{f_G(s)\,\mathbf B_\beta(x,S)}+S\,\mathfrak r_\beta\Bigl(\sqrt{\frac xS}\Bigr)\Bigr|
\le C\Bigl(\frac1T+\sqrt{\frac xT}\Bigr).
\]
In particular the logarithm is $-\varkappa_\beta x^{3/2}S^{-1/2}+O(1/T+\sqrt{x/T}+x^2/T)$.
\item If $\pi^{\rm in}_\beta=\pi^{\rm out}_\beta$, then $\mathfrak r_\beta=0$ near $0$. This holds for every
$\beta$ if $Q$ is reversible on $F$.
\item Fix $\beta\in\mathcal K$ with $\mathfrak r_\beta\ne0$, and let $m\ge3$ be the order of the first
nonzero Taylor coefficient of $\mathfrak r_\beta$ at $0$. There is $\delta_\beta>0$ such that for
$0<x=x(T)\le\delta_\beta T$, the ratio $f_F(s,x)/(f_G(s)\mathbf B_\beta(x,S))$ tends to $1$ as
$T\to\infty$ if and only if $x/T^{1-2/m}\to0$. If $\mathfrak r_\beta=0$ near $0$, the ratio tends to $1$
whenever $x/T\to0$.
\item For the cycle with $q_{12}=q_{23}=q_{31}=2$ and $q_{21}=q_{32}=q_{13}=1$, a start with
$\mu_1+\mu_2>0$, $i=3$ and $\beta=(\frac12,\frac12)$,
\[
\omega_\beta=2+\tfrac94\sqrt2,\qquad \partial_z^3J_G(\beta,0)=-\tfrac{1323}{64}\sqrt2,\qquad
\varkappa_\beta=-\frac{49}{64\sqrt{\omega_\beta}}=-0.3363\ldots,
\]
so $m=3$ there, and the ratio tends to infinity when $x/T^{1/3}\to\infty$ with
$x\le\delta_\beta T$.
\end{enumerate}
\end{proposition}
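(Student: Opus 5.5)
The plan is a saddle-point analysis of the generalized kernel $\mathbf K_\beta(x,S)$ of Theorem~\ref{thm:layer}(a). With Theorem~\ref{thm:layer}(a) in hand, $f_F/(f_G\mathbf B_\beta)=\mathbf K_\beta/\mathbf B_\beta\cdot(1+O(1/T))$, so everything reduces to comparing the contour integral defining $\mathbf K_\beta$ with the Bessel integral.

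\emph{Step 1: expand the exponent.} Write $\Lambda_\beta(z)=J_G(\beta)-J_G(\beta,z)$, which is holomorphic on $|z|<2z_0$ by Theorem~\ref{thm:L-facet}. By first-order perturbation of the tilted rate, $\Lambda_\beta'(0)=\omega_\beta$: the perturbation $z\,c^-(c^+)^\top$ is paired against $l_\beta$ and $r_\beta$. By second-order perturbation, $\Lambda_\beta''(0)/2=-(\gamma_\beta-q_i)\omega_\beta^2/\omega_\beta^{\cdot}$ up to normalization; more precisely, the second-order term is the resolvent of $\widehat Q_\beta$ paired with $\pi^{\rm out}_\beta-\pi^{\rm in}_\beta$. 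Symmetrizing that quadratic form yields the effective resistance of \eqref{eq:layer-gamma}. Then $\mathbf B_\beta$ is exactly $e^{-q_ix}\frac1{2\pi\mathrm i}\oint e^{x/z+S\omega_\beta z-S(\gamma_\beta-q_i)\omega_\beta z^2\cdots}$, and after rescaling $z=z_*w$ the Bessel generating function $\oint e^{\frac{\zeta}2(w+1/w)}dw/w^2$ appears.

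\emph{Step 2: identify $\mathfrak r_\beta$.} The saddle of $x/z+S\Lambda_\beta(z)$ lies at $z\approx z_*$, on a circle of radius $\varepsilon/\sqrt{\omega_\beta}$ with $\varepsilon=\sqrt{x/S}$. I would show that the critical value of $\varepsilon^2/z+\Lambda_\beta(z)-q_i\varepsilon^2$, viewed as a Legendre-type transform, equals $-(\bar J_F(\beta,\varepsilon^2)-J_G(\beta))$. This is a variational identity: integrating out the visits to $i$ by a Laplace step gives the rate on $F$. Then $\mathfrak r_\beta$ is the difference between this exact critical value and its truncation at the Bessel exponent.

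\emph{Step 3: parts (a) and (b).} Real analyticity in $\varepsilon$ follows from the implicit function theorem applied to the saddle equation in the variable $w=z/\varepsilon$, which is nondegenerate at $w=\omega_\beta^{-1/2}$. The cubic coefficient $\varkappa_\beta$ comes from expanding the critical value to third order. The $(\gamma_\beta-q_i)^2$ term arises from the shift of the saddle caused by the quadratic term, and the $\partial_z^3J_G$ term is direct. For (b), I would run the saddle-point evaluation of both contour integrals on the circle through the exact saddle, with the phase cutoff $|\arg w|$ small. The Gaussian widths agree to relative $O(\varepsilon)$, and the prefactor $C_G(\beta,z)/C_G(\beta,0)=1+O(z)$ with $z=O(\sqrt{x/T})$. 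For small $\zeta$ the saddle is not separated, and there I would instead compare Taylor series in $x$ directly. \textbf{This is the main obstacle:} obtaining a bound uniform across $\zeta\lesssim1$ and $\zeta\gg1$, without an extra $\log$ loss.

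\emph{Step 4: parts (c), (d) and (e).} For (c), when $\pi^{\rm in}_\beta=\pi^{\rm out}_\beta$ I would show that $\Lambda_\beta(z)=\omega_\beta z$ exactly near $0$. In this case the rank-one perturbation maps $r_\beta$ into a multiple of itself, so $r_\beta$ remains the optimizer with only the diagonal shifted. Hence $\mathfrak r_\beta\equiv0$, and reversibility gives $\pi^{\rm in}=\pi^{\rm out}$ via $l_\beta\propto\nu r_\beta$. Part (d) follows from (b): the correction $S\mathfrak r_\beta(\varepsilon)\asymp Sc_m\varepsilon^m=c_mx^{m/2}S^{1-m/2}$ tends to $0$ if and only if $x=o(T^{1-2/m})$, and the error terms $\sqrt{x/T}$ vanish when $x=o(T)$. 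Part (e) is an explicit computation of $r_\beta$, $l_\beta$, $\omega_\beta$ and the perturbation series on the $2\times2$ facet. The positive sign of $-\varkappa_\beta$ then gives divergence to $+\infty$.
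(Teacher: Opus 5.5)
Your overall architecture is the paper's. You use the contour integral for $\mathbf K_\beta$, identify the critical value of the phase with $\bar J_F$ through Proposition~\ref{prop:layer-constants}(i), and get the cubic coefficient from the direct $\partial_z^3J_G$ term plus the saddle shift caused by the quadratic term. Then (c) comes from the affinity of $J_G(\beta,\cdot)$, (d) from (b), and (e) by computation.

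The genuine gap is in (b), at the step you flag, and your proposed split at $\zeta\asymp1$ does not close it. Separate saddle-point evaluations of $\mathbf K_\beta$ and $\mathbf B_\beta$ each carry a relative error $O(1/\zeta)$. For $1/S\ll x<1$ we have $\zeta=2\sqrt{\omega_\beta Sx}\gg1$, yet $1/\zeta\asymp(Sx)^{-1/2}$ exceeds $\sqrt{x/S}$ by the factor $1/x$. So the bound $C(1/T+\sqrt{x/T})$ is not reached in that range.

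The missing observation is that the split belongs at $x=1$.
\begin{itemize}
\item For $x\le1$, part (a) gives $S|\mathfrak r_\beta(\sqrt{x/S})|\le Cx^{3/2}S^{-1/2}\le C\sqrt{x/S}$. The correction is then already inside the error, and the claim is just Theorem~\ref{thm:layer}(b), whose bound $C(1/T+\sqrt{x/T}(1+x))$ is $O(1/T+\sqrt{x/T})$ there. That proof compares the two integrals directly through \eqref{eq:layer-bessel-moments}, with errors of size $x/\zeta$ rather than $1/\zeta$.
\item For $1\le x\le\delta T$ we have $\zeta\ge2\sqrt{S\min_{\mathcal K}\omega_\beta}$ and $1/\zeta\le C\sqrt{x/S}$. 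The saddle evaluations of the proof of Theorem~\ref{thm:layer}(a), (b) then lose nothing. The paper keeps $\log(C_G(\beta,z)/C_G(\beta,0))=O(z_*)$ in the phase and uses convexity on $[z_*/2,2z_*]$ to reduce to $\min\lambda_0=SJ_G(\beta)+q_ix-\bar J_F(s,x)$. Treating that factor as an amplitude, as you do, works equally well.
\end{itemize}

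Several smaller points need correcting.
\begin{itemize}
\item \emph{Step~1.} $\mathbf B_\beta$ is exactly $e^{-\gamma_\beta x}\frac1{2\pi\mathrm i}\oint e^{x/z+S\omega_\beta z}\,dz/z^2$ by \eqref{eq:layer-bessel-moments}. The quadratic term of $S\Lambda_\beta$ is not inside the Bessel integral; it is replaced by its value $-(\gamma_\beta-q_i)x$ at $z_*$, using $\Lambda_\beta''(0)=-2\omega_\beta(\gamma_\beta-q_i)$ from Proposition~\ref{prop:layer-constants}(ii). This is why $\mathfrak r_\beta$ subtracts $(\gamma_\beta-q_i)\varepsilon^2$, and why $\log(\mathbf K_\beta/\mathbf B_\beta)$ equals $(\gamma_\beta-q_i)x$ plus the critical value of $x/z+S\Lambda_\beta(z)$, minus $\zeta$, up to errors.
\item \emph{Step~2.} The identity is Proposition~\ref{prop:layer-constants}(i), proved by exchanging suprema, not by a Laplace step. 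Your implicit function argument in $w$ also needs the critical point to be the global maximizer of $J_G(\beta,z)-\varepsilon^2/z$ over $z>0$. This is not automatic, because $J_G(\beta,\cdot)$ is convex. The paper gets it from $z(\varepsilon)=\varepsilon/\sqrt{P_\beta(\psi(\varepsilon))}$, where $\psi(\varepsilon)$ maximizes the concave function $\Psi_\beta-2\varepsilon P_\beta^{1/2}$.
\item \emph{Part (c).} Your mechanism is wrong: $c^-(c^+)^\top r_\beta=b_\beta c^-$, which is not a multiple of $r_\beta$. What holds is that the gradient of $P_\beta(e^\psi)$ at $\log r_\beta$ is $\omega_\beta(\pi^{\rm out}_\beta-\pi^{\rm in}_\beta)=0$. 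Hence $\log r_\beta$ remains the maximizer of the concave function $\Psi_\beta-zP_\beta$ for every $z\ge0$ (Proposition~\ref{prop:layer-constants}(v)).
\item \emph{Part (d).} The ``only if'' direction cannot rely on $\sqrt{x/T}\to0$, since $x$ may be a fixed fraction of $T$. You need two things. First, $\delta_\beta$ small enough that $|\mathfrak r_\beta(\varepsilon)|\ge\frac12|\varrho_m|\varepsilon^m$ on the whole range, where $\varrho_m$ is the coefficient of $\varepsilon^m$. Second, $S\varepsilon^m\ge\eta$ forces $\varepsilon/(S\varepsilon^m)\le\eta^{-(m-1)/m}S^{-1/m}\to0$, so the error $\sqrt{x/S}=\varepsilon$ is negligible against the correction.
\end{itemize}
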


The condition in (c) is weaker than reversibility. Let $G$ be the 3-state cycle with the rates of (e),
$q_{12}=q_{23}=q_{31}=2$ and $q_{21}=q_{32}=q_{13}=1$, and add a fourth state $i$ with
$q_{ji}=q_{ij}=c>0$ for $j\in G$, and a start with $\mu(G)>0$. The chain on $F=G\cup\{i\}$ is not reversible, since the
products of the rates around the cycle are $8$ and $1$. At the uniform $\beta$, the criterion in
the proof of Proposition~\ref{prop:rate} gives $r_\beta\propto\ones$, since the uniform law is
stationary for the cycle, and then $l_\beta\propto\beta$. Hence $\pi^{\rm in}_\beta$ and
$\pi^{\rm out}_\beta$ are both uniform, and the relative Bessel form holds for $x=o(T)$ there.
At other $\beta$ the two laws differ.

\emph{Strategy of the proofs.} We prove the results in three steps. First, pulling the visits to $i$
out of the run expansion writes $f_F$ as a contour integral of the density $f^{(z)}_G$ of a
weighted chain on $G$, in which $z$ counts the visits to $i$
(Lemma~\ref{lem:layer-exact}). Second, a local law for the weighted chain, uniform for complex
$z$ near $0$, replaces $f^{(z)}_G(S\beta)$ by $S^{-(k-2)/2}C_G(\beta,z)e^{-SJ_G(\beta,z)}$
(Theorem~\ref{thm:L-facet}). Third, a saddle point on the circle $|z|=z_*$ evaluates the
integral, and the expansion of $J_G(\beta,z)$ at $z=0$ (Proposition~\ref{prop:layer-constants})
compares the result with the Bessel kernel. Sections~\ref{sec:layer-exact}
to~\ref{sec:layer-prefactor} give the exact formulas, the constants and the matching of the
prefactor, Section~\ref{sec:layer-theorem} the second step, and Section~\ref{sec:layer-proofs}
the third.

\subsection{The exact representation}\label{sec:layer-exact}

\begin{lemma}[the runs in $i$ pulled out]\label{lem:layer-exact}
For $z\in\mathbb C$ and $s\in(0,\infty)^G$ let
\begin{gather*}
f^{(z)}_G(s)=e^{-\langle d(z),s\rangle}\sum_{m\in\Z_{\ge1}^G}W_z(m)\prod_{j\in G}\frac{s_j^{m_j-1}}{(m_j-1)!},\\
W_z(m)=\sum_w\mu^{(z)}_{w_1}\prod_{l<|w|}A(z)_{w_lw_{l+1}}\,e^{(z)}_{w_{|w|}},
\end{gather*}
the inner sum over the words $w$ in $G$ with no two equal neighbors and $m_j$ letters $j$.
Then $z\mapsto f^{(z)}_G(s)$ is entire, $f^{(0)}_G=f_G$, and for $x>0$ and every $\rho>0$
\begin{align}
f_F(s,x)&=e^{-q_ix}\sum_{K\ge1}\frac{x^{K-1}}{(K-1)!}\,[z^K]f^{(z)}_G(s)\label{eq:layer-series}\\
&=e^{-q_ix}\,\frac1{2\pi\mathrm i}\oint_{|z|=\rho}e^{x/z}f^{(z)}_G(s)\,\frac{dz}{z^2}.\label{eq:layer-contour}
\end{align}
\end{lemma}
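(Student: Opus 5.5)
We prove the lemma by a combinatorial bijection on the words of the run expansion $H_F$, followed by routine analytic bookkeeping. A word $c$ in $F$ with no two equal neighbours, in which $i$ occurs $K$ times and each $j\in G$ occurs $m_j$ times, is determined by the word $w$ in $G$ obtained by deleting every letter $i$, together with the set of places where an $i$ was inserted. Insertions can go before $w_1$, between $w_l$ and $w_{l+1}$ (only if $w_l\ne w_{l+1}$ would otherwise fail to be guaranteed, so deleting $i$ may create equal neighbours), or after $w_{|w|}$, with at most one $i$ per gap, since two consecutive $i$'s are forbidden. The deleted word may have equal neighbours, and the weighted chain must account for them.

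The key identification is this. $A(z)_{jj'}=q_{jj'}+zc^-_jc^+_{j'}$ for $j\ne j'$, while for $j=j'$ the off-diagonal part vanishes and the term $zc^-_jc^+_j$ sits in $-d(z)$ with a plus sign, since $d(z)=q_G-zc^-\circ c^+$. So I would first expand $e^{-\langle d(z),s\rangle}=e^{-\langle q_G,s\rangle}\prod_j e^{zc^-_jc^+_js_j}$. The factor $(zc^-_jc^+_js_j)^p/p!$ then accounts for $p$ excursions $j\to i\to j$ inside runs of $j$. Multiplied against $s_j^{m_j-1}/(m_j-1)!$, it splits a $j$-block into $m_j+p$ runs, because the positions of the $p$ cuts among the holding times contribute the Beta-type identity $\frac{s^{m-1}}{(m-1)!}\frac{s^p}{p!}\binom{m+p-1}{p}^{-1}\cdots$.

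I expect this bookkeeping to be the main obstacle. A cleaner route is to avoid it: I would instead verify the identity probabilistically. Condition on the path of the continuous-time chain and integrate out the holding times in $i$. Each visit to $i$ entered from $j$ and left to $j'$ contributes $q_{ji}q_{ij'}$ times a holding time. The $K$ holding times in $i$, with total $x$, integrate to $x^{K-1}/(K-1)!$, exactly as in the proof of Lemma~\ref{lem:continuum}. Removing the $i$-runs leaves a path on $G$ whose consecutive $G$-runs are either a genuine $Q_G$ jump or an $i$-excursion. Runs of $j$ separated only by an excursion $j\to i\to j$ merge into one $G$-occupation, and their contribution is produced precisely by the diagonal term $zc^-_jc^+_j$ of $Q_G(z)$ through the exponential $e^{zc^-_jc^+_js_j}$. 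This works because a Poisson count of such excursions during time $s_j$ reproduces both the binomial splitting and the factorials.

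Boundary visits are handled separately. A visit to $i$ at the start contributes $\mu_i c^+_{j}$, which gives $\mu^{(z)}$, and a visit at the end contributes $c^-_j$, which gives $e^{(z)}$ (the final $i$-run survives with $e^{-q_ix}$ factored out). Hence $[z^K]f^{(z)}_G(s)$ is the total weight of configurations with $K$ visits to $i$. Summing over $K$ with $e^{-q_ix}x^{K-1}/(K-1)!$ gives~\eqref{eq:layer-series}. Setting $z=0$ removes all visits to $i$, so $f^{(0)}_G=f_G$.

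For the analytic part, the bound $|W_z(m)|\le k^M(q_{\max}+|z|\,\|c\|^2)^{M}(1+|z|C)^2$ makes the series defining $f^{(z)}_G$ converge locally uniformly in $z$, so the function is entire. Its Taylor coefficients then satisfy $|[z^K]f^{(z)}_G(s)|\le C_R R^{-K}$ for every $R$, which makes the series in $K$ converge absolutely. Finally,~\eqref{eq:layer-contour} follows by writing $e^{x/z}/z^2=\sum_{K\ge0}x^Kz^{-K-2}/K!$, which converges uniformly on $|z|=\rho$. Integrating term by term against the Taylor series picks out $[z^{K+1}]f^{(z)}_G$ with weight $x^K/K!$, which is~\eqref{eq:layer-series} after the shift $K\to K-1$. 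The $K=0$ term contributes $[z^1]$ only through the shifted index, so no constant term appears.
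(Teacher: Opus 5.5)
Your proposal is correct and follows the paper's proof: delete the letters $i$, read off $A(z)$, $\mu^{(z)}$ and $e^{(z)}$ from the marks between different letters and at the two ends, absorb the $p$ marks inside the $\varrho$ blocks of a letter $j$ into $e^{zc^-_jc^+_js_j}$ via $\binom{\varrho+p-1}{p}\frac{s_j^{\varrho+p-1}}{(\varrho+p-1)!}=\frac{s_j^{\varrho-1}}{(\varrho-1)!}\frac{s_j^p}{p!}$, and obtain the contour form by expanding $e^{x/z}$ and integrating term by term. Your ``Poisson count'' variant is the same computation with the holding-time integrations reordered: with $y=zc^-_jc^+_j$, the cut points inside a merged run of length $\tau$ integrate to $\sum_p(y\tau)^p/p!=e^{y\tau}$. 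In a write-up, state either that computation or the stars-and-bars count explicitly instead of the heuristic, and say plainly that an $i$ is forced between two equal neighbours of the deleted word and optional between two different ones.
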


Hence $f^{(z)}_G$ is the killed density on the facet of the chain in which each visit to $i$ is
replaced by an instantaneous jump $j\to j'$ of rate $zc^-_jc^+_{j'}$, and $z$ counts the
visits. The $K$ visits have total length $x$, with density $q_i^Kx^{K-1}e^{-q_ix}/(K-1)!$,
and each return carries a factor $c^+_{j'}/q_i$. Thus the powers of $q_i$ cancel.

\begin{proof}
Fix a word $c$ in $F$ with no equal neighbors, every letter of $F$, and $K=m_i(c)\ge1$
letters $i$. Its term in $H_F(s,x)$ is $\mu_{c_1}\prod q_{c_lc_{l+1}}$ times
$x^{K-1}/(K-1)!$ times $\prod_{j\in G}s_j^{m_j-1}/(m_j-1)!$. The letters $i$ are isolated in
$c$. We delete them and mark the places where they were. The result is a marked word $c'$ in
$G$, in which two equal neighbors occur exactly where $c$ had $j\,i\,j$. A mark between
letters $j$ and $j'$ carries $q_{ji}q_{ij'}=c^-_jc^+_{j'}$. A mark before the first letter $j'$
carries $\mu_ic^+_{j'}$, and a mark after the last letter $j$ carries $c^-_j$. Two neighbors
without a mark are different and carry $q_{jj'}$. So the weight of $c$ is the product of these
weights. Between two different letters of $c'$ there is at most one mark, and between two
equal letters exactly one.

We group the marked words by their reduced word $w$, obtained by merging each block of equal
letters of $c'$ into one letter, and by the numbers $n_1,n_2,\dots$ of marks inside the
blocks. If $j$ forms $\varrho$ blocks with $n$ inner marks in all, it has $\varrho+n$ letters
in $c'$, and there are $\binom{n+\varrho-1}{\varrho-1}$ ways to spread the marks. Since
$\binom{n+\varrho-1}{\varrho-1}/(n+\varrho-1)!=1/(n!(\varrho-1)!)$,
\[
\sum_{n\ge0}\binom{n+\varrho-1}{\varrho-1}\frac{y^ns^{n+\varrho-1}}{(n+\varrho-1)!}=e^{ys}\frac{s^{\varrho-1}}{(\varrho-1)!}.
\]
With $y=zc^-_jc^+_j$, one power of $z$ for each mark, the inner marks give the factor
$e^{zc^-_jc^+_js_j}$, which accounts for the term $-z\,c^-\circ c^+$ of $d(z)$. The marks between different
letters give $A(z)$, and the first and last marks give $\mu^{(z)}$ and $e^{(z)}$. Collecting
the powers of $z$ gives~\eqref{eq:layer-series}. The entries of $A(z)$, $\mu^{(z)}$ and
$e^{(z)}$ are at most $q_{\max}(1+|z|q_{\max})$, $1+|z|q_{\max}$ and $1+|z|q_{\max}$ in
modulus. So the bound $W(m)\le k^Mq_{\max}^{M-1}$ of Section~\ref{sec:model} holds for $W_z$
with $q_{\max}$ replaced by $q_{\max}(1+|z|q_{\max})$, up to a factor $(1+|z|q_{\max})^2$, and
the series for $f^{(z)}_G(s)$ converges locally uniformly in $z$. Hence it is entire. For
$z=0$ no word carries a mark, and $f^{(0)}_G$ is the sum of Lemma~\ref{lem:continuum} for $G$.
Finally we expand $e^{x/z}=\sum_{n\ge0}x^nz^{-n}/n!$ and integrate term by term on the circle.
This gives $\sum_{n\ge0}(x^n/n!)[z^{n+1}]f^{(z)}_G(s)$, which is~\eqref{eq:layer-series}.
\end{proof}

The discrete law has the same structure. The only change is that the visits to $i$ come in
blocks of steps, and the lengths of the blocks form a composition of $n_i$.

\begin{lemma}[the discrete form]\label{lem:layer-exact-discrete}
Let $hq_{\max}\le1$, $\supp n=F$, $N'=N-n_i$, and let $\hat n$ be $n$ restricted to $G$. For
$z\in\mathbb C$ let
\[
P^{(z)}_{N'}(\hat n)=\sum_{x}\mu^{h,(z)}_{x_1}\prod_{t<N'}\bigl(I+hQ_G+zh^2c^-(c^+)^\top\bigr)_{x_tx_{t+1}}e^{h,(z)}_{x_{N'}},
\]
the sum over the words $x_1\cdots x_{N'}$ in $G$ with composition $\hat n$, where
$\mu^{h,(z)}=\mu_G+zh\mu_ic^+$ and $e^{h,(z)}=\ones+zhc^-$. Then $P^{(0)}_{N'}(\hat n)$ is the
probability of the composition $\hat n$ for the chain run for $N'$ steps, and
\[
P_N(n)=\sum_{K\ge1}\binom{n_i-1}{K-1}(1-hq_i)^{n_i-K}\,[z^K]P^{(z)}_{N'}(\hat n).
\]
\end{lemma}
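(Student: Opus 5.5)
The plan is to mimic the proof of Lemma~\ref{lem:layer-exact} at the level of paths rather than densities, starting from the path decomposition that underlies Lemma~\ref{lem:runs}. Fix a path $x_1\cdots x_N$ of positive probability with composition $n$. The visits to $i$ form $K\ge1$ maximal blocks whose lengths $\ell_1,\dots,\ell_K\ge1$ sum to $n_i$. Each block has $\ell_b-1$ internal stays of probability $1-hq_i$. It is entered from some $j\in G$ with probability $hq_{ji}=hc^-_j$ and left to some $j'\in G$ with probability $hq_{ij'}=hc^+_{j'}$. The exceptions are a first block, which starts with weight $\mu_i$ instead of an entry, and a last block, which ends the path instead of having an exit. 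The product of the stays is $(1-hq_i)^{n_i-K}$ for every choice of block lengths, and the number of such choices is $\binom{n_i-1}{K-1}$. This produces the two factors outside $[z^K]$.

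Next I delete the blocks of $i$ from the path. The result is a word $x_1\cdots x_{N'}$ in $G$ with composition $\hat n$, together with $K$ marks at the places where the blocks were. A mark between positions $t$ and $t+1$ replaces a transition $(I+hQ_G)_{x_tx_{t+1}}$. In the original path these two positions were separated by a block, so they are joined through $i$ with weight $hc^-_{x_t}\cdot hc^+_{x_{t+1}}$. This is the entry $h^2c^-_{x_t}c^+_{x_{t+1}}$ of $h^2c^-(c^+)^\top$. Here the positions may carry equal states, and at most one mark can sit in each gap, since blocks are maximal and separated by $G$-states. A mark before position $1$ contributes $\mu_i\,hc^+_{x_1}$ in place of $\mu_{x_1}$, and a mark after position $N'$ contributes $hc^-_{x_{N'}}$ in place of the factor $1$.

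Expanding the product defining $P^{(z)}_{N'}(\hat n)$ chooses, in each gap, either the unmarked entry of $I+hQ_G$ or $z$ times the marked entry. The boundary vectors $\mu^{h,(z)}$ and $e^{h,(z)}$ make the same choice at the two ends. So $[z^K]P^{(z)}_{N'}(\hat n)$ is the sum, over $G$-words with composition $\hat n$ and sets of $K$ marked gaps, of the products of these weights. This gives a bijection with pairs consisting of a path and its block structure in $i$, modulo the block lengths already counted. Multiplying by the stay and length factors and summing over $K$ then gives the displayed identity. I must also check the special cases. The case $N'=\ge1$ holds since $G\subset\supp n$ and $G$ is nonempty. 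A path beginning in $i$ corresponds to the first mark, and a path ending in $i$ to the last. Setting $z=0$ leaves exactly the path probabilities of the chain on $G$ run for $N'$ steps, restricted to composition $\hat n$, which is the claim about $P^{(0)}_{N'}$.

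The main point to get right is the bijection. I need that no two marks share a gap, and that an unmarked gap between equal states is a stay in $G$, while a marked one is a visit $j\,i\cdots i\,j$. Both follow from the maximality of the blocks in $i$. The condition $hq_{\max}\le1$ only ensures that all the weights are nonnegative probabilities.
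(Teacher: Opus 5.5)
Your proposal is correct and follows the paper's proof. Like the paper, you delete the maximal blocks of $i$ and let each interior, initial or final block become a marked gap carrying $h^2c^-_jc^+_{j'}$, $h\mu_ic^+_{x_1}$ or $hc^-_{x_{N'}}$, and you collect the stays and the block lengths into $(1-hq_i)^{n_i-K}\binom{n_i-1}{K-1}$; your check that each gap carries at most one mark, and that unmarked entries of $I+hQ_G$ are the true transition probabilities because $Q_G$ keeps the full exit rates, fills in details the paper leaves implicit.
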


\begin{proof}
We delete the maximal blocks of $i$ from a path with composition $n$. What is left is a path
$x_1\cdots x_{N'}$ in $G$ with composition $\hat n$. A block of length $\ell$ between
$x_t=j$ and $x_{t+1}=j'$ replaces the factor $(I+hQ)_{jj'}$ by
$hq_{ji}(1-hq_i)^{\ell-1}hq_{ij'}$. A block at the start replaces $\mu_{x_1}$ by
$\mu_i(1-hq_i)^{\ell-1}hq_{ix_1}$, and a block at the end adds the factor
$hq_{x_{N'}i}(1-hq_i)^{\ell-1}$. We count the blocks by $z$. If there are $K$ blocks, the
factors $(1-hq_i)^{\ell-1}$ give $(1-hq_i)^{n_i-K}$, and the block lengths form one of the
$\binom{n_i-1}{K-1}$ compositions of $n_i$ into $K$ positive parts.
\end{proof}

\subsection{Pair faces}\label{sec:layer-pair}

For $k=2$ the facet is one state, so the weighted chain has no words to sum. Hence the contour
formula gives the law in closed form.

\begin{proposition}[pair faces]\label{prop:pair-face}
Let $F=\{j,i\}$ with $q_{ji},q_{ij}>0$ and $\mu_j>0$, and put $\omega=q_{ji}q_{ij}$ and
$\zeta=2\sqrt{\omega sx}$. For all $s,x>0$
\[
f_F(s,x)=\mu_je^{-q_js-q_ix}\Bigl[\sqrt{\frac{\omega s}x}\,I_1(\zeta)+\Bigl(q_{ji}+\frac{\mu_i}{\mu_j}q_{ij}\Bigr)I_0(\zeta)
+\frac{\mu_i}{\mu_j}\sqrt{\frac{\omega x}s}\,I_1(\zeta)\Bigr].
\]
Here $\omega_\beta=\omega$, $\gamma_\beta=q_i$ and $f_G(s)=\mu_je^{-q_js}$, and for all
$s,x>0$
\[
0\le\frac{f_F(s,x)}{f_G(s)\mathbf B_\beta(x,s)}-1\le\Bigl(q_{ji}+\frac{\mu_i}{\mu_j}q_{ij}\Bigr)\Bigl(\frac1{\omega s}+\sqrt{\frac x{\omega s}}\Bigr)+\frac{\mu_i}{\mu_j}\,\frac xs .
\]
\end{proposition}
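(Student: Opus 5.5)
The plan is to specialize the exact contour representation of Lemma~\ref{lem:layer-exact} to a facet with one state. In that case the weighted chain has a single word, so the coefficients can be extracted in closed form. We then compare with the Bessel kernel using a standard bound on the ratio $I_0/I_1$.

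\emph{Step 1: the weighted density.} Take $G=\{j\}$. The only word in $G$ with no two equal neighbours and $m_j\ge1$ letters is the single letter $j$, so $W_z(m)=0$ unless $m_j=1$. In that case
\[
W_z=\mu^{(z)}_je^{(z)}_j=(\mu_j+z\mu_iq_{ij})(1+zq_{ji}).
\]
Also $d(z)=q_j-z\omega$. Hence
\[
f^{(z)}_G(s)=e^{-q_js}e^{z\omega s}\bigl(a+bz+cz^2\bigr),\qquad a=\mu_j,\quad b=\mu_jq_{ji}+\mu_iq_{ij},\quad c=\mu_i\omega .
\]

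\emph{Step 2: the series.} We insert this into~\eqref{eq:layer-series}, with $[z^K]e^{z\omega s}=(\omega s)^K/K!$.
\begin{itemize}
\item The $a$-part gives $\sum_{K\ge1}x^{K-1}(\omega s)^K/((K-1)!K!)=\sqrt{\omega s/x}\,I_1(\zeta)$.
\item The $b$-part gives $\sum_{K\ge1}(x\omega s)^{K-1}/((K-1)!)^2=I_0(\zeta)$.
\item The $c$-part gives $\sum_{K\ge2}x^{K-1}(\omega s)^{K-2}/((K-1)!(K-2)!)=\sqrt{x/(\omega s)}\,I_1(\zeta)$. Multiplied by $c=\mu_i\omega$, this becomes $\mu_i\sqrt{\omega x/s}\,I_1(\zeta)$.
\end{itemize}
Factoring out $\mu_je^{-q_js-q_ix}$ gives the first display. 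These are the series for $I_0$ and $I_1$ at $\zeta=2\sqrt{\omega sx}$.

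\emph{Step 3: the facet constants.} For $G=\{j\}$ we have $\beta=1$ and $r_\beta=l_\beta=1$. Then $a_\beta=q_{ji}$, $b_\beta=q_{ij}$ and $\omega_\beta=\omega$. Both $\pi^{\rm in}_\beta$ and $\pi^{\rm out}_\beta$ are the point mass at $j$, so $\gamma_\beta=q_i$ by~\eqref{eq:layer-gamma}. The formula $f_G(s)=\mu_je^{-q_js}$ is Lemma~\ref{lem:continuum}. Dividing the first display by $f_G\mathbf B_\beta$ gives
\[
\frac{f_F}{f_G\mathbf B_\beta}-1=\frac b{\mu_j}\sqrt{\frac x{\omega s}}\,\frac{I_0(\zeta)}{I_1(\zeta)}+\frac{\mu_i}{\mu_j}\frac xs ,
\]
which is clearly nonnegative.

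\emph{Step 4: the upper bound.} Since $\sqrt{x/(\omega s)}=\zeta/(2\omega s)$, the claimed bound reduces to
\[
\frac{\zeta}{2\omega s}\,\frac{I_0(\zeta)}{I_1(\zeta)}\le\frac1{\omega s}+\frac{\zeta}{2\omega s},
\]
that is, to $I_0(\zeta)/I_1(\zeta)\le 1+2/\zeta$ for all $\zeta>0$. This is the one step that needs an outside fact, and I expect it to be the main point to check. I would use Amos's inequality
\[
\frac{I_1(\zeta)}{I_0(\zeta)}\ge\frac{\zeta}{1+\sqrt{1+\zeta^2}},
\]
together with $\sqrt{1+\zeta^2}\le1+\zeta$. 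An alternative route is a Riccati/Turán-type argument on $y=I_1/I_0$, which satisfies $y'=1-y/\zeta-y^2$. The inequality is sharp as $\zeta\to0$, where $I_1\sim\zeta/2$, and consistent as $\zeta\to\infty$, where the ratio tends to $1$.
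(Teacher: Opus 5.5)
Your proposal is correct and follows the paper's proof. You specialize Lemma~\ref{lem:layer-exact} to the single word $j$, identify the three resulting terms as $I_1$, $I_0$ and $I_1$, read off $\omega_\beta=\omega$ and $\gamma_\beta=q_i$ from $r_\beta=l_\beta=1$, and reduce the upper bound to $I_0/I_1\le1+2/\zeta$ via Amos's inequality. The only cosmetic difference is that you extract coefficients from the series \eqref{eq:layer-series} with the power series of $I_0$ and $I_1$. The paper instead evaluates the contour form \eqref{eq:layer-contour} through the Bessel generating function, which it records as \eqref{eq:layer-bessel-moments} for later use.
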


This is the facet form of Pedler's formula for two-state chains~\cite{pedler1971}. For
$x\le T/2$ the bound is $O(1/T+\sqrt{x/T})$, and the term $\sqrt{x/T}$ is attained, since the
middle term of the formula has exactly this size.

\begin{proof}
For $G=\{j\}$ the only word is $j$, so
$f^{(z)}_G(s)=(\mu_j+z\mu_iq_{ij})(1+zq_{ji})e^{-(q_j-z\omega)s}$. With $z=z_*t$ and
$z_*=\sqrt{x/(\omega s)}$ we have $x/z+\omega sz=\frac\zeta2(t+1/t)$, and the generating
function $e^{(\zeta/2)(t+1/t)}=\sum_{n\in\Z}I_n(\zeta)t^n$~\cite[Eq.~10.35.1]{dlmf} gives
\begin{equation}\label{eq:layer-bessel-moments}
\frac1{2\pi\mathrm i}\oint e^{x/z+\omega sz}z^{n-2}\,dz=z_*^{n-1}I_{n-1}(\zeta)\qquad(n\in\Z).
\end{equation}
We put this into~\eqref{eq:layer-contour} for $n=0,1,2$ and use $I_{-1}=I_1$. This gives the
formula. Each term is nonnegative, which gives the lower bound. For the upper bound we divide
the last two terms by the first. This gives
$(q_{ji}+\frac{\mu_i}{\mu_j}q_{ij})z_*I_0(\zeta)/I_1(\zeta)+\frac{\mu_i}{\mu_j}x/s$. The lower
bound $I_1(\zeta)/I_0(\zeta)\ge\zeta/(1+\sqrt{1+\zeta^2})$ of Amos~\cite{amos1974} gives
$I_0/I_1\le(1+\sqrt{1+\zeta^2})/\zeta\le1+2/\zeta$, and $z_*(1+2/\zeta)=\sqrt{x/(\omega s)}+1/(\omega s)$.
For a pair face, $\beta=(1)$ and the vectors of the facet are $r_\beta=l_\beta=1$, so
$\omega_\beta=\omega$ and $\pi^{\rm in}_\beta=\pi^{\rm out}_\beta$, which gives
$\gamma_\beta=q_i$.
\end{proof}

\subsection{The constants for every irreducible chain}\label{sec:layer-constants}

For $u\in(0,\infty)^G$ put
\[
\Psi_\beta(u)=\sum_{j\in G}\beta_j\Bigl(q_j-\sum_{j'\in G\setminus\{j\}}q_{jj'}\frac{u_{j'}}{u_j}\Bigr),\qquad
P_\beta(u)=(c^+\cdot u)\sum_{j\in G}\frac{c^-_j\beta_j}{u_j}.
\]
Since $\sum_j\beta_j(-(Q_G(z)u)_j/u_j)=\Psi_\beta(u)-zP_\beta(u)$, the rate of the weighted
chain is $J_G(\beta,z)=\sup_u[\Psi_\beta(u)-zP_\beta(u)]$ for real $z\ge0$.

\begin{proposition}[the facet expansion of the rate]\label{prop:layer-constants}
Let $F$ be irreducible, $i\in F$, and $G=F\setminus\{i\}$ irreducible.
\begin{enumerate}[(i)]
\item For $s=S\beta$ with $\beta\in\Delta_G^\circ$ and $x>0$,
\[
\bar J_F(s,x)=q_ix+\sup_{z>0}\bigl[S\,J_G(\beta,z)-x/z\bigr].
\]
\item The map $z\mapsto J_G(\beta,z)$ is real-analytic near $z=0$, jointly with $\beta$, and
\[
J_G(\beta,0)=J_G(\beta),\qquad \partial_zJ_G(\beta,0)=-\omega_\beta,\qquad
\partial_z^2J_G(\beta,0)=2\omega_\beta(\gamma_\beta-q_i).
\]
\item There are $\delta>0$ and $C$ such that, uniformly for $\beta$ in a compact subset of
$\Delta_G^\circ$ and $0<x\le\delta S$,
\[
\Bigl|\bar J_F(s,x)-\bigl(S\,J_G(\beta)+q_ix-2\sqrt{\omega_\beta Sx}+(\gamma_\beta-q_i)x\bigr)\Bigr|\le C\,x^{3/2}S^{-1/2}.
\]
\item If $\nu_jq_{jj'}=\nu_{j'}q_{j'j}$ on $F$ for some $\nu\in(0,\infty)^F$, then
$\pi^{\rm in}_\beta=\pi^{\rm out}_\beta$, $\gamma_\beta=q_i$, $J_G(\beta,z)=J_G(\beta)-z\omega_\beta$
for all real $z\ge0$, $\omega_\beta=\bigl(\sum_{j\in G}\sqrt{q_{ij}q_{ji}\beta_j}\bigr)^2$, and
for all $s=S\beta$ and $x>0$
\[
\bar J_F(s,x)=S\,J_G(\beta)+q_ix-2\sqrt{\omega_\beta Sx}.
\]
\item The map $z\mapsto J_G(\beta,z)$ is affine on $[0,\infty)$ if and only if
$\pi^{\rm in}_\beta=\pi^{\rm out}_\beta$. Then $J_G(\beta,z)=J_G(\beta)-z\omega_\beta$ for all
real $z\ge0$, and $\bar J_F(s,x)=S\,J_G(\beta)+q_ix-2\sqrt{\omega_\beta Sx}$ for all
$s=S\beta$ and $x>0$.
\end{enumerate}
\end{proposition}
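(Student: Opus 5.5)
The plan is to prove (i) by optimizing out the coordinate $u_i$ in the supremum that defines $J_F$. Item (ii) follows from the envelope theorem and the implicit function theorem on the facet. Item (iii) is a one-variable Laplace-type evaluation of the supremum in (i). Items (iv) and (v) come from checking that the gradient of $P_\beta$ vanishes at $r_\beta$ exactly when $\pi^{\rm in}_\beta=\pi^{\rm out}_\beta$.

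\emph{Step (i).} By homogeneity, $\bar J_F(s,x)$ is the supremum over $u\in(0,\infty)^F$ of $\sum_{j\in G}s_j\bigl(q_j-\sum_{j'\in G\setminus\{j\}}q_{jj'}u_{j'}/u_j-q_{ji}u_i/u_j\bigr)+x\bigl(q_i-\sum_jq_{ij}u_j/u_i\bigr)$. Write $u$ on $G$ as $u$ again and put $w=u_i/(c^+\cdot u)$. The expression becomes
\[
q_ix+S\Psi_\beta(u)-S\,wP_\beta(u)-x/w .
\]
We then exchange the suprema over $u$ and over $z=w>0$. For real $z\ge0$ the identity $J_G(\beta,z)=\sup_u[\Psi_\beta(u)-zP_\beta(u)]$ gives (i).

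\emph{Step (ii).} Pass to $\psi=\log u$ with $\psi_1=0$. The map $\psi\mapsto\Psi_\beta-zP_\beta$ is concave for $z\ge0$, since $P_\beta$ is a product of two log-convex sums of exponentials. At $z=0$ its Hessian in the direction $\delta$ is $-\sum_{j\ne j'}l_{\beta,j}q_{jj'}r_{\beta,j'}(\delta_{j'}-\delta_j)^2=-\delta^\top\mathcal L_\beta\delta$, which is negative definite on $\{\delta_1=0\}$ because $G$ is irreducible. The implicit function theorem, applied as in the proof of Proposition~\ref{prop:discrete-rate}(b), then gives a maximizer $\psi(z)$ that is real-analytic near $z=0$ jointly with $\beta$, and so $J_G(\beta,z)$ is real-analytic as well. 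The envelope theorem gives $\partial_zJ_G=-P_\beta(u(z))$, and at $z=0$ this is $-(c^+\cdot r_\beta)(c^-\cdot l_\beta)=-\omega_\beta$. A direct computation gives $\nabla_\psi P_\beta(r_\beta)=\omega_\beta(\pi^{\rm out}_\beta-\pi^{\rm in}_\beta)$. Differentiating the stationarity equation gives $\psi'(0)=-\mathcal L_\beta^+\nabla P_\beta$, hence $\partial_z^2J_G(\beta,0)=\nabla P_\beta^\top\mathcal L_\beta^+\nabla P_\beta=2\omega_\beta(\gamma_\beta-q_i)$ by \eqref{eq:layer-gamma}.

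\emph{Step (iii), the main obstacle.} The difficulty is to localize the supremum in (i) near $z_*=\sqrt{x/(\omega_\beta S)}$, because (ii) controls $J_G(\beta,\cdot)$ only near $0$.
\begin{itemize}
\item The map $z\mapsto J_G(\beta,z)$ is nonincreasing, since it is a supremum of functions with slope $-P_\beta\le0$.
\item Fix $z_0$ so small that the Taylor bound $|J_G(\beta,z)-J_G(\beta)+\omega_\beta z-\omega_\beta(\gamma_\beta-q_i)z^2|\le Cz^3$ holds on $[0,z_0]$ uniformly on the compact set. Then $J_G(\beta,z_0)\le J_G(\beta)-\omega_\beta z_0/2$.
\item For $z\ge z_0$ monotonicity gives $SJ_G(\beta,z)-x/z\le SJ_G(\beta)-S\omega_\beta z_0/2$. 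This lies below the candidate value $SJ_G(\beta)-2\sqrt{\omega_\beta Sx}-O(x)$ once $x\le\delta S$ with $\delta$ small.
\end{itemize}
On $[0,z_0]$ we evaluate $\sup_z[S(J_G-\omega_\beta z+\omega_\beta(\gamma_\beta-q_i)z^2+O(z^3))-x/z]$ at $z=z_*(1+O(z_*))$, which gives the claim with error $O(Sz_*^3)=O(x^{3/2}S^{-1/2})$. For the upper bound we bound the cubic term uniformly on $[0,z_0]$; the maximizing $z$ must be of order $z_*$, since otherwise $-x/z$ or $-S\omega_\beta z$ dominates.

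\emph{Steps (iv) and (v).} Suppose first that $\pi^{\rm in}_\beta=\pi^{\rm out}_\beta$. Then $\nabla_\psi P_\beta(r_\beta)=0$ and $\nabla\Psi_\beta(r_\beta)=0$, so by concavity $r_\beta$ maximizes $\Psi_\beta-zP_\beta$ for every $z\ge0$. Hence $J_G(\beta,z)=J_G(\beta)-z\omega_\beta$, and then $\sup_z[S(J_G(\beta)-\omega_\beta z)-x/z]=SJ_G(\beta)-2\sqrt{\omega_\beta Sx}$. Conversely, if $J_G(\beta,\cdot)$ is affine, then $\partial_z^2J_G(\beta,0)=0$. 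This forces a zero effective resistance, hence $\pi^{\rm in}_\beta=\pi^{\rm out}_\beta$ since $\ker\mathcal L_\beta=\R\ones$ and both vectors are laws.

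For (iv), Proposition~\ref{prop:constant}(b) gives $r_\beta\propto\sqrt{\beta/\nu}$ and $l_\beta\propto\sqrt{\beta\nu}$. Using $\nu_jq_{ji}=\nu_iq_{ij}$, we get $c^-_jl_{\beta,j}\propto c^+_jr_{\beta,j}$, so the two laws agree and (v) applies. The same substitution turns $\omega_\beta=a_\beta b_\beta$ into $\bigl(\sum_{j\in G}\sqrt{q_{ij}q_{ji}\beta_j}\bigr)^2$.
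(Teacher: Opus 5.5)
Your proof is correct. Items (i), (ii) and (v) follow the paper closely. For (i), the paper optimizes $u_i$ out to get $-2\sqrt{xSP_\beta(u)}$ and then reinserts $z$ through $2\sqrt{xSP}=\inf_{z>0}[SzP+x/z]$; your substitution $w=u_i/(c^+\cdot u)$ does this in one step. The implicit function and envelope arguments for (ii) and the gradient argument for (v) are the paper's.

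The genuine difference is in (iii).
\begin{itemize}
\item The paper stays in $\psi$-space. It writes $\bar J_F(s,x)-q_ix=S\sup_\psi[\Psi_\beta(\psi)-2\varepsilon P_\beta(\psi)^{1/2}]$ with $\varepsilon=\sqrt{x/S}$. It localizes the maximizer by the strong concavity bound $\Psi_\beta(\psi_0)-\Psi_\beta(\psi)\ge c\min(|\psi-\psi_0|^2,r^2)$, sharpens $\psi^*-\psi_0=O(\varepsilon^{1/2})$ to $O(\varepsilon)$ through the critical equation, and expands to second order, identifying $2g^\top\mathcal L_\beta^+g$ with $\gamma_\beta-q_i$.
\item You reduce instead to the scalar problem of (i). Its only global input is that $z\mapsto J_G(\beta,z)$ is nonincreasing, and the coefficient $\gamma_\beta-q_i$ then comes straight from the Taylor data of (ii). This is shorter, and it is the substitution $z=\varepsilon w$ that the paper itself uses later, in the proof of Proposition~\ref{prop:layer-cubic}(a).
\end{itemize}

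The step you only sketch goes through as follows.
\begin{itemize}
\item Choose $z_0$ so that $J_G(\beta,z)\le J_G(\beta)-\omega_\beta z/2$ on all of $[0,z_0]$, not only at $z_0$, and take $\delta$ so small that $4z_*\le z_0$.
\item Then every $z\notin[z_*/4,4z_*]$ gives at most $SJ_G(\beta)-2\sqrt{\omega_\beta Sx}$, using monotonicity for $z\ge z_0$.
\item For $z=z_*w$ with $w\in[\frac14,4]$, the penalty $\sqrt{\omega_\beta Sx}\,(w-1)^2/w$ absorbs the variation $O(x|w-1|)$ of the quadratic term at a cost $O(x^{3/2}S^{-1/2})$.
\end{itemize}

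In (iv) you derive the claim from (v) via $c^-_jl_{\beta,j}\propto c^+_jr_{\beta,j}$. The paper instead proves that $Q_G(z)$ is reversible with respect to $\nu_G$ for every real $z\ge0$. That stronger fact is what the paper reuses for the formula~\eqref{eq:layer-c1} for $c_1(\beta)$, but it is not needed for this proposition.
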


Hence in the occupation fraction $\alpha_i$ of $i$ the rate has a square-root singularity
$-2\sqrt{\omega_\beta\alpha_i}$ at the facet, and the time in $i$ is charged at the rate
$\gamma_\beta$, not $q_i$.

\begin{proof}
(i) In the supremum that defines $\bar J_F(s,x)$ we write $u=(u_G,\epsilon)$ with
$\epsilon=u_i$. The terms with $\epsilon$ are
$-\epsilon\sum_js_jc^-_j/u_j-(x/\epsilon)(c^+\cdot u_G)$, and both coefficients are positive.
Their supremum over $\epsilon>0$ is $-2\sqrt{xS\,P_\beta(u_G)}$. So
$\bar J_F(s,x)=q_ix+\sup_u[S\Psi_\beta(u)-2\sqrt{xSP_\beta(u)}]$. Now
$2\sqrt{xSP}=\inf_{z>0}[SzP+x/z]$, and we exchange the two suprema.

(ii) Write $u=e^\psi$. For real $z\ge0$ the function $\psi\mapsto\Psi_\beta(e^\psi)-zP_\beta(e^\psi)$
is concave, since $P_\beta(e^\psi)$ is a positive combination of the convex functions
$e^{\psi_m-\psi_j}$, and it does not change when a constant is added to $\psi$. At $z=0$ its
maximizer is $\psi_0=\log r_\beta$, unique up to a constant (Proposition~\ref{prop:rate}(b)).
The Hessian of $\sum_{j\ne j'}\beta_jq_{jj'}e^{\psi_{j'}-\psi_j}$ at $\psi_0$ is the Laplacian
with the conductances $\kappa_{jj'}$, because $\beta_jq_{jj'}r_{\beta,j'}/r_{\beta,j}=l_{\beta,j}q_{jj'}r_{\beta,j'}$.
So the Hessian of $\Psi_\beta$ at $\psi_0$ is $-\mathcal L_\beta$, which is negative definite on
the complement of the constants. By the implicit function theorem on that complement, the
critical point $\psi(z)$ is real-analytic in $(\beta,z)$ near $(\beta,0)$, and for real $z\ge0$
it is the maximizer by concavity. So $J_G(\beta,z)=\Psi_\beta(\psi(z))-zP_\beta(\psi(z))$ is
real-analytic. The envelope theorem gives $\partial_zJ_G=-P_\beta(\psi(z))$, so
$\partial_zJ_G(\beta,0)=-(c^+\cdot r_\beta)(c^-\cdot l_\beta)=-\omega_\beta$. Differentiating the
critical equation gives $-\mathcal L_\beta\psi'(0)=\nabla P$, with $\nabla P$ the gradient of
$P_\beta(e^\psi)$ at $\psi_0$. Its entries are
\[
\partial_{\psi_m}P=c^+_mr_{\beta,m}\,(c^-\cdot l_\beta)-(c^+\cdot r_\beta)\,c^-_ml_{\beta,m}=\omega_\beta(\pi^{\rm out}_{\beta,m}-\pi^{\rm in}_{\beta,m}),
\]
which sum to zero. So $\partial_z^2J_G(\beta,0)=-\nabla P\cdot\psi'(0)=\nabla P^\top\mathcal L_\beta^+\nabla P$,
and this is $2\omega_\beta(\gamma_\beta-q_i)$ by~\eqref{eq:layer-gamma}.

(iii) By the proof of (i), $\bar J_F(s,x)-q_ix=S\sup_\psi[\Psi_\beta(\psi)-2\varepsilon P_\beta(\psi)^{1/2}]$
with $\varepsilon=\sqrt{x/S}$. The maximizer lies in a bounded set of $\psi$ modulo constants,
uniformly for $\beta$ in a compact set, as in the proof of Proposition~\ref{prop:rate}(b).
Comparing its value with the value at $\psi_0$ gives
$\Psi_\beta(\psi_0)-\Psi_\beta(\psi^*)\le2\varepsilon\sup P_\beta^{1/2}$ on that set. We
normalize $\psi$ and $\psi_0$ to sum to zero. The Hessian $-\mathcal L_\beta$ at $\psi_0$ is
negative definite on these vectors and continuous in $\beta$, so Taylor's theorem gives $c,r>0$
with $\Psi_\beta(\psi_0)-\Psi_\beta(\psi)\ge c|\psi-\psi_0|^2$ for $|\psi-\psi_0|\le r$,
uniformly on the compact set of $\beta$. For $|\psi-\psi_0|>r$, concavity along the segment from
$\psi_0$ to $\psi$ gives $\Psi_\beta(\psi_0)-\Psi_\beta(\psi)\ge cr|\psi-\psi_0|\ge cr^2$. Hence
$\Psi_\beta(\psi_0)-\Psi_\beta(\psi)\ge c\min(|\psi-\psi_0|^2,r^2)$, which gives
$\psi^*-\psi_0=O(\varepsilon^{1/2})$. The critical
equation $\nabla\Psi_\beta(\psi^*)=\varepsilon\nabla P_\beta(\psi^*)/\sqrt{P_\beta(\psi^*)}$ has a
right side of size $O(\varepsilon)$, and $\nabla^2\Psi_\beta$ is invertible near $\psi_0$ on the
complement of the constants, so $\psi^*-\psi_0=O(\varepsilon)$. We expand to
second order around $\psi_0$. With $g=\nabla(P^{1/2})=\nabla P/(2\sqrt{\omega_\beta})$, the maximizer
is $\psi^*=\psi_0-2\varepsilon\mathcal L_\beta^+g+O(\varepsilon^2)$, and the value is
$J_G(\beta)-2\varepsilon\sqrt{\omega_\beta}+2\varepsilon^2g^\top\mathcal L_\beta^+g+O(\varepsilon^3)$.
Here $2g^\top\mathcal L_\beta^+g=\nabla P^\top\mathcal L_\beta^+\nabla P/(2\omega_\beta)=\gamma_\beta-q_i$
by (ii). Multiplying by $S$ gives (iii), since $S\varepsilon^3=x^{3/2}S^{-1/2}$.

(iv) By Proposition~\ref{prop:constant}(b) for the face $G$ we may take
$r_{\beta,j}=\sqrt{\beta_j/\nu_j}$ and $l_{\beta,j}=\sqrt{\beta_j\nu_j}$. Then
$c^-_jl_{\beta,j}=q_{ji}\sqrt{\beta_j\nu_j}=\nu_iq_{ij}\sqrt{\beta_j/\nu_j}=\nu_ic^+_jr_{\beta,j}$, so
$\pi^{\rm in}_\beta=\pi^{\rm out}_\beta$ and $\gamma_\beta=q_i$. More is true. For every real
$z\ge0$ the matrix $Q_G(z)$ is reversible with respect to $\nu_G$, since
$\nu_jc^-_jc^+_{j'}=\nu_iq_{ij}q_{ij'}$ is symmetric in $j,j'$. So by
Proposition~\ref{prop:constant}(b) its optimal vector is $r_\beta$ for every $z$, and
$J_G(\beta,z)=\Psi_\beta(r_\beta)-zP_\beta(r_\beta)=J_G(\beta)-z\omega_\beta$. Next
$\omega_\beta=\nu_i\bigl(\sum_jq_{ij}\sqrt{\beta_j/\nu_j}\bigr)^2=\bigl(\sum_j\sqrt{q_{ij}q_{ji}\beta_j}\bigr)^2$,
using $q_{ji}=q_{ij}\nu_i/\nu_j$. Finally (i) gives
$\sup_{z>0}[-Sz\omega_\beta-x/z]=-2\sqrt{\omega_\beta Sx}$.

(v) If the two laws agree, the gradient $\nabla P$ in the proof of (ii) vanishes. So
$\psi_0$ is a critical point of the concave function $\psi\mapsto\Psi_\beta(e^\psi)-zP_\beta(e^\psi)$
for every real $z\ge0$, hence a maximizer, and
$J_G(\beta,z)=\Psi_\beta(r_\beta)-zP_\beta(r_\beta)=J_G(\beta)-z\omega_\beta$. Then (i) gives
$\bar J_F$ as in (iv). Conversely, if $J_G(\beta,\cdot)$ is affine, then
$\partial_z^2J_G(\beta,0)=0$, so $\gamma_\beta=q_i$ by (ii), and the two laws agree, since the
effective resistance in~\eqref{eq:layer-gamma} vanishes only when they do.
\end{proof}

For $1/T\ll x\ll T$ the Bessel kernel is
$(4\pi)^{-1/2}(\omega_\beta S)^{1/4}x^{-3/4}\exp(2\sqrt{\omega_\beta Sx}-\gamma_\beta x)(1+O(1/\zeta))$,
by $I_1(\zeta)=e^\zeta(2\pi\zeta)^{-1/2}(1+O(1/\zeta))$~\cite[Eq.~10.40.1]{dlmf}. Hence
Proposition~\ref{prop:layer-constants}(iii) says that $f_G(s)\mathbf B_\beta(x,S)$ and the local
law formula of $F$, which in occupation coordinates is $T^{-(k-1)/2}C_F(\alpha)e^{-TJ_F(\alpha)}$
(proved only on compact subsets of the open face), have
the same exponent up to $O(x^{3/2}S^{-1/2})$. For their powers of $x$ and their constants to
agree as well we need $C_F((1-\epsilon)\beta,\epsilon)\sim(4\pi)^{-1/2}\omega_\beta^{1/4}\epsilon^{-3/4}C_G(\beta)$
as $\epsilon\to0$. We prove this for reversible $Q$ in the next proposition.

\subsection{Prefactor matching}\label{sec:layer-prefactor}

\begin{proposition}[prefactor matching]\label{prop:layer-prefactor}
Let $Q$ be reversible on $F$ with respect to $\nu$, and let $G$ satisfy \textup{(H)}, with $|G|\ge1$.
Uniformly for $\beta$ in a compact subset of $\Delta_G^\circ$, as $\epsilon\to0$,
\[
C_F((1-\epsilon)\beta,\epsilon)=(4\pi)^{-1/2}\,\omega_\beta^{1/4}\,\epsilon^{-3/4}\,C_G(\beta)\bigl(1+O(\sqrt\epsilon)\bigr),
\]
with $C_G(\beta)=\mu_j$ when $G=\{j\}$.
\end{proposition}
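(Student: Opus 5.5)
The plan is to compute both sides in closed form with Proposition~\ref{prop:constant}(b) and compare them factor by factor. Reversibility of $Q$ on $F$ with respect to $\nu$ gives reversibility of $Q_G$ with respect to $\nu_G$, so the formula applies to $F$ at $\alpha=((1-\epsilon)\beta,\epsilon)$ and to $G$ at $\beta$ when $|G|\ge2$:
\[
C_F(\alpha)=\frac{(\mu\cdot r_\alpha)(l_\alpha\cdot\ones)}{(4\pi)^{(k-1)/2}}\,\frac{\sqrt{\tree(c(\alpha))}}{\prod_{m\in F}\alpha_m},
\]
with $r_\alpha\propto\sqrt{\alpha/\nu}$, $l_\alpha\propto\sqrt{\alpha\nu}$ and $l_\alpha\cdot r_\alpha=1$. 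The same formula holds for $C_G(\beta)$, with $k-2$ in place of $k-1$. The three factors (the boundary product, the tree sum and $\prod\alpha_m$) are handled separately, each uniformly for $\beta$ in the compact set, since every quantity involved is continuous and positive there.

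\emph{Boundary factor.} The product $(\mu\cdot r_\alpha)(l_\alpha\cdot\ones)$ does not depend on the normalization, so it equals $(\mu\cdot\sqrt{\alpha/\nu})(\sqrt{\alpha\nu}\cdot\ones)$. Substituting $\alpha_j=(1-\epsilon)\beta_j$ for $j\in G$ and $\alpha_i=\epsilon$ gives
\[
\Bigl(\sqrt{1-\epsilon}\,\mu_G\cdot\sqrt{\beta/\nu}+\mu_i\sqrt{\epsilon/\nu_i}\Bigr)\Bigl(\sqrt{1-\epsilon}\,\textstyle\sum_{j\in G}\sqrt{\beta_j\nu_j}+\sqrt{\epsilon\nu_i}\Bigr).
\]
This is the facet product $(\mu\cdot r_\beta)(l_\beta\cdot\ones)$ times $1+O(\sqrt\epsilon)$, because $\mu_G\cdot\sqrt{\beta/\nu}$ is bounded below by $\mu(G)>0$ and the compactness of the set of $\beta$.

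\emph{Tree factor.} This is the step where care is needed. I split the spanning trees of the complete graph on $F$ by the degree of $i$. Every edge $\{i,j\}$ carries the conductance $c_{ij}(\alpha)=\sqrt{\epsilon(1-\epsilon)\beta_j}\,q_{ij}\sqrt{\nu_i/\nu_j}$, which is of order $\sqrt\epsilon$. So trees in which $i$ has degree at least $2$ contribute $O(\epsilon)$. Trees in which $i$ is a leaf attached to $j$ are exactly a spanning tree of $G$ plus the edge $\{i,j\}$. On $G$ the conductances at $\alpha$ are $(1-\epsilon)c_{jj'}(\beta)$, so these trees sum to
\[
(1-\epsilon)^{k-2}\,\tree_G(c(\beta))\sum_{j\in G}c_{ij}(\alpha).
\]
By Proposition~\ref{prop:layer-constants}(iv),
\[
\sum_{j\in G}c_{ij}(\alpha)=\sqrt{\epsilon(1-\epsilon)}\,\sqrt{\nu_i}\sum_{j\in G}q_{ij}\sqrt{\beta_j/\nu_j}=\sqrt{\epsilon(1-\epsilon)}\,\sqrt{\omega_\beta}.
\]
The factor $\tree_G(c(\beta))$ is positive and continuous by the matrix-tree theorem, so the $O(\epsilon)$ remainder is relatively $O(\sqrt\epsilon)$. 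This gives
\[
\sqrt{\tree(c(\alpha))}=\epsilon^{1/4}\omega_\beta^{1/4}\sqrt{\tree_G(c(\beta))}\,\bigl(1+O(\sqrt\epsilon)\bigr).
\]
When $|G|=1$ the tree sum of $G$ is $1$ and the same computation applies.

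\emph{Assembly.} The product is $\prod_{m\in F}\alpha_m=\epsilon(1-\epsilon)^{k-1}\prod_{j\in G}\beta_j$. Collecting the three factors, the ratio of the powers of $4\pi$ leaves $(4\pi)^{-1/2}$, and $\epsilon^{1/4}/\epsilon=\epsilon^{-3/4}$. What remains is exactly the formula for $C_G(\beta)$ from Proposition~\ref{prop:constant}(b), multiplied by $\omega_\beta^{1/4}(1+O(\sqrt\epsilon))$, which is the claim. For $G=\{j\}$ the facet boundary factor is $(\mu_j\sqrt{1/\nu_j})(\sqrt{\nu_j})=\mu_j$, which matches the convention $C_G(\beta)=\mu_j$. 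I expect the only delicate point to be the degree bookkeeping in the tree expansion and the identification of the leaf sum with $\sqrt{\omega_\beta}$; the rest is routine.
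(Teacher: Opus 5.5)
Your proposal is correct and is essentially the paper's own proof. You put $\alpha=((1-\epsilon)\beta,\epsilon)$ into the tree formula of Proposition~\ref{prop:constant}(b); the boundary product and $\prod_m\alpha_m$ then change only by the factors $1+O(\sqrt\epsilon)$ and $\epsilon(1-\epsilon)^{k-1}$; and you expand the tree sum by the degree of $i$, where the trees with $i$ as a leaf sum to $\sqrt{\epsilon(1-\epsilon)}\,\sqrt{\omega_\beta}$ times the tree sum of $G$ by Proposition~\ref{prop:layer-constants}(iv). This is exactly how the paper argues, including the treatment of the case $G=\{j\}$.
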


\begin{proof}
We use Proposition~\ref{prop:constant}(b) at $\alpha=((1-\epsilon)\beta,\epsilon)$. On $G$ the
vectors $r_\alpha=\sqrt{\alpha/\nu}$ and $l_\alpha=\sqrt{\alpha\nu}$ are $\sqrt{1-\epsilon}$ times
$r_\beta$ and $l_\beta$, and at $i$ they are $O(\sqrt\epsilon)$. So
$(\mu\cdot r_\alpha)(l_\alpha\cdot\ones)=(\mu_G\cdot r_\beta)(l_\beta\cdot\ones)(1+O(\sqrt\epsilon))$,
where $\mu_G\cdot r_\beta>0$ by (H). Next $\prod_{j\in F}\alpha_j=\epsilon(1-\epsilon)^{k-1}\prod_{j\in G}\beta_j$.
The conductances are $c_{jj'}(\alpha)=(1-\epsilon)c_{jj'}(\beta)$ on $G$, and at $i$ they are
$c_{ij}(\alpha)=\sqrt{\epsilon(1-\epsilon)}\sqrt{q_{ij}q_{ji}\beta_j}$. A spanning tree of $F$
with $m$ edges at $i$ has weight $O(\epsilon^{m/2})$. The trees with one edge at $i$ are a
spanning tree of $G$ and one edge $\{i,j\}$. So
$\tree(c(\alpha))=\sqrt\epsilon\,\sqrt{\omega_\beta}\,\tree(c_G(\beta))(1+O(\sqrt\epsilon))$, by
$\sum_j\sqrt{q_{ij}q_{ji}\beta_j}=\sqrt{\omega_\beta}$ from
Proposition~\ref{prop:layer-constants}(iv). We put these into the formula for $C_F$ and divide
by the same formula for $C_G$, which has one factor $(4\pi)^{1/2}$ fewer. The powers of
$\epsilon$ are $\epsilon^{1/4}$ from the tree and $\epsilon^{-1}$ from the product, which gives
$\epsilon^{-3/4}$. For $G=\{j\}$ we use $C_{\{j\}}=\mu_j$ and $\tree=1$ on one vertex.
\end{proof}

\emph{The first amplitude correction.} Put $c_1(\beta)=\partial_z\log C_G(\beta,z)$ at $z=0$.
For reversible $Q$,
\begin{equation}\label{eq:layer-c1}
c_1(\beta)=\frac{\mu_ib_\beta}{\mu_G\cdot r_\beta}+\frac{a_\beta}{l_\beta\cdot\ones}+\frac{\omega_\beta}2\sum_{\{j,j'\}\subseteq G}R_{jj'}\pi_{\beta,j}\pi_{\beta,j'},
\end{equation}
with $\pi_\beta=\pi^{\rm in}_\beta=\pi^{\rm out}_\beta$ and $R_{jj'}$ the effective resistance for
the conductances $l_{\beta,j}q_{jj'}r_{\beta,j'}$. To see this, we use that $Q_G(z)$ is
reversible with respect to $\nu_G$ for real $z\ge0$, by the proof of
Proposition~\ref{prop:layer-constants}(iv). So its vectors $r_\beta(z)=r_\beta$ and $l_\beta(z)=l_\beta$ do not move
with $z$. By Proposition~\ref{prop:constant}(b) and the formula for $C_G(\beta,z)$ in
Section~\ref{sec:layer-setting}, for real $z\ge0$
\[
C_G(\beta,z)=\frac{(\mu^{(z)}\cdot r_\beta)(l_\beta\cdot e^{(z)})}{(4\pi)^{(k-2)/2}}\,
\frac{\sqrt{\tree(c(z))}}{\prod_{j\in G}\beta_j},\qquad
c_{jj'}(z)=l_{\beta,j}A(z)_{jj'}r_{\beta,j'} .
\]
Since $l_{\beta,j}c^-_j=a_\beta\pi_{\beta,j}$ and $c^+_{j'}r_{\beta,j'}=b_\beta\pi_{\beta,j'}$, we
have $c_{jj'}(z)=c_{jj'}(0)+z\omega_\beta\pi_{\beta,j}\pi_{\beta,j'}$. The derivatives at $z=0$ of
the logarithms of $\mu^{(z)}\cdot r_\beta$ and $l_\beta\cdot e^{(z)}$ are the first two terms
of~\eqref{eq:layer-c1}. The third term is the derivative of $\frac12\log\tree(c(z))$, by the rule
of Kirchhoff that $\partial\log\tree(c)/\partial c_e=R_e$, which says that $c_eR_e$ is the
weighted fraction of the spanning trees that contain the edge $e$. The function $C_G(\beta,z)$
of Theorem~\ref{thm:L-facet} below is holomorphic and agrees with this one for real $z\ge0$, so
its derivative at $0$ is also given by~\eqref{eq:layer-c1}.

\subsection{The weighted facet chain}\label{sec:layer-theorem}

The layer needs the local law of the weighted chain $Q_G(z)$, uniformly for complex $z$ near
$0$. We prove it in this section.

\begin{theorem}[the weighted facet chain]\label{thm:L-facet}
Let $G$ satisfy \textup{(H)} with $|G|=k-1\ge1$, let $i\notin G$ with $F=G\cup\{i\}$
irreducible, and let $\mathcal K\subset\Delta_G^\circ$ be compact. There are a number $z_0>0$,
not given explicitly, and a constant $C$, both depending only on $Q$, $\mu$, $i$ and
$\mathcal K$, and functions $J_G(\beta,z)$ and
$C_G(\beta,z)$ on $\mathcal K\times\{|z|<2z_0\}$, holomorphic in $z$, jointly continuous, real
for real $z$, equal to the rate and the local-law constant of Section~\ref{sec:layer-setting}
for real $0\le z<2z_0$, and with $C_G(\beta,0)=C_G(\beta)>0$, such that uniformly for
$\beta\in\mathcal K$, $S\ge1$ and complex $|z|\le z_0$
\begin{equation}\label{eq:L-facet}
f^{(z)}_G(S\beta)=S^{-(k-2)/2}\,C_G(\beta,z)\,e^{-S\,J_G(\beta,z)}\,(1+\varepsilon),\qquad|\varepsilon|\le C/S .
\end{equation}
For $|G|=1$ this holds with $\varepsilon=0$.
\end{theorem}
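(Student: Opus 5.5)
The plan is to rerun the proof of Theorem~\ref{thm:local}(a) for the face $G$, with the weighted data $A(z)$, $d(z)$, $\mu^{(z)}$, $e^{(z)}$ in place of $A_G$, $q_G$, $\mu_G$, $\ones$, and to carry the complex parameter $z$ through each step. For $|G|=1$ the only word is $j$, so $f^{(z)}_G(s)=\mu^{(z)}_j e^{(z)}_j e^{-d(z)s}$ exactly. This gives $J_G(\beta,z)=q_j-z c^-_jc^+_j$ and $C_G(\beta,z)=(\mu_j+z\mu_ic^+_j)(1+zc^-_j)$ with $\varepsilon=0$, as in the proof of Proposition~\ref{prop:pair-face}. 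From now on let $|G|\ge2$.

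\emph{Torus representation.} By Lemma~\ref{lem:torus}, applied to the matrix resolvent of the words with arbitrary start and end vectors, we write
\[
f^{(z)}_G(S\beta)=e^{-S\langle d(z),\beta\rangle}\oint\cdots
\]
as an integral over a torus with radii $\theta_j$. The integrand is a polynomial in $z$ times exponentials, so it is entire in $z$. At real $z\ge0$ we take the radii $\theta(\beta,z)=A(z)r_\beta(z)/r_\beta(z)$ of Definition~\ref{def:tilt} for the Metzler matrix $Q_G(z)$. These depend real-analytically on $(\beta,z)$ near $z=0$. The implicit function argument from the proof of Proposition~\ref{prop:layer-constants}(ii) applies here, because the Hessian $-\mathcal L_\beta$ is nondegenerate modulo constants. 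So $\theta(\beta,z)$ extends holomorphically to $|z|<2z_0$, uniformly on $\mathcal K$.

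We then define $J_G(\beta,z)$ and $C_G(\beta,z)$ for complex $z$ as the holomorphic continuations of the real formulas. These are $\langle d(z)-\theta(\beta,z),\beta\rangle$ and the expression with $(\mu^{(z)}\cdot r)(l\cdot e^{(z)})/\sqrt{\det\Sigma}$. The square root is well defined because $\det\Sigma_\beta(z)$ stays near the positive value $\det\Sigma_\beta(0)$ once $z_0$ is small. Both functions are real for real $z$ and agree with the rate and the constant there by Proposition~\ref{prop:constant}(a).

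\emph{Complex saddle.} For complex $z$ the integrand is no longer positive, so we cannot put the contour through its maximum modulus. Instead we keep the radii real and equal to $\theta(\beta,|z|)$ or $\theta(\beta,0)$, so that the torus stays fixed. The critical point of the phase then moves off the real torus to a complex point $\psi(\beta,z)=O(|z|)$. We deform the angular variables by the small imaginary shift $\psi(\beta,z)$. This is legitimate because the integrand is analytic in the angles and periodic. On the shifted torus we apply a Laplace lemma for a phase with complex Hessian. Its real part is negative definite, since it is close to the real Hessian $-\mathcal J$ of the $z=0$ case, and its error is uniform in the parameter set $\mathcal K\times\{|z|\le z_0\}$. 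This gives the leading term $S^{-(k-2)/2}$ times the prefactor, with relative error $O(1/S)$, by the usual argument that odd third-order terms vanish and the remaining corrections are of order $1/S$. The prefactor agrees with $C_G(\beta,z)$ at real $z$, so by analytic continuation it equals $C_G(\beta,z)$ throughout.

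\emph{Main obstacle.} The hard step is the global part of the Laplace argument: bounding the integrand away from the saddle on the shifted torus, uniformly in complex $z$. At $z=0$ the strict maximum of the modulus at the real point follows from the positivity of the coefficients together with strong connectivity, as in Lemma~\ref{lem:torus}. I would turn this into a quantitative gap $\operatorname{Re}(\text{phase})\le\text{phase}(\text{saddle})-c\min(|\phi|^2,1)$ that holds for all angles $\phi$ at $z=0$. By continuity of the phase in $(z,\psi)$ on the compact torus, the same gap holds with constant $c/2$ for $|z|\le z_0$ with $z_0$ small. The choice of $z_0$ is therefore inexplicit, as the statement allows.

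Finally, for $1\le S\le S_0$ there is nothing asymptotic to prove; we only need the statement to remain valid there. I would add $1+\varepsilon$ bounded for bounded $S$. If needed, I would shrink $z_0$ so that $f^{(z)}_G$ stays close to $f^{(0)}_G>0$ and does not vanish there.
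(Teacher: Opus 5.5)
Your proposal is correct and follows essentially the paper's route: the torus representation of Lemma~\ref{lem:torus-complex}, a complex critical point near $\theta'(\beta)$ (the paper builds it by a contraction argument in Lemma~\ref{lem:critical}; you obtain it as the holomorphic continuation of the real tilt, and the identity theorem shows the two agree), the torus through that point (your angular contour shift is the paper's choice of radii $|\zeta_j|$ plus a rotation), the complex-Hessian Laplace lemma, and the identification of $J_G$ and $C_G$ at real $z$ followed by analytic continuation. Two small corrections: the torus integrand is rational rather than entire in $z$, and holomorphic only while the radii stay admissible for $A(|z|)$; and continuity transfers the gap only outside a small cube around the saddle, which is all that Lemma~\ref{lem:laplace-complex} needs, because near the saddle the positive definite real part of the Hessian does the work.
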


At $z=0$ the display is the local law~\eqref{eq:layer-local-G}, that is
Theorem~\ref{thm:local}(a) for the face $G$. The proof inherits every constant from $z=0$ by
continuity, which is why $z_0$ is not explicit. The rate $J_G(\beta,z)$ is defined for all real
$z\ge0$, but for complex $z$ the proof needs a critical point of the torus phase near the real
one and a Hessian with positive definite real part, and neither need persist far from $z=0$.
The number $\delta$ of Theorem~\ref{thm:layer} is chosen so that the saddle radius
$z_*$ stays below $z_0/2$.

\begin{lemma}[one and two states]\label{lem:L-two}
Theorem~\ref{thm:L-facet} holds when $|G|\le2$, with the functions $J_G$ and $C_G$ given in the
proof.
\end{lemma}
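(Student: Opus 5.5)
The plan is to split on $|G|=1$ and $|G|=2$ and, in each case, to write $f^{(z)}_G(S\beta)$ in closed form. From the closed form we read off $J_G(\beta,z)$ and $C_G(\beta,z)$ as explicit functions of $z$ that are holomorphic near $0$.

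\emph{One state.} Let $G=\{j\}$. Then $\beta=(1)$ and the only word is $j$. As in the proof of Proposition~\ref{prop:pair-face},
\[
f^{(z)}_G(S)=(\mu_j+z\mu_iq_{ij})(1+zq_{ji})\,e^{-(q_j-z\omega)S},\qquad \omega=q_{ji}q_{ij}.
\]
We put $J_G(\beta,z)=q_j-z\omega$ and $C_G(\beta,z)=(\mu_j+z\mu_iq_{ij})(1+zq_{ji})$. These are polynomials in $z$, real for real $z$, and $C_G(\beta,0)=\mu_j>0$ by (H). The display~\eqref{eq:L-facet} then holds exactly, with $\varepsilon=0$ and $S^{0}$. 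It remains to check that these functions agree with the rate and the constant of Section~\ref{sec:layer-setting} for real $z\ge0$. For the rate this is immediate: the supremum over one state is $-(Q_G(z))_{jj}=q_j-z\omega$. For the constant, the definition with $k-2=0$ and $r=l=1$ gives $(\mu^{(z)}\cdot1)(1\cdot e^{(z)})$, which is the product above. We may take $z_0$ arbitrary, say $z_0=1$.

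\emph{Two states.} Let $G=\{1,2\}$ and write $s=(s_1,s_2)$ with $s_1+s_2=S$. The words are alternating, so $f^{(z)}_G$ is a two-state Pedler-type sum. Summing the alternating words with $m_1,m_2\in\{m,m\pm1\}$ gives a finite combination of $I_0$ and $I_1$ evaluated at
\[
\xi(z)=2\sqrt{a_{12}(z)a_{21}(z)s_1s_2},\qquad a_{jj'}(z)=q_{jj'}+zc^-_jc^+_{j'},
\]
multiplied by $e^{-\langle d(z),s\rangle}$. The coefficients are the start entries $\mu^{(z)}$, the end entries $e^{(z)}$, and factors $\sqrt{a_{12}s_2/(a_{21}s_1)}$ and its inverse. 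This is exactly the formula of Proposition~\ref{prop:pair-face} with $\mu,q$ replaced by their $z$-versions. Two facts make it usable:
\begin{itemize}
\item $q_{12},q_{21}>0$ by (H), so $a_{12}(z)a_{21}(z)$ stays away from $0$ for $|z|\le 2z_0$ with $z_0$ small;
\item $\sqrt{a_{12}a_{21}}$ is then holomorphic in $z$, using the principal branch near the positive real value.
\end{itemize}
Next we substitute the uniform asymptotic $I_\nu(\xi)=e^\xi(2\pi\xi)^{-1/2}(1+O(1/|\xi|))$, valid for $|\arg\xi|$ small~\cite[Eq.~10.40.1]{dlmf}. Since $|\xi|\asymp S$ uniformly on $\mathcal K$, this gives
\[
J_G(\beta,z)=\langle d(z),\beta\rangle-2\sqrt{a_{12}(z)a_{21}(z)\beta_1\beta_2},
\]
together with an explicit $C_G(\beta,z)$ proportional to $S^{-1/2}$, which is the power $S^{-(k-2)/2}$ with $k=3$. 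The relative error is $O(1/S)$ for $S$ large, uniformly. For $1\le S\le S_0$ we use continuity and nonvanishing at $z=0$, shrinking $z_0$, to keep the ratio within $C/S$.

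\emph{Identification and the main obstacle.} For real $z\ge0$ the formula for $J_G$ is Proposition~\ref{prop:rate}(c) for the reversible two-state generator $Q_G(z)$, so it is the rate. Here $\sqrt{a_{12}a_{21}\beta_1\beta_2}$ is the AM--GM infimum. The constant must match Definition~\ref{def:tilt} with $\mu^{(z)}$ and $e^{(z)}$. We check this through Proposition~\ref{prop:constant}(b) with one tree, $\tree=c_{12}(z)$. I expect this bookkeeping to be the main obstacle: the prefactor coming from the leading Bessel terms, which are the three coefficient types, each with $I_0$ or $I_1$ having the same leading term, has to equal $(\mu^{(z)}\cdot r)(l\cdot e^{(z)})\sqrt{\tree}/(\sqrt{4\pi}\,\beta_1\beta_2)$. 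I would verify it at $z=0$ against Theorem~\ref{thm:local}(a), which must agree, and then extend to real $z\ge0$ by the same algebra with $q$ replaced by $a(z)$. Holomorphic continuation then defines $C_G(\beta,z)$ for complex $|z|<2z_0$. Joint continuity in $\beta$ and realness for real $z$ are clear from the explicit formulas.
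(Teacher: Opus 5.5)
Your proposal is correct, and the closed form is obtained as in the paper: the alternating words are summed into $I_0$ and $I_1$, $\sqrt{\varpi}$ is taken on the principal branch near $\varpi(0)=q_{12}q_{21}>0$, the sector asymptotics of $I_\nu$ are applied, and continuity covers bounded $S$.

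Where you differ is in identifying $C_G$ with the constant of Section~\ref{sec:layer-setting} for real $z\ge0$.
\begin{itemize}
\item The paper notes that both are the limit of $S^{1/2}e^{SJ_G(\beta,z)}f^{(z)}_G(S\beta)$. It gets the second limit from Lemma~\ref{lem:local-general} for the weights $(A(z),\mu^{(z)},e^{(z)})$.
\item You compute directly. The matrix $A(z)$ is reversible with respect to $\nu=(a_{21},a_{12})$, so Proposition~\ref{prop:constant}(b) gives $r_j=\sqrt{\beta_j/\nu_j}$, $l_j=\sqrt{\beta_j\nu_j}$ and $\det\Sigma_\beta(z)=2\beta_1^2\beta_2^2/c_{12}$ with $c_{12}=\sqrt{\beta_1\beta_2\varpi}$.
\end{itemize}
The bookkeeping you flag does close. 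The product $(p\cdot r)(l\cdot e)\sqrt{\varpi/(\beta_1\beta_2)}$ expands term by term into $U+V$, where $U=p_1e_1\sqrt{\varpi\beta_1/\beta_2}+p_2e_2\sqrt{\varpi\beta_2/\beta_1}$ and $V=p_1a_{12}e_2+p_2a_{21}e_1$ are the coefficients of $I_1$ and $I_0$. Note the end weights here: your expansion is Pedler's formula only after $e^{(z)}\ne\ones$ is inserted.

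Your route is elementary and avoids the local law with a general end vector. The paper's route is shorter and uses no reversibility.

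Do the algebra with general $p$ and $e$, rather than checking at $z=0$ against Theorem~\ref{thm:local}(a). That theorem only sees the end vector $\ones$, so it cannot reach $e^{(z)}$.

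Finally, you need the lower bound on $|U+V|$, obtained by shrinking $z_0$, already for the $O(1/S)$ error at large $S$, not only for bounded $S$. The relative error is $(U\varepsilon_1+V\varepsilon_0)/(U+V)$, where $\varepsilon_\nu$ are the errors of the two Bessel expansions, so a small $|U+V|$ would spoil it.
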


\begin{proof}
For $G=\{j\}$ the proof of Proposition~\ref{prop:pair-face} gives
$f^{(z)}_G(s)=(\mu_j+z\mu_iq_{ij})(1+zq_{ji})e^{-(q_j-z\omega)s}$, which is~\eqref{eq:L-facet}
with $\varepsilon=0$, $J_G(\beta,z)=q_j-z\omega$ and
$C_G(\beta,z)=(\mu_j+z\mu_iq_{ij})(1+zq_{ji})$. Now let $G=\{1,2\}$, and write
$a_{12}(z)=q_{12}+zc^-_1c^+_2$ and $a_{21}(z)=q_{21}+zc^-_2c^+_1$ for
the entries of $A(z)$, $\varpi(z)=a_{12}(z)a_{21}(z)$, $p=\mu^{(z)}$ and $e=e^{(z)}$. A word in $G$
with no equal neighbors alternates. The words that start with $1$ are $1(21)^n$, with weight
$p_1\varpi^ne_1$ and $(n+1,n)$ letters, and $(12)^n$, with weight $p_1\varpi^{n-1}a_{12}e_2$ and
$(n,n)$ letters, for $n\ge1$. With $\xi=2\sqrt{\varpi s_1s_2}$ the power series of $I_1$ and $I_0$
give
\[
\sum_{n\ge1}\frac{\varpi^ns_1^ns_2^{n-1}}{n!\,(n-1)!}=\sqrt{\frac{\varpi s_1}{s_2}}\,I_1(\xi),\qquad
\sum_{n\ge1}\frac{(\varpi s_1s_2)^{n-1}}{(n-1)!^2}=I_0(\xi).
\]
We do the same for the words that start with $2$ and obtain
\[
f^{(z)}_G(s)=e^{-\langle d(z),s\rangle}\Bigl[\Bigl(p_1e_1\sqrt{\frac{\varpi s_1}{s_2}}+p_2e_2\sqrt{\frac{\varpi s_2}{s_1}}\Bigr)I_1(\xi)
+\bigl(p_1a_{12}e_2+p_2a_{21}e_1\bigr)I_0(\xi)\Bigr].
\]
Both sides are entire in $z$, so the branch of the square root does not matter. Since $G$ is
irreducible, $\varpi(0)=q_{12}q_{21}>0$. We choose $z_0$ so that $|\varpi(z)-\varpi(0)|\le\varpi(0)/2$
for $|z|\le2z_0$ and take the principal branch of $\sqrt\varpi$. Then
$\xi=2S\sqrt{\beta_1\beta_2\varpi}$ at $s=S\beta$ satisfies $|\arg\xi|\le\pi/12$ and $|\xi|\ge cS$,
uniformly on $\mathcal K$. The expansion $I_\nu(\xi)=e^\xi(2\pi\xi)^{-1/2}(1+O(1/|\xi|))$ holds
for $\nu=0,1$, uniformly in the sector $|\arg\xi|\le\pi/4$ and $|\xi|\ge1$~\cite[Eq.~10.40.1]{dlmf}.
Since $(2\pi\xi)^{-1/2}=S^{-1/2}(4\pi)^{-1/2}(\varpi\beta_1\beta_2)^{-1/4}$, this
gives~\eqref{eq:L-facet} with
\[
J_G(\beta,z)=\langle d(z),\beta\rangle-2\sqrt{\beta_1\beta_2\varpi},\qquad
C_G(\beta,z)=\frac{U+V}{\sqrt{4\pi}\,(\varpi\beta_1\beta_2)^{1/4}},
\]
where $U=p_1e_1\sqrt{\varpi\beta_1/\beta_2}+p_2e_2\sqrt{\varpi\beta_2/\beta_1}$ and
$V=p_1a_{12}e_2+p_2a_{21}e_1$, and with $\varepsilon=(U\varepsilon_1+V\varepsilon_0)/(U+V)$, where
$|\varepsilon_\nu|\le C/S$ are the errors of the two expansions. Both functions are holomorphic
in $z$, jointly continuous and real for real $z$. At $z=0$ we have $p=\mu_G$ and $e=\ones$, so
$U+V>0$ because $\mu(G)>0$. After shrinking $z_0$, $|U+V|$ is bounded below and $|U|+|V|$ above
on $\mathcal K\times\{|z|\le2z_0\}$, which gives $|\varepsilon|\le C/S$ once $cS\ge1$. For
$1\le S\le1/c$ the error $\varepsilon$ is continuous on a compact set, so it is bounded, and we
enlarge $C$. For real $z\ge0$ the
rate of the two-state matrix $Q_G(z)$ is
$\sup_u[\beta_1(d_1-a_{12}u_2/u_1)+\beta_2(d_2-a_{21}u_1/u_2)]=\langle d,\beta\rangle-2\sqrt{\beta_1\beta_2\varpi}$,
by the inequality of the arithmetic and geometric means. The constant agrees with that of
Section~\ref{sec:layer-setting}, because for real $z\ge0$ both are the limit of
$S^{1/2}e^{SJ_G(\beta,z)}f^{(z)}_G(S\beta)$. For the first this is the computation above. For the
second it is Lemma~\ref{lem:local-general} on $G$ for the weights $A(z)\ge0$, $\mu^{(z)}$ and
$e^{(z)}$, since $f^{(z)}_G(s)=e^{-\langle d(z),s\rangle}H_{A(z),\mu^{(z)},e^{(z)}}(s)$ in the
notation of Appendix~\ref{app:torus}.
\end{proof}

\emph{Facets with two or more states.} Let $|G|=k-1\ge2$. We number the states of $G$ as
$1,\dots,k-1$ and put $m=k-2$. For $z\in\mathbb C$ let $A'(z)$ be $A(z)$ without the row and
column of state $1$, and let $X_z$, $g_z$ and $b_z$ be the functions of
Lemma~\ref{lem:torus-complex} on $G$ for the weights $A(z)$, $\mu^{(z)}$ and $e^{(z)}$. Since
$c^-$, $c^+$ and $\mu_i$ are nonnegative, the weights $A(|z|)$, $\mu^{(|z|)}$ and $e^{(|z|)}$
dominate these in the sense of that lemma. For fixed $\beta\in\Delta_G^\circ$ and
$z'\in\mathbb C^m$ put
\[
\Phi_z(z')=\beta_1g_z(z')+\sum_{j\ge2}\beta_jz_j .
\]
Let $\theta(\beta)$ be the tilt of the face $G$ at $\beta$, let $\theta(\beta,z)$ be that of
$Q_G(z)$ for real $z\ge0$, and let $\mathcal H(\beta)$ be the Hessian $\mathcal H$ of the proof of
Theorem~\ref{thm:local}(a) for the face $G$ at $\alpha=\beta$. On $\mathbb C^m$ we use the norm
$\lVert w\rVert=\max_j|w_j|$ with closed balls $\bar B(w,r)$, and $we^{\mathrm i\varphi}$ is the
vector with entries $w_je^{\mathrm i\varphi_j}$. The proof of Theorem~\ref{thm:local}(a) for $G$
gives, for every $\beta\in\Delta_G^\circ$,
\begin{enumerate}[label=(P\arabic*)]
\item $\varrho(A'(0)-\operatorname{diag}\theta'(\beta))<0$ and $g_0(\theta'(\beta))=\theta_1(\beta)$,
by~\eqref{eq:A-radii},
\item $\nabla\Phi_0(\theta'(\beta))=0$, and $\nabla^2\Phi_0(\theta'(\beta))=\mathcal H(\beta)$ is
real, symmetric and positive definite,
\item $\operatorname{Re}\Phi_0(\theta'(\beta)e^{\mathrm i\varphi})\le\Phi_0(\theta'(\beta))
-\sum_{j\ge2}\beta_j\theta_j(\beta)(1-\cos\varphi_j)$ for $\varphi\in\mathbb T^m$,
\item $b_0(\theta'(\beta))>0$.
\end{enumerate}
Also $\theta$ and $\mathcal H$ are continuous, by Proposition~\ref{prop:constant}(a).

\begin{lemma}[the complex critical point]\label{lem:critical}
Let $|G|\ge2$ and let $\mathcal K\subset\Delta_G^\circ$ be compact. There are $z_1>0$,
$r_0\ge r_1>0$, $\sigma_0>0$ and $B_0$, which depend only on $Q$, $\mu$, $i$ and $\mathcal K$, such
that the following hold for every $\beta\in\mathcal K$.
\begin{enumerate}[(a)]
\item $r_0<\frac14\min_j\theta_j(\beta)$, and $\varrho(A'(|z|)-\operatorname{diag}s')<0$ for
$|z|\le z_1$ and real $s'$ with $\lVert s'-\theta'(\beta)\rVert\le3r_0$.
\item The functions $g_z(z')$ and $b_z(z')$ are holomorphic in $(z,z')$ and bounded in modulus by
$B_0$ on $\{|z|<z_1\}\times U_\beta$, where $U_\beta$ is the set of $z'$ with
$|z_j|>\theta_j(\beta)-3r_0$ for all $j$.
\item For $|z|<z_1$ there is exactly one $\zeta'(\beta,z)\in\bar B(\theta'(\beta),r_1)$ with
$\nabla\Phi_z(\zeta'(\beta,z))=0$. It is holomorphic in $z$ and jointly continuous in
$(\beta,z)$, $\zeta'(\beta,0)=\theta'(\beta)$, $\lVert\zeta'(\beta,z)-\theta'(\beta)\rVert\le B_0|z|$,
and $\zeta'(\beta,\bar z)=\overline{\zeta'(\beta,z)}$.
\item The matrix $\mathcal J(\beta,z)=Z_c\nabla^2\Phi_z(\zeta'(\beta,z))Z_c$, where
$Z_c=\operatorname{diag}\zeta'(\beta,z)$, is complex symmetric and jointly continuous, and
$\mathcal J(\beta,0)=\Theta'\mathcal H(\beta)\Theta'\ge\sigma_0I$, where
$\Theta'=\operatorname{diag}\theta'(\beta)$.
\item For real $0\le z<z_1$ we have $\zeta'(\beta,z)=\theta'(\beta,z)$,
$\Phi_z(\zeta'(\beta,z))=\langle\beta,\theta(\beta,z)\rangle$,
$\beta_1^2\det\mathcal J(\beta,z)=\prod_{j\ge2}\theta_j(\beta,z)^2\det\Sigma_\beta(z)$ and
$\beta_1b_z(\zeta'(\beta,z))=(\mu^{(z)}\cdot r_\beta(z))(l_\beta(z)\cdot e^{(z)})$.
\end{enumerate}
\end{lemma}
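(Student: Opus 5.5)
The plan is a perturbation argument from $z=0$, where (P1)--(P4) give everything. All constants come from continuity on the compact set $\mathcal K\times\{|z|\le z_1\}$, so I only have to choose radii in the right order.

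First I fix $r_0$ and $z_1$ for (a). The map $\beta\mapsto\theta(\beta)$ is continuous (Proposition~\ref{prop:constant}(a)) and positive on $\mathcal K$, so I can take $r_0<\frac14\min_{\beta,j}\theta_j(\beta)$. By (P1) the Perron root $\varrho(A'(0)-\operatorname{diag}\theta'(\beta))$ is negative and continuous in $\beta$, so it is at most some $-2\eta<0$ on $\mathcal K$. The Perron root of a Metzler matrix is monotone in the diagonal and continuous in the entries. Hence shrinking $r_0$ makes it at most $-\eta$ for real $s'$ with $\lVert s'-\theta'\rVert\le3r_0$, and shrinking $z_1$, using the continuity of $A'(|z|)$ in $|z|$, keeps it negative. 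This is (a).

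For (b) I use Lemma~\ref{lem:torus-complex}. It gives holomorphy of $g_z,b_z$ on the region where the dominating real radii satisfy the radius condition, and it bounds their moduli by the values of the dominating weights $A(|z|),\mu^{(|z|)},e^{(|z|)}$ at the real radii $|z_j|$. On $U_\beta$ only the lower bounds $|z_j|>\theta_j-3r_0$ are imposed. I expect the bound there to come from monotonicity in the radii: the real function at larger radii is smaller, since $(\operatorname{diag}s'-A')^{-1}$ decreases. So the maximum is near the corner $s'=\theta'-3r_0$, where (a) gives a uniform margin, and this yields $B_0$. If the lemma needs radii bounded above too, I would intersect $U_\beta$ with a polydisc. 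Only a neighbourhood of $\theta'$ is used later, so that costs nothing.

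For (c) and (d) I apply a quantitative implicit function theorem to $F(z,z')=\nabla\Phi_z(z')$. This map is holomorphic by (b), vanishes at $(0,\theta'(\beta))$ by (P2), and has $z'$-derivative $\mathcal H(\beta)$ there, which is invertible with $\lVert\mathcal H^{-1}\rVert$ bounded on $\mathcal K$. Cauchy estimates on the polydisc $\bar B(\theta',2r_0)\subset U_\beta$ bound the second derivatives by $CB_0/r_0^2$ uniformly. I then choose $r_1$ so that $\lVert\nabla^2\Phi_z(z')-\mathcal H\rVert\le\frac12\lVert\mathcal H^{-1}\rVert^{-1}$ on $\bar B(\theta',r_1)\times\{|z|<z_1\}$, shrinking $z_1$ if needed. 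The Newton map $z'\mapsto z'-\mathcal H^{-1}F(z,z')$ is then a contraction of $\bar B(\theta',r_1)$ into itself, because $\lVert F(z,\theta')\rVert\le C|z|$ by Cauchy in $z$. This gives existence and uniqueness, the bound $\lVert\zeta'-\theta'\rVert\le B_0|z|$ after enlarging $B_0$, holomorphy in $z$ (by the holomorphic implicit function theorem, or as a uniform limit of holomorphic iterates), and joint continuity. The conjugation symmetry follows from uniqueness, since the coefficients are real and so $\overline{F(\bar z,\bar w)}=F(z,w)$. For (d), $\mathcal J$ is symmetric as a scaled Hessian and continuous as a composition. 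At $z=0$ it equals $\Theta'\mathcal H\Theta'$, which is bounded below by $\min\theta_j^2\cdot\lambda_{\min}(\mathcal H)$, uniformly on $\mathcal K$.

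The main obstacle is (e): identifying the complex critical point at real $z$ with the real tilt $\theta'(\beta,z)$ of $Q_G(z)$. For real $0\le z<z_1$, $Q_G(z)$ is an irreducible Metzler matrix with weights $A(z)\ge0$, $\mu^{(z)}$, $e^{(z)}$. The proof of Theorem~\ref{thm:local}(a), in its general form Lemma~\ref{lem:local-general} on $G$, gives (P1)--(P4) for these weights: $\theta'(\beta,z)$ is a real critical point of $\Phi_z$, and the formulas for $\Phi_z$, $\det\mathcal J$ and $b_z$ at that point are exactly the three identities claimed. It remains to show that $\theta'(\beta,z)$ lies in $\bar B(\theta'(\beta),r_1)$. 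For this I use continuity in $z$ of the real tilt, which holds because $\theta(\beta,z)$ is real-analytic in $z$ by the argument of Proposition~\ref{prop:layer-constants}(ii) (optimal vector analytic in $z$). Shrinking $z_1$ puts it in the ball uniformly on $\mathcal K$, and uniqueness in (c) then forces $\zeta'(\beta,z)=\theta'(\beta,z)$.
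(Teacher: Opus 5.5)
Your proposal is correct. Parts (a)--(d) follow essentially the paper's route:
\begin{itemize}
\item[(a)] continuity of the Perron root on a compact set;
\item[(b)] the domination bounds of Lemma~\ref{lem:torus-complex} at the corner radii $\theta'(\beta)-3r_0\ones$. These hold for all $|z_j|\ge s_j$, so you do not need to intersect $U_\beta$ with a polydisc;
\item[(c)] Cauchy estimates and the Newton contraction $z'\mapsto z'-\mathcal H(\beta)^{-1}\nabla\Phi_z(z')$, with the conjugation symmetry obtained from uniqueness;
\item[(d)] continuity.
\end{itemize}

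The difference is in (e), where you argue in the opposite direction. The paper first uses conjugation symmetry and uniqueness to see that $\zeta'(\beta,z)$ is real for real $z$. It then shows directly that a real critical point near $\theta'(\beta)$ must be the tilt. The vector $\tilde r=(1,X_z(\zeta')b_1(z))$ is a nonnegative null vector of $A(z)-\operatorname{diag}(g_z(\zeta'),\zeta')$, hence positive with Perron root $0$. Differentiating $\varrho(A(z)-\operatorname{diag}(g_z(z'),z'))=0$ with Lemma~\ref{lem:perron}, together with the critical equation, gives $\tilde l\circ\tilde r=\beta$. This is the stationarity criterion for the optimal vector of $Q_G(z)$.

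You instead start from the real tilt $\theta'(\beta,z)$. You observe via Lemma~\ref{lem:local-general} that it is a critical point of $\Phi_z$, using $g_j=-\beta_j/\beta_1$ and \eqref{eq:A-radii} for $A(z)$. You place it in $\bar B(\theta'(\beta),r_1)$ by joint continuity of the optimal vector in $(\beta,z)$, which comes from the implicit-function argument of Proposition~\ref{prop:layer-constants}(ii) and a finite cover of $\mathcal K$. Uniqueness in (c) then finishes the argument.

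Your route reuses identities already proved and avoids re-differentiating a Perron root. The cost is one more shrinking of $z_1$ and an extra uniform-continuity step. The paper's route identifies $\zeta'$ with the tilt on all of $[0,z_1)$ with the $z_1$ already fixed, without knowing in advance where the tilt lies. As a by-product it yields continuity of $\theta(\beta,z)$ in $z$ and its closeness to $\theta(\beta)$.
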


\begin{proof}
(a) The Perron root of a Metzler matrix is a continuous function of its entries. By (P1) the
function $(\beta,y,s')\mapsto\varrho(A'(y)-\operatorname{diag}s')$ is negative on the compact set
of the points $(\beta,0,\theta'(\beta))$ with $\beta\in\mathcal K$. So it is negative on a
neighborhood of this set, and the neighborhood contains the points with $\beta\in\mathcal K$,
$0\le y\le z_1$ and $\lVert s'-\theta'(\beta)\rVert\le3r_0$ when $z_1$ and $r_0$ are small. We also
take $r_0<\frac14\min_{\mathcal K}\min_j\theta_j$, which is positive.

(b) Put $s'_\beta=\theta'(\beta)-3r_0\ones>0$. By (a) we may use Lemma~\ref{lem:torus-complex}
with the dominating weights at $|z|$ and the radii $s'_\beta$. So $\mathcal Z'-A'(z)$ is invertible
on $\{|z|\le z_1\}\times U_\beta$, and there $|g_z|\le\bar g(s'_\beta)$ and
$|b_z|\le\bar b(s'_\beta)$ for the functions $\bar g$ and $\bar b$ of the weights at $|z|$. The
terms of their series do not decrease with $|z|$, so we may take the weights at $z_1$. Then the
bounds are continuous in $\beta$, since $(\operatorname{diag}s'-A'(z_1))^{-1}$ is continuous where
$\varrho(A'(z_1)-\operatorname{diag}s')<0$. So they are bounded on $\mathcal K$. The weights are
affine in $z$, so the last claim of Lemma~\ref{lem:torus-complex} gives the holomorphy.

(c) For $|z|\le z_1/2$ and $z'\in\bar B(\theta'(\beta),r_0)$ the polydisc of radius
$\min(z_1/2,r_0)$ about $(z,z')$ lies in $\{|z|<z_1\}\times U_\beta$, since its points have
$|z_j|\ge\theta_j-2r_0$. So by (b) and Cauchy's estimates, the derivatives of $g_z(z')$ and
$b_z(z')$ in $(z,z')$ up to order $4$ are bounded on $\{|z|\le z_1/2\}\times\bar B(\theta'(\beta),r_0)$
by a constant $B_1$ that does not depend on $\beta$. Let
$\mathcal G(z,z')=\nabla\Phi_z(z')=\beta_1\nabla g_z(z')+\beta'$. By (P2), $\mathcal G(0,\theta')=0$ and
$D_{z'}\mathcal G(0,\theta')=\mathcal H(\beta)$, whose inverse has norm at most $B_2$ on $\mathcal K$ by
continuity. Put $\mathcal T_z(z')=z'-\mathcal H(\beta)^{-1}\mathcal G(z,z')$. For $|z|\le z_1/2$, $r\le r_0$ and
$z'\in\bar B(\theta',r)$ we get
\[
\lVert D_{z'}\mathcal T_z(z')\rVert=\lVert\mathcal H^{-1}(D_{z'}\mathcal G(0,\theta')-D_{z'}\mathcal G(z,z'))\rVert\le B_3(|z|+r),\qquad
\lVert \mathcal T_z(\theta')-\theta'\rVert\le B_3|z|,
\]
with $B_3$ depending only on $m$, $B_1$ and $B_2$. Choose $r_1\le r_0$ and $z_2\le z_1/2$ with
$B_3(z_2+r_1)\le\frac12$ and $B_3z_2\le\frac12r_1$. For $|z|\le z_2$ the map $\mathcal T_z$ sends
$\bar B(\theta',r_1)$ into itself and is a contraction with constant $\frac12$ there. So $\mathcal G(z,\cdot)$
has exactly one zero $\zeta'(\beta,z)$ in this ball, the fixed point of $\mathcal T_z$, and
$\lVert\zeta'-\theta'\rVert\le2\lVert \mathcal T_z(\theta')-\theta'\rVert\le2B_3|z|$. The iterates
$\mathcal T_z^n(\theta')$ are holomorphic in $z$, continuous in $(\beta,z)$ and converge uniformly, which
gives the regularity of $\zeta'$, and $\zeta'(\beta,0)=\theta'$. The data $A_G$, $c^\pm$, $\mu$
and $\beta$ are real, so $\mathcal G(\bar z,\bar w)=\overline{\mathcal G(z,w)}$. Hence
$\overline{\zeta'(\beta,z)}$ is a zero of $\mathcal G(\bar z,\cdot)$ in the ball, and it equals
$\zeta'(\beta,\bar z)$. We rename $z_2$ as $z_1$ and enlarge $B_0$ to at least $2B_3$.

(d) The Hessian $\nabla^2\Phi_z$ is symmetric and $Z_c$ is diagonal, so $\mathcal J$ is complex
symmetric, and it is continuous by (b) and (c). At $z=0$, $Z_c=\Theta'$ and (P2) gives
$\mathcal J(\beta,0)=\Theta'\mathcal H(\beta)\Theta'$, which is positive definite and continuous in
$\beta$. So its least eigenvalue has a positive minimum $\sigma_0$ on $\mathcal K$.

(e) Let $0\le z<z_1$ be real and write $\zeta'=\zeta'(\beta,z)$. By (c), $\zeta'$ is real, and it is
positive since $\lVert\zeta'-\theta'\rVert\le r_0<\frac14\min_j\theta_j$. The matrix
$A(z)=A(|z|)$ is nonnegative, and its graph contains that of $A_G$, so it is strongly connected.
By (a) and the first fact of Appendix~\ref{app:torus}, $X_z(\zeta')\ge0$, and as in the proof of Theorem~\ref{thm:local}(a) the
vector $\tilde r=(1,X_z(\zeta')b_1(z))\ge0$, with $b_1(z)=(A(z)_{j1})_{j\ge2}$, lies in the kernel of $A(z)-\operatorname{diag}\tilde\theta$
with $\tilde\theta=(g_z(\zeta'),\zeta')$. By the second fact of Appendix~\ref{app:torus}, $\tilde r>0$ and $0$ is the Perron
root. Let $\tilde l>0$ be the left Perron vector with $\tilde l\cdot\tilde r=1$, and
$\pi=\tilde l\circ\tilde r$. With $\tilde R=\operatorname{diag}\tilde r$, the matrix
$\tilde R^{-1}(A(z)-\operatorname{diag}\tilde\theta)\tilde R$ is an irreducible generator with
stationary law $\pi$, so Lemma~\ref{lem:perron} gives $\nabla\lambda(\tilde\theta)=-\pi$ for
$\lambda(y)=\varrho(A(z)-\operatorname{diag}y)$. For real $z'$ near $\zeta'$ the same argument
gives $\lambda(g_z(z'),z')=0$, and differentiating at $\zeta'$ gives
$\pi_1\partial_jg_z(\zeta')+\pi_j=0$. The critical equation $\beta_1\partial_jg_z(\zeta')+\beta_j=0$
then gives $\pi_j/\pi_1=\beta_j/\beta_1$ for $j\ge2$, and since $\pi$ and $\beta$ both sum to $1$,
$\pi=\beta$. So $\tilde l=\beta/\tilde r$, and $\tilde l^\top(A(z)-\operatorname{diag}\tilde\theta)=0$
says that the derivative of $u\mapsto\sum_{j\ne j'}\beta_jA(z)_{jj'}u_{j'}/u_j$ vanishes at
$\tilde r$, as before~\eqref{eq:stationary}. This function is convex in $\log u$, so $\tilde r$ is
a minimizer, and Proposition~\ref{prop:rate}(b) for $Q_G(z)$ shows that $\tilde r$ is a multiple
of $r_\beta(z)$. Hence $\tilde\theta=A(z)\tilde r/\tilde r=\theta(\beta,z)$, and
$\Phi_z(\zeta')=\beta_1\theta_1+\sum_{j\ge2}\beta_j\theta_j=\langle\beta,\theta(\beta,z)\rangle$.
The last two claims are the identities of Lemma~\ref{lem:local-general} on $G$ for $A(z)$,
$\mu^{(z)}$ and $e^{(z)}$ at $\alpha=\beta$, since $\mathcal J(\beta,z)$ is the Hessian
$\mathcal H$ of that lemma multiplied on both sides by $\operatorname{diag}\theta'(\beta,z)$.
\end{proof}

\begin{proof}[Proof of Theorem~\ref{thm:L-facet}]
For $|G|=1$ this is Lemma~\ref{lem:L-two}. Let $|G|\ge2$, and let $z_1$, $r_0$, $r_1$, $\sigma_0$
and $B_0$ be as in Lemma~\ref{lem:critical}. For $\beta\in\mathcal K$ and $|z|<z_1$ write
$\zeta'=\zeta'(\beta,z)$ and $\rho_j=|\zeta_j|$, and put
\[
\Psi(\beta,z,\varphi)=\Phi_z(\zeta'e^{\mathrm i\varphi}),\qquad
\chi(\beta,z,\varphi)=b_z(\zeta'e^{\mathrm i\varphi})\prod_{j\ge2}\zeta_je^{\mathrm i\varphi_j}.
\]
They play the roles of the phase $\Psi$ and the amplitude $a$ of Lemma~\ref{lem:laplace-complex}.

\emph{Step 1. The constants.} Since $\theta_j$ is real, $|\rho_j-\theta_j|\le|\zeta_j-\theta_j|\le r_1$.
So the points $\zeta'e^{\mathrm i\varphi}$ lie in $U_\beta$, and by Lemma~\ref{lem:critical}(b),
(c) the $\varphi$-derivatives of $\Psi$ and $\chi$ are jointly continuous on the compact set
$P_1\times\mathbb T^m$, where $P_1=\mathcal K\times\{|z|\le z_1/2\}$. Let $B_\Psi$ and $B_a$ bound
those of $\Psi$ up to order $4$ and those of $\chi$ up to order $2$. Since
$\partial/\partial\varphi_j=\mathrm iw_j\,\partial/\partial w_j$ at $w=\zeta'e^{\mathrm i\varphi}$
and $\nabla\Phi_z(\zeta')=0$, we have $\nabla_\varphi\Psi(\beta,z,0)=0$ and
$-\nabla^2_\varphi\Psi(\beta,z,0)=\mathcal J(\beta,z)$, as in the proof of Theorem~\ref{thm:local}(a).
Fix $\varepsilon_0\in(0,1)$ with $m^3B_\Psi\varepsilon_0\le\frac34\sigma_0$, and put
$2\eta=\beta_{\min}\theta_{\min}(1-\cos\varepsilon_0)$, where $\beta_{\min}$ and $\theta_{\min}$ are
the least entries of $\beta$ and $\theta(\beta)$ over $\mathcal K$. By (P3),
$\operatorname{Re}\Psi(\beta,0,\varphi)-\Psi(\beta,0,0)\le-2\eta$ for
$\lVert\varphi\rVert\ge\varepsilon_0$. By uniform continuity on $P_1\times\mathbb T^m$ we can choose
$z_0\in(0,z_1/4]$ such that for $\beta\in\mathcal K$ and $|z|\le2z_0$
\begin{enumerate}[(\roman*)]
\item $\operatorname{Re}\mathcal J(\beta,z)\ge\frac12\sigma_0I$, since
$\lVert\mathcal J(\beta,z)-\mathcal J(\beta,0)\rVert\le\frac12\sigma_0$ in the operator norm and
$\mathcal J(\beta,0)$ is real,
\item $|\chi(\beta,z,0)|\ge c_\chi$, where $2c_\chi>0$ is the minimum over $\mathcal K$ of
$\chi(\beta,0,0)=b_0(\theta')\prod_{j\ge2}\theta_j$, which is positive by (P4),
\item $\operatorname{Re}\Psi(\beta,z,\varphi)-\operatorname{Re}\Psi(\beta,z,0)\le-\eta$ for
$\lVert\varphi\rVert\ge\varepsilon_0$.
\end{enumerate}
The numbers $z_0$, $\varepsilon_0$, $\eta$ and $c_\chi$ depend only on $Q$, $\mu$, $i$ and
$\mathcal K$.

\emph{Step 2. The torus through the critical point.} Let $\beta\in\mathcal K$, $|z|\le z_0$ and
$S\ge1$. Since $\lVert\rho'-\theta'\rVert\le r_0$, Lemma~\ref{lem:critical}(a) shows that the
radii $\rho'$ are admissible in Lemma~\ref{lem:torus-complex} with the dominating weights at
$|z|$. With $t=S\beta$ it gives
\[
e^{S\langle d(z),\beta\rangle}f^{(z)}_G(S\beta)=\frac1{(2\pi)^m}\int_{\mathbb T^m}
\exp\bigl(S\Phi_z(w)\bigr)\,b_z(w)\prod_{j\ge2}w_j\,d\varphi,\qquad w_j=\rho_je^{\mathrm i\varphi_j}.
\]
The translation of $\varphi$ by the arguments of the $\zeta_j$ preserves the measure and turns $w$
into $\zeta'e^{\mathrm i\varphi}$. So the integral is $\int_{\mathbb T^m}e^{S\Psi}\chi\,d\varphi$.
No contour is moved. The torus representation holds at every admissible radius, and we use it at
the radii $\rho'$ of the complex critical point.

\emph{Step 3. The Laplace lemma.} By Step~1, Lemma~\ref{lem:laplace-complex} applies on
$\mathcal P=\mathcal K\times\{|z|\le z_0\}$ with $\sigma_{\min}=\frac12\sigma_0$, the cutoff $\varepsilon_0$ in place of $\varepsilon$,
and $\eta$. Since $\Psi(\beta,z,0)=\Phi_z(\zeta')$, it gives~\eqref{eq:L-facet} with
\[
J_G(\beta,z)=\langle d(z),\beta\rangle-\Phi_z(\zeta'),\qquad
C_G(\beta,z)=(2\pi)^{-m/2}\det\mathcal J(\beta,z)^{-1/2}\,b_z(\zeta')\prod_{j\ge2}\zeta_j,
\]
and with $|\varepsilon|\le C/(c_\chi S)$, where $C$ is the constant of
Lemma~\ref{lem:laplace-complex}.

\emph{Step 4. The functions.} We define $J_G$ and $C_G$ on $\mathcal K\times\{|z|<2z_0\}$ by these
formulas. Since $\zeta'$ is holomorphic in $z$ and $g_z(z')$ and $b_z(z')$ are holomorphic in
$(z,z')$, the numbers $\Phi_z(\zeta')$, $b_z(\zeta')$, $\prod_j\zeta_j$ and the entries of
$\mathcal J(\beta,z)$ are holomorphic in $z$ and jointly continuous. By (i) the matrix
$\mathcal J(\beta,z)$ lies in the set $\mathcal S_m$ of Appendix~\ref{app:laplace}, so
$\det\mathcal J(\beta,z)^{-1/2}$ is continuous, and as a continuous square root of a holomorphic
function without zeros it is holomorphic in $z$. For real $z$, $\zeta'$ is real by
Lemma~\ref{lem:critical}(c), so $J_G$ and $C_G$ are real, and $\mathcal J(\beta,z)$ is real and
positive definite, so the branch is the positive root. For real $0\le z<2z_0$,
Lemma~\ref{lem:critical}(e) gives $J_G(\beta,z)=\langle d(z)-\theta(\beta,z),\beta\rangle$, which is
the rate of $Q_G(z)$ by the identity $J=\langle q-\theta,\alpha\rangle$ after
Definition~\ref{def:tilt}, since the diagonal of $Q_G(z)$ is $-d(z)$. It also gives
$C_G(\beta,z)=(\mu^{(z)}\cdot r_\beta(z))(l_\beta(z)\cdot e^{(z)})/((2\pi)^{m/2}\sqrt{\det\Sigma_\beta(z)})$,
the constant of Section~\ref{sec:layer-setting}. At $z=0$ it is $C_G(\beta)>0$, by (H).
\end{proof}

The proof does not use the irreducibility of $F$, which only makes $c^-$ and $c^+$ nonzero.

\subsection{Proofs of the main results}\label{sec:layer-proofs}

\begin{proof}[Proof of Theorem~\ref{thm:layer}\textup{(a)}]
Theorem~\ref{thm:L-facet} gives~\eqref{eq:L-facet}, with $\varepsilon=0$ for $k=2$. Take
$\delta\le\frac12$, so $S\ge T/2$. Write $\Lambda(z)=S^{-(k-2)/2}C_G(\beta,z)e^{-SJ_G(\beta,z)}$
and $\varepsilon(z)=f^{(z)}_G(s)/\Lambda(z)-1$. By Step~1(i), (ii) of the proof of
Theorem~\ref{thm:L-facet}, $C_G(\beta,z)\ne0$ for $|z|\le2z_0$ if $|G|\ge2$, so $\varepsilon$ is
holomorphic there (for $|G|=1$, $\varepsilon=0$), and $|\varepsilon|\le C/S$ for $|z|\le z_0$. The Schwarz lemma for
$(\varepsilon(z)-\varepsilon(0))S/(2C)$ on $|z|<z_0$ gives
$|\varepsilon(z)-\varepsilon(0)|\le2C|z|/(z_0S)$ for $|z|\le z_0$. Since
$f_G(s)=\Lambda(0)(1+\varepsilon(0))$, \eqref{eq:layer-contour} gives for every $\rho\le z_0$
\[
e^{q_ix}f_F(s,x)=(1+\varepsilon(0))\mathcal A+\mathcal D,\qquad \mathcal A=\frac1{2\pi\mathrm i}\oint_{|z|=\rho}e^{x/z}\Lambda(z)\frac{dz}{z^2},\qquad
|\mathcal D|\le\frac{2C\rho}{z_0S}\,\mathcal A^{\rm abs},
\]
where $\mathcal A^{\rm abs}$ is the integral of $|e^{x/z}\Lambda(z)||z|^{-2}$ against $|dz|/(2\pi)$. By
definition $\mathbf K_\beta(x,S)=e^{-q_ix}\mathcal A/\Lambda(0)$, and $|\varepsilon(0)|\le\frac12$
for $T_0$ large, so $f_F=f_G\mathbf K_\beta(1+E)$ with $|E|\le2|\mathcal D|/\mathcal A$ once
$\mathcal A>0$. So it suffices to show that
$\mathcal A>0$, $\mathcal A^{\rm abs}\le C'\mathcal A$ and $\rho/S\le C'(T^{-2}+x^{1/2}T^{-3/2})$. Put
\[
\Xi(z)=\frac{C_G(\beta,z)}{C_G(\beta,0)}\,e^{S(J_G(\beta)-J_G(\beta,z)-\omega_\beta z)},
\]
so that $\Lambda(z)/\Lambda(0)=e^{S\omega_\beta z}\Xi(z)$. By
Proposition~\ref{prop:layer-constants}(ii), the holomorphy of $C_G$, and Cauchy estimates on the
compact set $\mathcal K\times\{|z|\le\frac32z_0\}$, the function $R=\log\Xi$ is
holomorphic on $|z|\le z_0$ after shrinking $z_0$, and
\[
R(z)=c_1z-S\omega_\beta(\gamma_\beta-q_i)z^2+O(|z|^2+S|z|^3),\qquad |R''(z)|\le C(1+S).
\]
We take $\delta$ so small that $z_*\le\sqrt{2\delta/\omega_{\min}}\le z_0/2$, where
$\omega_{\min}=\min_{\mathcal K}\omega_\beta>0$.

\emph{Few visits, $\zeta\le Z_0$} for a fixed large $Z_0$. Take $\rho=\max(z_*,1/S)$, which is
at most $C/S$. On the circle $|x/z|\le Z_0/2$, $|S\omega_\beta z|\le C$ and
$|\Xi(z)-1|\le C/S$. Let $\mathcal A_0$ be $\mathcal A$ with $\Xi=1$. By
\eqref{eq:layer-bessel-moments} it equals $\Lambda(0)\sqrt{\omega_\beta S/x}\,I_1(\zeta)$, which
is at least $\Lambda(0)\omega_\beta S$, because
$\sqrt{\omega S/x}\,I_1(\zeta)=\omega S\sum_{n\ge0}(\zeta/2)^{2n}/(n!(n+1)!)$. Also
$|\mathcal A-\mathcal A_0|\le C\Lambda(0)\rho^{-1}/S\le C\Lambda(0)$ and
$\mathcal A^{\rm abs}\le C\Lambda(0)\rho^{-1}\le C\Lambda(0)S$. So
$\mathcal A\ge\Lambda(0)(\omega_\beta S-C)>0$ and $\mathcal A^{\rm abs}\le C'\mathcal A$ for $S$ large,
and $\rho/S\le C/S^2\le4C/T^2$.

\emph{Many visits, $\zeta\ge Z_0$.} Let $\lambda(z)=x/z+S\omega_\beta z+R(z)$, so that
$e^{x/z}\Lambda(z)=\Lambda(0)e^{\lambda(z)}$. It is real on the real axis, and
$\lambda'(z)=-x/z^2+S\omega_\beta+O(1+S|z|)$. By the implicit function theorem it has a real
critical point $\rho_s=z_*(1+O(\sqrt{x/S}+1/\zeta))$, and we take $\rho=\rho_s$. For
$z=\rho e^{\mathrm i\phi}$,
\[
\operatorname{Re}\lambda(z)-\lambda(\rho)=(x/\rho+S\omega_\beta\rho)(\cos\phi-1)+\operatorname{Re}[R(z)-R(\rho)] .
\]
Since $R$ is real on the real axis, $\operatorname{Re}[R'(\rho)(z-\rho)]=R'(\rho)\rho(\cos\phi-1)$,
and $|R''|\le C(1+S)$ bounds the rest by $2C(1+S)\rho^2(1-\cos\phi)$. Also
$|R'(\rho)|\rho\le C(\rho+S\rho^2)$. Now $S\rho^2\le Cx$, $x/\rho+S\omega_\beta\rho\ge\zeta$,
and $x/\zeta\le C\sqrt\delta$. So for $\delta$ small
$\operatorname{Re}\lambda(z)-\lambda(\rho)\le-\frac\zeta2(1-\cos\phi)$, and
\[
\mathcal A^{\rm abs}\le\Lambda(0)\rho^{-1}e^{\lambda(\rho)}\frac1{2\pi}\int_{-\pi}^\pi e^{-\frac\zeta2(1-\cos\phi)}d\phi
\le C\Lambda(0)\rho^{-1}e^{\lambda(\rho)}\zeta^{-1/2}.
\]
For the lower bound we write
$\mathcal A=\Lambda(0)\rho^{-1}\frac1{2\pi}\int e^{\zeta\ell(\phi)}e^{-\mathrm i\phi}d\phi$ with
$\ell(\phi)=\lambda(\rho e^{\mathrm i\phi})/\zeta$. Here $\ell(0)$ is real, $\ell'(0)=0$,
$-\ell''(0)=\rho^2\lambda''(\rho)/\zeta$ is real and lies in $[\frac12,2]$, the derivatives of
$\ell$ up to order four are bounded uniformly in all parameters, and the gap above holds. The
parameters $(\beta,x,S)$ do not range over a compact set, so we use the uniform form of
Lemma~\ref{lem:laplace} in Remark~\ref{rem:laplace-uniform}, with $\sigma_{\min}=\frac12$, a fixed
$\varepsilon_0\in(0,1)$ with $B_\Psi\varepsilon_0\le\frac34$, where $B_\Psi$ bounds the derivatives of
$\ell$ up to order four, the cutoff $\varepsilon_0$ in place of $\varepsilon$, and the gap
$\eta=\frac12(1-\cos\varepsilon_0)$. With $m=1$, $\Psi=\ell$, $a=e^{-\mathrm i\phi}$ and $\zeta$ in place
of $T$ it gives
$\mathcal A=\Lambda(0)\rho^{-1}e^{\lambda(\rho)}(2\pi\zeta(-\ell''(0)))^{-1/2}(1+O(1/\zeta))$,
which is positive and at least $c\,\mathcal A^{\rm abs}$. Finally $\rho_s\le2z_*$ for $\delta$ small
and $Z_0$ large, so $\rho/S\le2x^{1/2}(\omega_{\min}S^3)^{-1/2}\le Cx^{1/2}T^{-3/2}$.
\end{proof}

\begin{proof}[Proof of Theorem~\ref{thm:layer}\textup{(b)} and \textup{(c)}]
We use two facts on Bessel functions besides~\eqref{eq:layer-bessel-moments}. For integers
$n\ge1$, $I_n(\zeta)=\frac1\pi\int_0^\pi e^{\zeta\cos t}\cos(nt)\,dt$~\cite[Eq.~10.32.3]{dlmf}.
Also $I_n\le I_1$ for $n\ge1$ and $\zeta>0$, since $I_\nu(\zeta)$ decreases in $\nu>0$~\cite[\S10.37]{dlmf}. The bound $|1-\cos nt-(1-\cos t)|\le n^2t^2/2$ and a Laplace
estimate give $|I_1(\zeta)-I_n(\zeta)|\le Cn^2I_1(\zeta)/\zeta$ for $\zeta\ge1$. Also
$I_0/I_1\le1+2/\zeta$, as in the proof of Proposition~\ref{prop:pair-face}.

\emph{The case $\pi^{\rm in}_\beta=\pi^{\rm out}_\beta$.} By
Proposition~\ref{prop:layer-constants}(v), $J_G(\beta,z)=J_G(\beta)-z\omega_\beta$ for real
$0\le z<2z_0$, hence on the whole disc by holomorphy, so
$\Xi(z)=C_G(\beta,z)/C_G(\beta,0)=1+c_1z+\sum_{n\ge2}c_nz^n$ with $|c_n|\le Cz_0^{-n}$. By
\eqref{eq:layer-bessel-moments} the term $c_nz^n$ contributes $c_nz_*^nI_{n-1}(\zeta)/I_1(\zeta)$
relative to the main term $z_*^{-1}I_1(\zeta)$. For $n\ge2$ this is at most $|c_n|z_*^n$, and the
sum over $n\ge2$ is $O(z_*^2)=O(x/T)$, since $z_*\le z_0/2$. The term $n=1$ is
$c_1z_*I_0(\zeta)/I_1(\zeta)=O(z_*+1/(\omega_\beta S))=O(\sqrt{x/T}+1/T)$. With (a) this proves
both claims of (c), for all $0<x\le\delta T$.

\emph{The general case, $x\le1$.} Write $\Xi(z)=e^{-S\omega_\beta(\gamma_\beta-q_i)z^2}(1+c_1z+R_2(z))$
with $|R_2(z)|\le C(|z|^2+S|z|^3)$. On $|z|=z_*$ the Gaussian factor has modulus at most
$e^{(\gamma_\beta-q_i)x}\le C$. We expand it in powers of $z^2$ and use
\eqref{eq:layer-bessel-moments}. With $y=(\gamma_\beta-q_i)x$ the term $1$ gives the relative
factor $\sum_{m\ge0}(-y)^mI_{2m-1}(\zeta)/(m!\,I_1(\zeta))$. For $\zeta\ge1$ it differs from
$e^{-y}$ by at most $C\sum_{m\ge1}y^mm^2/(m!\,\zeta)\le Cx/\zeta=O(\sqrt{x/S})$. For $\zeta<1$
we have $x<1/(4\omega_\beta S)$, and $|I_{2m-1}/I_1-1|\le1$ bounds the difference by
$e^y-1=O(1/S)$. The term $c_1z$ times the Gaussian factor gives, by the same expansion, the
relative factor $c_1z_*\sum_{m\ge0}(-y)^mI_{2m}(\zeta)/(m!\,I_1(\zeta))$, since
$S\omega_\beta z_*^2=x$. As $I_{2m}\le I_0$, its modulus is at most
$|c_1|z_*e^yI_0(\zeta)/I_1(\zeta)\le Cz_*(1+2/\zeta)=O(\sqrt{x/S}+1/S)$.
The term $R_2$ is bounded through the modulus of the integrand, and
$\frac1{2\pi}\int e^{\zeta\cos\phi}d\phi=I_0(\zeta)$ gives
$O\bigl((z_*^2+Sz_*^3)I_0(\zeta)/I_1(\zeta)\bigr)=O(1/S+\sqrt{x/S})$. So
$\mathbf K_\beta=\mathbf B_\beta(1+O(1/T+\sqrt{x/T}))$ for $x\le1$.

\emph{The general case, $1\le x\le\delta T$.} Here $\zeta\ge2\sqrt{\omega_{\min}S}\ge Z_0$. We
use the evaluation of $\mathcal A$ in the proof of (a), and the same evaluation with
$\Xi=1$, whose integral is exactly $\Lambda(0)z_*^{-1}I_1(\zeta)$. For $\Xi=1$ the
critical point is $z_*$, the critical value is $\zeta$, and $-\ell''(0)=1$. The ratio of the two
evaluations is $e^{\lambda(\rho_s)-\zeta}(1+O(\sqrt{x/S}+1/\zeta))$, since
$\rho_s/z_*-1$ and the relative change of $-\ell''(0)$ are $O(\sqrt{x/S}+1/\zeta)$. Now
$x/z+S\omega_\beta z$ takes the value $\zeta$ at its critical point $z_*$, and the shift from
$z_*$ to $\rho_s$ changes $\lambda$ only by
$O(\lambda''(\rho_s)(\rho_s-z_*)^2)=O(\zeta x/S+1/\zeta)$. So
\[
\lambda(\rho_s)-\zeta=R(z_*)+O(\zeta x/S+1/\zeta)=-(\gamma_\beta-q_i)x+O\bigl(\sqrt{x/S}\,(1+x)\bigr),
\]
because $c_1z_*=O(\sqrt{x/S})$, $S\omega_\beta z_*^2=x$, $Sz_*^3=O(x^{3/2}S^{-1/2})$,
$\zeta x/S=O(x^{3/2}S^{-1/2})$ and $1/\zeta\le C\sqrt{x/S}$ for $x\ge1$. Hence
$\mathbf K_\beta/\mathbf B_\beta=e^{(\gamma_\beta-q_i)x}\mathcal A/\mathcal A_0=\exp(O(\sqrt{x/T}(1+x)))$,
where the factor $e^{(\gamma_\beta-q_i)x}$ comes from $e^{-q_ix}$ in $\mathbf K_\beta$ against $e^{-\gamma_\beta x}$ in
$\mathbf B_\beta$, and $\mathcal A_0$ is $\mathcal A$ with $\Xi=1$. With (a) this proves (b).
\end{proof}

\begin{proof}[Proof of Proposition~\ref{prop:layer-cubic}]
(a) We prove this first. By the proof of Proposition~\ref{prop:layer-constants}(iii) with $S=1$,
$\mathfrak r_\beta(\varepsilon)=j(\varepsilon)-J_G(\beta)+2\sqrt{\omega_\beta}\,\varepsilon-(\gamma_\beta-q_i)\varepsilon^2$
with $j(\varepsilon)=\sup_\psi[\Psi_\beta(\psi)-2\varepsilon P_\beta(\psi)^{1/2}]$, and the maximizer
is within $O(\varepsilon)$ of $\psi_0$ modulo constants. The critical equation
$\nabla\Psi_\beta(\psi)=\varepsilon\nabla P_\beta(\psi)/\sqrt{P_\beta(\psi)}$ is real-analytic in
$(\psi,\varepsilon,\beta)$ near $(\psi_0,0,\beta)$, and its derivative in $\psi$ at $\varepsilon=0$
is $-\mathcal L_\beta$, which is invertible on the vectors that sum to zero. So the implicit
function theorem gives an analytic solution $\psi(\varepsilon)$, which is the maximizer for small
$\varepsilon\ge0$. Hence $j$ and $\mathfrak r_\beta$ are real-analytic near $0$, jointly in $\beta$, and by
Proposition~\ref{prop:layer-constants}(iii)
\[
j(\varepsilon)=J_G(\beta)-2\sqrt{\omega_\beta}\,\varepsilon+(\gamma_\beta-q_i)\varepsilon^2+j_3\varepsilon^3+O(\varepsilon^4)
\]
uniformly on $\mathcal K$. So $\mathfrak r_\beta(\varepsilon)=j_3\varepsilon^3+O(\varepsilon^4)$, and it
remains to show $j_3=\varkappa_\beta$. For each $\psi$ the supremum of $-zP_\beta(\psi)-\varepsilon^2/z$
over $z>0$ is $-2\varepsilon P_\beta(\psi)^{1/2}$, attained at $z=\varepsilon/\sqrt{P_\beta(\psi)}$.
So $z(\varepsilon)=\varepsilon/\sqrt{P_\beta(\psi(\varepsilon))}=z_*(1+O(\varepsilon))$ maximizes
$J_G(\beta,z)-\varepsilon^2/z$ over $z>0$, with the value $j(\varepsilon)$. Write $z=\varepsilon w$,
$a=\frac12\partial_z^2J_G(\beta,0)=\omega_\beta(\gamma_\beta-q_i)$ and
$c=\frac16\partial_z^3J_G(\beta,0)$. Then $(j(\varepsilon)-J_G(\beta))/\varepsilon$ is the maximum
over $w$ near $w_0=\omega_\beta^{-1/2}$ of $\phi(w)+p(w)+O(\varepsilon^3)$, where
$\phi(w)=-\omega_\beta w-1/w$ and $p(w)=a\varepsilon w^2+c\varepsilon^2w^3$. Here
$\phi(w_0)=-2\sqrt{\omega_\beta}$, $\phi'(w_0)=0$ and $\phi''(w_0)=-2\omega_\beta^{3/2}$, so the
maximum is $\phi(w_0)+p(w_0)+p'(w_0)^2/(4\omega_\beta^{3/2})+O(\varepsilon^3)$. This gives
$j_3=c\,\omega_\beta^{-3/2}+a^2\omega_\beta^{-5/2}=\varkappa_\beta$.

(b) Take $\delta$ at most the $\delta$ of Theorem~\ref{thm:layer}, and so small that
$\sqrt{2\delta}$ lies in the disc where (a) holds uniformly. Then $\varepsilon=\sqrt{x/S}$ lies
there, since $S\ge T/2$. First let $x\le1$. Theorem~\ref{thm:layer}(b) bounds the logarithm by
$C(1/T+\sqrt{x/T})$, and by (a) $S|\mathfrak r_\beta(\varepsilon)|\le CS\varepsilon^3=Cx^{3/2}S^{-1/2}\le
C\sqrt{x/S}$. Now let $1\le x\le\delta T$. The proof of Theorem~\ref{thm:layer}(b) gives
$\log(\mathbf K_\beta/\mathbf B_\beta)=(\gamma_\beta-q_i)x+\lambda(\rho_s)-\zeta+O(\sqrt{x/S})$,
since $1/\zeta\le C\sqrt{x/S}$ for $x\ge1$. On the real interval $I=[z_*/2,2z_*]$ we have
$\lambda=\lambda_0+g$ with $\lambda_0(z)=x/z+S(J_G(\beta)-J_G(\beta,z))$ and
$g(z)=\log(C_G(\beta,z)/C_G(\beta,0))=O(z_*)$. Also
$\lambda''(z)=2x/z^3+R''(z)\ge\omega_\beta S/(4z_*)-C(1+S)>0$ on $I$ for $\delta$ small, so
$\lambda$ is convex on $I$, and $\rho_s\in I$. Hence $\lambda(\rho_s)=\min_I\lambda$, which differs
from $\min_I\lambda_0$ by $O(z_*)$. By the proof of (a) the point $z(\varepsilon)\in I$ maximizes
$SJ_G(\beta,z)-x/z$ over $z>0$, so
$\min_I\lambda_0=SJ_G(\beta)+q_ix-\bar J_F(s,x)$ by Proposition~\ref{prop:layer-constants}(i).
Since $\bar J_F(s,x)=q_ix+Sj(\varepsilon)$, $\zeta=2\sqrt{\omega_\beta}\,S\varepsilon$ and
$(\gamma_\beta-q_i)x=(\gamma_\beta-q_i)S\varepsilon^2$, this gives
\[
\log\frac{\mathbf K_\beta}{\mathbf B_\beta}=-S\bigl(j(\varepsilon)-J_G(\beta)+2\sqrt{\omega_\beta}\,\varepsilon-(\gamma_\beta-q_i)\varepsilon^2\bigr)+O\Bigl(\sqrt{\frac xS}\Bigr)
=-S\,\mathfrak r_\beta(\varepsilon)+O\Bigl(\sqrt{\frac xS}\Bigr).
\]
With Theorem~\ref{thm:layer}(a) and $S\ge T/2$ this proves the bound. The last claim follows
from (a), since $S\varepsilon^4=x^2/S$.

(c) If the two laws agree, then $\gamma_\beta=q_i$, and
Proposition~\ref{prop:layer-constants}(v) gives
$\bar J_F(\beta,\varepsilon^2)=J_G(\beta)+q_i\varepsilon^2-2\sqrt{\omega_\beta}\,\varepsilon$. So
$\mathfrak r_\beta=0$. For reversible $Q$ the laws agree by
Proposition~\ref{prop:layer-constants}(iv).

(d) Let $\varrho_m\ne0$ be the coefficient of $\varepsilon^m$ in $\mathfrak r_\beta$. Choose $\delta_\beta\le\delta$
so small that $|\mathfrak r_\beta(\varepsilon)-\varrho_m\varepsilon^m|\le\frac12|\varrho_m|\varepsilon^m$ for
$\varepsilon\le\sqrt{2\delta_\beta}$. Put $X=x^{m/2}S^{1-m/2}=S\varepsilon^m$. By (b) the logarithm
$\ell$ of the ratio satisfies $\bigl||\ell|-|\varrho_m|X\bigr|\le\frac12|\varrho_m|X+C(1/T+\sqrt{x/T})$.
Since $T/2\le S\le T$, $X$ tends to $0$ if and only if $x/T^{1-2/m}$ does. If it does, then
$x/T\to0$ too, and $\ell\to0$. If it does not, there are $\eta>0$ and arbitrarily large $T$ with
$X\ge\eta$. For these $T$, with $u=x/S$, we have $X=Su^{m/2}\ge\eta$, so $u\ge(\eta/S)^{2/m}$ and
$\sqrt u/X=u^{-(m-1)/2}/S\le\eta^{-(m-1)/m}S^{-1/m}$, which tends to $0$. So
$|\ell|\ge\frac12|\varrho_m|X-C(1/T+\sqrt{2u})\ge\frac14|\varrho_m|\eta$ for these $T$ when $T$ is
large, and the ratio does not tend to $1$. If $\mathfrak r_\beta=0$, then $|\ell|\le C(1/T+\sqrt{x/T})$ by
(b).

(e) For the cycle, $c^-=(q_{13},q_{23})=(1,2)$, $c^+=(q_{31},q_{32})=(2,1)$ and $q_1=q_2=q_3=3$.
So $d(z)=(3-2z)\ones$, $a_{12}(z)=2+z$ and $a_{21}(z)=1+4z$, and Lemma~\ref{lem:L-two} gives
$J_G(\beta,z)=3-2z-\sqrt{(2+z)(1+4z)}$. Its derivatives at $0$ are $-\omega_\beta=-2-\frac94\sqrt2$,
$\frac{49}{16}\sqrt2$ and $-\frac{1323}{64}\sqrt2$. Then
$c+a^2/\omega_\beta=-\frac{441}{128}\sqrt2+\frac{2401}{128(8+9\sqrt2)}=-\frac{49}{64}\,\omega_\beta$,
and dividing by $\omega_\beta^{3/2}$ gives $\varkappa_\beta$. Since $\varkappa_\beta<0$, (b) and the
choice of $\delta_\beta$ as in the proof of (d) give $\ell\ge\frac12|\varkappa_\beta|X-C(1/T+\sqrt{x/T})$, which tends to
infinity when $x/T^{1/3}\to\infty$.
\end{proof}

\subsection{The counts near a facet}\label{sec:layer-counts}

Now let $n$ have support $F$. Put $N'=N-n_i$, let $\hat n$ be $n$ restricted to $G$, and put
$\beta=\hat n/N'$, $S=hN'$, $x=hn_i$ and $p_i=1-hq_i$. So $T=S+x$. Let
$\widetilde\Lambda_\beta(z)=C_G(\beta,z)C_G(\beta,0)^{-1}e^{S(J_G(\beta)-J_G(\beta,z))}$ be the
integrand of $\mathbf K_\beta$ without the factors $e^{-q_ix}$, $e^{x/z}$ and $z^{-2}$, and define the
lattice kernel
\[
\mathbf K^h_\beta(n_i,S)=\frac h{2\pi\mathrm i}\oint_{|z|=\rho}\Bigl(p_i+\frac hz\Bigr)^{n_i-1}\widetilde\Lambda_\beta(z)\,\frac{dz}{z^2}
=\sum_{K=1}^{n_i}\binom{n_i-1}{K-1}p_i^{\,n_i-K}h^K\,[z^K]\widetilde\Lambda_\beta(z),
\]
with $0<\rho<2z_0$. It is $h\mathbf K_\beta$ with $e^{x/z-q_ix}$ replaced by $(p_i+h/z)^{n_i-1}$, as
Lemma~\ref{lem:layer-exact-discrete} replaces Lemma~\ref{lem:layer-exact}. If
$\widetilde\Lambda_\beta(z)=e^{S\omega_\beta z}\sum_{r\ge0}\xi_rz^r$, then
$\mathbf K^h_\beta(n_i,S)=p_i^{n_i}\sum_r\xi_r(h/p_i)^rB^{(r)}_{n_i}(u)$ with $u=hS\omega_\beta/p_i$
and \ifarxivbuild $B_n$ of~\eqref{eq:refresh-polynomial}, so the Laguerre polynomials of
Section~\ref{sec:vertex} replace the Bessel kernel\else
$B_n(w)=\sum_{m=1}^n\binom{n-1}{m-1}w^m/m!=\frac wnL^{(1)}_{n-1}(-w)$, where $L^{(1)}_{n-1}$ is the
Laguerre polynomial~\cite[Eq.~18.5.12]{dlmf}. So Laguerre polynomials replace the Bessel kernel\fi. The ratio of the two laws below carries one
power of $h$ for every $k$. Indeed $P_{N'}(\hat n)$ is close to $h^{k-2}f_G(s)$, by
Lemma~\ref{lem:discrete} for the face $G$ if $k\ge3$ and by the path $j^{N'}$ if $G=\{j\}$,
and Theorems~\ref{thm:layer}(a)
and~\ref{thm:layer-counts}(b) show that $P_N(n)$ is then within a factor $1+O(1/T+T^2/N)$ of
$h^{k-1}f_F(s,x)$, also when $n_i$ is small.

\begin{theorem}[the counts on a pair face]\label{thm:pair-counts}
Let $F=\{j,i\}$ with $q_{ji},q_{ij}>0$ and $\mu_j>0$. Put $\omega=q_{ji}q_{ij}$, $c_0=1$,
$c_1=q_{ji}+\frac{\mu_i}{\mu_j}q_{ij}$, $c_2=\frac{\mu_i}{\mu_j}\omega$ and $p_l=1-hq_l$, and let
$hq_{\max}\le\frac12$ and $N',n_i\ge1$.
\begin{enumerate}[(a)]
\item \textup{(The discrete Pedler formula.)} Exactly, $P_{N'}(\hat n)=\mu_jp_j^{N'-1}$ and
\[
\frac{P_N(n)}{P_{N'}(\hat n)}=R:=\sum_{K=1}^{n_i}\binom{n_i-1}{K-1}p_i^{\,n_i-K}\sum_{e=0}^2c_e\,h^e\binom{N'-1}{K-e}\Bigl(\frac{h^2\omega}{p_j}\Bigr)^{K-e}.
\]
\item $\mathbf K^h(n_i,S)=p_i^{n_i}\bigl[B_{n_i}(u)+c_1\frac h{p_i}B'_{n_i}(u)+c_2\frac{h^2}{p_i^2}B''_{n_i}(u)\bigr]$
with $u=hS\omega/p_i$.
\item With $U=n_iu$,
\[
1-\frac{U+2\sqrt U+2}{2N'}\le\frac R{\mathbf K^h(n_i,S)}\le e^{2hq_j(1+2\sqrt U)} .
\]
So if $n_i\le N'$ and $1\le T\le N^{1/2}$, then $R=\mathbf K^h(n_i,S)(1+O(h+h^2\sqrt{N'n_i}))$.
\item If $n_i\le N'$ and $hS\le1$, then $\mathbf K^h(n_i,S)=h\,\mathbf K(x,S)(1+O(h+hS))$, where
$\mathbf K(x,S)=f_F(S,x)/f_G(S)$ is given by Proposition~\ref{prop:pair-face}.
\end{enumerate}
\end{theorem}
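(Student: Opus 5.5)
The plan is to take $G=\{j\}$ in Lemma~\ref{lem:layer-exact-discrete} and Lemma~\ref{lem:L-two}, so that every object is an explicit one-variable generating function, and then compare coefficients.

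\emph{Part (a).} For $G=\{j\}$ the only word with composition $\hat n=(N')$ is $j^{N'}$. The weighted transition entry is $(I+hQ_G+zh^2c^-(c^+)^\top)_{jj}=p_j+zh^2\omega$, so
\[
P^{(z)}_{N'}(\hat n)=(\mu_j+zh\mu_iq_{ij})(1+zhq_{ji})(p_j+zh^2\omega)^{N'-1}.
\]
At $z=0$ this gives $P_{N'}(\hat n)=\mu_jp_j^{N'-1}$. Dividing by this value, the first two factors become $\sum_{e=0}^2c_eh^ez^e$ with the stated $c_e$, and the last becomes $(1+zh^2\omega/p_j)^{N'-1}$. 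Extracting $[z^K]$ by the binomial theorem and inserting the result into Lemma~\ref{lem:layer-exact-discrete} gives $R$.

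\emph{Part (b).} By Lemma~\ref{lem:L-two}, $C_G(\beta,z)/C_G(\beta,0)=1+c_1z+c_2z^2$ and $J_G(\beta)-J_G(\beta,z)=z\omega$. Hence $\widetilde\Lambda(z)=e^{S\omega z}(1+c_1z+c_2z^2)$ exactly. For each $r\in\{0,1,2\}$ we compute $h^K[z^K]z^re^{S\omega z}=h^r(hS\omega)^{K-r}/(K-r)!$. Rewriting $hS\omega=up_i$ collects the factor $p_i^{n_i-r}$, and the remaining sum over $K$ is $B^{(r)}_{n_i}(u)=\sum_m\binom{n_i-1}{m-1}u^{m-r}/(m-r)!$.

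\emph{Part (c).} Written this way, $R$ and $\mathbf K^h$ have the same structure. Their $(K,e)$ terms differ only in that $R$ has $\binom{N'-1}{m}(h^2\omega/p_j)^m$ where $\mathbf K^h$ has $(h^2N'\omega)^m/m!$, with $m=K-e$ and $hS=h^2N'$. The termwise ratio is $\rho_m=p_j^{-m}\prod_{l<m}(1-(l+1)/N')$. It lies between $1-m(m+1)/(2N')$ and $p_j^{-m}\le e^{2hq_jm}$, using $hq_j\le\frac12$. So $R/\mathbf K^h$ is an average of $\rho_m$ under the positive weights $w_{K,e}=c_eh^e\binom{n_i-1}{K-1}p_i^{n_i-K}(hS\omega)^{K-e}/(K-e)!$.
\begin{enumerate}[(1)]
\item For the lower bound I need $\mathbb E_w[m(m+1)]\le U+2\sqrt U+2$. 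Under the weights $\binom{n-1}{m-1}u^m/m!$ one has $m\le n$, and $\binom{n-1}{m-1}\le n^{m-1}/(m-1)!$. I would bound $\mathbb E[m(m-1)]$ by comparing $B''$ with $nB$, which gives about $U$, and bound $\mathbb E[m]$ by about $\sqrt U+1$ via Cauchy--Schwarz. The $e=1,2$ terms only lower $m$ relative to $K$, so they can only help.
\item For the upper bound I need the weighted average of $e^{2hq_jm}$ to be at most $e^{2hq_j(1+2\sqrt U)}$. I expect this to be the main obstacle, because Jensen goes the wrong way. I would first try a sub-Gaussian tail for $m$ under these weights. The weights are log-concave in $m$: $\binom{n-1}{m-1}/m!$ is log-concave, and the ratio of consecutive weights is about $U/m^2$. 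That ratio drops below $e^{-2hq_j}$ once $m\ge\sqrt U e^{hq_j}$. Summing the geometric tail beyond $2\sqrt U$, and absorbing the excess into the stated constant, should give the bound.
\end{enumerate}
The corollary in (c) then follows. We have $U\asymp h^2N'n_i$, the lower bound loses $O(U/N')=O(h^2n_i)\le O(h^2\sqrt{N'n_i})$ when $n_i\le N'$, and the upper bound is $1+O(h+h\sqrt U)$.

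\emph{Part (d).} Here $\mathbf K(x,S)$ is $e^{-q_ix}$ times the bracket of Proposition~\ref{prop:pair-face} divided by $\mu_j$. Its power series in $\omega Sx$ uses $x^{K-1}/(K-1)!$ where $\mathbf K^h$ uses $h^{K-1}\binom{n_i-1}{K-1}p_i^{n_i-K}$. I would compare them termwise, as in (c). The ratio $\binom{n_i-1}{K-1}(K-1)!/n_i^{K-1}$ times $p_i^{n_i-K}e^{q_ix}$ lies in $[1-K^2/(2n_i),\,e^{O(hK)}]$, up to the factor $e^{O(h^2n_i)}$ that comes from $p_i^{n_i}e^{q_ix}$. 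The same moment bounds on $K$ as in (c), namely $\mathbb E K^2\lesssim 1+U\lesssim 1+hS\,n_i h$, with $hS\le1$ and $n_i\le N'$, then give the relative error $O(h+hS)$.
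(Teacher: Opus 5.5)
Your parts (a) and (b) follow the paper's argument. Your lower bound in (c) is also sound. For $e=0$ the identity $uB_{n_i}''+uB_{n_i}'=n_iB_{n_i}$ gives $\mathbb E[m(m-1)]=U-u\,\mathbb E[m]\le U$, which is the same information the paper extracts from $\beta_{K+1}/\beta_K\le U/(K(K+1-e))$. The cases $e=1,2$ do only help, because at fixed $m$ the weight of $e+1$ over that of $e$ is $(n_i-m-e)/(m+e)$, which decreases in $m$. Two steps, however, do not go through as proposed.

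\emph{The upper bound in (c).} A geometric tail sum beyond $2\sqrt U$ yields at best $e^{c(2\sqrt U+C)}$, with $c=2hq_j$ and a constant $C$ that your sketch does not pin down. That suffices for the $O(\cdot)$ consequence, but not for the displayed inequality with $1+2\sqrt U$. The missing idea is that for fixed $e$ the weights are proportional to $u^m$, so the tilt $e^{cm}$ is just the substitution $u\mapsto e^cu$. Put $\mathcal S_e(u)=\sum_K\beta_K$. Then $(\log\mathcal S_e)'(v)=\mathbb E^{(v)}[m]/v$, and hence
\[
\log\mathbb E\,e^{cm}=\log\frac{\mathcal S_e(e^cu)}{\mathcal S_e(u)}=\int_u^{e^cu}\mathbb E^{(v)}[m]\,\frac{dv}v\le c\bigl(1+\sqrt{e^cU}\bigr)\le c\bigl(1+2\sqrt U\bigr).
\]
This uses only your first-moment bound $\mathbb E^{(v)}[m]\le1+\sqrt{n_iv}$ at every $v\le e^cu$, together with $c\le1$. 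It integrates the mean along the tilt instead of fighting Jensen, and it passes to the mixture over $e$.

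\emph{Part (d).} As written it fails when $n_i$ is small, and the theorem covers that case. The bound $1-K^2/(2n_i)$ with $\mathbb E K^2\lesssim1+U$ loses at least $1/(2n_i)$. This is of order one for bounded $n_i$ (for $n_i=1$ it gives only the factor $\frac12$), not $O(h+hS)$. You need $\prod_{l<K}(1-l/n_i)\ge1-K(K-1)/(2n_i)$, which is exact at $K=1$, and then a bound on $\mathbb E[K(K-1)]$ rather than $\mathbb E K^2$. Put $U'=\omega Sx$; your $1+hS\,n_ih$ has a spurious factor $h$, and the right size is $\omega hSn_i$. The continuum terms $t'_{K,e}$ have the exact ratio $U'/(K(K+1-e))$, so $\mathbb E_e[K(K-1)]\le3U'$ for $e\le1$, which contributes $\frac32\omega hS$. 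Under $e=2$, however, $K\ge2$, so $\mathbb E_2[K(K-1)]\ge2$ and no moment bound removes the $1/n_i$. Here you must use that this part carries small mass. With $\mathcal I(y)=\sum_{j\ge0}y^j/(j!\,(j+1)!)$, its total weight is $c_2x\,\mathcal I(U')$, against at least $(S\omega+c_1)\,\mathcal I(U')$ for $e\le1$. So it contributes at most $c_2x/(n_i(c_1+S\omega))\le c_2h/c_1$.
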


Part (c) has no term $1/T$ and holds at every $T$, also when $T^2/N$ is of order one, where (d)
does not apply and \ifarxivbuild Section~\ref{sec:vertex}\else Section~8 of the arXiv version of this
paper~\cite{arxivC1}\fi{} treats the complete graph.

\begin{theorem}[the counts near a facet]\label{thm:layer-counts}
Let $F$, $i$, $G$ and $\mathcal K$ be as in Theorem~\ref{thm:layer}. There are $\delta>0$,
$\varepsilon_1>0$, $T_0$ and $C$ such that the following hold uniformly for $\supp n=F$ with
$\beta\in\mathcal K$, $1\le n_i\le\delta N$ and $T_0\le T\le\varepsilon_1N^{1/2}$.
\begin{enumerate}[(a)]
\item $P_N(n)=P_{N'}(\hat n)\,\mathbf K^h_\beta(n_i,S)\,(1+E)$ with
$|E|\le C(T^{-2}+x^{1/2}T^{-3/2}+T^2/N)\le\frac12$.
\item $\mathbf K^h_\beta(n_i,S)=h\,\mathbf K_\beta(x',S)(1+O(T^2/N))$ for $x'=x$ and for $x'=x-h$, where
$\mathbf K_\beta(0,S)=S\omega_\beta+c_1(\beta)$ by the same formula. Hence
$P_N(n)=hP_{N'}(\hat n)\,\mathbf K_\beta(x,S)(1+O(1/T+T^2/N))$.
\item Parts \textup{(b)} and \textup{(c)} of Theorem~\ref{thm:layer} hold with
$P_N(n)/(hP_{N'}(\hat n))$ in place of $f_F(s,x)/f_G(s)$ and with $T^2/N$ added to each error bound.
\end{enumerate}
\end{theorem}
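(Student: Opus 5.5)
The plan follows the continuum proof of Theorem~\ref{thm:layer}(a), run on the discrete identity of Lemma~\ref{lem:layer-exact-discrete}. First we rewrite that identity as a contour integral,
\[
P_N(n)=\frac1{2\pi\mathrm i}\oint_{|z|=\rho}\Bigl(p_i+\frac hz\Bigr)^{n_i-1}h\,P^{(z)}_{N'}(\hat n)\,\frac{dz}{z^2},
\]
checked by expanding the binomial and matching coefficients with $[z^K]P^{(z)}_{N'}$. Note that $P^{(z)}_{N'}$ is built from $I+hQ_G+zh^2c^-(c^+)^\top$, $\mu^{h,(z)}$ and $e^{h,(z)}$, so with $w=hz$ it is the discrete law of the weighted chain of Section~\ref{sec:layer-setting} at parameter $z$, up to the substitution $w/h=z$. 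The second step is a discrete analogue of Theorem~\ref{thm:L-facet}:
\[
P^{(z)}_{N'}(\hat n)=h^{k-2}f^{(z)}_G(S\beta)\bigl(1+O(T^2/N)\bigr)
\]
uniformly for complex $|z|\le z_0$, $\beta\in\mathcal K$ and $T\le\varepsilon_1N^{1/2}$. Combined with Theorem~\ref{thm:L-facet}, this replaces $P^{(z)}_{N'}$ by $P_{N'}(\hat n)\widetilde\Lambda_\beta(z)(1+\varepsilon(z))$. To prove it, we run Lemma~\ref{lem:discrete} (Appendix~\ref{app:torus}) for the complex weights $A(z)$. The torus representation of Lemma~\ref{lem:torus} carries Good's discrete formula with $e^{t_jz_j}$ replaced by $(1-hd_j(z)+hz_j)^{n_j-1}$. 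Its comparison with the exponential costs $T^2/N$ on the torus through the complex critical point of Lemma~\ref{lem:critical}, because $n_j\log(1+h y)-hn_jy=O(h^2n_j)=O(T^2/N)$ for bounded $y$. The amplitude bounds come from domination by the weights at $|z|$, as in Lemma~\ref{lem:torus-complex}.

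For (a), we take $\varepsilon(z)$ holomorphic with $|\varepsilon|\le C(1/S+T^2/N)$ on $|z|\le z_0$. The Schwarz lemma then gives $|\varepsilon(z)-\varepsilon(0)|\le C|z|(1/S+T^2/N)/z_0$. The task reduces to the two bounds of the proof of Theorem~\ref{thm:layer}(a), namely $\mathcal A^{\rm abs}\le C'\mathcal A$ and $\rho/S\le C(T^{-2}+x^{1/2}T^{-3/2})$, now for the lattice integrand $(p_i+h/z)^{n_i-1}$ in place of $e^{x/z-q_ix}$. In the regime of few visits ($\zeta\le Z_0$) we take $\rho=\max(z_*,1/S)$. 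On that circle $|h/(p_iz)|\le hS$ is small, and $(1+h/(p_iz))^{n_i-1}$ is bounded by $e^{x/(p_i|z|)}$. The main term with $\Xi=1$ is explicit: it equals $p_i^{n_i}B_{n_i}(u)$ with positive coefficients, which gives $\mathcal A>0$. In the regime of many visits we write $(p_i+h/z)^{n_i-1}=\exp((n_i-1)\log(p_i+h/z))$ and repeat the saddle analysis with phase
\[
\lambda_h(z)=(n_i-1)\log(p_i+h/z)+S\omega_\beta z+R(z),
\]
which is $\lambda(z)$ up to relative corrections $O(h/z_*)=O(hS/\zeta)\le O(T^2/N)$ and $O(h)$. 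The gap estimate $\operatorname{Re}\lambda_h-\lambda_h(\rho)\le-\frac\zeta2(1-\cos\phi)$ then persists, because $\log|1+h/(p_iz)|$ is maximized on the circle at $\phi=0$ and is concave in $\cos\phi$ near there. Remark~\ref{rem:laplace-uniform} gives the lower bound on $\mathcal A$. The restriction $n_i\le\delta N$ keeps $z_*\le z_0/2$.

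For (b), we compare the two integrals on the common circle $|z|=\rho_s$. For bounded $z$,
\[
(n_i-1)\log(p_i+h/z)-(x/z-q_ix)=O\bigl(n_ih^2(1+|z|^{-2})+h/|z|\bigr).
\]
On the saddle circle this is $O(x h/z_*^2+h/z_*)=O(hS)+O(h\sqrt{S/x})$, and since $hS\le T^2/N$ it is $O(T^2/N)$ once $x\ge h$. The remaining case $n_i=O(1)$ is handled through the explicit sum over $K$, where $\binom{n_i-1}{K-1}h^{K-1}$ against $x^{K-1}/(K-1)!$ differ by relative $O(n_i^2/N')$. Replacing $x$ by $x-h$ accounts for the exponent $n_i-1$ rather than $n_i$, and gives the value at $x'=0$. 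Part (c) then follows by combining (b) with Theorem~\ref{thm:layer}(b),(c) and Lemma~\ref{lem:discrete} for $G$.

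The main obstacle is the discrete weighted local law with complex $z$ and uniformity in $T\le\varepsilon_1N^{1/2}$. For that step I would first try the continuum route, Lemma~\ref{lem:discrete} made holomorphic in $z$: prove the bound for real $z$ and for the ratio $P^{(z)}/(h^{k-2}f^{(z)})$ by comparing the torus integrands pointwise, which avoids a new Laplace argument.
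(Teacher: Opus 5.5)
Your plan can be completed, but at its central step it takes a different route from the paper.

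\textbf{How the paper passes from the lattice on $G$ to the continuum.} The paper never forms a lattice law at complex $z$. In Lemma~\ref{lem:counts-half} it keeps the binomials of state $i$ exact. It compares the run expansion of $P_N(n)$ term by term with a sum whose facet factors are continuum ones, and a coefficient identity sums this comparison into $\mathbf D=\frac1{2\pi\mathrm i}\oint w_d\,f^{(z)}_G(s)\,dz/z^2$. All weights are nonnegative, so the averaging of Lemma~\ref{lem:discrete} applies verbatim. The moment bound on $M_G$ comes from the same identity with the rates out of $G$ multiplied by $\mathrm e$. Only then is Theorem~\ref{thm:L-facet} used, in continuum form, on the contour.

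\textbf{Your route.} You compare the lattice weighted facet law with $h^{k-2}f^{(z)}_G(s)$ at each complex $z$. Your contour display holds for the law at parameter $z$, which is $P^{(z/h)}_{N'}(\hat n)$ in the notation of Lemma~\ref{lem:layer-exact-discrete}. This comparison cannot be done by running the proof of Lemma~\ref{lem:discrete}: with complex $W_z(m)$ there is no probability proportional to $v_m$ against which to average $\kappa_m$. Your torus comparison does work, though. On the torus through $\zeta'(\beta,z)$ both integrands carry the amplitude $b_z(w')\prod_jw_j$. Their ratio is $\exp\bigl(\sum_j[(n_j-1)(\log(1+y_j)-y_j)-y_j]\bigr)$ with $y_j=h(w_j-d_j(z))=O(h)$, so it is $1+O(T^2/N)$ uniformly.

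You must add the step that turns this into a relative error of the integrals, namely $\int|e^{S\Psi}\chi|\,d\varphi\le C\bigl|\int e^{S\Psi}\chi\,d\varphi\bigr|$. This follows from Step~1 (i)--(iii) of the proof of Theorem~\ref{thm:L-facet}. With it, the error is holomorphic and $O(T^2/N)$ on $|z|\le z_0$. Your Schwarz step then gives $E=O(\rho(1/S+T^2/N))$, slightly better than the paper's $O(\rho/S+T^2/N)$, because the $T^2/N$ error at $z=0$ cancels against $P_{N'}(\hat n)$.

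\textbf{Comparison.} What you prove is a discrete form of Theorem~\ref{thm:L-facet} in the perturbative range; the paper says such a form would be needed only when $T^2/N$ is of order one. The paper's route avoids complex lattice weights altogether. Yours avoids the coefficient identity and the tilted moment bound.

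\textbf{An error in (b).} One estimate there is wrong as written. Since $x\ge h$ always, your bound $h\sqrt{S/x}$ gives only $\sqrt{hS}\le T/\sqrt N$, not $T^2/N$. What you need is the lower bound on the radius: $\rho\ge1/S$ with few visits, and $\rho\ge z_*/2\ge Z_0/(4\omega_\beta S)$ with many. Then on the circle $h/\rho\le ChS$ and $(n_i-1)h^2/\rho^2\le ChS$, both $O(T^2/N)$. This covers every $n_i\ge1$, with $x'=0$ read off directly as $h[z]\widetilde\Lambda_\beta$. So you should drop your separate case $n_i=O(1)$. Its justification fails anyway, since $\binom{n_i-1}{K-1}(K-1)!\,h^{K-1}/x^{K-1}=\prod_{l<K}(1-l/n_i)$ is not close to $1$ for bounded $n_i$.
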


For $k=2$, Theorem~\ref{thm:pair-counts}(a) gives the ratio $P_N(n)/P_{N'}(\hat n)$ exactly. By
Theorem~\ref{thm:layer-counts}(b) the lattice kernel matters only when $T^2/N$ is not small.
For $k\ge3$ that regime is open, since Theorem~\ref{thm:layer-counts}(a) then has an error of order
one. We prove both theorems in Appendix~\ref{app:counts}. The second one discretizes the
states of $G$ as in the proof of Lemma~\ref{lem:discrete}, keeps the state $i$ exact, and evaluates
the contour integral in $z$ as in the proof of Theorem~\ref{thm:layer}(a).

\section{The complete graph}\label{sec:complete}\label{sec:completegraph}

On the complete graph every face ties at first order, and a few exact facts resolve the
tie. The results of this section and the next are those of our preprint~\cite{paperB}, and
Table~\ref{tab:overlap} says which of them follow from the general theory and which use the
exact arguments of the preprint. On a complete face the tree sum of Proposition~\ref{prop:constant}(b) has a closed form. It is
classical, and we prove it here.

\begin{lemma}[weighted Cayley formula]\label{lem:weighted-cayley}
For $x_1,\ldots,x_k>0$ and the conductances $c_{ij}=x_ix_j$,
\[
\tree(c)=x_1x_2\cdots x_k\,(x_1+\cdots+x_k)^{k-2}.
\]
With every $x_i=1$ this is Cayley's count $k^{k-2}$ of the spanning trees of
the complete graph $K_k$.
\end{lemma}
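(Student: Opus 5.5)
We will obtain the weighted Cayley formula from the matrix-tree theorem, which the paper already used in the proof of Proposition~\ref{prop:constant}(b). Let $\mathcal L$ be the Laplacian of the conductances $c_{ij}=x_ix_j$ on $[k]$. Its off-diagonal entries are $-x_ix_j$, and its diagonal entries are $\sum_{j\ne i}x_ix_j=x_i(s-x_i)$, where $s=x_1+\dots+x_k$. Hence
\[
\mathcal L=s\,D-xx^\top,\qquad D=\operatorname{diag}(x).
\]
The matrix-tree theorem gives $\tree(c)=\det\mathcal L_{(1)}$. Deleting row and column $1$ gives $\mathcal L_{(1)}=sD'-x'x'^\top$, with $D'=\operatorname{diag}(x_2,\dots,x_k)$ and $x'=(x_2,\dots,x_k)$.

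We then apply the matrix determinant lemma, as in the proof of Proposition~\ref{prop:constant}(b):
\[
\det(sD'-x'x'^\top)=\det(sD')\bigl(1-x'^\top(sD')^{-1}x'\bigr).
\]
Here $\det(sD')=s^{k-1}x_2\cdots x_k$. The second factor is $1-\frac1s\sum_{i\ge2}x_i=x_1/s$. Their product is $x_1x_2\cdots x_k\,s^{k-2}$, which is the claim. Setting every $x_i=1$ gives $k^{k-2}$.

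The only point that needs care is the degenerate case $k=1$. There the empty tree sum is $1$ and the formula reads $x_1\cdot x_1^{-1}=1$, so it holds by convention. For $k\ge2$ nothing is delicate, since every $x_i>0$ makes $sD'$ invertible. There is no real obstacle: the identity $\mathcal L=sD-xx^\top$ is the whole content of the proof.
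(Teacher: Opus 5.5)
Your proposal is correct and follows the paper's proof: the Laplacian is $sD-xx^\top$, deleting the first row and column leaves $sD'-x'x'^\top$, and the matrix determinant lemma together with the matrix-tree theorem gives $s^{k-2}\prod_ix_i$. The paper adds one justification you leave implicit. It notes that the matrix-tree theorem holds for real conductances, because both sides are polynomials in the $c_{ij}$ that agree on integer (multigraph) conductances.
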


\begin{proof}
Put $X=\sum_ix_i$. The Laplacian of the conductances is $\operatorname{diag}(Xx_i)-xx^\top$,
so without its first row and column it is
$\operatorname{diag}(Xx_2,\ldots,Xx_k)-x'x'^\top$ with $x'=(x_2,\ldots,x_k)$. The
matrix determinant lemma gives the determinant
$X^{k-1}\prod_{i\ge2}x_i\,(1-\sum_{i\ge2}x_i/X)=X^{k-2}\prod_ix_i$, and this is
$\tree(c)$ by the matrix-tree theorem~\cite{bollobas1998}. That theorem holds for real
conductances, since both sides are polynomials in the $c_{ij}$ that agree for integer
conductances, the multigraph case.
\end{proof}

\begin{example}[the complete face]\label{ex:complete-face}
Let $F$ be a face of $k\ge2$ states with $q_{ij}=1$ for all $i\ne j$ in $F$, and
any exit rates, for example a face of $Q=J-dI$, where $q_i=d-1$. Then $F$ is
reversible with $\nu=\ones$, so by Proposition~\ref{prop:constant}(b) we may take
$r_\alpha=l_\alpha=\sqrt\alpha$ entrywise, and the conductances
$c_{ij}(\alpha)=\sqrt{\alpha_i\alpha_j}$ are products. Put
$A=\sum_{i\in F}\sqrt{\alpha_i}$. Lemma~\ref{lem:weighted-cayley} gives
$\tree(c(\alpha))=(\prod_i\alpha_i)^{1/2}A^{k-2}$, so for every
$\alpha\in\Delta_F^\circ$
\[
C_F(\alpha)=\frac{(\mu\cdot\sqrt\alpha)\,A^{k/2}}{(4\pi)^{(k-1)/2}\,(\prod_i\alpha_i)^{3/4}}
=\frac{(\mu\cdot\sqrt\alpha)\,A}{\sqrt{\det(2\pi\Sigma_\alpha)}},\qquad
J_F(\alpha)=\sum_{i\in F}q_i\alpha_i-(A^2-1).
\]
The second form of $C_F(\alpha)$ uses $\det\Sigma_\alpha=2^{k-1}(\prod_i\alpha_i)^2/\tree(c(\alpha))$.
For $Q=J-dI$ the rate is $J_F(\alpha)=d-A^2$, the rate of Dupuis and
Liu~\cite[Eq.~(3.2)]{dupuisliu2015}. At the center $\alpha=\ones/k$ we have $A^2=k$
and $C_F=\mu(F)k^{k-1/2}(4\pi)^{-(k-1)/2}$, which is Corollary~\ref{cor:symface}
with $\tree(q_F)=k^{k-2}$.
\end{example}

From now on in this section $Q=J-dI$. The chain switches to each other state with
probability $h$ and keeps its state with probability $p=1-(d-1)h$. The
following symbols of~\cite{paperB} are local to this section and to
Appendix~\ref{app:complete}. Put
\[
\sigma=p-h,\qquad u=\frac hp,\qquad v=\frac h\sigma=\frac u{1-u},\qquad
V_N=\frac{p^{N-1}}d .
\]
Thus $u$ is the odds of a switch to one given state against no switch, and $V_N$ is the
probability of a vertex $Ne_i$ for the uniform start, that is, $M_{\{i\}}$. For a composition
$n$ let $W_n(j)$ be the number of words with the counts $n$ and $j$ switches, and
$S_n(u)=\sum_jW_n(j)u^j$. For a
composition with some zero coordinates, $S_n$ is the polynomial of its positive coordinates,
which does not depend on $d$. By~\eqref{eq:runform}, the uniform start has
\begin{equation}\label{eq:SnVN}
P_N(n)=V_NS_n(u)\qquad(0<p\le1),
\end{equation}
and so does every start that is constant on $\supp n$, with $V_N$ replaced by
$\mu_ap^{N-1}$ for $a\in\supp n$. A composition is \emph{balanced} if its positive coordinates
differ by at most $1$. For the uniform start all faces of $k$ states have the same face maximum
$M_F$, and all balanced compositions with $k$ positive coordinates have the same probability. For
$p\ge1/d$ this probability is $M_F$, by Theorem~\ref{thm:simplex-balance} below. Put
\[
a_k=\frac12\log(4\pi)-\frac{k+\frac12}{k-1}\log k,\qquad
D_k=e^{-(k-1)a_k}=\frac{k^{k+1/2}}{(4\pi)^{(k-1)/2}}\qquad(k\ge2),
\]
and $D_1=1$. Numerically $a_2=-0.4674$, $a_3=-0.6571$ and $a_4=-0.8139$.

\begin{corollary}[the complete graph]\label{cor:paperB}
A face $F$ of $k$ states has $\lambda_F=d-k$ and $\alpha^*_F$ uniform, and if $\mu(F)>0$ then
$K_F=\frac{\mu(F)}kD_k$. So $\Phi_1(F)=d-1$ for every face, and all faces tie at $\tau=1$. With
the uniform start, the crossing of a face of $k\ge2$ states with a vertex has
$\Delta=\delta=k-1$ and $b_{FG}=a_k$.
\end{corollary}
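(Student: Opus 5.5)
The plan is to compute everything directly from the definitions for $Q=J-dI$, restricted to a face $F$ of $k$ states, and then read off the constants.

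\emph{Exit rate and quasi-ergodic law.} On $F$ the killed generator is $Q_F=J_F-dI_F$, where $J_F$ is the all-ones matrix on $F$. Its row sums are $k-d$. The matrix $Q_F$ is symmetric with constant row sums, so Corollary~\ref{cor:symface} applies. It gives $\lambda_F=q_i-\sum_{j\in F\setminus\{i\}}q_{ij}=(d-1)-(k-1)=d-k$, and it gives $\alpha^*_F$ uniform. For a singleton, $\lambda_{\{i\}}=q_i=d-1$, which agrees.

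\emph{The constant.} If $\mu(F)>0$ and $k\ge2$, then $F$ satisfies (H), since the complete graph has symmetric support (Lemma~\ref{lem:admissible}). Corollary~\ref{cor:symface} with $\tree(q_F)=k^{k-2}$ (Lemma~\ref{lem:weighted-cayley}) gives
\[
K_F=\mu(F)\,k^{(k+1)/2}k^{(k-2)/2}(4\pi)^{-(k-1)/2}=\mu(F)\,k^{k-1/2}(4\pi)^{-(k-1)/2}=\tfrac{\mu(F)}kD_k .
\]
This is also the center value in Example~\ref{ex:complete-face}. For $k=1$, $K_{\{i\}}=\mu_i=\frac{\mu(F)}1D_1$ with $D_1=1$.

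\emph{The tie and the crossing.} We have $\Phi_1(F)=\lambda_F+|F|-1=d-k+k-1=d-1$ for every face, so all faces tie at $\tau=1$. For a face $F$ of $k\ge2$ states against a vertex $G=\{j\}$, the differences are $\Delta=k-1$ and $\delta=\lambda_G-\lambda_F=(d-1)-(d-k)=k-1$. With the uniform start, $K_F=\frac kd\cdot\frac{D_k}{k}=D_k/d$ and $K_G=1/d$. Theorem~\ref{thm:crossing}(iii) then gives
\[
b_{FG}=-\tfrac12\log\tfrac\Delta\delta-\tfrac1\Delta\log\tfrac{K_F}{K_G}=-\tfrac1{k-1}\log D_k=a_k,
\]
using $D_k=e^{-(k-1)a_k}$.

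There is no real obstacle: every step is a direct specialization of earlier results. The only points needing care are the bookkeeping of the powers of $k$ in Corollary~\ref{cor:symface} and handling the singleton case $k=1$ separately.
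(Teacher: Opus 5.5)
Your proposal is correct and follows the paper's proof essentially line by line: Corollary~\ref{cor:symface} with Cayley's count $k^{k-2}$ from Lemma~\ref{lem:weighted-cayley} gives $\lambda_F$, $\alpha^*_F$ and $K_F=\frac{\mu(F)}kD_k$. Then $K_F/K_G=D_k$ and $\Delta=\delta=k-1$ in Theorem~\ref{thm:crossing}(iii) give $b_{FG}=a_k$. One small point, which the paper shares: Corollary~\ref{cor:symface} is stated under (H), so for faces with $\mu(F)=0$ the claims on $\lambda_F$ and $\alpha^*_F$ come from its proof rather than its statement. That proof only uses that $\ones$ is a positive right and left eigenvector of $Q_F$, and does not involve $\mu$.
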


\begin{proof}
Each $Q_F$ is symmetric with row sums $-(d-k)$, and the complete graph on $F$ has $k^{k-2}$
spanning trees by Lemma~\ref{lem:weighted-cayley}. Hence Corollary~\ref{cor:symface} gives
$\lambda_F$, $\alpha^*_F$ and
\[
K_F=\mu(F)k^{(k+1)/2}k^{(k-2)/2}(4\pi)^{-(k-1)/2}=\frac{\mu(F)}kD_k.
\]
A vertex has $K=\frac1d$ and $\lambda=d-1$ for the uniform start, so
$b_{FG}=-\frac1{k-1}\log(dK_F)=-\frac1{k-1}\log D_k=a_k$.
\end{proof}

By Example~\ref{ex:complete-face} at the center, for the uniform start, $D_k=dK_F=k/\sqrt{\det(2\pi\Sigma_k)}$ with
$\Sigma_k=\Sigma_{\ones/k}=(2/k^2)(I-J/k)$ in the coordinates $\alpha'$. Hence $D_k$ is the mass $k$
of a face, in units of a vertex, over the Gaussian volume of the fluctuations of its
fractions.

\begin{theorem}[balanced crossings]\label{thm:simplex-crossing}
Let $\mu$ be uniform and let $F$ be a face of $2\le k\le d$ states.
\begin{enumerate}[(i)]
\item For every $N\ge k$ there is exactly one $p=p_{N,k}\in(0,1)$ with $M_F=V_N$. The face
maximum $M_F$ is larger than $V_N$ below $p_{N,k}$ and smaller above it, and so is the
probability of a balanced composition of $F$. The crossing $p_{N,k}$ is strictly increasing in $N$,
$p_{k,k}=1/(1+(d-1)(k!)^{-1/(k-1)})$, and every crossing has $u\le(k!)^{-1/(k-1)}<1$, so
$p_{N,k}>1/d$.
\item For large $N$, uniformly for $1\le T\le N^{1/3}$,
\[
\log\frac{M_F}{V_N}=(k-1)\Bigl[T-L+\frac12\log T-a_k\Bigr]
+O\Bigl(\frac1T+\frac{T^2}N\Bigr).
\]
With $T=L-\frac12\log L+t$, the ratio $M_F/V_N$ tends to $e^{(k-1)(t-a_k)}$, uniformly for
$t$ in compact sets.
\item As $N\to\infty$, the value $T_{N,k}=N(1-p_{N,k})/(d-1)$ of $T$ at the crossing satisfies
\[
T_{N,k}+\frac12\log T_{N,k}=L+a_k+O(1/L),
\]
and
\begin{equation}\label{eq:simplex-root-expansion}
T_{N,k}=L-\frac12\log L+a_k+\frac{\log L}{4L}+O\Bigl(\frac1L\Bigr).
\end{equation}
\item $a_2>a_3>a_4>\cdots$.
\end{enumerate}
\end{theorem}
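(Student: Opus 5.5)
The plan is to prove the four parts separately: (i) by an exact argument on the polynomials $S_n$ of~\eqref{eq:SnVN}, (ii) and (iii) from the general second-order theory of Section~\ref{sec:secondorder}, and (iv) by a one-variable calculus check.

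\emph{Part (i).} By~\eqref{eq:SnVN}, $M_F/V_N=\max_{\supp n=F}S_n(u)$ with $u=h/p$. Each $S_n$ is a polynomial with nonnegative integer coefficients. Its lowest power is $u^{k-1}$, since a word with support $F$ has at least $k-1$ switches. So every $S_n$ is continuous, strictly increasing on $(0,\infty)$, vanishes at $0$ and tends to $\infty$. The same then holds for their finite maximum $m_N(u)$, so $m_N(u)=1$ has exactly one root $u_N$. Since $u=h/p$ is a strictly decreasing bijection from $p\in(0,1)$ onto $u\in(0,\infty)$ (with $h=(1-p)/(d-1)$), this gives the unique $p_{N,k}$ and the sign pattern of $M_F/V_N$.

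For monotonicity in $N$ I would use an injection. Lengthening the first run of a word with counts $n$ gives a word with counts $n+e_a$ and the same number of switches. Hence $S_{n+e_a}\ge S_n$ coefficientwise, and the inequality is strict, since the counts $n+e_a$ also admit words not obtained this way, for instance one that ends in $a$ with a run of length one. Thus $m_{N+1}>m_N$ on $(0,\infty)$, so $u_{N+1}<u_N$ and $p_{N,k}$ increases strictly. At $N=k$ the only composition is $\ones_F$, with $S=k!\,u^{k-1}$, which gives $u_k=(k!)^{-1/(k-1)}$ and the stated $p_{k,k}=1/(1+(d-1)u_k)$. Then $u_N\le u_k<1$ for all $N\ge k$, and $u<1$ is equivalent to $h<p$, that is $p>1/d$. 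The claim about balanced compositions needs Theorem~\ref{thm:simplex-balance}: for $p\ge1/d$ their common probability equals $M_F$, so it crosses $V_N$ at the same point. I would cite that result, or reorder so that it is proved first.

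\emph{Parts (ii) and (iii).} For (ii) I would insert Corollary~\ref{cor:paperB}, which gives $\lambda_F=d-k$ and $K_F=D_k/d$, into~\eqref{eq:logmax}. I would subtract the exact vertex formula $\log V_N=-\log d+(N-1)\log p=-\log d-(d-1)T+O(T^2/N)$. Since $\log D_k=-(k-1)a_k$, the difference is the stated expression. With $T=L-\frac12\log L+t$ we have $\log T=\log L+o(1)$, which gives the limit $e^{(k-1)(t-a_k)}$.

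For (iii), Theorem~\ref{thm:crossing} applies with $G$ a vertex, $\Delta=\delta=k-1$ and $b_{FG}=a_k$ by Corollary~\ref{cor:paperB}. The root $T_{N,k}$ lies in $[1,h_*N]$ for a fixed $h_*$ with $h_*(d-1)<1$, because $p_{N,k}>p_{k,k}$ bounds $h$ away from $1/(d-1)$. So~\eqref{eq:crossing} gives $T+\frac12\log T=L+a_k+O(1/L)$. To get~\eqref{eq:simplex-root-expansion} I would iterate once: substituting $T=L-\frac12\log L+O(1)$ gives $\log T=\log L-\frac{\log L}{2L}+O(1/L)$, and putting this back yields the term $\frac{\log L}{4L}$.

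\emph{Part (iv).} It suffices to show that $g(k)=\frac{k+1/2}{k-1}\log k$ is strictly increasing for real $k\ge2$. The numerator of $g'$ is $\frac{(k-1)(k+\frac12)}k-\frac32\log k$. At $k=2$ it equals $1.25-1.04>0$, and its derivative $1+\frac1{2k^2}-\frac3{2k}$ is positive for $k\ge2$, so the numerator stays positive.

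I expect the main obstacle to be the strictness and injectivity bookkeeping in the monotonicity of $p_{N,k}$, and the dependence of the balanced-composition claim on Theorem~\ref{thm:simplex-balance}.
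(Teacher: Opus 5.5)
Your proof is correct. Parts (ii)--(iv) follow the paper's proof essentially verbatim; your quantity $\frac{(k-1)(k+\frac12)}{k}-\frac32\log k$ in (iv) is the paper's $2x^2-x-1-3x\log x$ divided by $2x$. The difference is in (i).

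The paper fixes a balanced composition $n$ and defines $p_{N,k}$ as the root of $S_n(u)=1$. It gets strict growth in $N$ by adding a unit to a smallest coordinate, using the same first-occurrence injection, together with the explicit coefficient $W_n(k)=\frac{k!(k-1)}2(N-k)$. Only then does it transfer the crossing to $M_F$, using Theorem~\ref{thm:simplex-balance} for $p\ge1/d$ and the remark that $u>1$ lies beyond the root when $p<1/d$.

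You work from the start with $m_N(u)=\max_{\supp n=F}S_n(u)$. This gives the crossing of $M_F$ and its strict monotonicity in $N$ without the balancing theorem and without computing a coefficient. You then need Theorem~\ref{thm:simplex-balance} only for the claim about balanced compositions. There it suffices because $u_N<1$, so a balanced $S_n$ equals $m_N$ at $u_N$ and, being strictly increasing, has $u_N$ as its only root.

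Two small repairs are needed.

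First, your strictness witness does not match your injection, read as lengthening the first run of $a$. A word ending in a single $a$ can still have an earlier $a$-run of length at least $2$ and so lie in the image. For example, $a\,a\,b\,a$ is the image of $a\,b\,a$. Take instead a word whose first $a$-run has length one, such as one beginning $a\,b$. Alternatively, lengthen the last run of $a$, in which case your example works.

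Second, to apply Theorem~\ref{thm:crossing}(ii) you also need $T_{N,k}\ge1$, and you justify only the upper bound. The paper's route is simplest: Theorem~\ref{thm:crossing}(i) gives a zero of $\log(M_F/V_N)$ in $[L/2,2L]$, and uniqueness from (i) identifies it with $T_{N,k}$. This also makes your bound via $h_*$ unnecessary.
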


For $d=2$ the chain gives the directions of the persistent walk, and $T_{N,2}$ is the
crossing of its middle compositions and its endpoints, with $a_2=\frac12\log(\pi/8)$. We prove the theorem
in Appendix~\ref{app:complete}.

\begin{theorem}[exact balancing]\label{thm:simplex-balance}
Let $1/d\le p<1$, and let $\mu$ be constant and positive on a face $F$. Moving one unit from a
coordinate $b$ of a composition with support $F$ to another coordinate $a$, where $n_b\ge n_a+2$ and
$n_a\ge1$, strictly increases the probability of the composition. So the most likely compositions with support
$F$ are exactly the balanced ones, and they all have the same probability.
\end{theorem}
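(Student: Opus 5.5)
The plan is to compare the two compositions directly through the run formula~\eqref{eq:SnVN}. Since $\mu$ is constant on $F$, $P_N(n)=\mu_ap^{N-1}S_n(u)$, so it suffices to show that $S_{n'}(u)>S_n(u)$ for the moved composition $n'$, for every $u=h/p\le1$. The condition $p\ge1/d$ is equivalent to $h\le p$, that is $u\le1$. Only the two coordinates $a,b$ change, so I will condition on the relative pattern of the other letters. Concretely, I write $S_n(u)$ as a sum over words and group the words by the subword formed by the letters other than $a$ and $b$, together with the positions (gaps) where $a$/$b$ blocks sit. Then $S_n(u)$ becomes a sum of products of a common factor and a polynomial that depends on $(n_a,n_b)$ only through how many $a$'s and $b$'s are interleaved into a fixed set of slots.

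The key reduction is to the two-letter case. Inside one maximal stretch of letters from $\{a,b\}$ between two other letters (or at an end), the contribution is a two-letter run count. The number of ways to arrange $n_a$ letters $a$ and $n_b$ letters $b$ with $r$ runs of $a$ and $s$ runs of $b$ is $\binom{n_a-1}{r-1}\binom{n_b-1}{s-1}$, with $|r-s|\le1$ fixed by the boundary letters. Switches cost $u$ each: a run boundary inside the stretch costs $u$, and entry and exit switches depend only on which letter begins and ends the stretch. Since the stretches share the totals $n_a,n_b$, I will rather use a global generating function. Summing over all words with given counts, I would show that the $(a,b)$-dependence enters through a polynomial of the form
\[\sum_{r,s}c_{r,s}(u)\binom{n_a-1}{r-1}\binom{n_b-1}{s-1},\]
where the coefficients $c_{r,s}(u)\ge0$ are symmetric in $(r,s)$. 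They are symmetric because exchanging the labels $a$ and $b$ maps words to words with the same switch count, and $\mu$ is constant on $F$.

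With this symmetric form, the comparison reduces to the elementary inequality
\[\binom{n_a}{r-1}\binom{n_b-2}{s-1}+\binom{n_a}{s-1}\binom{n_b-2}{r-1}\ \ge\ \binom{n_a-1}{r-1}\binom{n_b-1}{s-1}+\binom{n_a-1}{s-1}\binom{n_b-1}{r-1},\]
paired over $(r,s)$ and $(s,r)$. This holds when $n_b\ge n_a+2$ by log-concavity of binomial coefficients in the upper index, a rearrangement-type argument. It is strict for at least one pair with $c_{r,s}>0$, for instance $r=s=1$ or $r=2$. This inequality would hold for all $u>0$. That is suspicious, since the statement needs $p\ge1/d$, and the counterexample for $d=2$ in the introduction uses a nonconstant start, so it may not contradict it.

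The main obstacle is precisely the symmetrization. With the full interleaving with other letters, the runs of $a$ and $b$ are not simply symmetric: an $a$-run may be adjacent to other letters rather than $b$. Moreover, the weights $u^{\#\text{switches}}$ interact with whether $a$ and $b$ runs are adjacent. That is where $u\le1$ should enter. Merging an $a$-run with an adjacent $b$-boundary changes the switch count, and the needed coefficient comparison (Theorem 3.8 of~\cite{paperB}) likely uses $u\le1$ to dominate terms with more runs. If the clean symmetric reduction fails, I would instead follow~\cite{paperB} and use an explicit injection: map words of $n$ to words of $n'$ by recoloring the last $b$ of a suitable run as $a$. The map must not increase switches, and it should leave a surplus that is strictly positive. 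I would check the weight change run by run and use $u\le1$ to control the cases where a recoloring creates an extra switch.
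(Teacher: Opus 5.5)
Your reduction fails at the ``elementary inequality'', which is false. Take $n_a=1$, $n_b=3$ and $(r,s)=(1,3)$. The left side is $\binom10\binom12+\binom12\binom10=0$, and the right side is $\binom00\binom22+\binom02\binom20=1$.

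The failure is systematic. For every $1\le r\le n_a$ the pair $\{r,n_b\}$ has left side $0$, because $\binom{n_b-2}{n_b-1}=0$ and $\binom{n_a}{n_b-1}=0$, and right side $\binom{n_a-1}{r-1}>0$. The unbalanced composition can have $n_b$ runs of $b$ and the balanced one cannot. The weight $c_{r,n_b}$ is positive as soon as $F$ has a third state with enough visits to separate those runs, for instance $b\,a\,b\,c\,b$ for $n=(1,3,1)$. So symmetry and nonnegativity of $c_{r,s}$ cannot suffice. Your observation that the argument would work for every $u>0$ is a symptom of this.

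The symmetric form is not the obstacle you expected: it is just Lemma~\ref{lem:runs} with the run counts of the other states summed out. The real obstacle is that $c_{r,s}$ contains the number of words with no two equal neighbours, which does not factor over the states. The fallback injection is left unspecified, so as written there is no working route.

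The missing idea is to change the expansion so that the weights factor. Here $I+hQ=\sigma I+hJ$ with $\sigma=p-h$, and $p\ge1/d$ is exactly $\sigma\ge0$. Expanding each factor $\sigma\mathbf 1\{x_s=x_{s+1}\}+h$ gives blocks between refreshes, whose consecutive labels may coincide (Lemma~\ref{lem:refresh}). The word count then becomes the multinomial $M!/\prod_im_i!$. Writing $M!=\int_0^\infty e^{-t}t^M\,dt$ gives, for $p>1/d$ and $v=u/(1-u)>0$,
\[
S_n(u)=\frac{(1-u)^{N-1}}v\int_0^\infty e^{-t}\prod_{i\in F}B_{n_i}(vt)\,dt,\qquad
B_n(w)=\sum_{m=1}^n\binom{n-1}{m-1}\frac{w^m}{m!},
\]
a positive mixture of products over the states. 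This is where $u\le1$ really enters.

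The move then reduces to strict log-concavity of $n\mapsto B_n(w)$ for each $w>0$. The paper proves it by writing $B_{n+1}(w)/B_n(w)-1$ as a weighted mean of the decreasing sequence $w/(i+2)$ and comparing the weights for consecutive $n$. Integrating $B_{n_a+1}B_{n_b-1}>B_{n_a}B_{n_b}$ against $e^{-t}\prod_{i\ne a,b}B_{n_i}(vt)$ finishes the case $p>1/d$. At $p=1/d$ one has $u=1$ and $S_n(1)=N!/\prod_in_i!$, and the move multiplies this by $n_b/(n_a+1)>1$.

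Even with product weights $\phi(r)\phi(s)$, a termwise pairing of $(r,s)$ with $(s,r)$ would still fail. What holds is the summed inequality, namely log-concavity in $n$ of $\sum_r\phi(r)\binom{n-1}{r-1}$.
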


For an integer $n\ge1$ put
\begin{equation}\label{eq:refresh-polynomial}
B_n(w)=\sum_{m=1}^n\binom{n-1}{m-1}\frac{w^m}{m!}
=\frac wn L^{(1)}_{n-1}(-w),
\end{equation}
where $L^{(1)}_{n-1}$ is the Laguerre polynomial~\cite[Eq.~18.5.12]{dlmf}. The proof
in Appendix~\ref{app:complete} sums over the numbers of blocks between refreshes first. This writes the law as an
integral of a product of one polynomial $B_{n_i}$ per state, and $B_n(w)$ is log-concave in $n$
for $w>0$. The condition
$p\ge1/d$ holds for $T\le N/d$, and so at every crossing by
Theorem~\ref{thm:simplex-crossing}(i).

\begin{corollary}[exactly one mode change]\label{cor:simplex-modes}
Let $\mu$ be uniform, and fix $d\ge2$ and $C>1$. There is $N_0$ such that for $N\ge N_0$ and
every $p$ with $0<T\le CL$ the following hold.
\begin{enumerate}[(i)]
\item If $p>p_{N,d}$, the global modes are exactly the $d$ vertices.
\item If $p<p_{N,d}$, they are exactly the balanced compositions with $d$ positive
coordinates.
\item If $p=p_{N,d}$, they are these two classes and nothing else.
\end{enumerate}
Moreover $p_{N,2}<p_{N,3}<\cdots<p_{N,d}$ and
$p_{N,d}-p_{N,k}=(d-1)(a_k-a_d+o(1))/N$. So all the crossings of balanced compositions with the
vertices lie in a $p$-interval of length $(d-1)(a_2-a_d+o(1))/N$, and as $p$ decreases the
balanced compositions with $d$ positive coordinates cross first.
\end{corollary}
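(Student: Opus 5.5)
The plan is to reduce the global competition to a comparison of $d$ numbers and then use Theorem~\ref{thm:simplex-crossing}. The uniform start makes all faces of a given size $k$ equivalent, so I write $M_k$ for their common face maximum and $M_1=V_N$.

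\emph{Reduction to the balanced compositions.} For $0<T\le CL$ and large $N$ we have $T\le N/d$, hence $p\ge1/d$. Theorem~\ref{thm:simplex-balance} then says that the maximizers on each face are exactly its balanced compositions. So the global modes are the balanced compositions on those sizes $k$ that attain $\max_k M_k$, and everything reduces to comparing $M_1,\dots,M_d$.

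\emph{Small $T$.} For $T\le T_0$, with $T_0$ a fixed large constant, I would use~\eqref{eq:upper-field} with $v=0$. It gives $M_k/V_N\le C(T/N)^{k-1}e^{O(T)}\to0$ for every $k\ge2$. This also covers $T<1$, where Theorem~\ref{thm:simplex-crossing}(ii) is not stated. By (i) of that theorem $p_{N,d}>p$ here, so the vertices are the only modes, as claimed.

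\emph{The range $T_0\le T\le CL$.} Put $g(T)=T-L+\frac12\log T$. Theorem~\ref{thm:simplex-crossing}(ii) gives
\[
\log(M_k/V_N)=(k-1)\bigl(g(T)-a_k\bigr)+\epsilon_k,\qquad \epsilon_k=O\Bigl(\frac1{T_0}+\frac{L^2}N\Bigr),
\]
and the error can be made small by taking $T_0$ and $N$ large. For $k<d$ the key identity is
\[
\log\frac{M_d}{M_k}=(d-k)\bigl(g-a_d\bigr)+(k-1)(a_k-a_d)+\epsilon_d-\epsilon_k .
\]
By Theorem~\ref{thm:simplex-crossing}(iv), $a_k-a_d\ge a_{d-1}-a_d=:m>0$ for every $2\le k<d$.

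\emph{Case $p\le p_{N,d}$.} Theorem~\ref{thm:simplex-crossing}(i) gives $M_d\ge V_N$ exactly. The display then forces $g\ge a_d-o(1)$. Substituting this into the identity gives $\log(M_d/M_k)\ge(k-1)m-o(1)>0$ for every $2\le k<d$. Hence $M_d$ beats all faces of intermediate size, and it strictly beats $V_N$ if $p<p_{N,d}$. This proves (ii). If $p=p_{N,d}$, the same bound shows that only the vertices and the full balanced compositions tie, which is (iii).

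\emph{Case $p>p_{N,d}$.} Now $M_d<V_N$, so $g\le a_d+o(1)$. Then $\log(M_k/V_N)\le(k-1)(a_d-a_k)+o(1)<0$ for every $k<d$, and the vertices are the only modes, which is (i).

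\emph{The ordering of the crossings.} From $T=N(1-p)/(d-1)$ we get $p_{N,d}-p_{N,k}=(d-1)(T_{N,k}-T_{N,d})/N$. By~\eqref{eq:simplex-root-expansion}, $T_{N,k}-T_{N,d}=a_k-a_d+O(1/L)$. Together with Theorem~\ref{thm:simplex-crossing}(iv) this gives both the strict ordering $p_{N,2}<\dots<p_{N,d}$ for large $N$ and the stated asymptotics, and the length of the interval is the case $k=2$.

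\emph{The main obstacle.} The delicate point is uniformity at the exact threshold $p=p_{N,d}$. There, statement (iii) must come from the exact crossing of Theorem~\ref{thm:simplex-crossing}(i), not from the asymptotics, and the asymptotic error must be dominated by the fixed margin $m$. The argument above is arranged so that the error is always compared against $m$, and this works once $T_0$ and $N_0$ are chosen large enough.
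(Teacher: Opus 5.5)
Your proof is correct and is essentially the paper's: you reduce to balanced compositions by Theorem~\ref{thm:simplex-balance}, compare the face ratios by Theorem~\ref{thm:simplex-crossing}(ii) against the fixed margin $a_k-a_d$ from (iv), let the exact crossing of Theorem~\ref{thm:simplex-crossing}(i) decide vertices against the full face, and read the ordering off \eqref{eq:simplex-root-expansion}. The differences are only organizational: the paper handles all $T\le cL$ through the monotonicity of $S_n(u)$ in $u$ rather than through \eqref{eq:upper-field}, and it splits the remaining range by $y$ against $a_d\pm\eta$ rather than by the sign of $p-p_{N,d}$; also, for $d=2$ your margin $m$ is undefined but not needed, since there are no intermediate faces.
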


On the triangle the middle composition
$(\frac N3,\frac N3,\frac N3)$ overtakes the vertices while the edge centers are still
below them by the factor $e^{-(a_2-a_3)}=0.827$.
In the tie window at $\tau^*=1$ the corollary is the case of Theorem~\ref{thm:window} whose envelope
$E$ has a single kink, at $t=a_d$. We prove the corollary in Appendix~\ref{app:complete}.

\begin{remark}[an intermediate face can be a mode at finite $N$]\label{rem:simplex-edge-mode}
Note that Corollary~\ref{cor:simplex-modes} does not hold for every $N$. Take $d=3$, $N=4$ and
$p=10/17$, so $u=7/20$ and $0<T=14/17<\log4$. Counting words gives
$P_4(2,2,0)/V_4=2u+2u^2+2u^3=4123/4000>1$ and $P_4(2,1,1)/V_4=6u^2+6u^3=3969/4000<1$.
Since $p>1/3$, Theorem~\ref{thm:simplex-balance} shows that every global mode is a vertex or
a balanced composition, so the three edge centers are the only global modes. Hence $N_0>4$
when $d=3$, for every $C>1$.
\end{remark}

\section{Near a vertex}\label{sec:vertex}

This section treats the boundary layer of Section~\ref{sec:boundary-layer} for the counts
themselves, on the complete graph near a vertex, in the regime where $T$ is of order $\sqrt N$. There the layer is discrete,
and its Laguerre kernel cannot be replaced by the Bessel kernel.

We keep the notation of Section~\ref{sec:completegraph}, namely $Q=J-dI$, the uniform start,
$p$, $u$, $v$, and $S_n(u)=P_N(n)/V_N$ by~\eqref{eq:SnVN}. Let one count $b$ be large and let
the other positive counts $a_1,\ldots,a_\ell$ be small compared with $b$, with $\ell\ge1$ fixed. We
put $|a|=a_1+\cdots+a_\ell$ and write $S_{(b,a)}$ for $S_n$ with $n=(b,a_1,\ldots,a_\ell)$. These
rare counts are not the constants $a_k$ of Section~\ref{sec:completegraph}. Note that $S_n$ does not
depend on the start. Hence $P_N(n)\le(\max_i\mu_i)p^{N-1}S_n(u)$, and upper bounds on $S_n$ bound $P_N(n)$
for every start.

A rare run away from the ends needs two switches, which gives a factor $u^2$, and it can go in
about $b$ places. Hence the rare counts enter through $bu^2$, and for fixed rare counts the
crossing moves to odds $u$ of order $b^{-1/2}$, that is, to $T$ of order $\sqrt N$. We use $B_a$
from~\eqref{eq:refresh-polynomial} and put
\[
\mathcal I(x)=\sum_{j\ge0}\frac{x^j}{j!\,(j+1)!}=\frac{I_1(2\sqrt x)}{\sqrt x},
\]
where $I_1$ is the modified Bessel function~\cite[Eq.~10.25.2]{dlmf}, and $\mathcal I(0)=1$.
By the Laguerre form in~\eqref{eq:refresh-polynomial}, the next lemma is a quantitative form
of the convergence of Laguerre polynomials to Bessel functions.

\begin{lemma}[a uniform Laguerre--Bessel inequality]\label{lem:uniform-laguerre-bessel}
For every integer $a\ge1$ and real $w>0$,
\begin{equation}\label{eq:uniform-laguerre-bessel}
0\le1-\frac{B_a(w)}{w\,\mathcal I(aw)}\le\frac w2 .
\end{equation}
\end{lemma}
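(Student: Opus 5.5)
The plan is to compare the two power series in $w$ term by term, since both are explicit. First I will rewrite the denominator as a series in $w$. From the definition of $\mathcal I$,
\[
w\,\mathcal I(aw)=\sum_{j\ge0}\frac{a^jw^{j+1}}{j!\,(j+1)!}=\sum_{m\ge1}t_m,\qquad t_m=\frac{a^{m-1}w^m}{(m-1)!\,m!}.
\]
By~\eqref{eq:refresh-polynomial}, $B_a(w)=\sum_{m=1}^a\binom{a-1}{m-1}w^m/m!$. Hence the coefficient of $w^m$ in $B_a$ is $t_m\,\pi_m$, with
\[
\pi_m=\frac{(a-1)!}{(a-m)!\,a^{m-1}}=\prod_{r=1}^{m-1}\Bigl(1-\frac ra\Bigr)\quad(m\le a),\qquad \pi_m=0\quad(m>a).
\]

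\emph{Lower bound.} Every $\pi_m$ lies in $[0,1]$ and every $t_m$ is positive for $w>0$. So $B_a(w)\le w\,\mathcal I(aw)$, which is the inequality $0\le1-B_a(w)/(w\,\mathcal I(aw))$.

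\emph{Upper bound.} The deficit is
\[
w\,\mathcal I(aw)-B_a(w)=\sum_{m\ge2}t_m(1-\pi_m).
\]
I will use the elementary bound $1-\prod_r(1-x_r)\le\sum_rx_r$ for $x_r\in[0,1]$. For $m\le a$ it gives $1-\pi_m\le\sum_{r<m}r/a=m(m-1)/(2a)$. For $m>a$ we have $1-\pi_m=1\le m(m-1)/(2a)$, since then $m-1\ge a$ and $m\ge2$. In both cases the deficit is therefore at most
\[
\sum_{m\ge2}\frac{m(m-1)}{2a}\,\frac{a^{m-1}w^m}{(m-1)!\,m!}
=\frac w2\sum_{m\ge2}\frac{a^{m-2}w^{m-1}}{(m-2)!\,(m-1)!}.
\]
Setting $j=m-2$, the last sum is $\sum_{j\ge0}a^jw^{j+1}/(j!\,(j+1)!)=w\,\mathcal I(aw)$. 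So the deficit is at most $\frac w2\,w\,\mathcal I(aw)$. Dividing by $w\,\mathcal I(aw)>0$ gives the upper bound in~\eqref{eq:uniform-laguerre-bessel}.

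The only step needing care is the bound on $1-\pi_m$, including the case $m>a$ where the term is absent from $B_a$. Once $1-\pi_m\le m(m-1)/(2a)$ holds for every $m\ge2$, the weighted sum reproduces the Bessel series exactly, after shifting the index by one and extracting the factor $w/2$. This is why the constant $\frac12$ is sharp to first order in $w$.
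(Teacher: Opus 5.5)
Your proof is correct and follows essentially the same route as the paper: a term-by-term comparison of $B_a(w)$ with $w\,\mathcal I(aw)$, where the coefficient ratio is the clipped product $\prod_{r<m}(1-r/a)_+$, the deficit is bounded by $m(m-1)/(2a)$, and the weighted sum collapses back to $\frac w2\,w\,\mathcal I(aw)$. The one difference is that the paper cites the clipped-product bound from an earlier preprint, while you prove it directly from $1-\prod(1-x_r)\le\sum x_r$ and check the case $m>a$ explicitly. That makes your argument self-contained.
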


We prove the lemma in Appendix~\ref{app:vertex}. Put
\[
\mathcal B(y)=\prod_{i=1}^{\ell}B_{a_i}(y),\qquad
\kappa(y)=\frac{y\,\mathcal B'(y)}{\mathcal B(y)} .
\]
The polynomial $\mathcal B$ has nonnegative coefficients, $\mathcal B(0)=0$ and
$\mathcal B(1)\ge1$, so $\mathcal B(y_a)=1$ has exactly one positive root $y_a$, and $y_a\le1$.
The polynomial $S_{(b,a)}$ has nonnegative coefficients, zero constant term and a positive
coefficient at $u^\ell$, so $S_{(b,a)}(u)=1$ has exactly one positive root $u_{b,a}$, the
crossing odds of the composition $(b,a_1,\ldots,a_\ell)$ with a vertex.

\begin{theorem}[the vertex law]\label{thm:vertex-uniform-root}
Fix $\ell\ge1$ and put
\[
 c_\ell=\ell+\sqrt\ell+1,\qquad
 \varepsilon=\sqrt{\frac{|a|+1}{b}},\qquad \delta=16c_\ell\varepsilon.
\]
For all positive counts $b,a_1,\ldots,a_\ell$ with $\delta\le1/2$,
\begin{equation}\label{eq:vertex-uniform-bracket}
 (1-\delta)\sqrt{y_a/b}\le u_{b,a}\le
 (1+\delta)\sqrt{y_a/b},\qquad \mathcal B(y_a)=1.
\end{equation}
In particular
\begin{equation}\label{eq:vertex-uniform-root}
 \biggl|\frac{u_{b,a}}{\sqrt{y_a/b}}-1\biggr|\le16c_\ell\sqrt{\frac{|a|+1}{b}}
 \qquad\text{whenever }\sqrt{\frac{|a|+1}{b}}\le\frac1{32c_\ell},
\end{equation}
so $u_{b,a}=\sqrt{y_a/b}\,(1+o(1))$ as $(|a|+1)/b\to0$. The constants are not
optimized.
\end{theorem}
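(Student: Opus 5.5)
The plan is to sandwich the word polynomial $S_{(b,a)}(u)$ between two multiples of $\mathcal B(bu^2)$. Both multiples should be within a factor $1+O(c_\ell\varepsilon)$ of $\mathcal B(bu^2)$ when $u$ is near $\sqrt{y_a/b}$. The bracket then follows from the monotonicity of $\mathcal B$ and a lower bound on its logarithmic derivative $\kappa$.

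\emph{Step 1: exact decomposition of $S_{(b,a)}$.} We classify the words with counts $(b,a_1,\dots,a_\ell)$ by their maximal runs. Suppose the rare state $i$ has $m_i$ runs. Its $a_i$ letters can be split into these runs in $\binom{a_i-1}{m_i-1}$ ways. Call a run of rare letters \emph{interior} if it sits between two letters of the large state. An interior run occupies one of the $b-1$ gaps of the $b$-block, costs two switches and so contributes $u^2$. If all runs are interior and no two rare runs share a gap, the number of placements is $\binom{b-1}{m}$ times a multinomial for the assignment of the states, where $m=\sum_im_i$. Summing gives the main term
\[
\prod_i\sum_{m_i}\binom{a_i-1}{m_i-1}\frac{(bu^2)^{m_i}}{m_i!}=\mathcal B(bu^2)
\]
up to the factor $\binom{b-1}{m}m!/b^m\in[1-m^2/b,1]$. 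All remaining words have a rare run at an end of the word, costing $u$ instead of $u^2$, or several rare runs sharing a gap, costing $u$ per extra adjacency but saving a factor of order $b$ in placements.

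\emph{Step 2: relative size of the corrections.} Each defect trades a factor $bu^2$ for a factor $u$, or a factor $b$ for a bounded factor. Near the root $bu^2\asymp y_a\le1$, so one defect costs a relative factor $O(1/(bu))=O(1/\sqrt b)$ times the number of runs it can attach to. I would bound the run-weighted sums by differentiating: the average number of runs under the weights of $\mathcal B$ is $\kappa(y)=y\mathcal B'(y)/\mathcal B(y)$. I would control $\kappa$ and the second factorial moment through Lemma~\ref{lem:uniform-laguerre-bessel}. That lemma gives $B_{a_i}(w)\approx w\,\mathcal I(a_iw)$, and the Bessel form makes the run counts Poisson-like with mean of order $\sqrt{a_iw}+1$. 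This is how $\sqrt{|a|+1}$ rather than $|a|$ should enter. The aim is
\[
\bigl|S_{(b,a)}(u)/\mathcal B(bu^2)-1\bigr|\le 4c_\ell\varepsilon
\]
for $bu^2\in[\frac14y_a,4y_a]$, where $c_\ell$ collects the $\ell$ end choices, the $\sqrt\ell$ from adjacency and $1$ from the binomial defect.

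\emph{Step 3: the root.} Every $B_{a_i}$ has lowest degree $1$, so $\kappa\ge\ell\ge1$. Hence $\mathcal B(y_a(1+\delta)^2)\ge(1+\delta)^{2\ell}\ge1+2\delta$, and similarly $\mathcal B(y_a(1-\delta)^2)\le(1-\delta)^{2}\le1-\delta$. With Step 2 and $\delta=16c_\ell\varepsilon$, $S_{(b,a)}$ exceeds $1$ at $u=(1+\delta)\sqrt{y_a/b}$ and is below $1$ at $(1-\delta)\sqrt{y_a/b}$. Since $S_{(b,a)}$ is increasing with a unique positive root, \eqref{eq:vertex-uniform-bracket} follows, and \eqref{eq:vertex-uniform-root} is a restatement. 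The main obstacle is Step 2: bounding the adjacency terms uniformly in the rare counts, with the error $\sqrt{(|a|+1)/b}$ rather than $|a|/\sqrt b$. I would first try an injection that removes one adjacency and re-inserts the run into a free gap, and then count its multiplicity with the Laguerre–Bessel inequality.
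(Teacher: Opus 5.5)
The gap is in Step~2. Your bound $|S_{(b,a)}(u)/\mathcal B(bu^2)-1|\le4c_\ell\varepsilon$ is false for $\ell\ge2$ when it is asked to hold uniformly in the rare counts, and Step~3 cannot close without it.

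Take the words in which exactly one gap of the $b$-block holds two runs, of distinct rare states $i\ne i'$, and every other rare run is alone in an interior gap. Counted as in your Step~1, their total weight is exactly
\[
\frac{\mathcal B(bu^2)}{bu}\,\E\Bigl[\prod_{l<m}\Bigl(1-\frac lb\Bigr)\sum_{i\ne i'}m_im_{i'}\Bigr].
\]
Here the run counts $m_i$ are independent under the product weights of $\mathcal B$, with means $\kappa_{a_i}(y)=yB_{a_i}'(y)/B_{a_i}(y)$ at $y=bu^2$. This term is quadratic in the run counts. It is therefore of order $\kappa\cdot\kappa/(bu)$, that is, $\kappa$ times $O(\varepsilon)$, and not $O(\varepsilon)$.

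Every word has positive weight. The only negative correction in your decomposition is the binomial defect, which is $O(\ell^2\varepsilon^2)$. So nothing cancels this excess.

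Concretely, let $\ell=2$ and $a_1=a_2=A$. The root solves $B_A(y_a)=1$, and the Bessel asymptotics give $\sqrt{Ay_a}\sim\frac12\log A$. Combined with $\kappa_A(y)\ge\sqrt{Ay}-\frac12$ (Lemma~\ref{lem:vertex-elasticity}), the excess is about $\varepsilon\log A/\sqrt2$. This exceeds $4c_2\varepsilon$ once $A$ is large, however large $b$ is. (For $\ell=1$ no two rare runs can share a gap, and your Step~2 is plausible.)

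This failure is fatal at the lower point $u_-=(1-\delta)\sqrt{y_a/b}$. There the excess must be smaller than the gain $1-\mathcal B(bu_-^2)$, but you use only $\kappa\ge\ell$, which gives a gain of about $2\ell\delta$. The true gain is about $2\kappa\delta$, with $\kappa$ of order $\log|a|$. The theorem holds because the error and the gain carry the same factor $\kappa$.

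The missing idea is therefore twofold. First, prove $\log\bigl(S_{(b,a)}(u)/\mathcal B(bu^2)\bigr)\le C_\ell\,\kappa(bu^2)\,\varepsilon$. Second, use the monotonicity of $\kappa$, which gives $\log\mathcal B(bu_-^2)\le2\kappa(bu_-^2)\log(1-\delta)$.

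Your upper point works as it stands. The main term alone gives $S(u_+)\ge\mathcal B(bu_+^2)\bigl(1-\E[m(m+1)]/(2b)\bigr)$, with $\E[m(m+1)]/b=O(\ell^2\varepsilon^2)$. Your end-run estimate, of relative size $O(\kappa/(bu))=O((\ell+\sqrt\ell)\varepsilon)$, is also right.

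The paper gets the $\kappa$-proportional upper bound without enumerating patterns of shared gaps. The refresh form gives the exact identity $S_{(b,a)}(u)=(1-u)^{|a|}\E[\mathcal B(v\mathcal X)]$, where $\mathcal X$ is a gamma variable plus a compound binomial sum (Lemma~\ref{lem:vertex-randomization}).

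\begin{itemize}
\item Jensen's inequality gives the lower bound. Its loss $|a|u$ is at most $4\kappa_0\varepsilon$, by $\kappa\ge\frac12\sqrt{|a|y}$.
\item The tangent line of the concave $\log\mathcal B$ at $bu^2$, together with the moment generating function of $\mathcal X$, gives $\log\bigl(S(u_-)/\mathcal B(bu_-^2)\bigr)\le25c_\ell\kappa\varepsilon$.
\item This is set against $\log\mathcal B(bu_-^2)\le-32c_\ell\kappa\varepsilon$.
\item The bound $\kappa\le\ell+\sqrt{\ell|a|y}$ keeps the tilt $s$ below $\frac18$.
\end{itemize}

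To rescue your combinatorial route, you would need an exponential-moment bound of the same kind, uniform over all configurations of shared gaps and end runs at once.
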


For fixed rare counts Lemma~\ref{lem:vertex-randomization} implies
$S_{(b,a)}(\sqrt{y/b})\to\mathcal B(y)$ as $b\to\infty$. The theorem shows that the crossing
stays near $\sqrt{y_a/b}$ uniformly as long as $|a|=o(b)$. For $d=2$ and $\ell=1$ its leading
term is that of the fixed-edge expansion of~\cite[Theorem~4.10]{paperA}. We prove the theorem in
Appendix~\ref{app:vertex} and the proof rests on two facts. First, $\log\mathcal B$ is concave, and for
$0<y\le1$ the elasticity $\kappa(y)$ of $\mathcal B$ lies between $\max\{\ell,\frac12\sqrt{|a|y}\}$ and $\ell+\sqrt{\ell|a|y}$
(Lemma~\ref{lem:vertex-elasticity}). Second, $S_{(b,a)}(u)=(1-u)^{|a|}\E[\mathcal B(v\mathcal X)]$
exactly, where $\mathcal X$, a gamma variable plus a compound binomial sum, has mean
$2+(b-1)u$ (Lemma~\ref{lem:vertex-randomization}). Jensen's inequality then gives a lower bound
that exceeds $1$ at $u=(1+\delta)\sqrt{y_a/b}$, and the tangent line of $\log\mathcal B$ at
$bu^2$ with the moment generating function of $\mathcal X$ gives an upper bound that is below
$1$ at $u=(1-\delta)\sqrt{y_a/b}$.

With one rare count the vertex law is the discrete pair-face case of
Section~\ref{sec:boundary-layer}, with $F=\{1,2\}$, $G=\{1\}$, $i=2$ and $n=(b,a)$. By
Theorem~\ref{thm:pair-counts}(a) the main part of $P_N(b,a)$ is
$\mu_1p^{N-1}\sum_K\binom{a-1}{K-1}\binom{b-1}Ku^{2K}$, which becomes $B_a(bu^2)$ when
$\binom{b-1}K$ is replaced by $b^K/K!$. This is the layer with $\omega=1$, $S=hb$ and $x=ha$,
and $w=bu^2$ is $\omega Sh$ up to the factor $p^{-2}=1+O(h)$. In
Theorem~\ref{thm:vertex-uniform-root} with fixed rare counts, $w$ is of order one, Lemma~\ref{lem:uniform-laguerre-bessel}
pins $B_a(w)/(w\mathcal I(aw))$ between $1-w/2$ and $1$, and the continuum layer does not apply.
When $T^2/N\to0$, $w$ tends to $0$ and the lattice adds no separate regime.

\section{Verification and open questions}\label{sec:verify}

Every theorem, proposition, lemma and corollary of this paper is proved in the
text or in the appendices, apart from the results of~\cite{paperA}
and~\cite{paperB} that we quote where we say so. No proof rests on a
computation. Hence the formal proofs and the scripts below are independent checks of parts of the paper. All of them are in the repository named under Code availability.

\subsection*{Formal proofs}

The folders \nolinkurl{lean/PaperC} and \nolinkurl{lean/PaperB} prove many finite
and algebraic steps in Lean~4 (v4.33.1, with Mathlib), with no \texttt{sorry} or
\texttt{admit}. The file \nolinkurl{lean/PaperC/MAP.md} gives for each statement the Lean theorems
that prove it or its finite core, and lists what is not in Lean. Mathlib has no Perron and Frobenius theorem for irreducible Metzler
matrices, so a positive Perron vector is a hypothesis where one is
needed, and the asymptotic results are not in Lean.

\subsection*{Computations}

The command \texttt{py -I verify\_all.py} in the folder \nolinkurl{paperC} runs the
9 scripts that check this paper, in about four minutes, and writes each output,
headed by the sha256 of its script, to \nolinkurl{paperC/data}. Each check is an
assertion, and the checks in floating point use a tolerance without interval
certification. The scripts \texttt{verify\_first\_order.py} and
\texttt{verify\_second\_order.py} compute the law in two independent ways, by the run
expansion and by a dynamic program or a discrete torus integral, and check
Sections~\ref{sec:model} to~\ref{sec:secondorder}, including the gaps quoted after
Theorem~\ref{thm:crossing}. The script \texttt{verify\_full\_range.py} checks
Lemma~\ref{lem:torus}, Lemma~\ref{lem:discrete-hessian} and Theorems~\ref{thm:local}(b)
and~\ref{thm:facemax}(a). For $q_{12}=1$, $q_{21}=2$ and $N=4000$ the ratio $R$ of the exact
law to Theorem~\ref{thm:local}(b) has $T|R-1|\le0.25$ at points of $[0.45,0.85]$ for $T=16$, $63$, $200$, $400$
and $1000=N/4$, while the continuum form has $T|R-1|$ of order $T$ from $T=200$ on. The script \texttt{verify\_examples.py} finds the
first-order winners on complete graphs, cycles and small graphs in two ways, and the
finite-$N$ modes from two exact codes. The script \texttt{verify\_rate\_flows.py}
computes no law and checks the rate of Proposition~\ref{prop:rate} against a form
through flows. The script \texttt{verify\_complete.py} checks
\ifarxivbuild Theorem~\ref{thm:simplex-balance}, Theorem~\ref{thm:simplex-crossing}(i) and
Remark~\ref{rem:simplex-edge-mode} in exact arithmetic for small $N$, and the failure of
balancing for a start in one state quoted after Theorem~\ref{thm:C}. No script checks
Theorem~\ref{thm:simplex-crossing}(ii) and (iii) or Corollary~\ref{cor:simplex-modes}
at large $N$.\else the exact results on the complete graph in Section~7 of the arXiv version of
this paper~\cite{arxivC1}, in exact arithmetic for small $N$.\fi{} The script \texttt{verify\_layer.py} checks the constants and kernels of
Section~\ref{sec:boundary-layer} on three chains with three states. One of them
is the cycle of Proposition~\ref{prop:layer-cubic}(e), with $\omega_\beta=5.1820$
and $\gamma_\beta=3.4179$ against $q_3=3$, and the script recomputes the
derivatives of $J_G(\beta,z)$ and $\varkappa_\beta$ for it exactly. The
generalized kernel matches $f_F/f_G$ to $O(1/T)$ for $x$ from $0.01/T$ to $T/4$. The
script also checks the example after Proposition~\ref{prop:layer-cubic}, where
$J_G(\beta,\cdot)$ is affine at the uniform $\beta$ to $10^{-15}$.
The script \texttt{verify\_layer\_general.py} checks Theorem~\ref{thm:L-facet}
on two facets with four states, namely a facet of a non-reversible face with five
states and a path whose jump chain is periodic. It computes the word sum exactly
and prints $S$ times the relative error for $S$ from $4$ to $32$. These products
settle near $0.438$ and $0.545$ at $z=0$, as~\eqref{eq:L-facet} predicts. The script
\texttt{verify\_counts.py} checks Section~\ref{sec:reducible-max} and
Section~\ref{sec:layer-counts}. It checks Theorem~\ref{thm:pair-counts}(a) against an
enumeration of all paths in exact arithmetic, for every $N\le12$ and two values of $h$, and
the bounds of Theorem~\ref{thm:pair-counts}(c) on a grid up to $N'=10^4$. For the cycle of
Proposition~\ref{prop:layer-cubic}(e) with $i=3$ and $\beta=(\frac12,\frac12)$ it computes
the exact law by a dynamic program and the lattice kernel by a contour integral, and the error
$E$ of Theorem~\ref{thm:layer-counts}(a) satisfies $0<E\le0.62\,T^2/N$ and falls by a factor
of about $2.5$ each time $N'$ doubles, at $T=8$, $n_3=10$ and $200\le N'\le1600$. On the face $\{1\}\to\{2,3\}$ with
$q_{12}=q_{23}=q_{32}=1$ and the start in state $1$, where $p_F=\frac32$ and
$K^{\rm red}_F=\sqrt{2/\pi}$, it computes $M_F$ exactly at $N=40000$, and the slope of
$\log M_F$ in $\log T$ is within $10^{-3}$ of $\frac32$ for $8\le T\le20$, far from the powers $1$
and $2$ of Theorem~\ref{thm:first}(ii), while $M_F$ divided by $N^{-2}T^{3/2}$ is within
$0.05\%$ of $\sqrt{2/\pi}$. The
figures only illustrate the theorems. Hence no statement is deduced from them or from
these numbers.

\subsection*{Open questions}

A local law up to the lower-dimensional corners of a face is open. The results on reducible
faces (Theorems~\ref{thm:first}(ii) and~\ref{thm:reducible} and
Corollary~\ref{cor:reducible-crossing})\ifarxivbuild{} and Theorem~\ref{thm:simplex-crossing}(ii)\fi{}
are proved for $T\le N^{1/3}$, and we have not extended them to $T\le h_*N$. We do not know whether the
zero in Corollary~\ref{cor:reducible-crossing} is unique, or where the most likely counts of a
reducible face lie. For a facet of three or more states, the law of the counts near the facet
when $T^2/N$ is of order one is open. Theorem~\ref{thm:layer-counts} has an error of order one
there, and a statement would need a discrete form of Theorem~\ref{thm:L-facet}. We do not know
whether $\mathfrak r_\beta=0$ forces $\pi^{\rm in}_\beta=\pi^{\rm out}_\beta$ in
Proposition~\ref{prop:layer-cubic}.

\paragraph{Acknowledgments.} The question grew out of Problem~131 of Peter
Kagey's Open Problem Collection~\cite{kagey131}, and I thank him for posing it.
I thank Chase Hameetman and Aidan Deshong for conversations about some of the
proofs of \Bversion~\cite{paperB}.

\section*{A note on the use of AI}
I wrote all of the proofs and text of this version. Apart from the addition
described below, I developed its general theory myself and used Claude Opus 5.5
(Anthropic) only to check it and inform me of any mistakes.

The first version of this arXiv entry~\cite{paperB} describes how AI was used for
that version. I wrote its Sections~2 and~3 by hand. The model then checked the
manuscript and drafted rewordings, which I reviewed and edited, and I also checked
its proofs with GPT-6 Astra (OpenAI). Of the results that version credits to the
model, one is reproved here: the tree form of the constants in its
Proposition~3.6, which is Lemma~\ref{lem:weighted-cayley} and
Proposition~\ref{prop:constant}(b).

The model reviewed the manuscript in several referee-style
passes and a proofreading pass, ran numerical experiments that aided the
revision, and searched the literature.

I wrote the verification scripts and the Lean files described in
Section~\ref{sec:verify}. The model checked the scripts and fixed errors in the
Lean code. I take responsibility for all claims, proofs and computations.

\paragraph{Statements and declarations.} No funding was received for this work.
The author has no competing interests to declare. Ethics approval, consent to
participate, consent for publication and materials availability are not
applicable. The use of AI is described in the note above. I am the sole author and am
responsible for the whole paper.

\paragraph{Data availability.} The outputs of the computations described in
Section~\ref{sec:verify} are in the folder
\nolinkurl{kagey-problems/problem131/paperC/data} of the repository named under
Code availability, each headed by the sha256 of the script that wrote it.

\paragraph{Code availability.} All code, data and Lean files supporting this
paper are in the folder \nolinkurl{kagey-problems/problem131} of
\url{https://github.com/apemm/Kagey-Problems}, with the scripts in the
subfolder \nolinkurl{paperC} and the Lean files in \nolinkurl{lean}.
Section~\ref{sec:verify} says how to run them. The scripts need Python~3 with
\texttt{numpy}, \texttt{scipy}, \texttt{sympy} and \texttt{mpmath}, and the
option \texttt{-{}-full} adds the largest cases of four of them.

\appendix
\section{Proofs of the local law and of the first-order bounds}\label{app:torus}

We prove Proposition~\ref{prop:constant}(a),
Lemma~\ref{lem:upper}, Theorem~\ref{thm:local}, Lemma~\ref{lem:discrete},
Lemma~\ref{lem:derivative}, Theorem~\ref{thm:first}(ii) and
Lemma~\ref{lem:discrete-hessian}. Except in
Appendix~\ref{app:first}, $F$ is a face with $k\ge2$ states, numbered
$1,\dots,k$. For $x\in\mathbb C^F$ we write $x'=(x_2,\dots,x_k)$. We write
$\mathrm i$ for the imaginary unit, since $i$ denotes a state, and
$\mathbb T^m=(\R/2\pi\Z)^m$. We use two standard facts~\cite{bermanplemmons1994}.
First, let $B\ge0$ be square and $S$ diagonal with positive diagonal. Then
$\varrho(B-S)<0$ if and only if $\varrho(S^{-1}B)<1$, and also if and only if
$(S-B)v>0$ for some $v>0$. In that case
$(S-B)^{-1}=\sum_{j\ge0}(S^{-1}B)^jS^{-1}\ge0$. Second, if $M$ is an
irreducible Metzler matrix, then every nonnegative eigenvector of $M$ is positive
and belongs to $\varrho(M)$, and $\varrho(M)$ is larger than the Perron root of each
proper principal submatrix of $M$ (Lemma~\ref{lem:face-monotone}).

\subsection{The Perron root under a diagonal perturbation}

An irreducible generator on $F$ is a Metzler matrix $\widetilde Q$ with $\widetilde Q\ones=0$ whose
graph is strongly connected. It has a unique stationary law $\pi>0$. The
argument after Definition~\ref{def:tilt}, with $(\widetilde Q,\pi)$ in place of
$(\widehat Q_\alpha,\alpha)$, shows that
$Z=(\ones\pi^\top-\widetilde Q)^{-1}-\ones\pi^\top$ exists and
satisfies~\eqref{eq:Zprops}. Put $D=\operatorname{diag}\pi$ and
$\Gamma=DZ+Z^\top D$.

\begin{lemma}[the Perron root]\label{lem:perron}
Let $\widetilde Q$, $\pi$, $Z$ and $\Gamma$ be as above. Then
$\kappa(\eta)=\varrho(\widetilde Q+\operatorname{diag}\eta)$ is real-analytic near $0$,
with $\kappa(0)=0$, $\nabla\kappa(0)=\pi$ and $\nabla^2\kappa(0)=\Gamma$. For
$\eta\in\R^F$ and $u=Z\eta$,
\begin{equation}\label{eq:A-dirichlet}
\eta^\top\Gamma\eta=\sum_{i\ne j}\pi_i\widetilde Q_{ij}(u_i-u_j)^2 .
\end{equation}
So $\Gamma$ is positive semidefinite with kernel $\R\ones$, and the matrix $\Sigma$
obtained from $\Gamma$ by deleting the row and column of state $1$ is positive
definite.
\end{lemma}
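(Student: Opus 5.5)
The plan is analytic perturbation theory for the simple Perron eigenvalue $0$ of $\widetilde Q$, with right eigenvector $\ones$ and left eigenvector $\pi$, normalized by $\pi\cdot\ones=1$. Since $\widetilde Q+\operatorname{diag}\eta$ is an irreducible Metzler matrix, its Perron root is simple, so $\kappa(\eta)$ and the normalized right eigenvector $r(\eta)$ are real-analytic near $0$. I normalize $r$ by $\pi\cdot r(\eta)=1$, so that $r(0)=\ones$. Differentiating $(\widetilde Q+\operatorname{diag}\eta)r=\kappa r$ once and pairing with $\pi^\top$ kills the term $\pi^\top\widetilde Q\,\partial r=0$, giving $\partial_{\eta_j}\kappa(0)=\pi_j$.

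For the Hessian I need the first derivative of $r$. Write $r_j=\partial_{\eta_j}r(0)$. The first-order equation reads $\widetilde Q r_j+e_j=\pi_j\ones$, that is $-\widetilde Q r_j=(I-\ones\pi^\top)e_j$, with the constraint $\pi\cdot r_j=0$. By \eqref{eq:Zprops}, and since $\ones$ spans the kernel of $\widetilde Q$, this forces $r_j=Ze_j$. Differentiating a second time in $\eta_m$ and pairing with $\pi^\top$ gives
\[
\partial_m\partial_j\kappa(0)=\pi_j(r_m)_j+\pi_m(r_j)_m-\pi_j\pi\cdot r_m-\pi_m\pi\cdot r_j .
\]
The last two terms vanish by the normalization, so the Hessian is $\pi_jZ_{jm}+\pi_mZ_{mj}$, which is $(DZ+Z^\top D)_{jm}=\Gamma_{jm}$.

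For \eqref{eq:A-dirichlet} I put $u=Z\eta$. Then $\eta^\top\Gamma\eta=2\eta^\top Du=2\sum_i\pi_i\eta_iu_i$. From $-\widetilde Qu=\eta-(\pi\cdot\eta)\ones$, and using $\pi\cdot u=0$, this equals $-2\sum_i\pi_iu_i(\widetilde Qu)_i$. Expanding $(\widetilde Qu)_i=\sum_{j\ne i}\widetilde Q_{ij}(u_j-u_i)$ gives
\[
-2\sum_{i\ne j}\pi_i\widetilde Q_{ij}u_i(u_j-u_i)=\sum_{i\ne j}\pi_i\widetilde Q_{ij}\bigl(2u_i^2-2u_iu_j\bigr).
\]
The step to watch is turning this into the sum of squares $\sum\pi_i\widetilde Q_{ij}(u_i-u_j)^2=\sum\pi_i\widetilde Q_{ij}(u_i^2-2u_iu_j+u_j^2)$. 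The difference between the two is $\sum_{i\ne j}\pi_i\widetilde Q_{ij}(u_i^2-u_j^2)$. This vanishes by stationarity, because $\sum_{i\ne j}\pi_i\widetilde Q_{ij}u_j^2=\sum_j u_j^2\pi_jq_j=\sum_i\pi_i\widetilde Q_{ij}u_i^2$ summed appropriately, using $\pi^\top\widetilde Q=0$ and $\widetilde Q\ones=0$. This is where the absence of reversibility must be handled, and the two identities above suffice for it.

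It remains to identify the kernel. Positive semidefiniteness is immediate from the sum of squares. The form vanishes exactly when $u_i=u_j$ on every edge, and by strong connectivity this means $u=c\ones$. Then $\pi\cdot u=0$ forces $u=0$, so $(I-\ones\pi^\top)\eta=-\widetilde Qu=0$, which means $\eta\in\R\ones$. Conversely $\Gamma\ones=0$ by \eqref{eq:Zprops}. Since the kernel is $\R\ones$, the restriction of $\Gamma$ to the coordinate subspace $\{\eta_1=0\}$ has trivial kernel and is positive definite, and this restriction is $\Sigma$.
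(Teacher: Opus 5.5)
Your proposal is correct and follows the paper's proof essentially step for step. It uses the same two differentiations of the eigenvalue equation paired with $\pi^\top$ (with $\partial_{\eta_j}r(0)=Ze_j$ from \eqref{eq:Zprops}), the same reduction of $\eta^\top\Gamma\eta$ to $-2\sum_i\pi_iu_i(\widetilde Qu)_i$, symmetrization through $\widetilde Q\ones=0$ and $\pi^\top\widetilde Q=0$, and the same kernel argument. The only blemish is notational: in the stationarity step your $q_j$ must be read as $-\widetilde Q_{jj}$, not as the exit rate of $Q$.
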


\begin{proof}
We differentiate the eigenvalue equation twice at $\eta=0$. Since
$\widetilde Q+\operatorname{diag}\eta$ is irreducible, its Perron root is a simple
eigenvalue. So by the implicit function theorem, $\kappa$ and the right
eigenvector $v(\eta)$ with $\pi^\top v=1$ are real-analytic near $0$, with
$\kappa(0)=0$ and $v(0)=\ones$. We write subscripts for derivatives at
$\eta=0$. Differentiating $(\widetilde Q+\operatorname{diag}\eta)v=\kappa v$ in $\eta_j$
gives $\widetilde Qv_j+e_j=\kappa_j\ones$, and multiplying by $\pi^\top$ gives
$\kappa_j=\pi_j$, since $\pi^\top \widetilde Q=0$. Then $v_j=Ze_j$, because
by~\eqref{eq:Zprops} both vectors solve $\widetilde Qx=\pi_j\ones-e_j$ with
$\pi^\top x=0$, and the kernel of $\widetilde Q$ is $\R\ones$. Differentiating again, in
$\eta_i$, gives
$\widetilde Qv_{ij}+\operatorname{diag}(e_i)v_j+\operatorname{diag}(e_j)v_i
=\kappa_{ij}\ones+\kappa_iv_j+\kappa_jv_i$. We multiply by $\pi^\top$ again.
The terms with $\widetilde Q$, $\kappa_i$ and $\kappa_j$ vanish, and we get
$\kappa_{ij}=\pi_iZ_{ij}+\pi_jZ_{ji}=\Gamma_{ij}$.

For~\eqref{eq:A-dirichlet} let $\bar\eta=\eta-(\pi\cdot\eta)\ones$.
By~\eqref{eq:Zprops}, $-\widetilde Qu=\bar\eta$ and $\pi\cdot u=0$, so
$\eta^\top\Gamma\eta=2\sum_i\pi_i\eta_iu_i=2\sum_i\pi_i\bar\eta_iu_i
=-2\sum_i\pi_iu_i(\widetilde Qu)_i$. Now expand the squares in~\eqref{eq:A-dirichlet}.
By $\widetilde Q\ones=0$ and $\pi^\top \widetilde Q=0$ the terms with $u_i^2$ and with $u_j^2$ each
sum to $-\sum_i\pi_i\widetilde Q_{ii}u_i^2$, and the cross terms sum to
$-2\sum_i\pi_iu_i(\widetilde Qu)_i+2\sum_i\pi_i\widetilde Q_{ii}u_i^2$. So both sides
of~\eqref{eq:A-dirichlet} agree. Also $\Gamma\ones=0$ by~\eqref{eq:Zprops}.
If $\eta^\top\Gamma\eta=0$, then $u$ is constant along every edge of the
strongly connected graph of $\widetilde Q$, so $u$ is constant. Then $u=0$ because
$\pi\cdot u=0$, so $\bar\eta=0$ and $\eta\in\R\ones$. A nonzero vector that vanishes
at state $1$ is not in $\R\ones$, which gives the claim about $\Sigma$.
\end{proof}

\begin{proof}[Proof of Proposition~\ref{prop:constant}(a)]
\emph{Step 1. The derivatives of the Perron root.} For $\vartheta\in\R^F$ let
$\Lambda(\vartheta)=\varrho(Q_F+\operatorname{diag}\vartheta)$, so that
$\Lambda(\vartheta+c\ones)=\Lambda(\vartheta)+c$, which we call the shift property. The Perron root is
a simple eigenvalue with positive right and left eigenvectors
$\hat r,\hat l$, normalized by $\sum_i\hat r_i=1$ and $\hat l\cdot\hat r=1$,
and by the implicit function theorem $\Lambda$, $\hat r$ and $\hat l$ are
real-analytic in $\vartheta$. Put $\pi=\hat l\circ\hat r$ and
$\widetilde Q=\hat R^{-1}(Q_F+\operatorname{diag}\vartheta-\Lambda(\vartheta)I)\hat R$
with $\hat R=\operatorname{diag}\hat r$. Then $\widetilde Q\ones=0$ and $\pi^\top \widetilde Q=0$
by the two eigenvector equations, so $\widetilde Q$ is an irreducible generator with
stationary law $\pi$. Since $\hat R$ commutes with diagonal matrices,
$\Lambda(\vartheta+\eta)=\Lambda(\vartheta)+\varrho(\widetilde Q+\operatorname{diag}\eta)$.
So by Lemma~\ref{lem:perron}, $\nabla\Lambda(\vartheta)=\pi$ and
$\nabla^2\Lambda(\vartheta)=\Gamma$, with $\Gamma$ built from $(\widetilde Q,\pi)$.

\emph{Step 2. The gradient can be inverted.} Let
$\Lambda_0(\beta)=\Lambda(0,\beta)$ for $\beta\in\R^{k-1}$. Its Hessian is
$\Gamma$ without the row and column of state $1$, which is positive definite
by Lemma~\ref{lem:perron}. Hence $\Lambda_0$ is strictly convex,
$\nabla\Lambda_0$ is injective, and by the inverse function theorem it has a
real-analytic inverse on its open image.

\emph{Step 3. The tilt that belongs to $\alpha$.} Fix
$\alpha\in\Delta_F^\circ$ and put $\vartheta(\alpha)=q-\theta(\alpha)$. Then
$Q_F+\operatorname{diag}\vartheta(\alpha)=A_F-\operatorname{diag}\theta(\alpha)$
has the positive right and left eigenvectors $r_\alpha$ and $l_\alpha$ for
the eigenvalue $0$, by Definition~\ref{def:tilt} and~\eqref{eq:stationary}.
So its Perron root is $0$, and at $\vartheta(\alpha)$ we have $\pi=\alpha$,
$\widetilde Q=\widehat Q_\alpha$ and $\Gamma=\Gamma_\alpha$. Let $\beta(\alpha)$ be the
last $k-1$ coordinates of $\vartheta(\alpha)-\vartheta_1(\alpha)\ones$. A
shift along $\ones$ does not change the derivatives of $\Lambda$, so
$\nabla\Lambda_0(\beta(\alpha))=\alpha'$ and
$\nabla^2\Lambda_0(\beta(\alpha))=\Sigma_\alpha$. Hence $\Sigma_\alpha$ is
positive definite, and $\beta(\alpha)=(\nabla\Lambda_0)^{-1}(\alpha')$ is
real-analytic. The shift property and $\Lambda(\vartheta(\alpha))=0$ give
$\vartheta_1(\alpha)=-\Lambda_0(\beta(\alpha))$. So $\theta(\alpha)$ is
real-analytic, and then so are its Perron vectors and $\Sigma_\alpha$.

\emph{Step 4. The Hessian of $J_F$.} Since
$\vartheta(\alpha)=\vartheta_1(\alpha)\ones+(0,\beta(\alpha))$ and
$\sum_i\alpha_i=1$, we have $J_F(\alpha)=\langle\vartheta(\alpha),\alpha\rangle
=\langle\beta(\alpha),\alpha'\rangle-\Lambda_0(\beta(\alpha))$. We
differentiate in $\alpha'$. The terms with the derivative of $\beta$ cancel
because $\nabla\Lambda_0(\beta(\alpha))=\alpha'$, and $\nabla J_F=\beta$
remains. So $\nabla^2J_F$ is the Jacobian of $(\nabla\Lambda_0)^{-1}$ at
$\alpha'$, which is $\Sigma_\alpha^{-1}$. The formula for $C_F$ follows from
Definition~\ref{def:tilt}.
\end{proof}

\subsection{The torus representation}

The generating function of the words in $H$ is Good's resolvent
formula~\cite[(2)]{good1961}, and the torus integral below is his constant-term
formula, here for the switching rates $A_F$ of the killed chain and with the circle of one state
integrated out. We split
\[
A_F=\begin{pmatrix}0&a_1^\top\\ b_1&A'\end{pmatrix},\qquad
\mu_F=(\mu_1,\mu'),
\]
so $a_1=(q_{1j})_{j\ge2}$, $b_1=(q_{i1})_{i\ge2}$ and
$A'=(q_{ij})_{i,j\ge2}$. For $z'\in\mathbb C^{k-1}$ let
$\mathcal Z'=\operatorname{diag}(z')$. Where $\mathcal Z'-A'$ is invertible
we put $X(z')=(\mathcal Z'-A')^{-1}$, $g(z')=a_1^\top X(z')b_1$ and
\[
b(z')=\bigl(\mu_1+\mu'^\top X(z')b_1\bigr)\bigl(1+a_1^\top X(z')\ones\bigr).
\]

\begin{lemma}[torus representation]\label{lem:torus}
Let $s'\in(0,\infty)^{k-1}$ satisfy
$\varrho(A'-\operatorname{diag}s')<0$. On the set of $z'\in\mathbb C^{k-1}$
with $|z_i|\ge s_i$ for every $i\ge2$, the matrix
$\mathcal Z'-A'$ is invertible, and $X$, $g$ and $b$ are given by absolutely
convergent power series in $1/z_2,\dots,1/z_k$ with nonnegative coefficients.
Hence $|g(z')|\le g(s')$ and
$|b(z')|\le b(s')$ there. For every $t\in(0,\infty)^F$,
\[
H(t)=\frac1{(2\pi)^{k-1}}\int_{\mathbb T^{k-1}}
\exp\Bigl(t_1g(z')+\sum_{i\ge2}t_iz_i\Bigr)\,b(z')\prod_{i\ge2}z_i\,d\varphi,
\qquad z_i=s_ie^{\mathrm i\varphi_i}.
\]
Let moreover $hq_{\max}\le1$, put $p_i=1-hq_i$, and let $n\in\Z_{\ge1}^F$. Then the
probability that $X_1,\dots,X_{|n|}$ have composition $n$ is
\[
P_{|n|}(n)=\frac{h^{k-1}}{(2\pi)^{k-1}}\int_{\mathbb T^{k-1}}
\bigl(p_1+hg(z')\bigr)^{n_1-1}\prod_{i\ge2}(p_i+hz_i)^{n_i-1}\,b(z')\prod_{i\ge2}z_i\,d\varphi .
\]
\end{lemma}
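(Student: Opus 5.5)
We prove Lemma~\ref{lem:torus} in four steps: a Neumann series for $X$, the generating function of the words in $H$, Cauchy's formula on the torus, and the same computation for the discrete law.

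\emph{Step 1. The Neumann series.} The hypothesis $\varrho(A'-\operatorname{diag}s')<0$ gives, by the first standard fact, $\varrho(S'^{-1}A')<1$ with $S'=\operatorname{diag}s'$. For $|z_i|\ge s_i$ we write $\mathcal Z'-A'=\mathcal Z'(I-\mathcal Z'^{-1}A')$ and expand
\[
X(z')=\sum_{j\ge0}(\mathcal Z'^{-1}A')^j\mathcal Z'^{-1}.
\]
Each entry of this series is a power series in $1/z_2,\dots,1/z_k$ with nonnegative coefficients. It is dominated termwise by the same series at $z'=s'$, which converges because $\varrho(S'^{-1}A')<1$. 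So $\mathcal Z'-A'$ is invertible on the set $\{|z_i|\ge s_i\}$. The functions $g$ and $b$ are polynomials in the entries of $X$ with nonnegative coefficients built from $a_1$, $b_1$, $\mu$ and $\ones$, so they inherit absolutely convergent series with nonnegative coefficients. The bounds $|g(z')|\le g(s')$ and $|b(z')|\le b(s')$ follow by the triangle inequality.

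\emph{Step 2. Words as a generating function.} We read $H(t)=\sum_m W(m)\prod_i t_i^{m_i-1}/(m_i-1)!$ as a sum over words $c$ without equal neighbours. Split each word at its visits to state $1$.
\begin{itemize}
\item An excursion from $1$ back to $1$ through states $\ge2$ with letter counts $m'$ has weight given by the coefficient of $\prod_{i\ge2}z_i^{-m_i}$ in $a_1^\top X b_1=g$.
\item The initial segment before the first $1$ is counted by $\mu'^\top X b_1$, plus $\mu_1$ if the word starts at $1$.
\item The final segment after the last $1$ is counted by $a_1^\top X\ones$, plus $1$ if the word ends at $1$.
\end{itemize}
A word with $m_1$ letters $1$ thus contributes the coefficient of $\prod_{i\ge2}z_i^{-m_i}$ in $g^{m_1-1}b$. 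Words avoiding state $1$ do not fill $F$, so they do not occur.

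\emph{Step 3. Cauchy's formula.} Summing over $m_1$ with weight $t_1^{m_1-1}/(m_1-1)!$ gives $e^{t_1g}b$. For $i\ge2$, the factor $t_i^{m_i-1}/(m_i-1)!$ is the coefficient of $z_i^{-m_i}$ in $z_ie^{t_iz_i}$ after the constant-term pairing. Concretely, on the torus $z_i=s_ie^{\mathrm i\varphi_i}$ the average of
\[
e^{t_iz_i}z_i\cdot z_i^{-m_i}
\]
is $t_i^{m_i-1}/(m_i-1)!$. Absolute convergence on the torus, from Step 1 together with the entire factors $e^{t_iz_i}$, lets us exchange sum and integral, and this gives the first display.

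The one point that needs care here is why no other powers survive. In the expansion of $e^{t_1g}b$ only negative powers of $z_i$ occur, while $z_ie^{t_iz_i}$ has only powers $\ge1$. Their product therefore has constant term exactly the coefficient pairing above, and no cross terms contribute.

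\emph{Step 4. The discrete formula.} We run the same bookkeeping on Lemma~\ref{lem:runs}, now for a path of length $|n|$, which prefixes each switch with its factor $h$. The binomial $\binom{n_i-1}{m_i-1}p_i^{n_i-m_i}h^{m_i-1}$ is the coefficient of $z_i^{m_i-1}$ in $(p_i+hz_i)^{n_i-1}$. For state $1$ we sum over $m_1$ with $g$ in place of $z_1$, which replaces $e^{t_1g}$ by $(p_1+hg)^{n_1-1}$. The total power of $h$ is $|m|-1$, and the count works out as $h^{k-1}$ times the powers of $h$ carried inside.

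The main obstacle is this power-counting of $h$, which is easy to get off by one. We would fix it by checking that the $|m|-1$ switches distribute as follows: $m_i-1$ powers go into the binomial of each state, and the remaining $k-1$ powers are attached to the excursion entries and to the prefactor. The identity on the word-generating series of Step 2 then closes the argument.
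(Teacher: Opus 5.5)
Your proof is correct, and it takes a more direct route than the paper's.

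The paper first proves Good's constant-term formula on the full $k$-torus, expanding $\mu_F^\top(\mathcal Z-A_F)^{-1}\ones$ as a word series. For this it must pick a radius $s_1>g(s')$ and check $\varrho(S^{-1}A_F)<1$ for $S=\operatorname{diag}(s_1,s')$. It then integrates out $\varphi_1$ in two moves: block inversion, $\mu_F^\top(\mathcal Z-A_F)^{-1}\ones=b(z')/(z_1-g(z'))+\mu'^\top X(z')\ones$, and Cauchy's formula at the pole $z_1=g(z')$.

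Your factorization of a word at its letters $1$ proves directly that the words with exactly $m_1\ge1$ letters $1$ have generating function $g^{m_1-1}b$. This is the coefficient of $z_1^{-m_1}$ in $b/(z_1-g)$. The paper's second term $\mu'^\top X\ones$ collects the words without a $1$, which you discard at the start. So you never need the $k$-th circle, the choice of $s_1$, or the residue. The factor $e^{t_1g}$ also appears transparently as the exponential generating function of the returns to state $1$, each return being an excursion counted by $g$. The paper's route buys the link with Good's resolvent and constant-term formulas. Both routes rest only on the domination of your Step~1, and both extend verbatim to the complex weights of Lemma~\ref{lem:torus-complex}.

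Two small corrections to the write-up:

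In Step~3, the expansion of $e^{t_1g}b$ has nonpositive, not only negative, powers of each $z_i$. The $z_i^0$ terms are exactly the words that miss state $i\ge2$. They drop out because $z_ie^{t_iz_i}$, and likewise $z_i(p_i+hz_i)^{n_i-1}$, has no constant term. This is the step that removes those words, and it deserves to be said.

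In Step~4, the power count is the one-line identity $h^{M-1}=h^{k-1}\prod_{i\in F}h^{m_i-1}$. All of the remaining $k-1$ powers go into the prefactor, and none attaches to $g$ or $b$. The $h$ in $p_1+hg$ is state $1$'s own share $h^{m_1-1}$, paired with $g^{m_1-1}$ by the binomial theorem. So this is not an obstacle.
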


\begin{proof}
\emph{Step 1. The series.} By the first fact,
$\varrho(\operatorname{diag}(s')^{-1}A')<1$. Hence the series
$X(z')=\sum_{j\ge0}(\mathcal Z'^{-1}A')^j\mathcal Z'^{-1}$ converges
absolutely for $|z_i|\ge s_i$, since its terms are bounded
entrywise in modulus by those at $z'=s'$. As $a_1$, $b_1$ and $\mu$ are
nonnegative, this gives the claims about $X$, $g$ and $b$.

\emph{Step 2. A radius for state $1$.} Choose $s_1>g(s')$ and put
$S=\operatorname{diag}(s_1,s')$. We show that $\varrho(S^{-1}A_F)<1$. For
$\varepsilon>0$ let $v=(1,v')$ with
$v'=X(s')(b_1+\varepsilon\ones)>0$. Then $((S-A_F)v)_i=\varepsilon$ for
$i\ge2$, and $((S-A_F)v)_1=s_1-g(s')-\varepsilon a_1^\top X(s')\ones$. For
small $\varepsilon$ both are positive, and the first fact gives the claim.

\emph{Step 3. The $k$ circles.} Let $\mathcal Z=\operatorname{diag}(z)$ with
$|z_i|=s_i$ for every $i\in F$. The matrices $A_FS^{-1}$ and $S^{-1}A_F$ are similar, so
$\varrho(A_FS^{-1})=\varrho(S^{-1}A_F)<1$, and expanding
$(\mathcal Z-A_F)^{-1}=\mathcal Z^{-1}(I-A_F\mathcal Z^{-1})^{-1}$ gives
\[
\mu_F^\top(\mathcal Z-A_F)^{-1}\ones=\sum_c\mu_{c_1}\prod_{j<M}q_{c_jc_{j+1}}
\prod_{i\in F}z_i^{-m_i(c)},
\]
absolutely and uniformly on the torus. Here $c=c_1\cdots c_M$ runs over the
words in $F$ with $M\ge1$ and $c_j\ne c_{j+1}$, and $m_i(c)$ is the number of
letters $i$ in $c$. For an integer $m$ and $t_i>0$, Cauchy's formula gives
$\frac1{2\pi}\int_0^{2\pi}e^{t_iz_i}z_i^{1-m}\,d\varphi_i=t_i^{m-1}/(m-1)!$ if
$m\ge1$ and $0$ if $m\le0$. We multiply the series by
$e^{\langle t,z\rangle}\prod_iz_i$ and integrate term by term. The words that
miss a state of $F$ drop out, and grouping the others by $m(c)$ gives
\begin{equation}\label{eq:A-full}
H(t)=\frac1{(2\pi)^k}\int_{\mathbb T^k}e^{\langle t,z\rangle}\,
\mu_F^\top(\mathcal Z-A_F)^{-1}\ones\prod_{i\in F}z_i\,d\varphi .
\end{equation}

\emph{Step 4. The first circle.} The Schur complement $z_1-g(z')$ of
$\mathcal Z'-A'$ in $\mathcal Z-A_F$ is not $0$, since
$|g(z')|\le g(s')<s_1$. Block inversion gives
\[
\mu_F^\top(\mathcal Z-A_F)^{-1}\ones=\frac{b(z')}{z_1-g(z')}
+\mu'^\top X(z')\ones .
\]
We integrate over $\varphi_1$ in~\eqref{eq:A-full} and divide by $2\pi$. The
second term does not depend on $z_1$, so it contributes
$\frac1{2\pi}\int_0^{2\pi}e^{t_1z_1}z_1\,d\varphi_1=0$. In the first term
$g(z')$ lies inside the circle $|z_1|=s_1$, so Cauchy's formula gives
$e^{t_1g(z')}b(z')$. The factors without $z_1$ are unchanged, and this proves
the formula for $H$.

\emph{The discrete formula.} We repeat Steps~3 and~4 with $(p_i+hz_i)^{n_i-1}$ in
place of $e^{t_iz_i}$. The coefficient of $z^{m-1}$ in the polynomial
$(p+hz)^{n-1}$ is $h^{m-1}\binom{n-1}{m-1}p^{n-m}$, which is $0$ for $m>n$. So
$\frac1{2\pi}\int_0^{2\pi}(p_i+hz_i)^{n_i-1}z_i^{1-m}\,d\varphi_i$ is this
coefficient if $m\ge1$ and $0$ if $m\le0$. The polynomial is bounded on the
circle, so the term-by-term integration of Step~3 applies, and it gives
\[
\frac1{(2\pi)^k}\int_{\mathbb T^k}\mu_F^\top(\mathcal Z-A_F)^{-1}\ones
\prod_{i\in F}(p_i+hz_i)^{n_i-1}z_i\,d\varphi
=\sum_mW(m)\prod_{i\in F}h^{m_i-1}\binom{n_i-1}{m_i-1}p_i^{n_i-m_i}.
\]
Since $h^{M-1}=h^{k-1}\prod_ih^{m_i-1}$, this is $h^{-(k-1)}P_{|n|}(n)$ by
Lemma~\ref{lem:runs} with $|n|$ in place of $N$. In Step~4 the second term gives
$\frac1{2\pi}\int_0^{2\pi}(p_1+hz_1)^{n_1-1}z_1\,d\varphi_1=0$, and Cauchy's formula
evaluates the polynomial $(p_1+hz_1)^{n_1-1}$ at $z_1=g(z')$.
\end{proof}

Section~\ref{sec:boundary-layer} needs the same formula for complex weights and an end vector
other than $\ones$. Let $A$ be a complex matrix on $F$ with zero diagonal and let
$\nu,e\in\mathbb C^F$. Let $W_{A,\nu,e}(m)$ be the sum over the words $c$ of $W(m)$ of
$\nu_{c_1}\prod_{j<M}A_{c_jc_{j+1}}e_{c_M}$, and $H_{A,\nu,e}(t)$ the sum $H(t)$ with $W_{A,\nu,e}$
in place of $W$. So $H=H_{A_F,\mu,\ones}$. We split $A$ and $\nu$ as above and define $X$ and $g$
by the same formulas, and
\[
b(z')=\bigl(\nu_1+\nu'^\top X(z')b_1\bigr)\bigl(e_1+a_1^\top X(z')e'\bigr).
\]

\begin{lemma}[complex weights]\label{lem:torus-complex}
Let $\bar A\ge0$ be a matrix on $F$ with zero diagonal and $\bar\nu,\bar e\in[0,\infty)^F$ with
$|A_{ij}|\le\bar A_{ij}$, $|\nu_i|\le\bar\nu_i$ and $|e_i|\le\bar e_i$ for all $i,j$, and let
$\bar X$, $\bar g$ and $\bar b$ be the functions above for $(\bar A,\bar\nu,\bar e)$. Let
$s'\in(0,\infty)^{k-1}$ satisfy $\varrho(\bar A'-\operatorname{diag}s')<0$. Then
the formula for $H$ in Lemma~\ref{lem:torus} holds for $H_{A,\nu,e}$ with these $X$, $g$
and $b$, and their
series no longer have nonnegative coefficients, but they are dominated term by term in modulus by those of $\bar X$, $\bar g$ and $\bar b$ at $s'$.
In particular $|g(z')|\le\bar g(s')$ and $|b(z')|\le\bar b(s')$ when $|z_i|\ge s_i$ for every
$i\ge2$. Where $\mathcal Z'-A'$ is invertible, $X$, $g$ and $b$ are rational functions of $z'$ and
of the entries of $A$, $\nu$ and $e$.
\end{lemma}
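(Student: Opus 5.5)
The plan is to rerun the proof of Lemma~\ref{lem:torus} with complex data. Domination by the nonnegative majorants $(\bar A,\bar\nu,\bar e)$ supplies every convergence statement that positivity supplied there.

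\emph{The series.} From $\varrho(\bar A'-\operatorname{diag}s')<0$ the first fact gives $\varrho(\operatorname{diag}(s')^{-1}\bar A')<1$. For $|z_i|\ge s_i$ the Neumann series $X(z')=\sum_{j\ge0}(\mathcal Z'^{-1}A')^j\mathcal Z'^{-1}$ then converges absolutely. Each monomial in $1/z_2,\dots,1/z_k$ has modulus at most that of the corresponding term of $\bar X(s')$, since $|A_{ij}|\le\bar A_{ij}$ and $|1/z_i|\le1/s_i$. Hence $\mathcal Z'-A'$ is invertible there, and $X$ equals its series. The series for $g=a_1^\top Xb_1$ and for $b$ are sums and products of dominated series with dominated coefficients $a_1,b_1,\nu,e$. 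Each term of these series is bounded in modulus by the corresponding term of $\bar g(s')$ or $\bar b(s')$, and this gives $|g|\le\bar g(s')$ and $|b|\le\bar b(s')$. Rationality where $\mathcal Z'-A'$ is invertible follows from Cramer's rule, since $X$ is an inverse and $g,b$ are polynomial in its entries and in $A,\nu,e$.

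\emph{The torus formula.} Choose $s_1>\bar g(s')$. Step~2 of Lemma~\ref{lem:torus}, applied to $\bar A$, gives $\varrho(S^{-1}\bar A)<1$ with $S=\operatorname{diag}(s_1,s')$. On the full torus $|z_i|=s_i$ the word expansion $\nu^\top(\mathcal Z-A)^{-1}e=\sum_c\nu_{c_1}\prod_{j}A_{c_jc_{j+1}}e_{c_M}\prod_iz_i^{-m_i(c)}$ converges absolutely and uniformly, because it is dominated termwise by the convergent expansion for $(\bar A,\bar\nu,\bar e)$ at $S$. Integrating term by term against $e^{\langle t,z\rangle}\prod_iz_i$ with Cauchy's formula gives the analogue of~\eqref{eq:A-full} for $H_{A,\nu,e}$. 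For the first circle, the Schur complement identity becomes
\[
\nu^\top(\mathcal Z-A)^{-1}e=\frac{b(z')}{z_1-g(z')}+\nu'^\top X(z')e',
\]
which is pure linear algebra and holds over $\mathbb C$. Here $|g(z')|\le\bar g(s')<s_1$, so $g(z')$ lies inside the circle $|z_1|=s_1$, and the residue computation of Step~4 goes through unchanged.

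The only point that needs care is that all interchanges of sums and integrals are justified by the majorant series rather than by nonnegativity. This is routine once the termwise domination is stated for words.
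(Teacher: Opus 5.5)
Your proposal is correct and follows the paper's proof step for step. The steps are: domination of the Neumann series of $X$ by $\bar X(s')$, Step~2 applied to $\bar A$ to get $s_1>\bar g(s')$ with $\varrho(S^{-1}\bar A)<1$, the dominated word expansion on the full torus, the complex Schur complement with $|g(z')|\le\bar g(s')<s_1$, and Cramer's rule for rationality.
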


\begin{proof}
We follow the four steps of the proof of Lemma~\ref{lem:torus}. In Step~1 the first fact gives
$\varrho(\operatorname{diag}(s')^{-1}\bar A')<1$, and for $|z_i|\ge s_i$ the entries of
$\mathcal Z'^{-1}A'$ are bounded in modulus by those of $\operatorname{diag}(s')^{-1}\bar A'$. So
the series of $X$ converges absolutely, and its terms are dominated by those of
$\bar X(s')\ge0$. Since $a_1$, $b_1$, $\nu$ and $e$ are dominated by their barred versions, the
same holds for $g$ and for the two factors of $b$. Step~2 applies to $\bar A$ as it stands and
gives $s_1>\bar g(s')$ with $\varrho(S^{-1}\bar A)<1$. In Step~3 the entries of $A\mathcal Z^{-1}$
are bounded in modulus by those of $\bar AS^{-1}$, so the word expansion of
$\nu^\top(\mathcal Z-A)^{-1}e$ converges absolutely and uniformly on the torus, and the same
integration gives~\eqref{eq:A-full} with $A$, $\nu$ and $e$ in place of $A_F$, $\mu_F$ and
$\ones$. In Step~4 the Schur complement $z_1-g(z')$ is not $0$, since $|g(z')|\le\bar g(s')<s_1$,
and block inversion gives
$\nu^\top(\mathcal Z-A)^{-1}e=b(z')/(z_1-g(z'))+\nu'^\top X(z')e'$. The rest of Step~4 is
unchanged. The last claim is Cramer's rule.
\end{proof}

\subsection{Proof of \texorpdfstring{Lemma~\ref{lem:upper}}{the upper bound}}

We bound the integrand of the discrete formula of Lemma~\ref{lem:torus} by its
modulus at the real point. Step~2 bounds the continuum sums in the same way, for
later use.

\emph{Step 1. The radii.} Let $F$ be irreducible and $\theta\in V_F$. Then
$A_F-\operatorname{diag}\theta$ is an irreducible Metzler matrix with Perron
root $0$. Hence it has a right eigenvector $u>0$ for $0$, and
$\theta_i=(A_Fu)_i/u_i$. Each row of $A_F$ has a positive entry, since $G_F$
is strongly connected and $k\ge2$, so $\theta>0$. By the second fact,
$\varrho(A'-\operatorname{diag}\theta')<0$. The rows $i\ge2$ of
$(A_F-\operatorname{diag}\theta)u=0$ give $u'=u_1X(\theta')b_1$, and then row
$1$, which reads $a_1^\top u'=\theta_1u_1$, gives $\theta_1=g(\theta')$. Hence
\begin{equation}\label{eq:A-radii}
\theta>0,\qquad \varrho(A'-\operatorname{diag}\theta')<0,\qquad
g(\theta')=\theta_1\qquad\text{for every }\theta\in V_F .
\end{equation}

\emph{Step 2. Continuum sums.} For $\sigma>0$ let $H_\sigma$ be $H$ for $\sigma A_F$
in place of $A_F$. We use Lemma~\ref{lem:torus} for
$\sigma A_F$ with the radii $\sigma\theta'$, which is allowed since
$\varrho(\sigma A'-\operatorname{diag}\sigma\theta')
=\sigma\varrho(A'-\operatorname{diag}\theta')<0$. The functions of
Lemma~\ref{lem:torus} for $\sigma A_F$ satisfy
$X_\sigma(\sigma z')=\sigma^{-1}X(z')$, so $g_\sigma(\sigma z')=\sigma g(z')$
and $b_\sigma(\sigma z')=b(z')$. So on the torus with radii $\sigma\theta'$
we have $|g_\sigma|\le\sigma g(\theta')=\sigma\theta_1$
by~\eqref{eq:A-radii}, $|b_\sigma|\le b(\theta')$ and
$|z_i|=\sigma\theta_i$. Hence the integrand has modulus at most
$e^{\sigma\langle t,\theta\rangle}b(\theta')\sigma^{k-1}
\prod_{i\ge2}\theta_i$, and for every $\sigma>0$ and $t\in(0,\infty)^F$
\begin{equation}\label{eq:A-Hsigma}
H_\sigma(t)\le\sigma^{k-1}b(\theta')\prod_{i\ge2}\theta_i\,
e^{\sigma\langle t,\theta\rangle}.
\end{equation}

\emph{Step 3. The bound.} Let $n$ have support $F$, let $hq_{\max}\le1$ and
$p_i=1-hq_i$. We use the discrete formula of Lemma~\ref{lem:torus} with the radii
$\theta'$. On that torus $|p_i+hz_i|\le p_i+h\theta_i$ for $i\ge2$,
$|p_1+hg(z')|\le p_1+h\theta_1$ since $|g(z')|\le g(\theta')=\theta_1$, and
$|b(z')|\le b(\theta')$. So
\begin{equation}\label{eq:A-chernoff}
P_{|n|}(n)\le h^{k-1}b(\theta')\prod_{i\ge2}\theta_i
\prod_{i\in F}(p_i+h\theta_i)^{n_i-1}
\qquad(\theta\in V_F,\ hq_{\max}\le1).
\end{equation}
Now let $hq_{\max}\le\frac12$. Since $\log(1+x)\le x$, we have
$(p_i+h\theta_i)^{n_i}=(1-h(q_i-\theta_i))^{n_i}\le e^{-hn_i(q_i-\theta_i)}$, and
$p_i+h\theta_i\ge p_i\ge\frac12$. So the lemma holds with
$C_\theta=2^kb(\theta')\prod_{i\ge2}\theta_i$. By Lemma~\ref{lem:torus}, $b$ is
continuous on the open set of $s'$ with $\varrho(A'-\operatorname{diag}s')<0$, which
contains $\theta'$ for every $\theta\in V_F$ by~\eqref{eq:A-radii}. So $C_\theta$ is
bounded on compact subsets of $V_F$. \qed

\subsection{Proof of \texorpdfstring{Theorem~\ref{thm:local}(a)}{the local law}}

The field cancels from the two sides of Theorem~\ref{thm:local}, so we take $v=0$.
For $k=1$ the formula is part of Lemma~\ref{lem:continuum}. Let $k\ge2$ and
$\alpha\in\mathcal K$, and write $\theta=\theta(\alpha)$, $r=r_\alpha$,
$l=l_\alpha$ and $\Theta'=\operatorname{diag}(\theta')$. By~\eqref{eq:A-radii}
we may use Lemma~\ref{lem:torus} with the radii $\theta'$ and $t=T\alpha$.
With Lemma~\ref{lem:continuum} this gives
\begin{equation}\label{eq:A-fT}
f_T(\alpha)=\frac{T^{k-1}e^{-T\langle q,\alpha\rangle}}{(2\pi)^{k-1}}
\int_{\mathbb T^{k-1}}e^{T\Psi(\alpha,\varphi)}\chi(\alpha,\varphi)\,d\varphi,
\end{equation}
where $z_i=\theta_ie^{\mathrm i\varphi_i}$,
$\Psi(\alpha,\varphi)=\alpha_1g(z')+\sum_{i\ge2}\alpha_iz_i$ and
$\chi(\alpha,\varphi)=b(z')\prod_{i\ge2}z_i$. We check the hypotheses of
Lemma~\ref{lem:laplace} with $m=k-1$.

\emph{Hypothesis \textup{(iii)}.} By~\eqref{eq:A-radii},
$\Psi(\alpha,0)=\langle\alpha,\theta\rangle$ is real. By
Lemma~\ref{lem:torus}, $|g(z')|\le g(\theta')=\theta_1$ on the
torus, so
\[
\operatorname{Re}\Psi(\alpha,\varphi)\le\Psi(\alpha,0)
-\sum_{i\ge2}\alpha_i\theta_i(1-\cos\varphi_i).
\]
All $\alpha_i$ and $\theta_i$ are positive, so the sum is positive unless
$\varphi=0$.

\emph{Hypotheses \textup{(i)} and \textup{(ii)}.} We need the first two
derivatives of $g$ at $\theta'$. We get them by differentiating a relation
$\lambda(g(z'),z')=0$ for a Perron root $\lambda$, whose derivatives are
known from Lemma~\ref{lem:perron}. For $z\in\R^F$ near $\theta$ let
$\lambda(z)=\varrho(A_F-\operatorname{diag}z)$. With $R=\operatorname{diag}r$
we have $A_F-\operatorname{diag}(\theta-\eta)=R(\widehat
Q_\alpha+\operatorname{diag}\eta)R^{-1}$, and $\widehat Q_\alpha$ is an
irreducible generator with stationary law $\alpha$. So
$\lambda(\theta-\eta)=\varrho(\widehat Q_\alpha+\operatorname{diag}\eta)$, and
Lemma~\ref{lem:perron} gives $\nabla\lambda(\theta)=-\alpha$ and
$\nabla^2\lambda(\theta)=\Gamma_\alpha$. For real $z'$ near $\theta'$ we have
$\varrho(A'-\operatorname{diag}z')<0$, so $(1,X(z')b_1)\ge0$ by the first
fact. This vector lies in the kernel of $A_F-\operatorname{diag}(g(z'),z')$, in
the rows $i\ge2$ by the definition of $X$ and in row $1$ by the definition of
$g$. By the second fact a nonnegative eigenvector belongs to the Perron
root, so $\lambda(g(z'),z')=0$.

Write $g_j$, $g_{ij}$ for the derivatives of $g$ at $\theta'$ and
$\lambda_j$, $\lambda_{ij}$ for those of $\lambda$ at $\theta$.
Differentiating $\lambda(g(z'),z')=0$ in $z_j$ gives
$\lambda_1g_j+\lambda_j=0$, and differentiating again in $z_i$ gives
$\lambda_1g_{ij}+\lambda_{11}g_ig_j+\lambda_{1j}g_i+\lambda_{i1}g_j
+\lambda_{ij}=0$. Put $w_j=e_j-(\alpha_j/\alpha_1)e_1\in\R^F$ for $j\ge2$.
Then the first equation gives $g_j=-\alpha_j/\alpha_1$, so $w_j=e_j+g_je_1$,
and the last four terms of the second equation are
$w_i^\top\Gamma_\alpha w_j$. So
\[
g_j=-\frac{\alpha_j}{\alpha_1},\qquad \alpha_1g_{ij}=w_i^\top\Gamma_\alpha w_j .
\]
Hence $\partial\Psi/\partial z_j=\alpha_1g_j+\alpha_j=0$ at $\theta'$, and the
Hessian $\mathcal H$ of $\Psi$ in $z'$ at $\theta'$ is $V^\top\Gamma_\alpha V$,
where $V$ has columns $w_2,\dots,w_k$. Since
$\partial/\partial\varphi_j=\mathrm iz_j\,\partial/\partial z_j$ and the first
derivatives vanish, $\nabla_\varphi\Psi(\alpha,0)=0$. We saw above that
$\Psi(\alpha,0)$ is real, so this is hypothesis (i). Also
$-\nabla^2_\varphi\Psi(\alpha,0)=\Theta'\mathcal H\Theta'$, which is real and
symmetric, as (ii) requires.

For the rest of (ii), namely that this matrix is positive definite, let $B=I+\ones\alpha'^\top/\alpha_1$. For $y\in\R^{k-1}$ the vector
$Vy$ has the entry $-\alpha'^\top y/\alpha_1$ at state $1$ and the entries
$y$ elsewhere, so $Vy+(\alpha'^\top y/\alpha_1)\ones=(0,By)$. Since
$\Gamma_\alpha\ones=0$, we may replace $Vy$ by $(0,By)$ in
$y^\top V^\top\Gamma_\alpha Vy$, which gives
$\mathcal H=B^\top\Sigma_\alpha B$. The matrix determinant lemma gives
$\det B=1+\alpha'^\top\ones/\alpha_1=1/\alpha_1$. By Lemma~\ref{lem:perron},
$\Sigma_\alpha$ is positive definite, and hence so are $\mathcal H$ and
$\Theta'\mathcal H\Theta'$. Also
$\det\mathcal H=\det\Sigma_\alpha/\alpha_1^2$.

\emph{Regularity.} By Proposition~\ref{prop:constant}(a), $\theta(\alpha)$ is
real-analytic. Fix $\alpha_0\in\mathcal K$. By~\eqref{eq:A-radii} and
continuity there is $\delta\in(0,\min_i\theta_i(\alpha_0))$ with
$\varrho(A'-\operatorname{diag}(\theta'(\alpha_0)-\delta\ones))<0$. By
Lemma~\ref{lem:torus}, $X$, $g$ and $b$ are holomorphic where
$|z_i|>\theta_i(\alpha_0)-\delta$ for every $i\ge2$, and this holds
on the torus of radii $\theta'(\alpha)$ for $\alpha$ near $\alpha_0$. So $\Psi$
and $\chi$ are real-analytic near $\{\alpha_0\}\times\mathbb T^{k-1}$.
So their $\varphi$-derivatives are jointly continuous on
$\mathcal K\times\mathbb T^{k-1}$, and covering $\mathcal K$ by finitely many
such sets bounds them there.

\emph{The constant.} The matrix $\mathcal J=\Theta'\mathcal H\Theta'$ of
Lemma~\ref{lem:laplace} has
$\det\mathcal J=\prod_{i\ge2}\theta_i^2\,\det\Sigma_\alpha/\alpha_1^2$. So
the lemma gives, uniformly for $\alpha\in\mathcal K$ and $T\ge1$,
\[
\int_{\mathbb T^{k-1}}e^{T\Psi}\chi\,d\varphi
=e^{T\langle\alpha,\theta\rangle}\Bigl(\frac{2\pi}T\Bigr)^{\frac{k-1}2}
\frac{\alpha_1}{\prod_{i\ge2}\theta_i\sqrt{\det\Sigma_\alpha}}
\Bigl(\chi(\alpha,0)+O\Bigl(\frac1T\Bigr)\Bigr).
\]
Here $\chi(\alpha,0)=b(\theta')\prod_{i\ge2}\theta_i$. As in the derivation
of~\eqref{eq:A-radii}, $r'=r_1X(\theta')b_1$. In the same way the columns
$j\ge2$ of $l^\top(A_F-\operatorname{diag}\theta)=0$, which is
\eqref{eq:stationary}, give $l'=l_1X(\theta')^\top a_1$. Hence
$\mu\cdot r=r_1(\mu_1+\mu'^\top X(\theta')b_1)$ and
$l\cdot\ones=l_1(1+a_1^\top X(\theta')\ones)$. Since $l_1r_1=\alpha_1$,
\[
b(\theta')=\frac{(\mu\cdot r_\alpha)(l_\alpha\cdot\ones)}{\alpha_1}.
\]
As $r_\alpha,l_\alpha>0$ and $\mu(F)>0$ by (H), $\chi(\alpha,0)$ is positive
and continuous, hence bounded below on $\mathcal K$. So the last bracket is
$\chi(\alpha,0)(1+O(1/T))$. Putting this into~\eqref{eq:A-fT} and using
$J_F(\alpha)=\langle q-\theta,\alpha\rangle$ gives
$f_T(\alpha)=T^{(k-1)/2}C_F(\alpha)e^{-TJ_F(\alpha)}(1+O(1/T))$, uniformly on
$\mathcal K$. \qed

The proof uses $A_F$, $\mu$ and the end vector $\ones$ only through their signs and the graph of
$A_F$.

\begin{lemma}[general weights]\label{lem:local-general}
Let $A\ge0$ be a matrix on $F$ with zero diagonal and a strongly connected graph, let
$\nu,e\in[0,\infty)^F$ be nonzero, and let $\theta(\alpha)$, $r_\alpha$, $l_\alpha$ and
$\Sigma_\alpha$ be the objects of Definition~\ref{def:tilt} for $A$ in place of $A_F$. Let $g$
and $b$ be the functions of Lemma~\ref{lem:torus-complex} for $(A,\nu,e)$. Then the identities
\eqref{eq:A-radii}, $g_j=-\alpha_j/\alpha_1$, $\mathcal H=B^\top\Sigma_\alpha B$,
$\det\mathcal H=\det\Sigma_\alpha/\alpha_1^2$ and
$\alpha_1b(\theta')=(\nu\cdot r_\alpha)(l_\alpha\cdot e)$ of the proof above hold, and uniformly
for $\alpha$ in a compact $\mathcal K\subset\Delta_F^\circ$ and $T\ge1$
\[
H_{A,\nu,e}(T\alpha)=(2\pi T)^{-(k-1)/2}e^{T\langle\alpha,\theta(\alpha)\rangle}\,
\frac{(\nu\cdot r_\alpha)(l_\alpha\cdot e)}{\sqrt{\det\Sigma_\alpha}}\,\bigl(1+O(1/T)\bigr).
\]
\end{lemma}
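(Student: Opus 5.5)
The plan is to rerun the proof of Theorem~\ref{thm:local}(a) word for word with $(A,\nu,e)$ in place of $(A_F,\mu,\ones)$, and to check at each step that only the sign pattern and graph of $A_F$, together with nonnegativity and nonvanishing of $\mu$ and the end vector, were used. The right tool is Lemma~\ref{lem:torus-complex} with $\bar A=A$, $\bar\nu=\nu$, $\bar e=e$. Since all weights are nonnegative, its series have nonnegative coefficients, so the torus representation of $H_{A,\nu,e}$ holds at any radii $s'$ with $\varrho(A'-\operatorname{diag}s')<0$. It also gives $|g(z')|\le g(s')$ and $|b(z')|\le b(s')$ there. Definition~\ref{def:tilt}, Proposition~\ref{prop:rate}(b) and Proposition~\ref{prop:constant}(a) apply to any irreducible nonnegative $A$ with zero diagonal, because the diagonal $q$ plays no role in $\theta$, $r_\alpha$, $l_\alpha$ or $\Sigma_\alpha$. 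So these objects are real-analytic, and $\Sigma_\alpha$ is positive definite on $\Delta_F^\circ$.

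The steps are as follows. First, Step~1 of the proof of Lemma~\ref{lem:upper} gives \eqref{eq:A-radii} for $\theta=\theta(\alpha)$, since $A-\operatorname{diag}\theta$ is irreducible Metzler with positive null vector $r_\alpha$. This yields $\theta>0$, $\varrho(A'-\operatorname{diag}\theta')<0$ and $g(\theta')=\theta_1$. Second, the real-point domination $\operatorname{Re}\Psi(\alpha,\varphi)\le\Psi(\alpha,0)-\sum_{j\ge2}\alpha_j\theta_j(1-\cos\varphi_j)$ goes through unchanged. Third, the derivative computation via $\lambda(g(z'),z')=0$ with $\lambda(z)=\varrho(A-\operatorname{diag}z)$ uses only the conjugation $A-\operatorname{diag}(\theta-\eta)=R(\widehat Q_\alpha+\operatorname{diag}\eta)R^{-1}$ and Lemma~\ref{lem:perron}. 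It gives $g_j=-\alpha_j/\alpha_1$, $\mathcal H=B^\top\Sigma_\alpha B$ and $\det\mathcal H=\det\Sigma_\alpha/\alpha_1^2$ verbatim.

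The constant comes from the block relations $r'=r_1X(\theta')b_1$ and $l'=l_1X(\theta')^\top a_1$. These give $\nu\cdot r_\alpha=r_1(\nu_1+\nu'^\top X(\theta')b_1)$ and $l_\alpha\cdot e=l_1(e_1+a_1^\top X(\theta')e')$, hence $\alpha_1b(\theta')=(\nu\cdot r_\alpha)(l_\alpha\cdot e)$.

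The regularity argument, that $\Psi$ and $\chi$ are real-analytic near $\{\alpha_0\}\times\mathbb T^{k-1}$ by holomorphy of $X$, $g$ and $b$ on a slightly smaller polyradius, carries over, and a finite cover bounds the derivatives on $\mathcal K$. Lemma~\ref{lem:laplace} then gives the display, with $e^{T\langle\alpha,\theta\rangle}$ in place of $e^{-TJ_F}$ because the factor $e^{-T\langle q,\alpha\rangle}$ of $f_T$ is absent for the bare sum $H$. Collecting the powers of $T$ and $2\pi$ as in \eqref{eq:A-fT} gives $(2\pi T)^{-(k-1)/2}$.

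The one real obstacle is the lower bound on the amplitude that converts the additive $O(1/T)$ of Lemma~\ref{lem:laplace} into a relative error. This requires $\chi(\alpha,0)=b(\theta')\prod_{j\ge2}\theta_j>0$, that is, $(\nu\cdot r_\alpha)(l_\alpha\cdot e)>0$. Here $r_\alpha$ and $l_\alpha$ are strictly positive by irreducibility and Perron--Frobenius, and $\nu,e\ge0$ are nonzero, so both inner products are positive. This replaces the use of $\mu(F)>0$ from (H). Continuity on the compact set $\mathcal K$ then bounds $\chi(\alpha,0)$ below uniformly, and the result follows.
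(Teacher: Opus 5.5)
Your proposal is correct and follows the paper's own proof. In both, the proof of Theorem~\ref{thm:local}(a) is rerun with Lemma~\ref{lem:torus-complex} for $(\bar A,\bar\nu,\bar e)=(A,\nu,e)$ in place of Lemma~\ref{lem:torus}, using that Definition~\ref{def:tilt}, Proposition~\ref{prop:constant}(a) and Lemma~\ref{lem:perron} see $A_F$ only through its sign pattern and graph. The constant is then recomputed from $r'=r_1X(\theta')b_1$ and $l'=l_1X(\theta')^\top a_1$, and positivity of the amplitude comes from $r_\alpha,l_\alpha>0$ with $\nu,e\ge0$ nonzero, in place of (H).
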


\begin{proof}
Definition~\ref{def:tilt}, Proposition~\ref{prop:constant}(a) and Lemma~\ref{lem:perron} involve
$A_F$ only as a nonnegative matrix with zero diagonal and a strongly connected graph, so they
hold for $A$. We repeat the proof above with Lemma~\ref{lem:torus-complex}, for
$(\bar A,\bar\nu,\bar e)=(A,\nu,e)$, in place of Lemma~\ref{lem:torus}. The paragraphs on the
hypotheses and on the regularity do not change. In the paragraph on the constant the identities
$r'=r_1X(\theta')b_1$ and $l'=l_1X(\theta')^\top a_1$ give $\nu\cdot r_\alpha=r_1(\nu_1+\nu'^\top
X(\theta')b_1)$ and $l_\alpha\cdot e=l_1(e_1+a_1^\top X(\theta')e')$, and $l_1r_1=\alpha_1$ gives
the formula for $b(\theta')$. It is positive since $r_\alpha,l_\alpha>0$ and $\nu,e\ge0$ are
nonzero. Then~\eqref{eq:A-fT} without the factor $T^{k-1}e^{-T\langle q,\alpha\rangle}$ and the
Laplace lemma give the display.
\end{proof}

\subsection{Proof of \texorpdfstring{Lemma~\ref{lem:discrete}}{the discrete
law}}

The discrete law of Lemma~\ref{lem:runs} and the continuum density of
Lemma~\ref{lem:continuum} are sums over the same vectors $m$ of run counts, and
we compare them term by term.

\emph{Step 1. The ratio of the terms.} Let $t=hn$ and
$\alpha=n/N\in\mathcal K$. Lemma~\ref{lem:continuum} gives the identity
$N^{-(k-1)}f_T(n/N)=h^{k-1}e^{-\langle q,t\rangle}H(t)=\sum_mv_m$ with
\[
v_m=W(m)\,h^{k-1}\prod_{i\in F}e^{-q_it_i}\frac{t_i^{m_i-1}}{(m_i-1)!},
\qquad m\in\Z_{\ge1}^F .
\]
Since $h^{m_i-1}\binom{n_i-1}{m_i-1}=\frac{t_i^{m_i-1}}{(m_i-1)!}
\prod_{1\le j<m_i}(1-j/n_i)$, the $m$-th term of Lemma~\ref{lem:runs} is
$\kappa_mv_m$ with
\[
\kappa_m=\prod_{i\in F}\Bigl[\prod_{1\le j<m_i}\Bigl(1-\frac j{n_i}\Bigr)\Bigr]
(1-hq_i)^{n_i-m_i}e^{hq_in_i}.
\]
Let $w_m=v_m/\sum_{m'}v_{m'}$, a probability on the vectors $m$ since
$H(t)>0$ under (H). We write $\E_w$ and $\Prob_w$ for it and regard
$M=\sum_im_i$ as a random variable. By Lemma~\ref{lem:runs} the ratio in the
lemma is $\sum_m\kappa_mv_m/\sum_mv_m=\E_w\kappa_m$.

\emph{Step 2. Bounds for $\kappa_m$.} Let $n_{\min}=\min_in_i$, and let $N$ be so large that $hq_{\max}\le\frac12$, which is possible since $h\le N^{-1/2}$. We claim that
for every $m$
\begin{equation}\label{eq:A-kappa}
\Bigl(1-\frac{M^2}{2n_{\min}}\Bigr)e^{-q_{\max}^2T^2/N}\le\kappa_m\le
e^{hq_{\max}M}.
\end{equation}
First let $m_i\le n_i$ for every $i$. The factors $1-j/n_i$ are at most $1$,
and $(1-hq_i)^{n_i-m_i}\le e^{-hq_i(n_i-m_i)}$, so
$\kappa_m\le e^{hq_{\max}M}$. For the lower bound,
$\log(1-x)\ge-x-x^2$ for $0\le x\le\frac12$ gives
$(1-hq_i)^{n_i-m_i}e^{hq_in_i}\ge e^{-n_ih^2q_i^2}$, and the product of these
over $i$ is at least $e^{-q_{\max}^2T^2/N}$. The factors $1-j/n_i$ lie in
$[0,1]$, so their product is at least $1$ minus the sum of the $j/n_i$, and
hence at least $1-\sum_im_i^2/(2n_i)\ge1-M^2/(2n_{\min})$. Now let
$m_i>n_i$ for some $i$. Then the $m$-th term of Lemma~\ref{lem:runs} is $0$, so $\kappa_m=0$, while $M\ge n_{\min}+1$ gives
$M^2/(2n_{\min})\ge2$, so the left side of~\eqref{eq:A-kappa} is negative.

By~\eqref{eq:A-kappa} it remains to bound $\E_we^{hq_{\max}M}$ and
$\E_wM^2$ from above.

\emph{Step 3. The number of runs under $w$.} Since $W$ for $\sigma A_F$ is
$\sigma^{M-1}W(m)$, we have $\E_w\sigma^{M-1}=H_\sigma(t)/H(t)$. So the
function $\Upsilon(c)=\log\E_we^{c(M-1)}=\log H_{e^c}(t)-\log H(t)$ is finite. It
is convex with $\Upsilon(0)=0$, as the logarithm of a moment generating function,
and $\Upsilon(1)\ge0$ since $M\ge1$. So $\Upsilon(c)\le c\,\Upsilon(1)$ for $0\le c\le1$.
We show that $\Upsilon(1)\le C_{\mathcal K}T$, with an upper bound for $H_e$ and
a lower bound for $H$ at $t=T\alpha$. By~\eqref{eq:A-Hsigma} with $\sigma=e$
and $\theta=\theta(\alpha)$,
$H_e(T\alpha)\le e^{k-1}b(\theta')\prod_{i\ge2}\theta_i\,
e^{eT\langle\alpha,\theta\rangle}$. For the lower bound,
$H(T\alpha)e^{-T\langle\alpha,\theta\rangle}=T^{-(k-1)}e^{TJ_F(\alpha)}f_T(\alpha)$,
by the definition of $f_T$ and $J_F(\alpha)=\langle q-\theta,\alpha\rangle$.
By Theorem~\ref{thm:local}(a) this is at least
$\frac12T^{-(k-1)/2}C_F(\alpha)$ for $T\ge T_1$, and for $1\le T\le T_1$ it is
continuous and positive on $[1,T_1]\times\mathcal K$. So in both cases
$H(T\alpha)\ge c_{\mathcal K}T^{-(k-1)/2}e^{T\langle\alpha,\theta\rangle}$.
Since $\theta(\alpha)$ and $b(\theta'(\alpha))$ are bounded on $\mathcal K$,
dividing the two bounds gives $\Upsilon(1)\le C_{\mathcal K}T$.

\emph{Step 4. The two averages.} Let $x=hq_{\max}\le\frac12$. Then
$\E_we^{xM}=e^{x+\Upsilon(x)}\le e^{x(1+C_{\mathcal K}T)}=1+O(T^2/N)$, because
$x\le q_{\max}T/N$ and $T^2/N\le1$. For the second moment we use the Chernoff
bound $\Prob_w(M-1\ge y)\le e^{\Upsilon(1)-y}$ for $y\ge\Upsilon(1)$ and the bound $1$
for $y<\Upsilon(1)$. This gives
\begin{align*}
\E_w(M-1)^2&=\int_0^\infty2y\,\Prob_w(M-1\ge y)\,dy\\
&\le\Upsilon(1)^2+\int_{\Upsilon(1)}^\infty2ye^{\Upsilon(1)-y}\,dy
=\Upsilon(1)^2+2\Upsilon(1)+2,
\end{align*}
so $\E_wM^2=O(T^2)$. Finally $n_{\min}\ge\varepsilon_{\mathcal K}N$ with
$\varepsilon_{\mathcal K}=\min_{\alpha\in\mathcal K}\min_i\alpha_i>0$. So
by~\eqref{eq:A-kappa}, $\E_w\kappa_m$ lies between
$e^{-q_{\max}^2T^2/N}(1-\E_wM^2/(2\varepsilon_{\mathcal K}N))$ and
$\E_we^{hq_{\max}M}$, and both are $1+O(T^2/N)$, uniformly for
$\alpha\in\mathcal K$ and $1\le T\le N^{1/2}$. \qed

\subsection{The derivative in \texorpdfstring{$T$}{T}}

Theorem~\ref{thm:crossing}(iv) needs the derivative of the local law in $T$. For fixed $N$ and
$n$, $P_N(n)$ is a polynomial in $T$, and we differentiate it at fixed $N$ and $n$.

\begin{lemma}[the derivative in $T$]\label{lem:derivative}
Let $F$ satisfy \textup{(H)} with $k\ge2$, and let $\mathcal K\subset\Delta_F^\circ$ be compact.
There are $T_1$, $N_0$ and $C$ such that for $N\ge N_0$, $\supp n=F$, $n/N\in\mathcal K$ and
$T_1\le T\le N^{1/3}$,
\[
\Bigl|\partial_T\log P_N(n)+J_F(n/N)\Bigr|\le\frac CT .
\]
\end{lemma}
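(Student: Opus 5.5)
We differentiate the run expansion of Lemma~\ref{lem:runs} term by term in $T$ at fixed $N$ and $n$, and read off the derivative as an average of a simple function of the run counts $m$ under the weights of the proof of Lemma~\ref{lem:discrete}. With $h=T/N$, the $m$-th term is $W(m)h^{M-1}\prod_i\binom{n_i-1}{m_i-1}(1-hq_i)^{n_i-m_i}$. Its logarithmic derivative in $T$ is
\[
\frac{M-1}{T}-\frac1N\sum_{i\in F}\frac{q_i(n_i-m_i)}{1-hq_i}.
\]
So $\partial_T\log P_N(n)=\E_{\tilde w}[(M-1)/T]-\langle q,\alpha\rangle+O(h+\E_{\tilde w}M/N)$, where $\tilde w_m\propto\kappa_mv_m$ is the probability on $m$ that $P_N(n)$ itself defines, $\alpha=n/N$, and $1/(1-hq_i)=1+O(h)$. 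For $T\le N^{1/3}$ the terms $h$ and $\E M/N$ are $O(T/N)=O(1/T)$, once we know $\E_{\tilde w}M=O(T)$. Since $J_F(\alpha)=\langle q-\theta(\alpha),\alpha\rangle$, it remains to show
\[
\E_{\tilde w}(M-1)=T\langle\theta(\alpha),\alpha\rangle+O(1)
\]
uniformly on $\mathcal K$.

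For the continuum weights $w$ we have an exact identity. By Step~3 of the proof of Lemma~\ref{lem:discrete}, $\E_w\sigma^{M-1}=H_\sigma(t)/H(t)$, so $\E_w(M-1)=\partial_\sigma\log H_\sigma(T\alpha)$ at $\sigma=1$. Scaling gives $H_\sigma(t)=\sigma^{k-1}H(\sigma t)$, because the $m$-th term scales as $\sigma^{M-1}=\sigma^{k-1}\prod_i\sigma^{m_i-1}$. Hence
\[
\E_w(M-1)=(k-1)+\frac{d}{d\sigma}\log H(\sigma T\alpha)\Big|_{\sigma=1}=(k-1)+T\frac{d}{dT}\log H(T\alpha).
\]
This equals $(k-1)+T\,\partial_T[\log f_T(\alpha)-(k-1)\log T+T\langle q,\alpha\rangle]$. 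The derivative of $\log f_T(\alpha)$ in $T$ comes from the torus formula~\eqref{eq:A-fT}. Differentiating under the integral brings down $\Psi$, and the Laplace lemma applied to $\Psi\chi$ in place of $\chi$ gives the ratio $\Psi(\alpha,0)+O(1/T)=\langle\alpha,\theta\rangle+O(1/T)$. This yields $\E_w(M-1)=T\langle\theta,\alpha\rangle+O(1)$.

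The main obstacle is passing from $w$ to $\tilde w$. We have $\tilde w_m=\kappa_mw_m/\E_w\kappa_m$, with $\E_w\kappa_m=1+O(T^2/N)$ by Lemma~\ref{lem:discrete}, and $\kappa_m$ obeys the bounds~\eqref{eq:A-kappa}, so $|\kappa_m-1|\le C(M^2/N+hM+T^2/N)$. Then
\[
|\E_{\tilde w}M-\E_wM|\le C\,\E_w\bigl[M(M^2/N+hM+T^2/N)\bigr]+C\,\E_wM\cdot T^2/N.
\]
The moments $\E_wM^j=O(T^j)$ follow from the Chernoff bound with $\Upsilon(1)\le CT$, exactly as in Step~4. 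The right side is then $O(T^3/N)=O(1)$ for $T\le N^{1/3}$, which is where this range is used.

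Dividing by $T$ gives $\partial_T\log P_N(n)=\langle\theta,\alpha\rangle-\langle q,\alpha\rangle+O(1/T)=-J_F(\alpha)+O(1/T)$. The threshold $T_1$ is needed so that the Laplace expansions and the moment bounds are in force. The delicate point to check is that the term-by-term differentiation and the identity for $\E_w(M-1)$ hold with uniform constants on $\mathcal K$; this follows from the absolute convergence in Lemma~\ref{lem:torus}.
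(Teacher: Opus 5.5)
Your argument is correct, but its key step differs from the paper's. Both proofs start from the same identity, $T\partial_T\log P_N(n)=\E_{\tilde w}(M-1)-\sum_i\frac{hq_i}{1-hq_i}\E_{\tilde w}(n_i-m_i)$. Both then reduce the lemma to $\E_{\tilde w}(M-1)=T\langle\theta(\alpha),\alpha\rangle+O(1)$. From there the routes separate.

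\emph{The paper's route.} It never compares moments under $w$ and $\tilde w$. It works with the cumulant generating function $\Upsilon(c)=\log\E_{\tilde w}e^{c(M-1)}=\log P^{(c)}-\log P^{(0)}$ of the discrete weights. It reruns Lemma~\ref{lem:discrete} for the scaled rates $e^cA_F$, uniformly in $|c|\le1$, and applies Theorem~\ref{thm:local}(a) at the times $e^cT$. This gives $\Upsilon(c)=\frac{k-1}2c+(e^c-1)T\langle\theta,\alpha\rangle+O(1/T)$. Convexity of $\Upsilon$, with difference quotients of step $1/T$, then turns these values into $\Upsilon'(0)$ up to $O(1)$, so no asymptotic expansion is ever differentiated.

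\emph{Your route.} You use the exact identity $\E_w(M-1)=(k-1)+T\frac{d}{dT}\log H(T\alpha)$. You differentiate the torus integral \eqref{eq:A-fT}, which is legitimate because the radii $\theta'(\alpha)$ do not depend on $T$. You apply Lemma~\ref{lem:laplace} with the amplitude $\Psi\chi$, and then transfer from $w$ to $\tilde w$ by hand using third moments.

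\emph{What each buys.} Your route needs no variant of Lemma~\ref{lem:discrete} for other rates, but it reopens the torus representation and needs the moment transfer. The paper's route uses Lemma~\ref{lem:discrete} and Theorem~\ref{thm:local}(a) only as black boxes. The range $T\le N^{1/3}$ enters the same way in both: as $T^2/N\le1/T$ in the paper, and as $T^3/N=O(1)$ in your transfer.

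\emph{A small repair.} Inequality \eqref{eq:A-kappa} only gives $\kappa_m-1\le e^{hq_{\max}M}-1$, which is not $O(hM)$ when $hM$ is large. So your bound $|\kappa_m-1|\le C(M^2/N+hM+T^2/N)$ does not follow from it as cited. The bound is nevertheless true. Combine $\prod_{1\le j<m_i}(1-j/n_i)\le e^{-m_i(m_i-1)/(2n_i)}$ with $(1-hq_i)^{n_i-m_i}e^{hq_in_i}\le e^{hq_im_i}$. This gives $\kappa_m\le\exp\bigl(\sum_i(hq_im_i-m_i(m_i-1)/(2n_i))\bigr)\le e^{C(T^2/N+h)}$, which is bounded and so covers $hM\ge1$.

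Alternatively, bound $\E_w[M(e^{hq_{\max}M}-1)]$ by $hq_{\max}(\E_wM^4)^{1/2}(\E_we^{2hq_{\max}M})^{1/2}$, using Cauchy--Schwarz. The exponential moment is $1+O(T^2/N)$ by the argument of Step~4 of the proof of Lemma~\ref{lem:discrete}.

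Finally, term-by-term differentiation of the discrete law needs no convergence argument. For fixed $n$ the sum in Lemma~\ref{lem:runs} is finite.
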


\begin{proof}
Put $\alpha=n/N$ and $t=hn$. Let $\tilde w$ be the probability on the vectors $m$ proportional to
the terms of Lemma~\ref{lem:runs}. Since $T\partial_T=h\partial_h$ and the $m$-th term has
the logarithmic derivative $(M-1)-\sum_i(n_i-m_i)hq_i/(1-hq_i)$ in $\log h$,
\[
T\partial_T\log P_N(n)=\E_{\tilde w}(M-1)-\sum_{i\in F}\frac{hq_i}{1-hq_i}\,\E_{\tilde w}(n_i-m_i).
\]
For $c\in\R$ let $P^{(c)}$ be the sum of Lemma~\ref{lem:runs} with $e^cA_F$ in place of $A_F$,
that is with $e^{c(M-1)}W(m)$ in place of $W(m)$ and the same factors $(1-hq_i)^{n_i-m_i}$. Then
$\Upsilon(c)=\log P^{(c)}-\log P^{(0)}=\log\E_{\tilde w}e^{c(M-1)}$ is convex, and
$\Upsilon'(0)=\E_{\tilde w}(M-1)$.

\emph{Step 1. The asymptotics of $\Upsilon$.} The proof of Lemma~\ref{lem:discrete} uses $A_F$
only through the nonnegative weights $W(m)$ and through bounds on its continuum sum $H$. For
$e^cA_F$ this sum is $H_{e^c}(t)=e^{c(k-1)}H(e^ct)$, since a word with $M$ letters has $M-1$
switches and $M-k$ is the total degree of $\prod_it_i^{m_i-1}$. Let $|c|\le1$ and $T\ge e$. Step~3
of that proof needs the ratio $H_{e^{c+1}}(t)/H_{e^c}(t)=e^{k-1}H(e^{c+1}t)/H(e^ct)$ to be at most
$e^{C_{\mathcal K}T}$. This follows from~\eqref{eq:A-Hsigma} and from the lower bound for $H$ at
time $e^cT\ge1$ in that step. So Lemma~\ref{lem:discrete} holds for the weights $e^cA_F$,
uniformly for $|c|\le1$, and gives
$P^{(c)}=h^{k-1}e^{-\langle q,t\rangle}e^{c(k-1)}H(e^ct)(1+O(T^2/N))$. By
Lemma~\ref{lem:continuum} and $J_F(\alpha)=\langle q-\theta(\alpha),\alpha\rangle$,
Theorem~\ref{thm:local}(a) at time $e^cT$ reads
$H(e^cT\alpha)=(e^cT)^{-(k-1)/2}C_F(\alpha)e^{e^cT\langle\theta(\alpha),\alpha\rangle}(1+O(1/T))$.
Hence, uniformly for $|c|\le1$, $\alpha\in\mathcal K$ and $e\le T\le N^{1/3}$,
\[
\Upsilon(c)=a(c)+\varepsilon(c),\qquad a(c)=\frac{k-1}2\,c+(e^c-1)\,T\langle\theta(\alpha),\alpha\rangle,
\qquad|\varepsilon(c)|\le\frac{C_1}T,
\]
where we used $T^2/N\le1/T$.

\emph{Step 2. Convexity.} We have $a'(0)=\frac{k-1}2+T\langle\theta(\alpha),\alpha\rangle$ and
$0\le a''\le C_2T$ on $[-1,1]$, since $\theta$ is bounded on $\mathcal K$. Let $\eta=1/T$. By
convexity, $(\Upsilon(0)-\Upsilon(-\eta))/\eta\le\Upsilon'(0)\le(\Upsilon(\eta)-\Upsilon(0))/\eta$.
By Taylor's theorem both quotients differ from $a'(0)$ by at most $C_2T\eta/2+2C_1/(T\eta)$,
which is $C_2/2+2C_1$. So $\E_{\tilde w}(M-1)=T\langle\theta(\alpha),\alpha\rangle+O(1)$, and in
particular $\E_{\tilde w}M=O(T)$.

\emph{Step 3. The other term.} Since $hn_i=T\alpha_i$ and $hq_{\max}\le\frac12$,
$\sum_ihq_in_i/(1-hq_i)=T\langle q,\alpha\rangle+O(Th)$ and
$0\le\sum_ihq_i\E_{\tilde w}m_i/(1-hq_i)\le2hq_{\max}\E_{\tilde w}M=O(T^2/N)$. Altogether
$T\partial_T\log P_N(n)=T\langle\theta(\alpha),\alpha\rangle-T\langle q,\alpha\rangle+O(1)
=-TJ_F(\alpha)+O(1)$, and we divide by $T$.
\end{proof}

\subsection{Proof of \texorpdfstring{Theorem~\ref{thm:first}(ii)}{the
first-order bounds}}\label{app:first}

Let $F$ be admissible, with components $C_1,\dots,C_r$. If $F$ is irreducible,
then $r=1$, and the argument below uses only Lemma~\ref{lem:upper} and an explicit
family of paths, not the local law.

For the upper bound we apply Lemma~\ref{lem:upper} to every block of a path.
For the lower bound a component of least exit rate gets almost all the time,
and every other component $C$ gets only about $N/T$ steps, the time scale of a
single switch. Such a short block has probability bounded below and only about
$(N/T)^{|C|-1}$ compositions instead of $N^{|C|-1}$.

If $Q=0$, only singletons with positive start mass are admissible, and the bounds
follow from $M_{\{i\}}=\mu_i$. Otherwise let $a$ be the least positive rate $q_{ij}$.
Since $T\le N^{1/3}$, we have
$h\le N^{-2/3}$ and $T^2/N\le1$, and we take $N_0$ so large that $h$ is as
small as we need. As in the
proof of Lemma~\ref{lem:admissible}, a path of positive probability with
support $F$ visits each component in one block of consecutive times, and it
passes from one block to the next by one switch. We call the order of the
blocks, the first state and these switches the pattern of the path. There are
finitely many patterns.

\emph{Upper bound.} Fix a pattern whose $j$-th block lies in the component
$D_j$ and starts at $y_j$, and whose switch out of block $j$ is
$x_j\to y_{j+1}$ for $j<r$. So block $j$ ends at $x_j$ for $j<r$, and the
pattern does not fix the end of the last block. Let $\supp n=F$. Block $j$ has
length $N_j=\sum_{i\in D_j}n_i$. By the Markov property the paths with this
pattern and composition $n$ have total probability
$\mu_{y_1}\prod_{j<r}hq_{x_jy_{j+1}}\prod_{j\le r}\varpi_j$. Here $\varpi_j$ is the
probability that the chain started at $y_j$ stays in $D_j$ for $N_j$ visits,
with the composition that $n$ prescribes on $D_j$, and for $j<r$ also ends
at $x_j$. We drop the condition on the end. Then $\varpi_j$ is at most the probability that
$X_1,\dots,X_{N_j}$ have that composition for the chain started at $y_j$. If
$|D_j|\ge2$, Lemma~\ref{lem:upper} with start $y_j$ and
$\theta=q-\lambda_{D_j}\ones$ gives
$\varpi_j\le\kappa_1h^{|D_j|-1}e^{-h\lambda_{D_j}N_j}$, where
$\kappa_1$ is the largest of these finitely many constants. If $D_j=\{i\}$,
then $\lambda_{D_j}=q_i$ and
$\varpi_j=(1-hq_i)^{N_j-1}\le e^{1/2-h\lambda_{D_j}N_j}$. We multiply over
$j$. The powers of $h$ add up to $\sum_j(|D_j|-1)+r-1=k_F-1$. In the
exponent, $\lambda_{D_j}\ge\lambda_F$ and $\sum_jN_j=N$ give
$h\sum_j\lambda_{D_j}N_j\ge T\lambda_F$. Summing over the patterns
gives $P_N(n)\le\kappa_2(T/N)^{k_F-1}e^{-T\lambda_F}$.

\emph{Lower bound.} We write down a family of paths with support $F$, bound
its probability from below and count its compositions. By
Lemma~\ref{lem:admissible}(a) we can number the components so that there are
$y_1\in C_1$ with $\mu_{y_1}>0$ and $x_j\in C_j$, $y_{j+1}\in C_{j+1}$ with
$q_{x_jy_{j+1}}>0$ for $j<r$. Take any $x_r\in C_r$, choose $j^*$ with
$\lambda_{C_{j^*}}=\lambda_F$ and $|C_{j^*}|=k^*$, and put
$m=\lceil N/T\rceil$, so that $1\le hm\le2$.

\emph{Claim A.} There is $c_A>0$ such that for every component $C$ and all
$y,x\in C$, the chain started at $y$ stays in $C$ for its first $m$ visits,
visits every state of $C$ and has $X_m=x$ with probability at least $c_A$.
Take a walk $y=v_0,\dots,v_J=x$ through every state of $C$ with
$q_{v_sv_{s+1}}>0$ and $J\le|C|^2$. Consider the paths $X_1,\dots,X_m$ that
make exactly these $J$ switches, in this order, at any $J$ of the $m-1$
transition times. There are $\binom{m-1}J$ of them, and each has probability
at least $(ha)^J(1-hq_{\max})^m$. For large $N$ we have $m>|C|^2\ge J$. Then
$\binom{m-1}Jh^J\ge(hm-Jh)^J/J!\ge(1-Jh)^J/J!$ and
$(1-hq_{\max})^m\ge e^{-2hq_{\max}m}\ge e^{-4q_{\max}}$, since
$1\le hm\le2$. So their total probability is at least
$(1-Jh)^Ja^Je^{-4q_{\max}}/J!$, and $(1-Jh)^J\ge2^{-J}$ for small $h$.

\emph{Claim B.} There is $c_B>0$ such that for all $y,x\in C_{j^*}$ and
$\ell\ge2m$, the event of Claim~A, with $C_{j^*}$ in place of $C$ and $\ell$
visits in place of $m$, has probability at least $c_Be^{-h\lambda_F\ell-h^2\lambda_F^2\ell}$. Let $w>0$
satisfy $Q_{C_{j^*}}w=-\lambda_Fw$ and let $B=I+hQ_{C_{j^*}}\ge0$. Then
$(B^s)_{zz'}$ is the probability of going from $z$ to $z'$ in $s$ steps
inside $C_{j^*}$. Since $B^sw=(1-h\lambda_F)^sw$, the probability of staying
in $C_{j^*}$ for $s$ steps from $z$ is
$\sum_{z'}(B^s)_{zz'}\ge(B^sw)_z/\max w\ge(1-h\lambda_F)^s\min w/\max w$. We
use Claim A for the first $m$ visits, from $y$ back to $y$, then
$s=\ell-2m+1$ steps inside $C_{j^*}$ to any state $z'$, then Claim A for the
last $m$ visits, from $z'$ to $x$. By the Markov property this gives at least
$c_A^2(\min w/\max w)(1-h\lambda_F)^\ell$. Also $\lambda_F\le q_{\max}$,
because the Perron root of $Q_{C_{j^*}}$ is at least each diagonal entry. So
$h\lambda_F\le\frac12$ and $\log(1-h\lambda_F)\ge-h\lambda_F-h^2\lambda_F^2$.

\emph{The family of paths.} Give block $j\ne j^*$ the length $m$ and block
$j^*$ the length $\ell=N-(r-1)m$, which is at least $2m$ for $T\ge T_0=r+2$
and $N\ge N_0$. Consider the paths that go from $y_j$ to $x_j$ in block $j$
as in the claims and pass from block $j$ to block $j+1$ by the switch
$x_j\to y_{j+1}$. They all have support $F$. By the Markov property, and
since $h\ell\le T$ and $h^2\ell\le T^2/N\le1$, their total probability is at
least $\mu_{y_1}(ha)^{r-1}c_A^{r-1}c_Be^{-T\lambda_F-q_{\max}^2}$. The
composition of such a path is fixed by its restrictions to the blocks. On
the block in $C_j$ the restriction is a vector of $|C_j|$ positive integers
whose sum is $m$ for $j\ne j^*$ and $\ell$ for $j=j^*$. A vector of
positive integers with a given sum is fixed by all but one of its entries,
and each entry is at most the sum. Also $\ell\le N$. So these paths
have at most $(N+1)^{k^*-1}\prod_{j\ne j^*}(m+1)^{|C_j|-1}$ compositions,
and $m+1\le3N/T$. Since $\sum_{j\ne j^*}(|C_j|-1)=k_F-k^*-(r-1)$ and $h=T/N$,
dividing the total probability by the number of compositions gives
$M_F\ge c\,N^{-(k_F-1)}T^{k_F-k^*}e^{-T\lambda_F}$. \qed

\subsection{The local law over the whole switching range}\label{app:full-range}

Here $F$ is irreducible with $k\ge2$ states, $h_*$ is fixed with $h_*q_{\max}<1$,
$p_i=1-hq_i$ and $p_{\min}=1-h_*q_{\max}>0$. Let $\Omega$ be the open set of
$s'\in(0,\infty)^{k-1}$ with $\varrho(A'-\operatorname{diag}s')<0$, and for
$s'\in\Omega$ put $s_1=g(s')$ and
\[
\Psi_h(\alpha;s')=-\sum_{i\in F}\alpha_iL_h(q_i-s_i),
\]
with $L_h$ from Section~\ref{sec:discrete-rate}. For $h>0$ this is
$\frac1h\sum_i\alpha_i\log(p_i+hs_i)$.

\begin{lemma}[the discrete Hessian]\label{lem:discrete-hessian}
Let $(h,\alpha)\in[0,h_*]\times\Delta_F^\circ$, and let $\theta=\theta^{(h)}(\alpha)$,
$r$, $l$ and $c_h=c_h(\alpha)$ be as in the definition of $C^{(h)}_F$ in
Section~\ref{sec:locallaw}. Put $\pi=l\circ r$.
\begin{enumerate}[(a)]
\item The point $\theta'$ is the only minimizer of $\Psi_h(\alpha;\cdot)$ on $\Omega$,
$\Psi_h(\alpha;\theta')=-J^{(h)}_F(\alpha)$, and the Hessian $H_h$ of
$\Psi_h(\alpha;\cdot)$ at $\theta'$ is positive definite.
\item There is a $(k-1)\times(k-1)$ matrix $K$ with
$\det K=1/(c_h\pi_1\prod_{i\in F}(p_i+h\theta_i))$ and
$\nabla^2J^{(h)}_F(\alpha)=K^\top H_h^{-1}K$. So $\nabla^2J^{(h)}_F(\alpha)$ is positive
definite.
\item $C^{(h)}_F(\alpha)=b(\theta')/\bigl((2\pi)^{(k-1)/2}\prod_{i\in F}(p_i+h\theta_i)\sqrt{\det H_h}\bigr)$,
and $C^{(h)}_F$ is positive and real-analytic on $[0,h_*]\times\Delta_F^\circ$ up to
$h=0$.
\end{enumerate}
\end{lemma}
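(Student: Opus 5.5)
We prove (a) through (c) in order, treating $\Psi_h(\alpha;\cdot)$ as the $h$-version of the phase $\Psi$ in the proof of Theorem~\ref{thm:local}(a), now on the real slice of the torus.

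For (a) we pass between $s'\in\Omega$ and positive vectors $u$. Given $u>0$, put $s=A_Fu/u$. By the argument before~\eqref{eq:A-radii}, $s\in V_F$, $s'\in\Omega$ and $g(s')=s_1$. Conversely, every $s'\in\Omega$ gives $u=(1,X(s')b_1)>0$ with $A_Fu/u=(g(s'),s')$. Hence
\[
-\Psi_h(\alpha;s')=\sum_i\alpha_iL_h(q_i-s_i)=G_h(\alpha,\log u),
\]
and minimizing $\Psi_h$ over $\Omega$ is the same as maximizing $G_h(\alpha,\cdot)$ modulo constants. By Proposition~\ref{prop:discrete-rate}(b) the maximizer is unique, so $\theta'$ is the unique minimizer and the value is $-J^{(h)}_F(\alpha)$. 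The map $\psi'\mapsto s'$ (with $\psi_1=0$) is a real-analytic diffeomorphism onto $\Omega$. Its inverse is $s'\mapsto\log(X(s')b_1)$, which is analytic by Lemma~\ref{lem:torus}. Since $\theta'$ is a critical point, the Hessians transform by congruence with this Jacobian. Negative definiteness of $\nabla^2_\psi G_h$ from Proposition~\ref{prop:discrete-rate}(b) then gives $H_h>0$.

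For (b) we repeat Steps 2--4 of the proof of Proposition~\ref{prop:constant}(a) with $L_h$ inserted. Write $\Psi_h(\alpha;s')=-\langle\alpha,\ell_h(s')\rangle$ with $\ell_{h,i}(s)=L_h(q_i-s_i)$ and $s_1=g(s')$. Then $J^{(h)}_F(\alpha)=\max_{s'}\langle\alpha,\ell_h(s')\rangle$, which is a Legendre-type envelope with $\alpha'$ entering through $\alpha_1=1-\sum_{j\ge2}\alpha_j$. By the envelope theorem, $\nabla_{\alpha'}J^{(h)}_F=\ell_h'(\theta')-\ell_{h,1}(\theta')\ones$ (the primed part minus the first entry). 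Differentiating the critical equation $\nabla_{s'}\Psi_h(\alpha;s')=0$ in $\alpha'$ gives $H_h\,\partial\theta'/\partial\alpha'=-K$. Here $K$ is the mixed derivative $\partial_{\alpha'}\nabla_{s'}\Psi_h$. Its entries are $-\delta_{ij}/(p_j+h\theta_j)-g_j/(p_1+h\theta_1)$, using $L_h'(y)=1/(1-hy)$. Consequently
\[
\nabla^2J^{(h)}_F=-K^\top\partial\theta'/\partial\alpha'=K^\top H_h^{-1}K.
\]
The entries of $K$ are, up to sign, $D^{-1}+(\text{rank one})$ with $D=\operatorname{diag}(p_j+h\theta_j)_{j\ge2}$. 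So the matrix determinant lemma evaluates $\det K$. This requires $g_j$ at $\theta'$. Lemma~\ref{lem:discrete-hessian} is stated for $h>0$ too, so $g_j$ must be found from Proposition~\ref{prop:discrete-rate}(c): $\lambda(g(z'),z')=0$ for $\lambda(z)=\varrho(A_F-\operatorname{diag}z)$ gives $g_j=-\pi_j/\pi_1$ with $\pi=l\circ r$. The critical equation states that $\alpha_i/(p_i+h\theta_i)\propto\pi_i$, with proportionality constant $c_h$. We expect the determinant to come out as $1/(c_h\pi_1\prod_i(p_i+h\theta_i))$ after this substitution, using $\sum_j\pi_j=1$. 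This bookkeeping is the main obstacle: the sign conventions and the placement of $c_h$ must be checked carefully. Positive definiteness follows once $\det K\ne0$.

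For (c) we substitute (b) into the definition of $C^{(h)}_F$. This gives
\[
\det\nabla^2J^{(h)}_F=(\det K)^2/\det H_h,
\]
so that
\[
C^{(h)}_F=c_h(\mu\cdot r)(l\cdot\ones)(2\pi)^{-(k-1)/2}\det H_h^{-1/2}\bigl/\bigl(c_h\pi_1\textstyle\prod_i(p_i+h\theta_i)\bigr).
\]
As in the proof of Theorem~\ref{thm:local}(a), $r'=r_1X(\theta')b_1$ and $l'=l_1X(\theta')^\top a_1$, hence $(\mu\cdot r)(l\cdot\ones)=l_1r_1\,b(\theta')=\pi_1b(\theta')$. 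Both $c_h$ and $\pi_1$ cancel, giving the stated formula. Positivity follows from (H) and $\pi>0$. Real-analyticity up to $h=0$ follows because $\theta^{(h)}(\alpha)$ is analytic by Proposition~\ref{prop:discrete-rate}(b), $b$ and $g$ are analytic on $\Omega$, $p_i+h\theta_i\ge p_{\min}>0$, and $\det H_h>0$ lets us take an analytic square root.
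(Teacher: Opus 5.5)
Your argument follows the paper's proof step for step. For (a) you use the same diffeomorphism $\psi\mapsto\theta(\psi)'$ from $\{\psi_1=0\}$ onto $\Omega$ and the same Hessian congruence. For (b) you use the same envelope theorem and implicit differentiation of the critical equation, giving $\nabla^2J^{(h)}_F=K^\top H_h^{-1}K$. For (c) you use the same identity $(\mu\cdot r)(l\cdot\ones)=\pi_1b(\theta')$. Your derivation of $g_j=-\pi_j/\pi_1$, by differentiating $\lambda(g(z'),z')=0$ with $\nabla\lambda(\theta)=-l\circ r$, is valid and slightly more direct than the paper's. The paper instead notes that $\log r$ attains the supremum defining $J_F(\pi)$, so $\theta=\theta(\pi)$, and reuses the proof of Theorem~\ref{thm:local}(a) at $\pi$. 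Your step rests on the first-order perturbation of a simple Perron root, not on Proposition~\ref{prop:discrete-rate}(c).

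The gap is the value of $\det K$. It is the substance of (b), and both the positive definiteness in (b) and the formula in (c) depend on it. You leave it as an expectation, and the entries you wrote would not produce it. Put $e_i=1/(p_i+h\theta_i)$ and $\alpha_1=1-\sum_{j\ge2}\alpha_j$. Then $\partial_{s_j}\Psi_h=\alpha_je_j+\alpha_1e_1g_j$, which vanishes at $\theta'$ as a check. Hence $\partial_{\alpha_i}\partial_{s_j}\Psi_h=\delta_{ij}e_j-e_1g_j=\delta_{ij}e_j+e_1\pi_j/\pi_1$; this is the transpose of the paper's $K$, with the same determinant. The diagonal term has the opposite sign to the one in your formula $-\delta_{ij}e_j-e_1g_j$. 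With your sign, the matrix determinant lemma gives $\pm\prod_{j\ge2}e_j\bigl(1-\frac{e_1}{\pi_1}\sum_{j\ge2}\pi_j/e_j\bigr)$, which is not the claimed value and could vanish. With the correct sign,
\[
\det K=\prod_{j\ge2}e_j\Bigl(1+\frac{e_1}{\pi_1}\sum_{j\ge2}\frac{\pi_j}{e_j}\Bigr)
=\frac{\prod_{i\in F}e_i}{\pi_1}\sum_{i\in F}\pi_i(p_i+h\theta_i).
\]
The normalization that closes this is $\sum_i\alpha_i=1$, not $\sum_j\pi_j=1$. The latter only identifies the constant in $\alpha_i=c\,\pi_i(p_i+h\theta_i)$ (Proposition~\ref{prop:discrete-rate}(c) with $\psi=\log r$) as $c=c_h$. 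Summing $\alpha_i$ then gives $\sum_i\pi_i(p_i+h\theta_i)=1/c_h$. Hence $\det K=1/(c_h\pi_1\prod_{i\in F}(p_i+h\theta_i))>0$, and with this inserted your (b) and (c) go through as written.
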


\begin{proof}
(a) The map $\psi\mapsto\theta(\psi)'$ is a real-analytic diffeomorphism from
$\{\psi\in\R^F:\psi_1=0\}$ onto $\Omega$. Indeed $\theta(\psi)=A_Fe^\psi/e^\psi$ lies
in $V_F$, so $\theta(\psi)'\in\Omega$ and $\theta_1(\psi)=g(\theta(\psi)')$
by~\eqref{eq:A-radii}. Conversely, for $s'\in\Omega$ the vector $u=(1,X(s')b_1)$ is
nonnegative by the first fact, and it is a null vector of
$A_F-\operatorname{diag}(g(s'),s')$, as in the proof of Theorem~\ref{thm:local}(a).
By the second fact it is positive, and $\psi=\log u$ has $\psi_1=0$ and
$\theta(\psi)=(g(s'),s')$. Both maps are real-analytic. Since
$\Psi_h(\alpha;\theta(\psi)')=-G_h(\alpha,\psi)$, part (a) follows from
Proposition~\ref{prop:discrete-rate}(b). The first derivatives vanish at the critical
point, so there the two Hessians are congruent through the Jacobian of the
diffeomorphism, which is invertible.

(b) Write $y_i(s')=L_h(q_i-s_i)$ with $s_1=g(s')$, so that, in the coordinates
$\alpha'$, $-\Psi_h=y_1+\sum_{j\ge2}\alpha_j(y_j-y_1)$. By (a),
$J^{(h)}_F(\alpha)=-\Psi_h(\alpha;\theta^{(h)}(\alpha)')$, and $\theta^{(h)}(\alpha)'$ is
real-analytic in $\alpha$. So the envelope theorem gives
$\partial J^{(h)}_F/\partial\alpha_j=y_j(\theta')-y_1(\theta')$ for $j\ge2$. Let
$K_{ij}=-\partial(y_j-y_1)/\partial s_i$ at $\theta'$, for $i,j\ge2$. Differentiating
$\nabla_{s'}\Psi_h(\alpha;\theta^{(h)}(\alpha)')=0$ in $\alpha'$ gives
$H_h\,\partial\theta'/\partial\alpha'=-K$, and hence
$\nabla^2J^{(h)}_F=-K^\top\partial\theta'/\partial\alpha'=K^\top H_h^{-1}K$.

To compute $K$ put $e_i=L_h'(q_i-\theta_i)=1/(p_i+h\theta_i)$, and let $g_i$ be the
derivatives of $g$ at $\theta'$. The vector $\psi=\log r$ attains the supremum
defining $J_F(\pi)$ by the criterion in the proof of Proposition~\ref{prop:rate},
since $\pi$ is the stationary law of $R^{-1}(A_F-\operatorname{diag}\theta)R$ with
$R=\operatorname{diag}r$. So $\theta=\theta(\pi)$, and the proof of
Theorem~\ref{thm:local}(a) at the point $\pi$ gives $g_i=-\pi_i/\pi_1$. Hence
$K_{ij}=\delta_{ij}e_j+e_1\pi_i/\pi_1$, that is
$K=\operatorname{diag}(e')+(e_1/\pi_1)\pi'\ones^\top$, and the matrix determinant lemma
gives $\det K=\prod_{i\in F}e_i\cdot\sum_{i\in F}\pi_i(p_i+h\theta_i)/\pi_1$. By
Proposition~\ref{prop:discrete-rate}(c) with $\psi=\log r$, $\alpha_i=c\,\pi_i(p_i+h\theta_i)$
for a constant $c$. Summing $\alpha_i/(p_i+h\theta_i)$ over $i$ gives $c=c_h$, and
summing $\alpha_i$ gives $\sum_i\pi_i(p_i+h\theta_i)=1/c_h$. This is the value of
$\det K$, which is not $0$, so $\nabla^2J^{(h)}_F(\alpha)$ is positive definite by (a).

(c) The identity $\alpha_1b(\theta')=(\mu\cdot r_\alpha)(l_\alpha\cdot\ones)$ of the
proof of Theorem~\ref{thm:local}(a), at the point $\pi$, gives
$b(\theta')=(\mu\cdot r)(l\cdot\ones)/\pi_1$, since $l\cdot r=1$. By (b),
$\sqrt{\det\nabla^2J^{(h)}_F}=|\det K|/\sqrt{\det H_h}$. Putting both into the
definition of $C^{(h)}_F$ gives the formula, which is positive. By
Proposition~\ref{prop:discrete-rate}(b), $\theta^{(h)}(\alpha)$ and $J^{(h)}_F$ are
real-analytic up to $h=0$, and so are $c_h$ and the normalized null vectors $r$ and $l$,
since $0$ is a simple eigenvalue of $A_F-\operatorname{diag}\theta$. As
$\det\nabla^2J^{(h)}_F>0$, $C^{(h)}_F$ is real-analytic up to $h=0$.
\end{proof}

\begin{proof}[Proof of Theorem~\ref{thm:local}(b)]
The field cancels from the two sides, so we take $v=0$. Let $\mathcal K$ be as in the
theorem and put $\mathcal P=[0,h_*]\times\mathcal K$. For $(h,\alpha)\in\mathcal P$ let
$\theta=\theta^{(h)}(\alpha)$ and $\Theta'=\operatorname{diag}(\theta')$.

\emph{The integral.} Let $1\le T\le h_*N$, $\supp n=F$ and $\alpha=n/N\in\mathcal K$, so
that $h=T/N\in(0,h_*]$ and $n_i=T\alpha_i/h$. By~\eqref{eq:A-radii} we may use the
discrete formula of Lemma~\ref{lem:torus} with the radii $\theta'$. It gives
$P_N(n)=h^{k-1}(2\pi)^{-(k-1)}\int_{\mathbb T^{k-1}}F_n\,d\varphi$, where $F_n$ is the
integrand. With $z_i=\theta_ie^{\mathrm i\varphi_i}$ and $z_1=g(z')$ put
\[
\Psi_h(\alpha,\varphi)=-\sum_{i\in F}\alpha_iL_h(q_i-z_i),\qquad
a_h(\alpha,\varphi)=\frac{b(z')\prod_{i\ge2}z_i}{\prod_{i\in F}(p_i+hz_i)},
\]
where $\operatorname{Re}(p_i+hz_i)>0$ for every $i$. There $F_n=e^{T\Psi_h}a_h$, since
$(p_i+hz_i)^{n_i}=\exp(n_i\log(p_i+hz_i))$ with the principal logarithm and
$n_i\log(p_i+hz_i)=-T\alpha_iL_h(q_i-z_i)$. The logarithm has no continuous branch on the whole
torus when $h\theta_i>p_i$, so we use these formulas only near $\varphi=0$.

\emph{Regularity on a cube.} The map $(h,\alpha)\mapsto\theta^{(h)}(\alpha)$ is
continuous on the compact set $\mathcal P$, so $\theta_i\in[\theta_{\min},\theta_{\max}]$
with $\theta_{\min}>0$, and $\varrho(A'-\operatorname{diag}\theta')$ is bounded above by
a negative constant. As in the regularity paragraph of the proof of (a), $X$, $g$ and
$b$ are holomorphic near the tori of radii $\theta'$, uniformly on $\mathcal P$, and
$g(z')$ is uniformly continuous in $(h,\alpha,\varphi)$. So there is
$\varepsilon_1\in(0,1)$ such that $|g(z')-\theta_1|\le\theta_{\min}$ when
$\lVert\varphi\rVert\le\varepsilon_1$. Then $\operatorname{Re}g(z')\ge0$ and
$\operatorname{Re}z_i>0$ for $i\ge2$, so $\operatorname{Re}(p_i+hz_i)\ge p_{\min}$ for
every $i$. Since $1-shy=(1-s)+s(1-hy)$, the function $L_h(y)=\int_0^1y\,ds/(1-shy)$ is
holomorphic in $y$ and real-analytic in $h$ where $\operatorname{Re}(1-hy)>0$, including
$h=0$. So $\Psi_h$ and $a_h$ are defined and real-analytic on
$\mathcal P\times\{\lVert\varphi\rVert\le\varepsilon_1\}$, and their
$\varphi$-derivatives up to orders $4$ and $2$ are bounded there by constants $B_\Psi$
and $B_a$. At $h=0$ they are $\Psi-\langle q,\alpha\rangle$ and $\chi$
of~\eqref{eq:A-fT}.

\emph{Hypotheses \textup{(i)} and \textup{(ii)}.} At $\varphi=0$,
$\Psi_h(\alpha,0)=\Psi_h(\alpha;\theta')=-J^{(h)}_F(\alpha)$ by
Lemma~\ref{lem:discrete-hessian}(a), which is real. Since
$\partial/\partial\varphi_j=\mathrm iz_j\,\partial/\partial z_j$ and $\Psi_h$ is
holomorphic in $z'$ with the derivatives of the real function $\Psi_h(\alpha;\cdot)$ at
$\theta'$, Lemma~\ref{lem:discrete-hessian}(a) gives $\nabla_\varphi\Psi_h(\alpha,0)=0$
and $-\nabla^2_\varphi\Psi_h(\alpha,0)=\Theta'H_h\Theta'$. This matrix is real,
symmetric and positive definite, and it is continuous on $\mathcal P$, so its least
eigenvalue is at least some $\sigma_{\min}>0$ on $\mathcal P$.

\emph{The gap.} Choose $\varepsilon\in(0,\varepsilon_1]$ with
$(k-1)^3B_\Psi\varepsilon\le\frac32\sigma_{\min}$. For $i\ge2$,
\[
|p_i+h\theta_ie^{\mathrm i\varphi_i}|^2=(p_i+h\theta_i)^2-2p_ih\theta_i(1-\cos\varphi_i).
\]
With $\log(1-x)\le-x$, and with $n_i-1\ge n_i/2=T\alpha_i/(2h)$, which holds for
$N\ge N_0$ since then $n_i\ge2$, this gives
$|p_i+hz_i|^{n_i-1}\le(p_i+h\theta_i)^{n_i-1}e^{-c_*T(1-\cos\varphi_i)}$ with
$c_*=\min_{\mathcal K}\min_i\alpha_i\,p_{\min}\theta_{\min}/(2(1+h_*\theta_{\max})^2)>0$.
The $h$ in $n_i$ cancels the $h$ in $p_ih\theta_i$, so $c_*$ does not depend on $h$.
Also $|p_1+hg(z')|\le p_1+h\theta_1$ and $|b(z')|\le b(\theta')$ by
Lemma~\ref{lem:torus}. If $\lVert\varphi\rVert\ge\varepsilon$, then
$1-\cos\varphi_i\ge1-\cos\varepsilon$ for some $i\ge2$, so
\[
|F_n(\varphi)|\le e^{-\eta T}F_n(0)=e^{-\eta T}e^{T\Psi_h(\alpha,0)}a_h(\alpha,0),
\qquad\eta=c_*(1-\cos\varepsilon),
\]
and $a_h(\alpha,0)$ is bounded on $\mathcal P$.

\emph{The expansion.} By Remark~\ref{rem:laplace-uniform} in its localized form, with
$m=k-1$, uniformly in $(h,\alpha)$ and $T\ge1$,
\[
\int_{\mathbb T^{k-1}}F_n\,d\varphi=e^{-TJ^{(h)}_F(\alpha)}\Bigl(\frac{2\pi}T\Bigr)^{\frac{k-1}2}
\det(\Theta'H_h\Theta')^{-1/2}\Bigl(a_h(\alpha,0)+O\Bigl(\frac1T\Bigr)\Bigr).
\]
Here $a_h(\alpha,0)=b(\theta')\prod_{i\ge2}\theta_i/\prod_i(p_i+h\theta_i)$ is positive
and continuous on $\mathcal P$, so the bracket is $a_h(\alpha,0)(1+O(1/T))$. Since
$\det(\Theta'H_h\Theta')=\prod_{i\ge2}\theta_i^2\det H_h$ and
$h^{k-1}T^{-(k-1)/2}=N^{-(k-1)}T^{(k-1)/2}$, Lemma~\ref{lem:discrete-hessian}(c) gives
the display of (b) with a factor $1+E$, $|E|\le C/T$.

\emph{Bounded $T$.} It remains to bound $1+E$ below for $1\le T\le T_1=2C$. Let
$N\ge T_1^2$. Then $T\le N^{1/2}$, and Lemma~\ref{lem:discrete} and part (a) give
$P_N(n)=N^{-(k-1)}f_T(\alpha)(1+O(T_1^2/N))$. The ratio of $f_T(\alpha)$ to
$T^{(k-1)/2}C_F(\alpha)e^{-TJ_F(\alpha)}$ is continuous and positive on
$[1,T_1]\times\mathcal K$, and the ratio of
$C^{(h)}_F(\alpha)e^{-TJ^{(h)}_F(\alpha)}$ to $C_F(\alpha)e^{-TJ_F(\alpha)}$ is
$1+O(T_1h)$ by Proposition~\ref{prop:discrete-rate}(b) and
Lemma~\ref{lem:discrete-hessian}(c). So for large $N$, $1+E$ lies between two positive
constants for $1\le T\le T_1$, and for $T\ge T_1$ it lies in $[\frac12,\frac32]$.
\end{proof}

\section{A Laplace lemma with a parameter}\label{app:laplace}

We prove the Laplace lemma that Appendix~\ref{app:torus} uses for
Theorem~\ref{thm:local}, and its form with a complex Hessian that
Section~\ref{sec:boundary-layer} uses for Theorem~\ref{thm:L-facet}. The compact
parameter space $\mathcal K$ can include the rates and the start when constants are
needed uniformly over a class of chains. On $\mathbb T^m=(\R/2\pi\Z)^m$ we use
$\lVert\varphi\rVert=\max_i\operatorname{dist}(\varphi_i,2\pi\Z)$. For
$x\in\R^m$ we write $\lVert x\rVert=\max_i|x_i|$ and $|x|$ for the Euclidean
norm. Repeated indices are summed from $1$ to $m$.

\begin{lemma}[Laplace integrals with a parameter]\label{lem:laplace}
Let $\mathcal K$ be a compact metric space and $m\ge1$. Let
$\Psi,a:\mathcal K\times\mathbb T^m\to\mathbb C$ be such that the
$\varphi$-derivatives of $\Psi$ up to order $4$ and of $a$ up to order $2$
exist and are jointly continuous on $\mathcal K\times\mathbb T^m$. Assume that
for every $\alpha\in\mathcal K$
\begin{enumerate}[(i)]
\item the value $\Psi(\alpha,0)$ is real and $\nabla_\varphi\Psi(\alpha,0)=0$,
\item the matrix $\mathcal J(\alpha)=-\nabla^2_\varphi\Psi(\alpha,0)$ is real, symmetric
and positive definite, and
\item the inequality $\operatorname{Re}\Psi(\alpha,\varphi)<\Psi(\alpha,0)$ holds for every
$\varphi\ne0$ in $\mathbb T^m$.
\end{enumerate}
Then, uniformly for $\alpha\in\mathcal K$ and $T\ge1$,
\[
\int_{\mathbb T^m}e^{T\Psi(\alpha,\varphi)}a(\alpha,\varphi)\,d\varphi
=e^{T\Psi(\alpha,0)}\Bigl(\frac{2\pi}T\Bigr)^{m/2}\det\mathcal J(\alpha)^{-1/2}
\Bigl(a(\alpha,0)+O\Bigl(\frac1T\Bigr)\Bigr).
\]
\end{lemma}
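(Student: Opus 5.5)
The proof follows the classical Laplace method, made uniform through compactness of $\mathcal K$. Subtract $\Psi(\alpha,0)$ from $\Psi$, so that we may assume $\Psi(\alpha,0)=0$. Then split the torus into a small cube $\lVert\varphi\rVert\le\varepsilon$ and its complement, with $\varepsilon>0$ independent of $\alpha$.

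\emph{The far region.} Hypothesis (iii) and the joint continuity of $\operatorname{Re}\Psi$ on the compact set $\{(\alpha,\varphi):\lVert\varphi\rVert\ge\varepsilon\}$ give $\operatorname{Re}\Psi\le-\eta$ there for some $\eta>0$. Since $a$ is bounded, the far region contributes $O(e^{-\eta T})$, which is $O(T^{-m/2-1})$ uniformly.

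\emph{The choice of $\varepsilon$.} By (ii) and continuity, the least eigenvalue of $\mathcal J(\alpha)$ is at least some $\sigma_{\min}>0$ on $\mathcal K$. Let $B_\Psi$ bound the $\varphi$-derivatives of $\Psi$ up to order $4$. Choose $\varepsilon$ with $m^3B_\Psi\varepsilon\le\frac34\sigma_{\min}$. Taylor's theorem with (i) then gives
\[
\Psi(\alpha,\varphi)=-\tfrac12\varphi^\top\mathcal J\varphi+R_3(\alpha,\varphi),\qquad |R_3|\le\tfrac{m^3B_\Psi}6\lVert\varphi\rVert^3,
\]
so on the cube
\[
\operatorname{Re}\Psi\le-\tfrac12\sigma_{\min}|\varphi|^2+\tfrac18\sigma_{\min}|\varphi|^2\le-\tfrac14\sigma_{\min}|\varphi|^2 .
\]

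\emph{The near region.} Substitute $\varphi=y/\sqrt T$. Write $e^{TR_3}=1+TR_3+O(T^2|R_3|^2e^{T|\operatorname{Re}R_3|})$ and expand $a(\alpha,\varphi)=a(\alpha,0)+\nabla a\cdot\varphi+O(|\varphi|^2)$. Also expand $R_3=\frac16\Psi_{ijk}\varphi_i\varphi_j\varphi_k+O(|\varphi|^4)$, using the fourth derivatives. The odd terms, namely $\nabla a\cdot\varphi$ and the cubic part of $TR_3$, integrate to zero against the Gaussian $e^{-\frac12 y^\top\mathcal J y}$ over the whole of $\R^m$. The truncation to the cube costs only $e^{-c\varepsilon^2T}$. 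All the remaining terms are $O(1/T)$ times Gaussian moments: the term $|\varphi|^2$ from $a$, the quartic remainder of $TR_3$ (of order $T|\varphi|^4$), the product $TR_3\cdot\nabla a\cdot\varphi$, and the square term $T^2|R_3|^2$ (of order $T^2|\varphi|^6$). Each is dominated by $e^{-\frac14\sigma_{\min}|y|^2}$ times a polynomial. Finally,
\[
\int_{\R^m}e^{-\frac12 y^\top\mathcal J y}\,dy=(2\pi)^{m/2}\det\mathcal J^{-1/2},
\]
and the factor $T^{-m/2}$ from the substitution gives the stated main term.

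\emph{The main obstacle.} The only delicate point is keeping every constant uniform in $\alpha$. Every constant above depends on the data only through $\sigma_{\min}$, $B_\Psi$, the bound on the derivatives of $a$ up to order $2$, and $\eta$. The first three are uniform by joint continuity on the compact set $\mathcal K\times\mathbb T^m$. The gap $\eta$ is uniform because it comes from a strict inequality for a continuous function on a compact set. This is exactly the form in which the lemma is later reused, with an explicit $\eta$ and $\varepsilon$, for the complex and the localized versions.
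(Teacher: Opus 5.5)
Your proposal is correct and matches the paper's proof step for step. It localizes with a uniform gap $\eta$ obtained from (iii) and compactness, chooses the cube through $m^3B_\Psi\varepsilon\le c\,\sigma_{\min}$ so that the cubic Taylor remainder is absorbed into the Gaussian, rescales $\varphi=y/\sqrt T$, expands with $|e^w-1-w|\le\frac12|w|^2e^{|w|}$ so that the odd terms vanish, and ends with the Gaussian integral. The one point you leave implicit, which the paper states, is the bound $\det\mathcal J(\alpha)\le(mB_\Psi)^m$; it is needed to turn the absolute errors $O(T^{-m/2-1})$ into the relative form $a(\alpha,0)+O(1/T)$.
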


\begin{proof}
By compactness
$B_\Psi=\max_{|\beta|\le4}\sup|\partial^\beta_\varphi\Psi|$ and
$B_a=\max_{|\beta|\le2}\sup|\partial^\beta_\varphi a|$ are finite, the least
eigenvalue of $\mathcal J(\alpha)$ has a positive minimum $\sigma_{\min}$ over
$\mathcal K$, and $\det\mathcal J(\alpha)\le(mB_\Psi)^m$. Below, $C$ depends
only on $m$, $\sigma_{\min}$, $B_\Psi$, $B_a$ and the numbers $\varepsilon$ and
$\eta$ chosen below, and it may change from line to line. Hence every bound
below is uniform on $\mathcal K$.

\emph{Localization.} Choose $\varepsilon\in(0,1)$ with
$m^3B_\Psi\varepsilon\le\frac32\sigma_{\min}$. The set of $(\alpha,\varphi)$ with
$\lVert\varphi\rVert\ge\varepsilon$ is compact, and by (iii) the continuous
function $\operatorname{Re}\Psi(\alpha,\varphi)-\Psi(\alpha,0)$ is negative on
it. Hence it is at most $-\eta$ there for some $\eta>0$, and the integral over
this set has modulus at most
$(2\pi)^mB_ae^{T(\Psi(\alpha,0)-\eta)}\le CT^{-1-m/2}e^{T\Psi(\alpha,0)}$.

\emph{Scaling.} Reduction modulo $2\pi$ identifies the cube
$\lVert x\rVert\le\varepsilon$ in $\R^m$ with the set
$\lVert\varphi\rVert\le\varepsilon$, and it preserves measure. Let
$R(\alpha,\varphi)=\Psi(\alpha,\varphi)-\Psi(\alpha,0)
+\frac12\varphi^\top\mathcal J(\alpha)\varphi$. By (i) and (ii), $R$ and its
derivatives of order $1$ and $2$ vanish at $\varphi=0$. Taylor's theorem with
integral remainder, which holds for complex values, gives on the cube
$R=\frac16\Psi_{ijk}\varphi^i\varphi^j\varphi^k+\omega_0$ with
$|\omega_0|\le C\lVert\varphi\rVert^4$, where $\Psi_{ijk}$ are the third
derivatives at $(\alpha,0)$. It also gives
$|R|\le\frac{m^3}6B_\Psi\lVert\varphi\rVert^3\le\frac14\sigma_{\min}|\varphi|^2$,
since $\sum_{|\beta|=3}1/\beta!=m^3/6$ and
$\lVert\varphi\rVert^3\le\varepsilon|\varphi|^2$. We substitute
$\varphi=x/\sqrt T$ and put $w(x)=TR(\alpha,x/\sqrt T)$. The integral over the
cube is
\[
T^{-m/2}e^{T\Psi(\alpha,0)}\int_{\lVert x\rVert\le\varepsilon\sqrt T}
e^{-\frac12x^\top\mathcal Jx}\,e^{w(x)}a(\alpha,x/\sqrt T)\,dx,
\]
and $|w(x)|\le\frac14\sigma_{\min}|x|^2$.

\emph{Taylor expansion.} Let $P(x)=\frac16\Psi_{ijk}x^ix^jx^k$,
$a_0=a(\alpha,0)$ and $a_i=\partial_{\varphi_i}a(\alpha,0)$. Then
$w=T^{-1/2}P+\omega_1$ with $|\omega_1|\le CT^{-1}|x|^4$, and
$|w|\le CT^{-1/2}|x|^3$. Since $|e^w-1-w|\le\frac12|w|^2e^{|w|}$, this gives
$|e^w-1-T^{-1/2}P|\le CT^{-1}(|x|^4+|x|^6)e^{\sigma_{\min}|x|^2/4}$. Taylor's
theorem for $a$ to orders $1$ and $2$ gives
$|a(\alpha,x/\sqrt T)-a_0|\le CT^{-1/2}|x|$ and
$|a(\alpha,x/\sqrt T)-a_0-T^{-1/2}a_ix^i|\le CT^{-1}|x|^2$. Multiplying out,
\[
e^{w}a(\alpha,x/\sqrt T)=a_0+T^{-1/2}\bigl(a_0P(x)+a_ix^i\bigr)+\omega(x),
\qquad|\omega(x)|\le CT^{-1}(1+|x|)^6e^{\sigma_{\min}|x|^2/4}.
\]

\emph{Integration.} Since $x^\top\mathcal Jx\ge \sigma_{\min}|x|^2$, the term
$\omega$ contributes at most $CT^{-1}$ to the integral, because
$(1+|x|)^6e^{-\sigma_{\min}|x|^2/4}$ is integrable. Outside the cube
$|x|>\varepsilon\sqrt T$, so extending the integrals of the other two terms to
$\R^m$ changes them by at most $Ce^{-\sigma_{\min}\varepsilon^2T/4}\le CT^{-1}$. On
$\R^m$ we have
$\int e^{-\frac12x^\top\mathcal Jx}dx=(2\pi)^{m/2}\det\mathcal J^{-1/2}$, and
the odd polynomial $a_0P(x)+a_ix^i$ integrates to $0$ against this weight.
This is why the error
is $O(1/T)$ and not $O(T^{-1/2})$. So the integral over the cube is
$e^{T\Psi(\alpha,0)}T^{-m/2}\bigl[(2\pi)^{m/2}\det\mathcal J^{-1/2}a_0
+O(T^{-1})\bigr]$. Since $\det\mathcal J\le(mB_\Psi)^m$, this error and the
one from the localization are $O(1/T)$ relative to
$e^{T\Psi(\alpha,0)}(2\pi/T)^{m/2}\det\mathcal J^{-1/2}$.
\end{proof}

\begin{remark}[the uniform form]\label{rem:laplace-uniform}
The proof of Lemma~\ref{lem:laplace} uses compactness and joint continuity only to obtain the bounds $B_\Psi$ and $B_a$ on
the derivatives, the lower bound $\sigma_{\min}$ on the least eigenvalue of $\mathcal J$, and a
gap $\operatorname{Re}\Psi(\alpha,\varphi)-\Psi(\alpha,0)\le-\eta$ for
$\lVert\varphi\rVert\ge\varepsilon$, where $\varepsilon\in(0,1)$ satisfies
$m^3B_\Psi\varepsilon\le\frac32\sigma_{\min}$. So the following holds with the same proof. Let
$\mathcal P$ be any set, and let $\Psi,a:\mathcal P\times\mathbb T^m\to\mathbb C$ satisfy (i) and
(ii) for every $\alpha\in\mathcal P$, with $\varphi$-derivatives of $\Psi$ up to order $4$ and
of $a$ up to order $2$ bounded by $B_\Psi$ and $B_a$ on $\mathcal P\times\mathbb T^m$, with
$\mathcal J(\alpha)\ge\sigma_{\min}I$ for all $\alpha$, and with the gap above for all $\alpha$.
Then the expansion of Lemma~\ref{lem:laplace} holds uniformly for $\alpha\in\mathcal P$ and
$T\ge1$, with a constant that depends only on $m$, $B_\Psi$, $B_a$, $\sigma_{\min}$,
$\varepsilon$ and $\eta$.

The proof uses the integrand off the cube $\lVert\varphi\rVert\le\varepsilon$ only in the
localization, through a bound on its modulus, and it uses the bounds on the derivatives only on
the cube. So the expansion also holds in the following localized form, with a constant that
depends in addition on a number $B$. Let $\Psi$ and $a$ be defined only on
$\mathcal P\times\{\lVert\varphi\rVert\le\varepsilon\}$, satisfy (i) and (ii), have
$\varphi$-derivatives bounded by $B_\Psi$ and $B_a$ there, with $\mathcal J(\alpha)\ge\sigma_{\min}I$
and $m^3B_\Psi\varepsilon\le\frac32\sigma_{\min}$, and let the integrand be
$e^{T\Psi}a$ on the cube and any measurable function of modulus at most
$Be^{T(\Psi(\alpha,0)-\eta)}$ off it. Appendix~\ref{app:full-range} needs this form, since
there the logarithm in the exponent has no branch on the whole torus.
\end{remark}

Section~\ref{sec:boundary-layer} needs the same expansion when $\Psi(\alpha,0)$ and the Hessian
are complex. Let $\mathcal S_m$ be the set of complex symmetric $m\times m$ matrices $M$ whose
real part is positive definite. For $M\in\mathcal S_m$ and $x\in\mathbb C^m$ with $x\ne0$ we have
$\operatorname{Re}(\bar x^\top Mx)=\bar x^\top(\operatorname{Re}M)x>0$, because $\bar x^\top Px$ is
real for a real symmetric $P$. So $\det M\ne0$. The set $\mathcal S_m$ is convex, so $1/\det M$ has
exactly two continuous square roots on it. We write $\det M^{-1/2}$ for the one that is positive
on the real matrices of $\mathcal S_m$. Then
\[
\int_{\R^m}e^{-\frac12x^\top Mx}\,dx=(2\pi)^{m/2}\det M^{-1/2}\qquad(M\in\mathcal S_m).
\]
Indeed both sides are holomorphic functions of the entries $M_{jk}$, $j\le k$, on the open set
$\mathcal S_m$. For the left side we may differentiate under the integral, since
$|e^{-x^\top Mx/2}|=e^{-x^\top(\operatorname{Re}M)x/2}$ is bounded by a fixed Gaussian for $M$
near a point of $\mathcal S_m$. The right side is a continuous square root of a holomorphic
function without zeros. The two sides agree on the real positive definite matrices, which form a
nonempty open subset of the real points of $\mathcal S_m$. A holomorphic function on a connected
open set that vanishes on such a subset vanishes everywhere, since its Taylor coefficients at a
real point are real derivatives. So the two sides agree on $\mathcal S_m$.

\begin{lemma}[a complex Hessian]\label{lem:laplace-complex}
Let $\mathcal P$ be a set and $m\ge1$. Let $\Psi,a:\mathcal P\times\mathbb T^m\to\mathbb C$ have
$\varphi$-derivatives up to order $4$ and $2$ that are continuous in $\varphi$ and bounded in
modulus by $B_\Psi$ and $B_a$ on $\mathcal P\times\mathbb T^m$. Let $\sigma_{\min}>0$, $\eta>0$ and
$\varepsilon\in(0,1)$ with $m^3B_\Psi\varepsilon\le\frac32\sigma_{\min}$. Assume that for every
$p\in\mathcal P$
\begin{enumerate}[(i)]
\item $\nabla_\varphi\Psi(p,0)=0$,
\item the complex symmetric matrix $\mathcal J(p)=-\nabla^2_\varphi\Psi(p,0)$ satisfies
$\operatorname{Re}\mathcal J(p)\ge\sigma_{\min}I$, and
\item $\operatorname{Re}\Psi(p,\varphi)-\operatorname{Re}\Psi(p,0)\le-\eta$ for
$\lVert\varphi\rVert\ge\varepsilon$.
\end{enumerate}
Then the expansion of Lemma~\ref{lem:laplace} holds uniformly for $p\in\mathcal P$ and $T\ge1$,
with the branch of $\det\mathcal J^{-1/2}$ defined above and a constant that depends only on $m$,
$B_\Psi$, $B_a$, $\sigma_{\min}$, $\varepsilon$ and $\eta$.
\end{lemma}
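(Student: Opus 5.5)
The plan is to rerun the proof of Lemma~\ref{lem:laplace} step by step, with one change: the exact Gaussian integral for a real positive definite matrix is replaced by its complex counterpart $\int_{\R^m}e^{-\frac12x^\top Mx}dx=(2\pi)^{m/2}\det M^{-1/2}$ for $M\in\mathcal S_m$, which was established just before the statement. Since $\Psi(p,0)$ may now be complex, I will factor out $e^{T\Psi(p,0)}$ rather than a real exponential. Every modulus bound is then taken relative to $|e^{T\Psi(p,0)}|=e^{T\operatorname{Re}\Psi(p,0)}$.

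\emph{Localization.} By hypothesis (iii), the integral over $\lVert\varphi\rVert\ge\varepsilon$ has modulus at most $(2\pi)^mB_ae^{T(\operatorname{Re}\Psi(p,0)-\eta)}$. This is $O(T^{-1-m/2})\,|e^{T\Psi(p,0)}|$. To make it relative to the main term, I need $|\det\mathcal J^{-1/2}|$ bounded below, that is, $|\det\mathcal J|\le(mB_\Psi)^m$. This follows from the entry bound $|\mathcal J_{jk}|\le B_\Psi$ via Hadamard's inequality. I also need $|\det\mathcal J|$ bounded below, which follows from $\operatorname{Re}\mathcal J\ge\sigma_{\min}I$: each eigenvalue $\lambda$ of $\mathcal J$ with unit eigenvector $x$ has $\operatorname{Re}\lambda=\bar x^\top(\operatorname{Re}\mathcal J)x\ge\sigma_{\min}$.

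\emph{Scaling.} Put $R=\Psi(p,\varphi)-\Psi(p,0)+\frac12\varphi^\top\mathcal J\varphi$ on the cube $\lVert\varphi\rVert\le\varepsilon$. As before, Taylor's theorem gives $|R|\le\frac{m^3}6B_\Psi\lVert\varphi\rVert^3\le\frac14\sigma_{\min}|\varphi|^2$ by the choice of $\varepsilon$. After the substitution $\varphi=x/\sqrt T$, the integrand becomes $e^{-\frac12x^\top\mathcal Jx}e^{w}a$, with $|e^{-\frac12x^\top\mathcal Jx}|=e^{-\frac12x^\top(\operatorname{Re}\mathcal J)x}\le e^{-\sigma_{\min}|x|^2/2}$ and $|e^w|\le e^{\sigma_{\min}|x|^2/4}$. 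The expansion $e^wa=a_0+T^{-1/2}(a_0P+a_ix^i)+\omega$ with $|\omega|\le CT^{-1}(1+|x|)^6e^{\sigma_{\min}|x|^2/4}$ carries over verbatim, because complex Taylor remainders are bounded in modulus in the same way. Hence $\omega$ contributes $O(T^{-1})$ against the Gaussian of rate $\sigma_{\min}/4$. Extending the integral from the cube to $\R^m$ costs $O(e^{-cT})$.

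\emph{The step needing care} is the integration of the two main terms against the complex Gaussian weight. The constant term gives $(2\pi)^{m/2}\det\mathcal J^{-1/2}a_0$ by the complex Gaussian identity, with the stated branch. The odd polynomial term integrates to zero, by the symmetry $x\mapsto-x$ of the weight $e^{-\frac12x^\top\mathcal Jx}$, and this symmetry holds for complex $\mathcal J$ as well. Collecting the errors, which are all bounded by $C(m,B_\Psi,B_a,\sigma_{\min},\varepsilon,\eta)T^{-1}$ times $|e^{T\Psi(p,0)}|T^{-m/2}$, and dividing by $|\det\mathcal J^{-1/2}|\ge(mB_\Psi)^{-m/2}$ gives the claimed expansion uniformly on $\mathcal P$. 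No compactness is used anywhere, as in Remark~\ref{rem:laplace-uniform}.
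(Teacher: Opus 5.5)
Your proposal is correct and is essentially the paper's proof: you rerun Lemma~\ref{lem:laplace} in the form of Remark~\ref{rem:laplace-uniform}, use only the modulus $|e^{T\Psi(p,0)}|=e^{T\operatorname{Re}\Psi(p,0)}$ in the localization, bound the weight by $|e^{-\frac12x^\top\mathcal Jx}|\le e^{-\sigma_{\min}|x|^2/2}$ and use that it is even, and finish with the complex Gaussian identity and Hadamard's inequality $|\det\mathcal J|\le(mB_\Psi)^m$. Your lower bound $|\det\mathcal J|\ge\sigma_{\min}^m$ via the eigenvalues is correct but not needed, since only the upper bound on $|\det\mathcal J|$ enters the relative error.
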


\begin{proof}
The proof of Lemma~\ref{lem:laplace} in the form of Remark~\ref{rem:laplace-uniform} goes through
with three changes. First, the factor $e^{T\Psi(p,0)}$ is now complex, and the localization uses
only its modulus $e^{T\operatorname{Re}\Psi(p,0)}$. By (iii) the integral over
$\lVert\varphi\rVert\ge\varepsilon$ has modulus at most
$(2\pi)^mB_ae^{-\eta T}|e^{T\Psi(p,0)}|$. Second, for real $x$ the number $x^\top(\operatorname{Im}\mathcal J)x$
is real, so $|e^{-x^\top\mathcal Jx/2}|=e^{-x^\top(\operatorname{Re}\mathcal J)x/2}\le e^{-\sigma_{\min}|x|^2/2}$.
This is the only bound on the Gaussian weight that the integration step uses, and the weight is
still even and integrable, so the odd polynomial still integrates to $0$. Third, the Gaussian
integral over $\R^m$ is $(2\pi)^{m/2}\det\mathcal J^{-1/2}$ by the identity above, and Hadamard's
inequality gives $|\det\mathcal J|\le(mB_\Psi)^m$, so the errors are $O(1/T)$ relative to the main
term. The scaling and the Taylor expansion use only the bounds on the derivatives, Taylor's
theorem for complex values and $|w(x)|\le\frac14\sigma_{\min}|x|^2$, and they do not change.
\end{proof}

\section{Proofs for the complete graph}\label{app:complete}

We prove Theorems~\ref{thm:simplex-crossing} and~\ref{thm:simplex-balance} and
Corollary~\ref{cor:simplex-modes}, with the notation of Section~\ref{sec:completegraph}. The
first lemma writes the law with blocks between refreshes in place of runs. Here
$I+hQ=\sigma I+hJ$, and a refresh is a step $s<N$ at which the term $h$ is chosen when each factor
$\sigma\mathbf 1\{x_s=x_{s+1}\}+h$ of the probability of a word is expanded.

\begin{lemma}[the refresh form]\label{lem:refresh}
Let $N\ge1$ and let $n$ be a composition with support $F$. Sums over $m$ run over the integer vectors
with $1\le m_i\le n_i$ for $i\in F$, and $M=\sum_im_i$.
\begin{enumerate}[(a)]
\item With $0^0=1$,
\[
P_N(n)=\sum_m\Bigl(\sum_{i\in F}\mu_im_i\Bigr)\frac{(M-1)!}{\prod_im_i!}\,
h^{M-1}\sigma^{N-M}\prod_{i\in F}\binom{n_i-1}{m_i-1}.
\]
\item Let $1/d<p<1$, so that $0<u<1$ and $v>0$. Then
\begin{equation}\label{eq:simplex-positive-sum}
S_n(u)=(1-u)^{N-1}\sum_mM!\,v^{M-1}\prod_{i\in F}\frac1{m_i!}\binom{n_i-1}{m_i-1},
\end{equation}
which is a sum of positive terms, and
\begin{equation}\label{eq:simplex-balance-integral}
S_n(u)=\frac{(1-u)^{N-1}}{v}\int_0^\infty e^{-t}\prod_{i\in F}B_{n_i}(vt)\,dt,
\end{equation}
where $B_n$ is defined in~\eqref{eq:refresh-polynomial}.
\end{enumerate}
\end{lemma}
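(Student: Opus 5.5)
For (a), expand each factor of the word probability. With $Q=J-dI$ we have $(I+hQ)_{xy}=\sigma\mathbf 1\{x=y\}+h$. A word $x_1\cdots x_N$ therefore has probability $\mu_{x_1}\prod_{s<N}(\sigma\mathbf 1\{x_s=x_{s+1}\}+h)$. Expanding the product means choosing, at each step $s<N$, either the term $\sigma$ (which forces $x_{s+1}=x_s$) or the term $h$ (a refresh, with $x_{s+1}$ free).

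Fix the set of refresh steps. It cuts $1,\dots,N$ into $M$ blocks, and each block is constant. The word is then determined by the sequence of block states $c_1\cdots c_M$, and consecutive states may now be equal. Let $m_i$ be the number of blocks in state $i$. The block lengths in state $i$ form an ordered composition of $n_i$ into $m_i$ positive parts, so there are $\binom{n_i-1}{m_i-1}$ choices for each $i$. The sequences of block states with counts $m$ are counted by the multinomial $(M-1)!\,M/\prod_im_i!$, and weighting the first letter by $\mu$ gives $\sum_{c}\mu_{c_1}=\bigl(\sum_i\mu_im_i\bigr)(M-1)!/\prod_im_i!$, since a fraction $m_i/M$ of the arrangements start with $i$. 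The refreshes contribute $h^{M-1}$ and the non-refresh steps $\sigma^{N-M}$. Summing over $m$ gives (a).

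For (b), take $\mu$ uniform on $[d]$, though any start that is constant on $F$ works the same way. Then $\sum_i\mu_im_i=M/d$, and $P_N(n)=V_NS_n(u)$ with $V_N=p^{N-1}/d$ by~\eqref{eq:SnVN}. Divide (a) by $p^{N-1}/d$ and use $\sigma/p=1-u$ and $h/\sigma=v$, so that $h^{M-1}\sigma^{N-M}/p^{N-1}=(1-u)^{N-1}v^{M-1}$. This gives~\eqref{eq:simplex-positive-sum}. Positivity needs $\sigma>0$, that is $p>h$. Since $p=1-(d-1)h$, this is $dh<1$, which is equivalent to $p>1/d$.

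For~\eqref{eq:simplex-balance-integral}, write $M!=\int_0^\infty t^Me^{-t}\,dt$. Then
\[
M!\,v^{M-1}\prod_i\frac1{m_i!}=\frac1v\int_0^\infty e^{-t}\prod_i\frac{(vt)^{m_i}}{m_i!}\,dt .
\]
The sum over $m$ factorizes into the product over $i$ of $\sum_{m_i}\binom{n_i-1}{m_i-1}(vt)^{m_i}/m_i!=B_{n_i}(vt)$. All terms are nonnegative, so the exchange of sum and integral is justified by Tonelli.

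The main obstacle is the combinatorial count in (a): block states may repeat, so there is no alternation constraint, and one must weight the first block correctly to get the factor $\sum_i\mu_im_i/M$ and the exponents $h^{M-1}\sigma^{N-M}$. The rest is bookkeeping.
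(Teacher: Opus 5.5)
Your proposal is correct and follows the paper's proof essentially step for step. It uses the same expansion of each factor $\sigma\mathbf 1\{x_s=x_{s+1}\}+h$ into refreshes, the same count $m_i(M-1)!/\prod_jm_j!$ of label sequences starting with $i$ times $\prod_i\binom{n_i-1}{m_i-1}$ block-length choices, and, for (b), the same division by $V_N$ followed by the factorization through $M!=\int_0^\infty e^{-t}t^M\,dt$. The only wobble is in wording: you say you fix the refresh set first, but you are really summing jointly over the label sequences and the block lengths, which together determine the refresh set, exactly as the paper does.
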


\begin{proof}
(a) The transition matrix is $I+hQ=\sigma I+hJ$, so a word $x_1\cdots x_N$ has probability
$\mu_{x_1}\prod_{s<N}(\sigma\mathbf 1\{x_s=x_{s+1}\}+h)$. We expand each factor into its two
terms and call the steps where the term $h$ is chosen the refreshes. If there are $M-1$
refreshes, they cut the word into $M$ blocks, and a term of the expansion is not zero only if the
word is constant on each block. So $P_N(n)$ is the sum, over the sequences of $M$ block labels
and over the block lengths, of $\mu_{\ell_1}h^{M-1}\sigma^{N-M}$, where $\ell_1$ is the label of
the first block. If $m_i$ blocks carry the label $i$, there are $m_i(M-1)!/\prod_jm_j!$ sequences
of labels that start with $i$. Consecutive labels may be equal. Given the labels, the lengths of
the $m_i$ blocks of the label $i$ form an ordered sum of $n_i$ with $m_i$ positive parts, in
$\binom{n_i-1}{m_i-1}$ ways. Multiplying and summing gives (a).

(b) For the uniform start $\sum_i\mu_im_i=M/d$. By~\eqref{eq:SnVN} we divide (a) by
$V_N=p^{N-1}/d$, and $h^{M-1}\sigma^{N-M}/p^{N-1}=u^{M-1}(1-u)^{N-M}=(1-u)^{N-1}v^{M-1}$, since
$\sigma/p=1-u$. This gives~\eqref{eq:simplex-positive-sum}. We write
$M!=\int_0^\infty e^{-t}t^M\,dt$ and sum over each $m_i$ separately. All terms are nonnegative, so
the sum and the integral can be exchanged, and this gives~\eqref{eq:simplex-balance-integral}.
\end{proof}

When $\sigma\ge0$, part (a) says that the chain keeps its state with probability $\sigma$ and
otherwise draws a fresh state uniformly from all $d$ states, possibly the same one. The proof
does not need $\sigma\ge0$.

\subsection{Proof of Theorem~\ref{thm:simplex-crossing}}

Let $F$ be a face of $k$ states and $n$ a balanced composition of $F$, so that $P_N(n)/V_N=S_n(u)$
by~\eqref{eq:SnVN}, with $u=(1-p)/((d-1)p)$.

\emph{Part (i), one crossing.} A word with $j$ switches is a sequence of $j+1$ run colors in
which successive colors differ, together with run lengths. If color $i$ appears $m_i$ times,
its run lengths can be chosen in $\binom{n_i-1}{m_i-1}$ ways. So every coefficient of $S_n$ is
nonnegative, and the first nonzero one is $W_n(k-1)=k!$, from the $k!$ orders of $k$ single
runs. Hence $S_n$ increases strictly from $0$ to $\infty$ on $u>0$ and equals $1$ exactly once.
As $p$ increases over $(0,1)$, $u$ decreases over $(0,\infty)$. This gives $p_{N,k}$ and the
two inequalities for the balanced composition.

\emph{Part (i), monotonicity in $N$.} Choose the balanced compositions for increasing
$N$ by adding $1$ to one of the smallest coordinates. Each coefficient $W_n(j)$
is then nondecreasing. Indeed, inserting an extra copy of the incremented color just before
its first occurrence keeps the number of switches, and deleting the first letter of the first
run of that color inverts it. The coefficient of $u^k$ is
\[
W_n(k)=\frac{k!(k-1)}2\sum_i(n_i-1)=\frac{k!(k-1)}2(N-k),
\]
since exactly one run color appears twice. For each choice of it there are
$(k+1)!/2-k!=k!(k-1)/2$ admissible sequences of run colors, all orders minus those with the
two copies adjacent, and $n_i-1$ ways to split its $n_i$ visits into two runs. This coefficient
increases strictly with $N$. So at every $u>0$ the value $S_n(u)$ increases strictly with $N$,
the root of $S_n(u)=1$ decreases strictly, and $p=1/(1+(d-1)u)$ increases.

\emph{Part (i), the first crossing.} If $N=k$, every coordinate is $1$ and
$S_n(u)=k!u^{k-1}$, which gives $p_{k,k}$. Since the root decreases with $N$,
every root has $u\le(k!)^{-1/(k-1)}<1$, and $u<1$ means $h<p$, that is,
$p>1/d$.

\emph{Part (i), the face maximum.} For $p\ge1/d$, Theorem~\ref{thm:simplex-balance} gives
$M_F=P_N(n)$. For $p<1/d$ we have $u>1$, which exceeds the root of $S_n(u)=1$, so
$M_F\ge P_N(n)>V_N$. Hence $M_F=V_N$ exactly at $p_{N,k}$, with the same two inequalities.

\emph{Part (ii).} By~\eqref{eq:logmax} with $K_F=D_k/d$ and $\lambda_F=d-k$
(Corollary~\ref{cor:paperB}), uniformly for $1\le T\le N^{1/3}$,
\[
\log M_F=-(k-1)L+\tfrac{k-1}2\log T+\log(D_k/d)-(d-k)T+O(1/T+T^2/N).
\]
By~\eqref{eq:vertexmax}, $\log V_N=-\log d-(d-1)T+O(T^2/N)$. Subtracting, and using
$\log D_k=-(k-1)a_k$, gives the display. With $T=L-\frac12\log L+t$ and $t$ in a compact set,
$T-L+\frac12\log T=t+\frac12\log(T/L)=t+O(\log L/L)$, and the error tends to $0$.

\emph{Part (iii).} Let $G$ be a vertex, so that $M_G=V_N$, $\Delta=\delta=k-1$ and
$\log(K_F/K_G)=\log D_k=-(k-1)a_k$. By Theorem~\ref{thm:crossing}(i) the function
$\log(M_F/M_G)$ has a zero in $[L/2,2L]$, and by (i) this zero is $T_{N,k}$.
Dividing~\eqref{eq:crossing} by $k-1$ gives the implicit equation. For the explicit form put
$z_*=L-\frac12\log L+a_k+e_L/L$ with $e_L=\frac14\log L-\frac12a_k$. Since
$\log z_*=\log L-\frac{\log L}{2L}+\frac{a_k}L+O(\frac{(\log L)^2}{L^2})$, we get
$z_*+\frac12\log z_*=L+a_k+O((\log L)^2/L^2)$. The derivative of $z+\frac12\log z$ is at
least $1$, so $T_{N,k}-z_*=O(1/L)$, which is~\eqref{eq:simplex-root-expansion}.

\emph{Part (iv).} We have $a_k=\frac12\log(4\pi)-f(k)$ with
$f(x)=(x+\frac12)\log x/(x-1)$. For $x\ge2$ the sign of $f'(x)$ is the sign of
$g(x)=2x^2-x-1-3x\log x$. Here $g(2)=5-6\log2>0$, and
$g'(x)=4x-4-3\log x>0$ for $x\ge2$, since $g'(2)>0$ and
$g''(x)=4-3/x>0$. So $f$ increases strictly and $a_k$ decreases strictly. This order is
checked in Lean. \qed

\subsection{Proof of Theorem~\ref{thm:simplex-balance}}

Let $\mu_a=\mu_0$ for $a\in F$. By~\eqref{eq:runform}, $P_N(n)=\mu_0p^{N-1}S_n(u)$ for every
composition with support $F$, so it is enough to show that the move strictly increases $S_n(u)$.
The rest follows, since $S_n$ is symmetric in the coordinates of $n$ and repeated moves reach a
balanced composition. At $p=1/d$ we have $u=1$, $S_n(1)=N!/\prod_in_i!$, and the move multiplies
it by $n_b/(n_a+1)>1$. Let $1/d<p<1$, so that $v>0$.

\emph{Log-concavity.} We show that for each $w>0$ the sequence $B_n(w)$,
$n\ge1$, is strictly log-concave. Write
\[
\frac{B_n(w)}w=b_{n-1},\qquad
b_j=\sum_{i=0}^j\binom ji\rho_i,\quad
\rho_i=\frac{w^i}{(i+1)!},\qquad
c_j=\sum_{i=0}^j\binom ji\rho_{i+1}.
\]
Pascal's identity gives $b_{j+1}=b_j+c_j$. The ratio $c_j/b_j$ is the mean of the
strictly decreasing sequence $f_i=\rho_{i+1}/\rho_i=w/(i+2)$ under the weights
$\omega_i=\binom ji\rho_i$, $0\le i\le j$, normalized. Let $\omega'_i=\binom{j+1}i\rho_i$,
$0\le i\le j+1$, be the weights for $j+1$, and put $\omega_{j+1}=0$. On
$0\le i\le j$ the ratio $\omega'_i/\omega_i=(j+1)/(j+1-i)$ increases in $i$. So for
$i<l$ we have $\omega'_i\omega_l-\omega'_l\omega_i\le0$, with strict inequality when
$l=j+1$. Hence
\[
\sum_i\omega'_if_i\sum_l\omega_l-\sum_i\omega'_i\sum_l\omega_lf_l
=\sum_{i<l}(\omega'_i\omega_l-\omega'_l\omega_i)(f_i-f_l)<0,
\]
since $f_i>f_l$ for $i<l$. So $c_{j+1}/b_{j+1}<c_j/b_j$, and $b_{j+1}/b_j=1+c_j/b_j$
decreases strictly with $j$. By the Laguerre form in~\eqref{eq:refresh-polynomial}, this is a
Tur\'an-type inequality for the Laguerre polynomials.

\emph{The transfer.} Let $n_b\ge n_a+2$. By log-concavity,
$B_{n_a+1}(w)B_{n_b-1}(w)>B_{n_a}(w)B_{n_b}(w)$. We apply this
for $w=vt>0$ in~\eqref{eq:simplex-balance-integral} and integrate. \qed

\subsection{Proof of Corollary~\ref{cor:simplex-modes}}

For large $N$, $T\le CL$ forces $h\to0$ and so $p>1/d$. By
Theorem~\ref{thm:simplex-balance} every global mode is a vertex or a balanced
composition. Hence we compare the ratios $M_F/V_N$, which equal $S_n(u)$ at the balanced
compositions $n$ of $F$ and depend on $F$ only through $k=\lvert F\rvert$, with $1$ and with each
other. Fix $0<c<1$.

\emph{Case $T\le cL$.} The ratio $S_n(u)$ increases with $u$, and so with $T$. So it is at most
its value at $T=cL$, which is $\exp(-(k-1)(1-c)L+O(\log L))\to0$ by
Theorem~\ref{thm:simplex-crossing}(ii). So the vertices are the only modes, and $p>p_{N,d}$.

\emph{Case $cL\le T\le CL$.} Theorem~\ref{thm:simplex-crossing}(ii) gives
$\log(M_F/V_N)=(k-1)(y-a_k)+o(1)$ uniformly, with $y=T-L+\frac12\log T$ the same for every $k$.
Fix $0<\eta<(a_{d-1}-a_d)/2$ when $d\ge3$, and any $\eta>0$ when $d=2$.
If $y\le a_d-\eta$, every face ratio is at most
$e^{-(k-1)\eta+o(1)}<1$, so only the vertices are modes. If
$y\ge a_d+\eta$, the full center beats the vertices, and it beats a
balanced composition with $2\le k<d$ positive coordinates by
\[
(d-k)(y-a_d)+(k-1)(a_k-a_d)+o(1)>0 .
\]
If $|y-a_d|<\eta$, a proper face with $k\ge2$ states has
$(k-1)(y-a_k)+o(1)\le-(k-1)\eta+o(1)<0$, since $a_k-a_d>2\eta$. So it
loses to the vertices, and the comparison of the vertices with the full centers
is decided by the unique crossing $p_{N,d}$ of Theorem~\ref{thm:simplex-crossing}(i).
In each subcase the conclusion agrees with (i) to (iii).

The last claim follows from~\eqref{eq:simplex-root-expansion} and
$p_{N,k}=1-(d-1)T_{N,k}/N$, since $a_2>\cdots>a_d$. \qed

\section{Proofs near a vertex}\label{app:vertex}

We prove Lemma~\ref{lem:uniform-laguerre-bessel} and Theorem~\ref{thm:vertex-uniform-root},
with the notation of Section~\ref{sec:vertex}.

\begin{proof}[Proof of Lemma~\ref{lem:uniform-laguerre-bessel}]
By definition, $B_a(w)/w=\sum_{j=0}^{a-1}\binom{a-1}j w^j/(j+1)!$. So the
coefficient of $(aw)^j/[j!(j+1)!]$ in $B_a(w)/w$ is
$\binom{a-1}jj!/a^j=\prod_{l=1}^j(1-l/a)_+$, also for $j\ge a$, where it is $0$. By the
clipped products of~\cite[Lemma~3.2(b)]{paperA}, its deficit from $1$ lies between $0$ and
$j(j+1)/(2a)$. Note that $\sum_{j\ge1}j(j+1)x^j/[j!(j+1)!]=\sum_{j\ge1}x^j/[(j-1)!\,j!]=x\,\mathcal I(x)$.
So
\[
0\le\mathcal I(aw)-\frac{B_a(w)}w
\le\frac1{2a}\sum_{j\ge0}\frac{j(j+1)(aw)^j}{j!(j+1)!}
=\frac w2\,\mathcal I(aw).
\]
\end{proof}

\begin{lemma}[elasticity and concavity of the rare-count polynomial]
\label{lem:vertex-elasticity}
On $(0,\infty)$ the polynomial $\mathcal B$ is convex, $\log\mathcal B$ is concave
and $\kappa$ is nondecreasing. Also
\begin{equation}\label{eq:vertex-elasticity}
 \frac1{|a|+1}\le y_a\le1,\qquad
 \kappa(y)\le \ell+\sqrt{\ell|a|y}\quad(y>0).
\end{equation}
For $0<y\le1$ we also have $\kappa(y)\ge\max\{\ell,\tfrac12\sqrt{|a|y}\}$.
\end{lemma}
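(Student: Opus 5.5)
The plan is to work one factor at a time, since $\mathcal B=\prod_{i\le\ell}B_{a_i}$ and $\kappa$ is the sum of the elasticities $\kappa_i(y)=yB_{a_i}'(y)/B_{a_i}(y)$. Each $B_a$ is a polynomial with nonnegative coefficients and no constant term, so $\mathcal B$ has the same property and is convex on $(0,\infty)$. For the concavity of $\log\mathcal B$ it suffices that each $\log B_a$ is concave. Two routes are available. The first uses the Laguerre form $B_a(w)=\frac wa L^{(1)}_{a-1}(-w)$, whose zeros in $w$ are all real and nonpositive, so $\log B_a$ is a sum of $\log(w+c)$ with $c\ge0$, which is concave. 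The second writes $B_a(w)=w\sum_j\binom{a-1}{j}w^j/(j+1)!$, whose coefficients form a log-concave sequence (a product of binomials and $1/(j+1)!$); this gives real-rootedness via Newton, or at least log-concavity in $\log w$. I would use the real-rootedness of Laguerre polynomials. It also gives that $\kappa_i(y)=\sum_r y/(y+c_r)$ is nondecreasing in $y$, and that $\kappa_i\ge1$, from the root at $0$. Summing gives $\kappa\ge\ell$ and the monotonicity of $\kappa$.

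For the bounds on $y_a$, note that $\mathcal B(1)\ge1$ because $B_a(1)\ge1$, which forces $y_a\le1$ by monotonicity. For the lower bound, use $B_a(y)\le y\,\mathcal I(ay)$ from Lemma~\ref{lem:uniform-laguerre-bessel}, together with $\mathcal I(x)\le e^x$. Then
\[
\mathcal B(y)\le y^\ell e^{|a|y}.
\]
At $y=1/(|a|+1)$ this is at most $e\,(|a|+1)^{-\ell}\cdot(\ldots)$. This is too crude when $\ell=1$ and $|a|$ is small, so I would check it more carefully. I would use $B_a(y)\le y(1+y)^{a-1}$, which holds since $\binom{a-1}{j}/(j+1)!\le\binom{a-1}{j}$. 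Then
\[
\mathcal B(y)\le y^\ell(1+y)^{|a|-\ell}\le\frac{y}{1-y}\,(1+y)^{|a|}\,y^{\ell-1}.
\]
At $y=1/(|a|+1)$ one has $(1+y)^{|a|}\le e^{|a|/(|a|+1)}<e$, and the bound falls below $1$ as soon as $e\,y/(1-y)\le1$, roughly. Small cases would need a direct check. This is where I expect fiddliness, and I would first try the sharper elementary bound $y(1+y)^{a-1}$ against $(1+1/a)$-type estimates, factor by factor.

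For the upper bound on $\kappa$, write $\kappa_i=1+y\,\frac{d}{dy}\log(B_{a}(y)/y)$. The coefficients $c_j=\binom{a-1}{j}/(j+1)!$ satisfy $c_{j}/c_{j-1}=(a-j)/(j(j+1))\le a/j^2$. Hence the elasticity of $\sum_j c_jy^j$, which is the mean of $j$ under weights $c_jy^j$, is at most the mean for the weights $(ay)^j/(j!)^2$. That mean is of order $\sqrt{ay}$, via $I_0$-type bounds (mean $\le\sqrt{ay}$ for the Bessel weights by a Cauchy–Schwarz or ratio argument). Summing over $i$ and applying Cauchy–Schwarz, $\sum\sqrt{a_i}\le\sqrt{\ell|a|}$, gives $\kappa\le\ell+\sqrt{\ell|a|y}$. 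For the lower bound $\frac12\sqrt{|a|y}$ on $(0,1]$, use the largest factor: the mean of $j$ under $c_jy^j$ is at least comparable to $\sqrt{a_iy}$ when $a_iy\ge1$. Here I would compare with the Bessel weights using Lemma~\ref{lem:uniform-laguerre-bessel} to control the truncation, since $y\le1$. Then combine via $\sum_i\sqrt{a_i}\ge\sqrt{|a|}$; when $|a|y\le4$ the bound $\kappa\ge\ell\ge1$ already suffices. This mean comparison is the main obstacle.
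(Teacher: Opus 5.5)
\paragraph{The gap: the lower bound on $\kappa$.} You give only a plan, not an argument, for the lower bound $\kappa(y)\ge\frac12\sqrt{|a|y}$ on $(0,1]$, and the tool you name does not supply one. Let $J$ have law proportional to $c_jy^j$ with $c_j=\binom{a-1}{j}/(j+1)!$, so that $\kappa_a=1+\E J$ is the elasticity of $B_a$.

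Lemma~\ref{lem:uniform-laguerre-bessel} bounds the \emph{values} $B_a(w)/(w\,\mathcal I(aw))$ between $1-w/2$ and $1$. Two-sided bounds on values do not bound a logarithmic derivative at a point. So ``comparing with the Bessel weights to control the truncation'' does not bound $\E J$ from below. One could pair them with the convexity of $t\mapsto\log B_a(e^t)$ to bound a secant slope, but that needs quantitative growth bounds on $\log\mathcal I$ that you do not supply.

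Using only the largest factor would moreover give just $\frac12\sqrt{|a|y/\ell}$. You need $\kappa_a(y)\ge\frac12\sqrt{ay}$ for every factor, followed by $\sum_i\sqrt{a_i}\ge\sqrt{|a|}$.

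The missing idea is a second-moment identity. Summing the recursion $j(j+1)c_j=(a-j)c_{j-1}$ against the law of $J$ gives
\[
\E[J(J+1)]=y\,(a-1-\E J).
\]
This recursion is the ODE $yB_a''+yB_a'=aB_a$ that the paper uses. Your real-rootedness makes $J$ a sum of independent Bernoulli variables, so $\Var J\le\E J$ and hence $\E[J(J+1)]\le(\E J)^2+2\E J$. In terms of $\kappa_a$ this reads $\kappa_a^2+y\kappa_a\ge ay+1$. For $y\le1$ it gives $(\kappa_a+\frac12)^2>ay$, so $\kappa_a\ge\max\{1,\sqrt{ay}-\frac12\}\ge\frac12\sqrt{ay}$, and summing over the factors finishes.

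The paper obtains $\Var J\le\E J$ without orthogonal-polynomial theory. It uses $\E J=\E\mathcal R(J)$, $\E J^2=\E[(J+1)\mathcal R(J)]$ and $\Cov(J,\mathcal R(J))\le0$ for the nonincreasing ratio $\mathcal R(j)=y(a-1-j)/(j+2)$.

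\paragraph{The rest of the proposal.} The other parts are sound.
\begin{itemize}
\item Real-rootedness of $L^{(1)}_{a-1}$ gives $B_a(w)=w\prod_r(w+x_r)/a!$ with $x_r>0$. This yields the concavity of $\log\mathcal B$, the monotonicity of $\kappa$ and $\kappa\ge\ell$ at once. It imports a classical theorem where the paper's covariance argument is self-contained.
\item Your likelihood-ratio comparison with the weights $(ay)^j/(j!)^2$ does give $\kappa_a\le1+\sqrt{ay}$, since under those weights $\E[J^2]=ay$ exactly. The identity above gives it more directly: $(\E J)^2\le\E[J(J+1)]\le ay$.
\item For $y_a$ the detour is unnecessary. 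Your first estimate $\mathcal B(y)\le y^\ell e^{|a|y}$ already gives $\mathcal B(1/(|a|+1))\le e^{|a|/(|a|+1)}/(|a|+1)$. This is $\sqrt e/2<1$ for $|a|=1$ and below $e/3$ otherwise.
\item Newton's inequalities pass from real roots to log-concave coefficients, not back, so your second suggested route to concavity would not work.
\end{itemize}
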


\begin{proof}
\emph{Step 1. One factor as a law.} The polynomial $\mathcal B$ has
nonnegative coefficients, so it is convex. For one factor $B_a$ and $y>0$, let
$J$ take values in $\{0,\ldots,a-1\}$ with $\Prob(J=j)$ proportional to
$\binom{a-1}{j}y^j/(j+1)!$. Since $B_a(y)=\sum_jy\binom{a-1}jy^j/(j+1)!$, the elasticity of
$B_a$ is $\kappa_a=yB_a'/B_a=\E[J+1]$. The ratio
\[
\mathcal R(j)=\frac{(j+1)\Prob(J=j+1)}{\Prob(J=j)}=\frac{y(a-1-j)}{j+2},
\qquad \mathcal R(a-1)=0,
\]
is nonincreasing in $j$. Summing the ratio identity gives
$\E J=\E \mathcal R(J)$ and $\E J^2=\E[(J+1)\mathcal R(J)]$. So
$\Var J=\E J+\Cov(J,\mathcal R(J))\le \E J$, since for an independent copy $J'$ twice the
covariance is $\E[(J-J')(\mathcal R(J)-\mathcal R(J'))]\le0$.

\emph{Step 2. Concavity and monotonicity.} Since
$y^2B_a''/B_a=\E[J(J+1)]$, we get
\[
 (\log B_a)''=\frac{\Var J-\E[J+1]}{y^2}<0,
 \qquad y\kappa_a'=\Var J\ge0.
\]
Summing over the factors proves the concavity of $\log\mathcal B$ and the monotonicity
of $\kappa=\sum_i\kappa_{a_i}$.

\emph{Step 3. The elasticity bounds.} The differential equation
$yB_a''+yB_a'=aB_a$, which we check on the coefficients, gives
\[
 ay=\kappa_a^2+\Var J+(y-1)\kappa_a.
\]
Using $0\le\Var J\le\kappa_a-1$ we get
\[
 \kappa_a\le1+\sqrt{ay},\qquad \kappa_a^2+y\kappa_a\ge ay+1.
\]
For $0<y\le1$ the second inequality gives $(\kappa_a+\frac12)^2>ay$, and
$\kappa_a\ge1$. So $\kappa_a\ge\max\{1,\sqrt{ay}-1/2\}\ge\tfrac12\sqrt{ay}$.
Summing over the factors proves the elasticity bounds, using the
Cauchy--Schwarz inequality $\sum_i\sqrt{a_i}\le\sqrt{\ell|a|}$ for the upper bound and
$\sqrt{\sum_ix_i}\le\sum_i\sqrt{x_i}$ for the lower one.

\emph{Step 4. The root.} We have $B_a(1)\ge1$, from its term $m=1$, and
$B_a(y)\le y\,\mathcal I(ay)\le y e^{ay/2}$ by Lemma~\ref{lem:uniform-laguerre-bessel} and
$(j+1)!\ge2^j$. So $\mathcal B(1)\ge1$, while, since $|a|+1\ge2$,
$\mathcal B(1/(|a|+1))\le(|a|+1)^{-\ell}e^{|a|/(2(|a|+1))}\le\frac12e^{1/2}<1$.
\end{proof}

\begin{lemma}[an exact compound-binomial representation]
\label{lem:vertex-randomization}
For $0<u<1$, put $v=u/(1-u)$. Let $\Gamma_2$ have the gamma distribution
with shape $2$ and rate $1$, let $E_1,\ldots,E_{b-1}$ be independent
rate-one exponential variables, and let $Z_1,\ldots,Z_{b-1}$ be
independent Bernoulli variables with parameter $u$. Take all these
variables independent and set
\[
 \mathcal X=\Gamma_2+\sum_{j=1}^{b-1}Z_jE_j.
\]
Then
\begin{equation}\label{eq:vertex-randomization}
 S_{(b,a)}(u)=(1-u)^{|a|}\E[\mathcal B(v\mathcal X)],\qquad
 \E e^{s\mathcal X}=(1-s)^{-2}
 \left(1+\frac{us}{1-s}\right)^{b-1}\quad(s<1).
\end{equation}
In particular, set $y=bu^2$ and $\kappa=\kappa(y)$. For every $0<u<1$,
\begin{equation}\label{eq:vertex-jensen-lower}
 (1-u)^{|a|} \mathcal B(y)\le S_{(b,a)}(u).
\end{equation}
If also $s=\kappa/[bu(1-u)]<1$, then
\begin{equation}\label{eq:vertex-tangent-upper}
 S_{(b,a)}(u)\le (1-u)^{|a|} \mathcal B(y)e^{-\kappa}(1-s)^{-2}
 \left(1+\frac{us}{1-s}\right)^{b-1}.
\end{equation}
\end{lemma}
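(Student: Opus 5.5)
The plan is to start from the refresh form, Lemma~\ref{lem:refresh}(b), in the version~\eqref{eq:simplex-positive-sum} restricted to the composition $n=(b,a_1,\dots,a_\ell)$. This writes $S_{(b,a)}(u)=(1-u)^{N-1}\sum_mM!\,v^{M-1}\prod_i\binom{n_i-1}{m_i-1}/m_i!$. Then I would use $M!=\int_0^\infty e^{-t}t^M\,dt$ to sum over the rare coordinates separately, exactly as in~\eqref{eq:simplex-balance-integral}. This produces the factor $\prod_iB_{a_i}(vt)=\mathcal B(vt)$. The large coordinate $b$ is kept as an explicit sum over $m_b=m$, with weight $\binom{b-1}{m-1}v^{m}t^{m}/m!$ (up to the bookkeeping of one power of $v$ and one of $t$).

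The next step is to recognize the law of $\mathcal X$. The density of $\Gamma_2+\sum_{j<b}Z_jE_j$ given $\sum_jZ_j=m-1$ is a gamma density of shape $m+1$, namely $t^me^{-t}/m!$. The binomial weight of $\{\sum Z_j=m-1\}$ is $\binom{b-1}{m-1}u^{m-1}(1-u)^{b-m}$. So I would rewrite $v^{m-1}(1-u)^{b-1}=\binom{}{}$-free factors: since $v=u/(1-u)$, we have $(1-u)^{b-1}v^{m-1}=u^{m-1}(1-u)^{b-m}$. The prefactor $(1-u)^{N-1}$ then splits as $(1-u)^{b-1}(1-u)^{|a|}$. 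Matching these pieces gives $S_{(b,a)}(u)=(1-u)^{|a|}\E[\mathcal B(v\mathcal X)]$. The main care point is the exponents: the leftover $t$ and $1/v$ from~\eqref{eq:simplex-balance-integral} must combine into shape $m+1$, which is what $\Gamma_2$ supplies. I expect this bookkeeping to be the only delicate step.

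The moment generating function is routine. $\Gamma_2$ contributes $(1-s)^{-2}$, and each $Z_jE_j$ contributes $1-u+u/(1-s)=1+us/(1-s)$.

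For~\eqref{eq:vertex-jensen-lower}, I would use the convexity of $\mathcal B$ (Lemma~\ref{lem:vertex-elasticity}) and Jensen's inequality. This gives $\E\mathcal B(v\mathcal X)\ge\mathcal B(v\E\mathcal X)=\mathcal B(v(2+(b-1)u))$. Then $v(2+(b-1)u)\ge bu^2=y$, because $v\ge u$ and $2+(b-1)u\ge bu$ when $u<1$. Since $\mathcal B$ is increasing, the bound follows.

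For~\eqref{eq:vertex-tangent-upper}, I would use the concavity of $\log\mathcal B$. The tangent line at $y$ gives $\mathcal B(x)\le\mathcal B(y)(x/y)^{\kappa}\le\mathcal B(y)\exp(\kappa(x/y-1))$, using $\log r\le r-1$. Taking $x=v\mathcal X$, we get $\kappa x/y=\kappa\mathcal X/(bu(1-u))=s\mathcal X$. Taking expectations then gives $\mathcal B(y)e^{-\kappa}\E e^{s\mathcal X}$, and inserting the moment generating function completes the bound.
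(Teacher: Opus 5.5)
Your route is the paper's. The representation comes from the refresh integral~\eqref{eq:simplex-balance-integral}; the paper normalizes $e^{-t}B_b(vt)$ directly into the law of $\mathcal X$, which is the same bookkeeping you do term by term in $m_b$. The lower bound is Jensen's inequality plus $v\E\mathcal X\ge bu^2$. The upper bound is a tangent line of $\log\mathcal B$ plus the moment generating function. Your representation, your moment generating function and your Jensen step are all correct.

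There is one false step, in the upper bound. The inequality $\mathcal B(x)\le\mathcal B(y)(x/y)^{\kappa}$ would need $\log\mathcal B$ to be concave as a function of $\log x$. In fact $\frac{d}{dr}\log\mathcal B(e^r)=\kappa(e^r)$, and $\kappa$ is nondecreasing by Lemma~\ref{lem:vertex-elasticity}. So $\log\mathcal B$ is convex in $\log x$, the inequality holds in the opposite direction, and your version is false in general. For instance, take $\ell=1$ and $a_1=2$, so $\mathcal B(w)=w+w^2/2$. At $y=1$ one has $\kappa=4/3$, and at $x=4$ one gets $\mathcal B(4)=12>\frac32\cdot4^{4/3}\approx9.52$.

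The bound you actually need is still true and follows directly from the concavity of $\log\mathcal B$ in $x$ itself. Its tangent line at $y$ has slope $\mathcal B'(y)/\mathcal B(y)=\kappa/y$, so $\log\mathcal B(x)\le\log\mathcal B(y)+\frac{\kappa}{y}(x-y)$. This is $\mathcal B(x)\le\mathcal B(y)e^{\kappa(x/y-1)}$, with no use of $\log r\le r-1$, and it is what the paper does. With this replacement the rest of your argument gives~\eqref{eq:vertex-tangent-upper}: the substitution $x=v\mathcal X$, the identity $\kappa v/y=s$, and the moment generating function.
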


\begin{proof}
For $0<u<1$ the stay probability $p=1/(1+(d-1)u)$ lies in $(1/d,1)$, so
by~\eqref{eq:simplex-balance-integral} with $N=b+|a|$,
\[
 S_{(b,a)}(u)=\frac{(1-u)^{b+|a|-1}}v
 \int_0^\infty e^{-t}B_b(vt)\,\mathcal B(vt)\,dt.
\]
The integral of $e^{-t}B_b(vt)$ is $\sum_m\binom{b-1}{m-1}v^m=v(1+v)^{b-1}$, and
$(1-u)^{b-1}(1+v)^{b-1}=1$. Hence $e^{-t}B_b(vt)$, normalized, is the density
$\sum_{j=0}^{b-1}\binom{b-1}ju^j(1-u)^{b-1-j}\,t^{j+1}e^{-t}/(j+1)!$, a mixture of
gamma densities with shape $J+2$, where $J$ has the binomial law with parameters $b-1$ and $u$.
This is exactly the law of $\mathcal X$, which proves the identity and its
moment generating function. Since $\E\mathcal X=2+(b-1)u$, we have
$v\E\mathcal X-bu^2=u(2-u+bu^2)/(1-u)>0$, so $v\E\mathcal X\ge y$.
Jensen's inequality for the convex function $\mathcal B$, and the monotonicity of $\mathcal B$,
prove~\eqref{eq:vertex-jensen-lower}. Concavity of $\log\mathcal B$ gives, for $w>0$,
\[
 \mathcal B(w)\le \mathcal B(y)\exp\!\left(\frac{\kappa}{y}(w-y)\right).
\]
Insert $w=v\mathcal X$, use $\kappa v/y=s$, and use the moment
generating function to prove~\eqref{eq:vertex-tangent-upper}.
\end{proof}

\begin{proof}[Proof of Theorem~\ref{thm:vertex-uniform-root}]
Write $S=S_{(b,a)}$, $u_0=\sqrt{y_a/b}$ and $\kappa_0=\kappa(y_a)$. Since
$S$ increases on $u>0$, it suffices to show $\log S(u_+)>0$ and
$\log S(u_-)<0$ at $u_\pm=(1\pm\delta)u_0$. Here $\delta\le1/2$ forces
$\varepsilon\le1/(32c_\ell)\le1/96$, and $u_0\le\sqrt{1/b}\le\varepsilon$.

\emph{Step 1. The upper point.} We have $u_+\le\tfrac32\varepsilon<1/4$. Since
$\log\mathcal B(y_a)=0$ and $(\log\mathcal B)'=\kappa/y$, the monotonicity of $\kappa$ gives
$\log\mathcal B(bu_+^2)\ge\kappa_0\log(bu_+^2/y_a)=2\kappa_0\log(1+\delta)$. The lower
bound in Lemma~\ref{lem:vertex-elasticity}, applied at $y_a\le1$, gives
$|a|u_0/\kappa_0\le2\sqrt{|a|/b}\le2\varepsilon$. Hence,
by~\eqref{eq:vertex-jensen-lower} and $-\log(1-x)\le x/(1-x)$,
\[
 \log S(u_+)\ge
 2\kappa_0\log(1+\delta)-\frac{|a|u_+}{1-u_+}
 \ge\kappa_0\left(\frac43\delta-4\varepsilon\right)>0.
\]
Here we used $2\log(1+\delta)\ge\frac43\delta$ for $\delta\le\frac12$, and
$|a|u_+/(1-u_+)\le\frac43\cdot\frac32|a|u_0\le4\kappa_0\varepsilon$. The final
inequality of the display holds since $\frac43\delta=\frac{64}3c_\ell\varepsilon>4\varepsilon$.

\emph{Step 2. The lower point.} Write $y=bu_-^2$ and $\kappa=\kappa(y)$. The
inequalities $y_a\ge1/(|a|+1)$, $\delta\le1/2$ and the upper bound
in~\eqref{eq:vertex-elasticity} imply
\[
 u_-\le\varepsilon,\qquad bu_-\ge\frac1{2\varepsilon},\qquad
 s=\frac{\kappa}{bu_-(1-u_-)}
 \le 2(2\ell+\sqrt\ell)\varepsilon\le4c_\ell\varepsilon\le\frac18.
\]
For the bound on $s$ we have
$\kappa/(bu_-)\le\ell/(bu_-)+\sqrt{\ell|a|y}/(bu_-)\le2\ell\varepsilon+\sqrt{\ell|a|/b}
\le(2\ell+\sqrt\ell)\varepsilon$, and $1-u_-\ge\frac12$. We use
$\log(1+x)\le x$, $-\log(1-s)\le s/(1-s)$ and $u_-,s\le1/4$
in~\eqref{eq:vertex-tangent-upper}, and we drop the nonpositive term
$|a|\log(1-u_-)$. This gives
\begin{align*}
 \log\frac{S(u_-)}{\mathcal B(y)}
 &\le -\kappa+\frac{2s}{1-s}+\frac{bu_-s}{1-s}\\
 &=\kappa\left(\frac1{(1-u_-)(1-s)}-1\right)
       +\frac{2s}{1-s}\\
 &\le 5\kappa(u_-+s)\le25c_\ell\kappa\varepsilon.
\end{align*}
Here the equality uses $bu_-s=\kappa/(1-u_-)$. The next inequality uses
$(1-u_-)(1-s)\ge\frac{21}{32}$, so the first term is at most
$\frac{32}{21}\kappa(u_-+s)$, and $\kappa\ge\ell\ge1$, so the second is at most
$\frac{16}7\kappa s$. The last one uses $u_-+s\le5c_\ell\varepsilon$. Again by the
monotonicity of $\kappa$,
\[
 \log\mathcal B(y)=-\int_y^{y_a}\frac{\kappa(x)}x\,dx\le2\kappa\log(1-\delta)\le-2\kappa\delta
 =-32c_\ell\kappa\varepsilon.
\]
Hence $\log S(u_-)\le\kappa(-32c_\ell+25c_\ell)\varepsilon<0$.
\end{proof}

\section{Proofs for reducible faces and for the counts near a facet}\label{app:counts}

\subsection{Blocks of a reducible face}\label{app:reducible-blocks}

In this subsection and the next, $F$ is admissible, the notation is that of
Section~\ref{sec:reducible-max}, and $hq_{\max}\le\frac12$. For an irreducible $C$, states
$y,x\in C$ and $m\in\Z_{\ge1}^C$ with $|m|=l$, let $G_C(m)_{yx}$ be the probability that the chain
with $X_1=y$ has composition $m$ in $X_1,\dots,X_l$ and $X_l=x$. So $G_{\{i\}}(m)=(1-hq_i)^{m-1}$.
For $n$ with support $F$ let $n^{(j)}$ be $n$ restricted to $C_j$, $N_j=|n^{(j)}|$, $s_j=hN_j$ and
$G_j=G_{C_j}(n^{(j)})$. The blocks of a path with composition $n$ occupy fixed times, a path that
starts outside $C_1$ never visits $C_1$, and the Markov property at the $r-1$ switches gives
\[
P_N(n)=\mu_{C_1}^\top G_1\,(hR_1)\,G_2\,(hR_2)\cdots(hR_{r-1})\,G_r\,\ones .
\]

\emph{The bridge run expansion.} Let $|C|=k\ge2$ and write $W_{yx}=W_{A_C,e_y,e_x}$. The proof of
Lemma~\ref{lem:runs}, with the first and the last letter of the words fixed, gives
$G_C(m)_{yx}=\sum_{m'}W_{yx}(m')h^{M'-1}\prod_i\binom{m_i-1}{m'_i-1}(1-hq_i)^{m_i-m'_i}$. As in
Step~1 of the proof of Lemma~\ref{lem:discrete}, with $t=hm$ the $m'$-th term is $\kappa_{m'}v_{m'}$,
where $v_{m'}=W_{yx}(m')h^{k-1}\prod_ie^{-q_it_i}t_i^{m'_i-1}/(m'_i-1)!$, so that
$\sum_{m'}v_{m'}=h^{k-1}\Phi_C(t)_{yx}$, and $\kappa_{m'}$ is the factor of that proof. Its
argument for~\eqref{eq:A-kappa} gives
\[
1-\sum_i\frac{m'_i(m'_i-1)}{2m_i}-q_{\max}^2h|t|\le\kappa_{m'}\le e^{hq_{\max}M'} .
\]
Since $W_{yx}(m')\le k^{M'}q_{\max}^{M'-1}$, the series of $\Phi_C$ converges for every
$t\in\R^C$, and so do the series with the extra factors $M'e^{M'/2}$ or $m'_i(m'_i-1)t_i^{-1}$ in their
terms. So $\Phi_C$ is real-analytic on $\R^C$, and these series are bounded on bounded sets.
Also $\Phi_C(t)_{yx}>0$ for $t\in(0,\infty)^C$, since some word from $y$ to $x$ uses every state.

\begin{lemma}[one bound for every block]\label{lem:block-bound}
There are $N_0$ and $C_0$ such that for $N\ge N_0$, $T\le N^{1/3}$, every $j$, all $y,x\in C_j$ and every
$m$ with $\supp m=C_j$ and $s=h|m|\le T$, we have $G_{C_j}(m)_{yx}\le C_0h^{k_j-1}w_j(s)$, where
$w_j(s)=(1\vee s)^{-(k_j-1)/2}e^{-\lambda_Fs}$ if $\lambda_j=\lambda_F$ and $k_j\ge2$, and
$w_j(s)=e^{-\lambda_js}$ otherwise.
\end{lemma}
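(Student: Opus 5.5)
The plan is to split by the size of the component and by the size of $s$, and to use in each case a bound already proved for the chain killed on leaving $C_j$. The bridge probability $G_{C_j}(m)_{yx}$ is at most the probability that the chain started at $y$ has composition $m$ in $X_1,\dots,X_l$, because dropping the end condition only enlarges the event. For a singleton $C_j=\{i\}$ we have $G=(1-hq_i)^{l-1}\le e^{1/2}e^{-q_is}=e^{1/2}e^{-\lambda_js}$, which is the claim with $k_j-1=0$. For $k_j\ge2$ I would apply Lemma~\ref{lem:upper} to the block with start $e_y$ and the Perron tilt $\theta=q-\lambda_j\ones\in V_{C_j}$. This gives $G\le C_\theta h^{k_j-1}e^{-\lambda_js}$ uniformly over all such $m$, since the lemma holds for paths of any length. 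That settles every case with $w_j(s)=e^{-\lambda_js}$. It also settles the case $s\le1$ when $\lambda_j=\lambda_F$, because there $(1\vee s)^{-(k_j-1)/2}=1$. The finitely many components, start states and tilts give a single constant $C_0$.

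The remaining case is $\lambda_j=\lambda_F$, $k_j\ge2$ and $1\le s\le T$, where we need the extra factor $s^{-(k_j-1)/2}$. This is a face-maximum bound for the block. I would reduce it to Theorem~\ref{thm:facemax}(b), or more directly to the proof of the upper bound in Theorem~\ref{thm:facemax}, run for the face $C_j$ with start $e_y$ and length $l$ in place of $N$. Here $s=hl$ plays the role of $T$, and the switching probability is the same $h=s/l$. Since $s\le T\le N^{1/3}$ and $l\ge s/h\ge N^{2/3}$, we get $s\le l^{1/2}$, so the discrete-versus-continuum comparison applies in the range $1\le s\le l^{1/2}$ (the remark after Theorem~\ref{thm:facemax} on the range of part (b)). Inside a ball around $\alpha^*_{C_j}$, Theorem~\ref{thm:local}(b) with $k=k_j$ gives $P\le C\,l^{-(k_j-1)}s^{(k_j-1)/2}e^{-s\lambda_j}$, and this equals $C\,h^{k_j-1}s^{-(k_j-1)/2}e^{-\lambda_js}$. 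The start $e_y$ is allowed, since it puts mass on $C_j$ and so (H) holds for it. Outside the ball, the finitely many tilts $\theta_u\in V_{C_j}$ from the Chernoff step (\eqref{eq:A-chernoff}) give $h^{k_j-1}e^{-s(\lambda_j+2\eta)}$, which is even smaller.

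The step I expect to be the main obstacle is uniformity when $s$ is bounded but not small compared with $1$, and near the boundary of $\Delta_{C_j}$. The local law gives relative errors only on compact subsets, and we need the bound for all $s\in[1,T]$ with one constant. For $s$ in a fixed range $[1,s_0]$ the factor $s^{-(k_j-1)/2}$ is bounded below, so the crude Lemma~\ref{lem:upper} bound already suffices there. For $s\ge s_0$ the proof of Theorem~\ref{thm:facemax}(a) gives the ratio bound uniformly, since its constants depend only on $Q$, the start and compact sets. Near the boundary of $\Delta_{C_j}$ the compact-ball/outside split handles things, because the outside region is controlled by the Chernoff bound alone. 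Combining the cases and taking the maximum of the finitely many constants gives $C_0$ and $N_0$.
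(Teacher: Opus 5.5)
Your proposal is correct and follows the paper's proof. You treat singletons directly, and you use Lemma~\ref{lem:upper} with the Perron tilt $\theta=q-\lambda_j\ones$ for the crude bound, which also covers $s\le1$. For the extra factor $s^{-(k_j-1)/2}$ you apply Theorem~\ref{thm:facemax}(b) to the block with start $e_y$, with $l=|m|$ steps and time $s$ in place of $N$ and $T$. The differences are cosmetic. The paper checks the range directly through $s^3\le l\iff s^2T\le N$, not through $s\le l^{1/2}$ and the remark on the range of part (b). Your worry about bounded $s$ is already handled by the uniformity of Theorem~\ref{thm:facemax} for all $T\ge1$.
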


\begin{proof}
We prove this in two cases. For $k_j=1$, $(1-hq_i)^{|m|-1}\le e^{hq_i}e^{-q_is}$. Let $k_j\ge2$. The bridge is at most the
probability that the chain started at $y$ has composition $m$ in its first $|m|$ visits.
Lemma~\ref{lem:upper} with the start $e_y$ and $\theta=q-\lambda_j\ones\in V_{C_j}$ bounds it by
$Ch^{k_j-1}e^{-\lambda_js}$. If $\lambda_j=\lambda_F$ and $s\ge1$, we apply
Theorem~\ref{thm:facemax}(b) with $v=0$ to the chain started at $y$ on the face $C_j$, which
satisfies (H), with $l=|m|$ steps and time $s$ in place of $N$ and $T$, and the same $h$. Its
range holds, since $l=sN/T\ge N^{2/3}$ and $s^3\le l$ is $s^2T\le N$. It gives
$G_{C_j}(m)_{yx}\le C'l^{-(k_j-1)}s^{(k_j-1)/2}e^{-\lambda_Fs}=C'h^{k_j-1}s^{-(k_j-1)/2}e^{-\lambda_Fs}$
with a constant $C'$ for the start $e_y$. There are finitely many starts $y$.
\end{proof}

\begin{lemma}[blocks of bounded time]\label{lem:block-short}
For every $R$ there is $C_R$ such that $|G_C(m)_{yx}/h^{|C|-1}-\Phi_C(hm)_{yx}|\le C_Rh$ for all
$y,x\in C$ and $\supp m=C$ with $h|m|\le R$. Hence, if $D=\nu^\top\Psi_1R'_1\Psi_2\cdots R'_{p-1}\Psi_pw$
with fixed nonnegative $\nu$, $R'_c$ and $w$ and $\Psi_c=G_{C_c}(m^{(c)})/h^{|C_c|-1}$, and $D^{\rm c}$
is the same product with $\Phi_{C_c}(hm^{(c)})$, then $|D-D^{\rm c}|\le C'_Rh$ when
$\sum_ch|m^{(c)}|\le R$.
\end{lemma}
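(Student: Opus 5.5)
The first claim follows by comparing the bridge run expansion with $\Phi_C$ term by term. The identity $\sum_{m'}v_{m'}=h^{|C|-1}\Phi_C(t)_{yx}$ with $t=hm$ gives
\[
G_C(m)_{yx}/h^{|C|-1}-\Phi_C(hm)_{yx}=h^{-(|C|-1)}\sum_{m'}(\kappa_{m'}-1)v_{m'} .
\]
It therefore suffices to bound $|\kappa_{m'}-1|$ by $h$ times a weight whose sum against $v_{m'}/h^{|C|-1}$ is bounded for $|t|\le R$. For singletons $C=\{i\}$ the claim is direct: $(1-hq_i)^{m-1}-e^{-q_ihm}=O(h)$ uniformly for $hm\le R$, since $(1-hq_i)^{m-1}e^{hq_im}=e^{O(h+h^2m)}$ and $h^2m\le hR$.

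For $|C|\ge2$ I use the two-sided bound on $\kappa_{m'}$ displayed before Lemma~\ref{lem:block-bound}.

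\emph{Upper side.} We have $\kappa_{m'}-1\le e^{hq_{\max}M'}-1\le hq_{\max}M'e^{hq_{\max}M'}$. For $hq_{\max}\le\frac12$ this is at most $hq_{\max}M'e^{M'/2}$.

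\emph{Lower side.} We have
\[
1-\kappa_{m'}\le\sum_i\frac{m'_i(m'_i-1)}{2m_i}+q_{\max}^2h|t| ,
\]
and $1/m_i=h/t_i$. So this is $h\bigl(\sum_im'_i(m'_i-1)/(2t_i)+q_{\max}^2R\bigr)$.

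Terms with $m'_i>m_i$ vanish on the left and still obey $-1\ge$ the lower bound, so they are covered as in the proof of Lemma~\ref{lem:discrete}. Summing, $|G/h^{|C|-1}-\Phi_C|$ is at most $h$ times the series of $\Phi_C$ with the extra factors $q_{\max}M'e^{M'/2}$, $m'_i(m'_i-1)t_i^{-1}$ and $q_{\max}^2R$ in its terms. The paragraph before Lemma~\ref{lem:block-bound} states that these series converge and are bounded on bounded sets. Note that $m'_i(m'_i-1)t_i^{-1}\cdot t_i^{m'_i-1}$ is a polynomial, so there is no singularity at small $t_i$. Taking their supremum over $t\in(0,\infty)^C$ with $|t|\le R$, over $y,x$ and over the finitely many $C$ gives $C_R$.

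\emph{The product.} Write $D-D^{\rm c}$ as a telescoping sum of $p$ terms, in each of which one factor $\Psi_c$ is replaced by $\Phi_{C_c}(hm^{(c)})$. Each term is bounded by
\[
|\nu|\,\prod|R'|\,|w|\;\max_c\lVert\Psi_c-\Phi_{C_c}\rVert\;\prod_{c'\ne c}\max\bigl(\lVert\Psi_{c'}\rVert,\lVert\Phi_{C_{c'}}\rVert\bigr).
\]
The difference is $\le C_Rh$ entrywise by the first part, since each block has $h|m^{(c)}|\le R$. The other factors are bounded: $\Phi$ is bounded on $\{|t|\le R\}$, and $\Psi=\Phi+O(h)$. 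With finitely many components and $p\le r$, this gives $C'_R$.

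The main care point is the uniformity of the lower bound when some $t_i$ is small, i.e. when $m_i$ is small. The factor $t_i^{-1}$ is harmless because it is compensated by $t_i^{m'_i-1}$ with $m'_i\ge2$ whenever $m'_i(m'_i-1)\ne0$. I would check this explicitly before invoking the stated boundedness.
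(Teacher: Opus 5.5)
Your proposal is correct and follows essentially the same route as the paper. The paper also compares the bridge run expansion with $\Phi_C$ term by term through the two-sided bound on $\kappa_{m'}$. It converts $1/m_i$ into $h/t_i$ and bounds the series with the extra factors $M'e^{M'/2}$ and $m'_i(m'_i-1)t_i^{-1}$ on $|t|\le R$, treats singletons directly, and telescopes the product.
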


\begin{proof}
For a singleton, $e^{hq}e^{-qt}\ge(1-hq)^{m-1}\ge e^{-qt-q^2ht}$ with $t=hm$. Let $|C|=k\ge2$.
By the bound on $\kappa_{m'}$, $\kappa_{m'}-1\le hq_{\max}M'e^{M'/2}$, which gives
$G/h^{k-1}-\Phi_C\le C_Rh$. Also $1-\kappa_{m'}\le\sum_im'_i(m'_i-1)/(2m_i)+q_{\max}^2hR$, and
since $1/m_i=h/t_i$, the sum of $v_{m'}m'_i(m'_i-1)/(2m_i)$ over $m'$ is $h^k$ times a series with
the extra factor $m'_i(m'_i-1)/(2t_i)$, which is bounded on $|t|\le R$. This gives
$\Phi_C-G/h^{k-1}\le C_Rh$. For the product we telescope, since every factor is bounded
on $|t|\le R$.
\end{proof}

\begin{lemma}[the bridge law]\label{lem:block-local}
Let $|C|=k\ge2$ and let $\mathcal K\subset\Delta_C^\circ$ be compact. Uniformly for $y,x\in C$,
$m$ with $m/l\in\mathcal K$ and $1\le s=hl\le l^{1/2}$,
$G_C(m)_{yx}=h^{k-1}\Phi_C(hm)_{yx}(1+O(s^2/l))$.
\end{lemma}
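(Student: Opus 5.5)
We will rerun the proof of Lemma~\ref{lem:discrete} with the first and the last letter of the words fixed. The bridge run expansion above writes $G_C(m)_{yx}/h^{k-1}=\sum_{m'}\kappa_{m'}v_{m'}/h^{k-1}$ with $\sum_{m'}v_{m'}=h^{k-1}\Phi_C(t)_{yx}$ and $t=hm$. So the ratio $G_C(m)_{yx}/(h^{k-1}\Phi_C(t)_{yx})$ equals $\E_w\kappa_{m'}$, where $w$ is the probability on the run vectors $m'$ proportional to $v_{m'}$. The two-sided bound on $\kappa_{m'}$ displayed above is
\[
1-\frac{M'^2}{2m_{\min}}-q_{\max}^2hs\le\kappa_{m'}\le e^{hq_{\max}M'} .
\]
Since $m/l\in\mathcal K$, we have $m_{\min}\ge\varepsilon_{\mathcal K}l$, and also $hs=s^2/l$. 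The whole task therefore reduces to showing
\[
\E_we^{hq_{\max}M'}=1+O(s^2/l),\qquad \E_wM'^2=O(s^2).
\]

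Following Step~3 of Lemma~\ref{lem:discrete}, put $\Upsilon(c)=\log\E_we^{c(M'-1)}$. This function equals $\log\Phi^{(e^c)}_C(t)_{yx}-\log\Phi_C(t)_{yx}$, where $\Phi^{(\sigma)}$ is built from $\sigma A_C$. It is convex, vanishes at $0$, and satisfies $\Upsilon(1)\ge0$. It then suffices to show $\Upsilon(1)\le C_{\mathcal K}s$. With that bound, Step~4 goes through verbatim: $hq_{\max}\le q_{\max}s/l$ gives $\E_we^{hq_{\max}M'}\le e^{hq_{\max}(1+Cs)}=1+O(s^2/l)$, and the Chernoff tail estimate gives $\E_wM'^2=O(s^2)$.

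For the upper bound on $\Phi^{(e)}_C(t)_{yx}$ we use the bridge version of the torus representation. This is Lemma~\ref{lem:torus-complex} with the weights $(A_C,e_y,e_x)$, applied at the radii $\theta'(\alpha)$ for $\alpha=m/l$, together with the scaling argument of~\eqref{eq:A-Hsigma}. These give
\[
\Phi^{(e)}_C(t)_{yx}\le C\,e^{-\langle q,t\rangle}e^{es\langle\alpha,\theta\rangle},
\]
since the amplitude $b$ built from $e_y,e_x$ is bounded by its value at the real point. For the lower bound on $\Phi_C(t)_{yx}$ we use Lemma~\ref{lem:local-general} with $\nu=e_y$ and $e=e_x$, which are nonzero and nonnegative. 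It gives
\[
\Phi_C(t)_{yx}=(2\pi s)^{-(k-1)/2}e^{-\langle q,t\rangle}e^{s\langle\alpha,\theta\rangle}\,\frac{r_{\alpha,y}\,l_{\alpha,x}}{\sqrt{\det\Sigma_\alpha}}\,\bigl(1+O(1/s)\bigr).
\]
This value is bounded below by $c\,s^{-(k-1)/2}e^{-\langle q,t\rangle+s\langle\alpha,\theta\rangle}$ for large $s$. For $s$ in a bounded range $[1,s_1]$ we instead use continuity and the positivity of $\Phi_C$ on the compact set $[1,s_1]\times\mathcal K$. Dividing the two bounds gives $\Upsilon(1)\le C+(e-1)s\max\langle\alpha,\theta\rangle+\tfrac{k-1}2\log s=O(s)$.

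The main obstacle is this lower bound, specifically keeping it uniform in the endpoints $y,x$ and across small $s$. For the endpoints this is handled by taking the maximum over the finitely many pairs, each of which has $r_{\alpha,y}l_{\alpha,x}>0$. Small $s$ is covered by the compactness argument above. The remaining steps are direct transcriptions of Lemma~\ref{lem:discrete}, with the condition $s\le l^{1/2}$ playing the role of $T\le N^{1/2}$.
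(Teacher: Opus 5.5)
Your proposal is correct and is essentially the paper's proof. You rerun Lemma~\ref{lem:discrete} with the bridge run expansion, bound $\Upsilon(1)=O(s)$ from above by the bridge form of~\eqref{eq:A-Hsigma} (Step~2 of the proof of Lemma~\ref{lem:upper} with Lemma~\ref{lem:torus-complex} for $(A_C,e_y,e_x)$), and from below by Lemma~\ref{lem:local-general} with $(\nu,e)=(e_y,e_x)$ together with continuity and positivity on $[1,s_1]\times\mathcal K$. The one detail the paper states explicitly and you leave inside your constant $C$ is that $b_{yx}(\theta'(\alpha))$ is bounded on $\mathcal K$; this holds because $b_{yx}$ is continuous on the open set of admissible radii and $\theta$ is continuous on $\mathcal K$.
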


\begin{proof}
We prove this as we prove Lemma~\ref{lem:discrete} in Appendix~\ref{app:torus}, with the bridge run
expansion in place of Lemma~\ref{lem:runs}, and $l$ and $s$ in place of $N$ and $T$. Its Step~3
uses the weights only through an upper bound on $H$ for $eA_C$ and a lower bound on $H$ at
$t=s\alpha$. Step~2 of the proof of Lemma~\ref{lem:upper}, with Lemma~\ref{lem:torus-complex}
for $(A_C,e_y,e_x)$ in place of Lemma~\ref{lem:torus}, gives~\eqref{eq:A-Hsigma} for $H_{A_C,e_y,e_x}$
with $b_{yx}$ in place of $b$, and $b_{yx}(\theta'(\alpha))$ is bounded on $\mathcal K$.
Lemma~\ref{lem:local-general} with $(\nu,e)=(e_y,e_x)$ gives
$H_{A_C,e_y,e_x}(s\alpha)e^{-s\langle\alpha,\theta(\alpha)\rangle}\ge cs^{-(k-1)/2}$ for $s\ge s_1$,
and for $1\le s\le s_1$ the left side is continuous and positive on $[1,s_1]\times\mathcal K$.
\end{proof}

\begin{lemma}[bridge maxima]\label{lem:block-max}
Let $|C|=k\ge2$, $\lambda=\lambda_C$ and $\Sigma=\Sigma_C$, and put
$Y_{yx}=h^{k-1}s^{-(k-1)/2}r_{C,y}l_{C,x}e^{-\lambda s}/((2\pi)^{(k-1)/2}\sqrt{\det\Sigma})$. There
are $s_2$, $l_2$ and $C_3$ such that the following hold for $y,x\in C$, $l\ge l_2$ and
$s=hl\in[s_2,l^{1/3}]$. \textup{(i)} $G_C(m)_{yx}\le Y_{yx}(1+C_3(1/s+s^2/l))$ for every $m$ with
$|m|=l$ and $\supp m=C$. \textup{(ii)} $G_C(m^0)_{yx}\ge Y_{yx}(1-C_3(1/s+s^2/l))$ if $|m^0|=l$ and
$|m^0_i-l\alpha^*_{C,i}|<1$ for all $i$.
\end{lemma}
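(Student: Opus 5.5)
The plan is to run the proof of Theorem~\ref{thm:facemax}(b), by its continuum route, for the bridge weights $(A_C,e_y,e_x)$ in place of $(A_F,\mu,\ones)$, with $l$ and $s$ in place of $N$ and $T$.

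\emph{Step 1: the face constant.} At $\alpha^*_C$ the tilt is $\theta=q-\lambda\ones$. The vectors $r_{\alpha^*}$ and $l_{\alpha^*}$ are matching multiples of $r_C$ and $l_C$. Hence Lemma~\ref{lem:local-general} with $(\nu,e)=(e_y,e_x)$ has the constant $(e_y\cdot r_C)(l_C\cdot e_x)/((2\pi)^{(k-1)/2}\sqrt{\det\Sigma})=r_{C,y}l_{C,x}/((2\pi)^{(k-1)/2}\sqrt{\det\Sigma})$. The exponent is $-sJ_C(\alpha^*)=-s\lambda$. Multiplied by $h^{k-1}s^{-(k-1)/2}$, this is exactly $Y_{yx}$. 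For general $\alpha$ the bridge constant is $C^{yx}(\alpha)=(e_y\cdot r_\alpha)(l_\alpha\cdot e_x)/((2\pi)^{(k-1)/2}\sqrt{\det\Sigma_\alpha})$. It is positive and real-analytic on $\Delta_C^\circ$ by Proposition~\ref{prop:constant}(a), which holds for any nonnegative $A$ with a strongly connected graph.

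\emph{Step 2: inside a ball.} Fix a closed ball $U$ about $\alpha^*_C$ inside $\Delta_C^\circ$. For $m/l\in U$, Lemma~\ref{lem:block-local} and Lemma~\ref{lem:local-general} give
\[
G_C(m)_{yx}=h^{k-1}s^{-(k-1)/2}C^{yx}(m/l)e^{-sJ_C(m/l)}\bigl(1+O(1/s+s^2/l)\bigr).
\]
Note that $h^{k-1}\Phi_C(hm)$ equals $h^{k-1}$ times $e^{-s\langle q,\alpha\rangle}H_{A_C,e_y,e_x}(s\alpha)$. We write $J_C-\lambda\ge c_U|x|^2$ and $\log(C^{yx}/C^{yx}(\alpha^*))\le g\cdot x+C_1|x|^2$, with $x=\alpha'-\alpha^{*\prime}$. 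Maximizing the concave quadratic, as in the proof of Theorem~\ref{thm:facemax}(a), gives the bound $Y_{yx}(1+O(1/s+s^2/l))$ on $U$. For (ii), the point $m^0$ has $|x|\le k/l$. Its exponent loss is $O(s/l^2)$ and its constant loss is $O(1/l)$, so (ii) follows from the same display with the lower error.

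\emph{Step 3: outside the ball.} I expect this to be the main obstacle, because the bound must hold near the boundary of $\Delta_C$, where no local law is available. I would use \eqref{eq:A-chernoff} for the bridge: drop the end condition, so that $G_C(m)_{yx}$ is at most the probability of composition $m$ from start $y$. Then take finitely many $\theta_j\in V_C$, chosen by the compactness argument of the proof of Theorem~\ref{thm:facemax}(a) with $h=0$ and $v=0$. This gives
\[
G_C(m)_{yx}\le Ch^{k-1}e^{-s(\lambda+2\eta)+O(hs)}.
\]
The term $O(hs)$ comes from replacing $\log(1+h(\theta_i-q_i))$ by its linear part, and $hs=s^2/l\le1$. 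The result is $Y_{yx}\cdot O(s^{(k-1)/2}e^{-2\eta s})$, which is at most $Y_{yx}/s$ once $s\ge s_2$. Since $r_{C,y}l_{C,x}>0$, the ratio to $Y_{yx}$ is uniform over the finitely many pairs $(y,x)$. Choosing $l_2$ so large that $hq_{\max}\le\frac12$ and that $m^0$ exists with full support completes the proof.
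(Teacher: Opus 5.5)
Your proposal is correct and follows the paper's proof essentially step for step. It uses the same constant $c_{yx}(\alpha)$. Inside a ball it applies the local law through Lemmas~\ref{lem:block-local} and~\ref{lem:local-general} and maximizes a concave quadratic. Outside the ball it drops the end condition and uses finitely many tilts $\theta_j\in V_C$ from the compactness argument.

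The only difference is cosmetic: the paper quotes Lemma~\ref{lem:upper} directly. Since $\log(1+x)\le x$, the bound \eqref{eq:A-chernoff} already gives $e^{-s\langle q-\theta,\alpha\rangle}$ up to a constant, with no $O(hs)$ correction. The $O(hs)$ term in your Step~3 is therefore harmless but unnecessary.
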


\begin{proof}
Put $c_{yx}(\alpha)=r_{\alpha,y}l_{\alpha,x}/((2\pi)^{(k-1)/2}\sqrt{\det\Sigma_\alpha})$, which is
real-analytic and positive on $\Delta_C^\circ$ by Proposition~\ref{prop:constant}(a), with
$c_{yx}(\alpha^*_C)=r_{C,y}l_{C,x}/((2\pi)^{(k-1)/2}\sqrt{\det\Sigma})$. Let $J^0=J_C-\lambda$, which is
nonnegative and vanishes only at $\alpha^*_C$ (Proposition~\ref{prop:rate}(a)), and let $\xi$ be
the displacement $\alpha'-\alpha^{*\prime}_C$. Since $\nabla^2J_C=\Sigma_\alpha^{-1}$, Taylor's
theorem gives $\rho_0,c_U,C_U,C_1>0$ such that $U=\{|\xi|\le\rho_0\}\subset\Delta_C^\circ$ and on
$U$, for all $y,x$, $c_U|\xi|^2\le J^0\le C_U|\xi|^2$,
$|\log(c_{yx}(\alpha)/c_{yx}(\alpha^*_C))-g_{yx}\cdot\xi|\le C_1|\xi|^2$ with $|g_{yx}|\le C_1$,
and $\log c_{yx}$ is $C_1$-Lipschitz.

\emph{Outside $U$.} The function $J^0$ is lower semicontinuous and positive on the compact set
$\mathcal E=\Delta_C\setminus\{|\xi|<\rho_0\}$, so $J^0\ge3\eta$ there for some $\eta>0$. Since
$J_C=\sup_{\theta\in V_C}\langle q-\theta,\cdot\rangle$ (Section~\ref{sec:firstorder}), each
point of $\mathcal E$ has a neighborhood on which one $\theta\in V_C$ gives
$\langle q-\theta,\cdot\rangle>\lambda+2\eta$, and finitely many $\theta_1,\dots,\theta_p$ cover
$\mathcal E$. For $m/l\in\mathcal E$, the bridge is at most the probability that the chain started at $y$ has
composition $m$ in its first $l$ visits, and Lemma~\ref{lem:upper} bounds it by
$Ch^{k-1}e^{-s(\lambda+2\eta)}=Y_{yx}\,O(s^{(k-1)/2}e^{-2\eta s})=Y_{yx}\,O(1/s)$.

\emph{Inside $U$.} Lemma~\ref{lem:block-local} and Lemma~\ref{lem:local-general} with
$(\nu,e)=(e_y,e_x)$ at $t=s\alpha$ give
$G_C(m)_{yx}=Y_{yx}(c_{yx}(\alpha)/c_{yx}(\alpha^*_C))e^{-sJ^0(\alpha)}(1+E)$ with
$|E|\le C_2(1/s+s^2/l)$, where $\alpha=m/l$. For $s\ge2C_1/c_U$ the logarithm of the two middle
factors is at most $g_{yx}\cdot\xi-\frac12c_Us|\xi|^2\le|g_{yx}|^2/(2c_Us)$. With the bound outside
$U$ this proves (i). For (ii), $|\xi|\le k/l$ at $m^0$, so $c_{yx}(m^0/l)\ge c_{yx}(\alpha^*_C)e^{-kC_1/l}$
and $sJ^0(m^0/l)\le C_Uk^2s/l^2$.
\end{proof}

\subsection{Proofs of Theorem~\ref{thm:reducible} and Corollary~\ref{cor:reducible-crossing}}

\begin{proof}[Proof of Theorem~\ref{thm:reducible}]
Write $a=a_F$, $K=K^{\rm red}_F$ and $X=N^{-(k_F-1)}T^{p_F}e^{-T\lambda_F}=h^{k_F-1}T^{-a}e^{-T\lambda_F}$.
For $j\in S^*$ let $F^<_j(t^<)$ and $F^>_j(t^>)$ be the functions inside the suprema that define
$A_j$ and $B_j$. Call a block flat if it is a singleton $\{i\}$ with $q_i=\lambda_F$. Flat
blocks exist only if $k^*=1$, and then they are the blocks in $S^*$. Put
$\tilde w_m(u)=e^{\lambda_Fu}w_m(u)\le1$, with $w_m$ from Lemma~\ref{lem:block-bound}, let
$\delta_0>0$ be the least positive $\lambda_m-\lambda_F$ ($\delta_0=1$ if there is none), and
$\eta(u)=\max(e^{-\delta_0u},u^{-1/2})$ for $u\ge1$. A block that is not flat and has time
$u\ge1$ has $\tilde w_m(u)\le\eta(u)$.

\emph{The constant.} Since $\Phi_C$ is real-analytic on $\R^C$, $F^<_j$ and $F^>_j$ are Lipschitz
on bounded parts of the closed orthants, and the suprema over the open and the closed orthants
agree. Fix $t\in(0,\infty)^C$, let $h\to0$ and $m=\lceil t/h\rceil$. By
Lemma~\ref{lem:block-short}, $G_C(m)/h^{|C|-1}\to\Phi_C(t)$, and Lemma~\ref{lem:block-bound} with
$T=|t|+1$ and $N=T/h$ gives $e^{\lambda_F|t|}\Phi_{C_m}(t)\le C_0\tilde w_m(|t|)$ entrywise. So
every factor in $F^<_j$ and $F^>_j$ is bounded, and $A_j,B_j<\infty$. They are positive, since
$\mu_{C_1}\ge0$ and every $R_m\ge0$ are nonzero, $r,l>0$ and every $\Phi_{C_m}>0$ on the open
orthant. If $k^*\ge2$, no factor is flat, so $F^<_j$ and $F^>_j$ tend to $0$ at infinity, and the
suprema are attained at points $t^<_*$ and $t^>_*$ of the closed orthants.

\emph{Upper bound.} Let $\supp n=F$, so $\sum_ms_m=T$. By the factorization and
Lemma~\ref{lem:block-bound}, summed over the finitely many switch states, and since
$\sum_m(k_m-1)+r-1=k_F-1$,
\[
P_N(n)\le C_1h^{k_F-1}e^{-\lambda_FT}\prod_m\tilde w_m(s_m).
\]
Let $k^*\ge2$ and take $m_0$ with $s_{m_0}\ge T/r$. (A) If $m_0\notin S^*$, then
$\lambda_{m_0}>\lambda_F$, or $\lambda_{m_0}=\lambda_F$ and $k_{m_0}\ge k^*+1$, so
$P_N(n)\le C_1XT^a\max(e^{-\delta_0T/r},(T/r)^{-a-1/2})\le KX$ for $T\ge T_1$. (B1) If $m_0=j\in S^*$
and $T-s_j\ge R_0$, another block has time at least $R_0/r$, and $\tilde w_j(s_j)\le(T/r)^{-a}$, so
$P_N(n)\le C_1r^a\eta(R_0/r)X\le KX$ once $R_0$ is fixed large. (B2) If $m_0=j\in S^*$ and
$T-s_j<R_0$, write $P_N(n)=\mathbf a^\top G_j\mathbf b$ with
$\mathbf a^\top=\mu_{C_1}^\top G_1hR_1\cdots G_{j-1}hR_{j-1}$ and $\mathbf b=hR_jG_{j+1}\cdots G_r\ones$,
both nonnegative. Lemma~\ref{lem:block-max}(i) applies with $l=N_j\ge N/2$ and $s=s_j\ge T-R_0$,
since $s_j^3\le N_j$ means $s_j^2T\le N$, and $s_j^2/N_j=s_jT/N$. It gives
\[
P_N(n)\le h^{k^*-1}s_j^{-a}e^{-\lambda_Fs_j}\,\frac{(\mathbf a\cdot\mathbf r)(l'\cdot\mathbf b)}{(2\pi)^a\sqrt{\det\Sigma_{C_j}}}\,
\Bigl(1+C\Bigl(\frac1T+\frac{T^2}N\Bigr)\Bigr),
\]
where $\mathbf r=r_{C_j}$ and $l'=l_{C_j}$. Let $\varkappa_<=\sum_{m<j}(k_m-1)+j-1$ be the number of factors $h$ in $\mathbf a$. Then
$\mathbf a\cdot\mathbf r/h^{\varkappa_<}$ is a product as in Lemma~\ref{lem:block-short} with total time
$\rho_<\,<\,R_0$, whose continuum version is at most $e^{-\lambda_F\rho_<}A_j$. So
$e^{\lambda_F\rho_<}\mathbf a\cdot\mathbf r\le h^{\varkappa_<}(A_j+Ch)$, and likewise
$e^{\lambda_F\rho_>}l'\cdot\mathbf b\le h^{\varkappa_>}(B_j+Ch)$. Since
$\varkappa_<+\varkappa_>+k^*-1=k_F-1$, $\rho_<+\rho_>+s_j=T$, $s_j^{-a}\le T^{-a}(1+CR_0/T)$ and
$h\le T^2/N$, we get $P_N(n)\le X(K+C(1/T+T^2/N))$.

Let $k^*=1$, and let $\rho$ be the total time of the blocks that are not flat. If $\rho\ge R_0$,
one of them has time at least $R_0/r$, and $P_N(n)\le C_1\eta(R_0/r)X\le KX$. If $\rho<R_0$, each
flat block has $(1-h\lambda_F)^{N_m-1}\le e^{h\lambda_F}e^{-\lambda_Fs_m}$, and the other blocks
form products as in Lemma~\ref{lem:block-short} with total time below $R_0$. So
$P_N(n)\le h^{k_F-1}e^{-\lambda_FT}e^{rhq_{\max}}(\Psi^*+Ch)$, where $\Psi^*$ is the supremum of
$e^{\lambda_F|t|}\mu_{C_1}^\top\Phi_{C_1}R_1\cdots\Phi_{C_r}\ones$, in which every flat factor
equals $1$. At a flat $j\in S^*$ this product splits into the products of $A_j$ and $B_j$, in
disjoint variables, so $\Psi^*=A_jB_j=K$.

\emph{Lower bound.} Let $k^*\ge2$, and take $j\in S^*$ that attains the maximum in $K$ and the
points $t^<_*$ and $t^>_*$, with total time $\tau_*$. For $m\ne j$ put
$n^{(m)}=\max(1,\lceil t^{(m)}_*/h\rceil)$ entrywise. Every entry of $hn^{(m)}$ is within $h$ of
that of $t^{(m)}_*$, so $F^<_j(hn^<)\ge A_j-Ch$ and $F^>_j(hn^>)\ge B_j-Ch$. Give block $j$ the
length $l=N-\sum_{m\ne j}N_m$ and a composition $m^0$ with $|m^0_y-l\alpha^*_{C_j,y}|<1$, which has
full support for large $N$, so that $s=hl\in[T-\tau_*-Ch,T]$. By Lemma~\ref{lem:block-short},
$\mathbf a\ge h^{\varkappa_<}(\mathbf a^{\rm c}-Ch)$ entrywise, with
$e^{\lambda_F|hn^<|}\mathbf a^{\rm c}\cdot\mathbf r=F^<_j(hn^<)$, and likewise for $\mathbf b$.
Lemma~\ref{lem:block-max}(ii) and nonnegativity give
$P_N(n)\ge h^{k_F-1}s^{-a}e^{-\lambda_FT}(A_j-Ch)(B_j-Ch)((2\pi)^a\sqrt{\det\Sigma_{C_j}})^{-1}(1-C(1/T+T^2/N))$,
and $s^{-a}\ge T^{-a}$. So $M_F\ge X(K-C(1/T+T^2/N))$. Let $k^*=1$. The supremum $\Psi^*$ is
attained at a point of the closed orthants. Indeed, with the flat factors equal to $1$, every
other factor is at most $C_0\tilde w_m\le C_0$, and a factor whose time grows tends to $0$, since
it is not flat. So the function tends to $0$ at infinity. We take a flat $j\in S^*$, treat the
blocks that are not flat in the same way at such a point, give every other flat block the length $1$,
and use $(1-h\lambda_F)^{l-1}\ge e^{-\lambda_Fs}(1-CT^2/N)$ for block $j$.
\end{proof}

\begin{proof}[Proof of Corollary~\ref{cor:reducible-crossing}]
Theorem~\ref{thm:reducible} for $F$ and $G$ gives, uniformly on $[T_1,N^{1/3}]$,
\[
D_N(T)=-\Delta L+\Xi(T)+\kappa+O\Bigl(\frac1T+\frac{T^2}N\Bigr),\qquad\Xi(T)=\delta T+\pi\log T .
\]
(i) For fixed $N$, $M_F$ and $M_G$ are positive maxima of finitely many polynomials in $h$, so
$D_N$ is continuous. At $T=\frac\Delta{2\delta}L$ the display gives $D_N=-\frac\Delta2L+O(\log L)<0$,
and at $T=\frac{2\Delta}\delta L$ it gives $D_N=\Delta L+O(\log L)>0$.

(ii) A zero has $\delta T_N+\pi\log T_N=\Delta L+O(1)$. If $\pi\ge0$, then $\delta T_N\le\Delta L+O(1)$.
If $\pi<0$, then $\pi\log T_N\ge\pi L/3$, since $T_N\le N^{1/3}$, and
$\delta T_N\le(\Delta+|\pi|/3)L+O(1)$. So $T_N=O(L)$, then $T_N=\frac\Delta\delta L+O(\log L)$, and
the error term is $O(1/L)$.

(iii) Let $\Xi(T^\circ)=\Delta L-\kappa$. As in (ii), $T^\circ=\frac\Delta\delta L+O(\log L)$, and
$\Xi'=\delta+\pi/T\ge\delta/2$ on $[\frac\Delta{2\delta}L,\frac{2\Delta}\delta L]$ for large $N$.
So every zero is within $O(1/L)$ of $T^\circ$. Putting
$\log T^\circ=\log L+\log\frac\Delta\delta+O(\log L/L)$ into $\delta T^\circ=\Delta L-\pi\log T^\circ-\kappa$
gives the formula.
\end{proof}

\subsection{Proof of Theorem~\ref{thm:pair-counts}}

(a) For $G=\{j\}$ the only word in Lemma~\ref{lem:layer-exact-discrete} is $j^{N'}$, so
$P^{(z)}_{N'}(\hat n)=(\mu_j+zh\mu_iq_{ij})(1+zhq_{ji})(p_j+zh^2\omega)^{N'-1}$ and
$P^{(0)}_{N'}(\hat n)=\mu_jp_j^{N'-1}$. We divide by it, expand
$(1+zh\frac{\mu_i}{\mu_j}q_{ij})(1+zhq_{ji})=\sum_ec_eh^ez^e$ and $(1+zh^2\omega/p_j)^{N'-1}$ by the
binomial theorem, and use the lemma.

(b) By Lemma~\ref{lem:L-two}, $\widetilde\Lambda(z)=(1+c_1z+c_2z^2)e^{S\omega z}$. So
$\xi_e=c_e$ in the Laguerre form of Section~\ref{sec:layer-counts}, which follows from
$\sum_K\binom{n-1}{K-1}p^{n-K}h^K(S\omega)^{K-e}/(K-e)!=p^n(h/p)^eB^{(e)}_n(u)$.

(c) Write $n=n_i$, $p=p_i$ and $m=K-e$. The $(K,e)$ term of $R$ is $\rho_m$ times the $(K,e)$ term
$t_{K,e}=\binom{n-1}{K-1}p^{n-K}h^Kc_e(S\omega)^m/m!$ of $\mathbf K^h$, where
$\rho_m=\prod_{l=1}^m(1-l/N')\,p_j^{-m}$, which is $0$ for $m\ge N'$. All $t_{K,e}\ge0$, so
$R/\mathbf K^h=\E_\nu\rho_m$ for the probability $\nu$ proportional to $t_{K,e}$. Since
$-\log(1-y)\le2y$ for $y\le\frac12$, $1-m(m+1)/(2N')\le\rho_m\le e^{2hq_jm}$. Fix $e$. Then
$t_{K,e}$ is proportional to $\beta_K=\binom{n-1}{K-1}u^m/m!$, and
$\beta_{K+1}/\beta_K=\frac{n-K}K\cdot\frac u{K+1-e}\le\frac U{K(K+1-e)}$. Multiplying out and
summing over $K$ gives $\E[(K-1)(K-e)]\le U$ under the law proportional to $\beta_K$, that is
$\E[(m-1)m]\le U$ for $e=0$, $\E[m^2]\le U$ for $e=1$ and $\E[m(m+1)]\le U$ for $e=2$. So
$\E m\le1+\sqrt U$ and $\E m(m+1)\le U+2\sqrt U+2$, which gives the lower bound. Since
$\beta_K\propto u^m$, the logarithmic derivative of $\mathcal S_e(u)=\sum_K\beta_K$ is
$\E^{(u)}[m]/u$. So with $c=2hq_j\le1$ and $\sigma=e^c$,
$\log\E e^{cm}=\int_u^{\sigma u}\E^{(v)}[m]\,v^{-1}dv\le c(1+\sqrt{\sigma U})\le c(1+2\sqrt U)$,
which gives the upper bound. Mixing over $e$ keeps both bounds. For the last claim,
$U/N'\le2\omega h^2n$, $\sqrt U/N'\le h\sqrt{2\omega}$, $1/N'\le2h$ since $hN'\ge T/2$, and
$hq_j\sqrt U=O(h^2\sqrt{N'n})$. For $T\le N^{1/2}$ we have $h^2\sqrt{N'n}\le h^2N=T^2/N\le1$, so
the exponent $2hq_j(1+2\sqrt U)$ is bounded and the upper bound is $1+O(h+h^2\sqrt{N'n})$.

(d) Let $a_K=[z^K]\widetilde\Lambda=\sum_ec_e(S\omega)^{K-e}/(K-e)!$. By~\eqref{eq:layer-series},
$\mathbf K(x,S)=e^{-q_ix}\sum_{K\ge1}x^{K-1}a_K/(K-1)!$, and the $(K,e)$ terms of $\mathbf K^h/h$ and
$\mathbf K$ have the ratio $\prod_{l<K}(1-l/n)\,p^{n-K}e^{hq_in}$, which lies between
$(1-K(K-1)/(2n))e^{-hq_i^2x}$ and $e^{hq_iK}$, and is $0$ for $K>n$. So
$\mathbf K^h/(h\mathbf K)$ is the mean of this ratio under the probability $\nu'$ proportional to
the terms $t'_{K,e}=c_ex^{K-1}(S\omega)^{K-e}/((K-1)!(K-e)!)$ of $\mathbf K$. For fixed $e$,
$t'_{K+1,e}/t'_{K,e}=U'/(K(K+1-e))$ with $U'=\omega Sx$, and as in (c) this gives
$\E_e[(K-1)(K-e)]=U'$. Since $m=K-e\ge0$ is an integer, this gives $\E_e[K(K-1)]\le3U'$ for $e\le1$ and
$\E_2[K(K-1)]\le3U'+2$. The total weights of $e=0,1,2$ are $S\omega\mathcal I(U')$, at least $c_1\mathcal I(U')$
and $c_2x\,\mathcal I(U')$, with
\ifarxivbuild $\mathcal I$ of Section~\ref{sec:vertex}\else $\mathcal I(y)=\sum_{j\ge0}y^j/(j!\,(j+1)!)$\fi, so $\nu'(e=2)\le c_2x/(c_1+S\omega)$.
Hence $\E_{\nu'}[K(K-1)]/(2n)\le\frac32\omega hS+c_2h/c_1$. As in (c),
$\E_{\nu'}e^{hq_iK}\le e^{hq_i(3+2\sqrt{U'})}=1+O(h+hS)$, since $\sqrt{U'}\le\sqrt\omega\,S$ for
$n\le N'$. Also $e^{-hq_i^2x}\ge1-q_i^2hS$. This proves (d). \qed

\subsection{Proof of Theorem~\ref{thm:layer-counts}}

Constants depend only on $Q$, $\mu$, $i$ and $\mathcal K$. Write $n=n_i$, $p=p_i$, $s=h\hat n=S\beta$,
$\tilde x=(n-1)h$ and
\[
w_d(z)=\Bigl(p+\frac hz\Bigr)^{n-1}=\sum_{K=1}^nb_Kz^{-(K-1)},\qquad b_K=\binom{n-1}{K-1}p^{n-K}h^{K-1},
\]
so that $\mathbf K^h_\beta(n,S)=h\sum_Kb_K[z^K]\widetilde\Lambda_\beta$. Let $\Lambda$,
$\varepsilon$, $\Xi$ and $R=\log\Xi$ be as in the proof of Theorem~\ref{thm:layer}(a). So
$|\varepsilon(z)-\varepsilon(0)|\le C|z|/S$ for $|z|\le z_0$, and $R$ is real on the real axis with
$|R(z)|\le C(|z|+S|z|^2)$, $|R'(z)|\le C(1+S|z|)$ and $|R''|\le C(1+S)$ on $|z|\le z_0$. Throughout,
$hS\le T^2/N\le1$, $\tilde x\le x\le\delta T$ and $S\ge(1-\delta)T$.

\emph{The coefficient identity.} The proof of Lemma~\ref{lem:layer-exact} uses the rates only as
weights of words. So for every matrix $q'$ of nonnegative rates on $F$ in place of $Q$, with the
same $q_G$ in the exponent, the coefficient of $x^{K-1}/(K-1)!$ in~\eqref{eq:layer-series} gives
\[
e^{-\langle q_G,s\rangle}\sum_{m:\,m_i=K}W'(m)\prod_{j\in G}\frac{s_j^{m_j-1}}{(m_j-1)!}=[z^K]f'^{(z)}_G(s),
\]
where $W'$ and $f'^{(z)}_G$ are $W$ and $f^{(z)}_G$ built from $q'$, with
$d'(z)=q_G-zc'^-\circ c'^+$.

\emph{The radius.} Put $\tilde z_*=\sqrt{\tilde x/(\omega_\beta S)}$ and
$\tilde\zeta=2\sqrt{\omega_\beta S\tilde x}$. If $\tilde\zeta\le Z_0$, let $\rho=\max(\tilde z_*,1/S)$.
If $\tilde\zeta>Z_0$, let $\rho$ be the critical point on $I=[\tilde z_*/2,2\tilde z_*]$ of
$\lambda_d(z)=(n-1)\log(p+h/z)+S\omega_\beta z+R(z)$. Let
$\mathcal A_d=\frac1{2\pi\mathrm i}\oint_{|z|=\rho}w_d\Lambda\,dz/z^2$, let $\mathcal A_d^{\rm abs}$ be
the integral of $|w_d\Lambda||z|^{-2}$ against $|dz|/(2\pi)$, and $\zeta_d=\max(1,S\omega_\beta\rho)$.

\begin{lemma}[the discrete contour]\label{lem:counts-contour}
For $Z_0$ large, then $\delta$ small and $T_0$ large, $\rho$ is well defined, $\rho\le z_0/2$,
$\mathcal A_d>0$, $\mathcal A_d^{\rm abs}\le C\mathcal A_d$ and
$\mathcal A_d\ge c\rho^{-1}w_d(\rho)\Lambda(\rho)\zeta_d^{-1/2}$. With
$\mathbf D=\sum_Kb_K[z^K]f^{(z)}_G(s)=\frac1{2\pi\mathrm i}\oint_{|z|=\rho}w_df^{(z)}_G(s)\,dz/z^2$,
\[
\mathbf D=(1+\varepsilon(0))\mathcal A_d(1+O(\rho/S)),\qquad\rho/S\le C(T^{-2}+x^{1/2}T^{-3/2}).
\]
\end{lemma}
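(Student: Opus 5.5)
The plan is to copy the proof of Theorem~\ref{thm:layer}(a), with $e^{x/z}$ replaced by the polynomial $w_d(z)=(p+h/z)^{n-1}$ and $x$ replaced by $\tilde x=(n-1)h$. First I check the contour identity for $\mathbf D$. Since $w_d$ is a polynomial in $1/z$ of degree $n-1$, the residue of $w_d(z)f^{(z)}_G(s)z^{-2}$ at $0$ is $\sum_Kb_K[z^K]f^{(z)}_G(s)$, because $f^{(z)}_G$ is entire by Lemma~\ref{lem:layer-exact}. So $\mathbf D$ equals the integral over every circle, in particular $|z|=\rho$. Once $\rho\le z_0$ is known, I write $f^{(z)}_G=\Lambda(z)(1+\varepsilon(z))$ as in that proof. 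The bound $|\varepsilon(z)-\varepsilon(0)|\le C|z|/S$ then gives $\mathbf D=(1+\varepsilon(0))\mathcal A_d+O(\rho/S)\mathcal A_d^{\rm abs}$. With $\mathcal A_d^{\rm abs}\le C\mathcal A_d$ and $|\varepsilon(0)|\le\frac12$ this is the displayed identity. The remaining work is the estimates on $\mathcal A_d$ and $\rho$.

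\emph{Few visits, $\tilde\zeta\le Z_0$.} Here $\rho=\max(\tilde z_*,1/S)\le C/S$. On the circle, $|h/z|\le hS$ and $(n-1)h/\rho\le\tilde x/\tilde z_*\le C$, so $|w_d|\le p^{n-1}(1+h/(p\rho))^{n-1}\le Cp^{n-1}$. Also $|S\omega_\beta z|\le C$ and $|\Xi-1|\le C/S$. Let $\mathcal A_{d,0}$ be $\mathcal A_d$ with $\Xi=1$. It has nonnegative Taylor coefficients, and by the Laguerre form its main part is $\Lambda(0)h^{-1}p^{n}B_n(u)$. This is at least $\Lambda(0)p^{n-1}S\omega_\beta$, from the term $K=1$. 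The error from $\Xi-1$ is $O(\Lambda(0)p^{n-1}\rho^{-1}/S)=O(\Lambda(0)p^{n-1})$, and $\mathcal A_d^{\rm abs}\le C\Lambda(0)p^{n-1}\rho^{-1}$. So positivity, comparability and the lower bound (with $\zeta_d=1$ or bounded) follow, and $\rho/S\le C/S^2\le CT^{-2}$.

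\emph{Many visits, $\tilde\zeta>Z_0$.} I set $\lambda_d(z)=(n-1)\log(p+h/z)+S\omega_\beta z+R(z)$ on $I$. Because $h/z\le 2h/\tilde z_*$ is small (indeed $h/\tilde z_*\le C h\sqrt{S/\tilde x}$, and $h S\le1$ with $\tilde x\ge Z_0^2/(4\omega S)$), I can write $(n-1)\log(p+h/z)=(n-1)\log p+\tilde x/(pz)+O(\tilde x h/z^2)$, a small analytic perturbation of $\tilde x/z$. The derivative is $\lambda_d'(z)=-\tilde x/(z^2(p+h/z)p^{-1}\cdot p)+S\omega_\beta+O(1+S|z|)$, which changes sign on $I$ for $Z_0$ large and $\delta$ small. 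I also get $\lambda_d''\ge cS/\tilde z_*$ there, so $\rho$ is a unique critical point, $\rho\le2\tilde z_*\le2\sqrt{\delta/\omega_{\min}}\le z_0/2$, and hence $\rho/S\le Cx^{1/2}T^{-3/2}$.

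For the gap on the circle $z=\rho e^{\mathrm i\phi}$ I use
\[
|p+h/z|^2=(p+h/\rho)^2-2ph\rho^{-1}(1-\cos\phi).
\]
This gives $\operatorname{Re}[(n-1)\log(p+h/z)]-(n-1)\log(p+h/\rho)\le-c(\tilde x/\rho)(1-\cos\phi)$, exactly as in the gap step of Theorem~\ref{thm:local}(b). Together with the bound on $R$ from the proof of Theorem~\ref{thm:layer}(a), $\operatorname{Re}\lambda_d-\lambda_d(\rho)\le-c\tilde\zeta(1-\cos\phi)$. The upper bound $\mathcal A_d^{\rm abs}\le C\Lambda(0)\rho^{-1}e^{\lambda_d(\rho)}\tilde\zeta^{-1/2}$ follows. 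For the lower bound I apply Remark~\ref{rem:laplace-uniform} in its localized form with $m=1$ and parameter $\tilde\zeta$. Near $\phi=0$ the principal logarithm is analytic, and the derivatives of $\ell=\lambda_d(\rho e^{\mathrm i\phi})/\tilde\zeta$ are uniformly bounded, with $-\ell''(0)\in[\frac12,2]$. This gives $\mathcal A_d\ge c\rho^{-1}w_d(\rho)\Lambda(\rho)\zeta_d^{-1/2}$, since $\zeta_d\asymp\tilde\zeta$.

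The main obstacle is the uniformity of this last step when $n$ is small relative to $1/h$ versus large: the phase $(n-1)\log(p+h/z)$ is not exactly $\tilde x/z$. I must control its derivatives up to order four on the circle uniformly in $(n,h,S)$ using only $h/\rho\ll1$. I would bound them by $C\tilde x/\rho\cdot(1+h/\rho)$ and check that $h/\rho\to0$ whenever $\tilde\zeta>Z_0$, by enlarging $T_0$.
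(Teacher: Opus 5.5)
Your proposal is correct and follows the paper's proof essentially step for step: the same radius $\rho$, the same split at $\tilde\zeta=Z_0$, the same lower bound from the $K=1$ term with $\Xi=1$, the localized form of Remark~\ref{rem:laplace-uniform} with $m=1$, and the same final use of $|\varepsilon(z)-\varepsilon(0)|\le C|z|/S$. The one real difference is the source of the angular gap, and both versions work: you draw it from $w_d$ via $|p+h/z|^2=(p+h/\rho)^2-2ph\rho^{-1}(1-\cos\phi)$, while the paper draws it from $S\omega_\beta z$ and uses only that $|p+(h/\rho)e^{-\mathrm i\phi}|$ is largest at $\phi=0$. One small correction: $h/\rho\le4\omega_\beta hS/Z_0$ with $hS\le1$, so this quantity is made small by enlarging $Z_0$, not $T_0$.
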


\begin{proof}
\emph{Few visits.} Here $\rho\le C/S$. On the circle $|w_d|\le e^{\tilde x/\rho}\le e^{\tilde\zeta/2}\le e^{Z_0/2}$,
$|e^{S\omega_\beta z}|\le C$ and $|\Xi-1|\le C/S$. With $\Xi=1$ the integral is
$\Lambda(0)\sum_Kb_K(S\omega_\beta)^K/K!\ge\Lambda(0)p^{n-1}S\omega_\beta\ge cS\Lambda(0)$, since
$p^{n-1}\ge e^{-2q_i\tilde x}$ and $\tilde x\le Z_0^2/(4\omega_{\min}S)$. The change from $\Xi=1$ is at
most $C\Lambda(0)\rho^{-1}/S\le C\Lambda(0)$, and $\mathcal A_d^{\rm abs}\le C\Lambda(0)S$ and
$\rho^{-1}w_d(\rho)\Lambda(\rho)\le CS\Lambda(0)$. This gives the claims.

\emph{Many visits.} Put $a=h/\tilde z_*=2h\omega_\beta S/\tilde\zeta\le2\omega_{\max}/Z_0\le\frac18$, where
$\omega_{\max}=\max_{\mathcal K}\omega_\beta$.
Here the lattice enters, and $hS\le1$ makes it harmless. On $I$,
$\lambda_d'(z)=-\tilde x/(z^2(p+h/z))+S\omega_\beta+R'(z)$. The first term is at most
$-4\omega_\beta S/(1+2a)$ at $\tilde z_*/2$ and at least $-\omega_\beta S/2$ at $2\tilde z_*$,
since $p\ge\frac12$, and $|R'|\le C(1+S\tilde z_*)$ with $\tilde z_*\le\sqrt{2\delta/\omega_{\min}}$.
So $\lambda_d'$ changes sign on $I$. Also
$\lambda_d''(z)=\tilde x(2p+h/z)z^{-3}(p+h/z)^{-2}+R''(z)\ge\omega_\beta S/(18\tilde z_*)-C(1+S)>0$
on $I$, since $(2p+y)/(p+y)^2\ge\frac49$ for $p\in[\frac12,1]$ and $0\le y\le\frac14$. So $\rho$ is
unique, and $\rho\le2\tilde z_*\le z_0/2$ for $\delta$ small. On the circle $h/\rho\le\frac14$, so
$\operatorname{Re}(p+h/z)\ge\frac14$, $\lambda_d$ is holomorphic near the circle, real on
$(0,z_0]$, and $e^{\lambda_d}=w_d\Lambda/\Lambda(0)$. Since
$|p+(h/\rho)e^{-\mathrm i\phi}|$ is largest at $\phi=0$, and
$\operatorname{Re}[R(z)-R(\rho)]\le C(\rho+S\rho^2)(1-\cos\phi)$ as in the proof of
Theorem~\ref{thm:layer}(a), we get
$\operatorname{Re}\lambda_d(\rho e^{\mathrm i\phi})-\lambda_d(\rho)\le-\frac{\zeta_d}2(1-\cos\phi)$,
with $\zeta_d\ge\tilde\zeta/4\ge Z_0/4$. Let $\ell(\phi)=\lambda_d(\rho e^{\mathrm i\phi})/\zeta_d$. The
critical equation reads $\tilde x/(\rho(p\rho+h))=S\omega_\beta(1+\eta)$ with $|\eta|\le\frac1{10}$,
so $(n-1)h/(p\rho)\le3\zeta_d$, and the $\phi$-derivatives of $\ell$ up to order four are bounded.
Also $-\ell''(0)=\rho^2\lambda_d''(\rho)/\zeta_d=(1+\eta)(2p\rho+h)/(p\rho+h)+O(\rho+1/S)\in[0.4,3]$.
Remark~\ref{rem:laplace-uniform} with $m=1$, the amplitude $e^{-\mathrm i\phi}$ and $\zeta_d$ in place of
$T$ gives
$\mathcal A_d=\Lambda(0)\rho^{-1}e^{\lambda_d(\rho)}(2\pi\zeta_d(-\ell''(0)))^{-1/2}(1+O(1/\zeta_d))$,
which proves the claims, since $\Lambda(0)e^{\lambda_d(\rho)}=w_d(\rho)\Lambda(\rho)$.

\emph{Consequence.} $\mathbf D-(1+\varepsilon(0))\mathcal A_d$ is the integral of
$w_d\Lambda(\varepsilon(z)-\varepsilon(0))$, which is at most $C(\rho/S)\mathcal A_d^{\rm abs}$ in
modulus. Finally $\rho\le C/S$ with few visits and $\rho\le2\sqrt{\tilde x/(\omega_\beta S)}$ with
many.
\end{proof}

\begin{lemma}[the facet states discretized]\label{lem:counts-half}
Uniformly as in Theorem~\ref{thm:layer-counts}, $P_N(n)=h^{k-1}\mathbf D(1+O(T^2/N))$.
\end{lemma}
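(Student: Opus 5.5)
The plan is to run the term-by-term comparison of the proof of Lemma~\ref{lem:discrete}, with the state $i$ kept exact and only the states of $G$ discretized. First I will expand both sides in run counts. By Lemma~\ref{lem:layer-exact-discrete},
\[
P_N(n)=\sum_{K\ge1}\binom{n-1}{K-1}p^{\,n-K}\,[z^K]P^{(z)}_{N'}(\hat n).
\]
By the proof of Lemma~\ref{lem:layer-exact}, read for the words in $F$ with $m_i=K$, the coefficient $[z^K]P^{(z)}_{N'}(\hat n)$ is the run sum of Lemma~\ref{lem:runs} restricted to these words, with the factor of the runs in $i$ removed and every switch counted with its factor $h$. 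On the other side, the coefficient identity stated before Lemma~\ref{lem:counts-contour} writes $[z^K]f^{(z)}_G(s)$ as the continuum sum over the same vectors $m$ with $m_i=K$, namely $e^{-\langle q_G,s\rangle}\sum_{m_i=K}W(m)\prod_{j\in G}s_j^{m_j-1}/(m_j-1)!$. Since $b_K$ carries $h^{K-1}$ and $h^{M-1}=h^{k-1}h^{K-1}\prod_{j\in G}h^{m_j-1}$, the term of $m$ in $P_N(n)$ equals $\kappa_m$ times $h^{k-1}$ times the term of $m$ in $\mathbf D$. Here
\[
\kappa_m=\prod_{j\in G}\Bigl[\prod_{1\le l<m_j}\Bigl(1-\frac l{n_j}\Bigr)\Bigr](1-hq_j)^{n_j-m_j}e^{hq_jn_j},
\]
which is the factor of Step~1 of that proof with the product over $G$ only. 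All terms of $\mathbf D$ are nonnegative, so $P_N(n)/(h^{k-1}\mathbf D)=\E_w\kappa_m$ for the probability $w$ on $m$ proportional to these terms.

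Next, \eqref{eq:A-kappa} holds for this $\kappa_m$, with $M_G=\sum_{j\in G}m_j$ in place of $M$ and $n_{\min}=\min_{j\in G}n_j$. Because $\beta\in\mathcal K$ and $n_i\le\delta N$, we have $n_{\min}\ge cN$. So it suffices to prove $\E_we^{hq_{\max}M_G}=1+O(T^2/N)$ and $\E_wM_G^2=O(T^2)$. As in Steps~3 and~4 there, both follow from the convex function $\Upsilon(c)=\log\E_we^{c(M-1)}$ and the single bound $\Upsilon(1)\le CT$. The reason is that $hq_{\max}\cdot CT=O(T^2/N)$, and the Chernoff bound gives the second moment.

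The main obstacle is $\Upsilon(1)\le CT$. Replacing $A_F$ by $eA_F$ multiplies the term of $m$ by $e^{M-1}$. By the coefficient identity for $q'=eA_F$, applied with the same $q_G$ in the exponent, the numerator is the same contour integral $\mathbf D'$ built from $f'^{(z)}_G$ for the scaled rates. For the upper bound on $\mathbf D'$ I would put the circle at radius $\rho$ and bound $|f'^{(z)}_G(s)|$ on it. This uses Lemma~\ref{lem:torus-complex} with the dominating weights at $|z|$ and the bound \eqref{eq:A-Hsigma} for $\sigma=e$, which gives $|f'^{(z)}_G(s)|\le Ce^{CS}$ on $|z|\le z_0$. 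Together with $|w_d|\le w_d(\rho)$, this yields $\mathbf D'\le C\rho^{-1}w_d(\rho)e^{CS}e^{-\langle q_G,s\rangle}$. For the lower bound on $\mathbf D$ I would use Lemma~\ref{lem:counts-contour}, which gives $\mathbf D\ge c\rho^{-1}w_d(\rho)\Lambda(\rho)\zeta_d^{-1/2}$. Here $\Lambda(\rho)\ge\Lambda(0)\ge cS^{-(k-2)/2}e^{-SJ_G(\beta)}$ and $\zeta_d\le CS$. Dividing the two bounds gives $\Upsilon(1)\le CS+C\log S\le C'T$, because $S\le T$.

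The delicate point is to choose the same radius $\rho$ for both integrals, so that the factors $w_d(\rho)$ cancel. I expect this to work because the bound on $f'^{(z)}_G$ is uniform on the disc. The remaining care is that the constants must be uniform over $n_i\le\delta N$ and over the regimes of few and many visits of Lemma~\ref{lem:counts-contour}.
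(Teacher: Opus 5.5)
Your proof is correct and follows the paper's proof. Both use the same term-by-term comparison with $\kappa_m$ taken over $G$ only, and both get $\Upsilon(1)\le CT$ from a rate-scaled integral $\mathbf D'$ on the same circle $|z|=\rho$ as the lower bound $\mathbf D\ge c\rho^{-1}w_d(\rho)\Lambda(\rho)\zeta_d^{-1/2}$ of Lemma~\ref{lem:counts-contour}, so that $w_d(\rho)$ cancels.

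The paper scales only the rates out of the states of $G$, so that $W'(m)\ge\mathrm e^{M_G-1}W(m)$, and then compares $f'^{(\rho)}_G$ with $\Lambda(\rho)$ through the torus at the radii $\mathrm e\,\theta'(\beta,\rho)$. You scale all rates, which gives the exact factor $\mathrm e^{M-1}\ge\mathrm e^{M_G}$, and your crude bound $|f'^{(z)}_G(s)|\le C\mathrm e^{CS}\mathrm e^{-\langle q_G,s\rangle}$ is enough because $\Lambda(\rho)\ge cS^{-(k-2)/2}\mathrm e^{-SJ_G(\beta)}\ge cS^{-(k-2)/2}\mathrm e^{-\langle q_G,s\rangle}$. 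This holds only up to the constant $c$; the inequality $\Lambda(\rho)\ge\Lambda(0)$ that you wrote can fail, since $C_G(\beta,\rho)/C_G(\beta,0)$ need not exceed $1$.
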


\begin{proof}
\emph{Step 1.} By Lemma~\ref{lem:runs}, $P_N(n)=\sum_mu_m$ with
$u_m=W(m)h^{M-1}\prod_{l\in F}\binom{n_l-1}{m_l-1}(1-hq_l)^{n_l-m_l}$. Since
$M-1=m_i+(k-2)+\sum_{j\in G}(m_j-1)$, as in Step~1 of the proof of Lemma~\ref{lem:discrete} we get
$u_m=\kappa_mv_m$ with
\[
v_m=W(m)h^{k-2}h^{m_i}\binom{n-1}{m_i-1}p^{n-m_i}\prod_{j\in G}e^{-q_js_j}\frac{s_j^{m_j-1}}{(m_j-1)!}\ge0,
\]
and with $\kappa_m$ the factor of that proof with the product over $G$ only. The state $i$ is kept
exact. By the coefficient identity with $q'=Q$, $\sum_mv_m=h^{k-1}\mathbf D$, which is positive by
Lemma~\ref{lem:counts-contour}. So $P_N(n)/(h^{k-1}\mathbf D)=\E_w\kappa_m$ for the probability $w$
proportional to $v_m$. With $M_G=\sum_{j\in G}m_j$ and $n^G_{\min}=\min_{j\in G}n_j\ge\varepsilon_{\mathcal K}N/2$,
the argument for~\eqref{eq:A-kappa} gives
$(1-M_G^2/(2n^G_{\min}))e^{-q_{\max}^2T^2/N}\le\kappa_m\le e^{hq_{\max}M_G}$.

\emph{Step 2.} Let $\sigma=\mathrm e$, and let $q'$ be $Q$ with the rates out of the states of $G$
multiplied by $\sigma$. Every letter of $G$ in a word, except possibly the last, is followed by
a jump out of it, so $W'(m)\ge\sigma^{M_G-1}W(m)$, and $\E_w\sigma^{M_G-1}\le\mathbf D'/\mathbf D$
with $\mathbf D'=\sum_Kb_K[z^K]f'^{(z)}_G(s)$ by the coefficient identity. Here $f'^{(z)}_G$ is built
from $\sigma A(z)$, $\mu^{(z)}$, $\ones+z\sigma c^-$ and $q_G-z\sigma c^-\circ c^+$. All its coefficients
in $z$ are nonnegative, so $\mathbf D'\le\rho^{-1}w_d(\rho)f'^{(\rho)}_G(s)$. We claim that
$f'^{(r)}_G(S\beta)\le CS^{(k-2)/2}e^{CS}\Lambda(r)$ for real $0\le r\le z_0$. For $|G|=1$ this is
$f'^{(r)}_G\le\sigma e^{(\sigma-1)r\omega S}\Lambda(r)$, from the closed forms. For $|G|\ge2$ we use
Lemma~\ref{lem:torus-complex} for the weights $(\sigma A(r),\mu^{(r)},\ones+r\sigma c^-)$ at the
radii $\sigma\theta'(\beta,r)$, which are admissible by Lemma~\ref{lem:critical}(a), (e). As in
Step~2 of the proof of Lemma~\ref{lem:upper}, $g$ scales by $\sigma$, so $\bar g=\sigma\theta_1(\beta,r)$
by~\eqref{eq:A-radii} for $A(r)$, and $\bar b\le\sigma b_r(\theta'(\beta,r))\le\sigma B_0$ by
Lemma~\ref{lem:critical}(b), since $\ones+r\sigma c^-\le\sigma e^{(r)}$ and $X_{A(r)}(\theta'(\beta,r))\ge0$.
So $f'^{(r)}_G(S\beta)\le C\exp(S[\sigma\langle\beta,\theta(\beta,r)\rangle-\langle\beta,d(r)\rangle+(\sigma-1)r\langle\beta,c^-\circ c^+\rangle])$,
while $\Lambda(r)=S^{-(k-2)/2}C_G(\beta,r)e^{-S\langle\beta,d(r)-\theta(\beta,r)\rangle}$ by Step~4 of the
proof of Theorem~\ref{thm:L-facet}, with $C_G\ge c>0$ on $\mathcal K\times[0,z_0]$. This proves the
claim. With Lemma~\ref{lem:counts-contour} and $\mathbf D\ge\frac12\mathcal A_d$,
$\mathbf D'/\mathbf D\le C\zeta_d^{1/2}S^{(k-2)/2}e^{CS}\le e^{C'T}$. So the convex function
$\Upsilon(c)=\log\E_we^{c(M_G-1)}$ has $\Upsilon(0)=0$ and $\Upsilon(1)\le C'T$.

\emph{Step 3.} As in Step~4 of the proof of Lemma~\ref{lem:discrete},
$\E_we^{hq_{\max}M_G}=1+O(T^2/N)$ and $\E_wM_G^2=O(T^2)$, so $\E_w\kappa_m=1+O(T^2/N)$.
\end{proof}

\begin{proof}[Proof of Theorem~\ref{thm:layer-counts}]
We prove (a), (b) and (c) in turn. (a) By Lemmas~\ref{lem:counts-half} and~\ref{lem:counts-contour},
$P_N(n)=h^{k-1}(1+\varepsilon(0))\mathcal A_d(1+O(\rho/S+T^2/N))$. Lemma~\ref{lem:discrete} for the
face $G$, with $N'$ steps, the same $h$ and the time $S$, applies since $S^2/N'\le T^2/N$, and
gives $P_{N'}(\hat n)=h^{k-2}f_G(s)(1+O(T^2/N))$. For $|G|=1$ this is the formula for the path
$j^{N'}$. Also $f_G(s)=\Lambda(0)(1+\varepsilon(0))$ and $\mathbf K^h_\beta(n,S)=h\mathcal A_d/\Lambda(0)$.
Dividing, $1+\varepsilon(0)$ cancels, and $E=O(\rho/S+T^2/N)$. Taking $T_0$ large and
$\varepsilon_1$ small makes $|E|\le\frac12$.

(b) Fix $x'\in\{hn,h(n-1)\}$ and let $\rho$ and $\mathcal A$ be the radius and the integral of the
proof of Theorem~\ref{thm:layer}(a) for $x'$, so that $\mathcal A^{\rm abs}\le C\mathcal A$ and
$\mathbf K_\beta(x',S)=e^{-q_ix'}\mathcal A/\Lambda(0)$. For $n=1$ and $x'=0$ both sides are
$h[z]\widetilde\Lambda_\beta$. On the circle put $y=h/z-hq_i$. Since $h/\rho\le\max(hS,4h\omega_\beta S/Z_0)$,
we have $|y|\le CT^2/N\le\frac12$, and
$w_d=e^{(n-1)y}e^{(n-1)(\log(1+y)-y)}$ with $(n-1)|y|^2\le2h\tilde x/\rho^2+2hq_i^2\tilde x\le CT^2/N$,
since $\tilde x/\rho^2\le4\omega_\beta S$. Also $e^{(n-1)y}=e^{x'/z-q_ix'}$ if $x'=\tilde x$, with one
more factor $e^{-y}=1+O(T^2/N)$ if $x'=hn$. So $w_d=e^{x'/z-q_ix'}(1+\eta)$ with
$|\eta|\le CT^2/N$, and $|\mathcal A_d-e^{-q_ix'}\mathcal A|\le C(T^2/N)e^{-q_ix'}\mathcal A$. This is (b),
and with (a) it gives the last display.

(c) The proofs of Theorem~\ref{thm:layer}(b) and (c) compare $\mathbf K_\beta$ with $\mathbf B_\beta$
at the same $(x,S)$. They show $\mathbf K_\beta=\mathbf B_\beta(1+O(1/T+\sqrt{x/T}))$ for $x\le1$,
$\mathbf K_\beta=\mathbf B_\beta e^{O(\sqrt{x/T}(1+x))}$ for $1\le x\le\delta T$, and
$\mathbf K_\beta=\mathbf B_\beta(1+c_1(\beta)z_*I_0(\zeta)/I_1(\zeta)+O(x/T))$ when
$\pi^{\rm in}_\beta=\pi^{\rm out}_\beta$, where $c_1(\beta)z_*I_0/I_1$ is bounded. We combine these
with (b).
\end{proof}


{\small\begin{thebibliography}{99}

\bibitem{amos1974}
D.~E.~Amos, \emph{Computation of modified Bessel functions and their ratios},
Math.~Comp.~\textbf{28} (1974), 239--251.
\url{https://doi.org/10.1090/S0025-5718-1974-0333287-7}.

\bibitem{bbf2026}
M.~Bena\"im, M.~Bravo and M.~Faure, \emph{On the occupation measure of evolution
models with vanishing mutations}, preprint, 2026.
\href{https://arxiv.org/abs/2602.06829}{arXiv:2602.06829}.

\bibitem{bermanplemmons1994}
A.~Berman and R.~J.~Plemmons, \emph{Nonnegative Matrices in the
Mathematical Sciences}, SIAM, Philadelphia, 1994.
\url{https://doi.org/10.1137/1.9781611971262}.

\bibitem{bfg2014}
L.~Bertini, A.~Faggionato and D.~Gabrielli, \emph{From level 2.5 to level 2
large deviations for continuous time Markov chains}, Markov Process.~Related
Fields \textbf{20} (2014), 545--562.
\href{https://arxiv.org/abs/1212.6908}{arXiv:1212.6908}.

\bibitem{bfg2015}
L.~Bertini, A.~Faggionato and D.~Gabrielli, \emph{Large deviations of the
empirical flow for continuous time Markov chains}, Ann.~Inst.~H.~Poincar\'e
Probab.~Stat.~\textbf{51} (2015), 867--900.
\url{https://doi.org/10.1214/14-AIHP601}.

\bibitem{bgl2024}
L.~Bertini, D.~Gabrielli and C.~Landim, \emph{Metastable $\Gamma$-expansion
of finite state Markov chains level two large deviations rate functions},
Ann.~Appl.~Probab.~\textbf{34} (2024), 3820--3869.
\url{https://doi.org/10.1214/24-AAP2051}.

\bibitem{bollobas1998}
B.~Bollob\'as, \emph{Modern Graph Theory}, Springer, New York, 1998.
\url{https://doi.org/10.1007/978-1-4612-0619-4}.

\bibitem{bouguetcloez2018}
F.~Bouguet and B.~Cloez, \emph{Fluctuations of the empirical measure of freezing
Markov chains}, Electron.~J.~Probab.~\textbf{23} (2018), no.~2, 1--31.
\url{https://doi.org/10.1214/17-EJP130}.

\bibitem{bhk2007}
D.~Brydges, R.~van~der~Hofstad and W.~K\"onig, \emph{Joint density for the
local times of continuous-time Markov chains}, Ann.~Probab.~\textbf{35}
(2007), 1307--1332. \url{https://doi.org/10.1214/009171906000001024}.

\bibitem{cvc2022}
G.~Carugno, P.~Vivo and F.~Coghi, \emph{Graph-combinatorial approach for
large deviations of Markov chains}, J.~Phys.~A \textbf{55} (2022), 295001.
\url{https://doi.org/10.1088/1751-8121/ac79e6}.

\bibitem{cv2010}
V.~\v{C}ekanavi\v{c}ius and P.~Vellaisamy, \emph{Compound Poisson and signed
compound Poisson approximations to the Markov binomial law}, Bernoulli
\textbf{16} (2010), 1114--1136. \url{https://doi.org/10.3150/09-BEJ246}.

\bibitem{chendeng2013}
J.~Chen and X.~Deng, \emph{Large deviations and related problems for absorbing
Markov chains}, Stochastic Process.~Appl.~\textbf{123} (2013), 2398--2418.
\url{https://doi.org/10.1016/j.spa.2013.02.014}.

\bibitem{cmsm2013}
P.~Collet, S.~Mart\'inez and J.~San Mart\'in, \emph{Quasi-Stationary
Distributions: Markov Chains, Diffusions and Dynamical Systems}, Springer,
Heidelberg, 2013. \url{https://doi.org/10.1007/978-3-642-33131-2}.

\bibitem{darrochseneta1965}
J.~N.~Darroch and E.~Seneta, \emph{On quasi-stationary distributions in
absorbing discrete-time finite Markov chains}, J.~Appl.~Probab.~\textbf{2}
(1965), 88--100. \url{https://doi.org/10.2307/3211876}.

\bibitem{darrochseneta1967}
J.~N.~Darroch and E.~Seneta, \emph{On quasi-stationary distributions in
absorbing continuous-time finite Markov chains}, J.~Appl.~Probab.~\textbf{4}
(1967), 192--196. \url{https://doi.org/10.2307/3212311}.

\bibitem{dawsongood1957}
R.~Dawson and I.~J.~Good, \emph{Exact Markov probabilities from oriented
linear graphs}, Ann.~Math.~Statist.~\textbf{28} (1957), 946--956.
\url{https://doi.org/10.1214/aoms/1177706795}.

\bibitem{dietz2005}
Z.~Dietz and S.~Sethuraman, \emph{Large deviations for a class of
nonhomogeneous Markov chains}, Ann.~Appl.~Probab.~\textbf{15} (2005),
421--486. \url{https://doi.org/10.1214/105051604000000990}.

\bibitem{dietz2007}
Z.~Dietz and S.~Sethuraman, \emph{Occupation laws for some
time-nonhomogeneous Markov chains}, Electron.~J.~Probab.~\textbf{12} (2007),
no.~23, 661--683. \url{https://doi.org/10.1214/EJP.v12-413}.

\bibitem{dobrushin1953}
R.~L.~Dobrushin, \emph{Limit theorems for a Markov chain of two states},
Izv.~Akad.~Nauk SSSR Ser.~Mat.~\textbf{17} (1953), 291--330 (in Russian). English translation in Select.~Transl.~Math.~Statist.~Probab.~\textbf{1} (1961), 97--134.

\bibitem{dv1975}
M.~D.~Donsker and S.~R.~S.~Varadhan, \emph{On a variational formula for the
principal eigenvalue for operators with maximum principle}, Proc.~Nat.~Acad.~Sci.~U.S.A.
\textbf{72} (1975), 780--783. \url{https://doi.org/10.1073/pnas.72.3.780}.

\bibitem{dupuisliu2015}
P.~Dupuis and Y.~Liu, \emph{On the large deviation rate function for the
empirical measures of reversible jump Markov processes}, Ann.~Probab.~\textbf{43}
(2015), 1121--1156. \url{https://doi.org/10.1214/13-AOP883}.

\bibitem{gabriel1959}
K.~R.~Gabriel, \emph{The distribution of the number of successes in a sequence
of dependent trials}, Biometrika \textbf{46} (1959), 454--460.
\url{https://doi.org/10.1093/biomet/46.3-4.454}.

\bibitem{good1961}
I.~J.~Good, \emph{The frequency count of a Markov chain and the transition to
continuous time}, Ann.~Math.~Statist.~\textbf{32} (1961), 41--48.
\url{https://doi.org/10.1214/aoms/1177705138}.

\bibitem{goodman1958}
L.~A.~Goodman, \emph{Exact probabilities and asymptotic relationships for
some statistics from $m$-th order Markov chains}, Ann.~Math.~Statist.~\textbf{29}
(1958), 476--490. \url{https://doi.org/10.1214/aoms/1177706623}.

\bibitem{gnw2024}
A.~Guillin, B.~Nectoux and L.~Wu, \emph{Large deviations of the empirical
measures of a strong-Feller Markov process inside a subset and quasi-ergodic
distribution}, preprint, 2024.
\href{https://arxiv.org/abs/2411.17216}{arXiv:2411.17216}.

\bibitem{herveledoux2013}
L.~Herv\'e and J.~Ledoux, \emph{A local limit theorem for densities of the additive
component of a finite Markov additive process}, Statist.~Probab.~Lett.~\textbf{83}
(2013), 2119--2128. \url{https://doi.org/10.1016/j.spl.2013.05.032}.

\bibitem{hkst2018}
R.~Huang, D.~Kious, V.~Sidoravicius and P.~Tarr\`es, \emph{Explicit formula for
the density of local times of Markov jump processes}, Electron.~Commun.~Probab.~\textbf{23} (2018), no.~90, 1--7. \url{https://doi.org/10.1214/18-ECP194}.

\bibitem{kagey131}
P.~Kagey, \emph{Problem 131}, Open Problem Collection,
\url{https://peterkagey.com/problems/131/} (accessed October 2026).

\bibitem{kl2024}
S.~Kim and C.~Landim, \emph{$\Gamma$-expansion of the measure-current large
deviations rate functional of non-reversible finite-state Markov chains},
Stochastic Process.~Appl.~\textbf{189} (2025), 104733.
\url{https://doi.org/10.1016/j.spa.2025.104733}.

\bibitem{km2003}
I.~Kontoyiannis and S.~P.~Meyn, \emph{Spectral theory and limit theorems for
geometrically ergodic Markov processes}, Ann.~Appl.~Probab.~\textbf{13} (2003),
304--362. \url{https://doi.org/10.1214/aoap/1042765670}.

\bibitem{kmn2010}
Y.~Kovchegov, N.~Meredith and E.~Nir, \emph{Occupation times and Bessel
densities}, Statist.~Probab.~Lett.~\textbf{80} (2010), 104--110.
\url{https://doi.org/10.1016/j.spl.2009.09.018}.

\bibitem{landim2023}
C.~Landim, \emph{Metastability from the large deviations point of view: a
$\Gamma$-expansion of the level two large deviations rate functional of
non-reversible finite-state Markov chains}, Stochastic Process.~Appl.~\textbf{165}
(2023), 275--315. \url{https://doi.org/10.1016/j.spa.2023.09.001}.

\bibitem{miller1961}
H.~D.~Miller, \emph{A convexity property in the theory of random variables
defined on a finite Markov chain}, Ann.~Math.~Statist.~\textbf{32} (1961),
1260--1270. \url{https://doi.org/10.1214/aoms/1177704865}.

\bibitem{nn1987a}
P.~Ney and E.~Nummelin, \emph{Markov additive processes I. Eigenvalue
properties and limit theorems}, Ann.~Probab.~\textbf{15} (1987), 561--592.
\url{https://doi.org/10.1214/aop/1176992159}.

\bibitem{nn1987b}
P.~Ney and E.~Nummelin, \emph{Markov additive processes II. Large deviations},
Ann.~Probab.~\textbf{15} (1987), 593--609.
\url{https://doi.org/10.1214/aop/1176992160}.

\bibitem{dlmf}
F.~W.~J.~Olver, A.~B.~Olde Daalhuis, D.~W.~Lozier, B.~I.~Schneider,
R.~F.~Boisvert, C.~W.~Clark, B.~R.~Miller, B.~V.~Saunders, H.~S.~Cohl and
M.~A.~McClain (eds.), \emph{NIST Digital Library of Mathematical Functions},
Release 1.2.8 of 2026-09-15, \url{https://dlmf.nist.gov/}.

\bibitem{pedler1971}
P.~J.~Pedler, \emph{Occupation times for two state Markov chains},
J.~Appl.~Probab.~\textbf{8} (1971), 381--390.
\url{https://doi.org/10.2307/3211908}.

\bibitem{paperA}
A.~Pemmasani, \emph{Uniform Bessel bounds for magnetization crossings in finite
Ising chains}, preprint, 2026.
\href{https://arxiv.org/abs/2610.06935}{arXiv:2610.06935}.

\bibitem{paperB}
A.~Pemmasani, \emph{Vertex crossings in a symmetric Markov multinomial
model}, preprint, 2026. \href{https://arxiv.org/abs/2610.09258v1}{arXiv:2610.09258v1}\ifarxivbuild, the
first version of the present preprint\else, version~1 of the arXiv entry of~\cite{arxivC1}\fi.

\ifarxivbuild\else
\bibitem{arxivC1}
A.~Pemmasani, \emph{Occupation counts of rarely switching Markov chains: stratified local
laws, support selection and the boundary layer}, the arXiv version, with Sections~7 and~8 on the
complete graph and near a vertex, preprint, 2026.
\href{https://arxiv.org/abs/2610.09258v2}{arXiv:2610.09258v2}, version~2.
\fi

\ifarxivbuild
\bibitem{paperC2}
A.~Pemmasani, \emph{What the occupation law of a finite Markov chain
determines: time reversal, principal minors and cycle trees}, in
preparation, 2026.
\fi

\bibitem{polettini2015}
M.~Polettini, \emph{BEST statistics of Markovian fluxes: a tale of Eulerian
tours and Fermionic ghosts}, J.~Phys.~A \textbf{48} (2015), 365005.
\url{https://doi.org/10.1088/1751-8113/48/36/365005}.

\bibitem{stadje1999}
W.~Stadje, \emph{Joint distributions of the numbers of visits for finite-state
Markov chains}, J.~Multivariate Anal.~\textbf{70} (1999), 157--176.
\url{https://doi.org/10.1006/jmva.1999.1814}.

\bibitem{whittle1955}
P.~Whittle, \emph{Some distribution and moment formulae for the Markov
chain}, J.~Roy.~Statist.~Soc.~Ser.~B~\textbf{17} (1955), 235--242.
\url{https://doi.org/10.1111/j.2517-6161.1955.tb00197.x}.

\bibitem{wu1982}
F.~Y.~Wu, \emph{The Potts model}, Rev.~Modern Phys.~\textbf{54} (1982), 235--268.
\url{https://doi.org/10.1103/RevModPhys.54.235}.

\end{thebibliography}
}
\end{document}